\documentclass[11pt,noamsfonts,a4paper]{amsart}
\usepackage[left=1in, right=1in, top=1in, bottom=1in]{geometry}
\usepackage{mathtools}
\usepackage{braket}
\usepackage[charter]{mathdesign}
\usepackage[tracking]{microtype}
\usepackage{tabularx}
\usepackage{etoolbox}
\usepackage{graphicx}
\usepackage{stmaryrd} % \mapsfrom
\usepackage{comment}

\usepackage{amsmath}
\usepackage{amsthm}
\newtheoremstyle{pineapple}%
  {1em}{1em}%
  {\itshape}{}%
  {\bfseries}{. ---}
  {0.5em}{}

\newtheoremstyle{durian}%
  {1em}{1em}%
  {}{}%
  {\bfseries}{. ---}
  {0.5em}{}

\makeatletter
\def\swappedhead#1#2#3{%
  \thmnumber{\@upn{\the\thm@headfont#2\@ifnotempty{#1}{.~}}}%
  \thmname{#1}%
  \thmnote{ {\the\thm@notefont(#3)}}}
\makeatother

\theoremstyle{pineapple}
\newtheorem{IntroTheorem}{Theorem}

\swapnumbers
\newtheorem{Theorem}[subsection]{Theorem}
\newtheorem{Lemma}[subsection]{Lemma}
\newtheorem{Proposition}[subsection]{Proposition}
\newtheorem{Corollary}[subsection]{Corollary}

\theoremstyle{durian}

\newtheorem{Example}[subsection]{Example}

\usepackage[dvipsnames]{xcolor}
\usepackage{hyperref} %always add last
\hypersetup{
  colorlinks = true,
  linkcolor = Fuchsia,
  urlcolor = Fuchsia,
  citecolor = ForestGreen,
  linkbordercolor = {white}}

\usepackage{tikz-cd}
\usetikzlibrary{arrows}

\tikzset{
  symbol/.style={
    draw=none,
    every to/.append style={
      edge node={node [sloped, allow upside down, auto=false]{$#1$}}}
  }
}

\tikzset{>={Straight Barb[length=2pt,width=4pt]}, commutative diagrams/arrow style=tikz}

\usepackage{enumitem}
\setlist[1]{labelindent=\parindent}
\setlist[1]{labelsep=0.5em}
\setlist[enumerate,1]{label={\upshape (\roman*)}, ref={\upshape (\roman*)}}
\setlist[enumerate]{leftmargin=*, itemsep={2pt}}
\setlist[itemize,1]{leftmargin={2em},itemsep={2pt},label={--}}

\makeatletter
\def\@sect#1#2#3#4#5#6[#7]#8{%
  \edef\@toclevel{\ifnum#2=\@m 0\else\number#2\fi}%
  \ifnum #2>\c@secnumdepth \let\@secnumber\@empty
  \else \@xp\let\@xp\@secnumber\csname the#1\endcsname\fi
  \@tempskipa #5\relax
  \ifnum #2>\c@secnumdepth
    \let\@svsec\@empty
  \else
    \refstepcounter{#1}%
    \edef\@secnumpunct{%
      \ifdim\@tempskipa>\z@ % not a run-in section heading
        \@ifnotempty{#8}{.~}%
      \else
        \@ifempty{#8}{.}{.~}%
      \fi
    }%
    \@ifempty{#8}{%
      \ifnum #2=\tw@ \def\@secnumfont{\bfseries}\fi}{}%
    \protected@edef\@svsec{%
      \ifnum#2<\@m
        \@ifundefined{#1name}{}{%
          \ignorespaces\csname #1name\endcsname\space
        }%
      \fi
      \@seccntformat{#1}%
    }%
  \fi
  \ifdim \@tempskipa>\z@ % then this is not a run-in section heading
    \begingroup #6\relax
    \@hangfrom{\hskip #3\relax\@svsec}{\interlinepenalty\@M #8\par}%
    \endgroup
    \ifnum#2>\@m \else \@tocwrite{#1}{#8}\fi
  \else
  \def\@svsechd{#6\hskip #3\@svsec
    \@ifnotempty{#8}{\ignorespaces#8\unskip
       \@addpunct.}%
    \ifnum#2>\@m \else \@tocwrite{#1}{#8}\fi
  }%
  \fi
  \global\@nobreaktrue
  \@xsect{#5}}
\makeatother

\makeatletter
\def\@seccntformat#1{%
  \protect\textup{\protect\@secnumfont
    \ifnum\pdfstrcmp{subsection}{#1}=0 \bfseries\fi% subsection # in \bfseries
    \csname the#1\endcsname
    \protect\@secnumpunct
  }%
}
\makeatother

\makeatletter
\newcommand*{\coloneqq}{\mathrel{\rlap{%
           \raisebox{0.3ex}{$\m@th\cdot$}}%
           \raisebox{-0.3ex}{$\m@th\cdot$}}%
           =}
\newcommand{\eqqcolon}{=%
           \mathrel{\rlap{%
           \raisebox{0.3ex}{$\m@th\cdot$}}%
           \raisebox{-0.3ex}{$\m@th\cdot$}}}
\makeatother

\makeatletter
\newcommand*\rel@kern[1]{\kern#1\dimexpr\macc@kerna}
\newcommand*\widebar[1]{%
  \begingroup
  \def\mathaccent##1##2{%
    \rel@kern{0.8}%
    \overline{\rel@kern{-0.8}\macc@nucleus\rel@kern{0.2}}%
    \rel@kern{-0.2}%
  }%
  \macc@depth\@ne
  \let\math@bgroup\@empty \let\math@egroup\macc@set@skewchar
  \mathsurround\z@ \frozen@everymath{\mathgroup\macc@group\relax}%
  \macc@set@skewchar\relax
  \let\mathaccentV\macc@nested@a
  \macc@nested@a\relax111{#1}%
  \endgroup
}
\makeatother

\DeclareMathOperator{\Spec}{Spec}
\DeclareMathOperator{\coker}{coker}
\DeclareMathOperator{\Sym}{Sym}
\DeclareMathOperator{\Proj}{Proj}

\DeclareMathOperator{\Mor}{Mor}
\DeclareMathOperator{\Hom}{Hom}
\DeclareMathOperator{\Ext}{Ext}

\DeclareMathOperator{\image}{im}
\DeclareMathOperator{\Sing}{Sing}
\DeclareMathOperator{\rad}{rad}
\DeclareMathOperator{\corank}{corank}
\DeclareMathOperator{\rank}{rank}
\DeclareMathOperator{\Disc}{\mathbf{Disc}}
\DeclareMathOperator{\sDisc}{\mathcal{D}\mathit{isc}}
\DeclareMathOperator{\Br}{Br}
\DeclareMathOperator{\QCoh}{QCoh}
\DeclareMathOperator{\id}{id}

\AtBeginDocument{%
   \def\MR#1{}
}

\newcommand{\punct}[1]{\makebox[0pt][l]{\,#1}} %so that punctuation does not mess with tikz layout
\newcommand{\parref}[1]{{\bf\ref{#1}}}

\newcommand{\Dqc}{\mathrm{D}_{\mathrm{qc}}}
\newcommand{\Dbcoh}{\mathrm{D}^{\mathrm{b}}_{\mathrm{coh}}}
\newcommand{\PP}{\mathbf{P}}

\newcommand{\kk}{\mathbf{k}}
\newcommand{\sO}{\mathcal{O}}
\newcommand{\sHom}{\mathcal{H}\!\mathit{om}}
\newcommand{\sAut}{\mathcal{A}\!\mathit{ut}}
\newcommand{\Ku}{\mathrm{Ku}}
\newcommand{\pr}{\mathrm{pr}}
\newcommand{\Cl}{\mathit{C}\ell}
\newcommand{\hrefSP}[1]{\href{https://stacks.math.columbia.edu/tag/#1}{#1}}
\newcommand{\citeSP}[1]{\cite[\hrefSP{#1}]{stacks-project}}
\newcommand{\sCoh}{\mathcal{C}\!\mathit{oh}}
\newcommand{\ssCoh}{\mathit{s}\mathcal{C}\!\mathit{oh}}
\newcommand{\subsectiondash}[1]{\subsection{#1}\textbf{---}\;}

\newcommand{\liset}{\mathrm{lis}\text{-}\mathrm{\acute{e}t}}

\newcommand{\ComplexesStack}{\mathcal{C}\mathit{omplexes}}
\newcommand{\sComplexesStack}{\mathit{s}\mathcal{C}\mathit{omplexes}}

\address{SB MATH CAG \\
  EPFL \\
  Station 8 \\
  1015 Lausanne \\
  Switzerland
}
\email{raymond.cheng@epfl.ch}

\address{
  Department of Mathematics \\
  UC Berkeley \\
  Berkeley, CA 94720-3840 \\
  United States of America
}
\email{nolander@berkeley.edu}

\title{Derived categories of quadric bundles and moduli stacks of spinor sheaves}
\author{Raymond Cheng}
\author{Noah Olander}
\begin{document}
\thispagestyle{empty}
\begin{abstract}
We prove that the Kuznetsov component of a flat family of even-dimensional
quadrics of corank at most \(2\) is equivalent to the twisted derived category
of an algebraic space whenever: (i) the open subset of the base over which the
quadrics has corank at most \(1\) is scheme-theoretically dense; and (ii) a
certain \'etale double cover of the closed complement admits a section. This
provides the first general geometricity result for Kuznetsov components of
higher dimensional quadrics, thereby generalizing works of Kapranov, Bondal,
Orlov, Kuznetsov, Moschetti, Xie, and others. Our main tool is the moduli stack
of spinor sheaves on a family of quadrics, which we define and study in detail.
In the situation of our main result, we produce an open substack which is a
$\mathbf{G}_m$-gerbe, and show that the associated twisted derived category is
equivalent to the Kuznetsov component of the family of quadrics, thereby
providing a geometric interpretation of the Brauer classes appearing in
previous works.
\end{abstract}
\setcounter{tocdepth}{1}
\maketitle
\tableofcontents
\thispagestyle{empty}

\section*{Introduction}
Quadric bundles, being geometrically rich yet tractable, abound in algebraic
geometry. Often, varieties of interest may be fibred in, or else otherwise
related to quadrics, at which point invariants may be determined by studying a
family of quadrics. Such a method has been particularly successful in the study
of derived categories of varieties, especially in regards to rationality
problems: see, for example, \cite{Kuznetsov:Survey, AB:Survey} for surveys.
Results and techniques for quadric bundles are therefore foundational for
applications. Our aim here is to generalize some of the basic results regarding
derived categories of quadric bundles, making the theory applicable for higher
dimensional quadrics and for more general base schemes; in particular, we make
an effort to include points of even characteristic throughout. Our general
treatment furthermore allows us to use moduli-theoretic techniques in our
constructions.

Let \(\rho \colon Q \to S\) be a quadric bundle, by which we mean a flat and
proper family of degree \(2\) hypersurfaces in a \(\PP^n\)-bundle
\(\pi \colon \PP\mathcal{E} \to S\) over a base scheme \(S\). Each
fibre of \(\rho\) is a Fano variety of index \(n-1\), so the
derived category of the total space \(Q\) admits an \(S\)-linear
semiorthogonal decomposition of the form
\[
\Dqc(Q) =
\langle \Ku(Q),\,
\rho^*\Dqc(S) \otimes \sO_\rho,\,
\rho^*\Dqc(S) \otimes \sO_\rho(1),\,
\ldots,\,
\rho^*\Dqc(S) \otimes \sO_\rho(n-2)
\rangle.
\]
The \emph{Kuznetsov component} \(\Ku(Q)\) is the ``interesting component'' of
\(\Dqc(Q)\) and it is of interest to identify it more explicitly. In general,
\(\Ku(Q)\) may be described algebraically as the derived category of the sheaf
of even Clifford algebras of $\rho \colon Q \to S$, a certain finite locally
free non-commutative $\mathcal{O}_S$-algebra, see \cite[Theorem 4.2]{Kuznetsov:Quadrics}.

A more geometric description of \(\Ku(Q)\) is often desirable and potentially
more useful. Kapranov's decomposition from \cite[\S4]{Kapranov:Homogeneous} for
a quadric hypersurface \(Q \subset \PP^{2\ell+1}\) might be considered the
first example: here, \(S\) is a point and \(\Ku(Q)\) is equivalent to two
copies of \(\Dqc(S)\). Bondal and Orlov, in their seminal work \cite{BO:SOD}
on semiorthogonal decompositions of algebraic varieties, then give perhaps
the first nontrivial example in the case \(\rho \colon Q \to \PP^1\) is a
general pencil of smooth even dimensional quadrics, wherein they show that
\(\Ku(Q)\) is equivalent to the derived category of the classically associated
hyperelliptic curve. General recent interest around these questions perhaps
stems from Kuznetsov's work in \cite[Theorem 4.3]{Kuznetsov:Cubic}, where he
proves that
%over an algebraically closed field of characteristic \(0\),
for the quadric surface fibration \(\rho \colon Q \to \PP^2\) associated with a
smooth cubic fourfold containing a plane, if all fibres of \(\rho\) have at
worst isolated singularities, then \(\Ku(Q)\) is equivalent to the derived
category of the K3 double cover of \(\PP^2\) branched along the discriminant
divisor of \(\rho \colon Q \to \PP^2\), though perhaps twisted by a Brauer
class; the hypothesis on the fibres were later removed by \cite{Moschetti}.
His
extensive study of this case led Kuznetsov to propose that a cubic fourfold
should be rational if and only if its Kuznetsov component is equivalent to the
untwisted derived category of a K3 surface. Spectacular recent progress was
made on this conjecture by the recent work of Katzarkov, Kontsevich, Pantev,
and Yu on Hodge atoms, see especially \cite[Theorem 6.8 and Examples 6.17--6.21]{KKPY}.
% the category $\operatorname{Ku}(Q)$ 
% can be described as the twisted (with respect to some Brauer class) derived 
% category of the double cover of $S$ branched along the degeneracy locus of 
% $Q \to S$. This is an important part of Kuznetsov's work on rationality of 
% cubic fourfolds. In \cite{Moschetti}, the assumption that the corank of each fiber is $\leq 1$
% is removed.
% These results are further generalized in \cite{Xie:Quadrics}
% to the case where $Q \to S$ is a flat family of quadric surfaces where $S$ is a smooth projective
% surface over an algebraically closed field of characteristic $0$ and $Q$ is a smooth fourfold. 

Returning to general quadric bundles, Xie shows in \cite[Theorem 5.10]{Xie:Quadrics}
that over an algebraically closed field of characteristic \(0\), for a flat
family \(\rho \colon Q \to S\) of quadric surfaces over a smooth projective
surface, \(\Ku(Q)\) is equivalent to the twisted derived category of a
resolution of the discriminant double cover of \(S\). She further conjectures
that whenever a quadric surface bundle $Q\to S$ over an integral Noetherian
scheme has simple degeneration generically and each fibre has corank $\leq 2$,
the Kuznetsov component of $Q$ is equivalent to the twisted derived category of
a scheme. Theorems 4.2 and 4.4 in \emph{loc. cit.} show that such schemes,
Brauer classes, and equivalences exist \'etale locally on $S$, suggesting
an approach via glueing. One view of our work here is that we carry out this
program.

Our main result identifies the Kuznetsov component of a quadric bundle
with a twisted derived category of an algebraic space \(M\), generically a double cover over \(S\), whenever the
degeneracy loci
\[
S_c
\coloneqq \{s \in S : \operatorname{corank} Q_s \geq c\}
= \{s \in S : \dim \Sing Q_s \geq c-1\}
\]
stratifying \(S\) with respect to the singularities in the fibres of \(\rho\)
satisfy three conditions. For example, the hypotheses below are satisfied
whenever \(\rho \colon Q \to S\) is a quadric bundle of even relative dimension
over an algebraically closed field with \(S_3 = \varnothing\) and \(S_2\) a
finite set of closed points. %In general, we prove the following generalization
%of the results of Fei Xie from \cite{Xie:Quadrics}:

\begin{IntroTheorem}\label{intro-general-result}
Let \(\rho \colon Q \to S\) be a quadric bundle of relative dimension
\(2\ell\). Assume that:
\begin{enumerate}
\item\label{intro-general-result.S3}
\(S_3 = \varnothing\);
\item\label{intro-general-result.associated-points}
\(S_2\) contains no weakly associated points of \(S\); and
% \item\label{intro-general-result.depth}
% \(\operatorname{depth}\sO_{S,s} \geq 2\) for every \(s \in S_2\); and
\item\label{intro-general-result.covering}
the \'etale double cover \(\tilde{S}_2 \to S_2\) parameterizing families of
\((\ell+1)\)-planes in \(\rho \colon Q \to S\) splits.
\end{enumerate}
Then there exists an algebraic space \(M\), a proper morphism \(M \to S\) which
is finite of degree \(2\) away from \(S_2\), a Brauer class
\(\beta \in \operatorname{Br}(M)\), and an \(S\)-linear
equivalence
\[
\Phi \colon
\Dqc(M,\beta) \to
\Ku(Q).
\]
\end{IntroTheorem}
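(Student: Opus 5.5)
The plan is to build \(M\) as a moduli space of spinor sheaves on the fibres of \(\rho\), and to obtain \(\Phi\) as a Fourier--Mukai transform along the universal spinor sheaf. First I will define the stack \(\mathcal{M} \to S\) of spinor sheaves. Its objects over an \(S\)-scheme \(T\) are \(T\)-flat families of sheaves on \(Q_T\) which, fibrewise, are spinor sheaves in the sense of Kapranov and Ottaviani on a quadric of corank \(\leq 2\). Over a smooth fibre \(Q_s\) of dimension \(2\ell\) there are exactly two spinor bundles, one for each family of \(\ell\)-planes. Over a corank \(1\) fibre there is one, pulled back from the cone structure, and the two families of maximal linear spaces of the double-cover description collide. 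Over a corank \(2\) fibre there is a \(\PP^1\)'s worth of them, interpolating between the two families of \((\ell+1)\)-planes. Every such sheaf is simple, so \(\mathcal{M}\) is a \(\mathbf{G}_m\)-gerbe over its rigidification \(M'\), and \(M' \to S\) is finite of degree \(2\) over \(S \setminus S_2\).

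Next I will take the rigidification and find the Brauer class. Using deformation theory (\(\Ext^1\) and \(\Ext^2\) computations for spinor sheaves on the fibres, together with the Clifford algebra description of \(\Ku(Q)\) from \cite[Theorem 4.2]{Kuznetsov:Quadrics}), I will show that \(M'\) is an algebraic space, proper over \(S\). Over \(S_2\) its fibres are \(\PP^1\)'s, so \(M' \to S\) is not flat there. The étale double cover \(\tilde S_2 \to S_2\) records the two families of \((\ell+1)\)-planes. The splitting hypothesis \ref{intro-general-result.covering} lets me choose one family globally, and hence an open substack \(\mathcal{M}^\circ \subset \mathcal{M}\): I keep only the spinor sheaves over \(S_2\) that are degenerations attached to the chosen family. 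This cuts out a section-like piece and makes \(M \coloneqq \mathcal{M}^\circ\)-rigidified proper with the required degree-\(2\) behaviour away from \(S_2\). The gerbe \(\mathcal{M}^\circ \to M\) defines \(\beta \in \Br(M)\), and the universal sheaf \(\mathcal{S}\) on \(Q \times_S \mathcal{M}^\circ\) has weight \(1\). I then set \(\Phi(F) = \pr_{Q*}(\pr^*F \otimes \mathcal{S})\), defined on weight-\(1\) sheaves, i.e.\ on \(\Dqc(M,\beta)\).

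Then I will prove that \(\Phi\) lands in \(\Ku(Q)\) and is an equivalence. Landing in \(\Ku(Q)\) reduces to the fibrewise vanishing \(H^*(Q_s, \mathcal{S}_s(-i)) = 0\) for \(0 \leq i \leq 2\ell - 2\), which holds for spinor sheaves. For full faithfulness, I will compute the relative Ext sheaves \(\mathcal{E}xt\) between the spinor sheaves parametrised by two points of \(M\) over the same point of \(S\). By base change (both sides are \(S\)-linear) this reduces to a statement on the fibres. These Ext groups should vanish for distinct points and be \(\kk\) in degree \(0\) for equal points, which is exactly where restricting to one \(\PP^1\)-family over \(S_2\) is needed. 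Essential surjectivity then follows from \(S\)-linearity and local generation: \(\Ku(Q)\) is generated locally over \(S\) by the spinor sheaves, by Kapranov's collection on the smooth fibres and Kuznetsov's Clifford algebra description in general. Alternatively, I can check that the right adjoint is conservative fibrewise.

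The main obstacle is the behaviour over \(S_2\), where I need both that \(M\) is a well-behaved algebraic space and that full faithfulness holds. Hypothesis \ref{intro-general-result.associated-points} will enter here. I will try to show that \(\mathcal{M}^\circ\) is the scheme-theoretic closure of its part over \(S \setminus S_2\), which is dense, so that \(\mathcal{M}^\circ\) is flat-like enough for base change of the Fourier--Mukai kernel and for Bondal--Orlov type criteria to apply after reducing to fibres. Establishing the required Ext vanishing for the \(\PP^1\)-family of spinor sheaves on a corank \(2\) quadric, including in characteristic \(2\), is the computation I expect to be hardest.
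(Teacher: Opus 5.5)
\textbf{Genuine gap: full faithfulness cannot be checked fibre by fibre.} Your construction of a spinor moduli space, and your use of the splitting of \(\tilde S_2 \to S_2\) to keep one component over \(S_2\), match the paper's approach. The step that is supposed to prove the equivalence is where things break. You want to check full faithfulness by reducing, ``by base change'', to Ext computations between spinor sheaves on single fibres \(Q_s\), in the style of the Bondal--Orlov point-object criterion.

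\(S\)-linearity gives no such reduction. The base-change results for full faithfulness (\parref{lemma-basechangeofff}) need \(T \to S\) flat, or both structure maps flat. But \(M \to S\) is not flat over \(S_2\), where its fibre dimension jumps to \(1\).

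Worse, the fibrewise statement is false there. Take \(\ell = 1\) and \(Q_s = \PP^2 \cup_{\ell_0} \PP^2\). The spinor attached to \(x \in M_s \cong \PP^1\) is, up to twist and duality, \(\mathcal{I}_{L/Q_s}\) for a line \(L \neq \ell_0\). At the point \(L \cap \ell_0\) of the hypersurface \(Q_s\), this is a non-free maximal Cohen--Macaulay module, so it has a \(2\)-periodic infinite resolution. Hence \(\Ext^{*}_{Q_s}(\mathcal{I}_L,\mathcal{I}_L)\) is nonzero in infinitely many degrees, while \(\Ext^{*}_{\PP^1}(\kk_x,\kk_x)\) vanishes above degree \(1\). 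So \(\Dqc(M_s) \to \Ku(Q_s)\) is not fully faithful, and no fibrewise Ext computation can succeed.

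Over \(S_1 \setminus S_2\) the fibre \(M_s = \Spec \kk[\epsilon]\) is nonreduced. There the spinor's self-Ext is \(\Ext^*_{\kk[\epsilon]}(\kk,\kk)\), nonzero in every degree \(\geq 0\). So your expected pattern ``\(\kk\) in degree \(0\) for equal points'' is wrong, and the point-object criterion, which needs a smooth source over a field, is unavailable there too.

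Hypothesis \ref{intro-general-result.associated-points} is what lets the global statement hold even though it fails on these fibres: it makes every hyperbolic reduction an effective Cartier divisor. It cannot make \(M\) flat enough to pass to fibres. Likewise, ``local generation of \(\Ku(Q)\) by spinor sheaves'' is the content of the theorem, not an input. Kapranov's collection only covers smooth fibres, and fibrewise conservativity of the right adjoint runs into the same non-flatness.

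\textbf{What the paper does instead: prove the equivalence étale-locally over the whole base, then descend.} First, suppose \(Q\) has a regular section. Hyperbolic reduction along it gives an \(S\)-linear equivalence \(\Ku(Q') \simeq \Ku(Q)\) (\parref{hypred-equivalence}). This comes from mutating between the two Orlov decompositions of \(\widetilde{Q}\): one as the blowup of \(Q\) along the section, the other as the blowup of \(\PP\mathcal{E}'\) along \(Q'\). The second description is where hypothesis \ref{intro-general-result.associated-points} enters, via \parref{lemma-cartier-divisor-criterion}. The equivalence carries dual spinors to dual spinors (\parref{cliff-hyperbolic-reduction-spinors}).

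Iterating down to quadric surfaces, where dual spinors are ideal sheaves of lines and the Fourier--Mukai functor is this very equivalence, gives \(\Dqc(M) \simeq \Ku(Q)\) for a maximal hyperbolic reduction \(M\) (\parref{reduction-fourier-mukai-equivalence}). Second, étale locally \(M_\sigma\) is such an \(M\), with trivial gerbe and matching kernel (\parref{spinor-moduli-compatible}, \parref{corollary-kernelscorrespond}). The descent theorem \parref{thm-locallytwisted} then globalizes the equivalence; it uses only flat base change and allows a kernel that is merely relatively perfect.

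\textbf{Intermediate claims that need correcting.}
\begin{itemize}
\item Over a point of \(S_2\), the spinor moduli has two disjoint \(\PP^1\)'s, one per family of \((\ell+1)\)-planes. There are also spinors of \(\ell\)-planes meeting the singular locus in a line, and these are not simple; for example \(\mathcal{I}_{\ell_0}\) is decomposable.
\item The full coarse space is not separated, hence not proper. This is exactly why a section of \(\tilde S_2 \to S_2\) is needed.
\item \(R\rho_*\mathcal{S}_d = \mathcal{I}_d \neq 0\), so \(\mathcal{S}_d \notin \Ku(Q)\). The kernel must be \(\mathcal{S}^\vee\) or \(\mathcal{S}(-1)\).
\item The universal sheaf has weight \(1\), so \(\Phi\) must be defined on the weight \(-1\) component. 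As you wrote it, weight-\(1\) inputs are sent to \(0\).
\end{itemize}
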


A slightly more precise version is given in \parref{spinor-main-theorem}.
Although this and other results here are phrased in terms of the full
quasi-coherent derived category, statements for, say, the bounded derived
category in suitable situations may be deduced by adapting the arguments from
 \cite[Theorem 6.2]{BS:Descent}.  The hypotheses are discussed in more detail
in \parref{spinors-hypotheses}, though briefly: When \(S\) is integral,
\ref{intro-general-result.associated-points} simply means that \(S_2 \neq S\).
The covering \(\tilde{S}_2 \to S_2\) in \ref{intro-general-result.covering} is
discussed in \parref{quadric-bundles-fano-schemes-corank-2}. The algebraic
space \(M\) is canonically an open subspace---depending on a choice of section
of \(\tilde{S}_2 \to S_2\)---of the coarse moduli space of spinor sheaves for
the quadric bundle \(\rho \colon Q \to S\), \(\beta\) is the Brauer obstruction
for the existence of a universal sheaf on \(Q \times_S M\), and the
equivalence is induced by the Fourier--Mukai transform with respect to the
universal sheaf on the associated moduli stack. In this way, our result loosely
says that the Kuznetsov component of an even-dimensional quadric bundle is
generated by its spinor sheaves, making it apparent that this is a vast
generalization of Kapranov's semiorthogonal decomposition from
\cite[\S4]{Kapranov:Homogeneous} of a smooth quadric \(Q\) of dimension
\(2\ell\) over an algebraically closed field:
\[
\Dqc(Q) =
\langle
\mathcal{S}_+^\vee,
\mathcal{S}_-^\vee,
\sO_Q,
\sO_Q(1),
\ldots,
\sO_Q(2\ell-1)
\rangle,
\]
where \(\mathcal{S}_+^\vee\) and \(\mathcal{S}_-^\vee\) are the even and odd
spinor bundles on \(Q\). For example, when \(\ell = 1\) so that
\(Q \cong \PP^1 \times \PP^1\), the spinors are
\(\mathcal{S}_+^\vee = \sO_Q(-1,0)\) and
\(\mathcal{S}_-^\vee = \sO_Q(0,-1)\).

Our approach to Theorem \parref{intro-general-result} begins with a careful
study of the local situation on \(S\), wherein \(\rho \colon Q \to S\) admits a
\emph{regular section}; geometrically, this is a section
\(\sigma \colon S \to Q\) contained in the smooth locus of \(\rho\). In this
case, the scheme
\[
Q' \coloneqq \{[\ell] \in \mathbf{F}_1(Q/S) : \sigma \in \ell\}
\]
parameterizing lines in the fibres of \(\rho \colon Q \to S\) passing through
the section \(\sigma\) is another family of quadrics \(\rho' \colon Q' \to S\),
possibly not flat and of \(2\) dimensions less, called the \emph{hyperbolic
reduction of \(\rho \colon Q \to S\) along \(\sigma\)}. This construction is
well-known, perhaps popularized in the setting of derived categories by the
work \cite[\S\S1.3--1.4]{ABB} of Auel--Bernardara--Bolognesi, and has since
featured in many other works, such as \cite{Xie:Quadrics, KS:Quadrics, JS};
perhaps most notably for us, Kuznetsov shows in \cite[Proposition
1.1(3)]{Kuznetsov:Reduction} that, often, the even Clifford algebras associated
with \(Q'\) and \(Q\) are Morita equivalent. In particular, in view of
Kuznetsov's description of the derived category of quadric bundles from
\cite[Theorem 4.2]{Kuznetsov:Quadrics}, this implies that \(\Ku(Q')\) and
\(\Ku(Q)\) are equivalent.

The next result provides a generalization of this fact. Our result is applicable
for more general base schemes \(S\)---in particular, for \(S\) on which \(2\)
may not be invertible---as well as quadric bundles that may not be generically
smooth. Moreover, we can explicitly identify the kernel underlying the
equivalence between \(\Ku(Q')\) and \(\Ku(Q)\).

\begin{IntroTheorem}\label{intro-hyperbolic-reduction}
Let \(\rho \colon Q \to S\) be a quadric bundle of relative dimension
\(n - 1 \geq 2\), and let \(\rho' \colon Q' \to S\) be its hyperbolic reduction
along a regular section. Assume that \(S_{n-1}\) does not contain any weakly
associated points of \(S\). Then there exists an \(S\)-linear equivalence
\(
\Phi \colon \Ku(Q') \to \Ku(Q)
\).
\end{IntroTheorem}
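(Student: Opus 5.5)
We aim to construct the equivalence geometrically via a Fourier--Mukai kernel built from the hyperbolic reduction correspondence, avoiding Clifford algebras so that points of residue characteristic \(2\) and non-generically-smooth families are allowed. Let \(\sigma \colon S \to Q\) be the regular section, and consider the blowup \(\tilde{Q} \coloneqq \mathrm{Bl}_{\sigma(S)} Q\). Projection from \(\sigma\) gives a morphism \(\tilde{Q} \to \PP(\mathcal{E}/L)\), where \(L \subset \mathcal{E}\) is the line corresponding to \(\sigma\). Restricting the quadratic form to \(L^\perp/L\) identifies \(Q' \subset \PP(L^\perp/L)\), and the projection exhibits \(\tilde{Q}\) as the blowup of \(\PP(\mathcal{E}/L)\) along \(Q'\), embedded in the hyperplane bundle \(\PP(L^\perp/L)\). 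The correspondence we use is
\[
Q \xleftarrow{\;p\;} E \xrightarrow{\;q\;} Q',
\]
where \(E\) is the exceptional divisor over \(Q'\); it is a \(\PP^1\)-bundle over \(Q'\) whose fibres are the strict transforms of the lines through \(\sigma\). The candidate functor is \(\Phi = \text{(projection to } \Ku(Q)) \circ p_* (q^*(-) \otimes \sO_E(a))\) for a suitable twist \(a\), restricted to \(\Ku(Q')\).

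The steps, in order, are as follows.

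\textbf{Step 1.} Establish the geometry of the previous paragraph in families. Flatness of \(Q'\) over \(S\) may fail, so we will instead work with the base change of \(E\) and \(\tilde{Q}\) and use that \(Q'\) is a Cartier divisor in \(\PP(L^\perp/L)\) away from where the restricted form vanishes identically, namely \(S_{n-1}\).

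\textbf{Step 2.} Apply Orlov's blowup formula together with the projective bundle formula on both sides. This writes \(\Dqc(\tilde{Q})\) in two ways:
\begin{enumerate}
\item as \(\Dqc(Q)\) together with \(n-1\) copies of \(\Dqc(S)\) from the blowup of the section;
\item as \(\Dqc(\PP(\mathcal{E}/L))\) together with \(\Dqc(Q')\) from the blowup along \(Q'\).
\end{enumerate}
Expanding \(\Dqc(Q)\) and \(\Dqc(Q')\) via their Kuznetsov decompositions (by \(S\)-linearity) and \(\Dqc(\PP(\mathcal{E}/L))\) via Beilinson, both sides become \(\Ku\) plus exceptional-type pieces \(\rho^*\Dqc(S)\otimes(\text{line bundle})\). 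The count is \(2(n-1)\) pieces on the first side and \(n + (n-2)\) on the second.

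\textbf{Step 3.} Perform a sequence of mutations, in the style of Kuznetsov's proof for hyperbolic reduction and of \cite{ABB}, to match these pieces. Taking the orthogonal complement of the common pieces then yields an \(S\)-linear equivalence \(\Ku(Q') \simeq \Ku(Q)\). Tracking the mutations identifies \(\Phi\) with the functor above.

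\textbf{Main obstacle.} The key difficulty is Step 1 together with the applicability of Orlov's blowup formula along \(Q' \subset \PP(L^\perp/L)\) when \(Q'\) is not flat, or not a divisor, over points of \(S_{n-1}\). There the restricted form on \(L^\perp/L\) vanishes identically, so \(Q'\) jumps to all of \(\PP(L^\perp/L)\). To handle this, I would first prove the equivalence over \(U = S \setminus S_{n-1}\), where everything is a regular embedding. Then I would show that the kernel (the structure sheaf of \(E\), twisted) is defined over all of \(S\), and that the relevant adjunction units and counits are isomorphisms. Since their cones are \(S\)-linear objects supported over \(S_{n-1}\), the hypothesis that \(S_{n-1}\) contains no weakly associated points should force these cones to vanish. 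The mechanism is that the relevant Hom-sheaves are torsion-free or flat over \(S\), so they are determined by their restriction to the scheme-theoretically dense open \(U\). Making this density argument precise for unbounded quasi-coherent complexes, likely by a reduction to perfect generators, is where I expect the real work to lie.
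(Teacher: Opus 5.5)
\textbf{Verdict: genuine gap, in how you treat the points of \(S_{n-1}\).} Your Steps 2 and 3 (two Orlov decompositions of \(\widetilde{Q}\), one over \(Q\) and one over \(\PP(\mathcal{E}/L)\), matched by mutations) are exactly the paper's proof of \parref{hypred-equivalence}. The problem is that you misread the role of the hypothesis.

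\textbf{What the hypothesis actually gives.} By \parref{lemma-cartier-divisor-criterion}, ``\(S_{n-1}\) contains no weakly associated points'' is \emph{equivalent} to \(Q'\) being an effective Cartier divisor in \(\PP(L^\perp/L)\) over \emph{all} of \(S\). Locally \(Q'\) is cut out by \(q'\). By McCoy's theorem, \(q'\) is a zero divisor in \(\sO_S[y_1,\ldots,y_{n-1}]\) if and only if its coefficients are killed by a common nonzero scalar. That happens exactly when their vanishing locus, which is \(S_{n-1}\), contains a weakly associated point. Non-flatness over \(S_{n-1}\) is not failure to be Cartier. For instance, over \(S = \mathbf{A}^1\) with \(q' = t\,q''\), the fibre over \(t = 0\) is all of \(\PP^{n-2}\), yet \(\mathrm{V}(tq'')\) is still a divisor. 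Consequently:
\begin{itemize}
\item \(Q' \subset \PP(\mathcal{E}/L)\) is a regular codimension \(2\) immersion everywhere;
\item \parref{lemma-blowup-relation} identifies \(\widetilde{Q}\) with \(\mathrm{Bl}_{Q'}\PP(\mathcal{E}/L)\) globally;
\item the blowup formula and the mutation argument therefore run over all of \(S\).
\end{itemize}
No extension from \(U = S \setminus S_{n-1}\) is needed. This is the missing idea.

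\textbf{Why your extension step fails.} It is false that \(S\)-linear cones supported over \(S_{n-1}\) must vanish when \(S_{n-1}\) contains no weakly associated points: a skyscraper at the origin of \(\mathbf{A}^1\) is a counterexample. It is also false that a morphism of flat (even locally free) objects which is an isomorphism over a scheme-theoretically dense open is an isomorphism: multiplication by \(t\) on \(\sO_{\mathbf{A}^1}\) is a counterexample. Density only gives injectivity-type conclusions, never vanishing of cokernels. You also cannot descend from \(U\) via \parref{thm-locallytwisted}, since \(U \to S\) is not surjective. So as written, your plan proves the equivalence only over \(U\).

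\textbf{Minor count slip.} Blowing up the section, which has codimension \(n-1\) in \(Q\), contributes \(n-2\) copies of \(\Dqc(S)\), not \(n-1\). Also \(\Ku(Q')\) has \(n-3\) exceptional companions, since \(Q'\) has relative dimension \(n-3\). Both sides therefore have \(2n-3\) pieces, not \(2n-2\).
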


This is the content of \parref{hypred-equivalence}. As above, when \(S\) is
integral, the hypothesis regarding weakly associated points means that
\(S_{n-1} \neq S\); see \parref{lemma-cartier-divisor-criterion} for its import
here. The equivalence \(\Phi\) is of Fourier--Mukai type, and
\parref{proposition-identify-the-kernel} identifies its kernel as a twist of
the ideal sheaf of the family over \(Q'\) of lines in \(Q\) through the chosen
section. An important feature of \(\Phi\) is that it preserves dual spinor
sheaves: see \parref{cliff-hyperbolic-reduction-spinors}.

Iterating this equivalence gives: If \(\rho \colon Q \to S\) is a quadric
bundle of relative dimension \(2\ell\) which carries a complete flag of linear
spaces ending in a family of \((\ell-1)\)-planes \(\PP\mathcal{W}\) contained
in the smooth locus of \(\rho\)---so that fibres have corank at most
\(2\)!---and \(S_2\) does not contain any weakly associated points of \(S\),
then there is an \(S\)-linear equivalence
\[
\Psi \colon \Dqc(M) \to \Ku(Q)
\]
where \(\mu \colon M \to S\) is the (maximal) hyperbolic reduction of
\(\rho \colon Q \to S\) along \(\PP\mathcal{W}\). The morphism \(\mu\) is
generically finite of degree \(2\) and has \(\PP^1\) as its geometric fibres
over points of \(S_2\). Crucially, \(\Psi\) is identified in
\parref{reduction-fourier-mukai-equivalence} as the Fourier--Mukai transform
induced by a family of dual spinor sheaves on \(Q \times_S M\), essentially
verifying the folklore fact that \(M\) is a moduli space of spinors for \(Q\).

This suggests a natural approach to Theorem \parref{intro-general-result} in
general: Consider a moduli stack \(\widebar{\mathcal{M}} \to S\) of dual spinor
sheaves for \(\rho \colon Q \to S\) and study the Fourier--Mukai transform
associated with the universal sheaf on \(Q \times_S \widebar{\mathcal{M}}\). In
pursuing this idea, however, one is confronted with the conundrum that the
coarse moduli space of \(\widebar{\mathcal{M}}\) is not separated over
\(S\) because its geometric fibres over \(S_2\) consist of two copies of
\(\PP^1\) and an extra point. This is most readily seen in case of a quadric
surface \(Q\): Spinor sheaves here are essentially ideal sheaves
\(\mathcal{I}_{\ell/Q}\) of lines \(\ell \subset Q\), see
\parref{reduction-quadric-surfaces-example}. When
\(Q = \PP^2 \cup_{\ell_0} \PP^2\) has corank \(2\) with singular line
\(\ell_0\), the isomorphism class of \(\mathcal{I}_{\ell/Q}\) for
\(\ell \neq \ell_0\) is determined by the irreducible component it lies on and
the intersection point \(\ell \cap \ell_0\): see \cite[Example
5.2]{Hartshorne:Divisors}. To proceed, one must be able to consistently choose
a family of spinor sheaves on corank \(2\) fibres, and this is encoded by
condition \ref{intro-general-result.covering} in Theorem
\parref{intro-general-result}. Fixing a choice of family determines an open
substack \(\mathcal{M} \subset \widebar{\mathcal{M}}\), on which the strategy
may be carried through.
\smallskip

\noindent\textbf{Technical tools. --- }
Implementing this argument required developing several general technical
results which may be of independent interest. First is a result which allows us
to verify whether or not an \(S\)-linear Fourier--Mukai functor is an equivalence
fppf locally on \(S\).  When analyzing the functor
\(\Dqc(\mathcal{M}) \to \Dqc(S)\) in the setting of Theorem
\parref{intro-general-result}, this allows us to replace the base \(S\)
by a cover on which \(\rho \colon Q \to S\) admits a regular section, at which
point Theorem \parref{intro-hyperbolic-reduction} applies. Our statement, set
up and stated in \parref{thm-locallytwisted}, is a variant of Bergh's and
Schn\"urer's conservative descent from \cite{BS:Descent}, but modified to apply
in a setting where the source may be a \(\mathbf{G}_m\)-gerbe over a proper and
perfect algebraic stack and where the Fourier--Mukai kernel is only relatively
perfect over the two factors.

Second is a precise formulation of the general philosophy, originating from
ideas of Mukai, Orlov and Lieblich and Olsson, that Fourier--Mukai transforms correspond to
morphisms of moduli spaces of complexes. Combined with the descent technique
above, this allows us to relate derived category computations with geometric
properties of the stack of spinors. Our statements here are phrased in terms of
Lieblich's stack of complexes on a flat, proper, and finitely-presented
morphism \(X \to S\) as introduced in \cite{Lieblich:Complexes}. We formulate a
Yoneda-type lemma in \parref{complexes-yoneda} which canonically
relates \(T\)-valued points of the stack of complexes with certain complexes
in \(\Dqc(X \times_S T)\). We then show that complexes underlying fully faithful
Fourier--Mukai transforms induce open immersions on stacks of complexes:

\begin{IntroTheorem}
Let \(X\) and \(Y\) be flat, proper, and finitely-presented schemes over
a scheme \(S\), and let
\[
\Phi_K \colon \Dqc(X) \to \Dqc(Y)
\]
be the Fourier--Mukai transform associated with an object
\(K \in \Dqc(X \times_S Y)\) which is perfect relative to both \(X\) and \(Y\). If $\Phi_K$ is fully faithful, %need this condition for FM_K to be defined 
the assignment \( (T,E) \mapsto (T,\Phi_{K_T}(E))\) induces a morphism of stacks
\[
\mathrm{FM}_K \colon
\ComplexesStack_{X/S} \to
\ComplexesStack_{Y/S}
\]
which is an open immersion
immersion.
\end{IntroTheorem}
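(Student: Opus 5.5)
The plan has three stages: show that \(\mathrm{FM}_K\) is well defined on \(T\)-points, show that it is a monomorphism, and show that its image is open.

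\emph{Well-definedness.} Let \(T \to S\) be a scheme and let \(E \in \Dqc(X_T)\) be a \(T\)-point of \(\ComplexesStack_{X/S}\). Such an \(E\) is relatively perfect over \(T\) and universally gluable, that is, \(\mathrm{Ext}^{<0}\) vanishes on geometric fibres. By the Yoneda-type description of \parref{complexes-yoneda}, it suffices to check that \(\Phi_{K_T}(E) = \pr_{2*}(\pr_1^*E \otimes K_T)\) has these same two properties on \(Y_T\).

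\begin{enumerate}
\item \emph{Relative perfectness.} Since \(K\) is perfect relative to \(Y\) and \(E\) is \(T\)-perfect, the product \(\pr_1^*E \otimes K_T\) is perfect relative to \(Y_T\). Pushing forward along the flat, proper, finitely presented map \(X_T \times_T Y_T \to Y_T\) keeps the result \(T\)-perfect. This is the standard pushforward result for relatively perfect complexes (Lieblich; Stacks Project).
\item \emph{Base change.} The Fourier--Mukai transform commutes with arbitrary base change \(T' \to T\). This follows from flatness of \(X\) and \(Y\) over \(S\) together with tor-independent base change, and it makes the assignment a morphism of stacks (functorial in \(T\), compatible with pullbacks).
\item \emph{Gluability.} We need \(\mathrm{Ext}^{i}(\Phi(E_t),\Phi(E_t)) = 0\) for \(i<0\) at each geometric point \(t\). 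Here I would use full faithfulness on fibres: \(\mathrm{Ext}^i(\Phi(E_t),\Phi(E_t)) = \mathrm{Ext}^i(E_t,E_t)\).
\end{enumerate}

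The hypothesis "fully faithful" is over \(S\), so the main technical point is to deduce that \(\Phi_{K_T}\) is fully faithful for every \(T\), and in particular on geometric fibres. My approach would be to express full faithfulness as the statement that the unit \(\id \to \Phi^R\Phi\) is an isomorphism. Here \(\Phi^R\) is the right adjoint, which is again a Fourier--Mukai transform with kernel \(K^\vee \otimes \omega_{X/S}\)-type data, available because \(K\) is perfect relative to \(X\). The composite \(\Phi^R\Phi\) is then a Fourier--Mukai transform whose kernel \(K'\) on \(X \times_S X\) is formed compatibly with base change. Full faithfulness says the induced map \(\sO_\Delta \to K'\) is an isomorphism, and since both sides commute with base change, it stays an isomorphism after any \(T \to S\). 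I expect this base-change compatibility of the adjoint kernels in the merely relatively perfect setting to be the main obstacle. If it fails, I would fall back on the conservative descent of \parref{thm-locallytwisted}.

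\emph{Monomorphism.} To see that \(\mathrm{FM}_K\) is a monomorphism, I would use the \(T\)-linear full faithfulness of \(\Phi_{K_T}\). It gives \(\sHom_T(E,E') \simeq \sHom_T(\Phi E,\Phi E')\), with \(\sHom_T\) meaning derived pushforward of the internal Hom. Hence isomorphisms \(\Phi E \cong \Phi E'\) correspond bijectively to isomorphisms \(E \cong E'\), compatibly with automorphism sheaves. So the morphism is fully faithful on groupoids of \(T\)-points.

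\emph{Openness.} I would characterize the image as those \(F\) for which the counit \(\Phi\Phi^R F \to F\) is an isomorphism and \(\Phi^R F\) is a \(T\)-point of \(\ComplexesStack_{X/S}\). Given \(F\) on \(Y_T\), the adjoint \(\Phi^R F\) is \(T\)-perfect, and the counit has \(T\)-perfect cone \(C\). The locus of \(t\) where \(C\) vanishes is open, by the support of a relatively perfect complex under proper pushforward together with Nakayama. Over that open set \(F \cong \Phi(\Phi^R F)\). Gluability of \(\Phi^R F\) then comes for free: negative self-Exts of \(\Phi^R F\) coincide with those of \(F\) by full faithfulness. Hence this open subscheme of \(T\) is exactly the pullback of \(\mathrm{FM}_K\), which makes \(\mathrm{FM}_K\) representable by open immersions.
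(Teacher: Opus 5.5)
Your decomposition into well-definedness, monomorphism, and openness via the right adjoint \(R\) of \(\Phi_K\) is reasonable. Your openness step is a genuine alternative to the paper's. You identify the preimage of \(\mathrm{FM}_K\) over \((T,F)\) directly as the open locus where the cone of the counit \(\Phi_{K_T}R_T(F)\to F\) vanishes. The paper instead proves formal smoothness by the infinitesimal lifting criterion: it takes \(E\coloneqq R_T(F)\), applies Nakayama (\parref{lemma-nakforpseudo}) only across a square-zero thickening, and then uses that a smooth monomorphism is an open immersion. Your version describes the image explicitly, but it additionally needs that the vanishing locus of a \(T\)-perfect complex on a proper \(T\)-scheme is open in \(T\). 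Both routes need \(R\) to commute with arbitrary base change and to preserve relative perfectness. The paper proves this in \parref{complexes-right-adjoint} from \(R=R\pr_{1,*}\circ R\sHom(K,-)\circ a\), without writing \(R\) as a Fourier--Mukai transform. However, two steps do not go through as written.

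\emph{Base change of full faithfulness.} Everything rests on \(\Phi_{K_T}\) being fully faithful for \emph{every} \(T\to S\), including non-flat ones such as geometric points and the thickenings in your openness argument. You leave this unproven, and your fallback does not supply it. \parref{thm-locallytwisted} descends full faithfulness from a faithfully flat cover \(U\to S\) down to \(S\), which is the opposite direction; it is also stated for gerbes over fpqc locally \(H\)-projective spaces. The kernel route would need relative duality and convolution for kernels that are only relatively perfect. It would also need an argument that an isomorphism of functors forces an isomorphism of kernels, which itself requires a relative generator.

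The paper's fix is \parref{lemma-ffconditions} together with \parref{lemma-basechangeofff}. For \(S\) affine, take a perfect \(S\)-linear generator \(G\) of \(\Dqc(X)\). Since \(\Phi_K\) has a right adjoint and preserves perfect complexes (\parref{complexes-preserve-perfect}), \(\Phi_K\) is fully faithful if and only if the enriched map \(Rf_*R\sHom(G,G)\to Rg_*R\sHom(\Phi_K G,\Phi_K G)\) is an isomorphism in \(\Dqc(S)\). Here \(f\colon X\to S\) and \(g\colon Y\to S\) are flat and proper, and \(G\) and \(\Phi_K(G)\) are perfect. Hence both sides commute with arbitrary base change, and \(G_T\) stays a \(T\)-linear generator. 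With this, glueability and simplicity can be checked for all affine \(T\) directly via \parref{complexes-glueable}, without the Noetherian fibrewise criterion.

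\emph{Relative perfectness.} Your argument uses the wrong half of the hypothesis, and its intermediate claim is false. For \(E\) merely \(T\)-perfect, \(L\pr_1^*E\otimes^L K_T\) need not be \(Y_T\)-perfect. Take \(S=\Spec\kk\), \(X=Y\) a nodal curve, \(K=\sO_\Delta\) (perfect relative to both factors, with \(\Phi_K=\id\)), and \(E\) the skyscraper at the node. Then \(L\pr_1^*E\otimes^L K=\Delta_*E\), which is not \(Y\)-perfect: otherwise its pushforward \(\Phi_K(E)=E\) would be perfect.

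What is true is that \(L\pr_1^*E\otimes^L K_T\) is \(T\)-perfect. Indeed, \(E\) has finite tor-amplitude over \(\sO_T\), and \(K_T\) has finite tor-amplitude over \(\pr_1^{-1}\sO_{X_T}\); this is where \(X\)-perfectness of \(K\) enters. Pushforward along the flat, proper, finitely presented morphism \(X_T\times_T Y_T\to Y_T\) then preserves \(T\)-perfectness. The paper argues instead via pseudo-coherence and fibrewise boundedness.

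Symmetrically, your remark that the right adjoint is available ``because \(K\) is perfect relative to \(X\)'' has it backwards. It is \(Y\)-perfectness of \(K\) that makes the relative dual \(R\sHom(K,\pr_1^*\omega^\bullet_{X/S})\) a well-behaved kernel for \(R\). Without it, \(R\) need not even commute with direct sums.
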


This is \parref{theorem-fully-faithful-open-immersion}, and a variant for
objects in a semiorthogonal component is formulated in
\parref{theorem-fully-faithful-open-immersion-2}.  Related results
appear in \cite[Lemma 5.2]{LO:FM} and \cite[Proposition 3.2.3]{HLT}, for
example, but have been restricted to embedding spaces of skyscraper sheaves of
one into the stack of complexes of the other. We expect that this statement
is more generally applicable.
\smallskip

\noindent\textbf{Intersections of two quadrics. ---}
Two applications of Theorem \parref{intro-general-result} are sketched in
\S\parref{section-applications}. The first concerns the derived category of
an intersection \(X\) of \(m \leq 4\) even-dimensional quadrics. For the
Introduction, we focus on the most classical case, \(m = 2\);
results for \(m = 3\) and \(m = 4\) are given in
\parref{applications-intersections-result}\ref{applications-intersections-result.surface}
and
\parref{applications-intersections-result}\ref{applications-intersections-result.threefold}
and further discussed in \parref{applications-intersections-remarks}. In one of
the first works regarding semiorthogonal decompositions in algebraic geometry,
Bondal and Orlov show in \cite[\S2]{BO:SOD} that, over an algebraically closed
field of characteristic different from \(2\), the bounded derived category
\(\Dbcoh(X)\) of coherent sheaves contains that of of the hyperelliptic curve
\(C\), as already considered by Weil and Reid in \cite{Weil:Footnote,
Reid:Thesis}, arising from the associated pencil of quadrics. This was
interpreted as a ``categorical explanation'' of classical results of \cite{DS}
relating rank \(2\) vector bundles on \(C\) to a Fano scheme of linear spaces
in \(X\). Our results imply a generalization of the theorem of Bondal and Orlov
to an arbitrary base field:

\begin{IntroTheorem}\label{intro-theorem-intersection-quadrics}
Let \(X \subset \PP^{2\ell+1}\) be a smooth complete intersection of two general
quadrics over an arbitrary field \(\kk\). There is a semiorthogonal
decomposition
\[
\Dbcoh(X) = \langle
\Dbcoh(C,\alpha),
\sO_X(1),
\sO_X(2),
\ldots,
\sO_X(2\ell-2)
\rangle
\]
where \(C\) is a smooth projective curve and \(\Br(C)\).
\end{IntroTheorem}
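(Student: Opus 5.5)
The plan is to realize \(X\) as a hyperplane-type section of a quadric bundle over \(\PP^1\), apply Theorem \parref{intro-general-result} to that bundle, and then pass from \(\Ku(Q)\) to \(\Dbcoh(X)\) by the standard homological projective duality / Orlov-type comparison.

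\textbf{Setup.} Write \(X = V(q_0, q_1) \subset \PP^{2\ell+1}\). Consider the pencil of quadrics \(\rho \colon Q \to S \coloneqq \PP^1\), with
\[
Q = \{([t_0:t_1],x) : t_0q_0(x) + t_1q_1(x) = 0\} \subset \PP^1 \times \PP^{2\ell+1}.
\]
This is a quadric bundle of relative dimension \(2\ell\) in the trivial \(\PP^{2\ell+1}\)-bundle, with \(Q \to \PP^{2\ell+1}\) the blowup along \(X\).

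\textbf{Step 1: verify the hypotheses of Theorem \parref{intro-general-result}.} Flatness is automatic because no member of the pencil vanishes identically.
\begin{itemize}
\item Since \(X\) is smooth and the quadrics are general, every singular member of the pencil has corank exactly \(1\). This includes characteristic \(2\), where corank is measured via the appropriate discriminant/half-discriminant and the corank-\(\geq 1\) locus is the degeneracy divisor. Hence \(S_2 = \varnothing\).
\item Consequently conditions \ref{intro-general-result.S3}, \ref{intro-general-result.associated-points} and \ref{intro-general-result.covering} hold vacuously.
\end{itemize}
This is the step needing the most care over an arbitrary \(\kk\). I would argue via the Jacobian criterion on \(Q\): a point of corank \(\ge 2\) forces \(X\) to have a singular point lying on the singular locus. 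This is the classical argument, which I would redo with the characteristic-free notion of corank used in \parref{quadric-bundles-fano-schemes-corank-2}. Failing that, I would interpret ``general'' as directly imposing \(S_2=\varnothing\).

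The theorem then yields an algebraic space \(M\), finite of degree \(2\) over \(\PP^1\) (since \(S_2=\varnothing\)), a class \(\alpha \in \Br(M)\), and an equivalence \(\Dqc(M,\alpha) \simeq \Ku(Q)\). Being finite over \(\PP^1\), \(M\) is a projective curve \(C\). Smoothness of \(C\) follows because it is the discriminant double cover, branched along the reduced discriminant; in characteristic \(2\) it is the Artin--Schreier-type cover. Reducedness of the discriminant again uses that \(X\) is smooth.

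\textbf{Step 2: compare \(\Ku(Q)\) with \(\Dbcoh(X)\).} This is the main obstacle. I would follow Bondal--Orlov and Kuznetsov's homological projective duality argument, which is characteristic-free.
\begin{itemize}
\item On one side, \(Q = \mathrm{Bl}_X\PP^{2\ell+1}\), so Orlov's blowup formula gives
\[
\Dbcoh(Q) = \langle \Dbcoh(\PP^{2\ell+1}),\; \Dbcoh(X)\otimes \sO(E) \rangle.
\]
\item On the other side, \(Q\) is a quadric bundle over \(\PP^1\), whose decomposition has components \(\Ku(Q)\) and \(2\ell\) copies of \(\Dbcoh(\PP^1)\).
\item Mutating exceptional objects, one matches these. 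The \(2(2\ell+2)\) exceptional objects of type \(\sO(a,b)\) are rearranged so that \(2\ell-2\) of the \(\sO_X(i)\) survive, giving the stated decomposition with \(\Dbcoh(C,\alpha)\) in place of \(\Ku(Q)\) up to the resulting mutation.
\item The count works: \(\Dbcoh(\PP^{2\ell+1})\) has \(2\ell+2\) exceptional objects, and \(2\ell\) copies of \(\Dbcoh(\PP^1)\) give \(4\ell\). The difference \(4\ell - (2\ell+2) = 2\ell-2\) accounts for \(\sO_X(1),\ldots,\sO_X(2\ell-2)\), since \(X\) is Fano of index \(2\ell-2\).
\end{itemize}
Bounded coherent versions follow from the \(\Dqc\) statements by restricting to compact objects, as the paper notes via \cite{BS:Descent}. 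I expect the bookkeeping of mutations to be routine but lengthy; I would cite Kuznetsov's HPD for quadric intersections rather than redo it.
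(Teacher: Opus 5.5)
Your architecture is the paper's own. For \(m=2\), smoothness of \(X\) forces \(S_2=\varnothing\); your Jacobian argument is the right one and works in every characteristic. So the hypotheses of \parref{spinor-main-theorem} hold vacuously. The passage from \(\Ku(Q)\) to \(\Dbcoh(X)\) is exactly the citation of \cite[Theorem 5.5]{Kuznetsov:Quadrics} that you propose. The step that fails as written is smoothness of \(C\) in characteristic \(2\), which is precisely the new case.

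\emph{Why the characteristic \(2\) argument fails.} There the cover is \(z^2-\beta z+\gamma=0\), with \(\beta\) the Pfaffian of the polar form, so it is branched along \(\mathrm{V}(\beta)\), while \(\det b_q=\beta^2\) is never reduced. Both of your implications break down:
\begin{itemize}
\item smoothness of \(X\) does not make \(\mathrm{V}(\beta)\) reduced;
\item reducedness of \(\mathrm{V}(\beta)\) would not give a smooth cover anyway, since \(z^2+tz=0\) is a node.
\end{itemize}
For a concrete example, over \(\overline{\mathbf{F}}_2\) take \(q_0=x_0x_1+x_2^2\) and \(q_1=x_0x_2+x_1x_3+x_3^2\) in \(\PP^3\). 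Then \(\beta=t_1^2\), and the only singular member is \(q_0\). It has corank \(1\) with vertex \((0:0:0:1)\notin X\), so \(X\) is smooth. Hyperbolic reduction along the regular isotropic line spanned by \(e_0\) gives \(C\) locally as \(z^2+s^2z+s=0\) with \(s=t_1/t_0\). This curve is smooth, but wildly ramified over a non-reduced branch point.

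\emph{How to repair it.} Argue through \(Q\), as the paper does:
\begin{itemize}
\item \(Q=\mathrm{Bl}_X\PP^{2\ell+1}\) is smooth because \(X\) is.
\item \'Etale locally on \(\PP^1\), a regular isotropic subbundle of rank \(\ell\) puts \(Q\) in the form \(\mathrm{V}(\sum_{i=1}^{\ell}x_iy_i+q'(z_0,z_1))\), as in the proof of \parref{hyperbolic-reduction-equations}.
\item Since \(S_2=\varnothing\), \parref{spinors-coarse-moduli-fibres} identifies \(C\) there with \(\mathrm{V}(q')\subset\PP^1_S\).
\item The partial derivatives in the \(x_i,y_i\) force \(x=y=0\) at any singular point, so \(\Sing Q=\Sing\mathrm{V}(q')\).
\end{itemize}
Hence \(C\) is smooth in every characteristic.
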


This is
\parref{applications-intersections-result}\ref{applications-intersections-result.curve}.
For \(\kk \neq \overline{\kk}\), this appears to be new even in characteristic
\(0\); see, however, \cite[\S5]{ABB} for related results. It would be
interesting to relate the Brauer class appearing here to \(\kk\)-rationality of
\(X\) as studied in \cite{HT:Two-Quadrics, BW:Two-Quadrics}. For
\(\operatorname{char}\kk = 2\), this is completely new and may be seen as a
``categorical explanation'' of the work \cite{Bhosle} of Bhosle.
\smallskip

\noindent\textbf{Cubic fourfolds. ---}
The second application concerns smooth cubic fourfolds. Their derived
categories possess perhaps the most well-known semiorthogonal decomposition in
algebraic geometry: see \cite{Kuznetsov:Cubic, Huybrechts:K3,
Huybrechts:K3-update} for example. Specifically, since such \(X \subset \PP^5\)
is a Fano variety of index \(3\), there is a semiorthogonal decomposition
\[
\Dbcoh(X) =
\langle
\mathcal{A}_X, \sO_X, \sO_X(1), \sO_X(2)
\rangle.
\]
The category \(\mathcal{A}_X\) looks like the derived category of a K3 surface:
Its Serre functor is the shift-by-two functor by \cite[Corollary
4.1]{Kuznetsov:CY} and the dimensions of its Hochschild homology coincide with
the Betti numbers of a K3. This latter statement may be deduced from the weak
form of the Hochschild--Kostant--Rosenberg theorem from \cite[Example
1.7]{AV:HKR}.

Whether or not \(\mathcal{A}_X\) is equivalent to the (twisted) derived
category of an actual K3 surface depends on the geometry of \(X\)---see
\cite[Theorem 1.1]{AT:Cubic}, \cite[Theorem 1.4]{Huybrechts:K3}, and
\cite[Theorem 5.1]{Huybrechts:K3-update}---and is conjectured in
\cite[Conjecture 1.1]{Kuznetsov:Cubic} to be related to the rationality of
\(X\). The most well-studied example for when \(\mathcal{A}_X\) is twisted
geometric is the case \(X\) contains a plane. Kuznetsov showed in
\cite[\S4]{Kuznetsov:Cubic} that for a general such \(X\) over a field of
characteristic different from \(2\),
 \(\mathcal{A}_X \simeq \Dbcoh(S,\alpha)\) where \(S\) is a double sextic K3
and \(\alpha \in \mathrm{Br}(S)\). This was extended to all smooth cubic
fourfolds containing a plane over an algebraically closed field of
characteristic zero by Moschetti in \cite[Theorem 1.2]{Moschetti}; Xie later
gives in \cite[Example 6.2]{Xie:Quadrics} a much more direct proof in this
setting. Our results apply to prove this in any characteristic: see
\parref{applications-cubic-fourfolds-decomposition}.

Rather than quote the general result, we highlight a particularly striking
example: Consider the Fermat cubic fourfold \(X \subset \PP^5\) over an
algebraically closed field \(\kk\). Of course, \(X\) is extremely special
for a variety of reasons. For example, when \(\kk = \mathbf{C}\), it has
the largest automorphism group by \cite[Corollary 6.14]{LZ:Automorphisms}, it
contains the most planes by \cite{DIO:Planes}, and it is contained in every
Hassett divisor by \cite[Theorem 1.2]{YY:Fermat}. When
\(\operatorname{char}\kk = 2\), the geometry of \(X\) becomes in many ways
even more special. Most notably in this context, every fibre of any quadric
surface bundle obtained via projection from a plane in \(X\) is singular!
Nevertheless, \parref{applications-cubic-fourfolds-decomposition} associates
with \(X\) a K3 surface \(S\), and \parref{applications-cubic-fourfolds-fermat}
furthermore determines \(S\) as the most special K3 over \(\kk\):

\begin{IntroTheorem}\label{intro-theorem-fermat-cubic}
Let \(X \subset \PP^5_\kk\) be the Fermat cubic fourfold over an algebraically
closed field \(\kk\) of characteristic \(2\). Then there is a semiorthgonal
decomposition
\[
\Dbcoh(X) =
\langle \Dbcoh(S), \sO_X, \sO_X(1), \sO_X(2) \rangle
\]
where \(S\) is the supersingular K3 surface of Artin invariant \(1\).
\end{IntroTheorem}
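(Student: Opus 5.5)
The plan is to project \(X\) from a plane and apply Theorem \parref{intro-general-result} to the resulting quadric surface bundle. Write \(X = \{x_0^3 + \cdots + x_5^3 = 0\}\). Over \(\kk\) of characteristic \(2\), pick a cube root of unity \(\zeta\). Then \(X\) contains the plane
\[
P = \{x_0 + x_1 = x_2 + x_3 = x_4 + x_5 = 0\},
\]
since \(a^3 + b^3 = (a+b)(a+\zeta b)(a+\zeta^2 b)\).

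Blowing up \(X\) along \(P\) and projecting from \(P\) gives a quadric surface bundle \(\rho \colon Q \to \PP^2\), with \(Q = \mathrm{Bl}_P X \subset \PP\mathcal{E}\) for \(\mathcal{E} = \sO^{3} \oplus \sO(-1)\). The standard argument (as in \cite[\S4]{Kuznetsov:Cubic}, but valid in any characteristic via mutations) gives
\[
\Dbcoh(X) = \langle \Ku(Q), \sO_X, \sO_X(1), \sO_X(2) \rangle
\]
up to the identification of \(\mathcal{A}_X\) with \(\Ku(Q)\). I take this to be the content of \parref{applications-cubic-fourfolds-decomposition}, which provides \(\mathcal{A}_X \simeq \Dbcoh(S,\alpha)\) once the hypotheses of Theorem \parref{intro-general-result} are checked.

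Next I verify those hypotheses by explicit coordinates. Substitute \(x_1 = x_0 + u_0\), \(x_3 = x_2 + u_1\), \(x_5 = x_4 + u_2\), with \((u_0:u_1:u_2)\) coordinates on the base \(\PP^2\). The cubic becomes
\[
\sum_i \bigl(u_i x_{2i}^2 + u_i^2 x_{2i} + u_i^3\bigr),
\]
which is quadratic in \((x_0, x_2, x_4, t)\) after homogenizing. Its Gram/half-discriminant data then lets me check three things:
\begin{enumerate}
\item \(S_3 = \varnothing\), since no fibre contains a plane; otherwise \(X\) would contain a 3-fold cone, contradicting smoothness.
\item \(S_2\) is a finite set of points, in fact the points where the quadratic part \(\sum u_i x_{2i}^2\) degenerates appropriately.
\item The double cover \(\tilde S_2 \to S_2\) splits automatically because \(\kk\) is algebraically closed and \(S_2\) is reduced and finite.
\end{enumerate}
Consequently \(\mathcal{A}_X \simeq \Dqc(M,\beta)\), and on bounded coherent objects \(\Dbcoh(M,\beta)\).

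It remains to show that \(M\) is a smooth K3 surface, that \(\beta = 0\), and that \(M\) is the supersingular K3 of Artin invariant \(1\). For smoothness and the K3 property, I would argue that \(\Dbcoh(M,\beta) \simeq \mathcal{A}_X\) has Serre functor \([2\,]\). Smoothness of \(M\) follows since \(\mathcal{A}_X\) is smooth and proper and \(M\) is proper over \(\PP^2\). Then \(\omega_M\) is trivial, and the Hochschild homology computation gives the K3 Betti numbers. Note that \(M\) is a scheme, being projective over \(\PP^2\) via the Stein factorization double cover. For \(\beta\), I would exhibit a section of \(\rho\), or a family of spinor sheaves defined over all of \(M\). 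Concretely, another plane \(P'\) meeting \(P\) appropriately (the Fermat cubic has many planes) gives a rational multisection of odd degree, or directly a universal sheaf, killing \(\beta\).

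Finally, \(M\) is supersingular because \(X\) is: the Fermat cubic in characteristic \(2\equiv -1 \bmod 3\) is supersingular. Its crystalline/Hodge-theoretic K3 lattice of \(\mathcal{A}_X\) is then entirely algebraic, so the Picard rank is \(22\). The Artin invariant is read off from the discriminant of the Mukai lattice of \(\mathcal{A}_X\), equivalently the algebraic cohomology of \(X\), which for the Fermat cubic is as special as possible, forcing \(\sigma_0 = 1\). Alternatively, write \(M\) explicitly as the double cover of \(\PP^2\) given by the discriminant quadric of the bundle. In characteristic \(2\) this is an inseparable-type cover \(w^2 + f_3 w = f_6\), and one recognizes it as a known model of the \(\sigma_0 = 1\) K3, for instance via unirationality and a count of \(\mathbf{F}_4\)-points. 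I expect this last identification, together with checking \(\beta = 0\), to be the main obstacle; I would first try the explicit equation route, comparing with Dolgachev--Kondō's description of the Artin invariant \(1\) surface as a purely inseparable cover related to the Hermitian curve.
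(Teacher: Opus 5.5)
Your outer frame is exactly \parref{applications-cubic-fourfolds-decomposition}: project from a plane, identify $\mathcal{A}_X$ with $\Ku(Q)$ via Kuznetsov's mutations, and apply the main theorem. Your coordinates are also sound. The linear coefficient of the discriminant algebra vanishes, the discriminant cover is $z^2 = u_0u_1u_2(u_0^3+u_1^3+u_2^3)$ (the same $S_0$ as in \parref{applications-cubic-fourfolds-fermat}), and $S_2$ is supported on $\PP^2(\mathbf{F}_4)$. Your reason for $S_3=\varnothing$ is wrong, since corank $2$ fibres are unions of two planes. The conclusion still holds, because the polar form has rank exactly $2$ on every fibre.

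\textbf{The untwisting step.} The gap is that the two steps distinguishing this theorem from \parref{applications-cubic-fourfolds-decomposition} are, as you anticipate, only gestured at: killing the Brauer class, and identifying the surface. For $\beta=0$, a plane meeting $P$ projects onto a line or a point, so it gives no (multi)section. An odd-degree multisection argument would in any case need a Springer-type theorem and a link between isotropy of the generic fibre and $\beta$. That link is delicate here, because in characteristic $2$ the generic fibre is a degenerate quadric and the discriminant extension is purely inseparable. What works is a plane \emph{disjoint} from $P$, for instance $P'=\{x_0+\zeta x_1=x_2+\zeta x_3=x_4+\zeta x_5=0\}$. It maps isomorphically onto the base, so it gives a section of $\rho$. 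Since every fibre is singular, you must check that this section is regular. Take coordinates with $P=\mathrm{V}(y)$, $P'=\mathrm{V}(x)$ and $f=f_{1,2}+f_{2,1}$. Then the section is a singular point of the fibre over $y$ exactly when all $\partial_{x_i}f_{1,2}$ vanish at $(0:y)$, which is exactly when $X$ is singular there. \parref{reduction-fourier-mukai-equivalence} then gives an untwisted equivalence $\Dqc(M)\simeq\Ku(Q)$, with $M$ the hyperbolic reduction. By \parref{spinor-moduli-compatible}, $M$ is $M_\sigma$ for the corresponding $\sigma$, so no Brauer class ever appears.

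\textbf{The identification step.} Your lattice argument is only a heuristic. Nothing in it computes the discriminant of the algebraic part of $\mathrm{H}^4(X)$ or transports it to $\mathrm{NS}(M)$ in characteristic $2$. A point count over $\mathbf{F}_4$ would determine $\sigma_0$ only through the Artin--Tate formula, which you do not set up. Your explicit route also conflates $M$ with $S_0$. In fact $S_0$ is singular at the $21$ points over $S_2$, and $M$ is its minimal resolution, with a $\PP^1$ over each of them.

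The idea you are missing is the two-plane model used in \parref{applications-cubic-fourfolds-two-planes} and \parref{applications-cubic-fourfolds-fermat}. In the coordinates above, the hyperbolic reduction along the section from $P'$ is exactly $M=\{f_{1,2}=f_{2,1}=0\}\subset\PP^2\times\PP^2$. The Fermat cubic is a smooth $q$-bic fourfold with $q=2$, so it is isomorphic to $\sum_i(x_i^2y_i+x_iy_i^2)=0$. Hence $M=\{\sum_i x_i^2y_i=\sum_i x_iy_i^2=0\}$, which is visibly a projective scheme. This complete intersection is identified by \cite[Theorem 1.1(vi) and (viii)]{DK:K3} with the supersingular K3 surface of Artin invariant $1$. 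One model thus yields both the untwisting and the identification.
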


In fact, the computations in \parref{applications-cubic-fourfolds-fermat}
are valid over any field \(\kk\) and relate \(X\) to an explicit \(S\) given as
a complete intersection in \(\PP^2 \times \PP^2\). A precise relationship
between \(X\) and \(S\) taken over a number field \(\kk\) appears to
have first appeared in \cite[\S5]{HK:L-series}, where the global Hasse--Weil
zeta function of \(X\) is related to that of \(S\). This may be viewed as a
motivic shadow of Theorem \parref{intro-theorem-fermat-cubic}, in line with
Orlov's conjecture from \cite{Orlov:Motives} that semiorthogonal summands
correspond to direct summands on the level of rational Chow motives.

\medskip\noindent\textbf{Outline. --- }%
The first three sections develop the general technical tools used in the
proof of the main results: \S\parref{section-derived} discusses derived
categories of algebraic stacks, the main result being the descent statement
\parref{thm-locallytwisted} for Fourier--Mukai transforms. Moduli stacks of
complexes are discussed in \S\parref{section-complexes} with the open immersion
induced by a fully faithful Fourier--Mukai transform in
\parref{theorem-fully-faithful-open-immersion}. Relative exceptional
collections and their residual categories are then discussed in
\S\parref{section-decompositions} where, notably, a generalization of
\parref{theorem-fully-faithful-open-immersion} to the stack of complexes
parameterizing objects in a residual category is formulated in
\parref{theorem-fully-faithful-open-immersion-2}.

Families of quadrics are discussed starting from
\S\parref{section-quadric-bundles}; since our base scheme is rather general,
extra care is required in discussing the corank stratification: see
\parref{quadric-bundles-corank}--\parref{quadric-bundles-even-rank}. Spinor
sheaves for families of quadrics are introduced in
\S\parref{section-clifford-and-spinor}, where we give new statements about
how spinors are related along non-generic hyperplane sections and cones:
see \parref{cliff-subbundle-situation} and \parref{cliff-cone-situation}.
In \S\parref{section-hyperbolic-reduction}, we study how the Kuznetsov
component of a quadric bundle behaves under hyperbolic reduction,
cumulating in the proof of Theorem \parref{intro-hyperbolic-reduction}.
In \S\parref{section-spinor-moduli}, we construct the stack of spinor sheaves
associated with a quadric bundle as the image of a smooth morphism from the
Fano scheme of \(\ell\)-planes: see \parref{prop-fanotocoh}.  After discussing
some properties of this stack, we prove Theorem \parref{intro-general-result},
see \parref{spinor-main-theorem}.  Finally, we end off with
\S\parref{section-applications} where we sketch applications of the main
results to intersections of quadrics in \parref{applications-intersections} and
cubic fourfolds in \parref{applications-cubic-fourfolds}.

\medskip\noindent\textbf{Conventions. --- }%
All stacks are algebraic stacks and we follow the conventions of the Stacks
Project: see \citeSP{026O}.
% A quasi-coherent sheaf on a stack $\mathcal{X}$ means a quasi-coherent module either on the ringed site $(\mathcal{X}_{fppf}, \mathcal{O}_{\mathcal{X}})$ or the ringed site $(\mathcal{X}_{lis-\acute{e}t}, \mathcal{O}_{\mathcal{X}})$, see \cite[\href{https://stacks.math.columbia.edu/tag/07B1}{Tag 07B1}]{stacks-project}.
We use the standard abbreviations \emph{fppf}, \emph{fpqc}, and \emph{qcqs} for
faithfully flat and finitely presented, faithfully flat and quasi-compact, and
quasi-compact and quasi-separated, respectively. Given an algebraic space
\(X\) and a finite locally free \(\sO_X\)-module \(\mathcal{E}\), we write
\(\pi \colon \PP\mathcal{E} \to X\) for the projective bundle of lines
in \(\mathcal{E}\), so that \(\pi_*\sO_{\pi}(1) \cong \mathcal{E}^\vee\).
Unless otherwise stated, all tensor products are taken over the structure sheaf
of the ambient space.

\medskip\noindent\textbf{Acknowledgements. ---}
A portion of this project was carried out during the Junior Trimester Program
in Algebraic Geometry at the Hausdorff Research Institute for Mathematics
during the autumn of 2023, funded by the Deutsche Forschungsgemeinschaft under
Germany's Excellence Strategy - EXC-2047/1 - 390685813. During the completion
of this work, RC was partially supported by a Humboldt Postdoctoral Research
Fellowship and NO was partially supported by the National Science Foundation
under grant DMS-2402087.

\section{Derived categories of algebraic stacks}
\label{section-derived}
In this section, we collect and develop some facts regarding the derived
categories of stacks. One of the primary aims is to formulate and prove a
descent result regarding Fourier--Mukai functors from a single weight
component of the derived category of a \(\mathbf{G}_m\)-gerbe to a scheme:
see \parref{thm-locallytwisted}.

\subsectiondash{}\label{derived-generalities}
We follow the conventions of \cite{hallrydh} regarding derived categories
of stacks. Namely, for a stack \(\mathcal{X}\), we write \(\Dqc(\mathcal{X})\)
for the full subcategory of
\(\mathrm{D}(\mathcal{X}_{\liset}, \sO_{\mathcal{X}})\) consisting of objects
with quasi-coherent cohomology sheaves. When \(\mathcal{X} = X\) is a scheme,
this coincides with the usual quasi-coherent derived category of \(X\).
In any case, \(\Dqc(\mathcal{X})\) admit the following operations:
\begin{enumerate}
\item\label{derived-generalities.tensor}
If \(E,F \in \Dqc(\mathcal{X})\), then
\(E \otimes^L F \in \Dqc(\mathcal{X})\).
\item\label{derived-generalities.hom}
If \(E,F \in \Dqc(\mathcal{X})\) with \(E\) perfect, then
\(R\sHom_{\sO_{\mathcal{X}}}(E,F) \in \Dqc(\mathcal{X})\).
\item\label{derived-generalities.adjoint}
If \(f \colon \mathcal{X} \to \mathcal{Y}\) is a morphism of stacks, there is
a pair of adjoint exact functors
\[
Lf^* \colon \Dqc(\mathcal{Y}) \to \Dqc(\mathcal{X})
\;\;\text{and}\;\;
Rf_* \colon \Dqc(\mathcal{X}) \to \Dqc(\mathcal{Y})
\]
We warn the reader that these are the functors denoted $Lf_{qc}^*$ and $Rf_{qc,*}$ in \cite{hallrydh}. 
\end{enumerate}
Pullback and pushforward behave well when
\(f \colon \mathcal{X} \to \mathcal{Y}\) is a \emph{concentrated morphism} of
stacks, meaning that it is qcqs and, for every morphism \(Y \to \mathcal{Y}\)
from an affine scheme, the stack \(\mathcal{X} \times_{\mathcal{Y}} Y\) has
finite quasi-coherent cohomological dimension: see \cite[Definition
2.4]{hallrydh}. The latter condition is superfluous when \(\mathcal{X}\) and
\(\mathcal{Y}\) are algebraic spaces by \citeSP{073G}. Assuming that
\(f \colon \mathcal{X} \to \mathcal{Y}\) is a concentrated morphism of
stacks:
\begin{enumerate}
\setcounter{enumi}{3}
\item\label{derived-generalities.compatibilities}
\(Lf^*\) and \(Rf_*\) satisfy the projection formula and tor-independent base
change;
\item\label{derived-generalities.coproducts}
\(Rf_*\) commutes with coproducts; and
\item\label{derived-generalities.right-adjoint}
\(Rf_*\) admits an exact right adjoint
\(f^\times \colon \Dqc(\mathcal{Y}) \to \Dqc(\mathcal{X})\).
\end{enumerate}
See \cite[Theorem 2.6, Corollaries 4.12 and 4.13, and Theorem 4.14]{hallrydh}.
Finally, a stack is \emph{concentrated} if its structure morphism to
\(\Spec\mathbf{Z}\) is. With this,
\cite[Lemma 4.4]{hallrydh} shows that
\begin{enumerate}
\setcounter{enumi}{6}
\item\label{derived-generalities.perfect}
if \(\mathcal{X}\) is concentrated, perfect objects of
\(\mathrm{D}(\mathcal{X}_{\liset}, \sO_{\mathcal{X}})\)
are precisely compact objects of \(\Dqc(\mathcal{X})\).
\end{enumerate}

\subsectiondash{Relative generators}\label{derived-relative-generators}
Recall that an object \(G\) of a triangulated category \(\mathcal{D}\) is
called a \emph{generator} if an object \(E \in \mathcal{D}\) is the zero object
if and only if \(\Hom_{\mathcal{D}}(G, E[i]) = 0\) for all
\(i \in \mathbf{Z}\): see \citeSP{09SJ}. A relative version of this for our
setting is as follows:

Given a \(f \colon \mathcal{X} \to \mathcal{S}\) concentrated morphism of
stacks, a perfect object \(G \in \Dqc(\mathcal{X})\) is said to be an
\emph{\(\mathcal{S}\)-linear generator} of \(\Dqc(\mathcal{X})\) if for every
morphism \(T \to \mathcal{S}\) from an affine scheme, the object \(G_T\) is a
compact generator of \(\Dqc(\mathcal{X}_T)\). Basic properties of
relative generators are:
\begin{enumerate}
\item\label{derived-relative-generators.affine}
If \(\mathcal{S}\) is an affine scheme, then \(G\) is an \(\mathcal{S}\)-linear
generator if and only if it is a compact generator.
\item\label{derived-relative-generators.base-change}
If \(G\) is an \(\mathcal{S}\)-linear generator of \(\Dqc(\mathcal{X})\)
and \(\mathcal{T} \to \mathcal{S}\) is a morphism of stacks, then
\(G_{\mathcal{T}}\) is a \(\mathcal{T}\)-linear generator of
\(\Dqc(\mathcal{X}_{\mathcal{T}})\).
\item\label{derived-relative-generators.fpqc-descent}
If \(\{\mathcal{S}_i \to \mathcal{S}\}_i\) is an fpqc covering by stacks and
each \(G_{\mathcal{S}_i}\) is an \(\mathcal{S}_i\)-linear generator of
\(\Dqc(\mathcal{X}_{\mathcal{S}_i})\), then \(G\) is an \(\mathcal{S}\)-linear
generator of \(\Dqc(\mathcal{X})\).
\end{enumerate}

\begin{proof}
Item \ref{derived-relative-generators.affine} follows from the fact that
perfect generators are preserved under pullback by affine morphisms: see
\citeSP{0BQT}. Item \ref{derived-relative-generators.base-change} follows
easily from definitions. For \ref{derived-relative-generators.fpqc-descent},
we may replace the base \(\mathcal{S} = \Spec A\) by an affine scheme,
whereupon to suffices to show that if \(\Spec B \to \Spec A\) is a faithfully
flat morphism such that \(G_B \in \Dqc(\mathcal{X}_B)\) is a compact generator,
then so is \(G \in \Dqc(\mathcal{X})\). So consider
an object \(E \in \Dqc(\mathcal{X})\) with \(\Ext^i_{\mathcal{X}}(G,E) = 0\)
for all \(i \in \mathbf{Z}\). Flat base change gives
\[
\Ext^i_{\mathcal{X}_B}(G_B, E_B) \cong
\Ext^i_{\mathcal{X}}(G,E) \otimes_A B = 0
\]
so, since \(G_B\) is a generator, \(E_B = 0\). Faithful flatness thus implies
\(E = 0\), and so \(G\) is a generator.
\end{proof}

We may now give a more familiar characterization of relative generators in
terms of relative \(\Hom\):

\begin{Lemma}
\label{lemma-relativelynohoms}
Let \(f \colon \mathcal{X} \to \mathcal{S}\) be a concentrated morphism of
stacks. A perfect object \(G \in \Dqc(\mathcal{X})\) is an \(\mathcal{S}\)-linear
generator if and only if for every \(0 \neq E \in \Dqc(\mathcal{X})\),
\[
0 \neq Rf_*R\sHom_{\sO_{\mathcal{X}}}(G,E) \in \Dqc(\mathcal{S}).
\]
\end{Lemma}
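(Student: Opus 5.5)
We reduce to the affine case and compare global and relative homomorphisms via base change. Write \(H(E) \coloneqq Rf_*R\sHom_{\sO_{\mathcal{X}}}(G,E)\). Since \(G\) is perfect, \(R\sHom(G,E) \cong G^\vee \otimes^L E\). The projection formula and tor-independent base change in \parref{derived-generalities}\ref{derived-generalities.compatibilities}, applied to the concentrated morphism \(f\), show that \(H\) commutes with flat base change. In particular, for a flat morphism \(u \colon T \to \mathcal{S}\) from an affine scheme with \(\mathcal{X}_T \to \mathcal{X}\) the projection, we have \(Lu^*H(E) \cong Rf_{T,*}R\sHom(G_T,E_T)\). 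Taking global sections over the affine \(T\) gives
\[
R\Gamma(T, Lu^*H(E)) \cong R\Hom_{\mathcal{X}_T}(G_T,E_T),
\]
because \(R\Gamma(T,-) \circ Rf_{T,*} = R\Gamma(\mathcal{X}_T,-)\) and \(R\Gamma(\mathcal{X}_T, R\sHom(G_T,-)) = R\Hom(G_T,-)\) for perfect \(G_T\). On an affine scheme, an object of \(\Dqc\) vanishes exactly when its derived global sections vanish.

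\emph{Generator implies nonvanishing.} Take \(0 \neq E\) and choose a smooth, hence flat, affine cover \(\{T_i \to \mathcal{S}\}\). Since \(\mathcal{X}_{T_i} \to \mathcal{X}\) form a smooth cover and \(E \neq 0\), some \(E_{T_i} \neq 0\). By hypothesis \(G_{T_i}\) is a compact generator of \(\Dqc(\mathcal{X}_{T_i})\), so \(R\Hom(G_{T_i},E_{T_i}) \neq 0\). By the displayed isomorphism \(Lu_i^*H(E) \neq 0\), and therefore \(H(E) \neq 0\).

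\emph{Nonvanishing implies generator.} Here we must handle arbitrary, non-flat \(T \to \mathcal{S}\), and this is the main point. Let \(g \colon T \to \mathcal{S}\) with \(T\) affine and suppose \(F \in \Dqc(\mathcal{X}_T)\) satisfies \(R\Hom(G_T,F)=0\); we want \(F = 0\). Let \(p \colon \mathcal{X}_T \to \mathcal{X}\) be the projection. It is a base change of \(T \to \mathcal{S}\) and hence concentrated, at least when \(\mathcal{S}\) has affine diagonal. In general one first factors through a smooth affine cover of \(\mathcal{S}\), and we check in that step that concentratedness survives. Set \(E \coloneqq Rp_*F\). Adjunction and the projection formula give \(R\sHom(G,Rp_*F) \cong Rp_*R\sHom(Lp^*G,F)\). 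Hence
\[
H(E) \cong Rg_*Rf_{T,*}R\sHom(G_T,F),
\]
and the inner object lives on the affine \(T\) with global sections \(R\Hom(G_T,F)=0\), so it vanishes. Thus \(H(E)=0\), and the hypothesis forces \(E = Rp_*F = 0\).

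It remains to deduce \(F=0\) from \(Rp_*F=0\). Since \(T \to \mathcal{S}\) is affine, \(p\) is affine, and \(Rp_*\) is conservative on \(\Dqc\) for affine morphisms. This settles the case where \(\mathcal{S}\) has affine diagonal. For general \(\mathcal{S}\), we first reduce to \(\mathcal{S}\) affine. Replace \(\mathcal{S}\) by a smooth affine cover \(U\): the condition on \(H\) for \(\mathcal{X}\) implies the same condition for \(\mathcal{X}_U\) by the flat-base-change argument above, run with \(Rp_*\) along \(\mathcal{X}_U \to \mathcal{X}\) as needed. Then any \(T \to \mathcal{S}\) can be refined étale locally on \(T\) to factor through \(U\), and \parref{derived-relative-generators}\ref{derived-relative-generators.fpqc-descent} handles the refinement. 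I expect this reduction and the bookkeeping about concentratedness of the projections to be the only genuinely delicate part.
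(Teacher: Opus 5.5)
Your ``only if'' direction is correct for arbitrary \(\mathcal{S}\). Your ``if'' direction is also correct when \(\mathcal{S}\) has affine diagonal. There you push \(F\) forward along the affine morphism \(p \colon \mathcal{X}_T \to \mathcal{X}\) and use conservativity of \(Rp_*\). That is exactly the fact behind \parref{derived-relative-generators}\ref{derived-relative-generators.affine} (perfect generators pull back to generators along affine morphisms), which the paper applies after reducing to affine \(\mathcal{S}\).

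The gap is your last step. To transfer the hypothesis from \(\mathcal{X}\) to \(\mathcal{X}_U\) for a smooth affine cover \(U \to \mathcal{S}\), flat base change does not help, since an object \(E' \in \Dqc(\mathcal{X}_U)\) need not be pulled back from \(\mathcal{X}\). You must push \(E'\) forward along \(q \colon \mathcal{X}_U \to \mathcal{X}\) and know that \(Rq_*\) is conservative. But \(q\) is a base change of \(U \to \mathcal{S}\), which is affine only when \(\mathcal{S}\) has affine diagonal, and in general \(Rq_*\) is not conservative. In the example below, \(q\) is an \(A\)-torsor and \(R\Gamma\) kills nontrivial degree \(0\) line bundles on \(A\). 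So the reduction breaks in precisely the case it was meant to cover.

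It cannot be repaired, because the ``if'' direction is false in this generality. By adjunction, \(\Hom_{\mathcal{S}}(\sO_{\mathcal{S}}, Rf_*R\sHom_{\sO_{\mathcal{X}}}(G,E)[i]) \cong \Hom_{\mathcal{X}}(G,E[i])\). Hence every perfect generator of \(\Dqc(\mathcal{X})\) satisfies the nonvanishing condition. Now take:
\begin{itemize}
\item \(A\) an elliptic curve over an algebraically closed field \(k\), and \(\mathcal{S} = BA\);
\item \(\mathcal{X} = \Spec k\), with \(f\) the universal torsor; it is concentrated, since its base changes to affine schemes are \(A\)-torsors, which are qcqs algebraic spaces;
\item \(G = \sO_{\Spec k}\).
\end{itemize}
For \(T = \Spec k\) mapping via \(f\), one has \(\mathcal{X}_T \cong A\) and \(G_T = \sO_A\). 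Then \(R\Hom_A(\sO_A, \sO_A(a-b)) = 0\) for distinct points \(a,b \in A\), so \(G\) is not a \(BA\)-linear generator.

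The paper's own proof passes over the same point: it reduces to affine \(\mathcal{S}\) by ``flat base change and \parref{derived-relative-generators}\ref{derived-relative-generators.fpqc-descent}''. The right fix is to add a hypothesis on \(\mathcal{S}\):
\begin{itemize}
\item With affine diagonal, your argument is already a complete proof and needs no reduction to affine \(\mathcal{S}\).
\item With quasi-affine diagonal, it works after factoring \(p\) as a quasi-compact open immersion \(j\), for which \(j^*Rj_* \cong \id\), followed by an affine morphism.
\end{itemize}
The quasi-affine case covers the qcqs schemes and algebraic spaces over which the paper actually applies the lemma.
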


\begin{proof}
Flat base change for \(Rf_*R\sHom_{\sO_{\mathcal{X}}}(G,E)\) combined with
\parref{derived-relative-generators}\ref{derived-relative-generators.fpqc-descent}
reduces this to the case \(\mathcal{S}\) is affine, in which case the result
follows from
\parref{derived-relative-generators}\ref{derived-relative-generators.affine},
the definition of a generator, and the fact that the pushforward may be
identified with the object \(R\Hom_{\mathcal{X}}(G,E)\).
\end{proof}

\subsectiondash{Relative generators of subcategories}
\label{derived-relative-generators-subcategory}
The characterization of \parref{lemma-relativelynohoms} allows us to generalize
the definition of a relative generator to linear triangulated subcategories. We
will only need the situation when \(f \colon \mathcal{X} \to S\) is a
concentrated morphism from a stack to a qcqs algebraic space. Let
\(\mathcal{A} \subseteq \Dqc(\mathcal{X})\) be a full triangulated subcategory
which is \emph{\(\mathcal{S}\)-linear}, meaning that for \(E \in \mathcal{A}\)
and \(F \in \Dqc(\mathcal{S})\), \(E \otimes^L Lf^*F \in \mathcal{A}\). A
perfect object \(G \in \mathcal{A}\) is called an \emph{\(\mathcal{S}\)-linear
generator} of \(\mathcal{A}\) if
\[
E \neq 0 \in \mathcal{A} \iff
Rf_*R\sHom_{\sO_{\mathcal{X}}}(G,E) \neq 0 \in \Dqc(\mathcal{S}).
\]
Note that a generator \(G\) of \(\mathcal{A}\) is also an
\(\mathcal{S}\)-linear generator. More interestingly, given a perfect object
\(G \in \mathcal{A}\) which is an \(S\)-linear generator, we have the
following two statements:
\begin{enumerate}
\item\label{derived-relative-generators-subcategory.make-absolute}
If \(F \in \Dqc(S)\) is a perfect generator, then \(G \otimes^L Lf^*F\) is a
compact generator of \(\mathcal{A}\).
\item\label{derived-relative-generators-subcategory.smallest}
The smallest strictly full triangulated \(S\)-linear subcategory of
\(\Dqc(\mathcal{X})\) containing \(G\) is \(\mathcal{A}\).
\end{enumerate}

\begin{proof}
For \ref{derived-relative-generators-subcategory.make-absolute}, the stack
\(\mathcal{X}\) is concentrated, so \(G\) is compact in \(\Dqc(\mathcal{X})\)
and hence also in \(\mathcal{A}\)---this makes sense as \(\mathcal{A}\) is
closed under direct sums by virtue of being \(S\)-linear. Now, for any
\(E \in \mathcal{A}\),
\[
R\Hom_{\mathcal{X}}(G \otimes^L Lf^*F, E) \cong
R\Hom_{\mathcal{X}}(Lf^*F, R\sHom_{\sO_{\mathcal{X}}}(G,E)) \cong
R\Hom_S(F, Rf_*R\sHom_{\sO_{\mathcal{X}}}(G,E)).
\]
The object of \(\Dqc(S)\) on the right is nonzero whenever \(E\) is nonzero
since \(G\) is an \(\mathcal{S}\)-linear perfect generator of \(\mathcal{A}\)
and \(F\) is a perfect generator of \(\Dqc(S)\). Looking at the
object on the left then shows that \(G \otimes^L Lf^*F\) is a compact
generator of \(\mathcal{A}\). This implies that the category in
\ref{derived-relative-generators-subcategory.smallest} contains a compact
generator. Since it is also strictly full, triangulated, and closed under
direct sums, it follows from \citeSP{09SN} that it must be equal to
\(\mathcal{A}\).
\end{proof}

\subsectiondash{Fourier--Mukai transforms}\label{derived-fm}
Given morphisms \(f \colon \mathcal{X} \to \mathcal{S}\) and
\(g \colon \mathcal{Y} \to \mathcal{S}\) of stacks, and an object
\(K \in \Dqc(\mathcal{X} \times_{\mathcal{S}} \mathcal{Y})\),
there is an associated \emph{Fourier--Mukai transform} with
\emph{kernel} \(K\):
\[
\Phi_K \colon \Dqc(\mathcal{X}) \to \Dqc(\mathcal{Y})
\quad\quad
E \mapsto R\pr_{2,*}(L\pr_1^*E \otimes^L K).
\]
When \(f\) and \(g\) are concentrated morphisms of stacks, the functor
\(\Phi_K\) satisfies many familiar properties:
\begin{enumerate}
\item\label{derived-fm.linear}
\(\Phi_K\) is \(\mathcal{S}\)-linear: For \(E \in \Dqc(\mathcal{X})\)
and \(F \in \Dqc(\mathcal{S})\), there is a natural isomorphism
\[
\Phi_K(E \otimes^L_{\sO_{\mathcal{X}}} Lf^*F) \cong
\Phi_K(E) \otimes^L_{\sO_{\mathcal{Y}}} Lg^*F \in \Dqc(\mathcal{Y}).
\]
\item\label{derived-fm.enriched-hom}
\(\Phi_K\) is an enriched functor: For \(E, P \in \Dqc(\mathcal{X})\),
if \(P\) and \(\Phi_K(P)\) are perfect, then there is a natural morphism in
\(\Dqc(\mathcal{S})\)
\[
Rf_*R\sHom_{\sO_{\mathcal{X}}}(P,E) \to
Rg_*R\sHom_{\sO_{\mathcal{Y}}}(\Phi_K(P), \Phi_K(E))
\]
which, after applying the functor \(\mathrm{H}^0(\mathcal{S},-)\), becomes the
homomorphism
\[
\Phi_K \colon
\Hom_{\mathcal{X}}(P,E) \to
\Hom_{\mathcal{Y}}(\Phi_K(P),\Phi_K(E)).
\]
\item\label{derived-fm.base-change}
\(\Phi_K\) is compatible with flat base change: For a morphism
\(h \colon \mathcal{T} \to \mathcal{S}\) of stacks, if either \(h\) is flat or
both \(f\) and \(g\) are flat, then for any \(E \in \Dqc(\mathcal{X})\), there
is a canonical isomorphism
\[
\Phi_K(E)_{\mathcal{T}} \cong
\Phi_{K_{\mathcal{T}}}(E_{\mathcal{T}})
\in
\Dqc(\mathcal{Y} \times_{\mathcal{S}} \mathcal{T}).
\]
\end{enumerate}

\begin{proof}
Part \ref{derived-fm.linear} is a calculation using the projection formula.
Construct the morphism in \ref{derived-fm.enriched-hom} via the Yoneda lemma:
for every \(F \in \Dqc(\mathcal{S})\), there is a map
\begin{multline*}
\Hom_{\mathcal{S}}(F, Rf_*R\sHom_{\sO_{\mathcal{X}}}(P,E))
\cong \Hom_{\mathcal{X}}(P \otimes_{\sO_{\mathcal{X}}}^L Lf^*F, E) \\
\stackrel{\Phi_K}{\longrightarrow}
\Hom_{\mathcal{Y}}(\Phi_K(P \otimes_{\sO_{\mathcal{X}}}^L Lf^*F), \Phi_K(E)) \cong
\Hom_{\mathcal{S}}(F, Rg_*R\sHom_{\sO_{\mathcal{Y}}}(\Phi_K(P), \Phi_K(E)))
\end{multline*}
where the two identifications use perfectness of \(P\) and \(\Phi_K(P)\), and
the bottom identification additionally uses \ref{derived-fm.linear}. Finally,
\ref{derived-fm.base-change} follows from tor-independent base change
applied to the square
\[
\begin{tikzcd}[/tikz/baseline=(tikz@f@1-2-1.base)]
  \mathcal{X}_{\mathcal{T}} \times _{\mathcal{T}} \mathcal{Y}_{\mathcal{T}} \ar[r] \ar[d] &\mathcal{Y}_{\mathcal{T}} \ar[d] \\
  \mathcal{X} \times _{\mathcal{S}} \mathcal{Y} \ar[r] &\mathcal{Y}.
\end{tikzcd}
\qedhere
\]
\end{proof}

The Fourier--Mukai transform often has a right adjoint:

\begin{Lemma}
\label{lemma-rightadjointofFM}
Let \(f \colon \mathcal{X} \to \mathcal{S}\) and
\(g \colon \mathcal{Y} \to \mathcal{S}\) be morphisms of stacks and
\(K \in \Dqc(\mathcal{X} \times_{\mathcal{S}} \mathcal{Y})\). If \(f\) is
concentrated, then the functor
\(\Phi_K \colon \Dqc(\mathcal{X}) \to \Dqc(\mathcal{Y})\) has an exact right
adjoint.
\end{Lemma}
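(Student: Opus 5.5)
The plan is to write \(\Phi_K\) as a composite of three functors, each with an exact right adjoint, and compose the adjoints. Let \(p_1 \colon \mathcal{X} \times_{\mathcal{S}} \mathcal{Y} \to \mathcal{X}\) and \(p_2 \colon \mathcal{X} \times_{\mathcal{S}} \mathcal{Y} \to \mathcal{Y}\) be the projections. By definition,
\[
\Phi_K = Rp_{2,*} \circ (- \otimes^L K) \circ Lp_1^*.
\]
We handle the three factors separately.

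\emph{The pullback.} By \parref{derived-generalities}\ref{derived-generalities.adjoint}, \(Lp_1^*\) is exact and has the exact right adjoint \(Rp_{1,*}\). No hypothesis on \(p_1\) is needed for this.

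\emph{The tensor product.} The functor \(- \otimes^L K\) on \(\Dqc(\mathcal{X}\times_{\mathcal{S}}\mathcal{Y})\) is exact and commutes with arbitrary direct sums. We will get its right adjoint from Brown representability. The stack \(\mathcal{X} \times_{\mathcal{S}} \mathcal{Y}\) need not be concentrated, and \(K\) need not be perfect, so \(R\sHom(K,-)\) cannot be used directly. Instead, we use that \(\Dqc\) of any algebraic stack is well generated (Hall--Rydh, and Hall--Neeman--Rydh) and that Brown representability holds for well generated triangulated categories (Neeman). A coproduct-preserving exact functor out of such a category has an exact right adjoint. Explicitly, the adjoint is \(F \mapsto \mathcal{Q}\,R\sHom_{\sO}(K,F)\), with \(\mathcal{Q}\) the quasi-coherator, the right adjoint of the inclusion \(\Dqc \subset \mathrm{D}\). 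Either description suffices.

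\emph{The pushforward.} We need a right adjoint for \(Rp_{2,*}\). The morphism \(p_2\) is the base change of \(f\) along \(g\), so it is qcqs. Concentratedness is stable under base change by \cite[Lemma 2.5]{hallrydh}: for an affine scheme \(Y \to \mathcal{Y}\), the fibre of \(p_2\) over \(Y\) is \(\mathcal{X}\times_{\mathcal{S}} Y\), which has finite cohomological dimension because \(f\) is concentrated. Hence \(p_2\) is concentrated. By \parref{derived-generalities}\ref{derived-generalities.right-adjoint}, \(Rp_{2,*}\) then has an exact right adjoint \(p_2^\times\).

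Composing the three adjoints, the right adjoint of \(\Phi_K\) is
\[
F \mapsto Rp_{1,*}\,\mathcal{Q}\,R\sHom(K, p_2^\times F).
\]
It is exact because each factor is.

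The main obstacle is the middle step, the adjoint to \(- \otimes^L K\) when \(\mathcal{X}\times_{\mathcal{S}}\mathcal{Y}\) is not assumed concentrated. There I would rely on well-generatedness of \(\Dqc\) together with Neeman's Brown representability, rather than on compact generation. If that reference is unavailable in the needed generality, the fallback is the quasi-coherator: it exists because the inclusion \(\Dqc \subset \mathrm{D}(\mathcal{X}_{\liset})\) preserves coproducts, so Brown representability for the well generated category \(\Dqc\) gives the adjoint. This is the same input as \cite{hallrydh} uses to produce \(f^\times\).
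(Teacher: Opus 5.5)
Your argument is correct, but it is organized differently from the paper's. The paper applies Neeman's Brown representability only once, directly to the composite \(\Phi_K\) on the well-generated category \(\Dqc(\mathcal{X})\). It checks that \(\Phi_K\) preserves coproducts, since each of \(L\pr_1^*\), \(-\otimes^L K\) and \(R\pr_{2,*}\) does, the last by concentratedness. It then concludes without producing any formula for the adjoint.

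You instead build a right adjoint for each of the three factors and compose them. This uses Brown representability on the product \(\Dqc(\mathcal{X}\times_{\mathcal{S}}\mathcal{Y})\), for the adjoint of \(-\otimes^L K\) or equivalently for the quasi-coherator \(\mathcal{Q}\), together with the Hall--Rydh adjoint \(p_2^\times\). The underlying inputs are the same: well-generatedness, Brown representability, and concentratedness of \(p_2\). Your explicit verification that \(p_2\) inherits concentratedness from \(f\), via \((\mathcal{X}\times_{\mathcal{S}}\mathcal{Y})\times_{\mathcal{Y}} Y \cong \mathcal{X}\times_{\mathcal{S}} Y\) for affine \(Y\), fills in a step the paper leaves implicit when it says \(R\pr_{2,*}\) preserves coproducts ``by the assumption that \(f\) is concentrated.''

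The paper's route is shorter and needs well-generatedness only of \(\Dqc(\mathcal{X})\). Yours gives the explicit formula \(Rp_{1,*}\,\mathcal{Q}\,R\sHom(K, p_2^\times(-))\). This is exactly the shape the paper later writes down, in the scheme setting and without \(\mathcal{Q}\), in \parref{complexes-right-adjoint} to show the right adjoint commutes with base change. Exactness of the composite is automatic in either approach, since any right adjoint of an exact functor between triangulated categories is exact.
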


\begin{proof}
$\Phi_K$ commutes with coproducts since it is a composition of three functors
which do, with $R\pr_{2,*}$ doing so by the assumption that $f$ is
concentrated and
\parref{derived-generalities}\ref{derived-generalities.right-adjoint}. Since
the category $\Dqc(\mathcal{X})$ is well-generated by
\cite[Theorem B.1]{Hall_Neeman_Rydh_2019}, the result therefore follows from
Neeman's version of Brown Representability Theorem for well-generated
categories: see \cite[Theorem 8.4.4]{Neeman+2001}.
\end{proof}

We now give a criterion for when \(\Phi_K\) is fully faithful in terms of a
perfect generator. First, the following is a well-known criterion for when an
exact functor preserves compact objects.

\begin{Lemma}
\label{lemma-preservescompact}
Let \(\Psi \colon \mathcal{D} \to \mathcal{D}'\) be an exact functor between
triangulated categories with arbitrary direct sums. Assume that \(\Psi\)
has a right adjoint \(R\) and that \(\mathcal{D}\) is compactly generated.
Then \(\Psi\) takes compact objects to compact objects if and only if
\(R\) commutes with direct sums. \qed
\end{Lemma}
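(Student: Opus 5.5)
Both implications follow by testing against compact objects, using the adjunction \(\Hom_{\mathcal{D}'}(\Psi C, -) \cong \Hom_{\mathcal{D}}(C, R(-))\). Throughout, fix direct sums \(\bigoplus_i E_i\) in \(\mathcal{D}'\) and consider the canonical comparison map
\[
\phi \colon \bigoplus_i R(E_i) \to R\Big(\bigoplus_i E_i\Big),
\]
which is adjoint to the map \(\Psi(\bigoplus_i R E_i) \cong \bigoplus_i \Psi R E_i \to \bigoplus_i E_i\) induced by the counits. Here \(\Psi\) commutes with direct sums because it is a left adjoint.

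\emph{Suppose \(\Psi\) preserves compact objects.} Let \(C \in \mathcal{D}\) be compact. Then \(\Psi C\) is compact, so
\[
\Hom\Big(C, R\big(\textstyle\bigoplus_i E_i\big)\Big) \cong \Hom\Big(\Psi C, \textstyle\bigoplus_i E_i\Big) \cong \textstyle\bigoplus_i \Hom(\Psi C, E_i) \cong \bigoplus_i \Hom(C, R E_i) \cong \Hom\Big(C, \bigoplus_i R E_i\Big),
\]
where the last isomorphism uses that \(C\) is compact. The one point to check is naturality: this composite must be the map induced by \(\phi\). That is a routine diagram chase with the unit and counit.

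Now let \(\mathrm{cone}(\phi)\) be the cone of \(\phi\). Applying \(\Hom(C[n], -)\) to the triangle containing \(\phi\) and using the five lemma shows \(\Hom(C[n], \mathrm{cone}(\phi)) = 0\) for all compact \(C\) and all \(n\). Since \(\mathcal{D}\) is compactly generated, this forces \(\mathrm{cone}(\phi) = 0\), so \(\phi\) is an isomorphism and \(R\) commutes with direct sums.

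\emph{Suppose \(R\) commutes with direct sums.} Let \(C\) be compact. For any family \(E_i\),
\[
\Hom\Big(\Psi C, \textstyle\bigoplus_i E_i\Big) \cong \Hom\Big(C, R\big(\textstyle\bigoplus_i E_i\big)\Big) \cong \Hom\Big(C, \bigoplus_i R E_i\Big) \cong \bigoplus_i \Hom(C, R E_i) \cong \bigoplus_i \Hom(\Psi C, E_i).
\]
Again one checks that this composite is the canonical map. Hence \(\Psi C\) is compact. Note that this direction does not use compact generation.

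The only step that needs any care is verifying that the chain of natural isomorphisms coincides with the canonical comparison maps. I expect this to be a standard unit/counit computation and not a genuine obstacle.
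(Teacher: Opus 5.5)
Your proof is correct and takes the same route as the paper's argument (stated there as standard, with the proof omitted). You test the comparison map \(\bigoplus_i R(E_i) \to R(\bigoplus_i E_i)\) against compact objects via the adjunction and conclude by compact generation, and for the converse you transport compactness of \(C\) to \(\Psi C\) through the adjunction; your cone argument and your remark that the converse does not need compact generation are accurate refinements of that same idea.
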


% \begin{proof}
% Suppose \(F\) takes compact objects to compact objects. Then, for any
% compact object \(P \in \mathcal{D}\) and subset
% \(\{M_i\}_i \subset \mathcal{D}'\) of objects,
% \begin{align*}
% \operatorname{Hom}_{\mathcal{D}}(P, R(\oplus_i M_i)) &= \operatorname{Hom}_{\mathcal{D}'}(F(P), \oplus _i M_i) 
% = \oplus _i \operatorname{Hom}_{\mathcal{D}'}(F(P), M_i)=  \oplus_i \operatorname{Hom}_{\mathcal{D}}(P, R(M_i))\\
% &= \operatorname{Hom}_{\mathcal{D}}(P, \oplus_i R(M_i)).
% \end{align*}
% Since \(\mathcal{D}\) is compactly generated, this implies that the canonical
% map $R(\oplus _i M_i) \to \oplus _i R(M_i)$ is an isomorphism, meaning \(R\)
% commutes with direct sums. Conversely, if \(R\) commutes with direct sums,
% then for a compact object \(P \in \mathcal{D}\),
%    \begin{align*}
%    \operatorname{Hom}_{\mathcal{D}'}(F(P), \oplus _i M_i) &= \operatorname{Hom}_{\mathcal{D}}(P, R(\oplus _i M_i)) = \operatorname{Hom}_{\mathcal{D}}(P, \oplus _i R(M_i)) = \oplus _i \operatorname{Hom}_{\mathcal{D}}(P, R(M_i)) 
%    \\
%    &= \oplus _i \operatorname{Hom}_{\mathcal{D}'}(F(P), M_i)
%    \end{align*}
% for any set \(\{M_i\}_i \subset \mathcal{D}'\) of objects. This means
% that \(F(P)\) is compact.
% \end{proof}

Next, the following is a general criterion for fully faithfulness in terms of a
compact generator:

\begin{Lemma}
\label{lemma-ffbygen}
Let \(\Psi \colon \mathcal{D} \to \mathcal{D}'\) be an exact functor between
triangulated categories with arbitrary direct sums. Assume that
\begin{itemize}
\item \(\mathcal{D}\) has a compact generator \(G\);
\item \(\Psi\) has a right adjoint \(R\); and
\item \(\Psi\) takes compact objects to compact objects.
\end{itemize}
Then \(\Psi\) is fully faithful if and only if the following map is an isomorphism
for all \(i \in \mathbf{Z}\):
\[
\Psi \colon \Ext^i_{\mathcal{D}}(G,G) \to \Ext^i_{\mathcal{D}'}(\Psi(G), \Psi(G)).
\]
\end{Lemma}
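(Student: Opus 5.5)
The forward direction is immediate: if \(\Psi\) is fully faithful, then \(\Psi\) induces bijections on all \(\Hom\) groups, in particular on \(\Hom_{\mathcal{D}}(G, G[i]) = \Ext^i_{\mathcal{D}}(G,G)\). For the converse, the plan is a standard dévissage using localizing subcategories. It goes in two rounds, first varying the second argument and then the first.

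\emph{Step 1.} Fix \(i\) and consider the full subcategory
\[
\mathcal{C}_1 = \{E \in \mathcal{D} : \Psi \colon \Hom_{\mathcal{D}}(G, E[j]) \to \Hom_{\mathcal{D}'}(\Psi G, \Psi E[j]) \text{ is bijective for all } j\}.
\]
\begin{itemize}
\item By hypothesis, \(\mathcal{C}_1\) contains \(G\).
\item It is closed under shifts.
\item It is closed under cones, by the five lemma applied to the long exact \(\Hom\) sequences; here we use that \(\Psi\) is exact, so it sends distinguished triangles to distinguished triangles compatibly with the maps.
\item It is closed under arbitrary direct sums. First, \(G\) is compact, and \(\Psi G\) is compact by the third hypothesis, so both \(\Hom\) functors commute with direct sums in the second variable. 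Second, \(\Psi\) commutes with direct sums, being a left adjoint. The comparison map on \(\bigoplus E_\alpha\) is then identified with the direct sum of the comparison maps.
\end{itemize}
Hence \(\mathcal{C}_1\) is a localizing subcategory containing the compact generator \(G\). By \citeSP{09SN} (the fact already used above, that a strictly full triangulated subcategory closed under direct sums and containing a compact generator is everything), \(\mathcal{C}_1 = \mathcal{D}\).

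\emph{Step 2.} Now consider
\[
\mathcal{C}_2 = \{P \in \mathcal{D} : \Psi \colon \Hom_{\mathcal{D}}(P, E[j]) \to \Hom_{\mathcal{D}'}(\Psi P, \Psi E[j]) \text{ is bijective for all } E, j\}.
\]
\begin{itemize}
\item Step 1 shows \(G \in \mathcal{C}_2\).
\item Closure under shifts and cones again follows from the five lemma, now applied in the first variable.
\item Closure under direct sums holds with no compactness needed: \(\Hom(\bigoplus P_\alpha, -) = \prod \Hom(P_\alpha, -)\), and \(\Psi\) preserves direct sums, so the comparison map on \(\bigoplus P_\alpha\) is the product of the comparison maps.
\end{itemize}
Thus \(\mathcal{C}_2 = \mathcal{D}\) by the same generation argument, which means \(\Psi\) is fully faithful.

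The main point requiring care is the direct-sum step in Step 1, which is exactly where compactness of \(\Psi(G)\) enters. It is also worth checking that the comparison maps are compatible with the canonical maps \(\bigoplus \Hom \to \Hom(-, \bigoplus)\). This compatibility follows from naturality and the fact that \(\Psi\) carries the coproduct inclusions to coproduct inclusions.

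One could alternatively phrase Step 2 via the unit \(\eta \colon \mathrm{id} \to R\Psi\). Step 1 says \(\Hom(G, \eta_E[j])\) is an isomorphism for all \(j\), so \(\operatorname{cone}(\eta_E)\) is right-orthogonal to all shifts of \(G\) and hence zero, giving that \(\eta\) is an isomorphism. I would use this cleaner argument in place of Step 2.
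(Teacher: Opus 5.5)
Your proof is correct, and it organizes the d\'evissage differently from the paper.

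\textbf{The paper's route.} The paper runs a single localizing-subcategory argument on the class of objects \(M\) for which the unit \(\eta_M \colon M \to R\Psi(M)\) is an isomorphism.
\begin{itemize}
\item Closure of this class under direct sums requires \(R\) to commute with direct sums. The paper extracts this from the third hypothesis via \parref{lemma-preservescompact}.
\item The paper shows that \(G\) lies in the class by exactly the orthogonality argument you give at the end, but it applies that argument only to \(E = G\).
\end{itemize}

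\textbf{Your route.} You reverse the order.
\begin{itemize}
\item You first run the d\'evissage in the second variable with \(G\) fixed. Closure under sums there uses only compactness of \(G\) and \(\Psi(G)\), together with \(\Psi\) preserving sums.
\item You then either run the d\'evissage in the first variable, or apply the orthogonality argument to \(\operatorname{cone}(\eta_E)\) for every \(E\).
\end{itemize}

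\textbf{Comparison.} Your route never needs \(R\) to commute with direct sums, so it bypasses \parref{lemma-preservescompact}. Your \(\mathcal{C}_1\)/\(\mathcal{C}_2\) version uses the right adjoint only to know that \(\Psi\) preserves direct sums. It also visibly needs compactness of \(\Psi(G)\) alone. That is not a genuine weakening of the hypotheses, though, because the compact objects of \(\mathcal{D}\) form the thick closure of the compact generator \(G\). The paper's route, in exchange, is shorter once \parref{lemma-preservescompact} is available, and it lands directly on the statement that the unit is an isomorphism.
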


\begin{proof}
The ``if'' direction is clear. Conversely, suppose that the map between
self extensions of \(G\) and \(\Psi(G)\) are all isomorphisms. Consider the set
of objects \(M \in \mathcal{D}\) such that the unit
\(\eta_M \colon M \to R\Psi(M)\) of adjunction is an isomorphism. This set is
closed under cones since \(R\) and \(\Psi\) are exact. It is closed under
direct sums since both \(R\) and \(\Psi\) preserves direct sums by
\parref{lemma-preservescompact} and left adjointness, respectively. Finally,
this set contains the generator \(G\) since, for every \(i \in \mathbf{Z}\),
\[
\Ext^i_{\mathcal{D}}(G,G) \cong
\Ext^i_{\mathcal{D}'}(\Psi(G),\Psi(G)) \cong
\Ext^i_{\mathcal{D}}(G, R\Psi(G))
\]
and \(G\) is a generator. Since the generator \(G\) is compact, it now
follows from \citeSP{09SR} that the set in question contains every object of
\(\mathcal{D}\). Therefore \(\Psi\) is fully faithful.
\end{proof}

In our setting of Fourier--Mukai transforms, the condition on self extensions
of a generator may be verified on the level of enriched Homs, leading to the
following full faithfulness criterion:

\begin{Lemma}\label{lemma-ffconditions}
Let \(f \colon \mathcal{X} \to \mathcal{S}\) and
\(g \colon \mathcal{Y} \to \mathcal{S}\) be morphisms of concentrated stacks,
and \(K \in \Dqc(\mathcal{X} \times_{\mathcal{S}} \mathcal{Y})\). Assume that
\begin{itemize}
\item \(\Dqc(\mathcal{S})\) has a perfect generator \(F\);
\item \(\Dqc(\mathcal{X})\) has an \(\mathcal{S}\)-linear perfect generator \(G\); and
\item \(\Phi_K \colon \Dqc(\mathcal{X}) \to \Dqc(\mathcal{Y})\) takes perfect
complexes to perfect complexes.
\end{itemize}
Then \(\Phi_K\) is fully faithful if and only if the morphism in \(\Dqc(\mathcal{S})\)
\[
\phi \colon
Rf_*R\sHom_{\sO_{\mathcal{X}}}(G,G) \to
Rg_*R\sHom_{\sO_{\mathcal{Y}}}(\Phi_K(G), \Phi_K(G))
\]
of \parref{derived-fm}\ref{derived-fm.enriched-hom} is an isomorphism.
\end{Lemma}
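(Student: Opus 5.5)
We reduce to \parref{lemma-ffbygen}, applied to the compact generator \(G' \coloneqq G \otimes^L Lf^*F\) of \(\Dqc(\mathcal{X})\). This object is a compact generator by \parref{derived-relative-generators-subcategory}\ref{derived-relative-generators-subcategory.make-absolute}, taking \(\mathcal{A} = \Dqc(\mathcal{X})\) and using \parref{lemma-relativelynohoms}. The remaining hypotheses of \parref{lemma-ffbygen} hold as follows:
\begin{itemize}
\item \(\Phi_K\) has a right adjoint by \parref{lemma-rightadjointofFM}, since \(f\) is concentrated, as \(\mathcal{X}\) is concentrated and the diagonal issues are handled by \cite{hallrydh}.
\item \(\Phi_K\) preserves compact objects, since compact objects are perfect by \parref{derived-generalities}\ref{derived-generalities.perfect} and \(\Phi_K\) preserves perfect complexes by assumption.
\end{itemize}
Hence \(\Phi_K\) is fully faithful if and only if \(\Phi_K \colon \Ext^i(G',G') \to \Ext^i(\Phi_K G', \Phi_K G')\) is an isomorphism for all \(i\).

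Next we express these Ext groups through \(\phi\). By linearity \parref{derived-fm}\ref{derived-fm.linear}, \(\Phi_K(G') \cong \Phi_K(G) \otimes^L Lg^*F\). Adjunction and the projection formula, using perfectness of \(F\), \(G\) and \(\Phi_K(G)\), give
\[
R\Hom_{\mathcal{X}}(G',G') \cong R\Hom_{\mathcal{S}}(F, Rf_*R\sHom(G,G) \otimes^L F),
\]
and similarly on \(\mathcal{Y}\) with \(Rg_*R\sHom(\Phi_K G,\Phi_K G)\). By construction of \(\phi\) in \parref{derived-fm}\ref{derived-fm.enriched-hom}, namely via Yoneda applied to \(\Hom_{\mathcal{X}}(G\otimes Lf^*F'',G)\), the map induced by \(\Phi_K\) on these Ext groups is \(R\Hom_{\mathcal{S}}(F, \phi\otimes^L F)\). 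Thus if \(\phi\) is an isomorphism, so is the map on Ext groups, and \(\Phi_K\) is fully faithful.

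Conversely, suppose \(\Phi_K\) is fully faithful. Then the Yoneda description of \(\phi\) shows that \(\Hom_{\mathcal{S}}(F'', \phi[i])\) is identified with the map
\[
\Hom(G \otimes Lf^*F'', G[i]) \to \Hom(\Phi_K(G\otimes Lf^*F''), \Phi_K(G)[i]),
\]
which is bijective for every \(F''\). Taking \(F'' = F\) and using that \(F\) is a generator, the cone \(C\) of \(\phi\) satisfies \(\Hom(F,C[i]) = 0\) for all \(i\). We need the five lemma here: the maps for \(\phi\) are isomorphisms in every degree, so the long exact sequence forces \(\Hom(F,C[i])=0\). Hence \(C=0\) and \(\phi\) is an isomorphism.

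The main obstacle is bookkeeping rather than substance. One must check that the identification of \(\Phi_K\) on \(\Ext(G',G')\) with the map induced by \(\phi\) is compatible. The cleanest route is to avoid computing \(R\Hom(G',G')\) directly. Instead, use that full faithfulness is equivalent to bijectivity of \(\Hom(G\otimes Lf^*F'', G[i]) \to \Hom(\Phi_K(\cdots),\Phi_K(G)[i])\) for all perfect \(F''\); this implies the Lemma \parref{lemma-ffbygen} condition by taking \(F''=F\) and applying the same with \(G\) replaced by \(G'\) in the first slot. These Hom sets are exactly \(\Hom_{\mathcal{S}}(F'',\phi[i])\) by definition of \(\phi\), and the equivalence follows since \(F\) generates.
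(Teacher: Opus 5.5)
Your proposal is correct and follows the paper's proof. For the forward direction you apply \(\Hom_{\mathcal{S}}(F[-i],-)\) to \(\phi\) through its Yoneda construction. For the converse you identify \(\Ext^i_{\mathcal{S}}(F, \phi \otimes^L F)\) with \(\Phi_K\) on \(\Ext^i(G \otimes^L Lf^*F, G \otimes^L Lf^*F)\) and apply \parref{lemma-ffbygen} to the compact generator \(G \otimes^L Lf^*F\); you also check the compact-preservation hypothesis, which the paper leaves implicit.

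One small slip is in your last paragraph. To reach \(\Ext^i(G',G')\) you need \(G'\) in the \emph{second} slot, not the first. The relevant map is the enriched map of \parref{derived-fm}\ref{derived-fm.enriched-hom} for the pair \((G, G \otimes^L Lf^*F)\), which is the paper's \(\phi'\); it is identified with \(\phi \otimes^L F\) by the projection formula and \(\mathcal{S}\)-linearity, and you then take \(F'' = F\).
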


\begin{proof}
Since \(F \in \Dqc(\mathcal{S})\) is a perfect generator, \(\phi\) is an
isomorphism if and only if it becomes an isomorphism after applying the functor
\(\Ext^i_{\mathcal{S}}(F, -)\) for every \(i \in \mathbf{Z}\). By its
construction in \parref{derived-fm}\ref{derived-fm.enriched-hom}, this gives
the map
\[
\Phi_K \colon
\Ext^i_{\mathcal{X}}(Lf^*F \otimes^L G, G) \to
\Ext^i_{\mathcal{Y}}(\Phi_K(Lg^*F \otimes^L G), \Phi_K(G)).
\]
If \(\Phi_K\) is fully faithful, then this is, of course, an isomorphism for
all \(i \in \mathbf{Z}\), whence \(\phi\) is an isomorphism. Conversely, if
\(\phi\) is an isomorphism, then so is the map
\[
\phi' \colon
Rf_*R\sHom_{\sO_{\mathcal{X}}}(G, G \otimes^L Lf^*F) \to
Rg_*R\sHom_{\sO_{\mathcal{Y}}}(\Phi_K(G), \Phi_K(G \otimes^L Lf^*F))
\]
obtained from \(\phi \otimes^L F\) via the projection formula and
\(\mathcal{S}\)-linearity from \parref{derived-fm}\ref{derived-fm.linear}.
Applying \(\Ext^i_{\mathcal{S}}(F,-)\) to \(\phi'\) then shows that the map
\[
\Phi_K \colon
\Ext^i_{\mathcal{X}}(G \otimes^L Lf^*F, G \otimes^L Lf^*F) \to
\Ext^i_{\mathcal{Y}}(\Phi_K(G \otimes^L Lf^*F), \Phi_K(G \otimes^L Lf^*F))
\]
is an isomorphism for every \(i \in \mathbf{Z}\). Since \(\Phi_K\)
has a right adjoint by \parref{lemma-rightadjointofFM} and
\(G \otimes^L Lf^*F\) is a compact generator of \(\Dqc(\mathcal{X})\) by
\parref{derived-relative-generators-subcategory}\ref{derived-relative-generators-subcategory.make-absolute},
full faithfulness of \(\Phi_K\) now follows from the criterion
\parref{lemma-ffbygen}.
\end{proof}

Combined with tor-independent base change and
\parref{derived-fm}\ref{derived-fm.base-change}, the criterion
\parref{lemma-ffconditions} implies that full faithfulness of \(\Phi_K\) is
often preserved under base change, and also that it may be checked fpqc
locally:

\begin{Lemma}\label{lemma-basechangeofff}
In the setting of \parref{lemma-ffconditions}, furthermore suppose given a
morphism \(h \colon \mathcal{T} \to \mathcal{S}\) of concentrated stacks. If
either \(h\) is flat or both \(f\) and \(g\) are flat, then:
\begin{enumerate}
\item\label{lemma-ffconditions.base-change-ff}
If \(\Phi_K \colon \Dqc(\mathcal{X}) \to \Dqc(\mathcal{Y})\) is fully faithful,
then so is \(\Phi_{K_{\mathcal{T}}} \colon \Dqc(\mathcal{X}_{\mathcal{T}}) \to \Dqc(\mathcal{Y}_{\mathcal{T}})\).
\item\label{lemma-ffconditions.descend-ff}
If \(h \colon \mathcal{T} \to \mathcal{S}\) is faithfully flat and
quasi-compact, then the converse of \ref{lemma-ffconditions.base-change-ff}
also holds. \qed
\end{enumerate}
\end{Lemma}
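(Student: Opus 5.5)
The plan is to apply the criterion of \parref{lemma-ffconditions} to both \(\Phi_K\) and \(\Phi_{K_{\mathcal{T}}}\), and to compare the two comparison morphisms \(\phi\) and \(\phi_{\mathcal{T}}\) by base change along \(h\). The first step is to check that the hypotheses of \parref{lemma-ffconditions} hold after base change.

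\begin{itemize}
\item By \parref{derived-relative-generators}\ref{derived-relative-generators.base-change}, \(G_{\mathcal{T}}\) is a \(\mathcal{T}\)-linear perfect generator of \(\Dqc(\mathcal{X}_{\mathcal{T}})\).
\item By \parref{derived-fm}\ref{derived-fm.base-change}, \(\Phi_{K_{\mathcal{T}}}(G_{\mathcal{T}}) \cong \Phi_K(G)_{\mathcal{T}}\), which is perfect.
\item One also wants \(\Dqc(\mathcal{T})\) to have a perfect generator and \(\Phi_{K_{\mathcal{T}}}\) to preserve perfects. In practice this is part of ``the setting''; I would either add it implicitly or note that the proof of \parref{lemma-ffconditions} only uses preservation of compactness of the single generator \(G \otimes^L Lf^*F\).
\end{itemize}

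The second step compares the morphisms. By tor-independent base change, which applies since either \(h\) is flat or \(f\) and \(g\) are flat, together with the compatibility of \(R\sHom\) from a perfect source with pullback, there are canonical isomorphisms
\[
Lh^*Rf_*R\sHom(G,G) \cong Rf_{\mathcal{T},*}R\sHom(G_{\mathcal{T}},G_{\mathcal{T}})
\]
and similarly for \(\Phi_K(G)\) on \(\mathcal{Y}\). The key point to verify is that under these identifications \(Lh^*\phi\) corresponds to \(\phi_{\mathcal{T}}\). This follows from the Yoneda construction in \parref{derived-fm}\ref{derived-fm.enriched-hom}: both are adjoint to the map induced by applying \(\Phi\) to evaluation maps, and \(\Phi_K\) commutes with \(Lh^*\) by \parref{derived-fm}\ref{derived-fm.base-change}.

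I expect this compatibility of the enriched map with base change to be the main obstacle. It is a diagram chase: one checks that the morphism \(R\sHom(G,G)\to \Phi\)-level maps is built from the unit and counit of \(L\pr_1^*\dashv R\pr_{1,*}\), the projection formula and base change, all of which are natural in \(h\).

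Given this compatibility, part \ref{lemma-ffconditions.base-change-ff} follows. If \(\Phi_K\) is fully faithful then \(\phi\) is an isomorphism, hence so is \(Lh^*\phi \cong \phi_{\mathcal{T}}\), so \(\Phi_{K_{\mathcal{T}}}\) is fully faithful by \parref{lemma-ffconditions}. For part \ref{lemma-ffconditions.descend-ff}, suppose \(\Phi_{K_{\mathcal{T}}}\) is fully faithful. Then \(\phi_{\mathcal{T}} \cong Lh^*\phi\) is an isomorphism, so \(Lh^*\) kills the cone of \(\phi\). Since \(h\) is faithfully flat and quasi-compact, \(Lh^*\) is conservative on \(\Dqc(\mathcal{S})\): it is exact on cohomology sheaves, and a quasi-coherent sheaf with vanishing fpqc pullback vanishes, checked smooth-locally on affines. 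Hence the cone of \(\phi\) is zero, \(\phi\) is an isomorphism, and \(\Phi_K\) is fully faithful by \parref{lemma-ffconditions}.
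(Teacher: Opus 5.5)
Your proposal is correct and follows the paper's argument: the paper omits the proof, saying only that it follows from the criterion \parref{lemma-ffconditions} together with tor-independent base change and \parref{derived-fm}\ref{derived-fm.base-change}, which identify $Lh^*\phi$ with the comparison map over $\mathcal{T}$, exactly as you do. Your extra points fill in details the paper leaves implicit: the hypotheses needed over $\mathcal{T}$ (a perfect generator of $\Dqc(\mathcal{T})$, and compactness of $\Phi_{K_{\mathcal{T}}}$ on a generator, which follows from $\Phi_{K_{\mathcal{T}}}(G_{\mathcal{T}}) \cong \Phi_K(G)_{\mathcal{T}}$ being perfect), and the conservativity of $Lh^*$.
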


% \begin{proof}
% We use the criterion of \parref{lemma-ffconditions}. By tor-independent
% base change and \parref{derived-fm}\ref{derived-fm.base-change}, the base
% change of \parref{derived-fm}\ref{derived-fm.enriched-hom} to $\mathcal{T}$ is
% $$
% Rf_{\mathcal{T}, *}(R \mathcal{H}om_{\mathcal{O}_{\mathcal{X}_{\mathcal{T}}}}(G_{\mathcal{T}}, G_{\mathcal{T}}) \to Rg_{\mathcal{T}, *}(R \mathcal{H}om_{\mathcal{O}_{\mathcal{Y}_{\mathcal{T}}}}(\Phi_{K_{\mathcal{T}}}(G_{\mathcal{T}}), \Phi_{K_{\mathcal{T}}}(G_{\mathcal{T}})))
% $$
% and the result follows.
% \end{proof}

\subsectiondash{Gerbes}\label{subsection-gmgerbe}
In this work, a \emph{\(\mathbf{G}_m\)-gerbe} refers to a gerbe banded by
\(\mathbf{G}_m\); that is, the data of
\begin{itemize}
\item a morphism \(\pi \colon \mathcal{X} \to X\) from a stack to an algebraic
space; and
\item an isomorphism
\(\xi \colon \mathbf{G}_{m, \mathcal{X}} \to \mathcal{I}_{\mathcal{X}}\) of sheaves on
\(\mathcal{X}_{\mathrm{fppf}}\) from \(\mathbf{G}_m\) to the inertia stack
of \(\mathcal{X}\).
\end{itemize}
The identification \(\xi\) provides each quasi-coherent
\(\sO_{\mathcal{X}}\)-module \(\mathcal{F}\) with a right action by
\(\mathbf{G}_m\), whence a weight decomposition
\(\mathcal{F} = \bigoplus\nolimits_{i \in \mathbf{Z}} \mathcal{F}_i\)
where the weight \(i\) component is characterized by
\[
\mathcal{F}_i(x) =
\{
s \in \mathcal{F}(x) :
s \cdot \gamma = \gamma^is\;\text{for all}\; \gamma \in \sAut_T(x) = \mathbf{G}_m(T)
\}
\]
for each morphism \(T \to X\) and \(x \in \mathcal{X}_T\).
If \(\mathcal{F} = \mathcal{F}_i\), then the sheaf \(\mathcal{F}\) is said to
\emph{have weight \(i\)}. Sheaves of different weights do not map to one another, so we have a product decomposition $\operatorname{QCoh}(\mathcal{X}) = \prod_{i \in \mathbf{Z}}\operatorname{QCoh}_i(\mathcal{X})$ where $\operatorname{QCoh}_i(\mathcal{X}) \subset \operatorname{QCoh}(\mathcal{X})$ denotes the full subcategory of quasi-coherent sheaves of weight $i$.
Pullback along \(\pi\) induces an equivalence
\[
\pi^* \colon \QCoh(\sO_X) \to \QCoh_0(\sO_{\mathcal{X}})
\]
between the categories of quasi-coherent \(\sO_X\)-modules and quasi-coherent
\(\sO_{\mathcal{X}}\)-modules of weight \(0\). If \(\pi \colon \mathcal{X} \to X\)
admits a section \(\sigma \colon X \to \mathcal{X}\), then the restriction of
pullback along \(\sigma\) induces, for any \(i \in \mathbf{Z}\), an equivalence
\[
\sigma^* \colon \QCoh_i(\sO_{\mathcal{X}}) \to \QCoh(\sO_X)
\]
between the categories of quasi-coherent \(\sO_{\mathcal{X}}\)-modules of
weight \(i\) and quasi-coherent \(\sO_X\)-modules.

\subsectiondash{}\label{gmgerbes-derived-categories}
For each \(i \in \mathbf{Z}\), let
\(\mathrm{D}_{\mathrm{qc}, i}(\mathcal{X}) \subset \Dqc(\mathcal{X})\) be the
full subcategory consisting of objects whose cohomology sheaves all have
weight \(i\). 
%Each \(\mathrm{D}_{\mathrm{qc},i}(\mathcal{X})\) may be
%obtained as the derived category of the abelian category \(\QCoh_i(\sO_{\mathcal{X}})\)
%and is a triangulated subcategory of \(\Dqc(\mathcal{X})\). -- This is false
Furthermore,
\cite[Theorem 5.4]{BS:BS} shows that there is an orthogonal decomposition
\[
\Dqc(\mathcal{X}) \cong
\prod\nolimits_{i \in \mathbf{Z}} \mathrm{D}_{\mathrm{qc},i}(\mathcal{X}).
\]
Namely, the \(\mathrm{D}_{\mathrm{qc},i}(\mathcal{X})\) are completely
orthogonal to one another for different \(i \in \mathbf{Z}\), and every object
\(E \in \Dqc(\mathcal{X})\) may be uniquely expressed as a direct sum
\(E = \bigoplus\nolimits_{i \in \mathbf{Z}} E_i\) with
\(E_i \in \mathrm{D}_{\mathrm{qc},i}(\mathcal{X})\). It is shown
in \cite[\S5]{BS:BS} that these decompositions enjoy the following
compatibilities:
\begin{enumerate}
\item\label{gmgerbes-derived-categories.morphisms}
If \(f \colon \mathcal{X} \to \mathcal{Y}\) is a morphism of
\(\mathbf{G}_m\)-gerbes, then \(Lf^*\) and \(Rf_*\) preserve the decomposition.
\item\label{gmgerbes-derived-categories.tensor}
If \(E \in \mathrm{D}_{\mathrm{qc},i}(\mathcal{X})\) and
\(F \in \mathrm{D}_{\mathrm{qc},j}(\mathcal{X})\), then
\(E \otimes^L F \in \mathrm{D}_{\mathrm{qc},i+j}(\mathcal{X})\).
\item\label{gmgerbes-derived-categories.pullback}
Pullback along \(\pi\) induces an equivalence
\(L\pi^* \colon \Dqc(X) \to \mathrm{D}_{\mathrm{qc},0}(\mathcal{X})\).
\item\label{gmgerbes-derived-categories.pushforward}
The functor \(R\pi_* \colon \mathrm{D}_{\mathrm{qc}}(\mathcal{X}) \to \Dqc(X)\)
may be identified as \(E \mapsto E_0\).
\item\label{gmgerbes-derived-categories.section}
If \(\sigma \colon X \to \mathcal{X}\) is a section, then
\(L\sigma^* \colon \mathrm{D}_{\mathrm{qc},i}(\mathcal{X}) \to \Dqc(X)\) is
an equivalence for any \(i \in \mathbf{Z}\).
\end{enumerate}

Note that \(\pi \colon \mathcal{X} \to X\) is a concentrated morphism of
stacks: this is because, in addition to \(\pi\) being qcqs, the functor
\(\pi_* \colon \QCoh(\sO_{\mathcal{X}}) \to \QCoh(\sO_X)\) is exact. When
the algebraic space \(X\) is qcqs, each of the categories
\(\mathrm{D}_{\mathrm{qc},i}(\mathcal{X})\) admits a perfect generator
by \cite[Example 9.3]{hallrydh}.

\subsectiondash{Trivial gerbes}
\label{example-trivialityofgerbe}
The \emph{trivial \(\mathbf{G}_m\)-gerbe} over \(X\) is the relative
classifying stack \(B\mathbf{G}_{m,X} \to X\). The tautological line bundle on
\(B\mathbf{G}_{m,X}\) is of weight \(1\) and provides a section
\(X \to B\mathbf{G}_{m,X}\). Conversely, suppose \(\pi \colon \mathcal{X} \to X\)
is a \(\mathbf{G}_m\)-gerbe and that \(\mathcal{X}\) carries a line bundle
\(\mathcal{L}\) of weight \(1\). Then
\begin{enumerate}
\item\label{trivialityofgerbe.trivial}
\(\pi \colon \mathcal{X} \to X\) is isomorphic to the trivial \(\mathbf{G}_m\)-gerbe; and
\item\label{trivialityofgerbe.coarse}
there is a canonical isomorphism
\(X \cong \mathbf{A}(\mathcal{L}) \setminus \{0\}\) between \(X\) and the
complement of the zero section in the affine bundle
\(\mathbf{A}(\mathcal{L}) \coloneqq \Spec\Sym(\mathcal{L}^\vee)\) on
\(\mathcal{L}\) over \(\mathcal{X}\).
\end{enumerate}
Item \ref{trivialityofgerbe.trivial} is because \(\mathcal{L}\) is classified
by a morphism of \(\mathbf{G}_m\)-gerbes \(\mathcal{X} \to B\mathbf{G}_{m,X}\)
over \(X\), and this map is necessarily an isomorphism. As for \ref{trivialityofgerbe.coarse},
this comes from identifying the universal \(\mathbf{G}_m\)-torsor over
\(B\mathbf{G}_{m,X}\) as, on the one hand, the canonical morphism
\(X \to B\mathbf{G}_{m,X}\) and, on the other hand, as the affine bundle
\(\underline{\mathit{Isom}}(\sO_{B\mathbf{G}_{m,X}}, \mathcal{L})\)
of local trivializations for the universal line bundle \(\mathcal{L}\).
Finally, the latter is easily seen to be isomorphic to
\(\mathbf{A}(\mathcal{L}) \setminus \{0\}\).

More generally, if \(\mathcal{X}\) carries a rank \(r\) vector bundle of weight
\(1\), then its associated cohomology class
\([\mathcal{X}] \in \mathrm{H}^2_{\mathrm{\acute{e}t}}(X,\mathbf{G}_m)\) is
\(r\)-torsion. This is because the determinant line bundle has weight \(r\),
and this gives a line bundle of weight \(1\) on the \(\mathbf{G}_m\)-gerbe
on \(X\) with class \(r[\mathcal{X}]\).

\subsectiondash{Relatively perfect objects}\label{derived-relatively-perfect}
Given a morphism \(f \colon X \to S\) of schemes which is locally of finite type
an object \(E \in \Dqc(X)\) is said to be \emph{perfect relative to \(S\)} or,
briefly, \emph{\(S\)-perfect} if it is
\begin{itemize}
\item pseudo-coherent relative to \(S\), in the sense of \citeSP{09UI}, and
\item locally has finite tor-dimension as an object of
\(\mathrm{D}(f^{-1}\sO_S)\), see \citeSP{08CG}.
\end{itemize}
We adopt here the generalized definition suggested in \citeSP{0DI9}. In
contrast, the Stacks Project defines relatively perfect objects only for
morphisms which are flat and locally of finite presentation, in which case an
\(S\)-perfect object in \(\Dqc(X)\) is the same as one that can be represented,
locally on \(X\), by a bounded complex of \(S\)-flat finitely presented
quasi-coherent sheaves on \(X\): see
\cite[\href{https://stacks.math.columbia.edu/tag/0DI0}{0DI0} and
\href{https://stacks.math.columbia.edu/tag/0DI2}{0DI2}]{stacks-project}.
An analogous characterization of relatively perfect objects in this setting
can be obtained by combining \parref{lemma-relperfonaffine} and
\parref{lemma-relperfislocal} below. When \(f \colon X \to S\) is smooth,
relatively perfect is the same as perfect:

\begin{Lemma}
\label{lemma-relperfonsmooth}
If \(f \colon X \to S\) is a smooth morphism of schemes, then an object
\(E \in \Dqc(X)\) is perfect relative to \(S\) if and only if it is perfect.
\end{Lemma}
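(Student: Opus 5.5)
The plan is to work locally and compare the two definitions through pseudo-coherence and finite tor-dimension separately. Both notions are local on \(X\); for relative perfectness this is part of the definition, and it is also built into the Stacks Project conventions. So we may assume \(S = \Spec A\) and \(X = \Spec B\) are affine, with \(A \to B\) smooth. In this situation \(B\) is of finite presentation over \(A\), and \(f^{-1}\sO_S\)-modules correspond to \(A\)-modules, so the question becomes purely algebraic about an object \(E \in \mathrm{D}(B)\).

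For pseudo-coherence, since \(A \to B\) is of finite presentation, an object of \(\mathrm{D}(B)\) is pseudo-coherent relative to \(A\) if and only if it is pseudo-coherent as a complex of \(B\)-modules; this is \citeSP{09UI}-adjacent material, specifically the lemma stating that relative pseudo-coherence over a finitely presented ring map agrees with absolute pseudo-coherence. Hence the pseudo-coherence conditions match, and it remains to compare tor-dimension over \(A\) with tor-dimension over \(B\).

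\emph{Perfect \(\Rightarrow\) \(S\)-perfect.} A perfect \(E\) is pseudo-coherent and locally has finite tor-dimension over \(B\). Since \(B\) is flat over \(A\), any bounded complex of flat \(B\)-modules is also a bounded complex of flat \(A\)-modules. Therefore \(E\) has finite tor-dimension over \(A\).

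\emph{\(S\)-perfect \(\Rightarrow\) perfect.} This is the key step and the main obstacle. Since \(E\) is pseudo-coherent over \(B\), it suffices to show that \(E\) has finite tor-dimension over \(B\). I would use the factorization \(B \otimes_A B \to B\), the multiplication map. Because \(A \to B\) is smooth, the diagonal is a regular immersion: its kernel \(I\) is locally generated by a regular sequence of length \(d\), the relative dimension. Hence \(B\) is perfect as a \(B \otimes_A B\)-module, with Koszul amplitude \([-d,0]\).

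Now write
\[
E \cong E \otimes^L_{B} B \cong (E \otimes^L_A B) \otimes^L_{B \otimes_A B} B,
\]
where \(E \otimes_A^L B\) is viewed as a \(B\otimes_A B\)-module through the first factor, with \(B\) acting on \(E\). Then for a \(B\)-module \(M\) we get
\[
E \otimes^L_B M \cong \bigl(E \otimes^L_A B\bigr)\otimes^L_{B\otimes_A B} \bigl(B \otimes^L_{B} M\bigr),
\]
and this can be bounded using tor-dimension of \(E\) over \(A\) together with the length-\(d\) Koszul resolution of the diagonal. Concretely, \(E \otimes^L_B M\) is computed by a Koszul complex of length \(d\) on \(E \otimes_A M\), taken with its \(B \otimes_A B\)-structure. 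So if \(E\) has tor-amplitude in \([a,b]\) over \(A\), then \(E\) has tor-amplitude in \([a-d,b]\) over \(B\).

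Some care is needed here. One must get the identification \(E \otimes^L_B M \simeq (E \otimes^L_A M)\otimes^L_{B\otimes_A B} B\) right, i.e. the standard identity for \(M\otimes_B N\) as the base change of \(M \otimes_A N\) along the diagonal. An alternative route is to cite the Stacks Project lemma that for a flat, finitely presented ring map, finite tor-dimension over the base implies finite tor-dimension over \(B\) when \(B\) is smooth (regularity of fibres); this is the approach in \citeSP{0DI9}'s vicinity, via fibrewise regularity plus local criteria.

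Finally, I would check that both notions glue from affine opens. This settles the lemma.
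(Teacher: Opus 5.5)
Your proof is correct. The direction ``perfect \(\Rightarrow\) \(S\)-perfect'' is essentially the paper's. The paper represents \(E\) locally by a bounded complex of vector bundles and notes that these are \(S\)-flat and finitely presented; you instead check relative pseudo-coherence and tor-amplitude over \(A\) separately. Both use only that \(A \to B\) is flat and finitely presented.

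For ``\(S\)-perfect \(\Rightarrow\) perfect'' the routes differ. The paper simply cites \citeSP{068X}, which is essentially the alternative you mention at the end. You give a self-contained argument through the diagonal, and the identification you flag does hold. Choose a K-flat resolution \(P \to E\) by flat \(B\)-modules; \(P\) is then also K-flat over \(A\), and \(P \otimes_A M \cong P \otimes_B (B \otimes_A M)\). Moreover \(B \otimes^L_{B \otimes_A B} (B \otimes_A M) \simeq M\), because \(B \otimes_A M \cong (B \otimes_A B) \otimes_B M\) with \(B \otimes_A B\) flat over the second factor. Hence \(E \otimes^L_B M \simeq (E \otimes^L_A M) \otimes^L_{B \otimes_A B} B\).

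The Koszul resolution of the diagonal exists only locally on \(\Spec(B \otimes_A B)\). The input is therefore best phrased as ``\(B\) has tor-amplitude in \([-d,0]\) over \(B \otimes_A B\)''. That condition is local and immediately gives amplitude \([a-d,b]\) for \(E\) over \(B\).

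Your route buys an explicit bound and shows exactly what smoothness contributes: flatness plus perfectness of the diagonal. Without the latter the statement fails; for example, the residue field of \(k[x]/(x^2)\) is perfect relative to \(k\) but not perfect. The paper's route buys brevity.

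One small correction: relative and absolute pseudo-coherence do not agree for arbitrary finitely presented ring maps. A counterexample is \(R \to R/I\) with \(I\) finitely generated but not finitely presented. The Stacks Project statement you want is for flat, finitely presented maps, which suffices here because smooth maps are flat.
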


\begin{proof}
That an \(S\)-perfect object is perfect follows from \citeSP{068X}. Conversely,
if \(E\) is perfect, then locally on \(X\), it can be represented by a bounded
complex of vector bundles. Since vector bundles are \(S\)-flat and finitely
presented quasi-coherent sheaves, it follows that \(E\) is \(S\)-perfect.
\end{proof}

When \(X\) and \(S\) are both affine, relatively perfect may be reduced to
perfect in an affine space:

\begin{Lemma}
\label{lemma-relperfonaffine}
Let \(f \colon X \to S\) be a finite type morphism of affine schemes. If
\(f = \pr \circ i \colon X \to \mathbf{A}^n_S \to S\) is any factorization through a closed
immersion \(i \colon X \to \mathbf{A}^n_S\), then an object
\(E \in \Dqc(X)\) is perfect relative to \(S\) if and only if \(Ri_*E\) is a
perfect object of \(\Dqc(\mathbf{A}^n_S)\).
\end{Lemma}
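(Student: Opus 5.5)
The plan is to unwind both conditions in the definition of \(S\)-perfect and compare them with the corresponding conditions for \(Ri_*E\) over \(\mathbf{A}^n_S\). Write \(S = \Spec A\), \(X = \Spec B\) and \(P = A[x_1,\ldots,x_n] \twoheadrightarrow B\). Since everything is affine, \(E\) corresponds to a complex of \(B\)-modules, and \(Ri_*E\) is the same complex regarded as a complex of \(P\)-modules. Perfection on \(\mathbf{A}^n_S\) is equivalent to \(P\)-pseudo-coherence together with finite tor-dimension over \(P\) (\citeSP{0658}). So it suffices to match these two conditions with the two conditions defining \(S\)-perfectness.

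\emph{Pseudo-coherence.} By definition (\citeSP{09UI}, with the affine description in \citeSP{09UH}), \(E\) is pseudo-coherent relative to \(S\) exactly when \(Ri_*E\) is pseudo-coherent on \(\mathbf{A}^n_S\). The Stacks Project also shows this is independent of the chosen closed immersion. This step is therefore essentially a citation.

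\emph{Tor-dimension.} Finite tor-dimension of \(E\) over \(f^{-1}\sO_S\) means that the complex \(E\) has finite tor-amplitude as a complex of \(A\)-modules. Here one uses that \(X\) is affine: the local condition can be globalized because \(X\) is quasi-compact and tor-amplitude is local. We must show this holds if and only if \(E\), viewed over \(P\), has finite tor-amplitude.

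\begin{itemize}
\item Suppose \(E\) has tor-amplitude in \([a,b]\) over \(P\). Then \(P\) is flat over \(A\), so \(E \otimes^L_A M = E \otimes^L_P (P \otimes_A M)\), and \(E\) has tor-amplitude in \([a,b]\) over \(A\).
\item Conversely, suppose \(E\) has tor-amplitude in \([a,b]\) over \(A\). Use the Koszul-type resolution of \(P\)-modules relative to \(A\): for any \(P\)-module \(N\) there is an exact sequence of length \(n\),
\[
0 \to N\otimes_A P \otimes \wedge^n \to \cdots \to N \otimes_A P \to N \to 0.
\]
This is the standard resolution of the diagonal of \(\mathbf{A}^n_A\). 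It gives \(E\otimes^L_P N\) as a finite iterated cone of shifts of the objects \(E \otimes^L_P (N \otimes_A P) = E \otimes^L_A N\). Hence \(E\) has tor-amplitude in \([a-n,b]\) over \(P\).
\end{itemize}

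Combining the two directions, \(Ri_*E\) is pseudo-coherent with finite tor-dimension over \(P\), hence perfect, exactly when \(E\) is \(S\)-perfect.

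The main obstacle is this converse direction, passing from tor-amplitude over \(A\) to tor-amplitude over \(P\). Care is needed that the resolution of \(N\) is by \(P\)-modules of the form \(N\otimes_A P\) with the correct \(P\)-structure, namely the one via the second factor. It may be cleaner to cite \citeSP{0DI1}-type results or \citeSP{068X}, which proves the analogous statement for flat finite-presentation morphisms \(\mathbf{A}^n_S \to S\). A second point to check is that the paper's "locally finite tor-dimension over \(f^{-1}\sO_S\)" agrees with tor-amplitude of the global module complex over \(A\) on an affine \(X\). This should follow from the stalkwise description of tor-amplitude and quasi-compactness.
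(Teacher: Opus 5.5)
Your proof is correct, and its skeleton matches the paper's. Relative pseudo-coherence of \(E\) is by definition pseudo-coherence of \(Ri_*E\). Finite tor-dimension over \(f^{-1}\sO_S\) becomes, since \(X\) is affine, finite tor-amplitude of the global complex over \(A\).

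Where you diverge is the last step. The paper combines these two facts into ``\(E\) is \(S\)-perfect iff \(Ri_*E\) is \(S\)-perfect.'' It then applies \parref{lemma-relperfonsmooth} to the smooth projection \(\mathbf{A}^n_S \to S\), and that lemma cites \citeSP{068X} for the implication \(S\)-perfect \(\Rightarrow\) perfect. You instead use \citeSP{0658} and compare tor-amplitudes over \(A\) and over \(P\) directly.

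Your Koszul argument is sound, and your worry about the \(P\)-module structure resolves as you hoped:
\begin{itemize}
\item The Koszul complex on the regular sequence \(x_i \otimes 1 - 1 \otimes x_i\) in \(P \otimes_A P\) is a bounded resolution of \(P\). Its terms are free over \(P\) via the first factor.
\item Tensoring with \(N\) over that first factor therefore stays exact. The resulting terms \(P \otimes_A N\) carry the \(P\)-action through the second factor, as needed.
\item Flatness of \(P\) over \(A\) gives \(E \otimes^L_P (P \otimes_A N) \cong E \otimes^L_A N\).
\end{itemize}

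Your route is self-contained, gives the explicit bound \([a-n,b]\), and shows pseudo-coherence plays no role in the tor-dimension comparison. The paper's route is shorter and reuses \parref{lemma-relperfonsmooth}, which it needs anyway.
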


\begin{proof}
First, essentially by definition, \(E\) is pseudo-coherent relative to \(S\) if
and only if \(Ri_*E\) is: compare
\cite[\href{https://stacks.math.columbia.edu/tag/09UI}{09UI} and
\href{https://stacks.math.columbia.edu/tag/09VC}{09VC}]{stacks-project}.
Next, writing \(S = \Spec R\), \(E\) has finite tor-dimension as an object
of \(\mathrm{D}(f^{-1}\sO_S)\) if and only if
\(R\Gamma(X,E) \cong R\Gamma(\mathbf{A}^n_S, Ri_*E)\) has finite
tor-dimension as an object of \(\mathrm{D}(R)\), and so these conditions
are equivalent to \(Ri_*E\) having finite tor-dimension as an object of
\(\mathrm{D}(\pr^{-1}\sO_S)\). Put together, this means that \(E \in \Dqc(X)\)
is \(S\)-perfect if and only if \(Ri_*E \in \Dqc(\mathbf{A}^n_S)\) is
\(S\)-perfect, and now the result follows from applying \parref{lemma-relperfonsmooth}
to \(\pr \colon \mathbf{A}^n_S \to S\).
\end{proof}

In general, being relatively perfect is fppf local on both the source and
target:

\begin{Lemma}
\label{lemma-relperfislocal}
Let \(f \colon X \to S\) be a morphism of schemes which is locally of finite
type and \(E \in \Dqc(X)\).
\begin{enumerate}
\item\label{lemma-relperfislocal.fppf-descent}
If \(\{g_i \colon U_i \to X\}_i\) is an fppf covering, then \(E\) is
perfect relative to \(S\) if and only if each \(Lg_i^*E \in \Dqc(U_i)\)
is perfect relative to \(S\).
\item\label{lemma-relperfislocal.factor}
Given a factorization \(f \colon X \to V \to S\) with \(V \to S\) flat and
finitely presented, \(E\) is perfect relative to \(S\) if and only if it is
perfect relative to \(V\).
\end{enumerate}
\end{Lemma}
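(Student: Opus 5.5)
Both parts reduce to the affine situation and then to Lemma~\parref{lemma-relperfonaffine}. First we check that both conditions in the definition of \(S\)-perfectness are local on \(X\) and on \(S\) for the Zariski topology. For relative pseudo-coherence this is \citeSP{09UI} together with its locality lemmas. For local finite tor-dimension over \(f^{-1}\sO_S\) it is immediate from the definition in \citeSP{08CG}. So throughout we may assume \(X\) and \(S\) affine, and in (i) also that each \(U_i\) is affine and the covering is finite.

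For \ref{lemma-relperfislocal.factor}: with \(X = \Spec C\), \(V = \Spec B\), \(S = \Spec A\) affine, choose a closed immersion \(V \to \mathbf{A}^m_S\). Since \(V \to S\) is of finite presentation, \(B\) is a finitely presented \(A\)-algebra. Then choose a closed immersion \(X \to \mathbf{A}^n_V\), and hence \(X \to \mathbf{A}^n_V \to \mathbf{A}^{n+m}_S\). By Lemma~\parref{lemma-relperfonaffine}, \(E\) is \(V\)-perfect if and only if its pushforward \(E'\) to \(\mathbf{A}^n_V\) is perfect. Also, \(E\) is \(S\)-perfect if and only if the pushforward of \(E'\) to \(\mathbf{A}^{n+m}_S\) is perfect, which by Lemma~\parref{lemma-relperfonaffine} again means \(E'\) is \(S\)-perfect. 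It therefore suffices to show: for \(P \coloneqq \mathbf{A}^n_V\), which is flat and finitely presented over \(S\), an object of \(\Dqc(P)\) is perfect if and only if it is \(S\)-perfect.

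\begin{itemize}
\item Perfect implies \(S\)-perfect: locally \(E'\) is a bounded complex of finite free modules, which are \(S\)-flat and finitely presented since \(P \to S\) is flat and finitely presented.
\item \(S\)-perfect implies perfect: this is the content of \citeSP{0DI2} and neighbouring results (for a flat finitely presented morphism, relatively perfect objects on the source are perfect relative to any smooth factor). Concretely, pseudo-coherence relative to \(S\) and relative to \(V\) agree because \(V \to S\) is of finite presentation (\citeSP{09VC}-type statements). Tor-amplitude over \(A\) controls tor-amplitude over \(B\): for \(B\)-modules \(N\) with \(B\) flat over \(A\), one writes \(N\) via the bar resolution by \(A\)-flat-induced modules, and bounded tor-dimension is preserved since \(V\) is locally of finite tor-dimension over \(S\) relative to the diagonal.
\end{itemize}

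I expect this tor-dimension comparison to be the main obstacle. The first thing I would try is to invoke \citeSP{0DI2} directly after passing to the affine chart; if that is not available, I would argue by hand with the diagonal of \(V\) over \(S\), which is of finite tor-dimension because \(V\) is flat over \(S\) and \(V\) is of finite presentation.

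For \ref{lemma-relperfislocal.fppf-descent}: after refining, the covering is a single faithfully flat finitely presented \(g\colon U \to X\) of affines.

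\begin{itemize}
\item Forward direction: by \ref{lemma-relperfislocal.factor} applied to \(U \to X \to S\), it suffices that \(Lg^*E\) is \(X\)-perfect. Indeed \(Lg^*E\) is \(X\)-perfect exactly when \(E\) is \(S\)-perfect, since pulling back a local presentation by \(S\)-flat finitely presented sheaves yields \(X\)-relative data; more simply, embed \(U\) in \(\mathbf{A}^n_X\) and reduce via Lemma~\parref{lemma-relperfonaffine} to flat pullback of a perfect complex.
\item Converse: by \ref{lemma-relperfislocal.factor} we get that \(Lg^*E\) is perfect relative to \(X\)'s ambient affine space \(\mathbf{A}^N_S\). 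Writing \(\mathbf{A}^N_S \supset X\), the flat finitely presented cover \(U \to X\) extends locally to a flat finitely presented cover of \(\mathbf{A}^N_S\), for example using \(\mathbf{A}^N_{S}\)-presentations of \(U\). Then we apply fppf descent of perfectness, \citeSP{068T}, together with descent of pseudo-coherence and of tor-amplitude along faithfully flat maps, \citeSP{09UJ}.
\end{itemize}

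The extension step in the converse is delicate. The alternative I would pursue is to check the two defining conditions directly. Relative pseudo-coherence descends fpqc by \citeSP{09UJ}-type results. Tor-amplitude over \(A\) of \(R\Gamma\) descends because \(R\Gamma(U, Lg^*E) = R\Gamma(X,E)\otimes_C C'\) with \(C \to C'\) faithfully flat, and tor-amplitude over \(A\) can be tested after this faithfully flat base change.
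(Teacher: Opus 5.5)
The gap is in part (ii), in the direction ``\(S\)-perfect \(\Rightarrow\) \(V\)-perfect''. Your reduction through \parref{lemma-relperfonaffine} to a statement about an object \(E'\) on \(P = \mathbf{A}^n_V\) is fine. The statement you then need, that an \(S\)-perfect object on \(P\) is perfect, is false when \(P \to S\) is only flat and finitely presented. Your supporting assertion, that the diagonal of a flat finitely presented morphism has finite tor-dimension, is also false.

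Here is a counterexample. Take \(S = \Spec\kk\), \(V = \Spec\kk[\epsilon]\) with \(\epsilon^2 = 0\), \(X = \Spec\kk \subset V\), and \(E = \sO_X\). Since \(X \to S\) is the identity, \(E\) is \(S\)-perfect. But \(\kk\) has infinite tor-dimension over \(\kk[\epsilon]\), so \(E\) is not \(V\)-perfect. Likewise, \(\kk[\epsilon]\) is a non-free module over the Artinian local ring \(\kk[\epsilon]\otimes_{\kk}\kk[\epsilon]\), so it has infinite tor-dimension there.

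So this implication is false for merely flat \(V \to S\). No argument can prove it, including the tor-dimension citations in the paper's own proof. What does hold for flat \(V \to S\) is ``\(V\)-perfect \(\Rightarrow\) \(S\)-perfect'', and your first bullet handles that correctly.

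The natural fix is to assume \(V \to S\) smooth, which is what is needed in \parref{derived-relatively-perfect-stacks}. Then \(P\) is smooth over \(S\), and \parref{lemma-relperfonsmooth} gives exactly ``perfect \(\iff\) \(S\)-perfect'' on \(P\). With that, your embedding argument is a complete proof, and it stays inside the paper's own lemmas. The paper instead compares relative pseudo-coherence and stalkwise tor-amplitude separately via Stacks Project citations.

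In part (i), your first argument for the forward direction misapplies (ii): \(X \to S\) is not assumed flat. The claimed equivalence ``\(Lg^*E\) is \(X\)-perfect iff \(E\) is \(S\)-perfect'' is also false: \(Lg^*\sO_X = \sO_U\) is always \(X\)-perfect, while \(\sO_X\) need not be \(S\)-perfect. Extending \(U \to X\) to a flat finitely presented cover of \(\mathbf{A}^N_S\) is unjustified, and it is unnecessary.

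Your fallback is correct and is essentially the paper's argument. You check the two defining conditions separately. Relative pseudo-coherence is fppf local on the source, which is what the paper cites. Tor-amplitude over \(A\) descends along the faithfully flat \(C \to C'\) because \(R\Gamma(U, Lg^*E) \cong R\Gamma(X,E)\otimes_C C'\). The paper makes the same tor-amplitude argument stalkwise rather than on global sections.
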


\begin{proof}
The analogues of the statements \ref{lemma-relperfislocal.fppf-descent} and
\ref{lemma-relperfislocal.factor} for relative pseudo-coherence hold by
\cite[ \href{https://stacks.math.columbia.edu/tag/0CSN}{0CSN} and
\href{https://stacks.math.columbia.edu/tag/0CSP}{0CSP}]{stacks-project}.
Similarly, for the statements about tor-dimension, note that \(E\) has
tor-amplitude in \([a,b]\) as an object of \(\mathrm{D}(f^{-1}\sO_X)\) if and
only if \(E_x\) has tor-amplitude in \([a,b]\) as an object of
\(\mathrm{D}(\sO_{S,f(x)})\) for every \(x \in X\) by \citeSP{09U9}. With this,
the analogue of \ref{lemma-relperfislocal.fppf-descent} follows from
\citeSP{0DJF} whereas \ref{lemma-relperfislocal.factor} is
\cite[\href{https://stacks.math.columbia.edu/tag/066L}{066L} and
\href{https://stacks.math.columbia.edu/tag/068S}{068S}]{stacks-project}.
Together, this gives the result.
\end{proof}

\subsectiondash{}\label{derived-relatively-perfect-stacks}
This allows us to extend the definition of relative perfectness to stacks:
Let \(f \colon \mathcal{X} \to \mathcal{S}\) be a morphism of stacks
which is locally finite type, and choose a commutative square
\[
\begin{tikzcd}
U \ar[r] \ar[d, "\varphi"'] & V \ar[d]\\
\mathcal{X} \ar[r] &\mathcal{S}
\end{tikzcd}
\]
where \(U\) and \(V\) are schemes, \(V \to \mathcal{S}\) and
\(U \to \mathcal{X} \times_{\mathcal{S}} V\) are smooth surjections, and
\(U \to V\) is locally of finite type. An object \(E \in \Dqc(\mathcal{X})\)
is then said to be \emph{perfect relative to \(\mathcal{S}\)} or
\emph{\(\mathcal{S}\)-perfect} if \(\varphi^*E \in \Dqc(U)\) is perfect relative
to \(V\) in the sense of \parref{derived-relatively-perfect}. By
\parref{lemma-relperfislocal} and a standard argument, the definition is
independent of the choice of the square.

When \(f \colon \mathcal{X} \to \mathcal{S}\) is flat and locally of finite
presentation, relative perfectness is preserved under arbitrary base change:
see \citeSP{0DI5}. In our generality, however, that is not necessarily the
case. Nonetheless, relative perfectness is preserved under flat base change:

\begin{Lemma}
\label{lemma-basechangeofrelperf}
Let \(f \colon \mathcal{X} \to \mathcal{S}\) and
\(g \colon \mathcal{T} \to \mathcal{S}\) be morphisms of stacks, with
\(f\) locally of finite type. Assume either \(g\) is flat or \(f\) is flat
and locally of finite presentation. If \(E \in \Dqc(\mathcal{X})\) is
perfect relative to \(\mathcal{S}\), then
\(E_{\mathcal{T}} \in \Dqc(\mathcal{X}_{\mathcal{T}})\)
is perfect relative to \(\mathcal{T}\).
\end{Lemma}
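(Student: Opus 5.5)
The plan is to reduce to schemes using the definition in \parref{derived-relatively-perfect-stacks}, and then handle the two hypotheses separately. Choose a smooth surjection \(V \to \mathcal{S}\) from a scheme and a smooth surjection \(U \to \mathcal{X}\times_{\mathcal{S}} V\) from a scheme. Next choose a smooth surjection \(W \to \mathcal{T}\times_{\mathcal{S}} V\) from a scheme, and a smooth surjection \(U' \to U \times_V W\) from a scheme; the latter is possible since \(U\times_V W\) is an algebraic space. Then \(W \to \mathcal{T}\) and \(U' \to \mathcal{X}_{\mathcal{T}}\times_{\mathcal{T}} W = \mathcal{X}\times_{\mathcal{S}} W\) are smooth surjections, so this square computes \(\mathcal{T}\)-perfectness of \(E_{\mathcal{T}}\). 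By \parref{lemma-relperfislocal}\ref{lemma-relperfislocal.fppf-descent}, applied to the smooth cover \(U' \to U\times_V W\) (working Zariski locally on an algebraic space if needed), it suffices to show that the pullback of \(E|_U\) to \(U \times_V W\) is perfect relative to \(W\). The hypotheses descend: \(W \to V\) is flat if \(g\) is, and \(U \to V\) is flat and locally of finite presentation if \(f\) is. So we are reduced to the following case: \(f\colon X \to S\) is a locally finite type morphism of schemes, \(g\colon T \to S\) is a morphism of schemes, and \(E\) is \(S\)-perfect.

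\textbf{Case \(f\) flat and locally of finite presentation.} This is exactly \citeSP{0DI5}, so there is nothing further to do.

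\textbf{Case \(g\) flat.} The question is local on \(X\), \(S\), and \(T\), so we may assume all three are affine: \(S=\Spec R\), \(T = \Spec R'\) with \(R \to R'\) flat, and \(X\) a closed subscheme of \(\mathbf{A}^n_S\) via \(i\). By \parref{lemma-relperfonaffine}, \(Ri_*E\) is perfect on \(\mathbf{A}^n_S\). The square formed by \(X_T \to \mathbf{A}^n_T\) and \(X \to \mathbf{A}^n_S\) is tor-independent because \(\mathbf{A}^n_T \to \mathbf{A}^n_S\) is flat. Hence \(R i_{T,*}(E_T) \cong (Ri_*E)_{\mathbf{A}^n_T}\), and this is perfect, since pullbacks of perfect complexes are perfect. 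Applying \parref{lemma-relperfonaffine} again, now to \(X_T \to \mathbf{A}^n_T \to T\), shows that \(E_T\) is \(T\)-perfect.

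The main obstacle is the first reduction step. One must check that the chosen atlas square for \(\mathcal{X}_{\mathcal{T}} \to \mathcal{T}\) is legitimate, and that the intermediate algebraic space \(U\times_V W\) causes no trouble. This should be handled by the independence of the choice of square asserted in \parref{derived-relatively-perfect-stacks}, together with fppf locality on the source from \parref{lemma-relperfislocal}.
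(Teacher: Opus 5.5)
Your proposal is correct and follows the paper's proof. Both reduce to (affine) schemes using the locality of relative perfectness from \parref{lemma-relperfislocal}, dispose of the flat and locally finitely presented case via \citeSP{0DI5}, and in the case \(g\) flat embed \(X\) in \(\mathbf{A}^n_S\), use flat base change to see that \(Ri_{T,*}E_T\) is perfect, and reapply \parref{lemma-relperfonaffine}. Your reduction to schemes is spelled out more explicitly than the paper's one-line appeal to locality; note that \(U \times_V W\) is already a scheme because \(U\), \(V\), \(W\) are, so the auxiliary cover \(U'\) is unnecessary.
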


\begin{proof}
Since relative perfectness is fppf local on both the source and target
by \parref{lemma-relperfislocal}, we may reduce to the case that
\(f \colon X \to S\) and \(g \colon T \to S\) are morphisms of affine
schemes. It remains to consider the case \(g\) is flat. Choose a factorization
\(f = \pr \circ i \colon X \to \mathbf{A}^n_S \to S\) where \(i\) is a
closed immersion. Since \(E \in \Dqc(X)\) is \(S\)-perfect,
\(Ri_*E \in \Dqc(\mathbf{A}^n_S)\) by \parref{lemma-relperfonaffine}.
Flat base change applied to the square
\[
\begin{tikzcd}
  X_T \ar[r, "i_T"'] \ar[d] & \mathbf{A}^n_T \ar[d] \\
  X \ar[r, "i"] \ar[r] & \mathbf{A}^n_S
\end{tikzcd}
\]
implies that \(Ri_{T,*}E_T \in \Dqc(\mathbf{A}^n_T)\) is perfect: see
\cite[\href{https://stacks.math.columbia.edu/tag/08IB}{08IB} and
\href{https://stacks.math.columbia.edu/tag/066X}{066X}]{stacks-project}.
Applying \parref{lemma-relperfonaffine} once again then shows that \(E_T\) is
\(T\)-perfect.
\end{proof}

A key feature of relatively perfect objects is that they push forward to
perfect objects under reasonable proper morphisms. When \(f \colon X \to S\)
is flat and locally of finite presentation, a general version is given
in \citeSP{0DJT}. In our more general setting, we prove a weaker version
that suffices for our purposes. By way of terminology, call a morphism
\(f \colon X \to S\) of algebraic spaces \emph{fpqc locally \(H\)-projective}
if there exists a faithfully flat and quasi-compact cover \(T \to S\) such that
the base change \(f_T \colon X_T \to T\) is \(H\)-projective, in the sense that
it factors as \(f_T = \pr \circ i \colon X_T \to \PP^n_T \to T\) where \(i\) is
a closed immersion. In this setting, \(Rf_*\) takes relatively perfect complexes to perfect ones:

\begin{Lemma}
\label{lemma-perfectpush}
Let \(f \colon X \to S\) be a morphism of algebraic spaces that is fpqc
locally \(H\)-projective. If \(E \in \Dqc(X)\) is perfect relative to \(S\),
then \(Rf_*E \in \Dqc(S)\) is perfect.
\end{Lemma}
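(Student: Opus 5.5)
The plan is to reduce to the case of a closed subscheme of projective space over an affine base, and there to push forward to $\PP^n_T$, where relatively perfect coincides with perfect.

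\emph{Step 1: reduction to $H$-projective over an affine base.} Perfectness of an object of $\Dqc(S)$ is fpqc local on $S$. Moreover $Rf_*$ commutes with flat base change, since $f$ is qcqs: it is quasi-compact and separated after an fpqc cover, and these properties descend. Relative perfectness of $E$ is preserved under the flat base change $T \to S$ by \parref{lemma-basechangeofrelperf}. So we may replace $S$ by $T$, and then further by the members of an affine open cover. We are reduced to the case where $S = \Spec R$ is affine and $f = \pr \circ i \colon X \to \PP^n_S \to S$ with $i$ a closed immersion.

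\emph{Step 2: push forward along $i$.} We claim $Ri_*E \in \Dqc(\PP^n_S)$ is perfect. This is local on $\PP^n_S$, so cover $\PP^n_S$ by the standard affine opens $\mathbf{A}^n_S$. Over each such chart, $X \cap \mathbf{A}^n_S \to \mathbf{A}^n_S$ is a closed immersion of affine schemes of finite type over $S$. Since $i$ is affine, $(Ri_*E)|_{\mathbf{A}^n_S} = Ri'_*(E|_{X\cap \mathbf{A}^n_S})$, where $i'$ is the restricted closed immersion. The restriction $E|_{X\cap\mathbf{A}^n_S}$ is still $S$-perfect by \parref{lemma-relperfislocal}\ref{lemma-relperfislocal.fppf-descent}, since open immersions form an fppf covering. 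Now \parref{lemma-relperfonaffine} shows that $Ri'_*$ of it is perfect, so $Ri_*E$ is perfect on each chart and hence perfect.

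\emph{Step 3: push forward along $\pr$.} We have $Rf_*E = R\pr_*Ri_*E$, so it suffices that $R\pr_*$ carries perfect objects of $\Dqc(\PP^n_R)$ to perfect objects of $\Dqc(R)$. Two routes are available. The first is to quote that $\pr$ is flat, proper and of finite presentation and that a perfect object is $S$-perfect by \parref{lemma-relperfonsmooth}; then \citeSP{0DJT} applies. The second is direct. By Beilinson, $\Dqc(\PP^n_R)$ has compact generator $\bigoplus_{j=0}^{n}\sO(-j)$, so perfect complexes are the thick closure of the $\sO(j)$. Each $R\pr_*\sO(j)$ is a finite complex of finite free $R$-modules by the standard cohomology computation of $\PP^n$, and perfect objects are closed under cones and summands, which gives the claim.

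The main subtlety is Step 1. One must check that $Rf_*$ commutes with the fpqc (not merely fppf) base change $T \to S$, and that perfectness descends along fpqc covers. Both hold: the first by flat base change for qcqs morphisms of algebraic spaces, the second by the standard fpqc-local nature of perfectness. One must also check that relative perfectness passes through $T \to S$ even though $T \to S$ need not be locally of finite type; this is exactly why \parref{lemma-basechangeofrelperf} is stated for flat $g$ with no finiteness assumption.
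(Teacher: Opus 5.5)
Your proposal is correct and follows the same route as the paper's proof. You reduce fpqc locally to the $H$-projective case using \parref{lemma-basechangeofrelperf} and the fpqc-local nature of perfectness, show $Ri_*E$ is perfect by applying \parref{lemma-relperfonaffine} Zariski locally on $\PP^n_S$, and conclude because $R\pr_*$ preserves perfect complexes. Your explicit reduction to an affine base and your check that $Rf_*$ commutes with the flat base change only spell out details the paper leaves implicit.
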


\begin{proof}
Since relative perfectness is preserved under flat base change by
\parref{lemma-basechangeofrelperf} and perfectness may be checked fpqc locally
by \citeSP{09UG}, we may reduce to the case \(f \colon X \to S\) itself
admits a factorization \(f = \pr \circ i \colon X \to \PP^n_S \to S\) with
\(i \colon X \to \PP^n_S\) a closed immersion. Since relative perfectness
is Zariski local on the source, the argument of \parref{lemma-relperfonaffine}
may be used to show that \(Ri_*E \in \Dqc(\PP^n_S)\) is perfect. Since the
projection \(\pr \colon \PP^n_S \to S\) is smooth and proper, \(R\pr_*\)
preserves perfect complexes, yielding the result.
\end{proof}

A final useful fact is that, at least for a morphism
\(f \colon \mathcal{X} \to \mathcal{S}\) of stacks which is flat and locally of
finite presentation, formation of internal \(\Hom\) with relatively perfect
objects often preserves \(\Dqc(\mathcal{X})\) and commutes with arbitrary base
change:

\begin{Lemma}
\label{lemma-internalhomandbasechange}
Let \(f \colon \mathcal{X} \to \mathcal{S}\) be a morphism of stacks which is
flat and locally of finite presentation. If \(E, F \in \Dqc(\mathcal{X})\)
are such that \(E\) is pseudo-coherent and \(F\) is perfect relative to
\(\mathcal{S}\), then
\[
R\sHom_{\sO_{\mathcal{X}}}(E,F) \in \Dqc(\mathcal{X})
\]
and its formation commutes with arbitrary base change in that, for
\(\mathcal{T} \to \mathcal{S}\) a morphism of stacks,
\[
R\sHom_{\sO_{\mathcal{X}}}(E,F)_{\mathcal{T}} \cong
R\sHom_{\sO_{\mathcal{X}_{\mathcal{T}}}}(E_{\mathcal{T}}, F_{\mathcal{T}}) \in
\Dqc(\mathcal{X}_{\mathcal{T}}).
\]
\end{Lemma}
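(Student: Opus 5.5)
Both assertions are local on \(\mathcal{S}\) and on \(\mathcal{X}\), so the plan is to reduce to the case of schemes and then to a local statement about complexes of flat modules. Membership in \(\Dqc(\mathcal{X})\) and formation of the base change map can be checked after pulling back along a smooth atlas. Choose a square as in \parref{derived-relatively-perfect-stacks}, with \(V \to \mathcal{S}\) smooth surjective and \(U \to \mathcal{X}\times_{\mathcal{S}} V\) smooth surjective. Pullback along the flat morphism \(U \to \mathcal{X}\) commutes with \(R\sHom\) when the first argument is pseudo-coherent. Pseudo-coherence, relative perfectness (\parref{lemma-relperfislocal}) and flatness with finite presentation of \(U \to V\) are all preserved. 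For the base change statement we choose compatible atlases of \(\mathcal{T}\) and \(\mathcal{X}_{\mathcal{T}}\) over those of \(\mathcal{S}\) and \(\mathcal{X}\). This reduces us to a flat, locally finitely presented morphism of affine schemes \(f \colon X = \Spec B \to S = \Spec A\) and a morphism \(T = \Spec A' \to S\), still affine after localizing.

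In this affine situation, \(E\) is pseudo-coherent, so Zariski locally it is represented by a bounded above complex \(P^\bullet\) of finite free \(B\)-modules. By \citeSP{0DI2}, \(F\) is locally a bounded complex \(M^\bullet\) of \(A\)-flat, finitely presented \(B\)-modules. Then \(R\sHom(E,F)\) is computed by the total Hom complex \(\mathrm{Hom}^\bullet(P^\bullet, M^\bullet)\). Because \(M^\bullet\) is bounded and each \(P^n\) is finite free, each term is a finite direct sum of the \(M^j\). Hence every term is \(A\)-flat and quasi-coherent, and the complex is bounded below. For affine \(X\) this already shows \(R\sHom(E,F)\in\Dqc(X)\); in general one uses that the Hom complex glues, or that \(R\sHom\) with pseudo-coherent source commutes with flat localization (\citeSP{0A6A}).

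For base change, the derived pullback \(L g'^*\) to \(X_T\) is computed by the termwise tensor \(\mathrm{Hom}^\bullet(P^\bullet,M^\bullet)\otimes_A A'\). This uses two facts: \(B\) is \(A\)-flat, and the Hom complex consists of \(A\)-flat terms bounded below. The latter needs a K-flatness argument, since for a bounded-below complex of flat modules one needs care. Because each degree involves only finitely many summands, tensoring commutes termwise with the finite products, so this equals \(\mathrm{Hom}^\bullet(P^\bullet\otimes_A A', M^\bullet\otimes_A A')\). Here \(M^\bullet\otimes_A A'\) represents \(F_T\) since \(M^\bullet\) is \(A\)-flat, \(P^\bullet\otimes_A A'\) represents \(E_T\), and \(P^\bullet\otimes_A A'\) is still finite free, so the right side computes \(R\sHom(E_T,F_T)\).

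The main obstacle is the K-flatness issue for the unbounded-above direction. The Hom complex is bounded below but may be unbounded above, and naive termwise tensoring computes \(\otimes^L\) only for bounded-above flat complexes. I would handle it by truncation. Since \(F\) has finite tor-amplitude over \(A\), for each fixed cohomological degree only a finite stupid truncation \(\sigma_{\ge -N}P^\bullet\) matters. The map \(R\sHom(E,F)\to R\sHom(\sigma_{\ge -N}P^\bullet, F)\) is an isomorphism in degrees up to about \(N-b\), and the source of the latter is perfect. For perfect \(E\), the statement follows from \(R\sHom(E,F)=E^\vee\otimes^L F\) and compatibility of \(\otimes^L\) with pullback. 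Passing to the limit in each degree, using the uniform tor-amplitude bound of \(F\) to control \(\otimes^L_A A'\) degree-wise, gives the general case. A final sign check confirms that the comparison map is the canonical one, so the isomorphisms glue.
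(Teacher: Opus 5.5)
Your affine reduction and the termwise identity \(\mathrm{Hom}^\bullet(P^\bullet,M^\bullet)\otimes_A A'\cong\mathrm{Hom}^\bullet(P^\bullet\otimes_A A',M^\bullet\otimes_A A')\) are exactly the paper's argument. You have also correctly located the step the paper passes over silently: it identifies the derived pullback of this bounded below, unbounded above complex of \(A\)-flat modules with its termwise tensor product. Your truncation repair, however, breaks at ``passing to the limit''. The cone of \(R\sHom(P^\bullet,F)\to R\sHom(\sigma_{\geq -N}P^\bullet,F)\) is (a shift of) \(R\sHom(\sigma_{\leq -N-1}P^\bullet,F)\). That cone does sit in degrees \(\geq N+c\). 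But the tor-amplitude of \(F\) gives no bound on the tor-amplitude of this cone over \(A\), which can be infinite. In that case \(-\otimes^L_A A'\) spreads the cone into arbitrarily negative degrees, and your degreewise comparison fails in every degree. This cannot be patched, because the base change assertion is false as stated.

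For instance, let \(A=\kk[\epsilon]/(\epsilon^2)\), \(f=\id\colon X=S=\Spec A\to S\), \(E=\kk\), \(F=\sO_X\) (perfect, hence \(S\)-perfect), and \(T=\Spec\kk\). Since \(A\) is self-injective, \(R\sHom(E,F)\cong\kk[0]\). So \(R\sHom(E,F)_T\cong\kk\otimes^L_A\kk\) has \(\mathrm{H}^{-i}=\kk\) for every \(i\geq 0\). On the other hand, \(R\sHom(E_T,F_T)=R\Hom_{\kk}(\kk\otimes^L_A\kk,\kk)\) has \(\mathrm{H}^{i}=\kk\) for every \(i\geq 0\).

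In your notation, \(P^\bullet=(\cdots\xrightarrow{\epsilon}A\xrightarrow{\epsilon}A)\), the tails are \(R\sHom(\sigma_{\leq -N-1}P^\bullet,A)\cong\kk[-N-1]\), and the perfect approximations \(R\sHom(\sigma_{\geq -N}P^\bullet,A)\otimes_A\kk\) have cohomology \(\kk\) in degrees \(0,\ldots,N\). They converge to \(R\sHom(E_T,F_T)\), not to \(R\sHom(E,F)_T\).

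Requiring \(E\) to be \(S\)-perfect does not rescue the statement either. Take \(B=A[x]/(x^2)\), \(E=B/(x-\epsilon)\) and \(F=B/(x)\), both free of rank one over \(A\). The periodic resolution \(\cdots\to B\xrightarrow{x+\epsilon}B\xrightarrow{x-\epsilon}B\to E\to 0\) gives \(R\Hom_B(E,F)\cong\kk[0]\) again, while \(\Ext^n_{\kk[x]/(x^2)}(\kk,\kk)=\kk\) for all \(n\geq 0\).

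What your argument does establish is the following: the first assertion; base change for perfect \(E\) (via \(E^\vee\otimes^L F\)); and base change along flat \(\mathcal{T}\to\mathcal{S}\), where termwise tensoring is legitimate. The lemma has to be restricted to such cases, and any later use of it for non-flat base change with non-perfect \(E\) needs a different argument.
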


%The assumption on $F$ guarantees that $F$ remains in $D+_{QC}$ after arbitrary base change.

\begin{proof}
The first statement holds more generally for \(E\) pseudo-coherent and
\(F\) locally in \(\Dqc^+(\mathcal{X})\): this may be checked smooth locally
on \(\mathcal{X}\) and so it reduces to the affine case, where \citeSP{0A6H}
applies. The second statement may now be reduced to the case where
\(f \colon X \to \Spec A\) is a morphism of affine schemes, and the base
change is of the form \(\Spec B \to \Spec A\). In this setting, \(E\) may be
represented by a bounded above complex \(\mathcal{E}^\bullet\) of finite free
\(\sO_X\)-modules, and \(F\) by a bounded complex \(\mathcal{F}^\bullet\) of
\(A\)-flat finitely presented \(\sO_X\)-modules.

On the one hand, as in the proof of \emph{ibid.}, the internal \(\Hom\) in
question may be represented as
\[
R\sHom_{\sO_X}(E,F) \simeq
\sHom_{\sO_X}^\bullet(\mathcal{E}^\bullet, \mathcal{F}^\bullet).
\]
The boundedness properties of \(\mathcal{E}^\bullet\) and
\(\mathcal{F}^\bullet\) imply that every term of this
complex is a finite direct sum of \(\sO_X\)-modules of the form
\(\sHom_{\sO_X}(\mathcal{E}^i, \mathcal{F}^j)\). On the other hand, since
the terms of \(\mathcal{E}^\bullet\) are free, and \(\mathcal{F}^\bullet\)
are \(A\)-flat, the internal \(\Hom\) on the base change may be
represented as
\[
R\sHom_{\sO_{X_B}}(E_B,F_B) \simeq
\sHom_{\sO_{X_B}}^\bullet(\mathcal{E}^\bullet \otimes_A B, \mathcal{F}^\bullet \otimes_A B)
\]
and the complex satisfies the same finiteness condition. This implies
the result since
\[
\sHom_{\sO_X}(\mathcal{E}^i, \mathcal{F}^j) \otimes_A B \cong
\sHom_{\sO_{X_B}}(\mathcal{E}^i \otimes_A B, \mathcal{F}^j \otimes_A B).
\qedhere
\]
\end{proof}

\subsectiondash{Relative orthogonal categories}\label{derived-orthogonal}
Let \(S\) be a qcqs scheme and \(g \colon Y \to S\) a morphism of schemes which
is flat, proper, and of finite presentation. Given a perfect object
\(P \in \Dqc(Y)\), its \emph{\(S\)-linear right orthogonal category} is
the full subcategory \(\langle P \rangle^\perp \subset \Dqc(Y)\) with objects
\[
\langle P \rangle^\perp \coloneqq
\{E \in \Dqc(Y) : Rg_*R\sHom_{\sO_Y}(P,E) = 0\}.
\]
Two basic properties of this construction are that:
\begin{enumerate}
\item\label{derived-orthogonal.linear}
\(\langle P \rangle^\perp\) is triangulated and \(S\)-linear, and so by our convenctions
closed under direct sums; and
\item\label{derived-orthogonal.local}
membership in \(\langle P \rangle^\perp\) is flat local: for a faithfully flat
morphism \(U \to S\) from a qcqs scheme, if \(E \in \Dqc(Y)\) is such that
\(E_U \in \langle P_U \rangle^\perp\), then
\(E \in \langle P \rangle^\perp\).
\end{enumerate}

\begin{proof}
For \ref{derived-orthogonal.linear}, that the subcategory
\(\langle P \rangle^\perp\) is triangulated follows from the fact that
the functor \(Rg_*R\sHom_{\sO_Y}(P,-)\) is exact. For \(S\)-linearity, let
\(E \in \langle P \rangle^\perp\) and \(F \in \Dqc(S)\), then
\[
Rg_*R\sHom_{\sO_Y}(P, E \otimes^L Lg^*F)
\cong Rg_*R\sHom_{\sO_Y}(P,E) \otimes^L F
= 0
\]
using the projection formula, so \(E \otimes^L Lg^*F \in \langle P \rangle^\perp\).
Item \ref{derived-orthogonal.local} follows directly from flat base change and
faithfully flat descent.
% \[ (Rg_*R\sHom_{\sO_Y}(E,M))_U \cong Rg_{U,*}R\sHom_{\sO_{Y_U}}(E_U,M_U). \]
\end{proof}

We now arrive at the main result of this section, which essentially says that
we may check whether or not a Fourier--Mukai transform from a single
homogeneous component of the derived category of a \(\mathbf{G}_m\)-gerbe to
the derived category of a scheme is an equivalence after passing to an
fppf cover. This may be seen as a variant of the descent result
\cite[Theorem B]{BS:Descent} of Bergh and Schn\"urer; unfortunately, our
statement does not directly follow from theirs because our kernel is not
perfect, only relatively so, and the source of our functor is not the
derived category of a proper algebraic stack. Note also that this result,
although phrased in terms of a \(\mathbf{G}_m\)-gerbe, also applies to schemes
by taking a weight \(0\) kernel on any gerbe and using
\parref{gmgerbes-derived-categories}\ref{gmgerbes-derived-categories.pullback}.
The result is:

\begin{Theorem}
\label{thm-locallytwisted}
Let \(g \colon Y \to S\) be a flat, proper, and finitely presented morphism
of qcqs schemes, \(f \colon X \to S\) a fpqc locally H-projective
morphism from an algebraic space, and \(\pi \colon \mathcal{X} \to X\) a
\(\mathbf{G}_m\)-gerbe. Let
\[
\Phi \colon
\mathrm{D}_{\mathrm{qc},-i}(\mathcal{X}) \subset
\Dqc(\mathcal{X}) \stackrel{\Phi_K}{\longrightarrow}
\Dqc(Y)
\]
be the functor induced by the Fourier--Mukai transform with a
\(Y\)-perfect kernel
\(K \in \mathrm{D}_{\mathrm{qc},i}(\mathcal{X} \times_S Y)\), and let \(U \to S\)
be a faithfully flat morphism from a qcqs scheme.
\begin{enumerate}
\item\label{thm-locallytwisted.fully-faithful}
If the functor
\(\Phi_U \colon \mathrm{D}_{\mathrm{qc},-i}(\mathcal{X}_U) \to \Dqc(Y_U)\)
induced by the Fourier--Mukai transform with the \(Y_U\)-perfect kernel
\(K_U \in \mathrm{D}_{\mathrm{qc},i}(\mathcal{X}_U \times_U Y_U)\)
is fully faithful, then \(\Phi\) is fully faithful.
\end{enumerate}
Let \(P \in \Dqc(Y)\) be a perfect complex and let
\(\mathcal{A} \coloneqq \langle P \rangle^\perp \subset \Dqc(Y)\) be its \(S\)-linear right orthogonal.
\begin{enumerate}
\setcounter{enumi}{1}
\item\label{thm-locallytwisted.essential-image}
If the essential image of \(\Phi_U\) lies in \(\mathcal{A}_U\), then
the essential image of \(\Phi\) is lies in \(\mathcal{A}\).
\item\label{thm-locallytwisted.equivalence}
If \(\Phi'\) is fully faithful with essential image
\(\mathcal{A}_U\), then \(\Phi\) induces an equivalence
\(\mathrm{D}_{\mathrm{qc}, -i}(\mathcal{X}) \cong \mathcal{A}\).
\end{enumerate}
\end{Theorem}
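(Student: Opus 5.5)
The plan is to reduce everything to the enriched-Hom criterion \parref{lemma-ffconditions} and to flat base change. Then each of the three assertions becomes the statement that some object or morphism in \(\Dqc(S)\) vanishes or is an isomorphism. Such statements can be checked after the faithfully flat base change \(U \to S\).

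\textbf{Setup.} First I verify the hypotheses of \parref{lemma-ffconditions} for the functor out of the single weight component. The subcategory \(\mathrm{D}_{\mathrm{qc},-i}(\mathcal{X})\) is a direct factor of \(\Dqc(\mathcal{X})\) by \parref{gmgerbes-derived-categories}, and \(\mathcal{X}\) is concentrated. By \cite[Example 9.3]{hallrydh} it has a perfect generator \(G\), which is then an \(S\)-linear generator. By \parref{derived-relative-generators}\ref{derived-relative-generators.base-change}, \(G_U\) is a \(U\)-linear generator of \(\mathrm{D}_{\mathrm{qc},-i}(\mathcal{X}_U)\). Here I apply the characterization \parref{lemma-relativelynohoms} within the weight summand; this works because the weight decomposition is compatible with base change.

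Next I check that \(\Phi\) sends perfect objects to perfect objects. For perfect \(E\) of weight \(-i\), the object \(L\pr_1^*E \otimes^L K\) has weight \(0\), so by \parref{gmgerbes-derived-categories}\ref{gmgerbes-derived-categories.pullback} it descends to \(X \times_S Y\). It is \(Y\)-perfect there, since \(K\) is \(Y\)-perfect and tensoring with a perfect complex preserves relative perfectness; this is checked smooth locally. The pushforward along \(X \times_S Y \to Y\) is the base change of \(f\), hence fpqc locally \(H\)-projective. So \parref{lemma-perfectpush} makes \(\Phi(E)\) perfect.

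The proof of \parref{lemma-ffconditions} only uses these inputs together with \parref{lemma-ffbygen} and \parref{lemma-rightadjointofFM}. Running it inside the summand shows that \(\Phi\) is fully faithful if and only if the enriched map
\[
\phi \colon Rf_*R\pi_*R\sHom(G,G) \to Rg_*R\sHom(\Phi(G),\Phi(G))
\]
is an isomorphism in \(\Dqc(S)\).

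\textbf{Part \ref{thm-locallytwisted.fully-faithful}.} By flat base change, applying it to both pushforwards via \parref{derived-fm}\ref{derived-fm.base-change}, the pullback of \(\phi\) to \(U\) is the corresponding map \(\phi_U\) for \(G_U\) and \(K_U\). By \parref{lemma-basechangeofrelperf}, \(K_U\) is still \(Y_U\)-perfect, so the criterion applies over \(U\) as well. Since \(\Phi_U\) is fully faithful, \(\phi_U\) is an isomorphism. Faithfully flat descent then shows that \(\phi\) is an isomorphism, and hence \(\Phi\) is fully faithful.

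\textbf{Part \ref{thm-locallytwisted.essential-image}.} For \(E\) of weight \(-i\), flat base change gives \(\Phi(E)_U \cong \Phi_U(E_U) \in \mathcal{A}_U\). Then \parref{derived-orthogonal}\ref{derived-orthogonal.local} gives \(\Phi(E) \in \mathcal{A}\).

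\textbf{Part \ref{thm-locallytwisted.equivalence}.} By the first two parts, \(\Phi\) is fully faithful into \(\mathcal{A}\). The essential image is a strictly full triangulated subcategory, closed under sums because \(\Phi\) commutes with coproducts, and \(S\)-linear by \parref{derived-fm}\ref{derived-fm.linear}. So by \parref{derived-relative-generators-subcategory}\ref{derived-relative-generators-subcategory.smallest} it suffices to show that \(\Phi(G)\) is an \(S\)-linear generator of \(\mathcal{A}\). Suppose \(E \in \mathcal{A}\) satisfies \(Rg_*R\sHom(\Phi(G),E)=0\). Base change along the flat map \(U \to S\) gives \(Rg_{U,*}R\sHom(\Phi_U(G_U),E_U)=0\). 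Since \(\Phi_U\) is an equivalence onto \(\mathcal{A}_U\) and \(G_U\) is a \(U\)-linear generator, \(\Phi_U(G_U)\) is a \(U\)-linear generator of \(\mathcal{A}_U\). Hence \(E_U=0\), and so \(E=0\) by faithful flatness.

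\textbf{Main obstacle.} I expect the delicate step to be the setup. Concretely, one must check that \(\Phi\) preserves perfect objects, which requires descending the weight-\(0\) object to \(X\times_S Y\) and tracking relative perfectness through the gerbe. One must also check that the arguments of \parref{lemma-ffconditions} and \parref{lemma-ffbygen} go through for the weight component rather than the whole \(\Dqc(\mathcal{X})\), since a right adjoint to the restriction is obtained by composing with projection onto the summand.
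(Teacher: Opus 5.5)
Your proposal is correct and follows essentially the same route as the paper. You reduce full faithfulness to the enriched-Hom criterion for a perfect \(S\)-linear generator \(G\) of the weight \(-i\) summand, using that \(\Phi\) has a right adjoint and preserves perfect objects, then use that \(G_U\) is still a \(U\)-linear generator and conclude by flat base change and faithfully flat descent. The only minor deviations are that you prove (ii) object by object from the flat-local nature of \(\langle P\rangle^\perp\) rather than through \(G\). In (iii) you descend directly the statement that \(\Phi(G)\) is an \(S\)-linear generator of \(\mathcal{A}\), instead of the paper's equivalent criterion that \(\Phi(G)\oplus P\) is an \(S\)-linear generator of \(\Dqc(Y)\).
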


Toward the proof, observe that \(\Phi\) is a composition of an inclusion of the
weight \(-i\) component of the derived category of \(\mathcal{X}\) and the
Fourier--Mukai transform \(\Phi_K\). The hypotheses on the situation guarantee
that \(\Phi\) has some good properties:

\begin{Lemma}
\label{derived-Phi-properties}
The functor
\(\Phi \colon \mathrm{D}_{\mathrm{qc},-i}(\mathcal{X}) \to \Dqc(Y)\)
\begin{enumerate}
\item\label{derived-Phi-properties.adjoint}
has an exact right adjoint; and
\item\label{derived-Phi-properties.perfect}
takes perfect complexes to perfect complexes.
\end{enumerate}
\end{Lemma}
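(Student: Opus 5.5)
The plan is to treat the two parts separately. Part \ref{derived-Phi-properties.adjoint} follows formally from \parref{lemma-rightadjointofFM} together with the orthogonal decomposition of \parref{gmgerbes-derived-categories}. Part \ref{derived-Phi-properties.perfect} reduces, by a weight argument, to \parref{lemma-perfectpush} applied on the coarse space \(X \times_S Y\).

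\emph{Right adjoint.} The composite \(f \circ \pi \colon \mathcal{X} \to S\) is concentrated. Indeed, \(\pi\) is concentrated, as noted in \parref{gmgerbes-derived-categories}, and \(f\) is a qcqs morphism of algebraic spaces, hence concentrated. So \parref{lemma-rightadjointofFM} gives an exact right adjoint \(R\) of \(\Phi_K \colon \Dqc(\mathcal{X}) \to \Dqc(Y)\). By the orthogonal decomposition \(\Dqc(\mathcal{X}) \cong \prod_j \mathrm{D}_{\mathrm{qc},j}(\mathcal{X})\), the inclusion of \(\mathrm{D}_{\mathrm{qc},-i}(\mathcal{X})\) has the exact right adjoint \(E \mapsto E_{-i}\). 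Composing adjunctions, \(F \mapsto R(F)_{-i}\) is an exact right adjoint of \(\Phi\).

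\emph{Perfect to perfect.} Let \(P \in \mathrm{D}_{\mathrm{qc},-i}(\mathcal{X})\) be perfect. Write \(\pi' \colon \mathcal{X} \times_S Y \to X \times_S Y\) for the base-changed gerbe and \(q \colon X \times_S Y \to Y\) for the projection. The steps are as follows.

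\begin{enumerate}
\item By \parref{gmgerbes-derived-categories}\ref{gmgerbes-derived-categories.morphisms} and \ref{gmgerbes-derived-categories.tensor}, the object \(L\pr_1^*P \otimes^L K\) has weight \(-i + i = 0\).
\item By \parref{gmgerbes-derived-categories}\ref{gmgerbes-derived-categories.pullback} and \ref{gmgerbes-derived-categories.pushforward}, this weight-zero object is \(L\pi'^*E'\), where \(E' \coloneqq R\pi'_*(L\pr_1^*P \otimes^L K) \in \Dqc(X \times_S Y)\). Hence \(\Phi(P) \cong Rq_*R\pi'_*(L\pi'^*E') \cong Rq_*E'\).
\item The object \(L\pr_1^*P \otimes^L K\) is \(Y\)-perfect. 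Tensoring a \(Y\)-perfect object with a perfect one preserves \(Y\)-perfectness, and this can be checked smooth locally, where \(P\) is a bounded complex of finite free modules.
\item The morphism \(q\) is fpqc locally \(H\)-projective. Pulling back an fpqc cover \(T \to S\) on which \(f\) is \(H\)-projective gives an fpqc cover \(Y_T \to Y\) on which \(q\) factors through \(\PP^n_{Y_T}\) by a closed immersion.
\item Therefore, once \(E'\) is known to be \(Y\)-perfect, \parref{lemma-perfectpush} shows that \(\Phi(P) \cong Rq_*E'\) is perfect.
\end{enumerate}

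The main obstacle is step (iii)'s counterpart on the coarse space: descending \(Y\)-perfectness from \(L\pi'^*E'\) on the gerbe to \(E'\) on \(X \times_S Y\). I would argue fppf locally, using \parref{lemma-relperfislocal}\ref{lemma-relperfislocal.fppf-descent} and the definition in \parref{derived-relatively-perfect-stacks}. Choose an fppf cover \(V \to X \times_S Y\) over which the gerbe acquires a section \(\sigma\), so the gerbe becomes trivial there. Then \(E'|_V \cong L\sigma^*L\pi'^*E'\).

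It then suffices to check that \(L\sigma^*\) preserves \(Y\)-perfectness on the trivial gerbe \(B\mathbf{G}_{m,V}\). Its smooth atlas is \(V \to B\mathbf{G}_{m,V}\) itself, given by \(\sigma\). So \(Y\)-perfectness of an object on \(B\mathbf{G}_{m,V}\) means, by definition, \(Y\)-perfectness of its pullback along \(\sigma\). Hence \(E'|_V\) is \(Y\)-perfect, and fppf descent gives that \(E'\) is \(Y\)-perfect.
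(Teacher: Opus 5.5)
Your proposal is correct and follows the paper's proof. Part (i) composes the adjoint from \parref{lemma-rightadjointofFM} with the weight projection, and part (ii) identifies the weight-zero object $L\pr_1^*P \otimes^L K$ with an object of $\Dqc(X \times_S Y)$ and applies \parref{lemma-perfectpush} to the fpqc locally $H$-projective morphism $X \times_S Y \to Y$. The one difference is your fppf trivialization of the gerbe to transfer $Y$-perfectness, which is valid but unnecessary. The paper simply uses that $\pi \times \id$ is smooth and surjective: composing any smooth atlas of $\mathcal{X} \times_S Y$ with it gives an atlas of $X \times_S Y$, so both relative perfectness conditions are tested on the same object.
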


\begin{proof}
For \ref{derived-Phi-properties.adjoint}, observe that \(\Phi\) has an
exact right adjoint since it is a composition of two functors which do: the
Fourier--Mukai transform \(\Phi_K\) does by \parref{lemma-rightadjointofFM},
and the right adjoint to the inclusion
\(\mathrm{D}_{\mathrm{qc},-i}(\mathcal{X}) \subset \Dqc(\mathcal{X})\) is the
projection onto the weight \(i\) component, see
\parref{gmgerbes-derived-categories}.

For \ref{derived-Phi-properties.perfect}, let
\(P \in \mathrm{D}_{\mathrm{qc},-i}(\mathcal{X})\) be a perfect complex. Then
\(L\pr_1^*P \in \mathrm{D}_{\mathrm{qc},-i}(\mathcal{X} \times_S Y)\) is
perfect, so that
\[
L\pr_1^*P \otimes^L K \in
\mathrm{D}_{\mathrm{qc}, 0}(\mathcal{X} \times_S Y) \cong
\Dqc(X \times_S Y)
\]
is \(Y\)-perfect, where the equivalence is induced by the surjective and smooth structure map
\(\pi \times \id \colon \mathcal{X} \times_S Y \to X \times_S Y\) as in
\parref{gmgerbes-derived-categories}\ref{gmgerbes-derived-categories.pullback}.
Under this identification, \(\Phi(P)\) is now obtained via pushforward along
\(f_Y \colon X \times_S Y \to Y\). The hypotheses on
\(g \colon Y \to S\) ensure that \(f_Y\) is also fpqc locally \(H\)-projective,
and so \parref{lemma-perfectpush} implies that \(Rf_{Y,*}\) takes
\(Y\)-perfect complexes to perfect complexes, yielding the result.
\end{proof}

We now phrase full faithfulness and essential surjectivity of the functor
\(\Phi\) in terms of a compact generator of
\(\mathrm{D}_{\mathrm{qc},-i}(\mathcal{X})\)---note that such an object exists,
as discussed in \parref{gmgerbes-derived-categories}.

\begin{Lemma}
\label{derived-descent-generator}
Let \(G \in \mathrm{D}_{\mathrm{qc}, -i}(\mathcal{X})\) be a perfect
\(S\)-linear generator.
\begin{enumerate}
\item\label{derived-descent-generator.fully-faithful}
\(\Phi\) is fully faithful if and only if the morphism in \(\Dqc(S)\)
\[
Rf_*R\sHom_{\sO_{\mathcal{X}}}(G,G) \to
Rg_*R\sHom_{\sO_Y}(\Phi(G), \Phi(G))
\]
from \parref{derived-fm}\ref{derived-fm.enriched-hom} is an isomorphism.
\item\label{derived-descent-generator.essential-image}
If \(\Phi\) is fully faithful, then the essential image of \(\Phi\) is
\(\mathcal{A} \coloneqq \langle P \rangle^\perp\) if and only if the essential
image of \(\Phi\) is contained in \(\mathcal{A}\) and \(\Phi(G) \oplus P\) is
an \(S\)-linear perfect generator of \(\Dqc(Y)\).
\end{enumerate}
\end{Lemma}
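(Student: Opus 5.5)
For \ref{derived-descent-generator.fully-faithful}, I will run the argument of \parref{lemma-ffconditions} with \(\mathcal{X}\) replaced by the weight \(-i\) component \(\mathcal{D} \coloneqq \mathrm{D}_{\mathrm{qc},-i}(\mathcal{X})\). This substitution is needed because \(\mathcal{X}\) is only a gerbe over an algebraic space, and \(G\) generates only one weight piece.

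The ingredients are as follows.
\begin{itemize}
\item \(\Phi\) has an exact right adjoint and preserves perfect complexes, by \parref{derived-Phi-properties}.
\item \(S\) is qcqs, since \(Y \to S\) is surjective and \(S\) is a qcqs scheme, so \(\Dqc(S)\) has a perfect generator \(F\).
\item \(\mathcal{D}\) is an \(S\)-linear subcategory of \(\Dqc(\mathcal{X})\), because \(Lf^*F\) has weight \(0\).
\end{itemize}
By \parref{derived-relative-generators-subcategory}\ref{derived-relative-generators-subcategory.make-absolute}, the object \(G \otimes^L L(f\pi)^*F\) is then a compact generator of \(\mathcal{D}\). Compactness in \(\mathcal{D}\) holds because \(\mathcal{X}\) is concentrated: it is concentrated over \(X\), and \(X\) is a qcqs algebraic space. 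Compact objects of \(\mathcal{D}\) are retracts of objects built from this generator, hence perfect, so \(\Phi\) takes compact objects to compact objects. From here, the two-step argument of \parref{lemma-ffconditions} goes through verbatim. First apply \(\Ext^i_S(F,-)\) to the enriched map. Then tensor with \(F\) and use the projection formula together with \(S\)-linearity \parref{derived-fm}\ref{derived-fm.linear} to reduce to self-Exts of the compact generator. Finally invoke \parref{lemma-ffbygen}. The only care needed is that the enriched map of \parref{derived-fm}\ref{derived-fm.enriched-hom} requires \(G\) and \(\Phi(G)\) to be perfect, which we have.

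For \ref{derived-descent-generator.essential-image}, assume \(\Phi\) is fully faithful.

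For the forward direction, suppose the essential image equals \(\mathcal{A}\). Let \(E \in \Dqc(Y)\) satisfy \(Rg_*R\sHom(P,E)=0\) and \(Rg_*R\sHom(\Phi(G),E)=0\). The first condition gives \(E \in \mathcal{A}\), so \(E \cong \Phi(E')\) for some \(E'\). By the enriched version of full faithfulness, \(Rf_*R\sHom(G,E') \cong Rg_*R\sHom(\Phi(G),\Phi(E')) = 0\). Since \(G\) is an \(S\)-linear generator, \(E'=0\) and hence \(E=0\).

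To justify the enriched version, I will argue that the enriched map is an isomorphism for any \(E'\), not just for \(E'=G\). Apply \(\Hom_S(F'[j],-)\) for all perfect \(F'\): this yields the maps \(\Hom(G\otimes Lf^*F',E'[j]) \to \Hom(\Phi(\cdot),\Phi(\cdot))\), which are bijections by full faithfulness. Since \(F\) generates \(\Dqc(S)\), the enriched map is an isomorphism.

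For the converse, the essential image \(\mathcal{B}\) of \(\Phi\) is a strictly full triangulated subcategory of \(\Dqc(Y)\). It is \(S\)-linear by \parref{derived-fm}\ref{derived-fm.linear}, and closed under sums because \(\Phi\) is fully faithful and commutes with sums. By hypothesis \(\mathcal{B} \subseteq \mathcal{A}\). Take \(E \in \mathcal{A}\) and let \(R\) denote the right adjoint. Form the triangle \(\Phi R E \to E \to C\). Full faithfulness gives \(R\Phi \cong \id\), so \(RC = 0\), and the enriched adjunction then gives \(Rg_*R\sHom(\Phi(G),C) \cong Rf_*R\sHom(G,RC)=0\). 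Both \(\Phi RE\) and \(E\) lie in \(\mathcal{A}\), so \(C \in \mathcal{A}\) as well, i.e.\ \(Rg_*R\sHom(P,C)=0\). Since \(\Phi(G)\oplus P\) is an \(S\)-linear generator, \(C=0\), and therefore \(E \in \mathcal{B}\).

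The main obstacle is justifying these enriched (relative-Hom) adjunction identities from ordinary Hom-adjunctions. I plan to handle this uniformly via Yoneda against \(F'\otimes\), as in the construction of \parref{derived-fm}\ref{derived-fm.enriched-hom}, using the projection formula and \(S\)-linearity of \(\Phi\), and of \(R\) by adjunction.
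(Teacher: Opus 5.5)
Your proposal is correct. Part (i) follows the paper, which just says to argue as in \parref{lemma-ffconditions}. You additionally make explicit why \(\Phi\) preserves compact objects of \(\mathrm{D}_{\mathrm{qc},-i}(\mathcal{X})\), a point the paper leaves implicit. (A minor nit: \(S\) is qcqs directly by the hypotheses of \parref{thm-locallytwisted}; surjectivity of \(Y \to S\) plays no role.)

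Part (ii) is where your route differs.

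\emph{Forward direction.} The paper shows that the enriched map is an isomorphism for arbitrary \(E'\) by combining (i) with the assertion that the objects for which it is an isomorphism form an \(S\)-linear triangulated subcategory containing \(G\). You instead test the enriched map against shifts of a perfect generator \(F\) of \(\Dqc(S)\). By the construction in \parref{derived-fm}\ref{derived-fm.enriched-hom}, this test becomes the map \(\Phi\) on \(\Hom(G \otimes^L Lf^*F[j], E')\), which is bijective by full faithfulness. Your argument is shorter and does not depend on (i).

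\emph{Converse.} The paper shows that \(\Phi(G)\) is an \(S\)-linear generator of \(\mathcal{A}\) and applies \parref{derived-relative-generators-subcategory}\ref{derived-relative-generators-subcategory.smallest} to the essential image. That requires knowing the essential image is strictly full, triangulated, \(S\)-linear and closed under sums, and it ultimately rests on compact generation. You use the counit triangle \(\Phi R E \to E \to C\) with \(RC = 0\): then \(C \in \mathcal{A}\) and \(C\) is relatively orthogonal to \(\Phi(G)\), so it is killed by the generator \(\Phi(G) \oplus P\). This needs only the right adjoint from \parref{derived-Phi-properties}. For the vanishing of \(Rg_*R\sHom(\Phi(G),C)\) you do not need a full enriched adjunction isomorphism. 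It suffices that \(\Hom(\Phi(G \otimes^L Lf^*F[j]), C) \cong \Hom(G \otimes^L Lf^*F[j], RC) = 0\) for all \(j\). Your remarks that the essential image is sum-closed and \(S\)-linear are therefore never used.

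The paper's version routes the argument through a reusable generation statement. Yours is the standard, more elementary adjunction argument.
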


\begin{proof}
Item \ref{derived-descent-generator.fully-faithful} may be proven in the same
way as \parref{lemma-ffconditions} since \(\Dqc(S)\) has a perfect generator,
\(\mathrm{D}_{\mathrm{qc},-i}(\mathcal{X})\) a perfect \(S\)-linear generator,
and \(\Phi\) an exact right adjoint by
\parref{derived-Phi-properties}\ref{derived-Phi-properties.adjoint}, and
\(\Phi\) takes perfect complexes to perfect complexes by
\parref{derived-Phi-properties}\ref{derived-Phi-properties.perfect}.

For \ref{derived-descent-generator.essential-image}, suppose that the essential
image of \(\Phi\) is \(\mathcal{A}\). Since \(\Phi(G)\) is perfect by
\parref{derived-Phi-properties}\ref{derived-Phi-properties.perfect}, it remains
to see that \(\Phi(G) \oplus P\) is an \(S\)-linear generator of \(\Dqc(Y)\).
Let \(E \in \Dqc(Y)\) be a nonzero object. If
\(E \notin \mathcal{A}\), then by definition of the orthogonal category,
\[
Rg_*R\sHom_{\sO_Y}(P,E) \neq 0.
\]
If \(E \in \mathcal{A}\), choose
\(E' \in \mathrm{D}_{\mathrm{qc},-i}(\mathcal{X})\) with \(\Phi(E') \cong E\).
Then, using \ref{derived-descent-generator.fully-faithful} and the
hypothesis that \(G\) is an \(S\)-linear generator of
\(\mathrm{D}_{\mathrm{qc},-i}(\mathcal{X})\),
\[
Rg_*R\sHom_{\sO_Y}(\Phi(G),E) \cong
Rf_*R\sHom_{\sO_{\mathcal{X}}}(G,E') \neq 0.
\]
Namely, the collection of $F$ for which 
$$
Rf_*R\sHom_{\sO_{\mathcal{X}}}(G,F) \to Rg_*R\sHom_{\sO_Y}(\Phi(G),\Phi(F))
$$
is an isomorphism is an $S$-linear full triangulated subcategory of $D_{qc, -i}(\mathcal{X})$ by the projection formula. Put together, this implies that \(\Phi(G) \oplus P\) is an \(S\)-linear
generator of \(\Dqc(Y)\).

Conversely, assume that the essential image of \(\Phi\) is contained in
\(\mathcal{A}\) and \(\Phi(G) \oplus P\) is an \(S\)-linear generator of
\(\Dqc(Y)\). Given a nonzero object \(E \in \mathcal{A}\), this gives
\[
Rg_*R\sHom_{\sO_Y}(\Phi(G), E) =
Rg_*R\sHom_{\sO_Y}(\Phi(G) \oplus P, E) \neq 0.
\]
This means that \(\Phi(G)\) is an \(S\)-linear generator of
\(\mathcal{A}\). Since \(\mathcal{A}\) is triangulated by
\parref{derived-orthogonal}\ref{derived-orthogonal.linear} and \(\Phi\) is
\(S\)-linear by \parref{derived-fm}\ref{derived-fm.linear} and fully faithful, hence the essential image of $\Phi$ is triangulated and $S$-linear,
\parref{derived-relative-generators-subcategory}\ref{derived-relative-generators-subcategory.smallest}
now implies that \(\Phi\) is essentially surjective onto \(\mathcal{A}\).
\end{proof}

\begin{proof}[Proof of \parref{thm-locallytwisted}]
Let \(G \in \mathrm{D}_{\mathrm{qc},-i}(\mathcal{X})\) be a perfect
\(S\)-linear generator. Descent implies that
\(G_U \in \mathrm{D}_{\mathrm{qc},-i}(\mathcal{X}_U)\) is a perfect \(U\)-linear
generator. Statement \ref{thm-locallytwisted.fully-faithful} now
follows from the full faithfulness criterion of
\parref{derived-descent-generator}\ref{derived-descent-generator.fully-faithful}
since the condition there is compatible with fppf base change. Part
\ref{thm-locallytwisted.essential-image} follows from
\parref{derived-orthogonal}\ref{derived-orthogonal.local} since, for any flat qcqs
\(S\)-scheme \(T\),
\[
\Phi(\mathrm{D}_{\mathrm{qc},-i}(\mathcal{X}_T)) \subseteq \mathcal{A}_T
\iff
\Phi(G)_T \cong \Phi_T(G_T) \in \mathcal{A}_T
\iff
Rg_*R\sHom_{\sO_{Y_T}}(P_T,G_T) = 0
\]
since \(G_T \in \mathrm{D}_{\mathrm{qc},-i}(\mathcal{X}_T)\) is a \(T\)-linear
generator and the image of \(\Phi\) is a \(T\)-linear triangulated category.
Finally, \ref{thm-locallytwisted.equivalence} follows from the
essential surjectivity criterion
\parref{derived-descent-generator}\ref{derived-descent-generator.essential-image}
since the condition there, once again, is compatible with flat base change.
\end{proof}

\section{Moduli of complexes}\label{section-complexes}
In this section, we discuss Lieblich's stack of complexes on a flat,
proper, and finitely presented morphism \(f \colon X \to S\). The main
result of this section is \parref{theorem-fully-faithful-open-immersion},
which says that fully faithful Fourier--Mukai transforms whose kernels
are perfect relative to both source and target induce open immersions
between stacks of complexes.

\subsectiondash{Glueability and simplicity}\label{complexes-glueable}
Let \(f \colon \mathcal{X} \to \mathcal{S}\) be a flat, proper, finitely
presented, and concentrated morphism of stacks. An \(\mathcal{S}\)-perfect
object \(E \in \Dqc(\mathcal{X})\) is said to be
\begin{itemize}
\item \emph{glueable} if \(R^if_*R\sHom_{\sO_{\mathcal{X}}}(E,E) = 0\) for all
integers \(i < 0\);
\item \emph{simplistic} if
\(\sO_{\mathcal{S}} \to Rf_*R\sHom_{\sO_{\mathcal{X}}}(E,E)\) is an isomorphism
on \(0\)-th cohomology sheaves;
\item \emph{universally glueable} if for every morphism \(\mathcal{T} \to \mathcal{S}\)
of stacks, \(E_{\mathcal{T}} \in \Dqc(\mathcal{X}_{\mathcal{T}})\) is glueable; and
\item \emph{simple} if for every morphism \(\mathcal{T} \to \mathcal{S}\)
of stacks, \(E_{\mathcal{T}} \in \Dqc(\mathcal{X}_{\mathcal{T}})\) is simplistic.
\end{itemize}
These notions depend on the morphism \(f\), so we may sometimes emphasize that
the properties are \emph{relative to \(\mathcal{S}\)}. In particular, the
condition on the base changed object \(E_{\mathcal{T}}\) in universal
glueability and simplicity are relative to \(\mathcal{T}\).

Being glueable or simplistic behave well under base change. Namely, given a
flat and quasi-compact morphism \(\mathcal{T} \to \mathcal{S}\) of stacks, flat
base change together with \parref{lemma-internalhomandbasechange} imply that:
\begin{enumerate}
\item\label{complexes-glueable.base-change}
If \(E \in \Dqc(\mathcal{X})\) is either glueable or simplistic, the same is
true for \(E_{\mathcal{T}} \in \Dqc(\mathcal{X}_{\mathcal{T}})\).
\item\label{complexes-glueable.descend}
If \(\mathcal{T} \to \mathcal{S}\) is surjective and \(E_{\mathcal{T}}\) is
glueable or simplistic, the same is true for \(E\).
\end{enumerate}
In other words, the properties of being glueable or simplistic satisfy fppf
descent. Since stacks have fppf coverings by a disjoint union of affine
schemes, being universally glueable or simple may be characterized more
concretely as follows:
\begin{enumerate}
\setcounter{enumi}{2}
\item\label{complexes-glueable.universally-glueable-affines}
\(E\) is universally glueable if and only if for every morphism \(T \to \mathcal{S}\)
from an affine scheme, \(E_T \in \Dqc(\mathcal{X}_T)\) satisfies \(\Ext^i_{\mathcal{X}_T}(E_T, E_T) = 0\) for all
integers \(i < 0\).
\item\label{complexes-glueable.simple-affines}
\(E\) is simple if and only if for every morphism \(T \to \mathcal{S}\) from
an affine scheme, the canonical map \(\sO_{\mathcal{X}}(T) \to \Hom_{\mathcal{X}_T}(E_T,E_T)\) is
an isomorphism.
\end{enumerate}
In the case that the morphism \(f\) is between locally Noetherian algebraic
spaces, \cite[Proposition 2.1.9]{Lieblich:Complexes} gives a fibrewise
criterion for universal glueability:
\begin{enumerate}
\setcounter{enumi}{4}
\item\label{complexes-glueable.fibrewise}
Let \(f \colon X \to S\) be a flat, proper, and finitely presented morphism between
locally Noetherian algebraic spaces. An \(S\)-perfect object \(E \in \Dqc(X)\)
is universally glueable if and only if \(E_{\bar{s}} \in \Dqc(X_{\bar{s}})\)
is glueable for each geometric point \(\bar{s} \to S\).
\end{enumerate}

Universally glueable complexes have the pleasant property that their presheaves
of endomorphisms are actually sheaves. More generally:

\begin{Lemma}
\label{complexes-homs-glue}
Let \(f \colon \mathcal{X} \to \mathcal{S}\) be a flat, proper, finitely
presented, and concentrated morphism of stacks. Let
\(E, F \in \Dqc(\mathcal{X})\) be \(\mathcal{S}\)-perfect objects that
satisfy, for every object \(T \to \mathcal{S}\), the vanishing
\[
R^if_{T,*} R\sHom_{\sO_{\mathcal{X}_T}}(E_T, F_T) = 0
\;\;\text{for all integers \(i < 0\)}.
\]
Then the assignment \((T \to \mathcal{S}) \mapsto \Hom_{\mathcal{X}_T}(E_T,F_T)\)
defines an fpqc sheaf on \(\mathcal{S}\).
\end{Lemma}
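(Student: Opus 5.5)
The idea is to identify the presheaf \(T \mapsto \Hom_{\mathcal{X}_T}(E_T,F_T)\) with the zeroth cohomology of an object of \(\Dqc(\mathcal{S})\) whose formation commutes with arbitrary base change. Once that is done, the sheaf property follows from fpqc descent for quasi-coherent modules.

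\emph{Step 1: the candidate object.} Set \(H \coloneqq Rf_*R\sHom_{\sO_{\mathcal{X}}}(E,F)\). By \parref{lemma-internalhomandbasechange}, using that \(E\) is \(\mathcal{S}\)-perfect and hence pseudo-coherent since \(f\) is flat and of finite presentation, the object \(R\sHom_{\sO_{\mathcal{X}}}(E,F)\) lies in \(\Dqc(\mathcal{X})\). Its formation commutes with arbitrary base change \(\mathcal{T} \to \mathcal{S}\).

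\emph{Step 2: base change for \(H\).} We need \(H_T \cong Rf_{T,*}R\sHom(E_T,F_T)\) for every \(T \to \mathcal{S}\), not only flat ones. This is the main obstacle, since tor-independent base change in \parref{derived-generalities} needs \(f\) flat; that is fine here because \(f\) is flat, so every base change square is tor-independent. Combined with Step 1, this gives \(H_T \cong Rf_{T,*}R\sHom_{\sO_{\mathcal{X}_T}}(E_T,F_T)\) for all \(T\).

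\emph{Step 3: identify \(\Hom\) with \(\mathcal{H}^0\).} For affine \(T\), we have \(\Hom_{\mathcal{X}_T}(E_T,F_T) = \mathrm{H}^0(T, H_T)\). The hypothesis says \(\mathcal{H}^i(H_T) = 0\) for \(i<0\), for every \(T\). Since \(T\) is affine, \(\mathrm{H}^0(T,H_T) = \Gamma(T,\mathcal{H}^0(H_T))\).

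Next we compare \(\mathcal{H}^0(H_T)\) with the pullback of \(\mathcal{H}^0(H)\). Because \(H_T\) is concentrated in degrees \(\geq 0\) universally, and the pullback \(H_T = L\pi^*H\) is right exact, we get \(\mathcal{H}^0(H_T) = \mathcal{H}^0(L\pi^*\tau_{\geq 0}H)\). The truncation is harmless because \(H\) itself has no negative cohomology (take \(T = \mathcal{S}\)). Right exactness then gives \(\mathcal{H}^0(H_T) \cong \pi^*\mathcal{H}^0(H)\).

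Hence the presheaf is \(T \mapsto \Gamma(T, \pi^*\mathcal{M})\), where \(\mathcal{M} \coloneqq \mathcal{H}^0(H)\) is a quasi-coherent \(\sO_{\mathcal{S}}\)-module. Compatibility with morphisms between the \(T\)'s follows from the naturality of base change.

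\emph{Step 4: conclude.} The functor \(T \mapsto \Gamma(T,\pi^*\mathcal{M})\) associated with a quasi-coherent module is an fpqc sheaf; this is fpqc descent for quasi-coherent sheaves, cf.\ \citeSP{023Q}. For non-affine \(T\), both sides are determined by affine covers: Homs between the base-changed objects glue Zariski or smooth locally by the same vanishing, and the quasi-coherent side is a sheaf. This identifies the assignment in the statement with an fpqc sheaf.
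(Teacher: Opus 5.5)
\textbf{There is a genuine gap in Step 3.} The isomorphism \(\mathcal{H}^0(H_T) \cong \pi^*\mathcal{H}^0(H)\) for arbitrary \(\pi\colon T \to \mathcal{S}\) is false. So is the resulting description of the presheaf as \(T \mapsto \Gamma(T,\pi^*\mathcal{M})\) for a single quasi-coherent \(\mathcal{M}\) on \(\mathcal{S}\).

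Right t-exactness of \(L\pi^*\) controls the \emph{top} cohomology sheaf of a complex concentrated in degrees \(\leq 0\). Your \(H\) is concentrated in degrees \(\geq 0\), and for its \emph{lowest} cohomology sheaf you only get an exact sequence
\[
\mathcal{H}^{-1}(L\pi^*\tau_{\geq 1}H) \to \pi^*\mathcal{H}^0(H) \to \mathcal{H}^0(H_T) \to \mathcal{H}^0(L\pi^*\tau_{\geq 1}H) \to 0.
\]
Its outer terms are Tor-contributions from \(\mathcal{H}^{\geq 1}(H)\), and they need not vanish, even under the universal vanishing hypothesis.

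Here is a concrete counterexample. Take \(S = \Spec\kk[t]\), \(f = \id_S\), \(E = \sO_S/(t)\) and \(F = \sO_S\). All hypotheses hold, since \(R\sHom(E_T,F_T) \simeq [\sO_T \xrightarrow{t} \sO_T]\) in degrees \(0,1\) for every \(T\). Yet \(\mathcal{H}^0(H) = 0\), while \(\Hom(E_T,F_T) = \kk\) when \(T\) is the origin. The presheaf here is \(T \mapsto \{a \in \Gamma(T,\sO_T) : ta = 0\}\). It is an fpqc sheaf, but it is not of the form \(\Gamma(T,\pi^*\mathcal{M})\) for any \(\mathcal{M}\).

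\textbf{The repair, which is the paper's argument,} is to note that you never need non-flat base change for \(\mathcal{H}^0\). The sheaf axiom only involves a fixed \(T\), an fpqc covering \(\{T_i \to T\}\), and the fibre products \(T_i\times_T T_j\), all of which are flat over \(T\).

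For flat \(g\colon T' \to T\), flat base change together with \parref{lemma-internalhomandbasechange} gives \(H_{T'} \cong Lg^*H_T\). Since \(g^*\) is exact, \(\mathcal{H}^0(H_{T'}) \cong g^*\mathcal{H}^0(H_T)\). So on schemes flat over \(T\) your presheaf is \(T' \mapsto \Gamma(T', g^*\mathcal{H}^0(H_T))\). This satisfies the sheaf condition by fpqc descent for the single quasi-coherent sheaf \(\mathcal{H}^0(H_T)\) on \(T\).

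The vanishing hypothesis then identifies \(\Gamma(T',\mathcal{H}^0(H_{T'}))\) with \(\Hom_{\mathcal{X}_{T'}}(E_{T'},F_{T'})\). This holds for every \(T'\), not only affine ones, because \(H_{T'}\) is concentrated in degrees \(\geq 0\).

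Your tor-independent base change in Step 2 is correct but not needed, and it is not where the difficulty lies.
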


\begin{proof}
This follows from flat base change together with
\parref{lemma-internalhomandbasechange}, which implies that the assignment
\[
(T \to \mathcal{S}) \mapsto
\Gamma(T, R^0f_{T,*}R\sHom_{\sO_{\mathcal{X}_T}}(E_T, F_T))
\]
defines an fpqc sheaf on \(\mathcal{S}\), and the fact that the vanishing of
negative pushforwards implies that the group on the right is
\(\Hom_{\mathcal{X}_T}(E_T,F_T)\).
% $$
% H^0((T, R^0g_{T,*}R \mathcal{H}om_{\mathcal{Y}_T}(K_T, L_T)) = H^0(T, Rg_{T,*}R \mathcal{H}om_{\mathcal{Y}_T}(K_T, L_T))= \operatorname{Hom}_{\mathcal{Y}_T}(K_T, L_T)
% $$
\end{proof}

\subsectiondash{Stack of complexes}\label{complexes-stack}
Lieblich constructs in \cite{Lieblich:Complexes}---though see also
\citeSP{0DLB}---a stack of complexes on a flat, proper, finitely presented
morphism \(f \colon X \to S\) of schemes of the form
\[
\ComplexesStack_{X/S} \coloneqq
\big\{
(T,E) :
T \in \mathrm{Sch}_S
\;\text{and}\;
\text{\(E \in \Dqc(X_T)\) is \(T\)-perfect and universally glueable}
\big\}.
\]
This is algebraic and locally of finite presentation
over \(S\). Its objects are pairs \((T,E)\) consist of an \(S\)-scheme \(T\)
and an object \(E \in \Dqc(X_T)\) which is universally glueable and perfect
relative to \(T\), and a morphism \((T, E) \to (T', E')\) between objects in
consist of a morphism \(h \colon T \to T'\) of schemes over \(S\), and an
isomorphism \(Lh^*E' \to E\). It is shown in \cite[Lemma 4.3.2 and Corollary
4.3.3]{Lieblich:Complexes} that there is an open substack
\[
\sComplexesStack_{X/S} \subset
\ComplexesStack_{X/S}
\]
consisting of those objects \((T,E)\) where \(E\) is simple relative to \(T\),
and that it naturally has the structure of a \(\mathbf{G}_m\)-gerbe over
an algebraic space \(\mathrm{sComplexes}_{X/S}\).
% It is naturally a $\mathbf{G}_m$-gerbe: For any object $(T,K),$ the morphism of sheaves
% $$
% \mathbf{G}_{m, T} \to Aut_T (K, K),
% $$
% which takes an invertible function $f$ to multiplication by $f$, is an isomorphism. We denote the associated algebraic space  $sComplexes_{Y/S}$. See \cite[Corollary 4.3.3]{Lieblich:Complexes}.

Universal objects on these stacks may be constructed, as usual, with a suitable
version of the Yoneda lemma. We have been unable to locate a reference, so
we give a proof of the version we need below in \parref{complexes-yoneda}.
Applying the result to the identity morphisms thus provides objects
\[
E_{\mathrm{univ}}
\in \Dqc(X \times_S \ComplexesStack_{X/S})
\;\;\text{and}\;\;
sE_{\mathrm{univ}}
\in \mathrm{D}_{\mathrm{qc},1}(X \times_S \sComplexesStack_{X/S})
\]
which is perfect and universally glueable relative to
\(\ComplexesStack_{X/S}\) and \(\sComplexesStack_{X/S}\), respectively. Moreover,
\(sE_{\mathrm{univ}}\) is also simple and may be identified as a weight \(1\)
object with respect to the \(\mathbf{G}_m\)-gerbe structure, see
\parref{gmgerbes-derived-categories}.

In the following, the objects on the right are the groupoids with the displayed
set of objects:

\begin{Lemma}\label{complexes-yoneda}
Let \(f \colon X \to S\) be a flat, proper, and finitely presented morphism
of schemes, and let \(\mathcal{T}\) be a stack over \(S\). There are canonical
equivalences of groupoids between
\begin{align*}
\Mor_S(\mathcal{T}, \ComplexesStack_{X/S})
& \simeq \{E \in \Dqc(X \times_S \mathcal{T}) : \text{\(E\) is \(\mathcal{T}\)-perfect and universally glueable}\}, \;\text{and} \\
\Mor_S(\mathcal{T}, \sComplexesStack_{X/S})
& \simeq \{E \in \Dqc(X \times_S \mathcal{T}) : \text{\(E\) is \(\mathcal{T}\)-perfect, universally glueable, and simple}\}.
\end{align*}
\end{Lemma}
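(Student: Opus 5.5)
The plan is to reduce to the case where \(\mathcal{T}\) is a scheme, where the statement holds by definition of \(\ComplexesStack_{X/S}\) via the 2-Yoneda lemma, and then to glue along a smooth presentation using descent for objects of \(\Dqc\).

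First, when \(\mathcal{T} = T\) is a scheme, \(\Mor_S(T, \ComplexesStack_{X/S})\) is the fibre groupoid over \(T\). By definition its objects are \(T\)-perfect, universally glueable \(E \in \Dqc(X_T)\), and its morphisms are isomorphisms. The same holds for \(\sComplexesStack_{X/S}\), with simplicity added.

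For a general stack \(\mathcal{T}\), choose a smooth surjection \(U \to \mathcal{T}\) from a scheme, and set \(R = U \times_{\mathcal{T}} U\) and \(U_2 = U \times_{\mathcal{T}} U \times_{\mathcal{T}} U\). Morphisms \(\mathcal{T} \to \ComplexesStack_{X/S}\) are then equivalent to descent data: an object \(E_U\) over \(U\), an isomorphism \(\varphi \colon \pr_1^*E_U \cong \pr_2^*E_U\) over \(R\), and the cocycle condition over \(U_2\). Here \(R\) and \(U_2\) are algebraic spaces, so we first extend the scheme case to algebraic spaces by the same argument one level down; this is the reason for treating \(\mathcal{T}\) as a stack. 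Going in the other direction, an object \(E \in \Dqc(X \times_S \mathcal{T})\) restricts to such descent data. Relative perfectness and universal glueability descend and ascend along the smooth cover \(X_U \to X \times_S \mathcal{T}\) by \parref{lemma-relperfislocal} and \parref{complexes-glueable}, and simplicity does too by \parref{complexes-glueable}\ref{complexes-glueable.simple-affines}. So the claim reduces to showing that the functor
\[
\{\text{such } E \text{ on } X \times_S \mathcal{T}\} \longrightarrow \{\text{descent data on } X_U \Rightarrow X_R\}
\]
is an equivalence of groupoids.

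Full faithfulness is the statement that isomorphisms \(E \to F\) form a sheaf on \(\mathcal{T}\). This follows from \parref{complexes-homs-glue} applied to the pairs \((E,F)\), \((F,E)\), \((E,E)\) and \((F,F)\). The hypothesis there asks for negative Ext vanishing between \(E_T\) and \(F_T\); since \(E\) and \(F\) become isomorphic locally, this reduces to universal glueability of \(E\) itself, so we only need \(\Hom\) sheaves between locally isomorphic objects. Strictly, the lemma is stated for the stack \(\mathcal{X} = X \times_S \mathcal{T}\) over \(\mathcal{T}\), where it applies verbatim.

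Essential surjectivity is the main obstacle, because objects of the derived category do not in general glue along covers. The approach is the standard result, due to Beilinson--Bernstein--Deligne and used by Lieblich, that complexes with vanishing negative self-Exts glue uniquely; see \citeSP{0DLB} and \cite[Theorem 2.1.9 ff.]{Lieblich:Complexes}. Concretely, take the descent datum \((E_U, \varphi)\) on the simplicial algebraic space \(X_{U_\bullet}\). The vanishing of \(R^i\) of the negative self-Hom sheaves gives the required uniqueness at each step. I would pass to \(\Dqc\) of the lisse-étale site of \(X \times_S \mathcal{T}\) and apply the gluing lemma for objects of \(\mathrm{D}(\mathcal{X}_{\liset})\) whose restrictions have no negative self-Exts, namely the version of BBD gluing in \citeSP{0D65}, which is stated for sites. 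This produces \(E\), and its quasi-coherence and the required properties follow from the local statements above. Finally, for \(\sComplexesStack_{X/S}\), which is an open substack, the second equivalence follows from the first by restricting to the objects that are simple, a property local on \(\mathcal{T}\).
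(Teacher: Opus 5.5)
Your proposal is correct and follows essentially the same route as the paper. Full faithfulness comes from the Hom-sheaf lemma \parref{complexes-homs-glue}, with its vanishing hypothesis supplied by the isomorphisms $E_T \cong F_T$ together with universal glueability of $E$; essential surjectivity comes from the BBD glueing lemma on the lisse-\'etale site of $X \times_S \mathcal{T}$, applied to the objects $E_T \in \Dqc(X_T)$ for affine $T$ over $\mathcal{T}$. Two small remarks: the detour through a smooth presentation and descent data over $U \times_{\mathcal{T}} U$ is not needed once you glue directly on $(X \times_S \mathcal{T})_{\liset}$, and when invoking the glueing lemma you should also record, as the paper does, that the local objects $E_T$ are (universally) bounded, which follows from $T$-perfectness, since the form of the lemma the paper cites uses this hypothesis in addition to the vanishing of negative self-Exts.
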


\begin{proof}
We explain the first equivalence, the second being analogous. Given
\(E \in \Dqc(X \times_S \mathcal{T})\) which is \(\mathcal{T}\)-perfect and
universally glueable, define a morphism
\(\varphi_E \colon \mathcal{T} \to \ComplexesStack_{X/S}\)
by
\[
\varphi_E(T \to \mathcal{T}) \coloneqq E_T \in \ComplexesStack_{X/S}(T).
\]
To see that the functor \(E \mapsto \varphi_E\) is fully faithful, let
\(E, F \in \Dqc(X \times_S \mathcal{T})\) be \(\mathcal{T}\)-perfect
and universally glueable objects. Then a \(2\)-morphism
\(\varphi_E \to \varphi_F\) is the data of, for every object
\(T \to \mathcal{T}\), an isomorphism \(E_T \to F_T\) in \(\Dqc(X_T)\) such
that, for every morphism in \(\mathcal{T}\), the obvious square commutes. This
amounts to a section of the sheaf of isomorphisms
\[
(T \to \mathcal{T}) \mapsto \mathrm{Isom}_{X_T}(E_T, F_T).
\]
Note that this is a sheaf because it is a subsheaf of \(\Hom_{X_T}(E_T,F_T)\)
from \parref{complexes-homs-glue}, and the vanishing hypothesis there
holds beacuse the isomorphisms \(E_T \to F_T\) provides an isomorphism
\[
R^if_{T,*}R\sHom_{\sO_{X_T}}(E_T,F_T) \cong
R^if_{T,*}R\sHom_{\sO_{X_T}}(E_T,E_T)
\]
and the latter vanishes for all \(i < 0\) since \(E_T\) is universally glueable.
This, in turn, is equivalent to an isomorphism \(E \to F\)
in \(\Dqc(X \times_S \mathcal{T})\).

Essential surjectivity of \(E \mapsto \varphi_E\) is a consequence of the
Beilinson--Bernstein--Deligne glueing lemma \cite[Theorem 3.2.4]{BBD} in
the form stated in \citeSP{0DCB}. Namely, consider the functor
\[
u \colon 
\mathcal{D} \coloneqq \mathcal{T}_{\liset} \to
\mathcal{C} \coloneqq (X \times_S \mathcal{T})_{\liset}
\quad\quad
T \mapsto X_T.
\]
A morphism \(\varphi \colon \mathcal{T} \to \ComplexesStack_{X/S}\) of stacks
gives, for each \(T \in \mathcal{D}\) an object \(E_T \in \Dqc(X_T)\). For
\(T\) in the full subcategory \(\mathcal{B} \subset \mathcal{D}\) consisting
of affine schemes, the objects \(E_T\) are universally bounded and have
vanishing negative self-Exts, and so \emph{ibid.} applies to give a unique
object \(E \in \Dqc(X \times_S \mathcal{T})\) whose restriction to
\(\Dqc(X_T)\) is \(E_T\) for each \(T \in \mathcal{T}\). In other words,
\(\varphi_E \simeq \varphi\).
\end{proof}

\subsectiondash{Autoequivalences}\label{complexes-autoequivalences}
The stack of complexes on a flat, proper, and finitely presented morphism
\(f \colon X \to S\) of schemes carries several natural autoequivalences.
Let \(\mathcal{L}\) be an invertible \(\sO_X\)-module and let
\(\omega_{X/S}^\bullet\) be a relative dualizing complex for
\(f \colon X \to S\); this exists in this generality by \citeSP{0E2X}. Then
the functors \(- \otimes^L_{\sO_X} \mathcal{L}\) and
\(R\sHom_{\sO_X}(-,\omega_{X/S}^\bullet)\) induce equivalences of stacks
\[
\tau_{\mathcal{L}} \colon \ComplexesStack_{X/S} \to \ComplexesStack_{X/S}
\;\;\text{and}\;\;
D \colon \ComplexesStack_{X/S} \to \ComplexesStack_{X/S}
\]
satisfying
\(\tau_{\mathcal{L}^\vee} \circ \tau_{\mathcal{L}} \simeq \id \simeq D \circ D \),
and which preserve the substack of simple complexes.

That \(\tau_{\mathcal{L}}\) exists and has quasi-inverse
\(\tau_{\mathcal{L}^\vee}\) is straightforward. That \(D\) makes sense and is
an equivalence on the stack of complexes, however, does not appear to be
well-documented, so we include a proof here. The main task is to verify that
\(R\sHom_{\sO_X}(-,\omega_{X/S}^\bullet)\) preserves relative perfectness,
universal glueability, and simplicity:

\begin{Lemma}\label{complexes-duality}
Let \(f \colon X \to S\) be a flat, proper, and finitely presented morphism
of schemes. Let \(E \in \Dqc(X)\) and
\(D(E) \coloneqq R\sHom_{\sO_X}(E,\omega_{X/S}^\bullet)\).
\begin{enumerate}
\item\label{complexes-duality.perfect}
If \(E\) is \(S\)-perfect, then so is \(D(E)\).
\item\label{complexes-duality.double-dual}
The canonical morphism \(E \to D(D(E))\) is an isomorphism.
\item\label{complexes-duality.glueability}
Assuming \(E\) is \(S\)-perfect, if \(E\) is universally glueable or
simple, then the same is true for \(D(E)\).
\end{enumerate}
\end{Lemma}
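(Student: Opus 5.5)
The plan is to reduce everything to the affine, locally projective case. There we use the standard facts about the relative dualizing complex of a flat, proper, finitely presented morphism. By \citeSP{0E2X} and its surrounding results, \(\omega_{X/S}^\bullet\) is \(S\)-perfect. Its formation commutes with arbitrary base change \(T \to S\), in the sense that \(L h_X^*\omega_{X/S}^\bullet \cong \omega_{X_T/T}^\bullet\). And \(R\sHom_{\sO_X}(\omega^\bullet_{X/S},\omega^\bullet_{X/S}) \cong \sO_X\).

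For \ref{complexes-duality.perfect}, the question is local on \(X\) and \(S\) by \parref{lemma-relperfislocal}. So take \(S = \Spec A\) affine and \(X\) affine, and write \(E\) as a bounded complex of \(A\)-flat finitely presented \(\sO_X\)-modules. The key input is the characterization, for flat finitely presented \(X \to S\), of \(S\)-perfect objects as pseudo-coherent objects whose derived fibres \(E \otimes^L_A \kappa(s)\) are perfect with bounded amplitude. One then computes, using \parref{lemma-internalhomandbasechange} (applicable since \(E\) is pseudo-coherent and \(\omega^\bullet_{X/S}\) is \(S\)-perfect),
\[
D(E) \otimes^L_A \kappa(s) \cong R\sHom_{\sO_{X_s}}(E_s, \omega^\bullet_{X_s/\kappa(s)}).
\]
This fibre has bounded coherent cohomology and is the Grothendieck dual of \(E_s\). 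A direct route to perfectness is to argue Zariski locally via a factorization \(X \to \mathbf{A}^n_S\), as in \parref{lemma-relperfonaffine}, and apply Grothendieck duality for the finite-type affine morphism: \(Ri_*D(E)\) is then identified with \(R\sHom(Ri_*E, \omega_{\mathbf{A}^n_S/S}[n])\), which is perfect because \(Ri_*E\) is. Pseudo-coherence of \(D(E)\) follows from \citeSP{0A6H}-type finiteness.

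For \ref{complexes-duality.double-dual}, the canonical map \(E \to D(D(E))\) is an isomorphism if it is so after each base change to a geometric fibre, since both sides are \(S\)-perfect and the cone is \(S\)-perfect; an \(S\)-perfect object vanishing on all fibres is zero by Nakayama. Formation of \(D\) commutes with base change by \parref{lemma-internalhomandbasechange} together with base change of \(\omega^\bullet\). On a fibre \(X_s\), a proper scheme over a field, \(\omega^\bullet_{X_s}\) is a dualizing complex. Biduality for bounded coherent complexes, \citeSP{0A7C}, then gives the isomorphism.

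For \ref{complexes-duality.glueability}, the functor \(D\) is fully faithful on \(S\)-perfect objects, compatibly with base change. Concretely, there are isomorphisms
\[
R\sHom_{\sO_X}(D(E), D(E)) \cong R\sHom_{\sO_X}(E, D(D(E))) \cong R\sHom_{\sO_X}(E,E),
\]
using tensor–hom adjunction and \ref{complexes-duality.double-dual}. These isomorphisms are compatible with the maps from \(\sO_X\). Since \(D(E)_T \cong D(E_T)\) by the base change statements above, applying \(Rf_{T,*}\) transfers the vanishing of negative Exts and the simplicity isomorphism from \(E_T\) to \(D(E)_T\) for every \(T\).

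The main obstacle is \ref{complexes-duality.perfect}, together with the base change compatibility of \(\omega^\bullet_{X/S}\) in this non-Noetherian generality. I would first try the affine-space factorization combined with duality for finite morphisms (\(i^!\) with \(Ri_*\)), reducing to the smooth case where \(\omega_{\mathbf{A}^n_S/S}\) is invertible.
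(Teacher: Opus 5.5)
Your proof is correct, but parts (i) and (ii) take a different route from the paper.

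\textbf{Part (i).} The paper argues globally. With \(S\) affine, it invokes the criterion \citeSP{0GET}, so it suffices that \(Rf_*(D(E)\otimes^L F)\) be perfect for all perfect \(F\). Grothendieck duality for the proper morphism \(f\) rewrites this as \(R\sHom_{\sO_S}(Rf_*(E\otimes^L F^\vee),\sO_S)\), which is the dual of a perfect complex.

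Your direct route is local instead. Duality for the closed immersion \(i\colon U\to\mathbf{A}^n_S\) gives \(Ri_*D(E)\cong R\sHom(Ri_*E,\Omega^n_{\mathbf{A}^n_S/S}[n])\), which is perfect, and \parref{lemma-relperfonaffine} finishes. This never uses properness. It does rest on the identification \(\omega^\bullet_{X/S}\rvert_U\cong R\sHom_{\sO_{\mathbf{A}^n_S}}(i_*\sO_U,\Omega^n_{\mathbf{A}^n_S/S}[n])\). That is true, since relative dualizing complexes are unique, Zariski local, and this is their standard local model, but you should state and cite it. The paper avoids it by using only the adjunction property of \(\omega^\bullet_{X/S}\).

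You should discard the fibrewise sketch that precedes the direct route. Derived fibres of \(S\)-perfect objects are bounded pseudo-coherent on \(X_s\), not perfect. Also, \citeSP{0A6H} yields only quasi-coherence of \(D(E)\), not pseudo-coherence, and pseudo-coherence is exactly the nontrivial point outside the Noetherian setting. Your direct route supplies it automatically.

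\textbf{Part (ii).} The paper simply cites \citeSP{0A89}. You instead use (i) to know that the cone of \(E\to D(D(E))\) is \(S\)-perfect. You then reduce to fibres via \parref{lemma-nakforpseudo} and \parref{lemma-internalhomandbasechange}, and apply classical biduality for the dualizing complex \(\omega^\bullet_{X_s}\) on \(\Dbcoh(X_s)\). This is more self-contained in the present non-Noetherian generality. The cost is checking that the biduality map is compatible with base change.

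\textbf{Part (iii).} This is the paper's argument: an anti-equivalence on \(S\)-perfect objects that commutes with base change preserves the criteria of \parref{complexes-glueable}. You phrase it at the level of \(R\sHom\) sheaves rather than Hom groups over affine \(T\).
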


\begin{proof}
The dualizing complex \(\omega_{X/S}^\bullet\) is \(S\)-perfect by its
definition in \citeSP{0E2T}, so \parref{lemma-internalhomandbasechange} applies
to show that \(D(E) = R\sHom_{\sO_X}(E,\omega_{X/S}^\bullet)\) lies in
\(\Dqc(X)\) and that its formation commutes with arbitrary base change. Since
each of the statements are local on the base by \parref{lemma-relperfislocal}
and
\parref{complexes-glueable}\ref{complexes-glueable.universally-glueable-affines}--\ref{complexes-glueable.simple-affines},
we may assume for the remainder that \(S\) is affine.

To see \ref{complexes-duality.perfect}, reduce to the case $S$ is affine and use the criterion \citeSP{0GET},
wherein we must show that \(Rf_*(D(E) \otimes^L F) \in \Dqc(S)\) is perfect for
every perfect object \(F \in \Dqc(X)\). But now properties of \(\omega_{X/S}^\bullet\),
as in \citeSP{0A9Q}, give
\[
Rf_*(D(E) \otimes^L F) \cong
Rf_*R\sHom_{\sO_X}(E \otimes^L F, \omega_{X/S}^\bullet) \cong
R\sHom_{\sO_S}(Rf_*(E \otimes^L F), \sO_S)
\]
and this is perfect since \(Rf_*(E \otimes^L F)\). Item
\ref{complexes-duality.double-dual} now follows from \citeSP{0A89}. So far,
this means that \(D\) defines an anti-equivalence of categories on
\(\Dqc(X)\), and this implies \ref{complexes-duality.glueability} since
the criteria from
\parref{complexes-glueable}\ref{complexes-glueable.universally-glueable-affines}--\ref{complexes-glueable.simple-affines},
are therefore preserved by \(D\).
% it suffices to check it's an
% isomorphism after applying the functor $Rg_*(E \otimes ^{\mathbf{L}} - )$ for
% any perfect complex $E$ on $Y$ (apply this to $E = G^\vee$ with $G$ a perfect
% generator of $D_{qc}(Y)$). So, let $E \in D_{qc}(Y)$ be perfect. Then
% \begin{align*}
% Rg_*(E \otimes^{\mathbf{L}} R \mathcal{H}om_{Y}(R \mathcal{H}om_{Y}(K, \omega_{Y/S}^\bullet), \omega_{Y/S}^\bullet)) &=
% Rg_*(R \mathcal{H}om_{Y}(E \otimes^{\mathbf{L}} R \mathcal{H}om_{Y}(K, \omega_{Y/S}^\bullet), \omega_{Y/S}^\bullet)) \\
% &=
% Rg_*(R \mathcal{H}om_{Y}(R \mathcal{H}om_{Y}(E \otimes^{\mathbf{L}} K, \omega_{Y/S}^\bullet), \omega_{Y/S}^\bullet)) \\
% &= R \mathcal{H}om_{S} (Rg_*(R \mathcal{H}om_{Y}(E \otimes^{\mathbf{L}} K, \omega_{Y/S}^\bullet)), \mathcal{O}_S) \\
% &= R \mathcal{H}om_{S} (R\mathcal{H}om_{S}(Rg_*(E \otimes^{\mathbf{L}} K),\mathcal{O}_S), \mathcal{O}_S) \\
% &= Rg_*(E \otimes^{\mathbf{L}}K)
% \end{align*}
% since $Rg_*(E \otimes^{\mathbf{L}}K)$ is perfect, as needed.
\end{proof}

% \begin{Corollary}\label{complexes-automorphisms-gorenstein-dual}
% If, additionally, \(g \colon Y \to S\) has Gorenstein fibres, then for
% any invertible \(\sO_Y\)-module \(\mathcal{L}\),
% \(R\sHom_{\sO_Y}(-,\mathcal{L})\) induces self equivalences of the stacks
% \(\ComplexesStack_{Y/S}\) and \(\sComplexesStack_{Y/S}\).
% \end{Corollary}

\subsectiondash{Fourier--Mukai open immersion}\label{complexes-FM}
Let \(f \colon X \to S\) and \(g \colon Y \to S\) be flat, proper, and finitely
presented morphisms of schemes and let \(\Phi_K \colon \Dqc(X) \to \Dqc(Y)\)
be the Fourier--Mukai transform associated with an object
\(K \in \Dqc(X \times_S Y)\) that is perfect relative to both \(X\) and \(Y\).
Compatibility of \(\Phi_K\) with base change, as in
\parref{derived-fm}\ref{derived-fm.base-change}, allows one to define
a functor \(\mathrm{FM}_K\) from the stack of complexes on \(f \colon X \to S\) to certain
complexes on \(g \colon Y \to S\) as follows:

On objects, set \(\mathrm{FM}_K(T,E) \coloneqq (T,\Phi_{K_T}(E_T))\), and on a
morphism \((h, \alpha) \colon (T,E) \to (T', E')\), meaning as in
\parref{complexes-stack} a \(S\)-morphism \(h \colon T \to T'\) and an
isomorphism \(\alpha \colon E'_T \to E\) in \(\Dqc(X_T)\), set
\[
\mathrm{FM}_K(h,\alpha) \coloneqq (h, \mathrm{FM}_K(\alpha)) \colon (T,\Phi_{K_T}(E)) \to (T',\Phi_{K_{T'}}(E'))
\]
where \(\mathrm{FM}_K(\alpha)\) is the isomorphism obtained by composing the
compatibility of the Fourier--Mukai transforms with base change together
with \(\Phi_{K_T}(\alpha)\).
% \[
% \mathrm{FM}_K(\alpha) \colon
% \Phi_{K_{T'}}(E')_T \stackrel{\cong}{\longrightarrow}
% \Phi_{K_T}(E'_T) \stackrel{\Phi_K(\alpha)}{\longrightarrow}
% \Phi_{K_T}(E).
% \]
Lemmas \parref{complexes-preserve-perfect} and
\parref{complexes-preserve-glueability} below imply that when \(\Phi_K\)
is fully faithful, \(\mathrm{FM}_K\) takes values in the stack of complexes on
\(g \colon Y \to S\) and that it preserves simple complexes. More notably,
the functor is even an open immersion of stacks:

\begin{Theorem}\label{theorem-fully-faithful-open-immersion}
In the setting of \parref{complexes-FM}, if
\(\Phi_K \colon \Dqc(X) \to \Dqc(Y)\) is fully faithful, then the assignment
\((T,E) \mapsto (T,\Phi_{K_T}(E_T))\) induces an open immersion of \(S\)-stacks
\[
\mathrm{FM}_K \colon \ComplexesStack_{X/S} \to \ComplexesStack_{Y/S}
\]
which preserves the open substack of simple complexes.
\end{Theorem}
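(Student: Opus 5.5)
The plan is to prove three things in turn: that \(\mathrm{FM}_K\) is well defined, that it is a monomorphism of stacks, and that its image is an open substack. The first input is that full faithfulness is inherited by every base change. Since \(f\) and \(g\) are flat, \parref{lemma-basechangeofff}\ref{lemma-ffconditions.base-change-ff} applies to any morphism \(h \colon T \to S\) from an affine scheme. It shows that each \(\Phi_{K_T} \colon \Dqc(X_T) \to \Dqc(Y_T)\) is fully faithful, and this is where the generators \parref{derived-relative-generators} and the perfect-preservation hypotheses of \parref{lemma-ffconditions} enter. Here \(K_T\) is still perfect relative to both factors by \citeSP{0DI5}, because the projections of \(X\times_SY\) are flat and of finite presentation.

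For well-definedness, I would show that \(\Phi_{K_T}(E)\) is \(T\)-perfect whenever \(E\) is \(T\)-perfect. Locally, \(L\pr_1^*E\otimes^L K_T\) is perfect relative to \(Y_T\) by a tor-amplitude and pseudo-coherence count. Pushing forward along the proper flat map \(X_T\times_T Y_T\to Y_T\) then preserves relative perfectness, by the argument of \parref{lemma-perfectpush} or \citeSP{0DJT}. Universal glueability and simplicity of \(\Phi_{K_T}(E)\) follow from the affine criteria \parref{complexes-glueable}\ref{complexes-glueable.universally-glueable-affines}--\ref{complexes-glueable.simple-affines}. The reason is that \(\Phi_{K_{T'}}\) is fully faithful for every affine \(T'\to T\) and commutes with base change by \parref{derived-fm}\ref{derived-fm.base-change}, so it identifies \(\Ext^i(E_{T'},E_{T'})\) with \(\Ext^i(\Phi(E)_{T'},\Phi(E)_{T'})\), compatibly with the map from \(\sO(T')\). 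This settles the functor and the preservation of simple complexes.

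Next, \(\mathrm{FM}_K\) is fully faithful on fibre groupoids. A morphism over a fixed \(T\) is an isomorphism in \(\Dqc(X_T)\), and full faithfulness of \(\Phi_{K_T}\) gives a bijection on such isomorphisms. Hence \(\mathrm{FM}_K\) is a monomorphism, and it remains to show that its essential image is open. Given \((T,F)\in\ComplexesStack_{Y/S}(T)\) with \(T\) affine, let \(R_T\) be the right adjoint of \(\Phi_{K_T}\). I would realize \(R_T\) as a Fourier--Mukai transform with kernel
\[
K_T^{R} \coloneqq R\sHom(K_T, L\pr_2^*\omega^\bullet_{Y/S})\otimes^L L\pr_1^*(\omega^\bullet_{X/S})^{\vee},
\]
or more safely as \(R\pr_{1,*}R\sHom(K_T, \pr_2^{\times}(-))\). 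The second form is built from relative duality \citeSP{0A9Q} together with \parref{lemma-internalhomandbasechange}, using that \(K\) is \(X\)-perfect. The goal of this step is that \(E \coloneqq R_T(F)\) is \(T\)-perfect and that both \(R_T\) and the counit \(\epsilon_F\colon \Phi_{K_T}R_T(F)\to F\) commute with arbitrary base change \(T'\to T\).

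Granting this, let \(C\) be the cone of \(\epsilon_F\). It is a \(T\)-perfect object of \(\Dqc(Y_T)\) whose formation commutes with base change. Its support is closed, so its image \(Z\subset T\) is closed because \(g\) is proper, and \(C_{T'}=0\) exactly when \(T'\to T\) factors through \(T\setminus Z\); the last point uses Nakayama on fibres of a relatively perfect complex. Over \(U = T\setminus Z\) we have \(F_U\cong\Phi_{K_U}(E_U)\), and \(E_U\) is universally glueable and simple by full faithfulness. Conversely, if \(F_{T'}\) lies in the essential image, then the counit is an isomorphism because \(\Phi_{K_{T'}}\) is fully faithful. So \(U\) represents the fibre product \(T\times_{\ComplexesStack_{Y/S}}\ComplexesStack_{X/S}\), and \(\mathrm{FM}_K\) is an open immersion.

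I expect the main obstacle to be the third paragraph: showing that the right adjoint is a base-change-compatible relative Fourier--Mukai functor that preserves \(T\)-perfectness. This is where the hypothesis that \(K\) is perfect relative to \(X\), and not only to \(Y\), is essential. I would first attempt it by reducing to \(S\) affine and verifying base change of \(R\pr_{1,*}R\sHom(K,\pr_2^\times-)\) through the formula \(\pr_2^\times = L\pr_2^*(-)\otimes^L\omega^\bullet\) for flat proper morphisms \citeSP{0E2X}.
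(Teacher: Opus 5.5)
Your outline is sound, but you prove openness differently from the paper. You both get the monomorphism property the same way. The paper then shows \(\mathrm{FM}_K\) is smooth by the infinitesimal lifting criterion, which uses that the stacks of complexes are locally of finite presentation. For a square-zero thickening \(T' \subset T\) it sets \(E = R_T(F)\), and notes that the counit \(\Phi_{K_T}(R_T(F)) \to F\) is an isomorphism over \(T'\). Since the thickening is a homeomorphism, \parref{lemma-nakforpseudo} makes it an isomorphism over \(T\). It then uses that a smooth monomorphism is an open immersion. You instead identify \(T \times_{\ComplexesStack_{Y/S}} \ComplexesStack_{X/S}\) directly with the open subset \(T \setminus g_T(\operatorname{Supp}\operatorname{cone}(\varepsilon_F))\). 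This is more explicit and avoids both the infinitesimal criterion and the smooth-monomorphism step. In exchange, it needs properness of \(g\) and closedness of the support of the \(T\)-perfect cone \(C\). That closedness holds. \(C\) is pseudo-coherent with cohomology in a bounded range \([a,b]\). By Nakayama, \(\operatorname{Supp} C\) is the finite union over \(i \in [a,b]\) of the loci where \(\dim \mathrm{H}^i(C \otimes^L \kappa(y)) \geq 1\), and each locus is closed by upper semicontinuity. The same fibrewise Nakayama argument justifies your claim that \(C_{T'} = 0\) exactly when \(T' \to T\) factors through \(U\). Both proofs rest on the same input, \parref{complexes-right-adjoint}, together with compatibility of the counit with base change.

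Two steps need repair.

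First, in the well-definedness paragraph, \(L\pr_1^*E \otimes^L K_T\) is \emph{not} perfect relative to \(Y_T\) when \(E\) is only \(T\)-perfect. If it were, \citeSP{0DJT} would make \(\Phi_{K_T}(E)\) perfect. That fails already for \(X = Y\) and \(K = \sO_\Delta\): then \(\Phi_K = \mathrm{id}\), the object is \(\Delta_*E\), and \(\Delta_*E\) is \(Y\)-perfect only if \(E\) is perfect. What is true is that the object is \(T\)-perfect, since \(K\) is \(X\)-perfect and this bounds its tor amplitude over \(T\). You then need that \(R\pr_{2,*}\) along a proper flat morphism preserves \(T\)-perfectness. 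This follows from Kiehl's theorem plus a \v{C}ech bound on tor amplitude. Alternatively, use the argument of \parref{complexes-preserve-perfect}: pseudo-coherence plus fibrewise boundedness, which reduces to a field.

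Second, for \(T\)-perfectness of \(R_T(F)\), your first kernel formula is wrong. The right adjoint is \(R\pr_{1,*}R\sHom(K, L\pr_2^*(-) \otimes^L L\pr_1^*\omega^\bullet_{X/S})\). Since \(K\) is only relatively perfect, \(R\sHom(K,-)\) cannot be rewritten as \(K^\vee \otimes^L -\). You do not need a kernel at all. Reduce to an affine base and use the criterion \citeSP{0GET}. For perfect \(P\) on \(X\), adjunction and linearity give \(R\Gamma(X, P \otimes^L R(F)) \cong R\Hom_Y(\Phi_K(P^\vee), F)\). This is perfect because \(\Phi_K(P^\vee)\) is perfect and \(F\) is relatively perfect. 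So the hypothesis doing the work here is \(Y\)-perfectness of \(K\), via \parref{complexes-preserve-perfect}\ref{complexes-preserve-perfect.absolute}, not \(X\)-perfectness as you expected. \(X\)-perfectness is what the well-definedness step needs.
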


We first verify that the Fourier--Mukai transform preserves \(S\)-perfectness:

\begin{Lemma}
\label{complexes-preserve-perfect}
In the setting of \parref{complexes-FM}, \(\Phi_K \colon \Dqc(X) \to \Dqc(Y)\)
takes
\begin{enumerate}
\item\label{complexes-preserve-perfect.absolute}
perfect complexes to perfect complexes;
\item\label{complexes-preserve-perfect.pseudo-coherent}
pseudo-coherent complexes to pseudo-coherent complexes; and
\item\label{complexes-preserve-perfect.relative}
\(S\)-perfect complexes to \(S\)-perfect complexes.
\end{enumerate}
\end{Lemma}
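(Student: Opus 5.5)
Each part reduces to the local case. All three properties are local on \(Y\) and compatible with flat base change on \(S\): for (i) and (ii) see \citeSP{09UG} and its pseudo-coherent analogue, for (iii) see \parref{lemma-relperfislocal}. The transform commutes with flat base change by \parref{derived-fm}\ref{derived-fm.base-change}, and here both \(f\) and \(g\) are flat anyway. So we may assume \(S\) is affine and work Zariski locally on \(Y\). Write \(\Phi_K(E) = R\pr_{2,*}(L\pr_1^*E \otimes^L K)\), where \(\pr_2 \colon X \times_S Y \to Y\) is the base change of \(f\). It is therefore flat, proper, and of finite presentation.

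\emph{Parts (i) and (ii).} If \(E\) is perfect, then \(L\pr_1^*E\) is perfect. Its tensor product with the \(Y\)-perfect kernel \(K\) is again \(Y\)-perfect: locally \(K\) is a bounded complex of \(Y\)-flat finitely presented modules, and tensoring with a bounded complex of finite locally free modules preserves this. The Stacks Project result \citeSP{0DJT} then says that pushforward along the flat, proper, finitely presented morphism \(\pr_2\) sends \(Y\)-perfect objects to perfect objects, which gives (i). For (ii), I would first use that \(K\) is perfect relative to \(X\), so \(K\) has locally finite tor-dimension over \(\pr_1^{-1}\sO_X\). Hence for pseudo-coherent \(E\), the object \(L\pr_1^*E \otimes^L K\) is pseudo-coherent relative to \(Y\). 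One way to see this is to truncate \(E\) by perfect complexes, using the approximation of pseudo-coherent objects by perfect ones in a range of degrees. Each approximating tensor product is \(Y\)-perfect by the argument for (i), and the error terms are pushed into arbitrarily low degrees; the finite tor-amplitude of \(K\) over \(X\) is what keeps them there. Properness of \(\pr_2\) then gives pseudo-coherence of the pushforward, via \citeSP{0CSD}-type results (proper pushforward of relatively pseudo-coherent objects). Both \(\pr_2\) and \(R\pr_{2,*}\) have bounded cohomological amplitude on affines, which lets this truncation argument pass to the limit.

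\emph{Part (iii).} Let \(E\) be \(S\)-perfect. By (ii) it suffices to show that \(\Phi_K(E)\) has locally finite tor-dimension over \(S\). I would use the criterion \citeSP{0GET} already used in \parref{complexes-duality}: over affine \(S\), an object of \(\Dqc(Y)\) is \(S\)-perfect if and only if \(Rg_*(- \otimes^L F)\) is perfect for every perfect \(F\) on \(Y\). The projection formula gives
\[
Rg_*(\Phi_K(E) \otimes^L F) \cong Rf_*\bigl(E \otimes^L \Phi_{K^\vee}(F)\bigr),
\]
where \(\Phi_{K^\vee}(F) \coloneqq R\pr_{1,*}(K \otimes^L L\pr_2^*F)\) is the transform in the other direction. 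By (i) with the roles of \(X\) and \(Y\) swapped, this is perfect, since \(K\) is \(X\)-perfect. Applying \citeSP{0GET} to \(E\) on \(X\) then shows the right-hand side is perfect. This is the cleanest route, and it also recovers pseudo-coherence, so it might make the direct argument in (ii) unnecessary for (iii).

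I expect (ii) to be the main obstacle. The argument needs a careful bookkeeping of tor-amplitude to control the truncations. A fallback is to reduce (ii) to a Noetherian situation by absolute approximation, where pseudo-coherence means bounded-above coherent cohomology and the conclusion follows from coherence of proper pushforward together with finite tor-dimension of \(K\) over \(X\).
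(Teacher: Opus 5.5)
Your proof is correct, and part (i) is exactly the paper's argument.

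In (ii) you reach the same endpoint as the paper, namely Kiehl's theorem \citeSP{0CSD}. However, the truncation and tor-amplitude bookkeeping you expect to be the main obstacle is not needed. Because \(K\) is perfect relative to \(X\) and \(\pr_1\) is flat and of finite presentation, \(K\) is already pseudo-coherent on \(X \times_S Y\). Derived tensor products of pseudo-coherent objects are pseudo-coherent. So \(\Phi_K\) is simply a composite of three functors that each preserve pseudo-coherence, which is all the paper says. Even within your truncation argument, keeping the error terms in low degrees only uses that \(K\) is bounded above, not finite tor-dimension over \(X\).

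Part (iii) is where you genuinely take a different route.

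The paper feeds (ii) into the fibrewise criterion \citeSP{0GEH}: a pseudo-coherent object on \(Y\) is \(S\)-perfect as soon as its derived fibres over points of \(S\) are bounded below. It then uses \(\Phi_K(E)_s \cong \Phi_{K_s}(E_s)\) to reduce to \(S\) the spectrum of a field and concludes with \citeSP{0FYU}. The finite tor-dimension of \(K\) over \(X\) is used in that field case.

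You instead test against perfect objects with \citeSP{0GET}. You use the projection-formula identity \(Rg_*(\Phi_K(E) \otimes^L F) \cong Rf_*(E \otimes^L R\pr_{1,*}(K \otimes^L L\pr_2^*F))\), together with (i) applied to the transform in the opposite direction. That application of (i) is where \(X\)-perfectness of \(K\) enters. The paper uses essentially the same device later, for the right adjoint in \parref{complexes-right-adjoint}.

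What each route buys:
\begin{itemize}
\item Your route is global. It needs neither the passage to fibres nor the field-case lemma, and it makes (iii) independent of (ii).
\item The paper's route reuses (ii) and only has to check a pointwise boundedness condition.
\end{itemize}

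One notational point: what you call \(\Phi_{K^\vee}\) is, by your own definition \(R\pr_{1,*}(K \otimes^L L\pr_2^*F)\), the transform with kernel \(K\) itself, not its dual. The superscript should be dropped.
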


\begin{proof}
For \ref{complexes-preserve-perfect.absolute}, if \(E \in \Dqc(X)\)
perfect, then \(L\pr_1^*E \otimes^L K\) is \(Y\)-perfect and so its direct
image along \(R\pr_{2,*}\), which is \(\Phi_K(E)\), is perfect:
see \cite[\href{https://stacks.math.columbia.edu/tag/0DI4}{0DI4} and
\href{https://stacks.math.columbia.edu/tag/0DJT}{0DJT}]{stacks-project}.
Item \ref{complexes-preserve-perfect.pseudo-coherent} follows upon observing
that \(\Phi_K\) is the composition of three functors which individually
preserve pseudo-coherence, the main point being that \(R\pr_{2,*}\) does by
Kiehl's theorem \cite[Theorem 2.9]{Kiehl}; see also \citeSP{0CSD}. For
\ref{complexes-preserve-perfect.relative}, let \(E \in \Dqc(X)\) be
\(S\)-perfect. By \ref{complexes-preserve-perfect.pseudo-coherent},
\(\Phi_K(E) \in \Dqc(Y)\) is pseudo-coherent, so since \(g \colon Y \to S\) is
flat and finitely presented, \citeSP{0GEH} shows that \(\Phi_K(E)\) is
\(S\)-perfect if and only if its fibre over every \(s \in S\) is bounded below.
Since \(\Phi_K(E)_s \cong \Phi_{K_s}(E_s)\) by
\parref{derived-fm}\ref{derived-fm.base-change}, we may reduce to the case
where \(S = \Spec \kk\) is the spectrum of a field, in which case the result
follows from \citeSP{0FYU}.
\end{proof}

We now observe that when the Fourier--Mukai transform is fully faithful,
\(\Phi_K\) also preserves universal glueability and simplicity, proving that
\(\mathrm{FM}_K\) takes values in the stack of complexes of \(g \colon Y \to S\):

\begin{Lemma}
\label{complexes-preserve-glueability}
In the setting of \parref{complexes-FM}, assume
\(\Phi_K \colon \Dqc(X) \to \Dqc(Y)\) is fully faithful. If an
\(S\)-perfect object \(E \in \Dqc(X)\) is universally glueable or simple, then
the same is true for \(\Phi_K(E) \in \Dqc(Y)\).
\end{Lemma}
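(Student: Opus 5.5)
The plan is to reduce both properties to statements about \(\Ext\) groups over affine bases. By \parref{complexes-glueable}\ref{complexes-glueable.universally-glueable-affines}--\ref{complexes-glueable.simple-affines}, \(\Phi_K(E)\) is universally glueable (respectively simple) if and only if, for every morphism \(T \to S\) from an affine scheme, \(\Ext^i_{Y_T}(\Phi_K(E)_T,\Phi_K(E)_T) = 0\) for \(i<0\) (respectively \(\sO(T) \to \Hom_{Y_T}(\Phi_K(E)_T,\Phi_K(E)_T)\) is an isomorphism). The same criteria hold for \(E\) on \(X_T\). Since \(\Phi_K(E)\) is \(S\)-perfect by \parref{complexes-preserve-perfect}\ref{complexes-preserve-perfect.relative}, these criteria apply to it.

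The first step is to identify \(\Phi_K(E)_T \cong \Phi_{K_T}(E_T)\). Here \(f\) and \(g\) are flat, so \parref{derived-fm}\ref{derived-fm.base-change} applies for arbitrary \(T \to S\). The second step is to show that \(\Phi_{K_T}\) is fully faithful for every affine \(T \to S\), which is where the real work lies.

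For flat \(T \to S\) this is \parref{lemma-basechangeofff}. For general \(T\), the same lemma still applies because both \(f\) and \(g\) are flat. That lemma sits in the setting of \parref{lemma-ffconditions}, whose hypotheses must be checked.
\begin{itemize}
\item \(S\), \(X\), \(Y\) are qcqs after localizing on \(S\), so perfect generators and \(S\)-linear generators exist by \citeSP{09SS}-type results; \(X\) and \(Y\) being proper and finitely presented over an affine base, \(\Dqc(X)\) has a perfect generator, which is then \(S\)-linear by \parref{derived-relative-generators}.
\item \(\Phi_K\) preserves perfect complexes by \parref{complexes-preserve-perfect}\ref{complexes-preserve-perfect.absolute}.
\end{itemize}
Full faithfulness is Zariski local on \(S\) by \parref{lemma-basechangeofff}\ref{lemma-ffconditions.descend-ff}, so we may assume \(S\) affine. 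The isomorphism \(\phi\) of \parref{lemma-ffconditions} for \(G\) then base changes to the corresponding map for \(G_T\). This uses tor-independent base change, legitimate since \(f\) and \(g\) are flat, together with \parref{lemma-internalhomandbasechange}: \(G\) is perfect and \(\Phi_K(G)\) is perfect, so the internal Homs commute with base change. Hence \(\Phi_{K_T}\) is fully faithful.

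Given this, \(\Phi_{K_T}\) induces isomorphisms \(\Ext^i_{X_T}(E_T,E_T) \cong \Ext^i_{Y_T}(\Phi_{K_T}(E_T),\Phi_{K_T}(E_T))\) for all \(i\), so negative Ext vanishing transfers directly. For simplicity, the map \(\sO(T) \to \Hom(\Phi_{K_T}(E_T),\Phi_{K_T}(E_T))\) factors as \(\sO(T) \to \Hom(E_T,E_T)\) followed by \(\Phi_{K_T}\). This factorization holds because \(\Phi_{K_T}\) is \(T\)-linear by \parref{derived-fm}\ref{derived-fm.linear}, so it carries multiplication by a function to multiplication by that function. Both maps are then isomorphisms, which gives the claim.

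The main obstacle is the base change of full faithfulness to non-flat \(T\). One must check that \(\Phi_K(G)\) remains perfect and that \(\phi\) base changes correctly. If the generator argument proves delicate, I would instead base change the unit \(\mathrm{id} \to R\circ\Phi_K\) directly, using that the right adjoint is itself a Fourier--Mukai transform with a relatively perfect kernel.
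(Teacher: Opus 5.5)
Your proposal is correct and follows the paper's proof. You reduce to the affine \(\Ext\)-criteria of \parref{complexes-glueable}, identify \(\Phi_K(E)_T \cong \Phi_{K_T}(E_T)\), and use that \(\Phi_{K_T}\) stays fully faithful for arbitrary \(T\) by \parref{lemma-basechangeofff} since \(f\) and \(g\) are flat; your extra checks (\(S\)-perfectness of \(\Phi_K(E)\), the hypotheses of \parref{lemma-ffconditions} after localizing on \(S\), where passing to affine opens uses part (i) of \parref{lemma-basechangeofff} for open immersions rather than part (ii), and the \(T\)-linearity factorization for simplicity) only make explicit what the paper leaves implicit.
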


\begin{proof}
Let \(E \in \Dqc(X)\) be an \(S\)-perfect object which is universally
glueable. By \parref{complexes-glueable}\ref{complexes-glueable.universally-glueable-affines},
this means that for any affine \(S\)-scheme \(T\), we have
\(\Ext^i_{X_T}(E_T,E_T) = 0\) for all \(i < 0\). Since
\(\Phi_K(E)_T \cong \Phi_{K_T}(E_T)\) by \parref{derived-fm}\ref{derived-fm.base-change},
and the base changed Fourier--Mukai functor
\(\Phi_{K_T} \colon \Dqc(X_T) \to \Dqc(Y_T)\) is also fully faithful by
\parref{lemma-basechangeofff}, this implies
\[
\Ext^i_{Y_T}(\Phi_K(E)_T, \Phi_K(E)_T) =
\Ext^i_{Y_T}(\Phi_{K_T}(E_T), \Phi_{K_T}(E_T)) = 0
\;\text{for all \(i < 0\).}
\]
Applying
\parref{complexes-glueable}\ref{complexes-glueable.universally-glueable-affines}
once again shows that \(\Phi_K(E)\) is universally glueable. The same argument
works for \(E\) simple, upon using the criterion from
\parref{complexes-glueable}\ref{complexes-glueable.simple-affines}.
\end{proof}

The next statement shows that the right adjoint to the Fourier--Mukai
transform, which exists in this setting by \parref{lemma-rightadjointofFM}, is
compatible with base change and preserves relative perfectness:

\begin{Lemma}
\label{complexes-right-adjoint}
In the setting of \parref{complexes-FM}, let \(R \colon \Dqc(Y) \to \Dqc(X)\) be
the right adjoint of \(\Phi_K\).
\begin{enumerate}
\item\label{complexes-right-adjoint.base-change}
\(R\) is compatible with base change in that if \(T \to S\) is a morphism of
schemes and \(R_T \colon \Dqc(Y_T) \to \Dqc(X_T)\) is the right adjoint to
\(\Phi_{K_T}\), then there are canonical isomorphisms
\[
R(E)_T \cong R_T(E_T)
\;\;\text{for all \(E \in \Dqc(Y)\)}.
\]
\item\label{complexes-right-adjoint.relative-perfectness}
\(R\) takes relatively perfect objects to relatively perfect objects.
\end{enumerate}
\end{Lemma}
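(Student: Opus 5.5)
Both parts will follow from an explicit formula for the right adjoint. Using Grothendieck duality, I will show
\[
R(F) \cong R\pr_{1,*}\big(R\sHom_{\sO_{X\times_S Y}}(K, L\pr_2^*F \otimes^L \pr_2^\times\sO_Y)\big),
\]
or, more conveniently, \(R(F) \cong R\pr_{1,*}(L\pr_2^*F \otimes^L K^\vee)\) with the twisted dual
\[
K^\vee \coloneqq R\sHom(K, \pr_1^!\sO_X).
\]
Concretely, \(\pr_2 \colon X\times_S Y \to Y\) is the base change of \(f\), so it is flat, proper, and finitely presented. It therefore has a relative dualizing complex \(\omega_{\pr_2}^\bullet \cong L\pr_1'^{*}\omega^\bullet_{X/S}\), pulled back along \(X\times_SY\to X\), and its right adjoint satisfies \(\pr_2^\times(F) \cong L\pr_2^*F \otimes^L \omega_{\pr_2}^\bullet\) for all \(F\). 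This is the Stacks Project statement that for flat proper finitely presented morphisms the upper shriek is given by tensoring with the relative dualizing complex, which commutes with arbitrary base change.

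The right adjoint is then computed step by step. It is the composite of three right adjoints: \(\pr_2^\times\), then \(R\sHom(K,-)\), then \(R\pr_{1,*}\). Hence
\[
R(F) = R\pr_{1,*}R\sHom\big(K, L\pr_2^*F\otimes^L \omega_{\pr_2}^\bullet\big).
\]
Since \(K\) is \(Y\)-perfect, I will rewrite this as \(R\pr_{1,*}\big(R\sHom(K,\omega^\bullet_{\pr_2}) \otimes^L L\pr_2^*F\big)\). The identification \(R\sHom(K, M\otimes^L L\pr_2^*F)\cong R\sHom(K,M)\otimes^L L\pr_2^*F\) for \(K\) relatively perfect over \(Y\) is checked locally, where \(K\) is a bounded complex of \(Y\)-flat finitely presented modules; this is the analogue of \parref{lemma-internalhomandbasechange}. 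This exchange is the main obstacle: it fails for general \(K\), and I would handle it by reducing to the affine case and using the Stacks Project duality statement for relatively perfect objects, namely that \(R\sHom(-,\omega^\bullet_{\pr_2})\) preserves \(Y\)-perfectness, as in \parref{complexes-duality}.

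Granting the formula, part (i) follows quickly. By \parref{complexes-duality}, \(K' \coloneqq R\sHom(K,\omega^\bullet_{\pr_2})\) is \(Y\)-perfect. Its formation commutes with base change by \parref{lemma-internalhomandbasechange}, and \(\omega_{\pr_2}^\bullet\) also commutes with base change. So \(R\) is the Fourier--Mukai transform with kernel \(K'\) from \(Y\) to \(X\), and \(R_T\) is the transform with kernel \(K'_T\). The base change isomorphism then follows from tor-independent base change, exactly as in \parref{derived-fm}\ref{derived-fm.base-change}, since \(f\) and \(g\) are flat. I still need \(K'\) to be \(X\)-perfect. Perfectness of \(K\) relative to \(X\) gives this by the same dualizing argument applied to \(\pr_1\): \(\omega_{\pr_2}^\bullet\) is \(X\)-flat-locally a shifted invertible-type perfect object relative to \(X\), and \(R\sHom\) of an \(X\)-perfect object into it stays \(X\)-perfect by checking on fibres over points of \(X\) via \citeSP{0GEH}.

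For part (ii), once \(R=\Phi_{K'}\) with \(K'\) perfect relative to both factors, \parref{complexes-preserve-perfect}\ref{complexes-preserve-perfect.relative} applies verbatim with the roles of \(X\) and \(Y\) swapped. Hence \(R\) takes \(S\)-perfect objects to \(S\)-perfect objects.
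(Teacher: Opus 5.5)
Your formula $R\cong\Phi_{K'}$, with $K'=R\sHom(K,\omega^\bullet_{\pr_2})$ and $\omega^\bullet_{\pr_2}\cong L\pr_1^*\omega^\bullet_{X/S}$, is correct, and (i) follows from it as you describe. The step that fails is the claim that $K'$ is perfect relative to $X$, which you need in order to apply \parref{complexes-preserve-perfect}\ref{complexes-preserve-perfect.relative} ``verbatim''. Your justification treats $\omega^\bullet_{\pr_2}$ as locally a shifted invertible object. That holds only when $f$ has Gorenstein fibres, which \parref{complexes-FM} does not assume, and the claim is in fact false. Take $S=Y=\Spec\kk$, $X=\Spec A$ with $A=\kk[x,y]/(x,y)^2$, and $K=\sO_X$. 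This $K$ is perfect relative to both factors, and $\pr_1=\id_X$. But $K'=\omega_{X/\kk}=\Hom_{\kk}(A,\kk)$ is not a perfect $A$-module, because $A$ is not Gorenstein.

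You do not actually need $X$-perfectness. The proof of \parref{complexes-preserve-perfect}\ref{complexes-preserve-perfect.relative} only uses that the kernel is pseudo-coherent and perfect relative to the \emph{source} factor. Over a field, this is what keeps $L\pr_2^*F\otimes^L K'$ bounded for bounded $F$, after which \citeSP{0GEH} applies. For $\Phi_{K'}\colon\Dqc(Y)\to\Dqc(X)$ the source factor is $Y$. $Y$-perfectness of $K'$ is exactly \parref{complexes-duality}\ref{complexes-duality.perfect} applied to $\pr_2$, and pseudo-coherence is then automatic since $\pr_2$ is flat and finitely presented.

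There are two smaller points. First, your initial definition $K^\vee=R\sHom(K,\pr_1^!\sO_X)$ uses the wrong factor; it should be $\pr_2^!\sO_Y$, which is what you use afterwards. Second, the exchange $R\sHom(K,\omega^\bullet_{\pr_2}\otimes^L L\pr_2^*F)\cong K'\otimes^L L\pr_2^*F$ is not a local consequence of $Y$-perfectness of $K$. With $\sO$ in place of $\omega^\bullet_{\pr_2}$ it fails, for instance in the example above with $K$ the skyscraper at the closed point and $F=\bigoplus_{n\geq0}\kk[n]$. It is a duality statement: for $Y$ affine and $P$ perfect, applying $R\Hom(P,-)$ to either side gives $R\Hom_Y(R\pr_{2,*}(P\otimes^L K),\sO_Y)\otimes^L F$. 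This uses $\omega^\bullet_{\pr_2}\otimes^L L\pr_2^*F\cong\pr_2^\times F$ and the perfectness of $R\pr_{2,*}(P\otimes^L K)$.

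With these repairs your route is valid, and it differs from the paper's. The paper proves (ii) via the criterion \citeSP{0GET} and adjunction, $R\Gamma(X,E\otimes^L R(F))\cong R\Hom_Y(\Phi_K(E^\vee),F)$. That argument needs no kernel for $R$, so it never meets the exchange or the question of which factor $K'$ is perfect over. Your explicit kernel gives a cleaner (i). The paper invokes \parref{lemma-internalhomandbasechange} for $R\sHom(K,a(E))$, whose second argument is not relatively perfect for general $E$. After your exchange, that lemma is only needed with $\omega^\bullet_{\pr_2}$ as the second argument.
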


\begin{proof}
For \ref{complexes-right-adjoint.base-change}, since
\(\Phi_K = R\pr_{2,*} \circ (K \otimes^L -) \circ  L\pr_1^* \colon \Dqc(X) \to \Dqc(Y)\),
its right adjoint \(R\) may be written as the composition of three functors
\[
R = R\pr_{1,*} \circ R\sHom_{\sO_{X \times_S Y}}(K,-) \circ a
\]
where \(a\) is the right adjoint to \(R\pr_{2,*} \colon \Dqc(X \times_S Y) \to \Dqc(Y)\).
Since each of \(a\), \(R\sHom_{\sO_{X \times_S Y}}(K, -)\), and \(R\pr_{1,*}\)
commute with base change by \citeSP{0AA8}, \parref{lemma-internalhomandbasechange},
and tor-independent base change, respectively, \(R\) also commutes with
base change.

For \ref{complexes-right-adjoint.relative-perfectness}, compatibility with base
change in \ref{complexes-right-adjoint.base-change} together the local nature
of relative perfectness from \parref{lemma-relperfislocal} reduces the
statement to the case \(S\) is affine, wherein it suffices to verify the
criterion \citeSP{0GET}: Given \(E \in \Dqc(X)\) perfect and \(F \in \Dqc(Y)\)
\(S\)-perfect, we must show that
\[
R\Gamma(X, E \otimes^L R(F))
\cong R\Hom_X(E^\vee, R(F))
\cong R\Hom_Y(\Phi_K(E^\vee),F)
\cong R\Gamma(Y,\Phi_K(E^\vee)^\vee \otimes^L F) 
\]
is perfect; note that the second identification uses the fact that \(\Phi_K\)
is an enriched functor, as in \parref{derived-fm}\ref{derived-fm.enriched-hom}.
Since \(\Phi_K\) preserves perfect objects by
\parref{complexes-preserve-perfect}\ref{complexes-preserve-perfect.absolute}
and \(F\) itself is \(S\)-perfect, so is the displayed complex.
\end{proof}

\begin{proof}[Proof of \parref{theorem-fully-faithful-open-immersion}]
It remains to show that \(\mathrm{FM}_K\) is an open immersion. Full
faithfulness of \(\Phi_K\) implies that it is a monomorphism of stacks: indeed,
given \(E_1, E_2 \in \ComplexesStack_{X/S}(T)\), \parref{lemma-basechangeofff}
implies that the base changed Fourier--Mukai functor \(\Phi_{K_T}\) still
provides an isomorphism
\[
\Phi_{K_T} \colon
\Hom_{X_T}(E_1,E_2) \cong
\Hom_{Y_T}(\Phi_{K_T}(E_1), \Phi_{K_T}(E_2)).
\]
We may therefore conclude by showing that \(\mathrm{FM}_K\) is a smooth
morphism of stacks. Since the stacks of complexes are locally of finite
presentation over \(S\), we may do so via the infinitesimal criterion for
smoothness: Given a square zero thickening \(T' \to T\) and a solid commutative
square
\[
\begin{tikzcd}
    T' \ar[r] \ar[d] & \ComplexesStack_{X/S} \ar[d, "\mathrm{FM}_K"] \\
    T \ar[ur, dashed] \ar[r] & \ComplexesStack_{Y/S}
\end{tikzcd}
\]
construct a dashed arrow making the diagram commute. In other words, given
\begin{itemize}
\item
objects \((T',E') \in \ComplexesStack_{X/S}\) and
\((T,F) \in \ComplexesStack_{Y/S}\), and
\item an isomorphism \(\alpha \colon F_{T'} \cong \Phi_{K_{T'}}(E')\),
\end{itemize}
the task is to construct an object \((T,E) \in \ComplexesStack_{X/S}\) and
isomorphisms \(E_{T'} \cong E'\) and \(\Phi_{K_T}(E) \cong F\).

Let \(R \colon \Dqc(Y) \to \Dqc(X)\) be the right adjoint to
\(\Phi_K \colon \Dqc(X) \to \Dqc(Y)\). Then \parref{complexes-right-adjoint}
shows that \(R\) is compatible with base change and preserves
\(S\)-perfectness. Writing \(R_T\) for its base change to \(T\), the object
\(E \coloneqq R_T(F) \in \Dqc(X)\) is thus \(S\)-perfect. Base change,
\(\alpha\), and full faithfulness of \(\Phi_{K_{T'}}\), now give an isomorphism
\[
E_{T'} =
R_T(F)_{T'} \cong
R_{T'}(F_{T'}) \cong
R_{T'}(\Phi_{K_{T'}}(E')) \cong
E'.
\]
Similarly, the counit \(\varepsilon \colon \Phi_{K_T}(R(F)) \to F\) becomes an
isomorphism after restriction over \(T'\):
\[
(\Phi_{K_{T'}} \circ R_{T'})(F_{T'}) \cong
(\Phi_{K_{T'}} \circ R_{T'} \circ \Phi_{K_{T'}})(E') \cong
\Phi_{K_{T'}}(E') \cong
F_{T'}.
\]
Since the inclusion \(X_{T'} \to X_T\) is a homeomorphism, applying
\parref{lemma-nakforpseudo} then implies that the cone of \(\varepsilon\)
vanishes, and so \(\varepsilon\) itself is an isomorphism; in other words,
\(\Phi_{K_T}(E) \cong F\). Universal glueability of \(F\) together with
full faithfulness of \(\Phi_{K_T}\) now implies that \(E\) is also
universally glueable. It is now straightforward to see that
\((T,E)\) provides the sought-after dashed arrow.
\end{proof}

\begin{Lemma}
\label{lemma-nakforpseudo}
Let \((R,\mathfrak{m})\) be a local ring and \(E \in \mathrm{D}(R)\)
pseudo-coherent. Then
\[
E \otimes^L_R R/\mathfrak{m} = 0
\iff
E = 0.
\]
\end{Lemma}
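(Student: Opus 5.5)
The direction \(E = 0 \Rightarrow E \otimes^L_R R/\mathfrak{m} = 0\) is trivial, so the content is the converse. I will argue by contradiction: assume \(E \neq 0\) and produce a nonzero cohomology group of \(E \otimes^L_R R/\mathfrak{m}\). The key input is that pseudo-coherence makes \(E\) bounded above with finitely generated top cohomology, so that Nakayama's lemma applies to that top cohomology.

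Concretely, since \(E\) is pseudo-coherent, it can be represented by a bounded above complex \(P^\bullet\) of finite free \(R\)-modules (\citeSP{064T}). In particular \(H^i(E) = 0\) for \(i \gg 0\). If \(E \neq 0\), there is a largest integer \(b\) with \(H^b(E) \neq 0\). I may replace \(P^\bullet\) by its truncation so that \(P^i = 0\) for \(i > b\): the complex \(\tau_{\le b}\) of a bounded above complex of finite free modules is again pseudo-coherent, and can be re-resolved by finite free modules in degrees \(\le b\). Then \(H^b(E) = \coker(P^{b-1} \to P^b)\) is a finitely generated \(R\)-module.

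Next I compute the top cohomology of the derived fibre. Since \(P^\bullet\) consists of free modules, \(E \otimes^L_R R/\mathfrak{m}\) is represented by \(P^\bullet \otimes_R R/\mathfrak{m}\), which vanishes in degrees above \(b\). By right exactness of \(- \otimes_R R/\mathfrak{m}\),
\[
H^b(E \otimes^L_R R/\mathfrak{m}) \cong \coker(P^{b-1} \to P^b) \otimes_R R/\mathfrak{m} = H^b(E) \otimes_R R/\mathfrak{m}.
\]
Since \(H^b(E)\) is finitely generated and nonzero, Nakayama's lemma gives \(H^b(E)/\mathfrak{m}H^b(E) \neq 0\). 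Hence \(E \otimes^L_R R/\mathfrak{m} \neq 0\), a contradiction.

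The only step needing care is arranging a free resolution with no terms above the top cohomological degree, so that the right exactness computation is legitimate. Equivalently, one can argue directly: \(H^b(E) \otimes R/\mathfrak{m}\) is the top cohomology of the derived tensor product because \(E \in \mathrm{D}^{\le b}\) and \(- \otimes^L_R R/\mathfrak{m}\) is right t-exact, while \(H^b(E)\) is finitely generated by pseudo-coherence (\citeSP{064U}). Everything else is formal.
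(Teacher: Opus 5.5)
Your proposal is correct and follows the same route as the paper. Both arguments take the top nonvanishing cohomology \(H^b(E)\), which is finitely generated by pseudo-coherence, and identify \(H^b(E) \otimes_R R/\mathfrak{m}\) with the top cohomology of \(E \otimes^L_R R/\mathfrak{m}\) by right exactness. Nakayama's lemma then shows this is nonzero. Your extra care in representing \(E\) by finite free modules in degrees \(\le b\) only spells out a step the paper leaves implicit.
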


\begin{proof}
Suppose that \(E \neq 0\) and let \(i \in \mathbf{Z}\) be maximal such that
\(\mathrm{H}^i(E) \neq 0\). Then \(\mathrm{H}^i(E)\) is a finite \(R\)-module
by \citeSP{0645}, so
\(\mathrm{H}^i(E \otimes^L_R R/\mathfrak{m}) \cong \mathrm{H}^i(E) \otimes_R R/\mathfrak{m} \neq 0\)
by Nakayama's lemma.
\end{proof}

\section{Residual category to an exceptional collection}\label{section-decompositions}
The main object of concern in this paper is the \emph{Kuznetsov component} of a
flat family \(\rho \colon Q \to S\) of quadric hypersurfaces in a
\(\PP^n\)-bundle: this is the full subcategory of \(\Dqc(Q)\) with objects
\[
\Ku(Q) \coloneqq
\{F \in \Dqc(Q) :
R\rho_*(F \otimes^L \sO_\rho(-i)) = 0
\;\text{for}\;i = 0,\ldots,n-2
\}
\]
that are \(S\)-linearly right orthogonal to the \(S\)-exceptional collection
\(\sO_\rho, \sO_\rho(1), \ldots, \sO_\rho(n-2)\): see
\parref{decompositions-exceptional} and \parref{decompositions-collections}.
Thus there is a semiorthogonal decomposition
\[
\Dqc(Q) =
\langle
\Ku(Q),\;
L\rho^*\Dqc(S),\;
L\rho^*\Dqc(S) \otimes^L \sO_\rho(1),
\ldots,\;
L\rho^*\Dqc(S) \otimes^L \sO_\rho(n-2)
\rangle.
\]
In this section, we generalize \parref{theorem-fully-faithful-open-immersion}
to allow the source category to be an admissible subcategory of the
quasi-coherent derived category of a scheme: see
\parref{theorem-fully-faithful-open-immersion-2}; here, \emph{admissible} is
always taken to mean two-sided admissible. Since we work with the entire
quasi-coherent derived category of a scheme---rather than the bounded
derived category of coherent sheaves---we also use this section to document
some facts regarding mutations and semiorthogonal decompositions in this
generality.

\subsectiondash{Relatively exceptional objects}\label{decompositions-exceptional}
Let \(f \colon X \to S\) be a flat, proper, and finitely presented morphism
of schemes. Assume that \(f\) has Gorenstein fibres, so that its relative
dualizing complex \(\omega_{X/S} \coloneqq \omega_{X/S}^\bullet\), moreover, is
a line bundle. A perfect complex \(E \in \Dqc(X)\) is called \emph{exceptional
relative to \(S\)} or, briefly, \emph{\(S\)-exceptional} if the natural
morphism
\[
\sO_S \to Rf_*R\sHom_{\sO_X}(E,E) \in \Dqc(X)
\]
is an isomorphism. Basic properties are that relative exceptionality is
preserved under arbitrary base change and that such objects provide admissible
subcategories of \(\Dqc(X)\). Precisely:
\begin{enumerate}
\item\label{decompositions-exceptional.base-change}
If \(T \to S\) is any morphism of schemes, then \(E_T \in \Dqc(X_T)\) is
\(T\)-exceptional.
\item\label{decompositions-exceptional.admissible}
The functor \(Lf^*(-) \otimes^L E \colon \Dqc(S) \to \Dqc(X)\) is fully
faithful and has both adjoints.
\end{enumerate}

\begin{proof}
Item \ref{decompositions-exceptional.base-change} follows from
\parref{lemma-internalhomandbasechange} together with tor-independent base change.
For \ref{decompositions-exceptional.admissible}, given \(F,G \in \Dqc(S)\),
the projection formula and exceptionality of \(E\) shows
\begin{align*}
\Hom_X(Lf^*F \otimes^L E, Lf^*G \otimes^L E)
& \cong \Hom_X(Lf^*F, Lf^*G \otimes^L R\sHom_{\sO_X}(E,E)) \\
& \cong \Hom_S(F, G \otimes^L Rf_*R\sHom_{\sO_X}(E,E))
\cong \Hom_S(F,G)
\end{align*}
implying full faithfulness. The right adjoint is given by
\(Rf_*R\sHom_{\sO_X}(E,-) \colon \Dqc(X) \to \Dqc(S)\). For the left adjoint,
compute:
\begin{align*}
\Hom_X(F, Lf^*G \otimes^L E)
& \cong \Hom_X(R\sHom_{\sO_X}(E,F), Lf^*G) \\
& \cong \Hom_X(R\sHom_{\sO_X}(E,F) \otimes^L \omega_{X/S}, Lf^*G \otimes^L \omega_{X/S}) \\
& \cong \Hom_X(Rf_*R\sHom_{\sO_X}(E,F \otimes^L \omega_{X/S}), G)
\end{align*}
where the second isomorphism is because \(\omega_{X/S}\) is a line bundle,
and the third is because \(Rf_*\) is left adjoint to
\(Lf^*(-) \otimes^L \omega_{X/S}\). Thus
\(Rf_*R\sHom_{\sO_X}(E,- \otimes^L \omega_{X/S})\) is the left adjoint.
\end{proof}

Let \(\mathcal{A} \coloneqq Lf^*\Dqc(S) \otimes^L E \subset \Dqc(X)\) be the
essential image of the functor in
\parref{decompositions-exceptional}\ref{decompositions-exceptional.admissible}.
This is an admissible subcategory, so it induces two semiorthogonal decompositions
\[
\Dqc(X)
= \langle \mathcal{A}, \prescript{\perp}{}{\mathcal{A}} \rangle
= \langle \mathcal{A}^{\perp}, \mathcal{A}\rangle.
\]
By definition, the left orthogonal \(\prescript{\perp}{}{}\mathcal{A}\) and
right orthogonal \(\mathcal{A}^\perp\) categories to \(\mathcal{A}\) are the
full subcategories of \(\Dqc(X)\) whose objects have no global maps to and from
objects in \(\mathcal{A}\), respectively. Since \(E\) is exceptional relative
to \(S\), these categories also admit a relative description, as follows:

\begin{Lemma}
\label{decompositions-relative-perp}
In the setting of \parref{decompositions-exceptional}, the
orthogonals to \(\mathcal{A} \coloneqq Lf^*\Dqc(S) \otimes^L E\) are
given by
\begin{align*}
\mathcal{A}^\perp
& = \{F \in \Dqc(X) : Rf_*R\sHom_{\sO_X}(E,F) = 0\}, \;\text{and} \\
\prescript{\perp}{}{\mathcal{A}}
& = \{F \in \Dqc(X) : Rf_*R\sHom_{\sO_X}(E, F \otimes^L \omega_{X/S}) = 0\}.
\end{align*}
In particular, \(\prescript{\perp}{}{\mathcal{A}}\) and \(\mathcal{A}^\perp\) are
\(S\)-linear subcategories of \(\Dqc(X)\) and are equivalent via
\(- \otimes^L \omega_{X/S}\)
\end{Lemma}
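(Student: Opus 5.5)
The plan is to compute the orthogonals by adjunction, reducing global Hom vanishing to vanishing of a relative Hom complex. Throughout I use that \(\Dqc(S)\) is compactly generated: \(S\) is qcqs in this setting, or one reduces to that case since all conditions are Zariski local on \(S\).

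\emph{Right orthogonal.} An object \(F\) lies in \(\mathcal{A}^\perp\) exactly when \(\Hom_X(Lf^*G \otimes^L E, F[i]) = 0\) for all \(G \in \Dqc(S)\) and \(i \in \mathbf{Z}\). Since \(E\) is perfect, the projection formula and the adjunction \(Lf^* \dashv Rf_*\) give
\[
\Hom_X(Lf^*G \otimes^L E, F[i]) \cong \Hom_S(G, Rf_*R\sHom_{\sO_X}(E,F)[i]),
\]
as in the proof of \parref{decompositions-exceptional}. If \(Rf_*R\sHom_{\sO_X}(E,F) = 0\), every such group vanishes. Conversely, taking \(G = Rf_*R\sHom_{\sO_X}(E,F)\) itself and the identity map shows that vanishing for all \(G\) forces \(Rf_*R\sHom_{\sO_X}(E,F) = 0\). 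This gives the description of \(\mathcal{A}^\perp\).

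\emph{Left orthogonal.} Here \(F \in \prescript{\perp}{}{\mathcal{A}}\) means \(\Hom_X(F, Lf^*G \otimes^L E[i]) = 0\) for all \(G\) and \(i\). The computation in \parref{decompositions-exceptional}\ref{decompositions-exceptional.admissible} identifies this group with
\[
\Hom_S\bigl(Rf_*R\sHom_{\sO_X}(E, F \otimes^L \omega_{X/S}), G[i]\bigr).
\]
That computation uses \(R\sHom_{\sO_X}(F, Lf^*G \otimes^L E) \cong R\sHom_{\sO_X}(E^\vee \otimes^L F, Lf^*G)\) and Grothendieck duality \(Rf_* \dashv Lf^*(-) \otimes^L \omega_{X/S}\), which is valid because \(f\) is flat, proper and finitely presented and \(\omega_{X/S}\) is invertible, so that \(f^\times = Lf^*(-) \otimes \omega_{X/S}\). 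Again, vanishing of \(Rf_*R\sHom_{\sO_X}(E, F\otimes^L\omega_{X/S})\) gives vanishing of all these groups. For the converse, take \(G\) equal to that object and the identity map.

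\emph{Consequences.} \(S\)-linearity of \(\mathcal{A}^\perp\) follows from the projection formula exactly as in \parref{derived-orthogonal}\ref{derived-orthogonal.linear}: for \(H \in \Dqc(S)\),
\[
Rf_*R\sHom_{\sO_X}(E, F \otimes^L Lf^*H) \cong Rf_*R\sHom_{\sO_X}(E,F) \otimes^L H.
\]
The same argument applies to \(\prescript{\perp}{}{\mathcal{A}}\), using \(Lf^*H \otimes \omega_{X/S} = \omega_{X/S} \otimes Lf^*H\). Finally, the two descriptions show directly that \(F \in \prescript{\perp}{}{\mathcal{A}}\) if and only if \(F \otimes^L \omega_{X/S} \in \mathcal{A}^\perp\). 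Since \(\omega_{X/S}\) is a line bundle, \(-\otimes \omega_{X/S}\) is an autoequivalence of \(\Dqc(X)\) with inverse \(-\otimes\omega_{X/S}^\vee\), so it restricts to an equivalence \(\prescript{\perp}{}{\mathcal{A}} \simeq \mathcal{A}^\perp\).

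The only delicate point is the duality isomorphism \(f^\times \cong Lf^*(-)\otimes\omega_{X/S}\) on all of \(\Dqc(S)\). This holds for flat proper finitely presented \(f\) with invertible relative dualizing complex (Stacks Project, relative duality: \(f^\times\) commutes with base change and \(f^\times(G) = Lf^*G \otimes f^\times\sO_S\) for perfect \(f\)), and it was already used in the earlier proof, so I would simply cite it there.
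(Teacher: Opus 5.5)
Your proposal is correct and follows the paper's own proof. Both identify the relevant Hom groups via the projection formula and the adjunctions $Lf^* \dashv Rf_*$ and $Rf_* \dashv Lf^*(-)\otimes^L\omega_{X/S}$ from the proof of \parref{decompositions-exceptional}, conclude by Yoneda, and then deduce $S$-linearity and the equivalence via $-\otimes^L\omega_{X/S}$ just as you do. Your opening appeal to compact generation of $\Dqc(S)$ is never actually used, since testing against $G$ equal to the object itself already suffices.
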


\begin{proof}
By definition, \(F \in \mathcal{A}^\perp\) if and only if for every
\(G \in \Dqc(S)\),
\[
0
= \Hom_X(Lf^*G \otimes^L E, F)
\cong \Hom_S(G, Rf_*R\sHom_X(E,F)),
\]
and this is equivalent to \(Rf_*R\sHom_{\sO_X}(E,F) = 0\) by Yoneda. Similarly,
\(F \in \prescript{\perp}{}{\mathcal{A}}\) if and only if
\[
0 = \Hom_X(F, Lf^*G \otimes^L E) \cong
\Hom_S(Rf_*R\sHom_{\sO_X}(E,F\otimes^L\omega_{X/S}), G),
\]
and the conclusion follows again by the Yoneda lemma. These descriptions
now imply \(S\)-linearity, as was explained in
\parref{derived-orthogonal}\ref{derived-orthogonal.linear}, and the equivalence
via tensor by \(\omega_{X/S}\).
\end{proof}

\subsectiondash{Relatively exceptional collections}\label{decompositions-collections}
Continuing with the setting of \parref{decompositions-exceptional}, a sequence
of perfect complexes \(E_1,\ldots,E_n \in \Dqc(X)\) is called an
\emph{exceptional collection relative to \(S\)}, or briefly, an
\emph{\(S\)-exceptional collection} if each \(E_i\) is \(S\)-exceptional and
\(Rf_*R\sHom_{\sO_X}(E_i,E_j) = 0\) for \(i > j\). By
\parref{decompositions-exceptional}\ref{decompositions-exceptional.admissible},
each \(\mathcal{A}_i \coloneqq Lf^*\Dqc(S) \otimes^L E_i\) is an admissible
subcategory of \(\Dqc(X)\), and repeatedly applying
\parref{decompositions-relative-perp} and argueing as \cite[Lemma
4.1(2)]{Xie:Quadrics} produces a semiorthogonal decomposition
\[
\Dqc(X) = \langle \mathcal{A}_0, \mathcal{A}_1, \ldots, \mathcal{A}_n \rangle
\]
where each \(\mathcal{A}_i\) is \(S\)-linear and admissible, and the objects
of \(\mathcal{A}_0\) may be described as
\[
\mathcal{A}_0 = \{F \in \Dqc(X) : Rf_*R\sHom_{\sO_X}(E_i, F) = 0\;\text{for \(i = 1,\ldots,n\)}\}.
\]
We refer to \(\mathcal{A}_0\) as the \emph{residual component} to the \(S\)-exceptional
collection \(E_1,\ldots,E_n\).

Since relative exceptionality is compatible with arbitrary base change as in
\parref{decompositions-exceptional}\ref{decompositions-exceptional.base-change},
such a semiorthogonal decomposition is also compatible with arbitrary base
change. Namely, if \(T \to S\) be any morphism of schemes, then there is a
semiorthogonal decomposition
\[
\Dqc(X_T) = \langle \mathcal{A}_{0, T}, \mathcal{A}_{1,T},\ldots, \mathcal{A}_{n,T} \rangle
\;\text{with}\;
\mathcal{A}_{i,T} \coloneqq Lf_T^*\Dqc(T) \otimes^L E_{i,T}
\;\text{for \(i = 1,\ldots,n\)}
\]
and the \(T\)-exceptional collection \(E_{1,T},\ldots,E_{n,T}\), and
\(\mathcal{A}_{0,T}\) is their \(T\)-linear right orthgonal.

\subsectiondash{Cyclic shifts and mutations}\label{decompositions-mutations}
Starting from any semiorthogonal decomposition
\[
\Dqc(X) = \langle \mathcal{B}_0, \mathcal{B}_1, \ldots, \mathcal{B}_n \rangle,
\]
induced from an \(S\)-exceptional collection---meaning that all but
possibly one component is equivalent to a subcategory of the form
\(Lf^*\Dqc(S) \otimes^L E_j\) for a perfect \(S\)-exceptional
\(E_j \in \Dqc(X)\)---there are two standard ways to construct a new one:

First, the descriptions of the two orthogonals of a subcategory generated by a
relatively exceptional object in \parref{decompositions-relative-perp} shows that
it is possible to cyclically shift the semiorthogonal decomposition, at least
after twisting the shifted component by the relative dualizing line bundle:
\[
\Dqc(X)
= \langle \mathcal{B}_n \otimes^L \omega_{X/S}, \mathcal{B}_0, \ldots, \mathcal{B}_{n-1} \rangle
= \langle \mathcal{B}_1, \ldots, \mathcal{B}_n, \mathcal{B}_0 \otimes^L \omega_{X/S}^{\vee} \rangle.
\]

Second, is left or right mutation across a semiorthogonal component
\(\mathcal{B}_i\). When \(\mathcal{B}_i\) is equivalent to a subcategory of the
form \(Lf^*\Dqc(S) \otimes^L E\) for an \(S\)-exceptional object \(E \in
\Dqc(X)\), the mutation functors
\(\mathbf{L}_E, \mathbf{R}_E \colon \Dqc(X) \to \Dqc(X)\) were already
introduced in \cite{Bondal:Reps} and are characterized as follows: They vanish
on \(\mathcal{B}_i\), induce equivalences
\(\mathbf{L}_E \colon \prescript{\perp}{}{\mathcal{B}_i} \to \mathcal{B}_i^\perp\)
and
\(\mathbf{R}_E \colon \mathcal{B}_i^\perp \to \prescript{\perp}{}{\mathcal{B}}_i\),
and are described explicitly on \(F \in \Dqc(X)\) by
\begin{align*}
\mathbf{L}_E(F) &
\coloneqq \operatorname{cone}(\iota R(F) \to F)
\cong \operatorname{cone}(Lf^*Rf_*R\sHom_{\sO_X}(E,F) \otimes^L E \to F),\;\text{and} \\
\mathbf{R}_E(F) &
\coloneqq \operatorname{cone}(F \to \iota L(F))[-1]
\cong \operatorname{cone}(F \to Lf^*Rf_*R\sHom_{\sO_X}(E,F \otimes^L \omega_{X/S}) \otimes^L E)[-1]
\end{align*}
where \(\iota \colon \mathcal{B}_i \to \Dqc(X)\) is the inclusion, and
\(L, R \colon \Dqc(X) \to \mathcal{B}_i\) are its left and right adjoints;
note that the cones are functorial by semiorthogonality, and the second
isomorphism comes from the identification of the adjoints from the proof of
\parref{decompositions-exceptional}\ref{decompositions-exceptional.admissible}.

\begin{Lemma}\label{decompositions-residual-adjoints}
In the setting of \parref{decompositions-collections}, let
\(L \colon \Dqc(X) \to \mathcal{A}_0\) and
\(R \colon \Dqc(X) \to \mathcal{A}_0\) be the left and right adjoints to the
inclusion \(\mathcal{A}_0 \subset \Dqc(X)\) of the residual component.
\begin{enumerate}
\item\label{decompositions-residual-adjoints.base-change}
If \(T\) is an \(S\)-scheme and \(L_T\) and \(R_T\) are the adjoints to the
inclusion \(\mathcal{A}_{0,T} \subset \Dqc(X_T)\), then
\[
L_T(F_T) \cong L(F)_T
\;\;\text{and}\;\;
R_T(F_T) \cong R(F)_T
\;\;\text{for any \(F \in \Dqc(X)\).}
\]
\item\label{decompositions-residual-adjoints.perfect}
The right adjoint \(R\) sends \(S\)-perfect objects to \(S\)-perfect objects.
\end{enumerate}
\end{Lemma}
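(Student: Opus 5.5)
The plan is to write both adjoints as explicit iterated mutations and then check base change and relative perfectness one mutation at a time. The semiorthogonal decomposition \(\Dqc(X) = \langle \mathcal{A}_0, \mathcal{A}_1, \ldots, \mathcal{A}_n\rangle\) gives the right adjoint as the left mutation through the collection:
\[
R \cong \mathbf{L}_{E_1} \circ \mathbf{L}_{E_2} \circ \cdots \circ \mathbf{L}_{E_n}.
\]
Each mutation is given by the explicit cone formula of \parref{decompositions-mutations}. The left adjoint \(L\) is handled similarly. Using the cyclic shift \(\Dqc(X) = \langle \mathcal{A}_1, \ldots, \mathcal{A}_n, \mathcal{A}_0 \otimes^L \omega_{X/S}^\vee\rangle\), or directly the formula for \(\mathbf{R}_E\), we get
\[
L \cong \mathbf{R}_{E_n} \circ \cdots \circ \mathbf{R}_{E_1}.
\]
To justify these identifications, I will check that the composite lands in \(\mathcal{A}_0\), that it is the identity on \(\mathcal{A}_0\), and that it kills each \(\mathcal{A}_i\) for \(i \geq 1\). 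These are standard facts about mutations along an exceptional collection; the argument of \cite[Lemma 4.1]{Xie:Quadrics} adapts.

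For \ref{decompositions-residual-adjoints.base-change}, it then suffices to show that a single mutation commutes with arbitrary base change \(h \colon T \to S\). The cone formula involves four ingredients: \(Lf^*\), \(Rf_*\), \(R\sHom_{\sO_X}(E,-)\) with \(E\) perfect, and tensoring with \(E\) and with the line bundle \(\omega_{X/S}\). Since \(E\) is perfect, \(R\sHom_{\sO_X}(E,F) \cong E^\vee \otimes^L F\), so this step commutes with pullback. The functor \(Rf_*\) commutes with arbitrary base change by tor-independent base change, because \(f\) is flat. Finally, \(\omega_{X/S}\) commutes with base change for flat, proper, finitely presented Gorenstein morphisms.

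The one point needing care is that the cone is only functorial up to non-unique isomorphism in general. Here, however, it is canonical: the evaluation morphism \(\iota R_i(F) \to F\) is the counit of an adjunction and commutes with base change, and the cone is unique because \(\mathrm{Hom}\)s from \(\mathcal{A}_i\) into \(\mathcal{A}_i^\perp\) vanish (semiorthogonality). With the base-changed collection \(E_{i,T}\) from \parref{decompositions-collections}, this gives \(\mathbf{L}_{E}(F)_T \cong \mathbf{L}_{E_T}(F_T)\), and the claim follows by composing.

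For \ref{decompositions-residual-adjoints.perfect}, I will again go one mutation at a time. Let \(F\) be \(S\)-perfect. Then \(E^\vee \otimes^L F\) is \(S\)-perfect, since tensoring with a perfect complex preserves relative perfectness (this can be checked locally). By \citeSP{0DJT}, \(Rf_*\) of an \(S\)-perfect object is perfect on \(S\) because \(f\) is flat, proper, and finitely presented. Hence \(Lf^*Rf_*R\sHom(E,F) \otimes^L E\) is perfect on \(X\), and therefore \(S\)-perfect by \parref{lemma-relperfonsmooth}-type reasoning: perfect objects are represented locally by bounded complexes of vector bundles, which are \(S\)-flat since \(f\) is flat. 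The cone of a morphism between two \(S\)-perfect objects is \(S\)-perfect, so \(\mathbf{L}_E(F)\) is \(S\)-perfect, and iterating gives the statement for \(R\).

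The main obstacle I expect is the first step: carefully identifying \(R\) (and \(L\)) with the composite of mutations in the unbounded, \(S\)-linear setting. This needs the intermediate categories \(\langle \mathcal{A}_0, \ldots, \mathcal{A}_k\rangle\) to be right orthogonals of \(E_{k+1}, \ldots, E_n\). That description is available from \parref{decompositions-relative-perp}, so the identification should be formal.
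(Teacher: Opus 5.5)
Your strategy is the paper's: write the adjoints as composites of mutations, then check base change and relative perfectness one mutation at a time. However, you have assigned the composites to the wrong adjoints, and this breaks part (ii). The category \(\mathcal{A}_0 = \{F : Rf_*R\sHom_{\sO_X}(E_i,F)=0\}\) is a \emph{right} orthogonal. For \(G \in \langle E\rangle^\perp\), the triangle defining \(\mathbf{L}_E(F)\) gives \(\Hom(\mathbf{L}_E(F),G) \cong \Hom(F,G)\). Hence the composite \(\mathbf{L}_{E_1}\circ\cdots\circ\mathbf{L}_{E_n}\) is the \emph{left} adjoint \(L\). Your own proposed check (identity on \(\mathcal{A}_0\), kills each \(\mathcal{A}_i\)) characterizes \(L\); the right adjoint \(R\) kills \(\mathcal{A}_i \otimes^L \omega_{X/S}\), not \(\mathcal{A}_i\) in general.

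Dually, \(\mathbf{R}_{E_n}\circ\cdots\circ\mathbf{R}_{E_1}\) is the right adjoint of the inclusion of \({}^\perp\langle E_1,\ldots,E_n\rangle = \mathcal{A}_0 \otimes^L \omega_{X/S}^\vee\). So it does not land in \(\mathcal{A}_0\) at all. The correct formula is \(R \cong (-\otimes^L\omega_{X/S}) \circ \mathbf{R}_{E_n}\circ\cdots\circ\mathbf{R}_{E_1} \circ (-\otimes^L \omega_{X/S}^\vee)\). For instance, take \(X = \PP^1\) over a field and \(E_1 = \sO\). Then \(\mathbf{L}_{\sO}(\sO(1)) \cong \sO(-1)[1]\), while \(R(\sO(1)) \cong \sO(-1)^{\oplus 3}\).

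Part (i) survives once the identifications are corrected. Left and right mutations, and twisting by the line bundle \(\omega_{X/S}\), all commute with base change, exactly as you argue. Part (ii), however, is proved for the wrong functor: you show that left mutations preserve \(S\)-perfectness, which gives the statement for \(L\), not \(R\).

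The repair is what the paper does. Use the triangle \(\mathbf{R}_E(F) \to F \to Lf^*Rf_*R\sHom_{\sO_X}(E, F \otimes^L \omega_{X/S}) \otimes^L E \to \mathbf{R}_E(F)[1]\). Its third term is perfect because \(F \otimes^L \omega_{X/S}\) is still \(S\)-perfect, so \(\mathbf{R}_E(F)\) is \(S\)-perfect. Then note that twisting by \(\omega_{X/S}^{\pm 1}\) preserves \(S\)-perfectness. (The paper's displayed formula for \(R\) omits the inner twist by \(\omega_{X/S}^\vee\); this is harmless for both (i) and (ii).)
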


\begin{proof}
For \ref{decompositions-residual-adjoints.base-change}, note that the left and
right adjoints to the inclusion \(\mathcal{A}_0 \subset \Dqc(X)\) may be
described as a composition of the mutation functors:
\[
L = \mathbf{L}_{E_1} \circ \cdots \circ \mathbf{L}_{E_n}
\;\;\text{and}\;\;
R = ((-) \otimes^L \omega_{X/S}) \circ \mathbf{R}_{E_n} \circ \cdots \circ \mathbf{R}_{E_1}.
\]
Combining the description of the mutation functors from
\parref{decompositions-mutations} together with flat base change and
\parref{lemma-internalhomandbasechange} shows that each of the mutation
functors commute with base change, in that
\[
\mathbf{L}_{E_i}(F)_T \cong \mathbf{L}_{E_{i,T}}(F_T)
\;\;\text{and}\;\;
\mathbf{R}_{E_i}(F)_T \cong \mathbf{R}_{E_{i,T}}(F_T).
\]
Since formation of the relative dualizing bundle \(\omega_{X/S}\) also
commutes with base change, this implies that \(L\) and \(R\) commute
with base change.

For \ref{decompositions-residual-adjoints.perfect}, it suffices to show that
the right mutation functors \(\mathbf{R}_{E_i}\) preserve \(S\)-perfect
objects. Given an \(S\)-perfect object \(F \in \Dqc(X)\), consider the
distinguished triangle
\[
\mathbf{R}_{E_i}(F) \longrightarrow
F \longrightarrow
Lf^*(Rf_*R\sHom_{\sO_X}(E_i,F) \otimes^L \omega_{X/S}) \otimes^L E_i \stackrel{+1}{\longrightarrow}
\]
from the definition of \(\mathbf{R}_{E_i}\). Since \(f \colon X \to S\)
is proper, \(Rf_*\) takes \(S\)-perfect complexes to perfect complexes, so the
term on the right is perfect. Since perfect complexes on \(X\) are also
\(X\)-perfect, and since the category of \(S\)-perfect complexes is
triangulated, this shows that \(\mathbf{R}_{E_i}(F)\) is
\(S\)-perfect.
\end{proof}

\subsectiondash{Generator for residual component}\label{decompositions-generator}
In order to adapt the proofs of
\S\S\parref{section-derived}--\parref{section-complexes} to prove analogous
results for the residual category in \parref{decompositions-collections}, we
prove that, at least when the base scheme \(S\) is additionally qcqs,
\(\mathcal{A}_0\) also has a compact generator that is a perfect complex on
\(X\): see \parref{decompositions-compactness}.

Given a triangulated category \(\mathcal{D}\) and two full triangulated
subcategories \(\mathcal{A}, \mathcal{B} \subseteq \mathcal{D}\), we write
\(\langle \mathcal{A}, \mathcal{B} \rangle \subseteq \mathcal{D}\) for the
smallest strictly full triangulated subcategory containing both \(\mathcal{A}\)
and \(\mathcal{B}\). When \(\mathcal{A}\) and \(\mathcal{B}\) satisfy
semiorthogonality conditions, objects of this category can be explicitly
described:

\begin{Lemma}
\label{lemma-smallestsubcat}
Let \(\mathcal{D}\) be a triangulated category and
\(\mathcal{A}, \mathcal{B} \subseteq \mathcal{D}\) full triangulated
subcategories. If \(\mathcal{A} \subseteq \mathcal{B}^\perp\), then an object
\(C \in \mathcal{D}\) lies in \(\langle \mathcal{A}, \mathcal{B} \rangle\) if
and only if there is a distinguished triangle
\[
B \longrightarrow C \longrightarrow A \stackrel{+1}{\longrightarrow}
\;\;\text{with \(A \in \mathcal{A}\) and \(B \in \mathcal{B}\).}
\]
In particular, if \(\mathcal{D}\) has arbitrary direct sums and \(\mathcal{A}\)
and \(\mathcal{B}\) are closed under them, then so is \(\langle \mathcal{A}, \mathcal{B} \rangle\).
\end{Lemma}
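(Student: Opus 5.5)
Let \(\mathcal{C}\) denote the full subcategory of \(\mathcal{D}\) consisting of objects \(C\) that fit into a triangle \(B \to C \to A \to B[1]\) with \(A \in \mathcal{A}\) and \(B \in \mathcal{B}\). I will show \(\mathcal{C} = \langle \mathcal{A}, \mathcal{B} \rangle\) by proving both inclusions.

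One inclusion is immediate. Any strictly full triangulated subcategory containing \(\mathcal{A}\) and \(\mathcal{B}\) is closed under cones. So it contains every \(C\) with such a triangle, since \(C\) is, up to shift and rotation, the cone of the connecting map \(A[-1] \to B\). Hence \(\mathcal{C} \subseteq \langle \mathcal{A}, \mathcal{B} \rangle\).

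For the reverse inclusion, it suffices to show that \(\mathcal{C}\) is itself a strictly full triangulated subcategory containing \(\mathcal{A}\) and \(\mathcal{B}\).
\begin{itemize}
\item It contains \(\mathcal{A}\) and \(\mathcal{B}\), using the triangles \(0 \to A \to A\) and \(B \to B \to 0\).
\item It is closed under isomorphism and under shifts, since \(\mathcal{A}\) and \(\mathcal{B}\) are.
\item The key step is closure under cones.
\end{itemize}

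For cones, take \(C_1, C_2 \in \mathcal{C}\) with triangles \(B_k \to C_k \to A_k\) and a morphism \(u \colon C_1 \to C_2\). The hypothesis \(\mathcal{A} \subseteq \mathcal{B}^\perp\) gives \(\Hom(B_1, A_2) = 0\). Hence the composite \(B_1 \to C_1 \to C_2 \to A_2\) vanishes, so \(B_1 \to C_2\) factors through \(B_2\). Moreover this factorization is unique, because \(\Hom(B_1, A_2[-1]) = 0\) as well, \(\mathcal{A}\) being closed under shifts. We get a map of triangles \((b, u, a)\), where \(a \colon A_1 \to A_2\) is induced by TR3.

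Now apply the octahedral axiom, or equivalently the \(3\times 3\) lemma (Verdier / BBD Prop.\ 1.1.11), to this morphism of triangles. It produces a triangle \(\operatorname{cone}(b) \to \operatorname{cone}(u) \to \operatorname{cone}(a)\), possibly after adjusting \(a\); the lemma allows choosing compatible cones. Since \(\operatorname{cone}(b) \in \mathcal{B}\) and \(\operatorname{cone}(a) \in \mathcal{A}\), we get \(\operatorname{cone}(u) \in \mathcal{C}\). This step is the main obstacle, because the \(3\times3\) lemma only asserts that some extension of the square to a \(3\times 3\) diagram exists. I would handle it by first fixing the square \(B_1 \to C_1\), \(B_2 \to C_2\) with \(b\) and \(u\), and applying the \(3\times3\) lemma to that square. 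This yields the third row \(\operatorname{cone}(b) \to \operatorname{cone}(u) \to X\), where \(X\) sits in a triangle with \(A_1\) and \(A_2\). Hence \(X \cong \operatorname{cone}(A_1 \to A_2)\) lies in \(\mathcal{A}\), since \(\mathcal{A}\) is triangulated and strictly full.

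Finally, suppose \(\mathcal{D}\) has arbitrary direct sums and \(\mathcal{A}\) and \(\mathcal{B}\) are closed under them. A direct sum of distinguished triangles is distinguished, since coproducts of triangles in a triangulated category are triangles (Neeman, Prop.\ 1.2.1). So for \(C_i\) with triangles \(B_i \to C_i \to A_i\), the sum gives \(\bigoplus B_i \to \bigoplus C_i \to \bigoplus A_i\), which has its outer terms in \(\mathcal{B}\) and \(\mathcal{A}\) respectively. Therefore \(\bigoplus C_i \in \langle \mathcal{A}, \mathcal{B} \rangle\).
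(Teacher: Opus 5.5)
Your proposal is correct and follows the same route as the paper. You use \(\Hom(B_1, A_2) = 0\) to lift \(C_1 \to C_2\) to a map \(B_1 \to B_2\), then complete the square via the \(3\times 3\) lemma (BBD, Proposition 1.1.11) to obtain a triangle \(\operatorname{cone}(b) \to \operatorname{cone}(u) \to \operatorname{cone}(A_1 \to A_2)\), and you handle direct sums by the fact that a coproduct of distinguished triangles is distinguished.
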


\begin{proof}
Every such \(C\) must lie in \(\langle \mathcal{A}, \mathcal{B} \rangle\)
because it is strictly full and triangulated. Conversely, it suffices to show
that the full subcategory \(\mathcal{C} \subseteq \mathcal{D}\)
consisting of such \(C\) is itself triangulated. It is clear that
\(\mathcal{C}\) closed under taking shifts, so it remains to show it is closed
under cones: Given a morphism \(C_1 \to C_2\) between objects fitting in
distinguished triangles \(B_i \to C_i \to A_i \to B_i[1]\) with
\(A_i \in \mathcal{A}\) and \(B_i \in \mathcal{B}\) for \(i = 1,2\),
semiorthogonality of \(\mathcal{A}\) and \(\mathcal{B}\) implies that there is
a unique morphism \(B_1 \to B_2\) making the diagram
\[
\begin{tikzcd}
B_1 \ar[r] \ar[d] & C_1 \ar[d] \\
B_2 \ar[r] & C_2
\end{tikzcd}
\]
commute. By \cite[Proposition 1.1.11]{BBD}, this square can be completed to a
$4 \times 4$ commutative-up-to-sign diagram whose rows and columns are
distinguished triangles. This provides a distinguished triangle
\(B_3 \to C_3 \to A_3 \to B_3[1]\) where $B_3$ is a cone of $B_1 \to B_2$,
$C_3$ is a cone of $C_1 \to C_2$, and there is a morphism $A_1 \to A_2$ whose
cone is $A_3$. But then $A_3 \in \mathcal{A}$ and $B_3 \in \mathcal{B}$, so the
cone $C_3$ of $C_1 \to C_2$ is itself an extension of an object of
$\mathcal{A}$ by an object of $\mathcal{B}$, showing that \(\mathcal{C}\) is
triangulated.

Regarding direct sums, let
\(\{C_i\}_{i \in I} \subset \langle \mathcal{A}, \mathcal{B} \rangle\) be a set of
objects where, for each \(i \in I\), there is a distinguished triangle
\(B_i \to C_i \to A_i \to B_i[1]\). Then there is a distinguished triangle
\[
\bigoplus\nolimits_{i \in I} B_i \longrightarrow
\bigoplus\nolimits_{i \in I} C_i \longrightarrow
\bigoplus\nolimits_{i \in I} A_i \stackrel{+1}{\longrightarrow}
\]
see \citeSP{0CRG}. Since \(\mathcal{A}\) and \(\mathcal{B}\) are closed under
sums, this implies that \(\bigoplus\nolimits_{i \in I} C_i \in \langle \mathcal{A}, \mathcal{B} \rangle\).
\end{proof}

\begin{Lemma}
\label{lemma-projectingagenerator}
Let \(\mathcal{D}\) be a triangulated category with arbitrary direct sums
and \(\iota \colon \mathcal{A} \to \mathcal{D}\) an admissible subcategory.
If \(\mathcal{A}\) is closed under sums and \(\mathcal{D}\) has a compact
generator \(G\), then the left adjoint \(L \colon \mathcal{D} \to \mathcal{A}\)
to \(\iota\) preserves compact objects and \(L(G) \in \mathcal{A}\) is a
compact generator.
\end{Lemma}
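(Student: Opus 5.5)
The plan is to use the adjunction $L \dashv \iota$ together with \parref{lemma-preservescompact} for compactness, and then check the generator property directly against $G$ through the same adjunction.

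\emph{Compactness.} Since $\mathcal{A}$ is admissible, $\iota$ has a right adjoint $R$ as well as the left adjoint $L$. The functor $\iota$ commutes with direct sums. Indeed, $\mathcal{A}$ is closed under sums in $\mathcal{D}$, so a sum computed in $\mathcal{D}$ of objects of $\mathcal{A}$ is also their sum in $\mathcal{A}$, because $\mathcal{A}$ is a full subcategory. Now apply \parref{lemma-preservescompact} to $\Psi = L \colon \mathcal{D} \to \mathcal{A}$. Its hypotheses hold: $\mathcal{D}$ is compactly generated by $G$, $L$ is an exact functor between triangulated categories with arbitrary direct sums, and $L$ has the right adjoint $\iota$, which commutes with sums. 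The lemma then says $L$ takes compact objects to compact objects, and in particular $L(G)$ is compact in $\mathcal{A}$. If one prefers to avoid the lemma, the same conclusion follows from the chain
\[
\Hom_{\mathcal{A}}(L(P), \textstyle\bigoplus A_i) \cong \Hom_{\mathcal{D}}(P, \bigoplus \iota A_i) \cong \bigoplus \Hom_{\mathcal{D}}(P, \iota A_i) \cong \bigoplus \Hom_{\mathcal{A}}(L(P), A_i),
\]
valid for any compact $P \in \mathcal{D}$.

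\emph{Generation.} Let $A \in \mathcal{A}$ and suppose $\Hom_{\mathcal{A}}(L(G), A[i]) = 0$ for all $i$. By adjunction this group equals $\Hom_{\mathcal{D}}(G, \iota A[i])$, using that $\iota$ commutes with shifts. Since $G$ generates $\mathcal{D}$, we get $\iota A = 0$, and hence $A = 0$ because $\iota$ is fully faithful. So $L(G)$ is a generator of $\mathcal{A}$.

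The only point needing care is the compatibility of sums. One must know that the sum in $\mathcal{A}$ agrees with the sum in $\mathcal{D}$, so that "$\mathcal{A}$ has arbitrary direct sums" and "$\iota$ preserves sums" both hold. This is exactly where the hypothesis that $\mathcal{A}$ is closed under sums enters. Everything else is formal from the adjunction.
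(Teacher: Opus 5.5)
Your proof is correct and follows the paper's argument: \(\iota\) commutes with sums, so \(L\) preserves compact objects by \parref{lemma-preservescompact}, and generation follows from the adjunction isomorphism \(\Hom_{\mathcal{A}}(L(G), A[i]) \cong \Hom_{\mathcal{D}}(G, \iota A[i])\). The only difference is cosmetic: the paper gets sum-preservation of \(\iota\) from its being a left adjoint (to \(R\)), while you derive it from \(\mathcal{A}\) being a full subcategory closed under sums, and both justifications are valid.
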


\begin{proof}
Since \(\iota\) also has a right adjoint, it commutes with arbitrary direct
sums. Thus \(L\) has a right adjoint that commutes with arbitrary direct sums,
so \(L\) preserves compact objects by \parref{lemma-preservescompact}.
To see that \(L(G) \in \mathcal{A}\) is a generator, let
\(A \in \mathcal{A}\) be nonzero and choose a nonzero morphism
\(G \to \iota(A)[i] \in \mathcal{D}\) for some \(i \in \mathbf{Z}\).
Adjunction then provides a nonzero morphism \(L(G) \to A[i] \in \mathcal{A}\).
\end{proof}

\begin{Lemma}
\label{lemma-rightadjpreservesplus}
Let \(\mathcal{D}\) be a triangulated category with arbitrary direct sums and
let \(\mathcal{D} = \langle \mathcal{A}, \mathcal{B} \rangle\) be a
semiorthogonal decomposition into components which are full triangulated
subcategories closed under direct sums. Then the right adjoint
\(R \colon \mathcal{D} \to \mathcal{B}\) to the inclusion
\(j \colon \mathcal{B} \to \mathcal{D}\) commutes with direct sums.
\end{Lemma}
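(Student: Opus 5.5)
The plan is to write the right adjoint explicitly in terms of the semiorthogonal decomposition and then exploit that cones and direct sums of distinguished triangles are compatible. For each \(D \in \mathcal{D}\), the decomposition \(\mathcal{D} = \langle \mathcal{A}, \mathcal{B} \rangle\) provides a functorial distinguished triangle
\[
j R(D) \longrightarrow D \longrightarrow A_D \stackrel{+1}{\longrightarrow}
\]
with \(A_D \in \mathcal{A}\) and \(jR(D) \in \mathcal{B}\), where \(\mathcal{A} \subseteq \mathcal{B}^\perp\). This is the convention of \parref{lemma-smallestsubcat}. The first map is the counit of adjunction: for any \(B \in \mathcal{B}\), applying \(\Hom(jB,-)\) to the triangle and using \(\Hom(jB, A_D[k]) = 0\) shows that \(\Hom(jB, jR(D)) \to \Hom(jB, D)\) is bijective. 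So \(R\) is characterised as the functor that sends \(D\) to the \(\mathcal{B}\)-part of its decomposition triangle.

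Next, take a set \(\{D_i\}_{i \in I}\) of objects, with their triangles \(jR(D_i) \to D_i \to A_{D_i} \to\). As in the proof of \parref{lemma-smallestsubcat}, via \citeSP{0CRG}, the direct sum of distinguished triangles is distinguished, which gives
\[
\bigoplus\nolimits_i jR(D_i) \longrightarrow \bigoplus\nolimits_i D_i \longrightarrow \bigoplus\nolimits_i A_{D_i} \stackrel{+1}{\longrightarrow}.
\]
Since \(\mathcal{B}\) and \(\mathcal{A}\) are closed under direct sums, the first term lies in \(\mathcal{B}\) and the third in \(\mathcal{A}\). Because \(\mathcal{B}\) is a full subcategory closed under sums, the sum computed in \(\mathcal{D}\) is also the sum in \(\mathcal{B}\), so \(j\) commutes with sums.

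The decomposition triangle of an object is unique up to unique isomorphism. This follows from semiorthogonality: any morphism from an object of \(\mathcal{B}\) into the decomposition of \(D\) factors uniquely through its \(\mathcal{B}\)-part. Hence \(jR(\bigoplus_i D_i) \cong \bigoplus_i jR(D_i)\), compatibly with the maps to \(\bigoplus_i D_i\).

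The only delicate point is to check that this isomorphism is the canonical comparison map \(\bigoplus_i R(D_i) \to R(\bigoplus_i D_i)\), rather than just some abstract isomorphism. The canonical map is the unique morphism whose composite with the counit \(jR(\bigoplus D_i) \to \bigoplus D_i\) equals the sum of the counits. The uniqueness of factorisation through the counit, coming from the bijectivity shown above, identifies it with the isomorphism just constructed. Since \(j\) is fully faithful, this yields the isomorphism \(\bigoplus_i R(D_i) \cong R(\bigoplus_i D_i)\) in \(\mathcal{B}\).
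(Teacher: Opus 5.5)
Your argument is correct and is essentially the paper's: both sum the decomposition triangles \(jR(D_i) \to D_i \to A_{D_i} \to\) (a direct sum of distinguished triangles is distinguished) and compare the result with the decomposition triangle of \(\bigoplus_i D_i\). The only difference is the last step. The paper notes that the third terms agree because the left adjoint \(L\) to \(\mathcal{A} \subset \mathcal{D}\) commutes with sums, and then applies the five-lemma for triangles. You instead recognize the summed triangle as itself a decomposition triangle and use uniqueness of such triangles, which also shows explicitly that the resulting isomorphism is the canonical comparison map.
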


\begin{proof}
Write \(\iota \colon \mathcal{A} \to \mathcal{D}\) be the other inclusion
and \(L \colon \mathcal{D} \to \mathcal{A}\) for its left adjoint. It is
standard \(R\) and \(L\) exist, and that every object \(D \in \mathcal{D}\)
fits into a functorial distinguished triangle
\[
jRD \longrightarrow
D \longrightarrow
\iota LD \stackrel{+1}{\longrightarrow}
\]
Since \(\mathcal{A}\) and \(\mathcal{B}\) are closed under direct sums, the
inclusion functors \(\iota\) and \(j\) commute with direct sums, and \(L\)
commutes with direct sums being a left adjoint. Thus, for any set of objects
\(\{D_i\}_{i \in I} \subset \mathcal{D}\), there is a morphism of distinguished
triangles
\[
\begin{tikzcd}
\bigoplus\nolimits_{i \in I} jRD_i \ar[r] \ar[d]
& \bigoplus_{i \in I} D_i \ar[r] \ar[d]
& \bigoplus_{i \in I} \iota LD_i \ar[r, "+1"] \ar[d] & \phantom{1}\\
jR(\bigoplus_{i \in I} D_i) \ar[r]
& \bigoplus _{i \in I} D_i \ar[r]
& \iota L(\bigoplus_{i \in I} D_i) \ar[r, "+1"] & \phantom{1}
\end{tikzcd}
\]
The middle two vertical arrows are isomorphisms by the discussion above, hence
so is the first vertical arrow. Therefore
\(j(\bigoplus\nolimits_{i \in I} RD_i) \cong jR(\bigoplus\nolimits_{i \in I} D_i)\) so,
since \(j\) is fully faithful,
\(\bigoplus\nolimits_{i \in I} RD_i \cong R(\bigoplus\nolimits_{i \in I} D_i)\).
\end{proof}

\begin{Proposition}\label{decompositions-compactness}
In the setting of \parref{decompositions-exceptional}, the residual category
\(\mathcal{A}_0\) satisfies:
\begin{enumerate}
\item\label{decompositions-compactness.generator}
writing \(L \colon \Dqc(X) \to \mathcal{A}_0\) for the left adjoint to
inclusion, if \(G\) is a compact generator of \(\Dqc(X)\), then \(L(G)\) is a
compact generator of \(\mathcal{A}_0\); and
\item\label{decompositions-compactness.compact}
\(E \in \mathcal{A}_0\) is compact as an object of \(\mathcal{A}_0\)
if and only if \(E\) is compact as an object of \(\Dqc(X)\).
\end{enumerate}
\end{Proposition}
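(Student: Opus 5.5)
For \ref{decompositions-compactness.generator}, the plan is to apply \parref{lemma-projectingagenerator} to the admissible subcategory \(\mathcal{A}_0 \subset \Dqc(X)\). The only hypothesis needing a check is that \(\mathcal{A}_0\) is closed under arbitrary direct sums in \(\Dqc(X)\). By \parref{decompositions-collections}, \(\mathcal{A}_0\) is cut out by the vanishing of \(Rf_*R\sHom_{\sO_X}(E_i,-)\) for \(i = 1,\ldots,n\). Each \(E_i\) is perfect, so \(R\sHom_{\sO_X}(E_i,-) \cong E_i^\vee \otimes^L -\) commutes with direct sums. The morphism \(f\) is qcqs between schemes, hence concentrated, so \(Rf_*\) commutes with direct sums by \parref{derived-generalities}\ref{derived-generalities.coproducts}. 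Together these show \(\mathcal{A}_0\) is closed under sums; it is also an \(S\)-linear subcategory, as remarked in \parref{derived-orthogonal}\ref{derived-orthogonal.linear}. Then \parref{lemma-projectingagenerator} gives that \(L\) preserves compact objects and that \(L(G)\) is a compact generator of \(\mathcal{A}_0\).

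For \ref{decompositions-compactness.compact}, one direction is immediate. If \(E \in \mathcal{A}_0\) is compact in \(\Dqc(X)\), it is compact in \(\mathcal{A}_0\), because sums in \(\mathcal{A}_0\) are computed in \(\Dqc(X)\) by the closure just established.

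For the converse, suppose \(E\) is compact in \(\mathcal{A}_0\). Since \(L\) is the identity on \(\mathcal{A}_0\), we have \(E \cong L(\iota E)\), but this alone does not show \(\iota E\) is compact. Instead, the plan is to use \parref{lemma-preservescompact} applied to the inclusion \(\iota\colon \mathcal{A}_0 \to \Dqc(X)\). This requires \(\mathcal{A}_0\) to be compactly generated, which part \ref{decompositions-compactness.generator} provides, and it requires the right adjoint \(R\colon \Dqc(X) \to \mathcal{A}_0\) to commute with direct sums. The latter follows from \parref{lemma-rightadjpreservesplus} applied to the semiorthogonal decomposition \(\Dqc(X) = \langle \mathcal{A}_0, \mathcal{B}\rangle\) with \(\mathcal{B} = \langle \mathcal{A}_1,\ldots,\mathcal{A}_n\rangle\).

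The only real obstacle is checking that \(\mathcal{B}\) is closed under direct sums. Each \(\mathcal{A}_j = Lf^*\Dqc(S) \otimes^L E_j\) is closed under sums because \(Lf^*\) and tensoring commute with sums. I would then iterate \parref{lemma-smallestsubcat} (the semiorthogonality hypothesis holds by construction) to see that the extension closure \(\mathcal{B}\) is also closed under sums. Alternatively, \(\mathcal{B}\) can be identified with the left orthogonal \({}^{\perp}\mathcal{A}_0\); by the relative description in \parref{decompositions-relative-perp}, such orthogonals are cut out by the vanishing of sum-preserving functors, which again gives closure under sums. With \(R\) commuting with sums, \(\iota\) preserves compact objects, so \(E\) is compact in \(\Dqc(X)\).

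Finally, whenever \(S\) is qcqs, \(X\) is qcqs, so compact objects of \(\Dqc(X)\) are exactly the perfect complexes by \parref{derived-generalities}\ref{derived-generalities.perfect}. This yields the promised perfect compact generator \(L(G)\) of \(\mathcal{A}_0\).
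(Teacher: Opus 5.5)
Part (i) and the easy direction of (ii) are fine. For the easy direction, your observation that sums in \(\mathcal{A}_0\) are computed in \(\Dqc(X)\) is slightly more direct than the paper's argument via \(E = L(E)\). The converse of (ii), however, has a genuine gap.

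In \(\Dqc(X) = \langle \mathcal{A}_0, \mathcal{B}\rangle\) with \(\mathcal{B} = \langle \mathcal{A}_1,\ldots,\mathcal{A}_n\rangle\), the convention is \(\Hom(\mathcal{B},\mathcal{A}_0) = 0\), with triangles \(B \to D \to A_0\). So \(\mathcal{A}_0\) sits in the \emph{left} slot, and the projection onto it is the left adjoint \(L\). Applied to this decomposition, \parref{lemma-rightadjpreservesplus} only tells you that the right adjoint of \(\mathcal{B} \hookrightarrow \Dqc(X)\) commutes with sums.

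The right adjoint \(R\) of \(\iota\) is instead the projection in the other decomposition \(\Dqc(X) = \langle \mathcal{A}_0^\perp, \mathcal{A}_0\rangle\). What must be shown is therefore that the \emph{right} orthogonal \(\mathcal{A}_0^\perp\) is closed under sums. The step you single out as the only obstacle, closure of \(\mathcal{B} = {}^{\perp}\mathcal{A}_0\) under sums, is automatic for any left orthogonal, since \(\Hom(\bigoplus_i F_i, A) = \prod_i \Hom(F_i, A)\). Right orthogonals have no such property, and this is where the content lies. A symptom of the problem is that your argument never uses that the fibres of \(f\) are Gorenstein.

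The missing input is relative Serre duality, in the form of the cyclic shifts of \parref{decompositions-mutations}. Since \(\omega_{X/S}\) is a line bundle, \parref{decompositions-relative-perp} lets you rotate \(\mathcal{A}_n, \ldots, \mathcal{A}_1\) one at a time to the front, each at the cost of a twist by \(\omega_{X/S}\). This gives \(\Dqc(X) = \langle \mathcal{A}_1 \otimes^L \omega_{X/S}, \ldots, \mathcal{A}_n \otimes^L \omega_{X/S}, \mathcal{A}_0\rangle\), and hence \(\mathcal{A}_0^\perp = \langle \mathcal{A}_1 \otimes^L \omega_{X/S}, \ldots, \mathcal{A}_n \otimes^L \omega_{X/S}\rangle = \mathcal{B} \otimes^L \omega_{X/S}\).

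Each of these pieces is \(S\)-linear, hence closed under sums. Your iteration of \parref{lemma-smallestsubcat}, applied to this subcategory rather than to \(\mathcal{B}\), then shows that \(\mathcal{A}_0^\perp\) is closed under sums. Applying \parref{lemma-rightadjpreservesplus} to \(\langle \mathcal{A}_0^\perp, \mathcal{A}_0\rangle\) and then \parref{lemma-preservescompact} finishes the argument as you intended. This is exactly the paper's proof.
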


\begin{proof}
Item \ref{decompositions-compactness.generator} follows direct from
\parref{lemma-projectingagenerator}. For \ref{decompositions-compactness.compact},
let \(E \in \mathcal{A}_0\) be compact when viewed as an object in \(\Dqc(X)\).
Since \(E = L(E)\) and \(L\) preserves compact objects by 
\parref{lemma-projectingagenerator}, \(E\) is also compact as an object
in \(\mathcal{A}_0\). Conversely, we must show that the inclusion
\(\mathcal{A}_0 \to \Dqc(X)\) preserves compact objects. Since \(\mathcal{A}_0\)
is compactly generated by \ref{decompositions-compactness.generator}, it
suffices by \parref{lemma-preservescompact} to show that the right adjoint to
inclusion commutes with direct sums. So consider the semiorthogonal
decomposition
\[
\Dqc(X) = \langle \mathcal{A}_0^\perp, \mathcal{A}_0 \rangle.
\]
Since \(\mathcal{A}_0\) is an \(S\)-linear triangulated subcategory as explained
in \parref{decompositions-collections}, it is closed under direct sums by our conventions and so
by \parref{lemma-rightadjpreservesplus}, it suffices to see that
\(\mathcal{A}_0^\perp\) is also closed under sums. Applying cyclic shifts as
in \parref{decompositions-mutations} to the given semiorthogonal decomposition
shows that
\[
\mathcal{A}_0^\perp = \langle \mathcal{A}_1 \otimes^L \omega_{X/S}, \ldots, \mathcal{A}_n \otimes^L \omega_{X/S} \rangle
\]
is the smallest strictly full triangulated subcategory of \(\Dqc(X)\) containing
each of the \(\mathcal{A}_i \otimes^L \omega_{X/S}\) for \(i = 1,\ldots,n\).
Since each of these subcategories are also \(S\)-linear, whence closed
under direct sums, it follows from \parref{lemma-smallestsubcat} that
\(\mathcal{A}_0^\perp\) is also closed under sums, as required.
\end{proof}

\subsectiondash{Fourier--Mukai on residual component}\label{decompositions-FM}
Let \(f \colon X \to S\) and \(g \colon X \to S\) be flat, proper, and
finitely presented morphisms of qcqs schemes. Assume additionally that the
fibres of \(f\) are Gorenstein and that there is a semiorthogonal decomposition
\[
\Dqc(X) = \langle \mathcal{A}_0, \mathcal{A}_1, \ldots, \mathcal{A}_n \rangle
\;\text{with}\;
\mathcal{A}_i \coloneqq Lf^*\Dqc(S) \otimes^L E_i
\;\text{for \(i = 1,\ldots,n\)}
\]
and an \(S\)-exceptional collection \(E_1,\ldots,E_n\). Let
\(K \in \Dqc(X \times_S Y)\) be a object that is perfect relative to both \(X\)
and \(Y\), \(\Phi_K \colon \Dqc(X) \to \Dqc(Y)\) the associated Fourier--Mukai
transform, and
\[
\Phi_{K,0} \coloneqq \Phi_K \circ \iota \colon
\mathcal{A}_0 \subset \Dqc(X) \to \Dqc(Y)
\]
its restriction to the residual category. Generalizing
\parref{complexes-preserve-perfect},
\parref{lemma-basechangeofff}, and
\parref{complexes-preserve-glueability}, this satisfies:
\begin{enumerate}
\item\label{decompositions-FM.preserve-perfect}
the functor \(\Phi_{K,0} \colon \mathcal{A}_0 \to \Dqc(Y)\) takes perfect
complexes to perfect complexes;
\item\label{decompositions-FM.base-change-ff}
if \(\Phi_{K,0} \colon \mathcal{A}_0 \to \Dqc(Y)\) is fully faithful, then
for any morphism \(T \to S\) from a qcqs scheme, 
the base changed functor
\(\Phi_{K_T,0} \colon \mathcal{A}_{0,T} \to \Dqc(Y_T)\) is also fully faithful;
\item\label{decompositions-FM.base-change-descend-ff}
if the morphism \(T \to S\) in \ref{decompositions-FM.base-change-ff} is
moreover faithfully flat, then the converse holds; and
\item\label{decompositions-FM.universal-glueability}
if \(\Phi_{K,0} \colon \mathcal{A}_0 \to \Dqc(Y)\) is fully faithful, then it
preserves universal glueability and simplicity.
\end{enumerate}

\begin{proof}
For \ref{decompositions-FM.preserve-perfect}, write
\(\Phi_{K,0} = \Phi_K \circ \iota\) and note that both \(\Phi_K\) and the
inclusion \(\iota\) preserve compact objects by
\parref{complexes-preserve-perfect} and the proof of
\parref{decompositions-compactness}\ref{decompositions-compactness.compact},
respectively. For \ref{decompositions-FM.base-change-ff} and
\ref{decompositions-FM.base-change-descend-ff}, let \(G \in \mathcal{A}_0\) be
a compact generator obtained by applying the left adjoint of inclusion to a
compact generator of \(\Dqc(X)\) as in
\parref{decompositions-compactness}\ref{decompositions-compactness.generator},
and argue as in \parref{lemma-ffconditions} to show that \(\Phi_{K,0}\) is
fully faithful if and only if the morphism
\[
\phi \colon
Rf_*R\sHom_{\sO_X}(G,G) \to
Rg_*R\sHom_{\sO_Y}(\Phi_K(G), \Phi_K(G))
\]
from \parref{derived-fm}\ref{derived-fm.enriched-hom} is an isomorphism;
here, the third hypothesis of the criterion \parref{lemma-ffbygen} is
satisfied by \ref{decompositions-FM.preserve-perfect}.
Since the base change of \(G\) to an \(S\)-scheme \(T\)
remains a compact generator of \(\mathcal{A}_{0,T}\) by
\parref{decompositions-residual-adjoints}\ref{decompositions-residual-adjoints.base-change}
and \parref{lemma-projectingagenerator}, flat base change together with
\parref{derived-fm}\ref{derived-fm.base-change} shows that this
full faithfulness criterion is invariant under base change, implying the
first two items. This also gives \ref{decompositions-FM.universal-glueability}
upon argueing exactly as in \parref{complexes-preserve-glueability}
using this base change property in place of \parref{lemma-basechangeofff}.
\end{proof}

Consider the full subcategory in the stack of complexes for the morphism \(f
\colon X \to S\) from \parref{complexes-stack} parameterizing objects in the
residual component \(\mathcal{A}_0\):
\[
\ComplexesStack_{\mathcal{A}_0/S} \coloneqq
\big\{(T,F) :
T \in \mathrm{Sch}_S
\;\text{and}\;
\text{\(F \in \mathcal{A}_{0,T}\) is \(T\)-perfect and universally glueable}
\big\}.
\]
This is an open substack since membership in this subcategory is defined by
\[
Rf_{T,*}R\sHom_{\sO_{X_T}}(E_{i,T}, F) = 0
\;\;\text{for}\; i = 1,\ldots,n,
\]
and this condition is closed under pullbacks by
\parref{lemma-internalhomandbasechange} and flat base change, and is an open
condition on \(T\) by \ref{lemma-nakforpseudo}. Further write
\[
\sComplexesStack_{\mathcal{A}_0/S} \subset
\ComplexesStack_{\mathcal{A}_0/S}
\]
for the open substack parameterizing simple objects
\(F \in \mathcal{A}_{0,T}\). The analogue of
\parref{theorem-fully-faithful-open-immersion} in this setting is:

\begin{Theorem}\label{theorem-fully-faithful-open-immersion-2}
In the setting of \parref{decompositions-FM}, if \(\Phi_{K,0} \colon \mathcal{A}_0 \to \Dqc(Y)\)
is fully faithful, then the assignment \((T,F) \mapsto (T,\Phi_{K_T}(F_T))\)
induces an open immersion of \(S\)-stacks
\[
\mathrm{FM}_{K,0} \colon
\ComplexesStack_{\mathcal{A}_0/S} \to
\ComplexesStack_{Y/S}
\]
which preserves the open substack of simple complexes.
\end{Theorem}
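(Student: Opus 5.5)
The plan is to run the proof of \parref{theorem-fully-faithful-open-immersion} again, with \(\Dqc(X)\) replaced by \(\mathcal{A}_0\). The inputs specific to the residual component are \parref{decompositions-FM} and \parref{decompositions-residual-adjoints}.

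First, \(\mathrm{FM}_{K,0}\) is a well-defined morphism of stacks. For an object \((T,F)\) of \(\ComplexesStack_{\mathcal{A}_0/S}\), the object \(F\) lies in \(\Dqc(X_T)\) and is \(T\)-perfect and universally glueable. Hence \(\Phi_{K_T}(F)\) is \(T\)-perfect by \parref{complexes-preserve-perfect}, applied over \(T\): the kernel \(K_T\) stays perfect relative to both factors because \(f\) and \(g\) are flat and finitely presented, so relative perfectness base changes by \citeSP{0DI5}. By \parref{decompositions-FM}\ref{decompositions-FM.universal-glueability}, applied to the fully faithful functor \(\Phi_{K_T,0}\) from \parref{decompositions-FM}\ref{decompositions-FM.base-change-ff}, the object \(\Phi_{K_T}(F)\) is universally glueable, and it is simple whenever \(F\) is. Functoriality in morphisms \((h,\alpha)\) is the base change compatibility \parref{derived-fm}\ref{derived-fm.base-change}, exactly as in \parref{complexes-FM}.

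Next, \(\mathrm{FM}_{K,0}\) is a monomorphism. Given \(F_1,F_2 \in \mathcal{A}_{0,T}\), full faithfulness of \(\Phi_{K_T,0}\) gives a bijection \(\Hom_{X_T}(F_1,F_2) \cong \Hom_{Y_T}(\Phi_{K_T}(F_1),\Phi_{K_T}(F_2))\). This identifies both isomorphisms and automorphisms, so the morphism is fully faithful on fibre groupoids.

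The remaining step is formal smoothness, via the infinitesimal lifting criterion. Both stacks are locally of finite presentation: the target by Lieblich, the source as an open substack of \(\ComplexesStack_{X/S}\). Let \(T' \to T\) be a square-zero thickening, \(F \in \ComplexesStack_{Y/S}(T)\), \(E' \in \mathcal{A}_{0,T'}\), and \(\alpha \colon F_{T'} \cong \Phi_{K_{T'}}(E')\). Write \(R_0 \coloneqq R^{\mathcal{A}} \circ R\), where \(R\) is the right adjoint of \(\Phi_K\) and \(R^{\mathcal{A}}\) is the right adjoint to \(\iota\). Then \(R_0\) is right adjoint to \(\Phi_{K,0}\), commutes with base change by \parref{complexes-right-adjoint}\ref{complexes-right-adjoint.base-change} and \parref{decompositions-residual-adjoints}\ref{decompositions-residual-adjoints.base-change}, and preserves relative perfectness by \parref{complexes-right-adjoint}\ref{complexes-right-adjoint.relative-perfectness} and \parref{decompositions-residual-adjoints}\ref{decompositions-residual-adjoints.perfect}.

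Set \(E \coloneqq R_{0,T}(F) \in \mathcal{A}_{0,T}\); it is \(T\)-perfect. Base change, \(\alpha\), and full faithfulness give \(E_{T'} \cong R_{0,T'}(\Phi_{K_{T'}}(E')) \cong E'\). The counit \(\Phi_{K_T}(E) \to F\) restricts over \(T'\) to an isomorphism. Its cone is pseudo-coherent, since both terms are \(T\)-perfect on \(Y_T\) and \(Y_T \to T\) is flat and finitely presented. Because \(Y_{T'} \to Y_T\) is a homeomorphism, \parref{lemma-nakforpseudo} applied at each point shows the cone vanishes, so \(\Phi_{K_T}(E) \cong F\). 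Universal glueability of \(E\) then follows from that of \(F\) via full faithfulness of the base-changed functors \(\Phi_{K_{T''},0}\) for affine \(T''\) over \(T\), using \parref{complexes-glueable}\ref{complexes-glueable.universally-glueable-affines}. A smooth monomorphism locally of finite presentation is an open immersion, which finishes the proof.

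I expect the main obstacle to be checking that \(R_0\) really preserves relative perfectness and commutes with arbitrary, not merely flat, base change. This requires composing the two base-change statements carefully and knowing that \(R\) maps \(T\)-perfect objects into \(\Dqc(X_T)\) before \(R^{\mathcal{A}}\) is applied; both points are supplied by the cited lemmas.
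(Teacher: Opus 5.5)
Your proposal is correct and is essentially the paper's proof. The paper also reruns the argument of \parref{theorem-fully-faithful-open-immersion}: it gets the monomorphism property from \parref{decompositions-FM}\ref{decompositions-FM.base-change-ff}, and smoothness from the right adjoint of \(\Phi_{K,0}\), whose compatibility with base change and preservation of relative perfectness come from \parref{decompositions-residual-adjoints}; your factorization of that adjoint as \(R^{\mathcal{A}} \circ R\) and your check that \(\mathrm{FM}_{K,0}\) lands in \(\ComplexesStack_{Y/S}\) just spell out details the paper leaves implicit.
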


\begin{proof}
The argument is the same as that for
\parref{theorem-fully-faithful-open-immersion}, except that one uses
\parref{decompositions-FM}\ref{decompositions-FM.base-change-ff} to see that
\(\mathrm{FM}_{K,0}\) is a monomorphism; and for the smoothness argument,
additionally use \parref{decompositions-residual-adjoints}\ref{decompositions-residual-adjoints.base-change}
to see that the right adjoint \(R\) of \(\Phi_{K,0}\) is compatible with base
change; and 
\parref{decompositions-residual-adjoints}\ref{decompositions-residual-adjoints.perfect}
to see that \(R\) preserves \(S\)-perfect complexes.
\end{proof}

\section{Geometry of quadric bundles}\label{section-quadric-bundles}
In this section, we set our notation and conventions regarding families
of quadrics \(\rho \colon Q \to S\) over an arbitrary base scheme \(S\). Much
of the material is well-known, but perhaps not well-documented in this
generality. In particular, since \(2\) is not necessarily invertible on \(S\)
nor is \(S\) assumed to be reduced, some care needs to be taken when discussing
matters like the corank stratification. Throughout, \(S\) is a scheme,
\(\mathcal{E}\) is a locally free \(\sO_S\)-module of rank \(n+1\), and
\(\mathcal{L}\) is an invertible \(\sO_S\)-module.

\subsectiondash{Quadric bundles}\label{quadric-bundles}
A \emph{quadratic form} on \(\mathcal{E}\) with values in \(\mathcal{L}\) is a
morphism of sheaves of sets \(q \colon \mathcal{E} \to \mathcal{L}\) such that \(q(fv) = f^2
q(v)\) for local sections \(f\) of \(\sO_S\) and \(v\) of \(\mathcal{E}\), and
such that the associated \emph{polar form}
\(b_q \colon \mathcal{E} \times \mathcal{E} \to \mathcal{L}\), defined on local
sections \(v\) and \(w\) of \(\mathcal{E}\) by
\[
b_q(v,w) \coloneqq q(v+w) - q(v) - q(w),
\]
is a symmetric bilinear form. Let \(\pi \colon \PP\mathcal{E} \to S\) be the
projective bundle of lines associated with \(\mathcal{E}\), so that a nonzero
quadratic form \(q \colon \mathcal{E} \to \mathcal{L}\) corresponds to a
nonzero section
\[
s_q
\in \Gamma(\PP\mathcal{E}, \sO_\pi(2) \otimes \pi^*\mathcal{L})
\cong \Gamma(S, \Sym^2(\mathcal{E}^\vee) \otimes \mathcal{L}).
\]
Its vanishing locus \(\iota \colon Q \hookrightarrow \PP\mathcal{E}\) is a
family of quadrics over \(S\). The morphism
\(\rho \coloneqq \pi \circ \iota \colon Q \to S\) is flat of relative dimension $n-1$ if and only if the
form \(q\) is \emph{primitive} in that it is nonzero over every residue field
of \(S\). In this case, we refer to the family
\(\rho \colon Q \to S\) as the \emph{quadric \((n-1)\)-fold bundle} associated
with \(q\).

\subsectiondash{Orthogonals}\label{quadric-bundles-orthogonals}
Given a submodule \(\mathcal{F} \subseteq \mathcal{E}\), its \emph{orthogonal}
is
\[
\mathcal{F}^\perp \coloneqq
\ker\big(
b_q(-,\phantom{-})\rvert_{\mathcal{F}} \colon
\mathcal{E} \to
\mathcal{E}^\vee \otimes \mathcal{L} \to
\mathcal{F}^\vee \otimes \mathcal{L}
\big)
\]
where the first map is the adjoint
\(b_q \colon \mathcal{E} \to \mathcal{E}^\vee \otimes \mathcal{L}\) to the
polar form, and the second map is the restriction. A special case is when
\(\mathcal{F} = \mathcal{E}\), wherein the orthogonal
\[
\rad b_q \coloneqq
\mathcal{E}^\perp \coloneqq
\ker(b_q \colon \mathcal{E} \to \mathcal{E}^\vee \otimes \mathcal{L})
\]
is referred to as the \emph{bilinear radical} of \(q\).

\subsectiondash{Singular locus}\label{quadric-bundles-singular-locus}
The \emph{singular locus} \(\Sing\rho\) of a family of quadrics
\(\rho \colon Q \to S\) refers to the locus in \(Q\) where the morphism
\(\rho\) is not smooth of relative dimension \(n-1\). This carries a scheme
structure given by the \((n-1)\)-st Fitting ideal of the sheaf
\(\Omega^1_{Q/S}\) of relative differentials: see \citeSP{0C3K}. 
By the conormal sequence
\[
\mathcal{C}_{Q/S} \to
\Omega^1_{\PP\mathcal{E}/S}\rvert_Q \to
\Omega^1_{Q/S} \to
0,
\]
this is simply the zero scheme of the map of finite locally free $\mathcal{O}_Q$-modules $\mathcal{C}_{Q/S} \to
\Omega^1_{\PP\mathcal{E}/S}$ on $Q$. The vector bundle $\Omega^1_{\PP\mathcal{E}/S}|_Q$ is a subbundle of $\rho^*\mathcal{E}(-1)$ via
the relative Euler sequence for \(\pi \colon \PP\mathcal{E} \to S\),
and we have\(\mathcal{C}_{Q/S} \cong \sO_\rho(-2) \otimes \rho^*\mathcal{L}\)
via the section \(s_q\). Thus twisting up by \(\sO_\rho(1)\) identifies
\(\Sing\rho\) with the zero scheme in \(Q\) of a map of finite locally free $\mathcal{O}_Q$-modules
\[
\Sing\rho =
\mathrm{V}(
\sO_\rho(-1) \otimes \rho^*\mathcal{L}^\vee \to 
\rho^*\mathcal{E}^\vee
).
\]
In local coordinates, this section gives the vector of partial derivatives of
the quadratic polynomial \(s_q\). A straightforward computation therefore shows
that this is the composition
\[
\rho^*b_q(\mathrm{eu}_\pi\rvert_Q,\phantom{-}) \colon
\sO_\rho(-1) \otimes \rho^*\mathcal{L}^\vee \to
\rho^*(\mathcal{E} \otimes \mathcal{L}^\vee) \to
\rho^*(\mathcal{E}^\vee \otimes \mathcal{L} \otimes \mathcal{L}^\vee) \cong
\rho^*\mathcal{E}^\vee
\]
where \(\mathrm{eu}_\pi \colon \sO_\pi(-1) \to \pi^*\mathcal{E}\) is the
tautological section, and
\(b_q \colon \mathcal{E}\to \mathcal{E}^\vee \otimes \mathcal{L}\) is
adjoint to the polar form of \(q\). The formation of the closed subscheme $\operatorname{Sing}\rho \subset Q$ clearly commutes with base change, and in particular for $s \in S$ we have
\[
(\Sing\rho)_s =
\Sing Q_s = Q_s \cap \PP(\rad b_{q,s})
\]
where \(b_{q,s} \colon \mathcal{E}_s \to (\mathcal{E}^\vee \otimes \mathcal{L})_s\)
is the restriction of the polar form to the residue field at \(s\), and where
the bilinear radical is the kernel of \(b_{q,s}\) as defined in
\parref{quadric-bundles-orthogonals}.

\subsectiondash{Coranks}\label{quadric-bundles-corank}
Over a field \(\kk\), the \emph{corank} of a quadric \(Q \subseteq \PP^n\)
is one more than the dimension of the locus \(\Sing Q\) of points not smooth of
dimension \(n-1\); for example, a smooth quadric is of corank \(0\), and that
defined by the zero quadratic form is of corank \(n+1\). Since the singular
locus is the intersection \(Q \cap \PP(\rad b_q)\) by
\parref{quadric-bundles-singular-locus}, setting
\(\corank b_q \coloneqq \dim_{\kk} \rad b_q\), gives inequalities
\[
\corank b_q - 1 \leq
\corank Q \leq
\corank b_q.
\]
When \(\corank Q = \corank b_q\), then the singular locus of \(Q\) is,
scheme-theoretically, the linear space on \(\rad b_q\). This is essentially because $\mathbf{P} (\operatorname{rad} b_q)$ is reduced. We shall see, however, that
when \(\corank Q < \corank b_q\), the singular locus of \(Q\) is never geometrically reduced over \(\kk\).

When \(\operatorname{char}(\kk) \neq 2\), the equation $q(x) = \frac{1}{2} b_q(x,x)$
means that $\mathbf{P}(\operatorname{rad} b_q) \subset Q$, and
so the corank of \(Q\) coincides with the corank of \(b_q\). When
\(\operatorname{char}(\kk) = 2\), a choice of basis for \(\rad b_q\) gives an
expression
\[
q \rvert_{\rad b_q} = \sum\nolimits_{i = 1}^{\corank b_q} a_i x_i^2
\]
for some \(a_i \in \kk\). If this vanishes, then \(Q\) contains all of
\(\PP(\rad b_q)\) and so \(\corank Q = \corank b_q\); otherwise, the singular
locus of \(Q\) is geometrically a double plane---which may be defined only
over an inseparable extension of \(\kk\)!---inside \(\PP(\rad b_q)\).
This gives the
first statement of:

\begin{Lemma}\label{quadrics-corank-bilinear}
Let \(Q\) be a quadric hypersurface in \(\PP^n\) over a field \(\kk\). Then
\[
\corank Q =
\begin{dcases*}
\corank b_q - 1 & if \(\operatorname{char}(\kk) = 2\) and \(q|_{\operatorname{rad}b_q} \neq 0\) and \\
\corank b_q     & otherwise.
\end{dcases*}
\]
If $n + 1 - \corank Q$ is even, then $\corank Q = \corank b_q$.
\end{Lemma}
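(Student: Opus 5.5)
The first assertion has already been established in the discussion preceding the statement, so the plan concerns only the parity claim. Suppose that \(n+1-\corank Q\) is even, and suppose for contradiction that \(\corank Q \neq \corank b_q\). By the first part we must then be in characteristic \(2\), with \(q|_{\rad b_q} \neq 0\) and \(\corank Q = \corank b_q - 1\). Under these assumptions, \(n+1-\corank Q = \operatorname{rank} b_q + 1\), where \(\operatorname{rank} b_q \coloneqq n+1-\corank b_q\) is the rank of the polar form. The goal is to show that this number is odd, which contradicts the hypothesis.

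The key step is to observe that in characteristic \(2\) the polar form \(b_q\) is alternating, since \(b_q(v,v) = q(2v) - 2q(v) = 2q(v) = 0\). Consequently \(b_q\) induces a nondegenerate alternating form on \(\mathcal{E}/\rad b_q\). I would then invoke the standard fact that a nondegenerate alternating form exists only on even-dimensional spaces: choose a symplectic basis inductively, or note that the Gram matrix is skew-symmetric with zero diagonal and hence has even rank. It follows that \(\operatorname{rank} b_q\) is even, so \(\operatorname{rank} b_q + 1\) is odd, which gives the contradiction.

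There is one bookkeeping point to check. Here \(\mathcal{E}\) is an \((n+1)\)-dimensional vector space over \(\kk\), and \(\corank b_q = \dim \rad b_q\), so the quotient \(\mathcal{E}/\rad b_q\) indeed has dimension \(n+1-\corank b_q\). No step here is a real obstacle. The only content lies in the alternating-form parity fact, together with correctly reading off which case of the dichotomy must hold.
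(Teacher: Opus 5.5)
Your argument is correct and is essentially the paper's own proof. The paper likewise reduces, via the first part, to characteristic \(2\) with \(\corank Q = \corank b_q - 1\), then uses that \(b_q\) is alternating and hence has even rank, which makes \(n+1-\corank Q = \operatorname{rank} b_q + 1\) odd and contradicts the hypothesis.
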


\begin{proof}
The second part concerns only $\operatorname{char}(\kk)  = 2$, in which case
the polar form $b_q$ is alternating, so its
rank is even. Since $\operatorname{corank} Q$ is either $\operatorname{corank}
b_q$ or $\operatorname{corank} b_q - 1$, the former must hold.
\end{proof}

%\begin{proof}
%It remains to treat the cases where \(\operatorname{char}(\kk) = 2\). Then
%the polar form \(b_q\) is moreover alternating, so its rank is even.
%Since \(\rank b_q \leq \rank Q \leq \rank b_q + 1\) by
%\parref{quadric-bundles-corank}, the result follows.
%\end{proof}

% Recall that if $q : V \to k$ is a quadratic form  defined over a field $k$, the
% \emph{radical} of $q$ is the subspace of $V$ defined by
% $$
% \{v \in \mathrm{Ker}(V \xrightarrow{b_q} V^*) : q(v) = 0\}.
% $$
% If $k$ has characteristic $\neq 2$ then this is equal to the kernel of the
% bilinear form $b_q : V \to V^*$, but in general it is a codimension $\leq 1$
% subspace thereof.

% \begin{Lemma}
% \label{lemma-corankofbil}
%     Let $k$ be a field. Let $V$ be a vector space over $k$. Let $q : V \to k$ be a quadratic form on $V$. Let $Q$ be the associated quadric. Assume either:
%     \begin{enumerate}
%         \item The characteristic of $k$ is 2 and $k$ is perfect; or
%         \item The characteristic of $k$ is not 2.
%     \end{enumerate}
%     Then the corank of $Q$ is equal to the dimension of the radical of $q$. 
% \end{Lemma}

\subsectiondash{Corank stratification}\label{quadrics-corank-stratification}
A family of quadrics \(\rho \colon Q \to S\) defines a decreasing filtration of
\(S\) by closed subsets
\[
S_c \coloneqq
\set{s \in S : \corank Q_s \geq c} =
\set{s \in S : \dim \Sing Q_s \geq c-1}.
\]
When \(2\) is invertible on \(S\), the corank of a quadric is equal to that of
its polar form, so the sets \(S_c\) coincide set-theoretically with the sets
\[
S_{\mathrm{bilin},c} \coloneqq
\set{s \in S : \operatorname{corank} b_q \geq c} =
\set{s \in S : \operatorname{rank} b_q \leq n+1-c}.
\]
These carry the scheme structure given locally by the vanishing of the size
\(n+2-c\) minors of \(b_q\), and this is what is given to the \(S_c\).

When \(2\) is not necessarily invertible on \(S\), it is a subtle issue to
endow the \(S_c\) with a suitable scheme structure: see, for example,
\cite[Definition 2.8]{ABBGvB} and \cite{Tanaka:Discriminant} for some
possibilities in low-dimensions. For the purposes of this
article, we will take that which is locally pulled back from the parameter
space of the universal quadric in \(\PP^n\) over \(\mathbf{Z}\), wherein they
are equipped with their reduced closed structure. Since universal degeneracy
loci are reduced, as follows from \cite[Theorem 1]{Kutz}, this is the usual
structure away from points of characteristic \(2\). Together with the corank
inequalities of \parref{quadric-bundles-corank}, this means that, for all
integers \(c \geq 0\), there are scheme-theoretic inclusions
\[
S_{\mathrm{bilin},c+1} \subseteq
S_c \subseteq
S_{\mathrm{bilin},c}.
\]
The content of these definitions is perhaps best illustrated with an example:

\begin{Example}
\label{quadric-bundles-dual-numbers-example}
Let \(S = \Spec\kk[\epsilon]\) be the spectrum of the ring of dual numbers
over a field \(\kk\), and consider the quadric surface bundle over \(S\) given
by
\[
Q \coloneqq \mathrm{V}(x_0x_1 + \epsilon x_2^2) \subset \PP^3_S.
\]
The moduli map of \(Q \to S\) identifies \(S\) with a tangent vector from a
closed point in the universal corank \(2\) locus into the corank \(1\) locus.
This means that the corank \(2\) stratum associated with the family \(Q \to S\)
is given by the reduced closed subscheme \(S_2 = S_{\mathrm{red}}\), and \(S_1 = S\)
scheme-theoretically; importantly, \(S_2 \neq S\) scheme-theoretically. Furthermore,
note that if \(\operatorname{char}\kk = 2\), then the bilinear corank \(2\)
locus is all of \(S\) scheme-theoretically, so \(S_2 \neq S_{\mathrm{bilin},2}\)
scheme-theoretically.
\end{Example}

It is often useful to restrict a quadric bundle to pieces of the corank
stratification, so that the singular locus may be identified as the projective
bundle on the bilinear radical. In general, as in cases such as
\parref{quadric-bundles-dual-numbers-example}, some care is required. The
following gives a useful situation in which this is possible, and may be viewed
as a relative version of the discussion in
\parref{quadric-bundles-corank} and \parref{quadrics-corank-bilinear}:

\begin{Lemma}
\label{quadric-bundles-even-rank}
Let \(\rho \colon Q \to S\) be a quadric \((n-1)\)-fold bundle. Assume that
\(S = S_c\) scheme-theoretically, \(S_{c+1} = \varnothing\), and that
\(r \coloneqq n+1-c > 0\) is even. Then $\rad b_q \subset \mathcal{E}$ is a subbundle and \(\Sing\rho\) is given by the
projective bundle \(\PP(\rad b_q)\).
\end{Lemma}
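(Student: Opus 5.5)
The plan is to reduce to a local statement about the universal family and then argue in two steps: first that \(b_q\) has constant rank \(r\) scheme-theoretically, so \(\rad b_q\) is a subbundle; then that \(q\) vanishes identically on \(\rad b_q\), so that \(\Sing\rho\) is the projective bundle on the radical.

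Both claims are local on \(S\), so we may assume \(S\) affine and \(\mathcal{E}\), \(\mathcal{L}\) trivial. Then \(q\) is pulled back from the universal quadratic form over the affine parameter space \(A\) of quadratic forms in \(n+1\) variables over \(\mathbf{Z}\). By the convention of \parref{quadrics-corank-stratification}, the hypotheses say that the classifying map \(S \to A\) factors scheme-theoretically through the locally closed subscheme \(A_c \setminus A_{c+1}\), where \(A_c\) carries its reduced structure. So it suffices to treat the universal case \(S = A_c \setminus A_{c+1}\), which is reduced, and afterwards check that the conclusions are stable under base change.

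On \(A_c \setminus A_{c+1}\), every fibre has \(\corank Q_s = c\) and \(n+1-c = r\) even. The second part of \parref{quadrics-corank-bilinear} then gives \(\corank b_{q,s} = c\) at every point, that is, \(\rank b_{q,s} = r\) fibrewise. On a reduced base, a map of vector bundles \(b_q \colon \mathcal{E} \to \mathcal{E}^\vee \otimes \mathcal{L}\) of constant fibrewise rank has locally free kernel, cokernel and image, and the kernel is a subbundle whose formation commutes with base change. Pulling back along \(S \to A_c \setminus A_{c+1}\) therefore shows that \(\rad b_q\) is a subbundle of rank \(c\) for the original \(S\), with fibres \(\rad b_{q,s}\).

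It remains to identify \(\Sing\rho\). By \parref{quadric-bundles-singular-locus}, \(\Sing\rho = Q \cap \PP(\rad b_q)\) scheme-theoretically: it is the zero scheme of \(b_q(\mathrm{eu},-)\), which is \(\PP(\rad b_q)\) intersected with \(Q\), where \(\rad b_q\) is a subbundle whose formation commutes with base change. So we only need \(q|_{\rad b_q} = 0\). Again this may be checked on the reduced universal base, where it suffices to check it at points, and since the restriction of \(q\) to the radical is compatible with base change we may pass to geometric points. At a geometric point, \parref{quadrics-corank-bilinear} says that \(\corank Q_s = \corank b_{q,s}\) forces \(q|_{\rad b_{q,s}} = 0\) in characteristic \(2\); in characteristic not \(2\) this is automatic from \(q(x) = \tfrac{1}{2}b_q(x,x)\). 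Hence \(q\) vanishes on \(\rad b_q\), and \(\Sing\rho = \PP(\rad b_q)\).

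The main obstacle I expect is the reduction to the reduced universal stratum. Here one must be careful that ``\(S = S_c\) scheme-theoretically and \(S_{c+1} = \varnothing\)'' really means the moduli map factors through the reduced locally closed stratum \(A_c \setminus A_{c+1}\), and that the conclusions (the radical being a subbundle, and the vanishing of \(q\) on it) pull back. This is exactly what fails in \parref{quadric-bundles-dual-numbers-example}, which is why the scheme structure convention from \parref{quadrics-corank-stratification} is needed.
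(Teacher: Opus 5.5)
Your proposal is correct and follows essentially the same route as the paper: reduce to the reduced universal corank-\(c\) stratum, use the parity part of \parref{quadrics-corank-bilinear} to get constant rank of \(b_q\) so that \(\rad b_q\) is a subbundle, and then identify \(\Sing\rho = Q \cap \PP(\rad b_q)\) with \(\PP(\rad b_q)\). The only cosmetic difference is in the last step: the paper upgrades the set-theoretic inclusion \(\PP(\rad b_q) \subseteq Q\) to a scheme-theoretic one using that \(\PP(\rad b_q)\) is reduced, while you show that the section \(q\rvert_{\rad b_q}\) vanishes pointwise on the reduced base, which amounts to the same thing.
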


\begin{proof}
Since formation of the corank stratification and singular locus commute with
base change, it suffices to treat the case \(\rho \colon Q \to S\) is the
universal corank \(c\) quadric over \(\mathbf{Z}\), so that we may take
\(S\) to be reduced. The polar form
\(b_q \colon \mathcal{E} \to \mathcal{E}^\vee \otimes \mathcal{L}\) has
constant rank \(n+1-c\) so, since \(S\) is reduced, its kernel \(\rad b_q\) is
a subbundle of \(\mathcal{E}\) of corank \(c\). Now, on the one hand,
\(\Sing \rho\) is the scheme-theoretic intersection of \(Q\) with
\(\PP(\rad b_q)\). On the other hand, since \(n+1-c\) is even,
\parref{quadrics-corank-bilinear} gives a set-theoretic inclusion
\(\PP(\rad b_q) \subseteq Q\); but since the former is
reduced, being a projective bundle over a reduced base, the inclusion holds
scheme-theoretically, from which it follows that \(\Sing \rho = \PP(\rad b_q)\).
\end{proof}

% \begin{Example}
%     Let $S = \operatorname{Spec}(k[\epsilon])$ with $k$ a field and let $Q = V(T_0T_1 + \epsilon T_2T_3) \subset \mathbf{P}^3_S$. Then the closed fiber of $Q/S$ has corank 2 but the conclusion of the Lemma doesn't hold. 
% \end{Example}

% \begin{Lemma}
% \label{quadric-bundles-even-rank-etalelocalstructure}
% Let \(\rho \colon Q \to S\) be a quadric \((n-1)\)-fold bundle. Assume that
% \(S = S_c\) scheme-theoretically, \(S_{c+1} = \varnothing\), and that
% \(r \coloneqq n+1-c > 0\) is even. Then \'etale locally on $S$, $Q/S$ is isomorphic to the quadric
% $$
% V(T_0T_1+\cdots + T_{n-c-1}T_{n-c}) \subset \mathbf{P}^n_S,
% $$
% which is defined over $\mathbf{Z}$.
% \end{Lemma}

\subsectiondash{Fano schemes}\label{quadric-bundles-fano-schemes}
A submodule \(\mathcal{W} \subseteq \mathcal{E}\) is \emph{isotropic} for the
quadratic form \(q \colon \mathcal{E} \to \mathcal{L}\) if the restriction
\(q\rvert_{\mathcal{W}}\) is the zero map. The subscheme of the Grassmannian
\(\mathbf{G}(r+1,\mathcal{E})\) of rank \(r+1\) subbundles in \(\mathcal{E}\)
parameterizing isotropic subbundles is identified with the \emph{Fano scheme}
\[
\rho_r \colon \mathbf{F}_r(Q/S) \to S
\]
of \(r\)-planes in \(\rho \colon Q \to S\), which parameterizes
\(\PP^r\)-bundles contained in \(Q\). A basic fact is that when
\(\rho \colon Q \to S\) is smooth of relative dimension \(n-1\), then the Fano
schemes are smooth over \(S\), with the last nonempty one satisfying
\[
\dim(\rho_\ell \colon \mathbf{F}_\ell(Q/S) \to S) = \binom{\ell+1}{2}
\;\;\text{and}\;\;
\operatorname{rank}_{\sO_S}\big(
\rho_{\ell,*}\sO_{\mathbf{F}_\ell(Q/S)}\big) =
\begin{dcases*}
2 & if \(n-1 = 2\ell\), and \\
1 & if \(n-1 = 2\ell+1\).
\end{dcases*}
\]
In particular, this means that when \(\rho \colon Q \to S\) is of even relative
dimension \(2\ell\), each fibre of \(\rho_\ell \colon \mathbf{F}_\ell(Q/S) \to S\)
has two geometric connected components.

\subsectiondash{}\label{quadric-bundles-quotients}
Let \(\mathcal{F} \subseteq \mathcal{E}\) be an isotropic subbundle of rank
\(r\) such that its orthogonal \(\mathcal{F}^\perp \subseteq \mathcal{E}\) is
also a subbundle. Isotropicity implies that
\(\mathcal{F} \subseteq \mathcal{F}^\perp\). The quotient
\(\mathcal{F}^\perp/\mathcal{F}\) is then a finite locally free
\(\sO_S\)-module and \(q \colon \mathcal{E} \to \mathcal{L}\) induces a
quadratic form \(q' \colon \mathcal{F}^\perp/\mathcal{F} \to \mathcal{L}\),
defined on a local section \(v'\) by \(q'(v') \coloneqq q(v)\) for any lift
\(v\) to \(\mathcal{F}^\perp \subseteq \mathcal{E}\). This is well-defined
because
\[
q(v+w) = q(v) + q(w) + b_q(v,w) = q(v)
\]
for local sections \(v\) and \(w\) of \(\mathcal{F}^\perp\) and
\(\mathcal{F}\), respectively. Let \(\rho' \colon Q' \to S\) be the
corresponding family of quadrics. Comparing functors of points shows that
families of linear spaces in \(\rho \colon Q' \to S\) naturally correspond to
linear spaces in \(\rho \colon Q \to S\) containing \(\PP\mathcal{F}\):

\begin{Lemma}
\label{lemma-fano-correspondence}
For every integer \(k \geq 0\), the Fano scheme \(\mathbf{F}_k(Q'/S)\)
embeds into \(\mathbf{F}_{k+r}(Q/S)\) as the subscheme of \((k+r)\)-planes
containing \(\PP\mathcal{F}\).
\end{Lemma}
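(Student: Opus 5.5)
The plan is to compare functors of points on the category of \(S\)-schemes. Everything involved commutes with base change: \(\mathcal{F}\), \(\mathcal{F}^\perp\) and \(\mathcal{F}^\perp/\mathcal{F}\) are subbundles or locally free quotients, and \(q'\) is defined by the formula in \parref{quadric-bundles-quotients}. So for an \(S\)-scheme \(T\) it suffices to give a bijection, natural in \(T\), between two sets:
\begin{itemize}
\item rank \(k+1\) subbundles \(\mathcal{W}' \subseteq (\mathcal{F}^\perp/\mathcal{F})_T\) on which \(q'_T\) vanishes;
\item rank \(k+r+1\) subbundles \(\mathcal{W} \subseteq \mathcal{E}_T\) containing \(\mathcal{F}_T\) on which \(q_T\) vanishes.
\end{itemize}
Here I use the identification of Fano schemes with isotropic subbundles in Grassmannians from \parref{quadric-bundles-fano-schemes}. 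I also use that ``containing \(\PP\mathcal{F}\)'' cuts out the closed subscheme of \(\mathbf{G}(k+r+1,\mathcal{E})\) where the composite \(\mathcal{F} \to \mathcal{E} \to \mathcal{E}/\mathcal{W}\) vanishes. This is closed because it is the zero locus of a map of vector bundles.

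The map from the first set to the second sends \(\mathcal{W}'\) to its preimage \(\mathcal{W}\) under \(\mathcal{F}^\perp_T \to (\mathcal{F}^\perp/\mathcal{F})_T\), viewed inside \(\mathcal{E}_T\). It is an extension of \(\mathcal{W}'\) by \(\mathcal{F}_T\), so it is locally free of rank \(k+r+1\). It is a subbundle of \(\mathcal{E}_T\) because \(\mathcal{E}_T/\mathcal{W}\) is an extension of \(\mathcal{E}_T/\mathcal{F}^\perp_T\) by \((\mathcal{F}^\perp/\mathcal{F})_T/\mathcal{W}'\), both locally free. It contains \(\mathcal{F}_T\), and \(q_T\rvert_{\mathcal{W}} = q'_T\rvert_{\mathcal{W}'} \circ (\text{projection}) = 0\).

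For the inverse, take an isotropic \(\mathcal{W} \supseteq \mathcal{F}_T\). The key step is to show \(\mathcal{W} \subseteq \mathcal{F}^\perp_T\). For local sections \(v \in \mathcal{W}\) and \(w \in \mathcal{F}_T\) we have \(v+w \in \mathcal{W}\), so isotropy gives
\[
b_q(v,w) = q(v+w) - q(v) - q(w) = 0.
\]
Hence \(\mathcal{W}\) is killed by \(\mathcal{E}_T \to \mathcal{F}_T^\vee \otimes \mathcal{L}_T\). Since \(\mathcal{F}^\perp\) is a subbundle, its formation commutes with base change, so this kernel is exactly \(\mathcal{F}^\perp_T\). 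Then \(\mathcal{W}' \coloneqq \mathcal{W}/\mathcal{F}_T \subseteq (\mathcal{F}^\perp/\mathcal{F})_T\). It has rank \(k+1\), and it is a subbundle because its cokernel \(\mathcal{F}^\perp_T/\mathcal{W}\) is a subsheaf of the locally free \(\mathcal{E}_T/\mathcal{W}\) with locally free quotient \(\mathcal{E}_T/\mathcal{F}^\perp_T\). Finally, \(\mathcal{W}'\) is \(q'\)-isotropic by the definition of \(q'\).

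The two constructions are visibly mutually inverse and natural in \(T\). Yoneda then gives a monomorphism \(\mathbf{F}_k(Q'/S) \to \mathbf{F}_{k+r}(Q/S)\) whose image is the closed subfunctor of planes containing \(\PP\mathcal{F}\). Since the bijection is onto that subfunctor, the monomorphism is an isomorphism onto this closed subscheme, hence a closed embedding.

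The only genuinely delicate point is the inclusion \(\mathcal{W} \subseteq \mathcal{F}^\perp_T\) after arbitrary base change, together with the subbundle bookkeeping. This is exactly where the hypothesis that \(\mathcal{F}^\perp\) is a subbundle is used, since it makes the orthogonal commute with base change.
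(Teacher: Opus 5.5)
You follow the paper's route: both sides are identified on \(T\)-points via \(\mathcal{W} \mapsto \mathcal{W}/\mathcal{F}_T\), and the polar-form identity shows that an isotropic \(\mathcal{W} \supseteq \mathcal{F}_T\) is orthogonal to \(\mathcal{F}_T\). The problem is the step you yourself single out. It is not true that \(\mathcal{F}^\perp\) commutes with base change merely because it is a subbundle. Local freeness of \(\mathcal{E}/\mathcal{F}^\perp\) only keeps \((\mathcal{F}^\perp)_T \to \mathcal{E}_T\) injective. The equality \(\ker(\mathcal{E}_T \to \mathcal{F}_T^\vee \otimes \mathcal{L}_T) = (\mathcal{F}^\perp)_T\) needs more: the injection \(\mathcal{E}/\mathcal{F}^\perp \hookrightarrow \mathcal{F}^\vee \otimes \mathcal{L}\) must stay injective after base change, i.e.\ have locally free cokernel.

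Concretely, take \(S = \Spec\kk[t]\) and \(\mathcal{E} = \sO_S^{\oplus 3}\) with basis \(e_1,e_2,e_3\) and dual coordinates \(x_1,x_2,x_3\). Let \(q = tx_1x_2 + x_3^2\), which is primitive, so \(Q \to S\) is flat, and let \(\mathcal{F} = \sO_S e_1\). Then \(b_q(-,e_1) = tx_2\), so \(\mathcal{F}^\perp = \sO_S e_1 \oplus \sO_S e_3\) is a subbundle. But over \(T = \{t = 0\}\) the orthogonal of \(\mathcal{F}_T\) is all of \(\mathcal{E}_T\). The plane \(\mathcal{W} = \kk e_1 \oplus \kk e_2\) over \(t=0\) is \(q_T\)-isotropic and contains \(\mathcal{F}_T\), yet it does not lie in \((\mathcal{F}^\perp)_T\). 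So your inverse map is undefined at it. In fact the statement itself fails here. We have \(\mathcal{F}^\perp/\mathcal{F} = \sO_S e_3\) with \(q' = x_3^2\) nowhere zero, so \(\mathbf{F}_0(Q'/S) = Q' = \varnothing\). Meanwhile the lines of \(Q\) through \(\PP\mathcal{F}\) include \(\PP\mathcal{W}\) over \(t = 0\).

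What the argument needs is \((\mathcal{F}_T)^\perp = (\mathcal{F}^\perp)_T\) for all \(T\). This holds whenever the image of \(\mathcal{E} \to \mathcal{F}^\vee \otimes \mathcal{L}\), \(v \mapsto b_q(v,-)\rvert_{\mathcal{F}}\), is a subbundle. In that case both short exact sequences through that image consist of locally free sheaves and remain exact after base change. This covers every application in the paper. The map is surjective when \(\mathcal{F}\) is regular isotropic, as in \parref{hyperbolic-reduction-setting}, and zero when \(\mathcal{F} = \rad b_q\), as in \parref{quadric-bundles-fano-schemes-corank-2}.

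The paper's own proof writes \(\mathcal{F}_T^\perp\) without distinguishing the two orthogonals, so it uses the same identification silently. Once this hypothesis is added, the rest of your proposal is correct and complete: the subbundle bookkeeping, the closedness of the incidence condition, and the Yoneda conclusion.
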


\begin{proof}
In detail, for every $S$-scheme $T$, the map
\(\mathcal{G} \mapsto \mathcal{G}/\mathcal{F}_T\)
provides a bijection between subbundles \(\mathcal{G} \subseteq \mathcal{E}_T\)
such that \(\mathcal{F}_T \subseteq \mathcal{G} \subseteq \mathcal{F}_T^\perp\)
and subbundles of \(\mathcal{F}_T^\perp/\mathcal{F}_T\), sending
\(q\)-isotropic subbundles to \(q'\)-isotropic ones. Furthermore, taking
\(v\) to be a local section of \(\mathcal{G}\) and \(t\) a local section of
\(\mathcal{F}_T\) in the computation of \parref{quadric-bundles-quotients} shows
that an isotropic subbundle of \(\mathcal{G} \subseteq \mathcal{E}_T\)
containing \(\mathcal{F}_T\) is automatically contained in \(\mathcal{F}_T^\perp\),
so that the map above restricts to a bijection between isotropic subbundles of
\(\mathcal{E}_T\) containing \(\mathcal{F}_T\) and isotropic subbundles of
\(\mathcal{F}_T^\perp/\mathcal{F}_T\).
\end{proof}

Combining this with \parref{quadric-bundles-even-rank} relates Fano schemes
of singular quadric bundles with those of smooth ones. An important
case for us is when \(\rho \colon Q \to S\) is a quadric \(2\ell\)-fold bundle
where all fibres are of corank \(2\), so that each fibre is a cone with
vertex \(\PP^1\) over a smooth quadric of dimension \(2\ell - 2\). The
following shows that such quadric bundles also have two families of maximal
isotropic subspaces. A related construction seems to have appeared in
\cite{DK:Double-Covers}.

\begin{Lemma}\label{quadric-bundles-fano-schemes-corank-2}
Let \(\rho \colon Q \to S\) be a quadric \(2\ell\)-fold bundle such that
\(S = S_2\) scheme-theoretically and \(S_3 = \varnothing\). The Stein
factorization of \(\rho_{\ell+1} \colon \mathbf{F}_{\ell+1}(Q/S) \to S\)
provides an \'etale double cover \(\tilde{S} \to S\).
\end{Lemma}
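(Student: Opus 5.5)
Apply \parref{quadric-bundles-even-rank} with \(c = 2\) and \(n + 1 = 2\ell + 2\), so that \(r = 2\ell > 0\) is even. This shows that \(\mathcal{K} \coloneqq \rad b_q\) is a rank \(2\) subbundle of \(\mathcal{E}\) and that \(\Sing\rho = \PP\mathcal{K}\), a \(\PP^1\)-bundle contained in \(Q\). In particular \(\mathcal{K}\) is isotropic. Its orthogonal is \(\mathcal{K}^\perp = \mathcal{E}\), because \(\mathcal{K}\) is the kernel of \(b_q\); hence \(\mathcal{K}^\perp\) is trivially a subbundle. We can therefore apply \parref{quadric-bundles-quotients} with \(\mathcal{F} = \mathcal{K}\), which gives a quadratic form \(q'\) on the rank \(2\ell\) bundle \(\mathcal{E}/\mathcal{K}\) and a family of quadrics \(\rho' \colon Q' \to S\) of relative dimension \(2\ell - 2\).

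The first key step is to show that \(\rho'\) is smooth. The polar form of \(q'\) is the form induced by \(b_q\) on \(\mathcal{E}/\mathcal{K}\), and it is nondegenerate since \(\mathcal{K}\) is the full kernel. So \(\rad b_{q'} = 0\) fibrewise, and \parref{quadrics-corank-bilinear} gives corank \(0\). In characteristic \(2\) one must also check that \(q'\) is primitive, which holds since its polar form is nonzero. Thus \(\rho'\) is a smooth quadric bundle of even relative dimension \(2(\ell-1)\), and by \parref{quadric-bundles-fano-schemes}, \(\rho'_{\ell-1} \colon \mathbf{F}_{\ell-1}(Q'/S) \to S\) is smooth and proper, with \(\rho'_{\ell-1,*}\sO\) locally free of rank \(2\).

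Next we identify the Fano schemes. By \parref{lemma-fano-correspondence} with \(r = 2\), \(\mathbf{F}_{\ell-1}(Q'/S)\) embeds in \(\mathbf{F}_{\ell+1}(Q/S)\) as the closed subscheme of \((\ell+1)\)-planes containing \(\PP\mathcal{K}\). The main obstacle is to show that this embedding is an isomorphism, i.e.\ that every family \(\mathcal{G} \subseteq \mathcal{E}_T\) of rank \(\ell + 2\) isotropic subbundles contains \(\mathcal{K}_T\).

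We first argue on geometric fibres. If \(\mathcal{G} \not\supseteq \mathcal{K}\), then the image of \(\mathcal{G}\) in \(\mathcal{E}/\mathcal{K}\) is isotropic for the nondegenerate form \(q'\) on a \(2\ell\)-dimensional space, so it has dimension at most \(\ell\). Hence \(\dim(\mathcal{G} \cap \mathcal{K}) \geq 2\), which is a contradiction. So the inclusion holds fibrewise.

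To pass to families, consider the composite \(\mathcal{K}_T \to \mathcal{E}_T/\mathcal{G}\). Its vanishing locus is a closed subscheme of \(T\) that contains all points. I would prove it is all of \(T\) by showing that \(\mathbf{F}_{\ell+1}(Q/S)\) is reduced-compatible, via a deformation argument: compute the tangent space to \(\mathbf{F}_{\ell+1}\) at a point of a fibre and compare its dimension with that of the smooth \(\mathbf{F}_{\ell-1}(Q'/S)\), namely \(\binom{\ell}{2}\). Equality of tangent spaces together with the closed embedding being bijective on points then forces an isomorphism, with both sides smooth over \(S\).

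Alternatively, we can reduce to the universal corank \(2\) family over \(\mathbf{Z}\), as in the proof of \parref{quadric-bundles-even-rank}. Étale locally that family is the split form with \(\mathcal{K}\) a coordinate summand, and there the Fano scheme is computed directly.

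Granting the isomorphism \(\mathbf{F}_{\ell+1}(Q/S) \cong \mathbf{F}_{\ell-1}(Q'/S)\), the Stein factorization is \(\tilde{S} \coloneqq \Spec_S \rho'_{\ell-1,*}\sO\). This is finite locally free of rank \(2\). Its formation commutes with base change by cohomology and base change for the smooth proper morphism \(\rho'_{\ell-1}\), since its fibres are geometrically reduced. Geometric fibres of \(\tilde{S} \to S\) consist of two reduced points, one for each connected component of the fibre of \(\mathbf{F}_{\ell-1}\). Thus \(\tilde{S} \to S\) is finite flat with geometrically reduced fibres, i.e.\ an étale double cover.
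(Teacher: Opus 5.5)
Your skeleton is the paper's: the even-rank lemma, the quotient by \(\mathcal{K} = \rad b_q\), the embedding from \parref{lemma-fano-correspondence}, and then Stein factorization of the smooth \(\mathbf{F}_{\ell-1}(Q'/S)\). The gap is the step you yourself flag as the main obstacle, which is never proved. Your dimension count only shows \(\mathcal{K}_{\bar t} \subseteq \mathcal{G}_{\bar t}\) on geometric fibres. Equivalently, it shows that the vanishing locus of \(\mathcal{K}_T \to \mathcal{E}_T/\mathcal{G}\) contains \(T_{\mathrm{red}}\), and neither proposed fix is carried out.

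\emph{Tangent-space route.} The computation is the whole content, and it is omitted. The concluding inference is also not valid as stated: \(S_{\mathrm{red}} \hookrightarrow S\) is a bijective closed immersion that is an isomorphism, with equal tangent spaces, on every fibre. What rescues it here is flatness of \(\mathbf{F}_{\ell-1}(Q'/S)\) over \(S\). Once the fibres agree, \(\operatorname{Tor}\)-vanishing gives \(\mathcal{I} \otimes \kappa(s) = 0\) for the ideal \(\mathcal{I}\) of the embedding, so \(\mathcal{I} = 0\) by Nakayama. The tangent space does have the right size. At \([\mathcal{G}]\) it is \(\{\varphi \in \Hom(\mathcal{G}, \mathcal{E}/\mathcal{G}) : b_q(v,\varphi(v)) = 0\}\). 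Because \(\mathcal{G}^\perp = \mathcal{G}\) on fibres here, this is identified with the alternating forms on \(\mathcal{G}/\mathcal{K}\), of dimension \(\binom{\ell}{2}\). So this route can be completed, but only if you supply both pieces.

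\emph{Universal or split model.} This reduction does not help. Reducedness of the base says nothing about reducedness of \(\mathbf{F}_{\ell+1}\). For the split cone over \(\mathbf{Z}\) you face exactly the same question over arbitrary \(T\).

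The missing idea, which the paper uses, is to make your rank count work in families via Lemma~\parref{quadric-bundles-intersect-radical}. Your fibrewise argument shows that \(b_q\rvert_{\mathcal{G}}\) has rank exactly \(\ell\) at every point. So locally some \(\ell \times \ell\) minor is a unit, and \(b_q(\mathcal{G}) \subseteq b_q(\mathcal{E}_T) \cong (\mathcal{E}/\mathcal{K})_T\) contains a rank \(\ell\) direct summand \(\mathcal{F}'\). This summand is isotropic for the regular form on a rank \(2\ell\) module, so it equals its own orthogonal and is maximal isotropic, which forces \(b_q(\mathcal{G}) = \mathcal{F}'\). Hence \(\mathcal{G} \cap \mathcal{K}_T = \ker(b_q\rvert_{\mathcal{G}})\) is a rank \(2\) subbundle of \(\mathcal{E}_T\) inside the rank \(2\) subbundle \(\mathcal{K}_T\). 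The two therefore coincide, and \(\mathcal{K}_T \subseteq \mathcal{G}\) over any \(T\), with no deformation theory needed. The rest of your proposal is fine and more detailed than the paper's: the smoothness of \(Q'\), and \(\tilde{S} = \Spec_S \rho'_{\ell-1,*}\sO\) being finite étale of degree \(2\).
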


\begin{proof}
% Let $\mathcal{F}$ be the kernel of $b_q$ as in Lemma \ref{lemma-nonredevenrank}, a rank 2 isotropic subbundle of $\mathcal{E}$. Write $\overline{Q} / S$ for the quadric bundle corresponding to the induced quadratic form on $\mathcal{E}/\mathcal{F}$. This is smooth of relative dimension $2 \ell - 2$, and by Lemma \ref{lemma-fano-correspondence}, there is an isomorphism between $\mathbf{F}_{\ell-1}(\overline{Q}/S)$ and the closed subscheme of $\mathbf{F}_{\ell+1}(Q/S)$ parametrizing isotropic subbundles containing $\mathcal{F}$. We claim this closed subscheme is equal to $\mathbf{F}_{\ell+1}(Q/S)$. If the claim is true, we are done by the discussion of  \ref{quadric-bundles-fano-schemes}.
By \parref{quadric-bundles-even-rank}, \(\rad b_q \subset \mathcal{E}\) is a
rank \(2\) isotropic subbundle, and the quadric bundle
\(\bar{\rho} \colon \widebar{Q} \to S\) defined by the induced quadratic form
on \(\mathcal{E}/\rad b_q\), as in \parref{quadric-bundles-quotients}, is
smooth of relative dimension \(2\ell-2\). Then
\parref{lemma-fano-correspondence} embeds
\(\mathbf{F}_{\ell-1}(\widebar{Q}/S)\) in \(\mathbf{F}_{\ell+1}(Q/S)\) as the
locus of \((\ell+1)\)-planes containing
\(\PP(\mathrm{rad} b_q)\). We claim this is in fact all of $\mathbf{F}_{\ell+1}(Q/S)$, whereupon we can conclude by the discussion of \parref{quadric-bundles-fano-schemes}. We note that it is a routine exercise in quadratic forms to check that this is true on the level of sets. Let $T$ be an $S$-scheme and $\mathcal{F} \in \mathbf{F}_{\ell+1}(Q/S)_T$. Then by \parref{quadric-bundles-intersect-radical} below, $\mathcal{F} \cap \operatorname{rad}(b_q)_T \subset \mathcal{F}$ is a subbundle, and $\mathcal{F}/\mathcal{F} \cap \operatorname{rad}(b_q)_T$ is an isotropic subbundle of $(\mathcal{E}/\operatorname{rad}b_q)_T$. Since $(\mathcal{E}/\operatorname{rad}b_q)_T$ is non-degenerate of rank $2 \ell$, the maximal rank of an isotropic subbundle of it is $\ell$, so the rank of $\mathcal{F} \cap \operatorname{rad}(b_q)_T$ must be at least 2. But this is a subbundle of the rank two vector bundle $\operatorname{rad}(b_q)_T$ (as they are each subbundles of $\mathcal{E}_T$), so in fact we must have $\mathcal{F} \cap \operatorname{rad}(b_q)_T = \operatorname{rad}(b_q)_T$ or $\mathcal{F} \supset \operatorname{rad}(b_q)_T$, as needed.
\end{proof}

% Here we have used the following standard lemma:

% \begin{Lemma}
% \label{lemma-freeoverartin}
%     Let $(R, \mathfrak{m})$ be a local Artin ring. Let $M$ be a finitely generated $R$-module. Then 
%     $$
%     \operatorname{length}(M) \leq \operatorname{dim}_{R/\mathfrak{m}}(M/\mathfrak{m}M) \cdot \operatorname{length}(R)
%     $$
%     with equality if and only if $M$ is free. 
% \end{Lemma}

% \begin{proof}
%     Set $d = \operatorname{dim}_{R/\mathfrak{m}}(M/\mathfrak{m}M)$ and $\ell = \operatorname{length}(R)$. By Nakayama's Lemma, there is a surjection $R^{\oplus d} \to M$. Denote by $K$ the kernel. Then
%     \[
%     d \cdot \ell =
%     \operatorname{length}(R^{\oplus d}) = \operatorname{length}(K) +
%     \operatorname{length}(M) \geq \operatorname{length}(M)
%     \]
%     with equality if and only if $K = 0$. 
% \end{proof}

Linear spaces of dimension \(\ell\) and \(\ell+1\) must intersect the singular
locus of a corank \(2\) quadric \(2\ell\)-fold. This behaves well in families,
even when the base \(S\) may be nonreduced:

\begin{Lemma}\label{quadric-bundles-intersect-radical}
Let \(\rho \colon Q \to S\) be a quadric \(2\ell\)-fold bundle associated with
a quadratic form \(q \colon \mathcal{E} \to \mathcal{L}\) such that
\(S = S_2\) scheme-theoretically and \(S_3 = \varnothing\). If
\(\mathcal{F} \subset \mathcal{E}\) is an isotropic subbundle satisfying
\[
\rank(
b_q \rvert_{\mathcal{F}} \colon
\mathcal{F} \subset
\mathcal{E} \to
\mathcal{E}^\vee \otimes \mathcal{L}) \geq \ell
\]
at every point $s \in S$. Then \(\mathcal{F} \cap \rad b_q\) is a subbundle of \(\mathcal{F}\) and $\mathcal{F}/\mathcal{F} \cap \rad b_q$ is a subbundle of $\mathcal{E}/\rad b_q$. 
\end{Lemma}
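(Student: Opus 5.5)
We work with the composite \(\beta \colon \mathcal{F} \to \mathcal{E}^\vee \otimes \mathcal{L}\) of the inclusion and the adjoint of the polar form. By \parref{quadric-bundles-even-rank}, applied with \(c = 2\) and \(r = 2\ell\) even, \(\rad b_q\) is a rank \(2\) subbundle of \(\mathcal{E}\). Hence \(\mathcal{E}/\rad b_q\) is locally free of rank \(2\ell\), and \(b_q\) induces an injection
\[
\bar b \colon \mathcal{E}/\rad b_q \to \mathcal{E}^\vee \otimes \mathcal{L}.
\]
Its image is \((\rad b_q)^\perp\)-type data, namely the annihilator of \(\rad b_q\) twisted by \(\mathcal{L}\). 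This annihilator is a subbundle because \(\rad b_q\) is a subbundle, so \(\bar b\) is an isomorphism onto a subbundle of \(\mathcal{E}^\vee\otimes\mathcal{L}\).

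Since \(\beta\) factors through \(\bar b\), we have \(\mathcal{F}\cap\rad b_q = \ker\beta\). Moreover, \(\mathcal{F}\to\mathcal{E}/\rad b_q\) is \(\beta\) followed by the inverse of the isomorphism onto its image. The problem therefore reduces to a single claim: the map \(\gamma \colon \mathcal{F} \to \mathcal{E}/\rad b_q\) has locally free cokernel. Given this, the image of \(\gamma\) is a subbundle, its kernel \(\mathcal{F}\cap\rad b_q\) is a subbundle of \(\mathcal{F}\), and \(\mathcal{F}/(\mathcal{F}\cap\rad b_q)\) is identified with that image.

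A map between finite locally free sheaves has locally free cokernel exactly when its rank is locally constant on fibres and the base change is compatible. Concretely, it suffices to show that the fibre rank of \(\gamma\) equals the same integer \(\ell\) at every point, and that \(\gamma\) is locally split. The key step is the fibrewise bound. At each \(s\in S\), the image of \(\mathcal{F}_s\) in \((\mathcal{E}/\rad b_q)_s\) is isotropic for the induced quadratic form. This form is nondegenerate of rank \(2\ell\), since \(S_3=\varnothing\) and by \parref{quadric-bundles-even-rank} the quotient quadric \(\widebar{Q}\) is smooth. An isotropic subspace there therefore has dimension at most \(\ell\). Together with the hypothesis \(\rank \geq \ell\), the fibre rank of \(\gamma\) is exactly \(\ell\) at every point.

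The main obstacle is passing from constant fibre rank to a subbundle when \(S\) is nonreduced. Over a reduced base this is automatic, but here it is not. The route I would take is a splitting argument. Locally, the fibre image \(\gamma(\mathcal{F}_s)\) is a Lagrangian \(W_s\) for the nondegenerate even-rank form on \(\bar{\mathcal{E}}\coloneqq\mathcal{E}/\rad b_q\). Choose \(\ell\) local sections of \(\mathcal{F}\) whose images in \(\bar{\mathcal{E}}_s\) form a basis of \(W_s\). By Nakayama, near \(s\) these generate a rank \(\ell\) direct summand \(W\subseteq\bar{\mathcal{E}}\). Its orthogonal \(W^\perp\) is a direct summand of rank \(\ell\) containing \(W\), because the form is nondegenerate, so \(W^\perp = W\). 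Isotropy of \(\mathcal{F}\) gives \(b(\gamma(\mathcal{F}),\gamma(\mathcal{F}))=0\), hence \(\gamma(\mathcal{F})\subseteq W^\perp = W\). Since \(W\) is already contained in \(\gamma(\mathcal{F})\), we get \(\gamma(\mathcal{F})=W\), which is a subbundle. This uses only the polar form; in characteristic \(2\) one should double-check that \(W^\perp=W\) still holds for the alternating form, which it does because \(\bar b\) is nondegenerate.

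Finally, \(\ker\gamma\) is the kernel of a surjection of finite locally free sheaves \(\mathcal{F}\to W\), hence locally free and a direct summand. This is \(\mathcal{F}\cap\rad b_q\), which concludes.
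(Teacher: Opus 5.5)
Your proposal is correct and follows essentially the same argument as the paper. Both proofs produce a rank-\(\ell\) direct summand inside the image of \(\mathcal{F}\) in the regular rank-\(2\ell\) module \(\mathcal{E}/\rad b_q \cong b_q(\mathcal{E})\) (the paper via an invertible \(\ell \times \ell\) minor of \(b_q\rvert_{\mathcal{F}}\), you via lifting a fibre basis and Nakayama), and then force the image to equal it because an isotropic rank-\(\ell\) summand of a regular rank-\(2\ell\) module is its own orthogonal. Two small points: your early claim that \(\bar b\) maps onto the full annihilator of \(\rad b_q\) relies on the fibrewise nondegeneracy of the induced form, which you only justify later, and the fibrewise upper bound \(\le \ell\) is not needed, since \(\ell\) fibrewise independent sections suffice.
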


\begin{proof}
Note that
\(\ker b_q\rvert_{\mathcal{F}} = \mathcal{F} \cap \rad b_q \subset \mathcal{E}\),
so writing \(b_q(\mathcal{F})\) for the image of \(\mathcal{F}\), there is a
short exact sequence of \(\sO_S\)-modules
\[
0 \to
\mathcal{F} \cap \rad b_q \to
\mathcal{F} \to
b_q(\mathcal{F}) \to
0.
\]
We show that \(b_q(\mathcal{F})\) is a local direct summand of \(\mathcal{F}\)
of rank \(\ell\), from which the conclusion follows from the short exact sequence.
The question is thus local on \(S\), so assume for the remainder that
\(S = \Spec R\) is the spectrum of a local ring.

On the one hand, the hypothesis on \(\rank b_q\rvert_{\mathcal{F}}\)
means that some $\ell \times \ell$ minor of the matrix representing $b_q|_{\mathcal{F}}$ is invertible, so \(b_q(\mathcal{F})\) contains a subsheaf \(\mathcal{F}'\) which is a
rank \(\ell\) direct summand of \(\mathcal{E}^\vee\). On the other hand, the
modules \(b_q(\mathcal{F}) \subset b_q(\mathcal{E})\) are isomorphic to the
quotient of \(\mathcal{F} \subset \mathcal{E}\) by \(\rad b_q\).
In this way, \(q\) induces a regular quadratic form on the rank \(2\ell\)
module \(b_q(\mathcal{E})\) with respect to which \(b_q(\mathcal{F})\) is a
totally isotropic submodule; in particular, the rank \(\ell\) summand
\(\mathcal{F}'\) is totally isotropic. In a regular quadratic module $M$ of rank
\(2\ell\), an isotropic summand $N$ of rank \(\ell\) is equal to its own orthogonal and hence maximal, since if $N \subset N'$ is isotropic then $N \subset N' \subset N^\perp = N$, and so this
implies \(b_q(\mathcal{F}) = \mathcal{F}'\) is a direct summand of
\(\mathcal{F}\).
\end{proof}

\subsectiondash{Hyperbolic reduction}\label{hyperbolic-reduction-setting}
Assume now that \(\rho \colon Q \to S\) is a quadric bundle, meaning that it
is flat of relative dimension $n-1$. Let \(\mathcal{F} \subseteq \mathcal{E}\) be an isotropic subbundle
of rank \(r\). Following \cite[\S2.1]{Kuznetsov:Reduction}, \(\mathcal{F}\)
is furthermore called \emph{regular isotropic} if the map
\[
b_q(-,\phantom{-})\rvert_{\mathcal{F}} \colon
\mathcal{E} \to
\mathcal{E}^\vee \otimes \mathcal{L} \to
\mathcal{F}^\vee \otimes \mathcal{L}
\]
is surjective. A straightforward argument using the discussion of
\parref{quadric-bundles-singular-locus} shows that this is equivalent to the
property that \(\PP\mathcal{F}\) is contained in the smooth locus of
\(\rho \colon Q \to S\). The orthogonal \(\mathcal{F}^\perp\) is then a
corank \(r\) subbundle of \(\mathcal{E}\) containing \(\mathcal{F}\), so \(q\)
induces via \parref{quadric-bundles-quotients} a quadratic
form \(q' \colon \mathcal{F}^\perp/\mathcal{F} \to \mathcal{L}\). The
associated family of quadrics \(\rho' \colon Q' \to S\) is called the
\emph{hyperbolic reduction} of \(Q\) along \(\mathcal{F}\). This has a modular
interpretation by \parref{lemma-fano-correspondence} as the scheme of
\(r\)-planes along \(\rho \colon Q \to S\) containing \(\PP\mathcal{F}\).

\subsectiondash{}\label{quadrics-reduction-blowup}
Hyperbolic reduction can be realized geometrically via linear projection
centred at \(\PP\mathcal{F}\). We describe this here, though see also
\cite[Proposition 2.5]{KS:Quadrics}. Let
\(\mathcal{E}' \coloneqq \mathcal{E}/\mathcal{F}\) be the quotient bundle,
\(\widetilde{\PP}\mathcal{E} \to \PP\mathcal{E}\) the blowup along
\(\PP\mathcal{F}\), and \(\widetilde{Q}\) the strict transform of \(Q\). Then
there is a diagram
\[
\begin{tikzcd}
& \widetilde{Q} \ar[dl] \rar[hook] & \widetilde{\PP}\mathcal{E} \ar[dl,"b"'] \ar[dr,"a"] \\
Q \rar[hook]
  & \PP\mathcal{E} \ar[rr,dashed]
  && \PP\mathcal{E}'
  & \lar[hook'] \PP(\mathcal{F}^\perp/\mathcal{F}) & \lar[hook'] Q'.
\end{tikzcd}
\]
The morphism \(\widetilde{Q} \to \PP\mathcal{E}'\) is generically a
\(\PP^{r-1}\)-bundle, and restricts to a \(\PP^r\)-bundle along the locus in
\(\PP\mathcal{E}'\) parameterizing \(r\)-planes in \(Q\) which contain
\(\PP\mathcal{F}\): this is \(Q'\) by \parref{lemma-fano-correspondence}.

This situation furthermore gives natural equations for \(Q'\). To describe
this, set some notation: Let \(\pi' \colon \PP\mathcal{E}' \to S\) be the
structure morphism and form the following commutative exact diagram
\[
\begin{tikzcd}
0 \rar
& \pi'^*\mathcal{F} \rar \dar[equal]
& \mathcal{G} \rar \dar
& \sO_{\pi'}(-1) \rar \dar
& 0 \\
0 \rar
& \pi'^*\mathcal{F} \rar
& \pi'^*\mathcal{E} \rar
& \pi'^*\mathcal{E}' \rar
& 0
\end{tikzcd}
\]
where the locally free \(\sO_{\PP\mathcal{E}'}\)-module \(\mathcal{G}\) sits as
the pullback of the right hand square. Write \(E\) for the exceptional divisor
of \(b \colon \widetilde{\PP}\mathcal{E} \to \PP\mathcal{E}\). The following
are standard facts:

\begin{Lemma}\label{lemma-projection-facts}
In the setting of \parref{hyperbolic-reduction-setting}, the following are true:
\begin{enumerate}
\item\label{lemma-projection-facts-bundle}
\(a \colon \widetilde{\PP}\mathcal{E} \to \PP\mathcal{E}'\) is the
\(\PP^r\)-bundle \(\PP\mathcal{G} \to \PP\mathcal{E}'\);
\item\label{lemma-projection-facts-exceptional}
the exceptional divisor \(E\) of \(b \colon \widetilde{\PP}\mathcal{E} \to \PP\mathcal{E}\)
is the subbundle \(\PP(\pi'^*\mathcal{F}) \subset \PP\mathcal{G}\);
\item\label{lemma-projection-facts-pic}
\(b^*\sO_\pi(1) = \sO_a(1)\) and
\(b^*\sO_\pi(1) = a^*\sO_{\pi'}(1) \otimes \sO_{\widetilde\PP\mathcal{E}}(E)\); and
\item\label{lemma-projection-facts-divisor}
\(\widetilde{Q}\) an effective Cartier divisor in \(\widetilde\PP\mathcal{E}\)
defined by a section
\[
\pushQED{\qed}
\tilde{q} \in
\mathrm{H}^0(\widetilde{\PP}\mathcal{E},
  \sO_a(1) \otimes a^*(\sO_{\pi'}(1) \otimes \pi'^*\mathcal{L})).
\qedhere
\popQED
\]
\end{enumerate}
\end{Lemma}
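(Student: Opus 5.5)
The four statements are local on \(S\) and compatible with base change, so I will work Zariski locally, taking \(S = \Spec R\) affine and \(\mathcal{F}\) a free direct summand of \(\mathcal{E}\). Throughout I use the standard description of the blowup of a projective bundle along a linear subbundle \(\PP\mathcal{F} \subset \PP\mathcal{E}\): it is the closed subscheme of \(\PP\mathcal{E} \times_S \PP\mathcal{E}'\) of pairs \((x,y)\) with \(x \mapsto y\) under \(\mathcal{E} \to \mathcal{E}'\) whenever \(x \notin \PP\mathcal{F}\). Equivalently, it is the incidence of lines \(x \subset \mathcal{E}\) contained in the preimage \(\mathcal{F} + y\) of a line \(y \subset \mathcal{E}'\).

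For \ref{lemma-projection-facts-bundle}, I write down the functor of points. Over \(\PP\mathcal{E}'\) with tautological line \(\sO_{\pi'}(-1) \subset \pi'^*\mathcal{E}'\), the preimage of this line in \(\pi'^*\mathcal{E}\) is precisely \(\mathcal{G}\), of rank \(r+1\). A line in \(\mathcal{G}\) is then the same as a line \(x \subset \mathcal{E}\) together with a line \(y\) with \(x \subset \mathcal{F}+y\), which is the incidence above. This identifies \(\PP\mathcal{G}\) with the closure of the graph of the projection, and I check that it is the blowup by verifying that the inverse image of \(\PP\mathcal{F}\) is a Cartier divisor and invoking the universal property. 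In the other direction, a direct computation in coordinates \(x_0,\dots,x_{r-1}\) for \(\mathcal{F}\) and \(y_j\) for \(\mathcal{E}'\) shows that \(\PP\mathcal{G}\) is the usual Proj-of-Rees-algebra chart description.

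For \ref{lemma-projection-facts-exceptional}, the inverse image of \(\PP\mathcal{F}\) in \(\PP\mathcal{G}\) is the locus where the line \(x\) lies in \(\pi'^*\mathcal{F} \subset \mathcal{G}\). This is the projective subbundle \(\PP(\pi'^*\mathcal{F})\), which is Cartier of codimension one because \(\mathcal{G}/\pi'^*\mathcal{F} \cong \sO_{\pi'}(-1)\) is a line bundle.

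For \ref{lemma-projection-facts-pic}, the first identity holds because the tautological line of \(\PP\mathcal{G}\) maps to that of \(\PP\mathcal{E}\) under \(\mathcal{G} \subset \pi'^*\mathcal{E}\). For the second, I compose \(\sO_a(-1) \to \mathcal{G} \to \sO_{\pi'}(-1)\). This composite vanishes exactly on \(\PP(\pi'^*\mathcal{F}) = E\), and it vanishes to first order, since \(\mathcal{G} \to \sO_{\pi'}(-1)\) is the linear form cutting out that subbundle. Hence \(\sO_a(-1) \cong a^*\sO_{\pi'}(-1)(-E)\), and dualizing gives the claim.

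For \ref{lemma-projection-facts-divisor}, I pull back \(s_q\). This gives \(b^*s_q \in \mathrm{H}^0(\sO_a(2) \otimes \mathcal{L})\), namely \(q\) restricted to the tautological line in \(\mathcal{G}\). Since \(\mathcal{F}\) is isotropic, \(q|_{\mathcal{G}}\) vanishes on \(\pi'^*\mathcal{F}\), so \(b^*s_q\) vanishes along \(E\). Concretely, in local coordinates where \(\mathcal{G}\) has coordinates \(x\) along \(\mathcal{F}\) and \(t\) along the quotient \(\sO_{\pi'}(-1)\), we get \(q(x + t\tilde y) = t(b_q(x,\tilde y) + t\,q(\tilde y))\). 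Dividing by the equation of \(E\), which is \(t\), and using \(\sO(E) = \sO_a(1) \otimes a^*\sO_{\pi'}(-1)\) from \ref{lemma-projection-facts-pic}, yields a section
\[
\tilde q \in \mathrm{H}^0\big(\sO_a(1) \otimes a^*(\sO_{\pi'}(1) \otimes \pi'^*\mathcal{L})\big).
\]
Its vanishing locus is the strict transform of \(Q\). To see this I need \(\tilde q\) not to vanish identically on \(E\) or on any fibre, so that its zero scheme is Cartier and contains no component of \(E\); it is then the closure of \(Q \setminus \PP\mathcal{F}\). The restriction of \(\tilde q\) to \(E\) is \(b_q(x,\tilde y)\), which is nonzero fibrewise because \(b_q\) is nonzero on \(\mathcal{F} \times \mathcal{E}\) by regularity (and, failing regularity, at least generically). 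I expect this last step, showing that \(\mathrm{V}(\tilde q)\) coincides scheme-theoretically with the strict transform and that \(\tilde q\) is a nonzerodivisor, to be the main obstacle. I would handle it by a flatness and fibrewise argument over \(\PP\mathcal{E}'\), using the regular isotropic hypothesis.
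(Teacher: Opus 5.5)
The paper gives no proof of this lemma; it records the four items as standard facts. Your argument therefore has to stand on its own. Parts (i)--(iii) are fine, and so is your construction of \(\tilde q\) as \(b^*s_q\) divided by the equation \(s_E\) of \(E\). The gap is the step you flag at the end of (iv), and the route you propose for it cannot work.

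Over a point \(y \in \PP\mathcal{E}'\), \(\tilde q\) restricts on \(a^{-1}(y) \cong \PP^r\) to the linear form \((x,t) \mapsto b_q(x,\tilde y) + t\,q(\tilde y)\). This form vanishes identically exactly when \(y \in Q'\), which is the \(\PP^r\)-bundle locus mentioned in \parref{quadrics-reduction-blowup}. Likewise \(\tilde q\rvert_E\) vanishes on entire fibres of \(E \to \PP\mathcal{E}'\) over \(\PP(\mathcal{F}^\perp/\mathcal{F})\). So \(\mathrm{V}(\tilde q)\) is not flat over \(\PP\mathcal{E}'\), no fibrewise argument over \(\PP\mathcal{E}'\) is available, and your requirement that \(\tilde q\) not vanish on any fibre is false for the fibres of \(a\).

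Also, over a general base, ``contains no component of \(E\)'' is not the right condition. The strict transform is cut out by the \(E\)-saturation of \((b^*s_q) = (s_E\tilde q)\). Since \(s_E\) is a nonzerodivisor, this equals the \(E\)-saturation of \((\tilde q)\). What you must show is that \(\tilde q\) is a nonzerodivisor and that \(s_E\) is a nonzerodivisor on \(\sO_{\mathrm{V}(\tilde q)}\).

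Both facts follow fibrewise over \(S\), using the flatness criterion the paper invokes in \parref{lemma-spinorsheavesrelperf}.

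First, fix \(s \in S\). The fibres \(\PP\mathcal{G}_s\) and \(E_s\) are integral. The restriction \(\tilde q\rvert_{E_s}\) is the section given by the map \(\mathcal{E}'_s \to \mathcal{F}_s^\vee \otimes \mathcal{L}_s\) induced by the polar form. This map is surjective by regularity, hence nonzero. So \(\tilde q_s\) is a nonzerodivisor on \(\PP\mathcal{G}_s\). By the criterion, \(\tilde q\) is a nonzerodivisor and \(\mathrm{V}(\tilde q)\) is \(S\)-flat.

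Next, \(\mathrm{V}(\tilde q_s)\) is a Cartier divisor in a regular scheme. It is therefore Cohen--Macaulay, with all associated points of codimension \(1\) in \(\PP\mathcal{G}_s\). On the other hand, \(\mathrm{V}(\tilde q_s) \cap E_s = \mathrm{V}(\tilde q\rvert_{E_s})\) has codimension \(2\). Thus \(s_E\) is a nonzerodivisor on every fibre of the \(S\)-flat scheme \(\mathrm{V}(\tilde q)\), hence on \(\mathrm{V}(\tilde q)\) itself.

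This gives \(\widetilde{Q} = \mathrm{V}(\tilde q)\) scheme-theoretically. The argument uses only regularity of \(\mathcal{F}\) and needs no hypothesis on \(S_{n-1}\).
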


Pushing the section \(\tilde{q}\) from
\parref{lemma-projection-facts}\ref{lemma-projection-facts-divisor} along
\(a \colon \widetilde{\PP}\mathcal{E} \to \PP\mathcal{E}'\) gives a
canonical map
\[
a_*(\tilde{q}) \colon
\sO_{\PP\mathcal{E}'} \to
\mathcal{G}^\vee \otimes \sO_{\pi'}(1) \otimes \pi'^*\mathcal{L}.
\]
Since \(\tilde{q}\) is the family of linear forms along the projective bundle
\(\PP\mathcal{G} \to \PP\mathcal{E}'\) corresponding to linear sections of
\(Q\) containing \(\PP\mathcal{F}\), the vanishing locus of \(a_*(\tilde{q})\)
is supported on \(Q'\). The following explicitly verifies that they agree
scheme-theoretically:

\begin{Lemma}\label{hyperbolic-reduction-equations}
\(Q'\) is the vanishing locus of \(a_*(\tilde{q})\) in \(\PP\mathcal{E}'\).
\end{Lemma}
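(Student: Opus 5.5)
The plan is to compute everything locally on \(S\) and compare ideals directly. Both \(Q'\) and the vanishing locus of \(a_*(\tilde{q})\) are closed subschemes of \(\PP\mathcal{E}'\) whose formation commutes with restriction to opens of \(S\), so we may assume \(S = \Spec R\) is affine and \(\mathcal{L} = \sO_S\). Regularity of \(\mathcal{F}\) lets us choose a splitting adapted to \(q\). Namely, since \(b_q\rvert_{\mathcal{F}} \colon \mathcal{E} \to \mathcal{F}^\vee\) is surjective, after shrinking \(S\) we can pick \(\mathcal{F}' \subseteq \mathcal{E}\) of rank \(r\) with \(b_q\) inducing a perfect pairing \(\mathcal{F} \times \mathcal{F}' \to \sO_S\). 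Then \(\mathcal{E} = \mathcal{F} \oplus \mathcal{F}' \oplus \mathcal{E}_0\) with \(\mathcal{E}_0 \coloneqq (\mathcal{F} \oplus \mathcal{F}')^\perp\), and \(\mathcal{E}_0 \to \mathcal{F}^\perp/\mathcal{F}\) is an isomorphism.

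In coordinates \(x\) on \(\mathcal{F}\), \(y\) on \(\mathcal{F}'\), and \(z\) on \(\mathcal{E}_0\), the form becomes
\[
q(x,y,z) = \sum\nolimits_i x_iy_i + q'(z) + \tilde{q}_0(y),
\]
where \(q'\) is the reduced form and \(\tilde{q}_0\) is some quadratic form in \(y\). We may first modify \(\mathcal{F}'\) by a map \(\mathcal{F}' \to \mathcal{F}\) to kill the \(y\)-only terms; alternatively we can keep them, since they will not matter below. Because \(\mathcal{E}' = \mathcal{E}/\mathcal{F} \cong \mathcal{F}' \oplus \mathcal{E}_0\), the homogeneous coordinates on \(\PP\mathcal{E}'\) are \((y,z)\). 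The subbundle \(\PP(\mathcal{F}^\perp/\mathcal{F})\) is \(\{y = 0\}\), and \(Q'\) is cut out there by \(q'(z)\). So the ideal of \(Q'\) is \((y_1,\dots,y_r, q'(z))\).

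The main step is to identify the map \(a_*(\tilde{q})\) explicitly. A point of \(\PP\mathcal{G}\) over \([y:z]\) is a vector \(x + \lambda(y,z)\), where \(\lambda\) is the coordinate on \(\sO_{\pi'}(-1)\). Substituting into \(q\) gives
\[
\lambda\Big(\sum\nolimits_i x_iy_i + \lambda\big(q'(z) + \tilde{q}_0(y)\big)\Big).
\]
The factor \(\lambda\) is the equation of the exceptional divisor \(E = \{\lambda = 0\}\) by \parref{lemma-projection-facts}\ref{lemma-projection-facts-exceptional}. Hence the strict transform has equation
\[
\tilde{q} = \sum\nolimits_i x_iy_i + \lambda\big(q'(z) + \tilde{q}_0(y)\big),
\]
which is linear in the fibre coordinates \((x,\lambda)\) of \(\mathcal{G}\), consistent with \parref{lemma-projection-facts}\ref{lemma-projection-facts-divisor}. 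Pushing forward along \(a\), the section \(a_*(\tilde{q})\) of \(\mathcal{G}^\vee(1)\) has components given by the coefficients of the fibre coordinates: \(y_1,\dots,y_r\) and \(q'(z) + \tilde{q}_0(y)\). The ideal these generate equals \((y_1,\dots,y_r,q'(z))\), since \(\tilde{q}_0(y) \in (y)\). This is exactly the ideal of \(Q'\).

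The obstacle I expect is bookkeeping rather than mathematics. We must check that the identification of \(\tilde{q}\) as a section of \(\sO_a(1) \otimes a^*\sO_{\pi'}(1)\) matches the coefficient extraction above, including the twist \(\sO_{\widetilde\PP\mathcal{E}}(E)\) from \parref{lemma-projection-facts}\ref{lemma-projection-facts-pic}. We must also check that the description of the zero scheme of a section of a vector bundle as the ideal generated by its components in a local frame is compatible with the local splitting \(\mathcal{G} \cong \pi'^*\mathcal{F} \oplus \sO_{\pi'}(-1)\). I would handle this by working on a standard affine chart of \(\PP\mathcal{E}'\) where \(\sO_{\pi'}(-1)\) is trivialized, so that all identifications become the explicit polynomial ones above. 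Finally, independence of the choice of \(\mathcal{F}'\) is automatic, since both sides are intrinsically defined closed subschemes and the comparison is local.
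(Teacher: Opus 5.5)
Your proposal is correct and follows the paper's proof. You work locally on \(S\) with coordinates adapted to a hyperbolic splitting off \(\mathcal{F}\), write \(\tilde{q}\) as a linear form in the fibre coordinates of \(\mathcal{G} \cong \pi'^*\mathcal{F} \oplus \sO_{\pi'}(-1)\), and match its coefficients \((y_1,\ldots,y_r,q')\) with the equations of \(Q' \subset \PP(\mathcal{F}^\perp/\mathcal{F}) = \mathrm{V}(y_1,\ldots,y_r)\). The one difference is cosmetic: the paper cites Baeza's splitting lemma to get \(q = \sum x_iy_i + q'\) exactly, whereas you build the splitting directly from regularity and note that the leftover \(\tilde{q}_0(y)\) lies in \((y_1,\ldots,y_r)\), so it does not change the ideal.
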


\begin{proof}
Having defined \(Q'\) and \(a_*(\tilde{q})\) globally, the question is local on
\(S\). Passing to a Zariski open, we may thus assume that \(\mathcal{L}\),
\(\mathcal{E}\), and \(\mathcal{F}\) are free \(\sO_S\)-modules of ranks \(1\),
\(n+1\), and \(r\), and that there are projective coordinates
\[
\PP\mathcal{E} \cong
\set{(x_0:\cdots:x_{r-1}:y_0:\cdots:y_{n-r}) \in \PP^n_S}
\]
such that \(\PP\mathcal{F} \cong \mathrm{V}(y_0,\ldots,y_{n-r})\) and
\(Q \cong \mathrm{V}(\sum\nolimits_{i = 0}^{r-1} x_iy_i + q')\) where \(q'\)
is a quadratic form in the remaining \(n-2r+1\) variables (by the ``splitting off a hyperbolic space lemma;'' see \cite[Proof of Theorem 3.6]{Baeza1978} which applies also when $q$ is degenerate but the isotropic subbundle is regular). In this way,
\(\PP\mathcal{E}'\) may be identified with the \(\PP^{n-r}_S\) on the
\(y_i\)-coordinates. Consider now the \(\PP^r\)-bundle
 The coordinates provide a splitting
\[
\mathcal{G} \cong \pi'^*\mathcal{F} \oplus \sO_{\pi'}(-1)
\]
and local fibre coordinates \((t_0:\cdots:t_r)\) for the associated
\(\PP^r\)-bundle \(a \colon \PP\mathcal{G} \to \PP\mathcal{E}'\) such that
\(\tilde{q}\) from
\parref{lemma-projection-facts}\ref{lemma-projection-facts-divisor} is
given above a point \(y = (y_0:\cdots:y_{n-r})\) of \(\PP\mathcal{E}'\) by
\[
\tilde{q}_y = y_0t_0 + \cdots + y_{r-1} t_{r-1} + q'(y_r,\ldots,y_{n-r}) t_r.
\]
Thus \(a_*(\tilde{q})\) is the section of
\(\mathcal{G}^\vee \otimes \sO_{\pi'}(1) \cong \sO_{\pi'}(1)^{\oplus r} \oplus \sO_{\pi'}(2)\)
with components \((y_0,\ldots,y_{r-1},q')\). Since the orthogonal of
\(\mathcal{F}\) in \(\mathcal{E}\) corresponds to the linear subspace
\(\mathrm{V}(y_0,\ldots,y_{r-1})\), the result follows upon comparing with the
definition of \(Q'\) from \parref{hyperbolic-reduction-setting}.
\end{proof}

This has a crucial consequence in the special case when the regular subbundle
\(\mathcal{F}\) has rank \(r = 1\). The discussion of
\parref{hyperbolic-reduction-setting} means that the morphism
\(\widetilde{Q} \to \PP\bar{\mathcal{E}}\) is an isomorphism away from \(Q'\),
and the inverse image of \(Q'\) has codimension \(1\). In fact,
\(\widetilde{Q}\) is often the blowup of \(\PP\bar{\mathcal{E}}\) along \(Q'\).
We give a proof below, though compare with \cite[Remark 2.6]{KS:Quadrics}. It
may also be interesting to note that something of this nature holds when
\(\mathcal{F}\) is not necessarily regular: see \cite[Lemma 4.3]{CPZ}.

\begin{Lemma}
\label{lemma-blowup-relation}
Assume that \(\mathcal{F}\) has rank \(1\). If \(Q'\) is an effective Cartier
divisor in \(\PP(\mathcal{F}^\perp/\mathcal{F})\), then
\(a \colon \widetilde{Q} \to \PP\mathcal{E}'\) is isomorphic to the blowup of
\(\PP\mathcal{E}'\) along \(Q'\) and its exceptional divisor \(\Lambda\)
satisfies
\[
\sO_{\widetilde{Q}}(\Lambda) \cong
\sO_a(1) \otimes \sO_{\widetilde{Q}}(-2E) \otimes \widetilde\pi^*(\mathcal{F}^\vee \otimes \mathcal{L}).
\]
\end{Lemma}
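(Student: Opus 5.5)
The plan is to realize \(\widetilde{Q}\) directly as the standard model of a blowup along a codimension-\(2\) regular immersion, cut out by a section of a rank-\(2\) bundle. Since \(r = 1\), \parref{lemma-projection-facts}\ref{lemma-projection-facts-bundle} identifies \(a \colon \widetilde{\PP}\mathcal{E} \to \PP\mathcal{E}'\) with the \(\PP^1\)-bundle \(\PP\mathcal{G}\), where \(\mathcal{G}\) has rank \(2\). By \parref{lemma-projection-facts}\ref{lemma-projection-facts-divisor}, \(\widetilde{Q} \subset \PP\mathcal{G}\) is the vanishing locus of \(\tilde{q}\), which is fibrewise a linear form. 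So \(\widetilde{Q}\) is cut out by the composite
\[
\sO_a(-1) \to a^*\mathcal{G} \xrightarrow{a^*a_*(\tilde{q})^\vee} a^*(\sO_{\pi'}(1) \otimes \pi'^*\mathcal{L}).
\]
Put \(\mathcal{V} \coloneqq \mathcal{G} \otimes \sO_{\pi'}(-1) \otimes \pi'^*\mathcal{L}^\vee\), and write \(s \colon \mathcal{V} \to \sO_{\PP\mathcal{E}'}\) for the dual of \(a_*(\tilde{q})\). Then \(\widetilde{Q}\) is the closed subscheme of \(\PP\mathcal{V} = \PP\mathcal{G}\) on which the tautological line is killed by \(s\). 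By \parref{hyperbolic-reduction-equations}, the image of \(s\) is the ideal \(\mathcal{I}\) of \(Q'\subset \PP\mathcal{E}'\).

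The first step is to show that \(Q' \subset \PP\mathcal{E}'\) is a regular (Koszul) immersion of codimension \(2\) cut out by \(s\). This is local, so work in the coordinates from the proof of \parref{hyperbolic-reduction-equations}, where \(s = (y_0, q')\). The function \(y_0\) cuts out the hyperplane \(\PP(\mathcal{F}^\perp/\mathcal{F})\), which is Cartier in \(\PP\mathcal{E}'\). By hypothesis \(q'\) is a nonzerodivisor on that hyperplane, so \((y_0, q')\) is a regular sequence locally on \(\PP\mathcal{E}'\). It follows that \(\mathcal{V} \to \mathcal{I}\) is surjective and that its Koszul complex \(0 \to \det\mathcal{V} \to \mathcal{V} \to \mathcal{I} \to 0\) is exact.

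The second step is the algebraic identification. For an ideal generated by a regular sequence, the Rees algebra equals the symmetric algebra \(\Sym\mathcal{I}\). Moreover, \(\Sym\mathcal{I} = \Sym\mathcal{V}/(\text{image of } \det\mathcal{V})\), the relation being the Koszul one \(y_0 \cdot e_1 - q' \cdot e_0\). Hence
\[
\mathrm{Bl}_{Q'}\PP\mathcal{E}' = \Proj \Sym\mathcal{I} \subset \Proj\Sym\mathcal{V} = \PP\mathcal{G}
\]
is the divisor cut out by the single equation dual to \(s\), fibrewise a linear form. Comparing with the local formula \(\tilde{q}_y = y_0 t_0 + q'(y) t_1\) from the proof of \parref{hyperbolic-reduction-equations} shows that this equation is \(\tilde{q}\). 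Both subschemes are globally defined and agree locally, so \(\widetilde{Q} \cong \mathrm{Bl}_{Q'}\PP\mathcal{E}'\) over \(\PP\mathcal{E}'\). I also need to match the projective-bundle convention: lines in \(\mathcal{G}\) correspond to quotients of \(\mathcal{V}^\vee\) up to the twist. This is the place where a dualization error would be easy to make, so I will check it against the local formula.

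The last step computes the exceptional divisor, and this bookkeeping is the main obstacle. On the blowup, \(\mathcal{I}\cdot\sO_{\widetilde{Q}} = \sO(-\Lambda)\) is the restriction of the tautological quotient \(\sO_{\PP\mathcal{V}}(1)\) of \(\mathcal{V}\). In the conventions of the paper this is \(\sO_a(1)\) twisted by \(a^*(\sO_{\pi'}(1) \otimes \pi'^*\mathcal{L})^{\pm1}\), because lines versus quotients swap a dual. I then use \parref{lemma-projection-facts}\ref{lemma-projection-facts-pic} to trade \(a^*\sO_{\pi'}(1)\) for \(\sO_a(1) \otimes \sO(-E)\). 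The remaining twist by \(\mathcal{F}\) comes from \parref{lemma-projection-facts}\ref{lemma-projection-facts-exceptional}: \(E = \PP(\pi'^*\mathcal{F}) \subset \PP\mathcal{G}\) is the zero locus of \(\sO_a(-1) \to \mathcal{G} \to \sO_{\pi'}(-1)\), which gives \(\sO(E) \cong \sO_a(1) \otimes a^*\sO_{\pi'}(-1)\) up to a twist by \(\pi'^*\mathcal{F}\). Combining these expresses \(\sO_{\widetilde{Q}}(\Lambda)\) as \(\sO_a(1) \otimes \sO(-2E) \otimes \widetilde{\pi}^*(\mathcal{F}^\vee \otimes \mathcal{L})\). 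I will fix every sign and dual by checking against the local coordinates, where \(\Lambda = \mathrm{V}(t_0)\)-type loci and \(E = \mathrm{V}(t_1)\) can be written down explicitly.
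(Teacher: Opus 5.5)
Your route is the paper's: \((y_0,q')\) is a regular sequence, the Koszul relation presents \(\mathcal{I}\), the Rees algebra is \(\Sym\) modulo that relation, and \(\sO(-\Lambda)\) is the \(\sO(1)\) of the Rees \(\Proj\). The isomorphism \(\widetilde{Q} \cong \mathrm{Bl}_{Q'}\PP\mathcal{E}'\) holds, but the line-bundle computation has a genuine gap, exactly at the dualization you flagged.

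Since \(\mathcal{I}\) is a quotient of \(\mathcal{V}\), the blowup lies in \(\Proj\Sym\mathcal{V}\). This is the bundle of rank-one quotients of \(\mathcal{V}\), which in the paper's convention is \(\PP(\mathcal{V}^\vee)\), not \(\PP\mathcal{V} = \PP\mathcal{G}\). In rank two the two are matched by \(\mathcal{V} \to \mathcal{V}/\mathcal{K} \leftrightarrow \mathcal{K}\). A quotient kills the Koszul element \(y_0e_1 - q'e_0\) exactly when \(s(\mathcal{K}) = 0\), so the blowup does correspond to \(\widetilde{Q}\). Under this matching, however, the tautological quotient is \(\mathcal{V}/\mathcal{K} \cong \det\mathcal{V} \otimes \mathcal{K}^\vee\), so
\[
\sO_{\widetilde{Q}}(-\Lambda) \cong a^*\det\mathcal{V} \otimes \sO_a(1) \otimes a^*(\sO_{\pi'}(1) \otimes \pi'^*\mathcal{L}),
\qquad
\det\mathcal{V} \cong \det\mathcal{G} \otimes \sO_{\pi'}(-2) \otimes \pi'^*\mathcal{L}^{\otimes -2}.
\]
The extension defining \(\mathcal{G}\) gives \(\det\mathcal{G} \cong \pi'^*\mathcal{F} \otimes \sO_{\pi'}(-1)\). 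Hence \(\sO(-\Lambda) \cong \sO_a(1) \otimes a^*(\sO_{\pi'}(-2) \otimes \pi'^*(\mathcal{F} \otimes \mathcal{L}^\vee))\). Now \parref{lemma-projection-facts}\ref{lemma-projection-facts-pic}, in the form \(a^*\sO_{\pi'}(1) \cong \sO_a(1) \otimes \sO_{\widetilde{\PP}\mathcal{E}}(-E)\), gives exactly the stated formula.

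Your intermediate claim, that \(\sO(-\Lambda)\) is \(\sO_a(1)\) twisted by \(a^*(\sO_{\pi'}(1) \otimes \pi'^*\mathcal{L})^{\pm 1}\), drops \(\det\mathcal{V}\) and is false for either sign. Combined with \ref{lemma-projection-facts-pic} it gives \(\sO(\Lambda) \cong \sO(-E) \otimes \widetilde\pi^*\mathcal{L}\) or \(\sO_a(-2) \otimes \sO(E) \otimes \widetilde\pi^*\mathcal{L}^\vee\).

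Item \ref{lemma-projection-facts-exceptional} cannot supply the missing \(\mathcal{F}\). The divisor \(E\) is the zero scheme of a section of \(\sO_a(1) \otimes a^*\sO_{\pi'}(-1)\), so \(\sO(E) \cong \sO_a(1) \otimes a^*\sO_{\pi'}(-1)\) with no \(\mathcal{F}\)-twist, which is just \ref{lemma-projection-facts-pic} again. The \(\mathcal{F}\) enters only through \(\det\mathcal{G}\). A check in local coordinates on \(S\) cannot repair this either, since it is blind to twists by line bundles pulled back from \(S\) such as \(\mathcal{F}^\vee \otimes \mathcal{L}\).

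The paper avoids the dualization by twisting the Koszul sequence by its determinant. Set \(\mathcal{W} \coloneqq \mathcal{V}^\vee = \mathcal{G}^\vee \otimes \sO_{\pi'}(1) \otimes \pi'^*\mathcal{L}\) and \(\Delta \coloneqq \det\mathcal{W}\). Rank-two self-duality \(\mathcal{W} \cong \mathcal{W}^\vee \otimes \Delta\) gives \(0 \to \sO \to \mathcal{W} \to \Delta \otimes \mathcal{I} \to 0\). So the Rees algebra of \(\Delta \otimes \mathcal{I}\) is a quotient of \(\Sym\mathcal{W}\), whose \(\Proj\) is literally \(\PP\mathcal{G}\) with \(\sO(1) = \sO_a(1) \otimes a^*(\sO_{\pi'}(1) \otimes \pi'^*\mathcal{L})\). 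On the Rees side \(\sO(1) = a^*\Delta \otimes \sO(-\Lambda)\). With \(\Delta \cong \sO_{\pi'}(3) \otimes \pi'^*(\mathcal{F}^\vee \otimes \mathcal{L}^{\otimes 2})\), this is the same relation as above.
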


\begin{proof}
Since \(Q'\) is an effective Cartier divisor in
\(\PP(\mathcal{F}^\perp/\mathcal{F})\), which itself is a hyperplane in
\(\PP\mathcal{E}'\), its defining equation \(a_*(\tilde{q})\) from
\parref{hyperbolic-reduction-equations} is a regular section of
\(\mathcal{G}^\vee \otimes \sO_{\pi'}(1) \otimes \pi'^*\mathcal{L}\). The
ideal sheaf \(\mathcal{I}\) of \(Q'\) in \(\PP\mathcal{E}'\) therefore
admits a Koszul resolution
\[
0 \longrightarrow
\sO_{\PP\mathcal{E}'} \stackrel{a_*(\tilde{q})}{\longrightarrow}
\mathcal{G}^\vee \otimes \sO_{\pi'}(1) \otimes \pi'^*\mathcal{L} \longrightarrow
\Delta \otimes \mathcal{I} \longrightarrow
0
\]
where
\(\Delta \coloneqq \det(\mathcal{G}^\vee \otimes \sO_{\pi'}(1) \otimes \pi'^*\mathcal{L})\).
The sequence induces a surjection from the symmetric algebra on the centre term
to the Rees algebra on the twisted ideal sheaf on the right; since line
bundle twists only change the relative hyperplane class when taking
\(\Proj\)-constructions, this embeds the blowup of \(\PP\mathcal{E}'\) along
\(Q'\) into the projective bundle \(\PP\mathcal{G}\). Furthermore, the sequence
shows that the blowup is cut out by the ideal generated in degree \(1\)
by \(a_*(q)\), and this is precisely \(\widetilde{Q}\) by \parref{lemma-projection-facts}\ref{lemma-projection-facts-divisor}.

To determine the line bundle on \(\widetilde{Q}\) associated with the
exceptional divisor \(\Lambda\), recall that this is the relative \(\sO(-1)\)
for the \(\operatorname{Proj}\)-construction: see, for example, \citeSP{02OS}.
The Koszul resolution above relates twists of relative \(\sO(1)\)'s, yielding
\[
\sO_a(1) \otimes a^*(\sO_{\pi'}(1) \otimes \pi'^*\mathcal{L})\rvert_{\widetilde{Q}}
\cong a^*\Delta\rvert_{\widetilde{Q}} \otimes \sO_{\widetilde{Q}}(-\Lambda).
\]
The diagram of \parref{quadrics-reduction-blowup} implies that
\(\Delta \cong \sO_{\pi'}(3) \otimes \pi'^*(\mathcal{F}^\vee \otimes \mathcal{L}^{\otimes 2})\).
Using additionally the fact that
\(a^*\sO_{\pi'}(1) \cong \sO_a(1) \otimes \sO_{\widetilde{\PP}\mathcal{E}}(E)\)
from \parref{lemma-projection-facts}\ref{lemma-projection-facts-exceptional}
and rearranging then gives the result.
\end{proof}

The following gives an algebraic reformulation of the hypothesis in
\parref{lemma-blowup-relation}. See, for example,
\cite[\href{https://stacks.math.columbia.edu/tag/0547}{0547} and \href{https://stacks.math.columbia.edu/tag/056L}{056L}]{stacks-project}
for the definition of weakly associated points.

\begin{Lemma}\label{lemma-cartier-divisor-criterion}
Assume that \(\mathcal{F}\) has rank \(1\). Then \(Q'\) is an effective Cartier
divisor in \(\PP(\mathcal{F}^\perp/\mathcal{F})\) if and only if \(S_{n-1}\)
does not contain any weakly associated points of \(S\).
\end{Lemma}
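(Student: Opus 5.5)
The question is local on \(S\), so I will work Zariski locally and use the explicit coordinates from the proof of \parref{hyperbolic-reduction-equations}. Take \(S = \Spec R\) affine, with \(\mathcal{L}\), \(\mathcal{E}\), and \(\mathcal{F}\) free. Choose coordinates \((x_0 : y_0 : \cdots : y_{n-1})\) with \(\PP\mathcal{F} = \mathrm{V}(y_0,\ldots,y_{n-1})\) and
\[
q = x_0y_0 + q'(y_1,\ldots,y_{n-1}).
\]
Then \(\PP(\mathcal{F}^\perp/\mathcal{F}) \cong \PP^{n-2}_R\) has coordinates \(y_1,\ldots,y_{n-1}\), and \(Q' = \mathrm{V}(q')\) inside it. So \(Q'\) is an effective Cartier divisor exactly when the quadratic form \(q'\), viewed as a section of \(\sO(2)\), is a nonzerodivisor on \(\PP^{n-2}_R\). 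Concretely, multiplication by \(q'\) must be injective on \(R[y_1,\ldots,y_{n-1}]\), since this can be checked on the standard affine charts and is equivalent to injectivity on the graded polynomial ring.

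The first step is to translate this into a condition on coefficients. The key tool is McCoy's theorem, in the form of the Stacks Project characterization of zerodivisors in polynomial rings over an arbitrary ring (Dedekind--Mertens/McCoy). It says that \(q'\) is a zerodivisor in \(R[y]\) if and only if some nonzero \(r \in R\) annihilates every coefficient of \(q'\). Equivalently, \(q'\) is a nonzerodivisor if and only if the ideal \(I \subseteq R\) generated by its coefficients has \(\operatorname{Ann}_R(I) = 0\). For \(I\) finitely generated, the Stacks Project (\hrefSP{0547}/\hrefSP{056L} circle) gives that \(\operatorname{Ann}_R(I) = 0\) if and only if \(\mathrm{V}(I)\) contains no weakly associated point of \(R\).

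The second step is to identify \(\mathrm{V}(I)\) with \(S_{n-1}\), at least on underlying sets, which is all that matters for weakly associated points. A point \(s\) lies in \(\mathrm{V}(I)\) exactly when \(q'_s = 0\). In that case \(Q_s = \mathrm{V}(x_0y_0)\) is a union of two hyperplanes, so its corank is \(n-1\). Conversely, if \(q'_s \neq 0\), then \(Q_s\) has corank equal to \(\corank Q'_s \leq n-2\). This follows by counting the singular locus: since \(\PP\mathcal{F}\) is regular, the singular locus of \(Q_s\) is the cone over \(\Sing Q'_s\) from the appropriate point, or can be computed directly via partial derivatives of \(x_0y_0 + q'\). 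Either way, \(\dim \Sing Q_s = \dim \Sing Q'_s\), and that is at most \(n-3\) when \(q'_s \neq 0\) because the ambient space is \(\PP^{n-2}\).

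This corank computation is the step I would handle most carefully, especially in characteristic \(2\). The plan is to compute \(\Sing Q_s\) in coordinates and see that it lies in \(\mathrm{V}(x_0, y_0)\) and equals the singular locus of \(\mathrm{V}(q')\) there. Here \(\mathrm{V}(q'=0)\) is the corank-\((n-1)\) case, since \(\mathrm{V}(x_0, y_0)\) is then the whole \(\PP^{n-2}\). So \(S_{n-1} = \mathrm{V}(I)\) set-theoretically, and the lemma follows by combining the two steps.
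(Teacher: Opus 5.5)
Your proposal is correct and follows the paper's proof essentially verbatim. It reduces to the local form $q = x_0y_0 + q'$, uses McCoy's theorem to turn the failure of the Cartier condition into a nonzero scalar killing the coefficients of $q'$, and then applies the Stacks Project lemma linking this to a weakly associated point in the vanishing locus of the finitely generated coefficient ideal; you also correctly identify that locus with $S_{n-1}$ set-theoretically, a step the paper leaves implicit.

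One small slip: $\Sing Q_s$ is not a cone over $\Sing Q'_s$ with vertex $\PP\mathcal{F}$, since that point lies in the smooth locus. It is instead a copy of $\Sing Q'_s$ inside $\mathrm{V}(x_0,y_0)$, exactly as your partial-derivative computation shows, so $\dim\Sing Q_s = \dim\Sing Q'_s$ still holds, including in characteristic $2$.
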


\begin{proof}
This is local on \(S\), so pass to a Zariski open and adopt notation as in the
proof of \parref{hyperbolic-reduction-equations} with \(r = 1\). Then
\(Q\) is cut out in \(\PP^n_S\) by a quadratic form \(q = xy + q'\) and \(Q'\)
is cut out by \(q'\) in a complementary \(\PP^{n-2}_S\). That \(Q'\) is an
effective Cartier divisor means that \(q'\) is not a zero divisor in
\(\sO_S[y_1,\ldots,y_{n-1}]\). Either by a minimal multidegree
argument or else by \cite{McCoy}, the polynomial \(q'\) is a zero divisor if
and only if its coefficients are simultaneously killed by a nonzero scalar;
this means that their vanishing locus \(S_{n-1}\) contains a weakly associated
point by \citeSP{05C3}.
\end{proof}

\section{Clifford algebras and spinor sheaves}\label{section-clifford-and-spinor}
This section develops a theory of spinor sheaves, following
\cite{Addington:Spinors, Xie:Quadrics}. The new results here are
\parref{cliff-subbundle-spinors} and \parref{cliff-subbundle-modify}, which
show how spinor bundles are related through special hyperplane sections; and
\parref{cliff-cone-spinors}, which relates spinor sheaves on quadric cones with
spinors from its base. Throughout, unless otherwise stated,
\(\rho \colon Q \to S\) is a family of quadrics associated with a quadratic
form \(q \colon \mathcal{E} \to \mathcal{L}\), and we additionally assume that
\(Q\) is an effective Cartier divisor in \(\PP\mathcal{E}\): see also
\parref{lemma-cartier-divisor-criterion}.

\subsectiondash{Clifford algebras}\label{cliff-algebras}
Following \cite{BK:Clifford}, the sheaf of \emph{generalized Clifford algebras}
associated with the quadratic form \(q \colon \mathcal{E} \to \mathcal{L}\) is
the sheaf of \(\sO_S\)-algebras with presentation
\[
\Cl(\mathcal{E},q) \coloneqq
\Big(\bigoplus\nolimits_{i = 0}^\infty \mathcal{E}^{\otimes i}\Big) \otimes
\Big(\bigoplus\nolimits_{k \in \mathbf{Z}} \mathcal{L}^{\otimes k}\Big)/
\langle (v \otimes v) \otimes 1 - 1 \otimes 1 \otimes q(v) : v \in \mathcal{E}\rangle.
\]
Placing \(\mathcal{E}^{\otimes i}\otimes \mathcal{L}^{\otimes k}\) in degree \(i + 2k\), the ideal of relations is
generated by homogeneous elements of degree \(2\), whereupon
\(\Cl(\mathcal{E},q)\) becomes a sheaf of graded \(\sO_S\)-algebras, with
graded decomposition
\[
\Cl(\mathcal{E},q) =
\bigoplus\nolimits_{d \in \mathbf{Z}}\Cl_d(\mathcal{E},q).
\]
The sheaf \(\Cl_0(\mathcal{E},q)\) is the sheaf of \emph{even Clifford
algebras}; \(\Cl_1(\mathcal{E},q)\) is the \emph{odd Clifford bimodule}; and
there are isomorphisms
\[
\Cl_d(\mathcal{E},q) \cong
\begin{dcases*}
\Cl_0(\mathcal{E},q) \otimes \mathcal{L}^{\otimes k} & if \(d = 2k\), and \\
\Cl_1(\mathcal{E},q) \otimes \mathcal{L}^{\otimes k} & if \(d = 2k+1\).
\end{dcases*}
\]
Writing the rank \(n+1\) of \(\mathcal{E}\) as \(2\ell\) or \(2\ell+1\), the
tensor algebra induces \(\sO_S\)-module filtrations
\begin{align*}
\sO_S
& = \operatorname{Fil}_0
\subset \operatorname{Fil}_2
\subset \cdots
\subset \operatorname{Fil}_{2\ell\phantom{+1}}
= \Cl_0(\mathcal{E},q)\;\text{and}\\
\mathcal{E}
& = \operatorname{Fil}_1
\subset \operatorname{Fil}_3
\subset \cdots
\subset \operatorname{Fil}_{2\ell+1}
= \Cl_1(\mathcal{E},q),
\end{align*}
with associated graded pieces
\(\operatorname{Fil}_i/\operatorname{Fil}_{i-2} \cong \wedge^i \mathcal{E} \otimes (\mathcal{L}^\vee)^{\otimes \lfloor i/2\rfloor}\).
In particular, for each integer \(d\), \(\Cl_d(\mathcal{E},q)\) is a locally
free \(\sO_S\)-module of rank \(2^n\).

\subsectiondash{Clifford ideals}\label{cliff-ideals}
Functoriality of the Clifford algebra construction, verified in
\cite[Lemma 3.3]{BK:Clifford} for instance, provides, for each subbundle
\(\mathcal{E}' \subseteq \mathcal{E}\), an inclusion of graded
\(\sO_S\)-algebras
\[
\Cl(\mathcal{E}',q') \subseteq \Cl(\mathcal{E},q),
\]
where \(q' \colon \mathcal{E}' \to \mathcal{L}\) is the restriction of \(q\).
In particular, if \(\mathcal{W} \subset \mathcal{E}\) is an isotropic subbundle
of rank \(r\), this provides an inclusion of graded \(\sO_S\)-algebras
\[
\Big(\bigoplus\nolimits_{i = 0}^r \wedge^i \mathcal{W}\Big) \otimes
\Big(\bigoplus\nolimits_{k \in \mathbf{Z}} \mathcal{L}^{\otimes k}\Big) =
\Cl(\mathcal{W},0) \subset
\Cl(\mathcal{E},q).
\]
The \emph{Clifford ideal} \(\mathcal{I}^\mathcal{W} = \mathcal{I}\) associated
with \(\mathcal{W}\) is the graded left ideal in \(\Cl(\mathcal{E},q)\)
generated by the degree \(r\) summand \(\det\mathcal{W}\). Its \(d\)-th
graded piece \(\mathcal{I}_d\) is a locally free \(\sO_S\)-module of rank
\(2^{n-r}\), as can be seen by considering the tensor algebra filtration.
Since the annihilator of \(\det\mathcal{W}\) in \(\Cl(\mathcal{E},q)\)
is the left ideal generated by \(\mathcal{W}\), writing \([k]\) for
shift-by-\(k\) in grading, there is a presentation
\[
\Cl(\mathcal{E},q)[-r-1] \otimes \mathcal{W} \otimes \det\mathcal{W} \to
\Cl(\mathcal{E},q)[-r] \otimes \det\mathcal{W} \to
\mathcal{I} \to
0
\]
of graded \(\Cl(\mathcal{E},q)\)-modules: see also
\cite[Lemma 2.5]{Xie:Quadrics}.

\subsectiondash{Spinor sheaves}\label{cliff-spinors}
Let \(\pi \colon \PP\mathcal{E} \to S\) be the projective bundle associated
with \(\mathcal{E}\) and write \(\iota \colon Q \to \PP\mathcal{E}\) for the
inclusion morphism. For each \(d \in \mathbf{Z}\), the
\emph{\(d\)-th spinor sheaf} \(\mathcal{S}_d^{\mathcal{W}} = \mathcal{S}_d\)
associated with \(\mathcal{W}\) is the \(\sO_Q\)-module characterized as by
the short exact sequence
\[
0 \to
\sO_\pi(-1) \otimes \pi^*\mathcal{I}_{d-1} \xrightarrow{\phi_d}
\pi^*\mathcal{I}_d \to
\iota_*\mathcal{S}_d \to
0,
\]
where \(\phi_d\) is the map induced by Clifford multiplication upon viewing
\(\sO_\pi(-1)\) as the tautological subbundle of
\(\pi^*\mathcal{E} \subset \Cl_1(\mathcal{E},q)\). The point here is that
\(\phi_d\) is part of a matrix factorization of \(q\), that is,
\(\phi_d \circ \phi_{d-1} = q\), where we abuse notation and write
\(\phi_{d-1}\) for its twist by \(\sO_\pi(-1)\). This implies that the
cokernel of \(\phi_d\) is supported on \(Q\), and gives injectivity of
\(\phi_d\) under our assumption that \(Q\) is an effective Cartier divisor in
\(\PP\mathcal{E}\). Moreover, \cite[Proposition 2.1]{Addington:Spinors} shows
that if \(\corank(\mathcal{W} \subset \mathcal{E}) = n + 1 - r \geq 2\), then
the restriction of \(\mathcal{S}_d\) to
\begin{itemize}
\item
\(Q \setminus (\PP\mathcal{W} \cap \Sing\rho)\) is locally free of rank
\(2^{n-r-1}\); and
\item
\(\PP\mathcal{W} \cap \Sing\rho\) coincides with \(\rho^*\mathcal{I}_d\), and
so is locally free of rank \(2^{n-r}\).
\end{itemize}
The spinor sheaves also enjoy the following properties relative to \(S\):

\begin{Lemma}
\label{lemma-spinorsheavesrelperf}
\(\mathcal{S}_d\) is \(S\)-perfect; it is furthermore \(S\)-flat if
\(\rho \colon Q \to S\) is flat.
\end{Lemma}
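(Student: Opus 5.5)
The plan is to read both properties directly off the defining short exact sequence
\[
0 \to
\sO_\pi(-1) \otimes \pi^*\mathcal{I}_{d-1} \xrightarrow{\phi_d}
\pi^*\mathcal{I}_d \to
\iota_*\mathcal{S}_d \to
0
\]
on \(\PP\mathcal{E}\). The two left terms are finite locally free \(\sO_{\PP\mathcal{E}}\)-modules, since \(\mathcal{I}_{d-1}\) and \(\mathcal{I}_d\) are locally free of rank \(2^{n-r}\) by \parref{cliff-ideals}. Hence \(\iota_*\mathcal{S}_d\) is a perfect object of \(\Dqc(\PP\mathcal{E})\) with tor-amplitude in \([-1,0]\).

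For \(S\)-perfectness, first use \parref{lemma-relperfonsmooth}: because \(\pi \colon \PP\mathcal{E} \to S\) is smooth, \(\iota_*\mathcal{S}_d\) is perfect relative to \(S\). To transport this to \(Q\), I would argue locally. By \parref{lemma-relperfislocal}, relative perfectness is Zariski local on \(S\) and on the source, so we may take \(S = \Spec R\) affine and restrict to an affine open \(\mathbf{A}^n_S \subset \PP^n_S = \PP\mathcal{E}\), which meets \(Q\) in a closed subscheme \(X = Q \cap \mathbf{A}^n_S\). By \parref{lemma-relperfonaffine}, applied to the closed immersion \(X \to \mathbf{A}^n_S\), \(\mathcal{S}_d\rvert_X\) is \(S\)-perfect exactly when its pushforward to \(\mathbf{A}^n_S\) is perfect. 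That pushforward is the restriction of \(\iota_*\mathcal{S}_d\), which is perfect by the previous paragraph. Since the standard affine opens cover \(Q\), this proves \(\mathcal{S}_d\) is \(S\)-perfect. The one point to check is that \parref{lemma-relperfonaffine} only needs \(X \to S\) of finite type with a closed embedding into affine space, and that holds here.

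For flatness when \(\rho\) is flat, I would use the fibrewise criterion for flatness, working locally on \(S\) with finitely presented data so that one can reduce to a Noetherian base. Fix \(s \in S\). The sequence restricted to the fibre \(\PP\mathcal{E}_s\) stays exact on the left: \(\phi_{d,s}\) is a matrix factorization of \(q_s\), and \(q_s \neq 0\) because \(\rho\) is flat, so \(q_s\) is primitive and \(Q_s\) is a Cartier divisor in \(\PP\mathcal{E}_s\). The matrix factorization identity \(\phi_{d+1} \circ \phi_d = q\) shows that any kernel of \(\phi_{d,s}\) would be killed by \(q_s\), a nonzerodivisor on the locally free source, so \(\phi_{d,s}\) is injective. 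This gives \(\mathcal{T}or_1^{\sO_S}(\iota_*\mathcal{S}_d, \kappa(s)) = 0\) for every \(s\). Because the left two terms are \(S\)-flat, the local criterion of flatness (Stacks 00MK/046Y in the finitely presented setting) then shows that \(\iota_*\mathcal{S}_d\), and hence \(\mathcal{S}_d\), is \(S\)-flat.

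The main obstacle is this flatness step, in particular handling a non-Noetherian base via standard limit arguments and justifying the injectivity of \(\phi_{d,s}\) on fibres.
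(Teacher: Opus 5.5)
Your proposal is correct and is essentially the paper's proof. \(S\)-perfectness comes from the two-term locally free resolution of \(\iota_*\mathcal{S}_d\) on \(\PP\mathcal{E}\): you make explicit, via \parref{lemma-relperfonaffine} on affine charts, the step the paper calls ``by definition'' (your appeal to \parref{lemma-relperfonsmooth} is harmless but unnecessary). Flatness comes from fibrewise injectivity of \(\phi_d\) together with the fibrewise criterion of Stacks Project 046Y. That criterion applies directly to finitely presented modules over an arbitrary base, so the Noetherian reduction and limit argument you flagged as the main obstacle are not needed.
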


\begin{proof}
By definition, $\mathcal{S}_d$ is $S$-perfect if and only if its pushforward to
$\mathbf{P}\mathcal{E}$ is perfect, which follows from the defining exact
sequence. If $\rho \colon Q \to S$ is flat, then
 $Q_s \subset \mathbf{P}\mathcal{E}_s$ is an effective Cartier divisor for each
\(s \in S\), so \(\phi_d\rvert_{\PP\mathcal{E}_s}\) is injective.
Then \citeSP{046Y} shows that $\iota_*\mathcal{S}_d$ is $S$-flat, at which
point \citeSP{0FLM} implies that \(\mathcal{S}_d\) itself is \(S\)-flat.
\end{proof}

\subsectiondash{}\label{cliff-spinors-duality}
Restricting the defining presentation of \(\iota_*\mathcal{S}_d\) to \(Q\) and
observing that \(\phi_i \circ \phi_{i-1}\) is a matrix factorization of \(q\)
provides two exact complexes of \(\sO_Q\)-modules
\begin{multline*}
\cdots \longrightarrow
\sO_\rho(-2) \otimes \rho^*\mathcal{I}_{d-2} \stackrel{\phi_{d-1}}{\longrightarrow}
\sO_\rho(-1) \otimes \rho^*\mathcal{I}_{d-1} \stackrel{\phi_d}{\longrightarrow}
\rho^*\mathcal{I}_d \longrightarrow
\mathcal{S}_d \longrightarrow 0, \;\text{and} \\
0 \longrightarrow
\mathcal{S}_d \longrightarrow
\sO_\rho(1) \otimes \rho^*\mathcal{I}_{d+1} \stackrel{\phi_{d+2}}{\longrightarrow}
\sO_\rho(2) \otimes \rho^*\mathcal{I}_{d+2} \stackrel{\phi_{d+3}}{\longrightarrow}
\cdots,
\end{multline*}
see \cite[\S4]{Addington:Spinors} and \cite[p.168]{Xie:Quadrics}. In
particular, this shows that
\[
\mathcal{S}_d \cong
\image(\phi_{d+1} \colon \rho^*\mathcal{I}_d \to \sO_\rho(1) \otimes \rho^*\mathcal{I}_{d+1}).
\]
By relating the dual of these complexes with the corresponding right Clifford
ideals associated with \(\mathcal{W}\), as is done in
\cite[Lemma 2.7 and Remark 2.9]{Xie:Quadrics}, this shows that the derived
dual of a spinor sheaf is simply a twist of another spinor sheaf; this is
summarized by the following duality relations, as in \cite[Proposition
4.1]{Addington:Spinors} and \cite[Corollary 2.11]{Xie:Quadrics}:
\[
\mathcal{S}_d^\vee \cong
\begin{dcases*}
\mathcal{S}_{r-d-1}
\otimes \sO_\rho(-1)
\otimes \rho^*(\det\mathcal{W}^\vee \otimes \det\mathcal{E}^\vee \otimes \mathcal{L}^{\otimes \ell})
& if \(\operatorname{rank}\mathcal{E} = 2\ell\), and \\
\mathcal{S}_{r-d\phantom{-1}}
\otimes \sO_\rho(-1)
\otimes \rho^*(\det\mathcal{W}^\vee \otimes \det\mathcal{E}^\vee \otimes \mathcal{L}^{\otimes \ell})
& if \(\operatorname{rank}\mathcal{E} = 2\ell+1\).
\end{dcases*}
\]
Finally, there is the degree shift relation
\(\mathcal{S}_d \otimes \rho^*\mathcal{L} \cong \mathcal{S}_{d+2}\).
\medskip

The defining presentation of \(\mathcal{S}_d\) makes it easy to compute its
derived pushforward along \(\rho \colon Q \to S\). For instance,
\(R\rho_*\mathcal{S}_d \cong \mathcal{I}_d\). For later use, some vanishing
pushforwards are as follows:

\begin{Lemma}\label{cliff-spinor-cohomology}
For any \(d \in \mathbf{Z}\) and each integer \(0 \leq i \leq n-2\),
\[
R\rho_*(\mathcal{S}_d \otimes \sO_\rho(-i-1)) =
R\rho_*R\sHom_{\sO_Q}(\sO_\rho(i), \mathcal{S}_d^\vee) = 0.
\]
\end{Lemma}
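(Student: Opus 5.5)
The plan is to push the defining presentation of \(\iota_*\mathcal{S}_d\) forward to \(S\) and use the standard cohomology of twists of \(\sO_\pi\) on the \(\PP^n\)-bundle \(\pi \colon \PP\mathcal{E} \to S\).

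\emph{Step 1: the first vanishing.} Twist the short exact sequence of \parref{cliff-spinors} by \(\sO_\pi(-i-1)\) to get
\[
0 \to
\sO_\pi(-i-2) \otimes \pi^*\mathcal{I}_{d-1} \to
\sO_\pi(-i-1) \otimes \pi^*\mathcal{I}_d \to
\iota_*(\mathcal{S}_d \otimes \sO_\rho(-i-1)) \to
0.
\]
Since \(\iota\) is a closed immersion, \(R\rho_* = R\pi_* \circ \iota_*\). By the projection formula, the left two terms push forward to \(R\pi_*\sO_\pi(-j) \otimes \mathcal{I}_\bullet\) with \(j = i+2\) and \(j = i+1\). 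For \(0 \leq i \leq n-2\) we have \(1 \leq j \leq n\). The relative cohomology of \(\sO_\pi(-j)\) on a \(\PP^n\)-bundle vanishes in all degrees for \(1 \leq j \leq n\); this holds over any base, since it can be checked Zariski locally on \(S\) for \(\PP^n_S\). Hence both terms vanish, and the triangle gives \(R\rho_*(\mathcal{S}_d \otimes \sO_\rho(-i-1)) = 0\).

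\emph{Step 2: rewrite the second expression.} Since \(\sO_\rho(i)\) is a line bundle,
\[
R\sHom_{\sO_Q}(\sO_\rho(i), \mathcal{S}_d^\vee) \cong \mathcal{S}_d^\vee \otimes \sO_\rho(-i).
\]
The duality relation of \parref{cliff-spinors-duality} gives \(\mathcal{S}_d^\vee \cong \mathcal{S}_{d'} \otimes \sO_\rho(-1) \otimes \rho^*\mathcal{M}\). Here \(d' = r-d-1\) or \(r-d\) depending on the parity of \(\operatorname{rank}\mathcal{E}\), and \(\mathcal{M}\) is a line bundle on \(S\). Therefore
\[
\mathcal{S}_d^\vee \otimes \sO_\rho(-i) \cong \mathcal{S}_{d'} \otimes \sO_\rho(-i-1) \otimes \rho^*\mathcal{M}.
\]
By the projection formula its \(R\rho_*\) is \(R\rho_*(\mathcal{S}_{d'} \otimes \sO_\rho(-i-1)) \otimes \mathcal{M}\). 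This vanishes by Step 1 applied to \(d'\), which is allowed because Step 1 holds for every \(d \in \mathbf{Z}\).

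\emph{Main point of care.} The only delicate input is the duality isomorphism for \(\mathcal{S}_d^\vee\). Here \(\mathcal{S}_d^\vee\) means the derived dual of a sheaf that need not be locally free near \(\PP\mathcal{W} \cap \Sing\rho\). The duality is taken as established in \parref{cliff-spinors-duality}, following Addington and Xie, under the running assumption that \(Q\) is an effective Cartier divisor. If that identification were not available, the fallback would be to compute \(\mathcal{S}_d^\vee\) directly. Grothendieck duality for the Cartier divisor \(\iota\) gives \(R\sHom(\iota_*\mathcal{S}_d, \sO_{\PP\mathcal{E}}) \cong \iota_*\mathcal{S}_d^\vee \otimes \sO_\pi(-2) \otimes \pi^*\mathcal{L}^\vee [-1]\). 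One then dualizes the two-term resolution and obtains a two-term complex of the same shape, \(\pi^*\mathcal{I}_d^\vee \to \sO_\pi(1) \otimes \pi^*\mathcal{I}_{d-1}^\vee\). Pushing forward after twisting by \(\sO_\pi(-i)\) again lands in the range of vanishing \(R\pi_*\sO_\pi(-j)\), \(1 \leq j \leq n\), after accounting for the \(\sO_\pi(-2)\) shift.
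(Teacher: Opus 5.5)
Your argument is correct and is essentially the paper's proof: the twisted defining presentation together with \(R\pi_*\sO_\pi(-j) = 0\) for \(1 \leq j \leq n\) gives the first vanishing, and the duality relation of \parref{cliff-spinors-duality} reduces the second vanishing to the first. One small slip in your optional fallback: since \(\iota^!\sO_{\PP\mathcal{E}} \cong \sO_\rho(2) \otimes \rho^*\mathcal{L}[-1]\), the twist there should be \(\sO_\pi(2) \otimes \pi^*\mathcal{L}\) rather than its inverse, and only with this corrected sign do the resulting terms \(\sO_\pi(-i-2)\) and \(\sO_\pi(-i-1)\) land in the vanishing range.
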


\begin{proof}
Vanishing for \(\mathcal{S}_d \otimes \sO_\rho(-i-1)\) follows from
the defining presentation of \(\mathcal{S}_d\) and the fact that
\(\pi \colon \PP\mathcal{E} \to S\) is a \(\PP^n\)-bundle. Together with the
duality relations from \parref{cliff-spinors-duality}, this implies the
vanishing of the
\(R\rho_*R\sHom_{\sO_Q}(\sO_\rho(i), \mathcal{S}_d^\vee)\).
\end{proof}

To illustrate the theory and for later use, the following two paragraphs
explicitly describe spinor sheaves when \(\mathcal{E}\) has small even rank.
In short: when \(\rank\mathcal{E} = 2\), spinors are structure sheaves
of points; whereas when \(\rank\mathcal{E} = 4\), spinors are essentially ideal
sheaves of lines.

\subsectiondash{Spinors in relative dimension \texorpdfstring{\(0\)}{0}}
\label{cliff-spinor-zero-dimensional}
Let \(\rho \colon Q \to S\) be a family of quadrics of relative dimension
\(0\), and consider a closed subscheme \(i \colon \PP\mathcal{W} \to Q\) given
by an isotropic line subbundle of \(\mathcal{E}\); this is the effective
Cartier divisor in \(\PP\mathcal{E}\) defined by a section
\(s_{\PP\mathcal{W}} \colon \sO_\pi \to \sO_\pi(1) \otimes \pi^*(\mathcal{E}/\mathcal{W})\).
The equation of \(Q\) in \(\PP\mathcal{E}\) factors as
\[
s_Q \colon
\sO_\pi(-2) \stackrel{s_{\PP\mathcal{W}}}{\longrightarrow}
\sO_\pi(-1) \otimes \pi^*(\mathcal{E}/\mathcal{W}) \stackrel{s_Z}{\longrightarrow}
\pi^*\mathcal{L}
\]
where \(s_Z\) is the equation of the \emph{residual subscheme}
\(j \colon Z \to Q\) to \(\PP\mathcal{W}\) in \(Q\): this is an effective
Cartier divisor of relative degree \(1\) over \(S\) away from the corank
\(2\) locus \(S_2\), and is otherwise the entire fibre. With this notation, the
\(d\)-th spinor sheaf associated with
\(\mathcal{W}\) is:
\[
\mathcal{S}_d \cong
\begin{dcases*}
\mathrlap{i_*}\phantom{j_*}\sO_{\PP\mathcal{W}}
\otimes \rho^*(\det\mathcal{E} \otimes \mathcal{L}^{\otimes k-1}) & if \(d = 2k\), and \\
j_*\sO_{\mathrlap{Z}\phantom{\PP\mathcal{W}}}
\otimes \rho^*(\mathcal{W} \otimes \mathcal{L}^{\otimes k}) & if \(d = 2k+1\).
\end{dcases*}
\]

\begin{proof}
Since \(\mathcal{W}\) is an isotropic line subbundle, the associated Clifford
ideal \(\mathcal{I}\) satisfies \(\mathcal{I}_1 = \mathcal{W}\) and
\(\mathcal{I}_2 \cong (\mathcal{E}/\mathcal{W}) \otimes \mathcal{W} \cong \det\mathcal{E}\).
Therefore the degree \(2\) spinor sheaf \(\mathcal{S}_2\) is the cokernel of
the map
\[
\phi_2 \colon
\sO_\pi(-1) \otimes \pi^*\mathcal{W} \to
\pi^*\det\mathcal{E}
\]
induced by Clifford multiplication. It is straightforward to see, via a
local computation for instance, that this is the map \(s_{\PP\mathcal{W}}\), so
\(\mathcal{S}_2 \cong i_*\sO_{\PP\mathcal{W}} \otimes \rho^*\det\mathcal{E}\).
Similarly, \(\mathcal{S}_1\) is the cokernel of
\[
\sO_\pi(-1) \otimes \pi^*\big((\mathcal{E}/\mathcal{W}) \otimes \mathcal{W} \otimes \mathcal{L}^\vee\big) \to
\pi^*\mathcal{W},
\]
which may be identified as \(s_Z\). Therefore
\(\mathcal{S}_1 \cong j_*\sO_Z \otimes \rho^*\mathcal{W}\). The
remaining degrees then follow from the degree shift relation from
\parref{cliff-spinors-duality}.
\end{proof}

\subsectiondash{Spinors in relative dimension \texorpdfstring{\(2\)}{2}}
\label{reduction-quadric-surfaces-example}
Let \(\rho \colon Q \to S\) be a quadric surface bundle. If \(\mathcal{W}\)
is a rank \(2\) isotropic subbundle of \(\mathcal{E}\), then its dual spinor
sheaf of degree \(d\) is
\[
\mathcal{S}_d^\vee \cong \begin{dcases*}
\mathcal{I}_{\PP\mathcal{W}/Q} \otimes \rho^*(\det\mathcal{W}^\vee \otimes \mathcal{L}^{\otimes -k+1}) & if \(d = 2k\), and \\
\sHom_{\sO_Q}(\mathcal{I}_{\PP\mathcal{W}/Q},\sO_Q) \otimes \sO_\rho(-1) \otimes \rho^*(\det\mathcal{E}^\vee \otimes \mathcal{L}^{\otimes -k+1}) & if \(d = 2k+1\).
\end{dcases*}
\]

\begin{proof}
It suffices to treat the case \(d = 2k\) is even, as the odd case then follows
from the duality relations of \parref{cliff-spinors-duality}. Since the
pushforward of \(\mathcal{S}_{2k}\) along \(\rho \colon Q \to S\) is
the \(2k\)-th Clifford ideal \(\mathcal{I}_{2k}\), taking the canonical line
subbundle \(\mathcal{L}^{\otimes k-1}\) inside
\(\Cl_{2k-2}(\mathcal{E},q) \cong \Cl_0(\mathcal{E},q) \otimes \mathcal{L}^{\otimes k-1}\)
in the presentation of \(\mathcal{I}_{2k}\) from \parref{cliff-ideals},
evaluating sections along \(\rho\), and dualizing provides a canonical map
\[
\sigma \colon
\mathcal{S}_{2k}^\vee \to
\rho^*(\det\mathcal{W}^\vee \otimes \mathcal{L}^{\otimes -k+1}).
\]
It suffices to show that \(\sigma\) is injective and that its cokernel is
locally isomorphic to the structure sheaf of \(\PP\mathcal{W}\), reducing the
problem to a local one on \(S\).

Shrinking \(S\) then reduces the problem to the case where all of
\(\mathcal{E}\), \(\mathcal{L}\), and \(\mathcal{W}\) are trivial
\(\sO_S\)-modules; in particular, since \(\mathcal{L}\) is trivial, only the
parity of the degree matters in Clifford algebra considerations. Choose
global projective coordinates \((x_0:x_1:x_2:x_3)\)
on \(\PP\mathcal{E} \cong \PP^3_S\) so that \(\PP\mathcal{W} = \mathrm{V}(x_0,x_1)\) and
\[
Q = \big\{(x_0:x_1:x_2:x_3) \in \PP^3_S : x_0L_0 + x_1L_1 = 0\big\}
\;\text{where}\;
L_0, L_1 \in \Gamma(\PP^3_S, \sO_{\PP^3_S}(1)),
\]
and such that \(L_1\) does not contain the variable \(x_0\).
Writing \(e_i\) for the basis vector of \(\mathcal{E}\) dual to the coordinate
\(x_i\), the odd and even Clifford ideals associated with \(\mathcal{W}\) have
bases
\[
\mathcal{I}_- \cong
(\sO_S \cdot e_0 e_2 e_3)
\oplus
(\sO_S \cdot e_1 e_2 e_3) \;\;\text{and}\;\;
\mathcal{I}_+ \cong
(\sO_S \cdot e_2 e_3)
\oplus
(\sO_S \cdot e_0 e_1 e_2 e_3)
\]
so that \(\det\mathcal{W}\) includes as the first summand of the even Clifford
ideal. A direct computation using the presentation from \parref{cliff-spinors}
identifies the even spinor sheaf \(\mathcal{S}_+\) as the cokernel of the map
\[
\begin{pmatrix}
L_0 & L_1 \\ -x_1 & x_0
\end{pmatrix} \colon
\sO_Q(-1) \otimes \rho^*\mathcal{I}_- \longrightarrow
\rho^*\mathcal{I}_+.
\]
The map dual to \(\sigma\) is the inclusion
\(\rho^*\det\mathcal{W} \to \rho^*\mathcal{I}_+\) of the first summand followed
by the projection \(\rho^*\mathcal{I}_+ \to \mathcal{S}_+\) onto the quotient.
From this, it follows that \(\sigma^\vee\) vanishes precisely at points where
the bottom row of the above matrix vanishes, identifying its cokernel as
\(j_*\sO_{\PP\mathcal{W}}\). The kernel of \(\sigma^\vee\) may be identified
as the submodule of \(\sO_Q\) spanned by local sections of the form
\(aL_0 + bL_1\) where \(a,b \in \sO_Q\) satisfy \(-ax_1 + bx_0 = 0\). The
relation implies that the kernel is both \(x_0\)- and \(x_1\)-torsion, and
therefore must be zero. Thus there is a right exact sequence
\[
\rho^*\det\mathcal{W} \stackrel{\sigma^\vee}{\to}
\mathcal{S}_+ \to
j_*\sO_{\PP\mathcal{W}} \to
0.
\]
The resolutions in \parref{cliff-spinors-duality} imply that spinors do not
have higher \(\mathcal{E}\mathit{xt}\)-sheaves, so the dual exact sequence
takes the form
\[
0 \to
\mathcal{S}_+^\vee \stackrel{\sigma}{\to}
\rho^*\det\mathcal{W}^\vee \to
\mathcal{E}\mathit{xt}^1_{\sO_Q}(j_*\sO_{\PP\mathcal{W}}, \sO_Q) \to
0.
\]
Grothendieck duality together with transitivity of relative dualizing sheaves
along the inclusions \(\PP\mathcal{W} \to Q \to \PP\mathcal{E}\) implies that
the last term is \(j_*\sO_{\PP\mathcal{W}}\), from which the conclusion
follows.
\end{proof}

\subsectiondash{Dependence on the subbundle}\label{cliff-dependence}
The \emph{special Clifford group scheme} \(\mathbf{S\Gamma}(\mathcal{E},q)\) of
the quadratic form \(q \colon \mathcal{E} \to \mathcal{L}\) is the group scheme
over \(S\) whose \(T\)-points are
\[
\mathbf{S\Gamma}(\mathcal{E},q)(T) \coloneqq
\{
u \in \Cl_0(\mathcal{E}_T,q_T)^\times :
u \cdot \mathcal{E}_T \cdot u^{-1} \subseteq \mathcal{E}_T
\},
\]
consisting of units in the base change \(\Cl_0(\mathcal{E}_T,q_T)\) of the
\(0\)-th Clifford algebra to \(T\) such that conjugation on
\(\Cl_1(\mathcal{E}_T,q_T)\) preserves the subbundle \(\mathcal{E}_T\). This
preserves the quadratic form and fits into an exact sequence of group schemes
over \(S\)
\[
1 \to
\mathbf{G}_m \to
\mathbf{S\Gamma}(\mathcal{E},q) \to
\mathbf{SO}(\mathcal{E},q)
\]
which is furthermore exact on the right when
\(q \colon \mathcal{E} \to \mathcal{L}\) is regular: see \cite[IV.8.2.3]{Knus}.

Suppose that the conjugation action of a section \(u\) of
\(\mathbf{S\Gamma}(\mathcal{E},q)\) takes an isotropic subbundle
\(\mathcal{W} \subseteq \mathcal{E}\) to \(\mathcal{V}\). Letting the special
Clifford group act on the right,
\[
\mathcal{I}^{\mathcal{W}} \cdot u^{-1} =
\Cl(\mathcal{E},q) \cdot (u \cdot \det\mathcal{W} \cdot u^{-1}) =
\Cl(\mathcal{E},q) \cdot \det\mathcal{V} =
\mathcal{I}^{\mathcal{V}}.
\]
Thus the action of \(u\) provides an isomorphism
\(\mathcal{I}^{\mathcal{W}} \cong \mathcal{I}^{\mathcal{V}}\) of
\(\Cl(\mathcal{E},q)\)-modules, whence an isomorphism
\(\mathcal{S}^{\mathcal{W}} \cong \mathcal{S}^{\mathcal{V}}\)
of the corresponding spinor sheaves.

As an application of this principle which will be useful in
\S\parref{section-spinor-moduli}, the following shows that, over an
algebraically closed field, once the quadric is at least \(4\)-dimensional and
its singular locus is at most \(1\)-dimensional, then a given spinor sheaf is
isomorphic to the spinor sheaf associated with a linear space containing any
given smooth point:

\begin{Lemma}
\label{cliff-dependence-corank-2}
Let $Q \subset \PP V$ be a quadric of dimension \(n-1 \geq 4\) and corank
\(\leq 2\) over an algebraically closed field $\kk$. Write \(n\) as
\(2\ell + 1\) or \(2\ell+2\). Given any smooth point \(x \in Q\) and any
\(\ell\)-plane \(\PP W \subset Q\), there exists an \(\ell\)-plane
\(\PP W' \subset Q\) containing \(x\) whose associated spinor sheaf is
isomorphic to that of \(\PP W\).
\end{Lemma}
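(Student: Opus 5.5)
The plan is to use the principle of \parref{cliff-dependence}: if a section \(u\) of \(\mathbf{S\Gamma}(V,q)\) conjugates \(W\) to \(W'\), then the spinor sheaves of \(\PP W\) and \(\PP W'\) are isomorphic. So it suffices to find an element of the special Clifford group whose image in \(\mathbf{SO}(V,q)\) moves \(\PP W\) to an \(\ell\)-plane through \(x\). Since \(\kk\) is algebraically closed, I would build this element as an explicit product of even products of anisotropic vectors. For a vector \(v\) with \(q(v) \neq 0\), conjugation by \(v\) in \(\Cl(V,q)\) acts on \(V\) as (minus) the reflection \(s_v\); hence for two anisotropic vectors \(v,w\), the element \(vw\) lies in \(\Cl_0(V,q)^\times\), preserves \(V\) under conjugation, and induces \(s_v s_w\). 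This avoids any surjectivity question for \(\mathbf{S\Gamma} \to \mathbf{SO}\), which the cited result from Knus only guarantees when \(q\) is regular.

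The first step is a reduction. If \(x \in \PP W\), take \(W' = W\) and there is nothing to prove. Otherwise, write \(x = [v_0]\) with \(q(v_0) = 0\) and \(v_0 \notin \rad b_q\), which holds because \(x\) is a smooth point.

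The second step is the main geometric one: find an \(\ell\)-plane \(\PP W'\) through \(x\) together with a product of an even number of reflections carrying \(W\) to \(W'\). My first attempt uses a single reflection \(s_v\) exchanging a point \(y = [w_0] \in \PP W\) with \(x\). Choose \(w_0 \in W\) with \(b_q(v_0,w_0) \neq 0\) and set \(v = v_0 - \lambda w_0\); by rescaling \(w_0\) we can arrange that \(s_v\) swaps \(v_0\) and \(\lambda w_0\). Such a \(w_0\) exists when \(W \not\subset v_0^\perp\). If \(W \subset v_0^\perp\), first move \(W\) by a reflection to a plane not contained in \(v_0^\perp\); the dimension hypothesis \(n-1 \geq 4\) gives enough room for this. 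A single reflection has odd parity and may exchange the two families of \(\ell\)-planes, so I then compose with a second reflection \(s_{v'}\), where \(v'\) is anisotropic and lies in \(v_0^\perp \cap s_v(W)^\perp\). Then \(s_{v'}\) fixes \(v_0\) and preserves \(s_v(W)\), so \(W' \coloneqq s_{v'}s_v(W)\) contains \(v_0\). The element \(u = v'v \in \mathbf{S\Gamma}(V,q)(\kk)\) then does the job, because conjugation by \(v'v\) gives \(\mathcal{I}^W \cdot u^{-1} = \mathcal{I}^{W'}\).

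The hard part is verifying that \(v_0^\perp \cap s_v(W)^\perp\) contains an anisotropic vector. This subspace has dimension at least \(n+1-\ell-2\). The danger is that it might be totally isotropic, or be contained in \(\rad b_q\) in a way that makes \(q\) vanish on it; this is exactly where corank \(\leq 2\) and \(n-1 \geq 4\) should enter. I would first try a dimension count: the subspace contains \(s_v(W) \cap v_0^\perp\) together with complementary directions. If \(q\) vanished identically on it, it would be a totally isotropic space of dimension greater than \(\ell + \corank\) would allow, contradicting the bound on the dimension of isotropic subspaces in a corank \(\leq 2\) quadric. In characteristic \(2\), reflections still make sense for anisotropic \(v\), but \(\rad b_q\) may contain anisotropic vectors. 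I would treat that case separately, checking that conjugation by such a \(v\) preserves \(W\) up to the radical and so still gives an isomorphism of ideals.
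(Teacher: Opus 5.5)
The step that fails is the existence of the second reflection. Because \(v_0 \in s_v(W)\), the space \(v_0^\perp \cap s_v(W)^\perp\) is simply \(s_v(W)^\perp\). In the cases that matter this space is totally isotropic, so it contains no anisotropic \(v'\).
\begin{itemize}
\item If \(n-1 = 2\ell\) and \(Q\) is smooth, then \(b_q\) is nondegenerate on the \((2\ell+2)\)-dimensional space \(V\). The \((\ell+1)\)-dimensional isotropic subspace \(s_v(W)\) therefore satisfies \(s_v(W)^\perp = s_v(W)\).
\item If \(\corank Q = 2\) and \(W \cap \rad b_q\) is a line, then \(s_v(W)^\perp = s_v(W) + \rad b_q\), which is again totally isotropic. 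These are exactly the \(\ell\)-planes in \(\mathbf{F}_\ell(Q)^\circ\) used in \S\parref{section-spinor-moduli}.
\end{itemize}
Your dimension count cannot rule this out. The dimensions \(\ell+1\) and \(\ell+2\) are precisely the maximal dimensions of totally isotropic subspaces in those two cases.

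There is also a conceptual obstruction. If such a \(v'\) existed, you would get \(W' = s_{v'}s_v(W) = s_v(W)\), and hence isomorphic spinor sheaves for \(\PP W\) and \(\PP s_v(W)\). But on a smooth quadric \(2\ell\)-fold a reflection interchanges the two families of \(\ell\)-planes, whose spinor bundles are not isomorphic. So no reflection can fix \(s_v(W)\) pointwise, and the parity correction must come from elsewhere.

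The repair stays inside your framework: you only need \(s_{v'}\) to fix \(v_0\), not \(s_v(W)\).
\begin{itemize}
\item Take any \(v' \in v_0^\perp\) with \(q(v') \neq 0\). Then \(s_{v'}\) fixes \(v_0 \in s_v(W)\), so \(W' \coloneqq s_{v'}s_v(W)\) contains \(v_0\). The even unit \(u = v'v\) conjugates \(W\) to \(W'\), and \parref{cliff-dependence} gives the isomorphism of spinor sheaves.
\item Such \(v'\) exists. Since \(v_0 \notin \rad b_q\), \(v_0^\perp\) is a hyperplane. If \(q\) vanished on a hyperplane, \(Q\) would be a union of two hyperplanes or a double hyperplane, of corank \(\geq n-1 \geq 4\).
\item You must also track parity. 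In your subcase \(W \subset v_0^\perp\), the preliminary reflection \(s_a\) already makes \(s_v s_a\) even. Appending \(s_{v'}\) there would put \(u\) in \(\Cl_1\).
\end{itemize}

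With these corrections your route is a valid and more explicit alternative to the paper's proof. The paper instead picks \(x' \in \PP W\) with the line \(xx'\) disjoint from \(\Sing Q\), and places both vectors in a nondegenerate \(4\)-dimensional \(U\). It then uses surjectivity of \(\mathbf{S\Gamma}(U,q\rvert_U) \to \mathbf{SO}(U,q\rvert_U)\) and transitivity on isotropic lines, extended by the identity on \(U^\perp\). Your version needs no such surjectivity. Your characteristic \(2\) worry also disappears, since conjugation by any anisotropic vector \(w\) induces \(-s_w\) in every characteristic.
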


\begin{proof}
If \(x \in \PP W\), then there is nothing to show. So suppose that
\(x \notin \PP W\). Then it suffices to show that there is
$x' \in \mathbf{P}W$ which is a smooth point of $Q$ in the orbit of \(x\) under
the special Clifford group: If $g\cdot x = x'$ for \(g \in \mathbf{S\Gamma}(V,q)\),
then $\PP W' \coloneqq g^{-1} \cdot \mathbf{P}W \subset Q$ is an \(\ell\)-plane
containing $x$ whose spinor sheaf is isomorphic to that of $\mathbf{P}W$ by
\parref{cliff-dependence}. Construct \(x'\) in three steps:

First, there exists a point $x' \in \mathbf{P}W$ which is a smooth point of $Q$
and such that the line in $\mathbf{P}V$ spanned by $x$ and $x'$ is disjoint
from the singular locus of $Q$: Writing \(v \in V\) for a basis vector of
the linear space underlying \(x\), this means there exists \(v' \in W\)
not contained in \(\kk \cdot v + \rad q\); in other words, it suffices to
show that
\[
(\kk \cdot v + \rad q ) \cap W \subsetneq W.
\]
Since \(\dim_\kk(\kk \cdot v + \rad q) \leq 3\) and
\(\dim_\kk W = \ell + 1 \geq 3\), the only way equality can occur is if
\(\kk \cdot v + \rad q = W\). This is impossible since $v \not\in W$. 
Now fix any such \(x' = \PP v' \in \PP W\).

Second, there is a \(4\)-dimensional subspace $U \subset V$ such that
$q\rvert_{U}$ is non-degenerate and $v, v' \in U$. Set
\(U_1 \coloneqq \kk \cdot v \oplus \kk \cdot v' \subset V\). There are then two
cases depending on the pairing between \(v\) and \(v'\): If $b_q(v,v') = 0$,
then $U_1$ is isotropic and intersects $\rad q$ trivially, so by the theory of
hyperbolic planes, there is a subspace $U_2 \subset V$ mapped isomorphically to
$U_1^\vee$ by $b_q$. If $b_q(v,v') \neq 0$, then $U_1$ is itself a
hyperbolic plane, so we have a splitting $V = U_1 \oplus U_1^\perp$. The
orthogonal $U_1^\perp$ has dimension $n-1 \geq 4$ and carries a quadratic
form of corank $\leq 2$, so it has an isotropic vector not contained in its
radical. This isotropic vector can be completed to a second hyperbolic plane
$U_2$. In either case, $U \coloneqq U_1 \oplus U_2 \subset V$ is a hyperbolic
subspace of dimension \(4\) containing \(v\) and \(v'\), as desired.

Finally, to conclude, since $q\rvert_{U}$ is non-degenerate, there is an
orthogonal decomposition $V = U \oplus U^\perp$ and a corresponding inclusion
of special Clifford groups
\[
\mathbf{S\Gamma}(U, q\rvert_{U}) \times_{\kk}
\mathbf{S\Gamma}(U^\perp,q\rvert_{U^\perp}) \subset
\mathbf{S\Gamma}(V,q).
\]
Since $U$ is nonsingular, the morphism
\(\mathbf{S\Gamma}(U,q\rvert_U) \to \mathbf{SO}(U, q\rvert_U)\) is surjective, as in
\parref{cliff-dependence}. Since \(\dim_\kk U \geq 2\), the special orthogonal
group acts transitively on its isotropic lines; this implies that there exists
an element $g' \in \mathbf{S\Gamma}(U,q\rvert_U)$ taking
\(\kk \cdot v\) to \(\kk \cdot v'\). Then the element
\(g \coloneqq (g',1) \in \mathbf{S\Gamma}(V,q)\) has the sought-after properties.  
\end{proof}

\subsectiondash{Hyperplane sections}\label{cliff-subbundle-situation}
The next few paragraphs describe the relationship between spinor sheaves on a
quadric and a hyperplane section when the hyperplane contains the defining
subspace. Let \(\mathcal{E}' \subset \mathcal{E}\) be a corank \(1\) subbundle
and let \(q' \colon \mathcal{E}' \to \mathcal{L}\) be the restriction of \(q\)
thereon. The Clifford algebra of \(\mathcal{E}'\) is then a graded subalgebra of the
Clifford algebra of \(\mathcal{E}\), and the quotient can be identified as
follows: Writing \(\mathcal{E}'' \coloneqq \mathcal{E}/\mathcal{E}'\) for the
quotient line bundle and \([-1]\) for shift of grading, there is a short exact
sequence of graded \(\Cl(\mathcal{E}',q')\)-modules:
\[
0 \to
\Cl(\mathcal{E}',q') \to
\Cl(\mathcal{E},q) \to
\mathcal{E}'' \otimes_{\sO_S} \Cl(\mathcal{E}',q')[-1] \to
0.
\]

\begin{proof}
Clifford multiplication gives a map 
\(\mathcal{E} \otimes_{\sO_S} \Cl(\mathcal{E}', q')[-1] \to \Cl(\mathcal{E}, q)/\Cl(\mathcal{E}', q')\)
which vanishes on \(\mathcal{E}' \otimes_{\sO_S} \Cl(\mathcal{E}', q')[-1]\)
and therefore factors through a map 
\[
\mathcal{E}'' \otimes_{\sO_S} \Cl(\mathcal{E}', q')[-1] \to \Cl(\mathcal{E}, q)/\Cl(\mathcal{E}', q').
\]
The degree $d$ part of each side is a locally free $\mathcal{O}_S$-module of
rank $2^{n-1}$ and easy to see that a local basis on the left is sent
to one on the right, so the map is an isomorphism.
\end{proof}

More generally, \(\mathcal{W} \subset \mathcal{E}'\) be an isotropic subbundle,
and write \(\mathcal{I}\) and \(\mathcal{I}'\) for the associated Clifford
ideals in \(\Cl(\mathcal{E},q)\) and \(\Cl(\mathcal{E}',q')\), respectively.
Since both ideals are generated by \(\det\mathcal{W}\), \(\mathcal{I}'\) may be
identified as a sub-\(\Cl(\mathcal{E}',q')\)-module of \(\mathcal{I}\).
The argument above generalizes to yield:

\begin{Lemma}\label{cliff-subbundle-ideals}
In the setting of \parref{cliff-subbundle-situation}, inclusion induces an
exact sequence graded of \(\Cl(\mathcal{E}',q')\)-modules
\[
\pushQED{\qed}
0 \to
\mathcal{I}' \to
\mathcal{I} \to
\mathcal{E}'' \otimes_{\sO_S} \mathcal{I}'[-1] \to
0.
\qedhere
\popQED
\]
\end{Lemma}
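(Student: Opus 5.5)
The plan is to mimic the proof for the full Clifford algebras given just above in \parref{cliff-subbundle-situation}, restricting it to the left ideals generated by \(\det\mathcal{W}\). First, \(\mathcal{I}'\) is a graded sub-\(\Cl(\mathcal{E}',q')\)-module of \(\mathcal{I}\), since both ideals are generated by the same summand \(\det\mathcal{W}\) and \(\Cl(\mathcal{E}',q') \subset \Cl(\mathcal{E},q)\) is a graded subalgebra; the inclusion is injective because it is a restriction of the injection \(\Cl(\mathcal{E}',q') \subset \Cl(\mathcal{E},q)\). Next, construct the right-hand map as follows. Left Clifford multiplication gives a \(\Cl(\mathcal{E}',q')\)-linear map
\[
\mathcal{E} \otimes_{\sO_S} \mathcal{I}'[-1] \to \mathcal{I}/\mathcal{I}', \qquad v \otimes x \mapsto vx.
\]
This map vanishes on \(\mathcal{E}' \otimes \mathcal{I}'[-1]\), since \(\mathcal{E}'\mathcal{I}' \subset \mathcal{I}'\). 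It therefore factors through \(\mathcal{E}'' \otimes \mathcal{I}'[-1]\). The linearity for the left \(\Cl(\mathcal{E}',q')\)-action needs a sign and twist check: for \(w \in \mathcal{E}'\) we have \(wv = -vw + b_q(v,w)\), and the correction term lies in \(\mathcal{I}'\). So the \(\Cl(\mathcal{E}',q')\)-module structure on the source should be the one twisted by the grading automorphism, which is what the shift \([-1]\) records.

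It then remains to show that the induced map \(\mathcal{E}'' \otimes \mathcal{I}'[-1] \to \mathcal{I}/\mathcal{I}'\) is an isomorphism. This is local on \(S\), so assume \(\mathcal{L}\) and \(\mathcal{E}\) are free. Choose a basis \(e_1,\ldots,e_{n+1}\) of \(\mathcal{E}\) such that \(e_1,\ldots,e_r\) spans \(\mathcal{W}\), \(e_1,\ldots,e_n\) spans \(\mathcal{E}'\), and \(e_{n+1}\) lifts a generator of \(\mathcal{E}''\). Using the tensor-algebra filtration, the degree \(d\) piece of \(\mathcal{I}\) has local basis the elements \(e_J e_1\cdots e_r\), with \(J\) running over subsets of \(\{r+1,\ldots,n+1\}\) of appropriate parity, suitably twisted by \(\mathcal{L}\). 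This gives rank \(2^{n-r}\) in each degree, as asserted in \parref{cliff-ideals}. Those \(J\) avoiding \(n+1\) give a basis of \(\mathcal{I}'_d\), of rank \(2^{n-1-r}\). Those \(J\) containing \(n+1\) are exactly \(e_{n+1}\) times basis elements of \(\mathcal{I}'_{d-1}\). So the map sends a local basis of \(\mathcal{E}''\otimes\mathcal{I}'_{d-1}\) bijectively onto a local basis of \(\mathcal{I}_d/\mathcal{I}'_d\). Both sides are locally free of rank \(2^{n-r-1}\), so the map is an isomorphism, and exactness follows.

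The main obstacle is the basis claim: that the elements \(e_Je_1\cdots e_r\) are really a basis of \(\mathcal{I}_d\), with no unexpected relations. I would deduce this from the rank statement of \parref{cliff-ideals}: these elements clearly generate \(\mathcal{I}_d\), since any monomial containing some \(e_i\) with \(i\le r\) can be moved next to \(e_1\cdots e_r\) modulo \(\mathcal{I}\)-lower terms and then killed by isotropy. A surjection between locally free modules of equal rank is an isomorphism, which gives the claim. The rest is routine sign bookkeeping.
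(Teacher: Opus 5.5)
Your proposal is correct and follows the paper's argument. The paper only remarks that the proof for \(\Cl(\mathcal{E}',q') \subset \Cl(\mathcal{E},q)\) generalizes: left Clifford multiplication \(\mathcal{E} \otimes \mathcal{I}'[-1] \to \mathcal{I}/\mathcal{I}'\) factors through \(\mathcal{E}'' \otimes \mathcal{I}'[-1]\), and one checks locally that it sends a local basis to a local basis of a locally free module of the same rank \(2^{n-r-1}\), exactly as you do. The sign issue you flag is harmless, because for graded modules the map \(x \mapsto (-1)^{\deg x} x\) identifies the module twisted by the grading automorphism with the untwisted one.
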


%\begin{proof}
%Again, multiplication in the $\mathcal{O}_S$-algebra $\Cl(\mathcal{E}, q)$ determines a map $\mathcal{I}'[-1] \otimes \mathcal{E} \to \mathcal{I}/\mathcal{I}', x \otimes s \mapsto sx$, which factors through a map $\mathcal{I}'[-1] \otimes \mathcal{E}'' \to \mathcal{I}/\mathcal{I}'$. One checks locally that this is an isomorphism as in the special case above.
%
%Let \(r\) be the rank of \(\mathcal{W}\). Then the exact sequence in
%\parref{cliff-subbundle-situation} provides an exact sequence
%\[
%0 \to
%\Cl_{d-r}(\mathcal{E}',q') \otimes \det\mathcal{W} \to
%\Cl_{d-r}(\mathcal{E},q) \otimes \det\mathcal{W} \to
%\Cl_{d-r-1}(\mathcal{E}',q') \otimes \mathcal{E}'' \otimes \det\mathcal{W} \to
%0.
%\]
%By definition of Clifford ideals in \parref{cliff-ideals}, Clifford
%multiplication maps the terms of the sequence surjectively onto
%\(\mathcal{I}_d'\), \(\mathcal{I}_d\), and
%\(\mathcal{I}_{d-1}' \otimes \mathcal{E}''\), respectively. Since the maps in
%this sequence are \(\Cl_0(\mathcal{E}',q')\)-module homomorphisms, they commute
%with multiplication, and so they induce maps \(\mathcal{I}_d' \to
%\mathcal{I}_d\) and \(\mathcal{I}_d \to \mathcal{I}_{d-1}' \otimes
%\mathcal{E}''\) which fit into an exact sequence, as required.
%\end{proof}

The sequence of Clifford ideals induces a corresponding sequence of spinor
sheaves. To set notation, write \(\pi' \colon \PP\mathcal{E}' \to S\) and
\(\rho' \colon Q' \to S\) for the projective and quadric bundles associated
with \(q' \colon \mathcal{E}' \to \mathcal{L}\), and let \(\mathcal{S}\) and
\(\mathcal{S}'\) be the spinor sheaves corresponding to \(\mathcal{W}\) on
\(Q\) and \(Q'\), respectively. Then:

\begin{Lemma}\label{cliff-subbundle-spinors}
In the setting of \parref{cliff-subbundle-situation}, there is an exact
sequence
\[
0 \to
\mathcal{S}_d' \to
\mathcal{S}_d\rvert_{Q'} \to
\mathcal{E}'' \otimes \mathcal{S}_{d-1}' \to
0.
\]
\end{Lemma}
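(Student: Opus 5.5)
The plan is to restrict the defining presentation of \(\mathcal{S}_d\) on \(\PP\mathcal{E}\) to \(\PP\mathcal{E}'\) and compare it with the defining presentation of \(\mathcal{S}_d'\), using the exact sequence of Clifford ideals from \parref{cliff-subbundle-ideals}. Write \(j \colon \PP\mathcal{E}' \to \PP\mathcal{E}\) for the inclusion; note that \(Q' = Q \cap \PP\mathcal{E}'\), and that \(j^*\sO_\pi(-1) = \sO_{\pi'}(-1)\) is the tautological subbundle of \(\pi'^*\mathcal{E}'\). Pulling back the sequence of \parref{cliff-subbundle-ideals} in degrees \(d-1\) and \(d\) along \(\pi'\), and twisting the first by \(\sO_{\pi'}(-1)\), gives two short exact sequences of locally free sheaves on \(\PP\mathcal{E}'\). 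Clifford multiplication by the tautological section lies in \(\pi'^*\mathcal{E}'\), and the maps in \parref{cliff-subbundle-ideals} are \(\Cl(\mathcal{E}',q')\)-linear, so the restricted maps \(j^*\phi_{d}\) are compatible with \(\phi'_d\) on the sub and with \(\mathcal{E}'' \otimes \phi'_{d-1}\) on the quotient. This yields a morphism of short exact sequences
\[
\begin{tikzcd}
0 \rar & \sO_{\pi'}(-1)\otimes\pi'^*\mathcal{I}'_{d-1} \rar \dar["\phi'_d"] & \sO_{\pi'}(-1)\otimes\pi'^*\mathcal{I}_{d-1} \rar \dar["j^*\phi_d"] & \sO_{\pi'}(-1)\otimes\pi'^*(\mathcal{E}''\otimes\mathcal{I}'_{d-2}) \rar \dar["\phi'_{d-1}"] & 0 \\
0 \rar & \pi'^*\mathcal{I}'_d \rar & \pi'^*\mathcal{I}_d \rar & \pi'^*(\mathcal{E}''\otimes\mathcal{I}'_{d-1}) \rar & 0.
\end{tikzcd}
\]
I will check commutativity of the right square locally, by writing \(\mathcal{E} = \mathcal{E}' \oplus \sO_S e\). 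Then \(\mathcal{I} = \mathcal{I}' \oplus e\,\mathcal{I}'\) as modules over \(\Cl(\mathcal{E}',q')\), and left multiplication by \(v \in \mathcal{E}'\) satisfies \(v e x = -e v x + b_q(v,e)x\). The correction term \(b_q(v,e)x\) lands in the sub \(\mathcal{I}'\), so on the quotient the induced map is \(-\phi'_{d-1}\). The sign is harmless, since it can be absorbed into the identification of the quotient.

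The snake lemma then gives an exact sequence
\[
0 \to \ker\phi'_d \to \ker j^*\phi_d \to \ker\phi'_{d-1} \to \iota'_*\mathcal{S}'_d \to j^*\iota_*\mathcal{S}_d \to \iota'_*(\mathcal{E}''\otimes\mathcal{S}'_{d-1}) \to 0.
\]
Since \(\coker j^*\phi_d = j^*\iota_*\mathcal{S}_d = \iota'_*(\mathcal{S}_d\rvert_{Q'})\), it suffices to show that \(\ker\phi'_{d-1} = 0\). This holds because \(\phi'_{d-1}\) is half of a matrix factorization of \(q'\), and is therefore injective as soon as \(Q'\) is an effective Cartier divisor in \(\PP\mathcal{E}'\), as in \parref{cliff-spinors}.

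The main obstacle is exactly this Cartier hypothesis for \(Q'\), which the running assumptions only impose on \(Q\). I would first try to deduce it from the fact that \(\mathcal{W} \subset \mathcal{E}'\) is isotropic with \(\operatorname{corank}\geq 2\), for instance via the McCoy-type criterion of \parref{lemma-cartier-divisor-criterion}. Failing that, I would simply add the hypothesis that \(Q'\) is Cartier in \(\PP\mathcal{E}'\) to the setting, which is implicit in speaking of the spinor sheaves \(\mathcal{S}'\). A secondary check is that the diagram's vertical maps really are the \(\phi\)'s after the identification \(\mathcal{I}/\mathcal{I}'\cong\mathcal{E}''\otimes\mathcal{I}'[-1]\), which is the local computation above.
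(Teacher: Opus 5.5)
Your argument is correct and is essentially the paper's proof. The paper applies the functor sending a graded \(\Cl(\mathcal{E}',q')\)-module \(\mathcal{M}\) to \(\coker(\sO_{\pi'}(-1)\otimes\pi'^*\mathcal{M}_{-1}\to\pi'^*\mathcal{M}_0)\) to the sequence of \parref{cliff-subbundle-ideals} and asserts that this functor is exact, which is exactly your snake-lemma argument on the restricted presentations. That exactness tacitly requires the same injectivity of \(\phi'_{d-1}\) you isolate, so your hypothesis that \(Q'\) be an effective Cartier divisor in \(\PP\mathcal{E}'\), together with your sign bookkeeping on the quotient \(\mathcal{E}''\otimes\mathcal{I}'[-1]\), makes explicit what the paper leaves implicit.
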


\begin{proof}
This can be deduced from \parref{cliff-subbundle-ideals} via the following
general construction: For any quasi-coherent graded
\(\Cl(\mathcal{E}',q')\)-module \(\mathcal{M}\), Clifford multiplication
as in \parref{cliff-spinors} provides a short exact sequence
\[
0 \to
\sO_{\pi'}(-1) \otimes \pi'^*\mathcal{M}_{-1} \to
\pi'^*\mathcal{M}_0 \to
\iota_*\mathcal{F} \to
0
\]
where \(\mathcal{F}\) is a quasi-coherent sheaf on $Q'$ and
\(\iota \colon Q' \to \mathbf{P}\mathcal{E}'\) is the inclusion, and the
assignment $\mathcal{M} \mapsto \mathcal{F}$ determines an exact functor from
the category of quasi-coherent graded $\Cl(\mathcal{E}', q')$-modules to the
category of quasi-coherent modules on $Q'$.
%
%
%Let \(\iota' \colon Q' \to \PP\mathcal{E}'\) be the inclusion. Restricting the
%defining short exact sequence of \(\mathcal{S}_d\) to \(\PP\mathcal{E}'\)
%yields an exact sequence of \(\pi'^*\Cl_0(\mathcal{E}',q')\)-modules
%\[
%\sO_{\pi'}(-1) \otimes \pi'^*\mathcal{I}_{d-1} \to
%\pi'^*\mathcal{I}_d \to
%\iota_*'\mathcal{S}_d\rvert_{Q'} \to 0.
%\]
%Since twofold Clifford multiplication on \(\pi'^*\mathcal{I}\) by vectors from
%\(\mathcal{E}'\) still provides a matrix factorization of \(q'\), the first map
%remains injective. Comparing with the presentation of
%\(\mathcal{S}'_d\) and combining with \parref{cliff-subbundle-ideals} then
%gives the result.
\end{proof}

Going the other direction, the spinor sheaf \(\mathcal{S}\) on
\(Q\) is related to the Clifford ideal \(\rho^*\mathcal{I}'\) via modification
by \(\mathcal{S}'\) along the closed subscheme \(j' \colon Q' \to Q\). This
relation comes from considering the quotient of the Clifford ideal
\(\rho^*\mathcal{I}_d\) by the span of the Clifford ideal \(\rho^*\mathcal{I}_d'\)
for the hyperplane section, and the twisted spinor sheaf
\(\sO_\rho(-1) \otimes \mathcal{S}_{d-1}\) identified as the image of
the Clifford multiplication map \(\phi_d\) from the first complex in
\parref{cliff-spinors-duality}:

\begin{Proposition}\label{cliff-subbundle-modify}
In the setting of \parref{cliff-subbundle-situation}, assume additionally
that \(j' \colon Q' \to Q\) is the inclusion of an effective Cartier divisor.
Then \(\rho^*\mathcal{I}'_d \cap \sO_\rho(-1) \otimes \mathcal{S}_{d-1} = 0\)
as submodules of \(\rho^*\mathcal{I}_d\) and
\[
\rho^*\mathcal{I}_d/(\rho^*\mathcal{I}'_d \oplus \sO_\rho(-1) \otimes \mathcal{S}_{d-1})
\cong \rho^*\mathcal{E}'' \otimes j'_*\mathcal{S}_{d-1}.
\]
In particular, there is a short exact sequence
\[
0 \to
\sO_\rho(-1) \otimes \mathcal{S}_{d-1} \to
\rho^*(\mathcal{E}'' \otimes \mathcal{I}_{d-1}') \to
\rho^*\mathcal{E}'' \otimes j'_*\mathcal{S}'_{d-1} \to
0.
\]
\end{Proposition}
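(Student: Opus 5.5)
We will compare the two presentations of spinor sheaves on \(Q\) as quotients of \(\rho^*\mathcal{I}_d\), working on \(Q\) rather than on \(\PP\mathcal{E}\). The first exact complex of \parref{cliff-spinors-duality} identifies \(\sO_\rho(-1)\otimes\mathcal{S}_{d-1}\) with the image of \(\phi_d \colon \sO_\rho(-1)\otimes\rho^*\mathcal{I}_{d-1}\to\rho^*\mathcal{I}_d\), and the cokernel of \(\phi_d\) is \(\mathcal{S}_d\). Lemma \parref{cliff-subbundle-ideals}, pulled back along \(\rho\) (it is a sequence of locally free modules, so it stays exact), gives
\[
0\to\rho^*\mathcal{I}'_d\to\rho^*\mathcal{I}_d\to\rho^*(\mathcal{E}''\otimes\mathcal{I}'_{d-1})\to 0 .
\]
Hence the quotient in the statement is the cokernel of the composite
\[
\psi\colon \sO_\rho(-1)\otimes\mathcal{S}_{d-1}\hookrightarrow\rho^*\mathcal{I}_d\to\rho^*(\mathcal{E}''\otimes\mathcal{I}'_{d-1}),
\]
and the intersection claim is exactly the injectivity of \(\psi\). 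So the statement reduces to the short exact sequence displayed at the end, and the first two assertions follow from it by a snake-lemma argument.

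\textbf{Computing \(\psi\) on \(Q'\).} Composing \(\phi_d\) with the projection, Clifford multiplication by the tautological vector factors through the quotient \(\mathcal{E}\to\mathcal{E}''\) on the \(\mathcal{I}'\)-part. Concretely, restricted to \(\sO_\rho(-1)\otimes\rho^*\mathcal{I}'_{d-1}\subset\sO_\rho(-1)\otimes\rho^*\mathcal{I}_{d-1}\), the map is \(v\otimes m\mapsto \bar v\otimes m\). Here \(\bar v\colon\sO_\rho(-1)\to\rho^*\mathcal{E}''\) is the section whose vanishing locus is \(Q'=Q\cap\PP\mathcal{E}'\).

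This gives a natural surjection \(\rho^*(\mathcal{E}''\otimes\mathcal{I}'_{d-1})\to\rho^*\mathcal{E}''\otimes j'_*\mathcal{S}'_{d-1}\), obtained by restricting to \(Q'\) and composing with the presentation \(\rho'^*\mathcal{I}'_{d-1}\to\mathcal{S}'_{d-1}\). I would check that \(\psi\) followed by this surjection vanishes. On \(Q'\) the vector \(v\) lies in \(\mathcal{E}'\), so the \(\mathcal{E}''\)-component of \(v\cdot m\) is the \(\mathcal{E}''\)-part of the \(\mathcal{I}\)-decomposition. Here I expect to use \parref{cliff-subbundle-spinors} together with the exactness of the functor \(\mathcal{M}\mapsto\mathcal{F}\) from its proof: applying that functor to the sequence of \parref{cliff-subbundle-ideals} shows the image lands in \(\phi'\)-images, which die in \(\mathcal{S}'_{d-1}\).

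\textbf{Exactness in the middle.} I would check this locally on \(S\), choosing a splitting \(\mathcal{E}=\mathcal{E}'\oplus\sO\cdot e\) with coordinate \(t\) cutting out \(\PP\mathcal{E}'\). A local section of \(\rho^*(\mathcal{E}''\otimes\mathcal{I}'_{d-1})\) maps to zero in \(j'_*\mathcal{S}'_{d-1}\) exactly when it lies in \(t\cdot\rho^*\mathcal{I}'_{d-1}+\operatorname{im}\phi'\). Both pieces are hit by \(\psi\): the first comes from the \(e\)-component of the tautological vector acting on \(\mathcal{I}'_{d-1}\), and the second from the \(\mathcal{E}'\)-components. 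The \(\mathcal{E}'\)-components have to be adjusted by elements of \(\rho^*\mathcal{I}'_d\), which is why the projection is used.

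\textbf{Injectivity of \(\psi\) (main obstacle).} This is where the hypothesis that \(Q'\) is Cartier in \(Q\) enters. I would compare ranks and supports. Away from \(Q'\), \(\bar v\) is invertible and \(\mathcal{S}_{d-1}\) is (generically) locally free of the same rank \(2^{n-r-1}\) as \(\rho^*\mathcal{I}'_{d-1}\). So \(\psi\) is a map between sheaves of equal generic rank that is an isomorphism off \(Q'\), by the computation above together with \parref{cliff-subbundle-ideals}. Its kernel is then supported on \(Q'\). Because \(\mathcal{S}_{d-1}\) has a two-term resolution by bundles on \(\PP\mathcal{E}\) and \(Q'\) is cut out by a nonzerodivisor \(t\), \(\mathcal{S}_{d-1}\) has no \(t\)-torsion; this needs care when \(\rho\) is not flat, and I would argue it via the second complex of \parref{cliff-spinors-duality}, which embeds \(\mathcal{S}_{d-1}\) into a vector bundle on \(Q\). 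Hence the kernel vanishes. The delicate point is the claim that \(\psi\) is an isomorphism off \(Q'\) even over the singular locus, where \(\mathcal{S}\) is not locally free. I would instead verify directly there that \(\psi\) becomes \(\bar v\)-invertible composed with the splitting \(\mathcal{I}_{d-1}\cong\mathcal{I}'_{d-1}\oplus e\,\mathcal{I}'_{d-2}\).
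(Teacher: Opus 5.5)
Your proposal is correct and matches the paper's proof in substance. Injectivity of \(\psi\) comes from the splitting \(\rho^*\mathcal{I}\cong\rho^*\mathcal{I}'\oplus\sO_\rho(-1)\otimes\rho^*\mathcal{I}'[-1]\) over \(Q\setminus Q'\), on which \(\phi_d\) kills the second summand because \(q(v)=0\) on \(Q\), combined with the fact that a subsheaf of the vector bundle \(\rho^*\mathcal{I}_d\) vanishing off the Cartier divisor \(Q'\) is zero. The cokernel comes from the same local Clifford computation, which shows that in a splitting \(\mathcal{E}=\mathcal{E}'\oplus\sO_S\cdot e\) the image of \(\psi\) is \(t\cdot\rho^*\mathcal{I}'_{d-1}+v'\cdot\rho^*\mathcal{I}'_{d-2}\); the paper phrases this as scheme-theoretic support on \(Q'\) followed by a diagram chase on \(Q'\), rather than your direct description of the kernel of the surjection onto \(\rho^*\mathcal{E}''\otimes j'_*\mathcal{S}'_{d-1}\).

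Your worry about the singular locus is unnecessary: \(\PP\mathcal{W}\cap\Sing\rho\) is contained in \(Q\cap\PP\mathcal{E}'=Q'\), and the splitting argument never uses local freeness of \(\mathcal{S}_{d-1}\) anyway.
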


\begin{proof}
We first show that \(\sO_\rho(-1) \otimes \mathcal{S}_{d-1}\), viewed as the
image of
\(\phi_d \colon \sO_\rho(-1) \otimes \rho^*\mathcal{I}_{d-1} \to \rho^*\mathcal{I}_d\)
as above, intersects \(\rho^*\mathcal{I}_d'\) trivially away on
\(Q \setminus Q'\). There, the inclusion \(\mathcal{E}' \subset \mathcal{E}\)
is split by the composite
\(\sO_\rho(-1) \hookrightarrow \rho^*\mathcal{E} \to \rho^*\mathcal{E}''\),
and this induces a corresponding splitting of the sequence from
\parref{cliff-subbundle-ideals}:
\[
\rho^*\mathcal{I}|_{Q \setminus Q'} \cong
\rho^*\mathcal{I}'\rvert_{Q \setminus Q'} \oplus
\sO_\rho(-1) \otimes \rho^*\mathcal{I}'[-1]\rvert_{Q \setminus Q'}.
\]
With respect to this splitting, \(\phi_d\rvert_{Q \setminus Q'}\) may be identified as the map
\[
(\varphi_1, \varphi_2) \colon
\sO_\rho(-1) \otimes \rho^*\mathcal{I}_{d-1}'\rvert_{Q \setminus Q'} \oplus
\sO_\rho(-2) \otimes \rho^*\mathcal{I}_{d-2}'\rvert_{Q \setminus Q'} \longrightarrow
\rho^*\mathcal{I}_d\rvert_{Q \setminus Q'}
\]
where \(\varphi_i\) is induced by \(i\)-fold Clifford multiplication. Now
\(\varphi_2 = 0\) since it is multiplication by the equation of \(Q\) as in
\parref{cliff-spinors}, and \(\varphi_1\) is the inclusion of a direct
complement of \(\rho^*\mathcal{I}_d'\rvert_{Q \setminus Q'}\) since any local section
contains a local section in \(\mathcal{E} \setminus \mathcal{E}'\). This
shows that \(\rho^*\mathcal{I}_d \cap \sO_\rho(-1) \otimes \mathcal{S}_{d-1}\)
is zero upon restriction to $Q \setminus Q'$. Since this is a
submodule of a locally free $\sO_Q$-module, and the complement $Q \setminus Q'$
of an effective Cartier divisor is scheme-theoretically dense in $Q$,
in fact
 $\rho^*\mathcal{I}_d \cap \mathcal{O}_\rho(-1) \otimes \mathcal{S}_{d-1}= 0$.
%For the first statement, let \(x \in Q\) correspond to an %isotropic element
%\(v\) in the stalk \(\mathcal{E}_{\rho(x)}\). In the stalk of
%\(\rho^*\mathcal{I}_d = \rho^*\Cl_{d-r}(\mathcal{E},q) \cdot \det\mathcal{W}\)
%at \(x\), identifying \(\sO_\rho(-1) \otimes \mathcal{S}_{d-1}\) as the image
%of the map \(\phi_d\) from the first complex of \parref{cliff-spinors-duality}
%shows that it may be written as the span of elements of the form
%\[
%v \cdot (v \cdot \xi + \eta) \cdot \det\mathcal{W}
%= (q(v) \cdot \xi + v \cdot \eta) \cdot \det\mathcal{W}
%= v \cdot \eta \cdot \det\mathcal{W}
%\]
%where \(\xi \in \rho^*\Cl_{d-r-1}(\mathcal{E},q)_x\) and
%\(\eta \in \rho^*\Cl_{d-r-2}(\mathcal{E},q)_x\) are elements not containing
%\(v\). All such elements are divisible by \(v\), so when
%\(x \in Q \setminus Q'\), meaning
%\(v \in (\mathcal{E}\setminus \mathcal{E}')_{\rho(x)}\), their span
%is linearly disjoint from the stalk of
%\(\rho^*\mathcal{I}_d' = \rho^*\Cl_{d-r}(\mathcal{E}',q') \cdot \det\mathcal{W}\)
%at \(x\). Since \(\rho^*\mathcal{I}_d\) is a locally free \(\sO_Q\)-module, it
%has no submodules set-theoretically supported on the effective Cartier divisor
%\(Q'\), and so this implies linear disjointness of the modules
%\(\rho^*\mathcal{I}_d'\) and \(\sO_\rho(-1) \otimes \mathcal{S}_{d-1}\).

The quotient of \(\rho^*\mathcal{I}_d\) by the span of these two submodules may
be identified as the cokernel of
\[
\psi \colon \sO_\rho(-1) \otimes \mathcal{S}_{d-1}
\to \rho^*\mathcal{I}_d
\to \rho^*\mathcal{I}_d/\rho^*\mathcal{I}_d'
\cong \rho^*(\mathcal{E}'' \otimes \mathcal{I}_{d-1}')
\]
where the rightmost identification is from \parref{cliff-subbundle-ideals}.
%The argument so far already shows that \(\coker\psi\) is supported on some
%thickening of \(Q'\); in fact,
We claim it is scheme-theoretically supported on \(Q'\). This is a
statement local on \(S\), so assume that \(\mathcal{L} \cong \sO_S\) and that
\[
\mathcal{E}
\cong \sO_S \cdot e_0 \oplus \mathcal{E}'
\cong \big(\bigoplus\nolimits_{i = 0}^{n-r-1} \sO_S \cdot e_i\big) \oplus \mathcal{W}
\cong \bigoplus\nolimits_{i = 0}^n \sO_S \cdot e_i.
\]
With these trivializations,
\(\rho^*(\mathcal{E}'' \otimes \mathcal{I}_{d-1}') \cong e_0 \cdot \rho^*\Cl_{d-r-1}(\mathcal{E}',q') \cdot \det\mathcal{W}\)
and, writing \(x_i\) for the coordinates of \(\PP\mathcal{E} \cong \PP^n\)
dual to the basis \(e_i\), the image of \(\psi\) may be identified with the
submodule
\[
\big(x_0e_0 \cdot \rho^*\Cl_{d-r-1}(\mathcal{E}',q') \cdot \det\mathcal{W}\big) +
\big(e_0 \cdot (x_1e_1 + \cdots + x_ne_n) \cdot \rho^*\Cl_{d-r-2}(\mathcal{E}',q') \cdot \det\mathcal{W}\big).
\]
The first summand shows that \(\coker\psi\) is supported on the vanishing
locus \(Q' = Q \cap \PP\mathcal{E}'\) of \(x_0\).

Writing \(\coker\psi \cong  j'_*\mathcal{F}\), it remains to show that
\(\mathcal{F} \cong \rho'^*\mathcal{E}'' \otimes \mathcal{S}_{d-1}'\). We have
$\mathcal{F} = \coker j^*\psi = \coker \alpha$ where $\alpha$ is as the
diagonal map in
%This
%can be deduced locally on \(S\) by examining the second summand above, which
%shows that the restriction of \(\psi\) to \(Q'\) factors through the map
%\(\phi_{d-1}'\). More globally, write \(\mathcal{F} \cong \coker j^*\psi\).
%Since \(\psi\) is induced from \(\phi_d\), the diagram
\[
\begin{tikzcd}
0 \rar
& \rho'^*\mathcal{I}_d' \rar
& \rho'^*\mathcal{I}_d \rar
& \rho'^*(\mathcal{E}'' \otimes \mathcal{I}_{d-1}') \rar
& 0 \\
0 \rar
& \sO_{\rho'}(-1) \otimes \rho'^*\mathcal{I}_{d-1}'  \rar \uar["\phi_d'"]
& \sO_{\rho'}(-1) \otimes \rho'^*\mathcal{I}_{d-1} \ar[ur, "\alpha"] \rar \uar["\phi_d\rvert_{Q'}"]
& \sO_{\rho'}(-1) \otimes \rho'^*(\mathcal{E}'' \otimes \mathcal{I}_{d-2}') \rar \uar["\phi_{d-1}'"]
& 0
\end{tikzcd}
\]
where the rows are exact and the squares are commutative by
\parref{cliff-subbundle-ideals}. We see that the cokernel of $\alpha$ is equal to the cokernel of $\phi'_{d-1}$ which is equal to $\rho^*\mathcal{E}'' \otimes j'\mathcal{S}_{d-1}'$.
%implies that \(\mathcal{F}\) is the cokernel
%of the rightmost vertical map, which is \(\rho'^*\mathcal{E}'' \otimes \mathcal{S}_{d-1}'\)
%as required.
\end{proof}

\subsectiondash{Cones}\label{cliff-cone-situation}
The final few statements in this section describe the behaviour of spinor
sheaves along cones over quadrics. Let \(\mathcal{K} \subset \mathcal{E}\) be a
rank \(c\) subbundle contained in the radical of \(q\), set
\(\bar{\mathcal{E}} \coloneqq \mathcal{E}/\mathcal{K}\), and let
\(\bar{q} \colon \bar{\mathcal{E}} \to \mathcal{L}\) be the induced quadratic
form. The quotient map \(\mathcal{E} \to \bar{\mathcal{E}}\) induces a
short exact sequence of graded \(\Cl(\mathcal{E},q)\)-modules
\[
0 \to
\mathcal{J} \to
\Cl(\mathcal{E},q) \to
\Cl(\bar{\mathcal{E}},\bar{q}) \to
0
\]
where the kernel is the sheaf of ideals generated by \(\mathcal{K}\).
Since \(\mathcal{K}\) lies in the radical of \(q\), it is central in
\(\Cl(\mathcal{E},q)\), and so \(\mathcal{J}\) is a two-sided ideal. The
quotient map endows each \(\Cl(\bar{\mathcal{E}},\bar{q})\)-module with the
structure of a \(\Cl(\mathcal{E},q)\)-module.

Let \(\mathcal{W} \subset \mathcal{E}\) be an isotropic subbundle of rank
\(r\) containing \(\mathcal{K}\), and let
\(\widebar{\mathcal{W}} \subset \widebar{\mathcal{E}}\) be the corresponding
isotropic bundle in the quotient. The Clifford ideals \(\mathcal{I}\) and
\(\widebar{\mathcal{I}}\) associated with \(\mathcal{W}\) and
\(\widebar{\mathcal{W}}\), respectively, are related as follows:

\begin{Lemma}\label{cliff-cone-ideals}
\(\widebar{\mathcal{I}}_d \cong \mathcal{I}_{d+c} \otimes_{\sO_S} \det\mathcal{K}^\vee\)
as \(\Cl_0(\mathcal{E},q)\)-modules
for each \(d \in \mathbf{Z}\).
\end{Lemma}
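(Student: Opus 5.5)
The plan is to construct an explicit map \(\mathcal{I}_{d+c} \to \widebar{\mathcal{I}}_d \otimes \det\mathcal{K}\), or rather its inverse, and to check locally that it is an isomorphism.

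First, since \(\mathcal{K} \subseteq \mathcal{W}\) are both subbundles, we have \(\det\mathcal{W} \cong \det\mathcal{K} \otimes \det\widebar{\mathcal{W}}\), where \(\widebar{\mathcal{W}} = \mathcal{W}/\mathcal{K}\) has rank \(r-c\). Concretely, locally choose a basis \(k_1,\ldots,k_c\) of \(\mathcal{K}\) and extend it by \(w_{1},\ldots,w_{r-c}\) to a basis of \(\mathcal{W}\). Then \(\det\mathcal{W}\) is spanned in \(\Cl(\mathcal{E},q)\) by \(k_1\cdots k_c\, w_1\cdots w_{r-c}\), and \(\det\widebar{\mathcal{W}}\) is spanned by \(\bar w_1 \cdots \bar w_{r-c}\) in \(\Cl(\widebar{\mathcal{E}},\bar q)\).

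Second, I would define
\[
\mu \colon \Cl(\widebar{\mathcal{E}},\bar q) \otimes \det\widebar{\mathcal{W}} \otimes \det\mathcal{K} \to \Cl(\mathcal{E},q)
\]
by \(\bar\xi \otimes \bar\omega \otimes \kappa \mapsto \xi\,\kappa\,\omega\), where \(\xi, \omega\) are arbitrary lifts. This is well defined for the following reasons. Each \(k_i\) is central, since it lies in the radical. Moreover \(\kappa \cdot k_i = \pm k_i^2 \cdots = \pm q(k_i)\cdots\), and \(q(k_i)=0\) because \(\mathcal{K} \subseteq \mathcal{W}\) is isotropic; with the alternating relation \(k_ik_j = -k_jk_i\) (which holds as \(b_q(k_i,k_j)=0\)), this gives \(\mathcal{J}\cdot\kappa = 0\), so \(\kappa\) annihilates the ideal generated by \(\mathcal{K}\). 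It is a map of left \(\Cl(\mathcal{E},q)\)-modules (hence of \(\Cl_0\)-modules), and it shifts degree by \(c\). Its image is exactly \(\Cl(\mathcal{E},q)\cdot\det\mathcal{W} = \mathcal{I}\), because \(\kappa\) is central and \(\kappa\omega\) generates \(\det\mathcal{W}\). Its kernel contains \(\Cl(\widebar{\mathcal{E}},\bar q)\cdot \widebar{\mathcal{W}}\otimes\cdots\), so it factors through the quotient, which is \(\widebar{\mathcal{I}}\otimes\det\mathcal{K}\) by the presentation in \parref{cliff-ideals}. Equivalently, it is cleaner to define the map on \(\widebar{\mathcal{I}}\) directly: an element \(\bar\xi\bar\omega\) maps to \(\xi\kappa\omega\), and one checks that this depends only on the product \(\bar\xi\bar\omega\).

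Third, this gives a surjection \(\widebar{\mathcal{I}}_d \otimes \det\mathcal{K} \to \mathcal{I}_{d+c}\) of locally free \(\sO_S\)-modules. The source has rank \(2^{(n+1-c)-1-(r-c)} = 2^{n-r}\) and the target has rank \(2^{n-r}\), so a surjection between locally free modules of equal finite rank is an isomorphism; tensoring with \(\det\mathcal{K}^\vee\) then gives the claim.

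The main obstacle is the well-definedness in the second step, namely that \(\xi\kappa\omega\) depends only on \(\bar\xi\bar\omega \in \widebar{\mathcal{I}}\) and not on the choices of lifts. If verifying this directly is messy, I would instead choose local bases adapted to \(\mathcal{K}\subseteq\mathcal{W}\subseteq\mathcal{E}\). In such bases, \(\mathcal{I}_{d+c}\) has a basis of monomials \(e_{J}\,k_1\cdots k_c\,w_1\cdots w_{r-c}\), with \(J\) ranging over subsets of a complement of \(\mathcal{W}\), and \(\widebar{\mathcal{I}}_d\) has the corresponding basis \(\bar e_J\bar w_1\cdots\bar w_{r-c}\). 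The map can then be written down on these bases and checked to be a bijection, with compatibility with the \(\Cl_0\)-action following from the centrality of \(\kappa\).
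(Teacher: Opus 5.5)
Your argument is correct. The map \(\bar\xi\bar\omega\otimes\kappa\mapsto\xi\kappa\omega\) you construct is the same isomorphism that the paper's comparison of presentations implicitly gives; the difference is in how you show it is injective.

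The paper identifies both sides as the same cyclic \(\Cl(\mathcal{E},q)\)-module. The annihilator of \(\det\widebar{\mathcal{W}}\) in \(\Cl(\mathcal{E},q)\) is the preimage of \(\Cl(\bar{\mathcal{E}},\bar q)\cdot\widebar{\mathcal{W}}\). This equals \(\Cl(\mathcal{E},q)\cdot\mathcal{W}\), because \(\mathcal{J}\) is generated by \(\mathcal{K}\subseteq\mathcal{W}\). By \parref{cliff-ideals} it is also the annihilator of \(\det\mathcal{W}\). So \(\widebar{\mathcal{I}}_d\) and \(\mathcal{I}_{d+c}\otimes\det\mathcal{K}^\vee\) have literally the same presentation, namely \(\Cl(\mathcal{E},q)\otimes\det\widebar{\mathcal{W}}\) modulo \(\Cl(\mathcal{E},q)\cdot\mathcal{W}\otimes\det\widebar{\mathcal{W}}\), with the appropriate degree shifts.

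You use the annihilator description only on the \(\bar{\mathcal{E}}\) side, to factor through \(\widebar{\mathcal{I}}\). On the \(\mathcal{E}\) side you need only the easy vanishings \(\mathcal{J}\kappa=\kappa\mathcal{J}=0\) and \(\mathcal{W}\cdot\det\mathcal{W}=0\); the latter is the one-line justification missing from your claim that the kernel contains \(\Cl(\bar{\mathcal{E}},\bar q)\cdot\widebar{\mathcal{W}}\). You then conclude from surjectivity together with the equality of ranks \(2^{n-r}\). In effect you trade the second annihilator computation for the rank count recorded in \parref{cliff-ideals}, and you get an explicit formula for the isomorphism.

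One small correction, which also applies to the paper's wording. For \(k\in\mathcal{K}\) and \(v\in\mathcal{E}\) one has \(kv+vk=b_q(k,v)=0\). So elements of \(\mathcal{K}\) anticommute with vectors: they commute with \(\Cl_0(\mathcal{E},q)\) and are central only up to the sign \((-1)^{\deg x}\) for homogeneous \(x\), genuinely central only where \(2=0\). Your own use of \(k_ik_j=-k_jk_i\) already reflects this. It is harmless, because all you need is that \(\mathcal{J}\) is two-sided and that \(\kappa\) moves past homogeneous elements up to sign.
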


\begin{proof}
The annihilator in \(\Cl(\mathcal{E},q)\) of the generator
\(\det\widebar{\mathcal{W}}\) of \(\widebar{\mathcal{I}}\) is the preimage of
the sheaf of left ideals in \(\Cl(\bar{\mathcal{E}},\bar{q})\) generated by
\(\widebar{\mathcal{W}}\). Since
\(\widebar{\mathcal{W}} = \mathcal{W}/\mathcal{K}\), and \(\mathcal{K}\)
is central in \(\Cl(\mathcal{E},q)\), this is the sheaf of left ideals in
\(\Cl(\mathcal{E},q)\) generated by \(\mathcal{W}\). So Clifford multiplication
gives rise to a presentation
\[
\Cl_{d-r+c-1}(\mathcal{E},q) \otimes \mathcal{W} \otimes \det\widebar{\mathcal{W}} \to
\Cl_{d-r+c}(\mathcal{E},q) \otimes \det\widebar{\mathcal{W}} \to
\bar{\mathcal{I}}_d \to
0.
\]
Since
\(\det\widebar{\mathcal{W}} \cong \det\mathcal{W} \otimes \det\mathcal{K}^\vee\),
this is also the presentation for
\(\mathcal{I}_{d+c} \otimes \det\mathcal{K}^\vee\).
\end{proof}

As usual, linear projection centred along \(\PP\mathcal{K}\) results in a
commutative diagram
\[
\begin{tikzcd}[column sep=1em, row sep=1.5em]
&&& \widetilde{Q} \ar[d,"\tilde{\iota}", hook] \ar[ddlll, "b_Q"', bend right=30] \ar[ddrrr, "a_{\bar{Q}}", bend left=30] \\
&&& \widetilde{\PP}\mathcal{E} \ar[dl,"b"'] \ar[dr,"a"] \\
Q \ar[rr,"\iota", hook] \ar[rrrd, "\rho"', bend right=24] &&
\PP\mathcal{E} \ar[dr,"\pi"'] \ar[rr,dashed]
&
& \PP\bar{\mathcal{E}} \ar[dl,"\bar\pi"]
&& \bar{Q} \ar[ll,"\bar{\iota}"', hook'] \ar[llld, "\bar{\rho}", bend left=24] \\
&&& S
\end{tikzcd}
\]
where \(b \colon \widetilde{\PP}\mathcal{E}\) and
\(b_Q \colon \widetilde{Q} \to Q\) are the blowups of \(\PP\mathcal{E}\) and
\(Q\) along \(\PP\mathcal{K}\), respectively, and
\(a \colon \widetilde{\PP}\mathcal{E} \to \PP\bar{\mathcal{E}}\) is the morphism
resolving linear projection. The blowup \(\widetilde{Q}\) may be identified as
the restriction of linear projection
\(a \colon \widetilde{\PP}\mathcal{E} \to \PP\bar{\mathcal{E}}\) over the base
of the cone \(\bar{Q}\); in other words, the top right square in the diagram is
Cartesian. The spinor sheaves \(\mathcal{S}\) and \(\bar{\mathcal{S}}\)
associated with \(\mathcal{W}\) and \(\widebar{\mathcal{W}}\), respectively,
are related as follows:

\begin{Proposition}\label{cliff-cone-spinors}
\(Rb_{Q,*}La_{\bar{Q}}^*\bar{\mathcal{S}}_d \cong \mathcal{S}_{d+c} \otimes \rho^*\det\mathcal{K}^\vee\)
in \(\Dqc(Q)\) for each \(d \in \mathbf{Z}\).
\end{Proposition}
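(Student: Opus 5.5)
The plan is to compute both sides through the defining two-term presentations of \parref{cliff-spinors}, pulled back along the projective bundle \(a\) and then pushed forward along the blowup \(b\). By \parref{lemma-projection-facts}\ref{lemma-projection-facts-bundle}, applied with \(\mathcal{K}\) in place of \(\mathcal{F}\), the morphism \(a \colon \widetilde{\PP}\mathcal{E} \to \PP\bar{\mathcal{E}}\) is a \(\PP^c\)-bundle and in particular flat. The top right square of the diagram is Cartesian, so \(a_{\bar Q}\) is flat as well, and \(La_{\bar Q}^*\bar{\mathcal{S}}_d = a_{\bar Q}^*\bar{\mathcal{S}}_d\) is a sheaf. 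Flat base change then gives \(a^*\bar\iota_*\bar{\mathcal{S}}_d \cong \tilde\iota_* a_{\bar Q}^*\bar{\mathcal{S}}_d\).

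Pulling back the exact sequence \(0 \to \sO_{\bar\pi}(-1) \otimes \bar\pi^*\bar{\mathcal{I}}_{d-1} \to \bar\pi^*\bar{\mathcal{I}}_d \to \bar\iota_*\bar{\mathcal{S}}_d \to 0\) along the flat map \(a\) keeps it exact. It therefore presents \(\tilde\iota_* a_{\bar Q}^*\bar{\mathcal{S}}_d\) as the cokernel of an injection
\[
b^*\sO_\pi(-1) \otimes \sO_{\widetilde{\PP}\mathcal{E}}(E) \otimes \tilde\pi^*\bar{\mathcal{I}}_{d-1} \to \tilde\pi^*\bar{\mathcal{I}}_d,
\]
where \(\tilde\pi\) is the structure map of \(\widetilde{\PP}\mathcal{E}\) and I have used \(a^*\sO_{\bar\pi}(-1) \cong b^*\sO_\pi(-1) \otimes \sO(E)\) from \parref{lemma-projection-facts}\ref{lemma-projection-facts-pic}. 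Next I apply \(Rb_*\), using the projection formula together with \(Rb_*\sO_{\widetilde{\PP}\mathcal{E}} \cong \sO_{\PP\mathcal{E}}\) and \(Rb_*\sO_{\widetilde{\PP}\mathcal{E}}(E) \cong \sO_{\PP\mathcal{E}}\). The second isomorphism is the standard computation for the blowup of a projective bundle along the linear subbundle \(\PP\mathcal{K}\), of codimension \(n+1-c \geq 2\); it follows from \(0 \to \sO \to \sO(E) \to \sO_E(E) \to 0\) and the vanishing of \(Rb_*\sO_E(E)\), since \(\sO_E(E)\) is \(\sO(-1)\) on the fibres of \(E \to \PP\mathcal{K}\). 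I will assume \(c \leq n-1\), the other cases being degenerate.

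This yields a triangle
\[
\sO_\pi(-1) \otimes \pi^*\bar{\mathcal{I}}_{d-1} \xrightarrow{\;\psi\;} \pi^*\bar{\mathcal{I}}_d \to Rb_*\tilde\iota_* a_{\bar Q}^*\bar{\mathcal{S}}_d,
\]
and since \(Rb_*\tilde\iota_* = \iota_* Rb_{Q,*}\), the third term is \(\iota_* Rb_{Q,*}La_{\bar Q}^*\bar{\mathcal{S}}_d\). Lemma \parref{cliff-cone-ideals} identifies \(\bar{\mathcal{I}}_{d-1}\) and \(\bar{\mathcal{I}}_d\) with \(\mathcal{I}_{d-1+c} \otimes \det\mathcal{K}^\vee\) and \(\mathcal{I}_{d+c} \otimes \det\mathcal{K}^\vee\). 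The remaining point is that, under these identifications, \(\psi\) equals \(\phi_{d+c} \otimes \det\mathcal{K}^\vee\). Away from \(E\) this holds because Clifford multiplication by the tautological vector \(v \in \sO_\pi(-1) \subset \pi^*\mathcal{E}\) on a \(\Cl(\bar{\mathcal{E}},\bar q)\)-module is multiplication by the image of \(v\) in \(\bar{\mathcal{E}}\). The quotient \(\Cl(\mathcal{E},q) \to \Cl(\bar{\mathcal{E}},\bar q)\) is compatible with this, and the isomorphism of \parref{cliff-cone-ideals} is one of \(\Cl(\mathcal{E},q)\)-modules. Both maps are maps of vector bundles on \(\PP\mathcal{E}\) agreeing off the codimension at least \(2\) locus \(\PP\mathcal{K}\), hence agreeing everywhere.

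Granting this, \(\psi\) is injective, so \(\iota_* Rb_{Q,*}La_{\bar Q}^*\bar{\mathcal{S}}_d\) is concentrated in degree \(0\) and isomorphic to \(\iota_*(\mathcal{S}_{d+c} \otimes \rho^*\det\mathcal{K}^\vee)\). Since \(\iota\) is a closed immersion, \(\iota_*\) is exact and fully faithful on quasi-coherent sheaves and detects cohomology. This gives the claimed isomorphism in \(\Dqc(Q)\), with the \(\sO_Q\)-module structures matching because the isomorphism of \(\sO_{\PP\mathcal{E}}\)-modules is between sheaves killed by the equation of \(Q\). I expect the main obstacle to be the identification of \(\psi\) with \(\phi_{d+c}\): tracking the twist by \(\sO(E)\) and checking that the adjunction/extension map across \(\PP\mathcal{K}\) really is Clifford multiplication, rather than differing by a unit or a sign. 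I would first do this in local coordinates where \(\mathcal{K}\) is spanned by the last \(c\) basis vectors.
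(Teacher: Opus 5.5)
Your proposal is correct and follows the paper's proof essentially step for step: pull back the defining presentation of $\bar{\mathcal{S}}_d$ along the projective bundle $a$ (flat, hence tor-independent, base change), push forward along $b$ using $Rb_*a^*\sO_{\bar\pi}(-i) \cong \sO_\pi(-i)$ and \parref{cliff-cone-ideals}, compare with the defining sequence of $\mathcal{S}_{d+c}$, and conclude using full faithfulness of $\iota_*$. The one addition is your explicit check that the resulting map is $\phi_{d+c} \otimes \det\mathcal{K}^\vee$ by comparing the two maps away from $\PP\mathcal{K}$, which the paper leaves implicit in ``comparing with the defining sequence''; that check only needs $\PP\mathcal{E} \setminus \PP\mathcal{K}$ to be scheme-theoretically dense, not of complementary codimension $2$.
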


\begin{proof}
View the defining presentation of \(\bar{\mathcal{S}}_d\) on
\(\PP\bar{\mathcal{E}}\) as an exact triangle in \(\Dqc(\PP\bar{\mathcal{E}})\)
of the form
\[
\sO_{\bar{\pi}}(-1) \otimes \bar{\pi}^*\bar{\mathcal{I}}_{d-1} \longrightarrow
\bar\pi^*\bar{\mathcal{I}}_d \longrightarrow
R\bar{\iota}_*\bar{\mathcal{S}}_d \stackrel{+1}{\longrightarrow}
\]
Apply \(Rb_*La^*\) to obtain a triangle in \(\Dqc(\PP\mathcal{E})\), the first
two terms of which are easy to identify: Commutativity of the
diagram, the projection formula, and \parref{cliff-cone-ideals} together give
the first line of
\begin{align*}
Rb_*La^*(\sO_{\bar{\pi}}(-i) \otimes \bar{\pi}^*\bar{\mathcal{I}}_{d-i})
& \cong Rb_* a^*\sO_{\bar{\pi}}(-i) \otimes \pi^*(\mathcal{I}_{d+c-i} \otimes \det\mathcal{K}^\vee) \\
& \cong \sO_{\pi}(-i) \otimes \pi^*(\mathcal{I}_{d+c-i} \otimes \det\mathcal{K}^\vee)
\;\text{for \(i = 0,1\)}.
\end{align*}
For the second line, note that
\(a^*\sO_{\bar{\pi}}(-1) \cong \sO_{\widetilde{\PP}\mathcal{E}}(E) \otimes b^*\sO_\pi(-1)\)
where \(E \subset \widetilde{\PP}\mathcal{E}\) is the exceptional divisor, and
so \(Rb_*a^*\sO_{\bar{\pi}}(-i) \cong \sO_{\pi}(-i)\) for \(i = 0, 1\). As for
the third term in the new triangle, observe that
\(a \colon \widetilde{\PP}\mathcal{E} \to \PP\bar{\mathcal{E}}\) is a
projective bundle, so it is tor independent with
\(\bar{\iota} \colon \bar{Q} \to \PP\bar{\mathcal{E}}\), and thus
\(La^* \circ R\bar{\iota}_* = R\tilde{\iota} \circ La_{\bar{Q}}^*\) as functors
\(\Dqc(\bar{Q}) \to \Dqc(\widetilde{\PP}\mathcal{E})\) by
\citeSP{0E23}. Commutativity of the top left square above then shows that
\(Rb_* \circ R\tilde{\iota} = R\iota_* \circ Rb_{Q,*}\). Therefore the new
triangle is
\[
\sO_\pi(-1) \otimes \pi^*(\mathcal{I}_{d+c-1} \otimes \det\mathcal{K}^\vee) \longrightarrow
\pi^*(\mathcal{I}_{d+c} \otimes \det\mathcal{K}^\vee) \longrightarrow
R\iota_*Rb_{Q,*}La_{\bar{Q}}^*\bar{\mathcal{S}}_d \stackrel{+1}{\longrightarrow}
\]
Comparing with the defining sequence for \(\mathcal{S}_{d+c}\) now shows that
\[
R\iota_*Rb_{Q,*}L_{\bar{Q}}^*\bar{\mathcal{S}}_d \cong
R\iota_*(\mathcal{S}_{d+c} \otimes \rho^*\det\mathcal{K}^\vee) \in
\Dqc(\PP\mathcal{E}).
\]
Since the closed immersion \(\iota \colon Q \to \PP\mathcal{E}\) is, in
particular, affine, this at least shows that
\(Rb_{Q,*}L_{\bar{Q}}^*\bar{\mathcal{S}}_d\) is a complex concentrated in degree
\(0\). Using that \(\iota_*\) is fully faithful on quasi-coherent modules, see
\citeSP{01QY}, the sheaf in degree \(0\) is identified
with \(\mathcal{S}_{d+c} \otimes \rho^*\deg\mathcal{K}^\vee\),
whence the result.
\end{proof}

\section{Kuznetsov components under hyperbolic reduction}
\label{section-hyperbolic-reduction}
Returning to the setting of \parref{hyperbolic-reduction-setting}, suppose
the quadric bundle \(\rho \colon Q \to S\) admits a regular section
corresponding to a rank \(1\) subbundle \(\mathcal{F} \subset \mathcal{E}\),
and let \(\rho' \colon Q' \to S\) be the associated hyperbolic reduction. Assume
throughout that \(S_{n-1}\) does not contain any weakly associated points of
\(S\), so that \(Q'\) is an effective Cartier divisor in
\(\PP(\mathcal{F}^\perp/\mathcal{F})\) by
\parref{lemma-cartier-divisor-criterion}. The aim in the first half of this
section is to relate the Kuznetsov components of \(Q\) and \(Q'\).
Specifically, a mutations argument gives in \parref{hypred-equivalence} an
\(S\)-linear equivalence \(\Phi \colon \Ku(Q') \to \Ku(Q)\). We are then able
to explicitly identify the kernel underlying \(\Phi\) in
\parref{proposition-identify-the-kernel}, with which we show in
\parref{cliff-hyperbolic-reduction-spinors} that
\(\Phi\) sends the dual spinor sheaves of \(Q'\) to those of \(Q\).

To begin, continue with the notation from the diagram in
\parref{quadrics-reduction-blowup}. By its construction together with
\parref{lemma-blowup-relation}, the scheme
\(\widetilde{\rho} \colon \widetilde{Q} \to S\) is a blowup in two ways, and fits
into a commutative diagram
\[
\begin{tikzcd}
 E \ar[d, "b_E"'] \ar[rr,hook,"\tilde{\imath}"] &&
 \widetilde{Q} \ar[dl,"b"'] \ar[dr,"a"] &&
 \ar[ll,hook',"\tilde{\jmath}"'] \Lambda \ar[d,"a_\Lambda"] \\
\PP\mathcal{F} \rar[hook] &
Q  &&
\PP\mathcal{E}' &
\lar[hook',"j'"'] Q'
\end{tikzcd}
\]
of schemes over \(S\). Orlov's blowup formula from
\cite[Theorem 4.3]{Orlov:Formulae}, see also \cite[Theorem 1.6]{Kuznetsov:SOD},
therefore gives two semiorthogonal decompositions of
\(\Dqc(\widetilde{Q})\): On the one hand, viewing \(\widetilde{Q}\) as a
blowup of \(Q\) along \(\PP\mathcal{F}\) yields a \(S\)-linear semiorthogonal
decomposition of \(\Dqc(\widetilde{Q})\) of the form
\begin{multline}\label{equation-sod-b}
\big\langle Lb^*\Ku(Q),\,
L\widetilde{\rho}^*\Dqc(S) \otimes^L \sO_{\widetilde{Q}},\,
\ldots,\,
L\widetilde{\rho}^*\Dqc(S) \otimes^L \sO_{\widetilde{Q}}((n-2)H), \\
L\widetilde{\rho}^*\Dqc(S) \otimes^L R\tilde{\imath}_*\sO_E,\,
\ldots,\,
L\widetilde{\rho}^*\Dqc(S) \otimes^L R\tilde{\imath}_*\sO_E(-(n-3)E)
\big\rangle
\tag{\(\mathrm{B}\)}
\end{multline}
where \(\sO_{\widetilde{Q}}(H) \coloneqq b^*\sO_\rho(1)\). On the other hand,
viewing \(\widetilde{Q}\) as a blowup of \(\PP\mathcal{E}'\) along
\(Q'\) and writing \(\sO_{\widetilde{Q}}(h) \coloneqq a^*\sO_{\rho'}(1)\)
yields the \(S\)-linear semiorthogonal decomposition
\begin{multline}\label{equation-sod-a}
\big\langle
R\tilde{\jmath}_*(La_\Lambda^*\Ku(Q') \otimes^L \sO_\Lambda(\Lambda)),\,
L\widetilde{\rho}^*\Dqc(S) \otimes^L R\tilde{\jmath}_*\sO_\Lambda(\Lambda),\,
\ldots,\,
L\widetilde{\rho}^*\Dqc(S) \otimes^L R\tilde{\jmath}_*\sO_\Lambda(\Lambda+ (n-4)h), \\
L\widetilde{\rho}^*\Dqc(S) \otimes^L \sO_{\widetilde{Q}},\,
\ldots,\,
L\widetilde{\rho}^*\Dqc(S) \otimes^L \sO_{\widetilde{Q}}((n-1)h)
\big\rangle.
\tag{\(\mathrm{A}\)}
\end{multline}
The following matches the two Kuznetsov components via a series of mutations:

\begin{Proposition}\label{hypred-equivalence}
As \(S\)-linear subcategories of \(\Dqc(\widetilde{Q})\),
\[
Lb^*\Ku(Q)
=
\mathbf{L}_{L\widetilde{\rho}^*\Dqc(S) \otimes^L \sO_{\widetilde{Q}}(-E)}
\big(R\tilde{\jmath}_*(La_\Lambda^*\Ku(Q') \otimes^L \sO_\Lambda(\Lambda))\big).
\]
This determines an \(S\)-linear equivalence of categories
\(\Phi \colon \Ku(Q') \to \Ku(Q)\).
\end{Proposition}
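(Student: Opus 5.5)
The plan is to transform decomposition \eqref{equation-sod-a} into one whose exceptional part coincides with that of \eqref{equation-sod-b}, up to a single leftover piece. Both Kuznetsov components then appear as complements of the same admissible subcategory, related by one left mutation. Throughout I use the relations \(H = h + E\) and \(\Lambda = H - 2E\), up to pullbacks of line bundles from \(S\), which come from \parref{lemma-projection-facts}\ref{lemma-projection-facts-pic} and \parref{lemma-blowup-relation}. I also use that every mutation functor across an \(S\)-exceptional object is \(S\)-linear and, by \parref{decompositions-mutations}, an equivalence between the relevant orthogonals. Since all twists by \(\widetilde\rho^*(\text{line bundle})\) preserve \(L\widetilde\rho^*\Dqc(S)\otimes^L(-)\), I will drop them.

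\textbf{Step 1: identify the restricted objects.} On \(\widetilde Q\), with \(\tilde\jmath\) the inclusion of \(\Lambda\), there are exact triangles
\[
\sO(kh) \to \sO(kh+\Lambda) \to R\tilde\jmath_*\sO_\Lambda(\Lambda+kh).
\]
I will mutate the \(n-3\) components \(R\tilde\jmath_*\sO_\Lambda(\Lambda+kh)\), \(0\le k\le n-4\), to the right through \(\sO,\ldots,\sO((n-1)h)\), or alternatively move the line bundles \(\sO(kh)\) to the left. The aim is to rewrite (A) as
\[
\langle \mathcal{K}', \sO(\Lambda),\ldots,\sO(\Lambda+(n-4)h), \sO,\ldots,\sO((n-1)h)\rangle
\]
with \(\mathcal{K}'\) equivalent to \(R\tilde\jmath_*(La_\Lambda^*\Ku(Q')\otimes\sO_\Lambda(\Lambda))\). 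Concretely:

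1. Left-mutate \(\sO,\ldots,\sO((n-4)h)\) past the torsion pieces. Because the relevant \(\Hom\)s vanish by \(\PP\)-bundle cohomology on \(\Lambda\to Q'\), these mutations replace \(R\tilde\jmath_*\sO_\Lambda(\Lambda+kh)\) by \(\sO(\Lambda+kh)\). They leave the image of \(\Ku(Q')\) untouched, since \(R\tilde\jmath_*(\ldots)\) is orthogonal to \(\sO(kh)\) by the definition of \(\Ku(Q')\).
2. Substitute \(\Lambda = H-2E\) and \(h = H-E\). This turns the line bundles into \(\sO(aH+bE)\), and the decomposition is then reorganised by cyclic shifts (\parref{decompositions-mutations}, using \(\omega_{\widetilde Q/S}=\sO(-(n-1)H+(n-2)E)\) up to a base twist) and further mutations. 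The target is the exceptional tail of (B), namely \(\sO,\ldots,\sO((n-2)H)\) and \(\sO_E,\ldots,\sO_E(-(n-3)E)\), together with one extra object, which should be \(\sO(-E)\).

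The Hom-vanishings needed at each swap reduce, via \(R b_*\) or \(R a_*\), to cohomology of \(\sO(iH+jE)\) on the blowup \(b\colon\widetilde Q\to Q\). These are standard: \(Rb_*\sO(jE)=\sO\) for \(0\le j\le n-2\), and \(Rb_*\) of \(\sO_E(-jE)\) is supported on \(\PP\mathcal F\) with known cohomology.

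\textbf{Step 2: compare the two residual components.} After Step 1 we will have
\[
\Dqc(\widetilde Q)=\langle \mathcal{B}, \sO(-E)\otimes L\widetilde\rho^*\Dqc(S), T\rangle=\langle Lb^*\Ku(Q), T\rangle,
\]
where \(T\) denotes the common tail from (B) and \(\mathcal{B}\) is the image of the \(\Ku(Q')\) component. The part of the first decomposition preceding \(T\) equals \(T^\perp = Lb^*\Ku(Q)\). Since \(\mathcal{B}\) and \(\sO(-E)\otimes L\widetilde\rho^*\Dqc(S)\) together generate \(T^\perp\), the left mutation of \(\mathcal{B}\) through \(\sO(-E)\) lands in \(\langle\sO(-E)\rangle^\perp\cap T^\perp\).

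It then remains to check that this intersection is exactly \(Lb^*\Ku(Q)\), that is, that \(Lb^*\Ku(Q)\subset\langle\sO(-E)\rangle^\perp\). For \(F\in\Ku(Q)\),
\[
R\widetilde\rho_*R\sHom(\sO(-E),Lb^*F)=R\rho_*(F\otimes Rb_*\sO(E))=R\rho_*F=0.
\]
This is what forces \(\sO(-E)\) to be the leftover object, and it will guide the bookkeeping in Step 1. The composite \(\Phi = Rb_*\circ\mathbf L_{\sO(-E)}\circ R\tilde\jmath_*(La_\Lambda^*(-)\otimes\sO_\Lambda(\Lambda))\) is then \(S\)-linear. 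It is an equivalence because each constituent is fully faithful on its source and the mutation is an equivalence onto the stated image, while \(Lb^*\) is fully faithful with left inverse \(Rb_*\).

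\textbf{Main obstacle.} I expect the main obstacle to be the precise sequence of mutations and cyclic shifts in Step 1. The numerology (\(n-3\) torsion pieces and \(n\) line bundles in (A), against \(n-1\) line bundles and \(n-2\) torsion pieces in (B)) is easy to mismatch. Each swap also requires a vanishing that, in this generality, must be checked relatively over \(S\). I would first work this out for small \(n\), say \(n=3\), where \(Q'\) is zero-dimensional, to pin down the order, and then verify that all the vanishing statements hold fibrewise and commute with base change.
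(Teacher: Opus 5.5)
There is a genuine gap, and it lies in the configuration you aim for in Step~2, which is also the target of Step~1. Write \(\mathcal{C}\) for \(L\widetilde\rho^*\Dqc(S)\otimes^L\sO_{\widetilde Q}(-E)\) and \(\mathcal{K}'\) for \(R\tilde\jmath_*(La_\Lambda^*\Ku(Q')\otimes^L\sO_\Lambda(\Lambda))\). You propose \(\Dqc(\widetilde Q)=\langle\mathcal{B},\mathcal{C},T\rangle=\langle Lb^*\Ku(Q),T\rangle\), with \(T\) the exceptional tail of \((\mathrm{B})\). No such decomposition exists, for three reasons.

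First, by your own numerology \((\mathrm{A})\) and \((\mathrm{B})\) each have \(2n-3\) exceptional components. Mutations and cyclic shifts preserve this number, so \(T\) together with an extra \(\sO_{\widetilde Q}(-E)\) is one component too many.

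Second, and more decisively, your displayed equality forces \(\sO_{\widetilde Q}(-E)\in T^\perp=Lb^*\Ku(Q)\). That contradicts the inclusion \(Lb^*\Ku(Q)\subset\langle\sO_{\widetilde Q}(-E)\rangle^\perp\), which you verify correctly, since \(R\widetilde\rho_*R\sHom(\sO_{\widetilde Q}(-E),\sO_{\widetilde Q}(-E))\cong\sO_S\neq0\).

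Third, in \(\langle\mathcal{B},\mathcal{C},\ldots\rangle\) the component \(\mathcal{B}\) already lies in \(\mathcal{C}^\perp\). There \(\mathbf{L}_{\mathcal{C}}\) is the identity, so the mutation you want to apply would do nothing.

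Step~1.1 has a related problem. Mutating across pairs whose relative \(\Hom\)s vanish only reorders them. Replacing \(R\tilde\jmath_*\sO_\Lambda(\Lambda+kh)\) by \(\sO_{\widetilde Q}(\Lambda+kh)\) is instead a genuine mutation of the adjacent pair \(\langle R\tilde\jmath_*\sO_\Lambda(\Lambda+kh),\sO_{\widetilde Q}(kh)\rangle\), along the nonzero connecting map of your triangle.

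The missing point is that \(\sO_{\widetilde Q}(-E)\) is one of the \(2n-3\) exceptional pieces. It sits to the \emph{left} of \(\mathcal{K}'\) and to the \emph{right} of \(Lb^*\Ku(Q)\).

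In \((\mathrm{A})\) it is the last line bundle: up to a twist from \(S\), \(\sO_{\widetilde Q}((n-1)h)\cong\sO_{\widetilde Q}(-E)\otimes\omega_{\widetilde\rho}^\vee\). A Serre-duality shift therefore rewrites \((\mathrm{A})\) as \(\langle\mathcal{C},\mathcal{K}',T'\rangle\). Here \(T'\) is the \((2n-4)\)-term collection \(R\tilde\jmath_*\sO_\Lambda(\Lambda+kh)\) for \(0\le k\le n-4\), followed by \(\sO_{\widetilde Q}(kh)\) for \(0\le k\le n-2\).

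In \((\mathrm{B})\) it comes from the block \(\langle\sO_{\widetilde Q},\tilde\imath_*\sO_E\rangle=\langle\sO_{\widetilde Q}(-E),\sO_{\widetilde Q}\rangle\). The paper works forward from \((\mathrm{B})\) as follows:
\begin{enumerate}
\item right-mutate the \(E\)-supported pieces to form blocks \(\langle\sO_{\widetilde Q}(kH-kE),\tilde\imath_*\sO_E(-kE)\rangle\);
\item left-mutate inside each block to get \(\langle\sO_{\widetilde Q}(\Lambda+(k-1)h),\sO_{\widetilde Q}(kh)\rangle\);
\item convert each \(\sO_{\widetilde Q}(\Lambda+kh)\) into \(\tilde\jmath_*\sO_\Lambda(\Lambda+kh)\);
\item reorder using \(\widetilde\rho_*\sHom(\sO_{\widetilde Q}(kh),\tilde\jmath_*\sO_\Lambda(\Lambda+\ell h))=0\).
\end{enumerate}
This arrives at \(\langle Lb^*\Ku(Q),\mathcal{C},T'\rangle\). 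Comparing the two decompositions gives \((T')^\perp=\langle Lb^*\Ku(Q),\mathcal{C}\rangle=\langle\mathcal{C},\mathcal{K}'\rangle\). Hence \(\mathcal{K}'=\mathbf{R}_{\mathcal{C}}(Lb^*\Ku(Q))\) and \(Lb^*\Ku(Q)=\mathbf{L}_{\mathcal{C}}(\mathcal{K}')\).

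Your ingredients are right: \(h=H-E\), \(\Lambda=H-2E\), the formula for \(\omega_{\widetilde\rho}\), the final orthogonality check, and the deduction that \(\Phi\) is an equivalence. What must change is the target decomposition. The explicit mutation sequence you defer is the actual content of the proof.
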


Compare the proof below to the proof of \cite[Theorem 4.2]{Xie:Quadrics}.

\begin{proof}
Starting with the semiorthogonal decomposition \eqref{equation-sod-b}, we
perform a series of mutations, which are justified by
\parref{decompositions-mutations}, to arrive at a decomposition which may be
compared with \eqref{equation-sod-a}. The overall strategy is to carefully move
the components supported on \(E\) completely toward the left, so that they end
up adjacent to the Kuznetsov component of \(Q\). In what follows, all
functors are derived, so we suppress the \(L\) and \(R\) for left and right
derived functors. Moreover, all constructions are linear over \(S\), so to ease
the notation, we suppress the tensor over \(L\widetilde{\rho}^*\Dqc(S)\)
and simply denote by an exceptional object the corresponding \(S\)-linear
subcategory it generates. 

\textbf{Step 1.}
Starting from \(\tilde{\imath}_*\sO_E\) and moving right, for each \(k = 0,
\ldots, n-3\), move \(\tilde{\imath}_*\sO_E(-kE)\) to the left via \(n-2-k\)
right mutations. Each mutation is of the form
\[
\langle
\sO_{\widetilde{Q}}(mH - kE),\,
\tilde{\imath}_*\sO_E(-kE)
\rangle =
\langle
\tilde{\imath}_*\sO_E(-kE),\,
\mathbf{R}_{\tilde{\imath}_*\sO_E(-kE)}(\sO_{\widetilde{Q}}(mH-(k+1)E))
\rangle.
\]
To determine the right mutation, note \(\sO_{\widetilde{Q}}(H)\)
restricts trivially to \(E\) because \(\widetilde{Q} \to Q\) is a blowup along
a section, so that
\[
\rho_*b_*\sHom_{\sO_{\widetilde{Q}}}(\sO_{\widetilde{Q}}(mH-kE), \tilde{\imath}_*\sO_E(-kE))
\cong \rho_*b_*\tilde{\imath}_*\sO_E
\cong \sO_S,
\]
generated by the equation of \(E\) in \(\widetilde{Q}\). Therefore the
right mutation of \(\sO_{\widetilde{Q}}(mH-kE)\), which is the cone of
the coevaluation map above up to a twist, is
\[
\operatorname{cone}(\sO_{\widetilde{Q}} \to \tilde{\imath}_*\sO_E)
\otimes \sO_{\widetilde{Q}}(mH-kE)
\cong
\sO_{\widetilde{Q}}(mH-(k+1)E).
\]
This series of right mutations produces a semiorthogonal decomposition
\[
\langle
b^*\Ku(Q),
\mathcal{A}_0,
\ldots,
\mathcal{A}_{n-3},
\sO_{\widetilde{Q}}((n-2)h)
\rangle
\]
where
\(\mathcal{A}_k = \langle \sO_{\widetilde{Q}}(kH-kE), \tilde{\imath}_*\sO_E(-kE) \rangle\)
for each \(k = 0, \ldots, n-3\).

\textbf{Step 2.} Within each of the blocks \(\mathcal{A}_k\), left mutate
\(\tilde{\imath}_*\sO_E(-kE)\) through \(\sO_{\widetilde{Q}}(kH-kE)\). The
computation from step 1 implies that this produces decompositions
\[
\mathcal{A}_k
= \langle \sO_{\widetilde{Q}}(kH - (k+1)E), \sO_{\widetilde{Q}}(kH - kE) \rangle
= \langle
  \sO_{\widetilde{Q}}(\Lambda + (k-1)h),
  \sO_{\widetilde{Q}}(kh) \rangle,
\]
for \(k = 0, \ldots, n-3\), where the second equality arises from the relations
\[
\sO_{\widetilde{Q}}(H-E)
\cong \sO_{\widetilde{Q}}(h)
\;\;\text{and}\;\;
\sO_{\widetilde{Q}}(H-2E)
\cong \sO_{\widetilde{Q}}(\Lambda) \otimes \widetilde{\rho}^*(\mathcal{F} \otimes \mathcal{L}^\vee)
\]
from \parref{lemma-projection-facts}\ref{lemma-projection-facts-pic}
and \parref{lemma-blowup-relation}, respectively, and the fact that twisting by
\(\widetilde{\rho}^*(\mathcal{F} \otimes \mathcal{L}^\vee)\) does not affect
the \(S\)-linear subcategory an object generates. Regroup the resulting
decomposition of \(\Dqc(\widetilde{Q})\) as follows:
\[
\langle
b^*\Ku(Q),
\sO_{\widetilde{Q}}(-E),
\mathcal{B}_0,
\ldots,
\mathcal{B}_{n-4},
\sO_{\widetilde{Q}}((n-3)h),
\sO_{\widetilde{Q}}((n-2)h)
\rangle
\]
where
\(
\mathcal{B}_k \coloneqq
\langle \sO_{\widetilde{Q}}(kh),
\sO_{\widetilde{Q}}(\Lambda + kh) \rangle
\)
for \(k = 0,\ldots,n-4\).

\textbf{Step 3.} Within each of the blocks \(\mathcal{B}_k\), left mutate
\(\sO_{\widetilde{Q}}(\Lambda + kh)\) through \(\sO_{\widetilde{Q}}(kh)\).
Since \(\Lambda\) is the exceptional divisor of the blowup \(a\),
\[
\widetilde{\rho}_*\sHom_{\sO_{\widetilde{Q}}}(\sO_{\widetilde{Q}}(kh),\sO_{\widetilde{Q}}(\Lambda+kh))
\cong \widetilde{\rho}_*\sO_{\widetilde{Q}}(\Lambda)
\cong \rho'_*a_*\sO_{\widetilde{Q}}(\Lambda)
\cong \sO_S,
\]
the generator being an equation of \(\Lambda\). Therefore the left mutation of
\(\sO_{\widetilde{Q}}(\Lambda + kh)\), being the cone of the evaluation map, is
\[
\operatorname{cone}(\sO_{\widetilde{Q}} \to \sO_{\widetilde{Q}}(\Lambda)) \otimes \sO_{\widetilde{Q}}(kh) \cong
\tilde{\jmath}_*\sO_\Lambda(\Lambda+kh).
\]
This gives
\(\mathcal{B}_k = \langle \tilde{\jmath}_*\sO_\Lambda(\Lambda + kh), \sO_{\widetilde{Q}}(kh) \rangle\)
for \(k = 0, \ldots, n-4\).

\textbf{Step 4.} Since \(\sO_\Lambda(\Lambda)\) restricts to
\(\sO_{\PP^1}(-1)\) along the fibres of \(\Lambda \to Q'\),
\begin{align*}
\widetilde{\rho}_*\sHom_{\widetilde{Q}}(\sO_{\widetilde{Q}}(kh), \tilde{\jmath}_*\sO_\Lambda(\Lambda+\ell h))
& \cong \rho'_*a_*\tilde{\jmath}_*\sO_\Lambda(\Lambda + (\ell - k)h) \\
& \cong \rho'_*j'_*a_{\Lambda,*} \sO_\Lambda(\Lambda + (\ell-k)h) = 0.
\end{align*}
Therefore the \(\sO_{\widetilde{Q}}(kh)\) are totally orthogonal to the
\(\tilde{\jmath}_*\sO_\Lambda(\Lambda + \ell h)\) for all integers \(k\) and
\(\ell\), and so the decomposition can be reordered to
\[
\langle
b^*\Ku(Q),
\sO_{\widetilde{Q}}(-E),
\tilde{\jmath}_*\sO_\Lambda(\Lambda), \ldots,
\tilde{\jmath}_*\sO_\Lambda(\Lambda + (n-4)h),
\sO_{\widetilde{Q}}, \ldots,
\sO_{\widetilde{Q}}((n-2)h)
\rangle.
\]

\textbf{Step 5.} Right mutate \(b^*\Ku(Q)\) through \(\sO_{\widetilde{Q}}(-E)\)
and apply Serre duality for the morphism
\(\widetilde{\rho} \colon \widetilde{Q} \to S\) to move
\(\sO_{\widetilde{Q}}(-E)\) to the right side. Since
\[
\sO_{\widetilde{Q}}(-E) \otimes \omega_{\widetilde{\rho}}^\vee
\cong \sO_{\widetilde{Q}}((n-1)h) \otimes \widetilde{\rho}^*\mathcal{M}
\]
for some invertible \(\sO_S\)-module \(\mathcal{M}\), this yields the
\(S\)-linear semiorthgonal decomposition
\[
\langle
\mathbf{R}_{\sO_{\widetilde{Q}}(-E)}(b^*\Ku(Q)),
\tilde{\jmath}_*\sO_\Lambda(\Lambda), \ldots,
\tilde{\jmath}_*\sO_\Lambda(\Lambda + (n-4)h),
\sO_{\widetilde{Q}}, \ldots,
\sO_{\widetilde{Q}}((n-1)h)
\rangle.
\]
Comparing with the semiorthogonal decomposition \eqref{equation-sod-a} shows
that
\[
\mathbf{R}_{\sO_{\widetilde{Q}}(-E)}(b^*\Ku(Q)) =
\tilde{\jmath}_*(a_\Lambda^*\Ku(Q') \otimes \sO_\Lambda(\Lambda))
\]
as \(S\)-linear subcategories of \(\Dqc(\widetilde{Q})\). Applying left
mutation through \(\sO_{\widetilde{Q}}(-E)\) on both sides then completes
the proof.
\end{proof}

Since \(\Phi \colon \Ku(Q') \to \Ku(Q)\) is a composition of pullbacks,
pushforwards, tensor products, and mutations, it is of Fourier--Mukai type. To
determine a kernel, consider the commutative diagram
\[
\begin{tikzcd}
& \widetilde{Q} \ar[dl, "b"', bend right=24] \ar[d]
& \Lambda \lar["\tilde{\jmath}"] \dar["j"] \ar[dr,"a_\Lambda", bend left=24] \\
Q
& Q \times_S \PP\mathcal{E}' \ar[l]
& Q \times_S Q' \ar[r, "\pr_2"] \ar[l]
& Q'
\end{tikzcd}
\]
where \(j \colon \Lambda \to Q \times_S Q'\) is the closed immersion as the
point-line incidence correspondence, resulting from the geometric interpretation
of \(\rho' \colon Q' \to S\) from \parref{lemma-fano-correspondence} as
the relative scheme of lines in \(\rho \colon Q \to S\) incident with the
section \(\PP\mathcal{F}\):
\[
\Lambda = \big\{(x, [\ell]) \in Q \times_S Q' : x \in \ell, \;
\ell \subset Q\;\text{a line through the section}\; \PP\mathcal{F}\big\}.
\]
Let \(\mathcal{J}\) be the ideal sheaf of \(\Lambda\) in \(Q \times_S Q'\).
The result is as follows:

\begin{Proposition}
\label{proposition-identify-the-kernel}
\(\Phi \colon \Ku(Q') \to \Ku(Q)\) is the Fourier--Mukai functor with kernel
\[
\pr_2^*\big(\sO_{\rho'}(1) \otimes \rho'^*(\mathcal{F}^\vee \otimes \mathcal{L})\big)
\otimes \mathcal{J} \in
\Dqc(Q \times_S Q').
\]
\end{Proposition}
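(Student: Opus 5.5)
The plan is to compute \(\Phi\) explicitly from \parref{hypred-equivalence} and then compare it with the Fourier--Mukai transform whose kernel is the stated twist of \(\mathcal{J}\). Write
\[
\mathcal{M} \coloneqq \sO_{\rho'}(1) \otimes \rho'^*(\mathcal{F}^\vee \otimes \mathcal{L}).
\]
Since \(Lb^*\colon \Ku(Q) \to Lb^*\Ku(Q)\) is inverted by \(Rb_*\), \parref{hypred-equivalence} gives, for \(F \in \Ku(Q')\) and with all functors derived,
\[
\Phi(F) \cong b_*\mathbf{L}_{\sO_{\widetilde{Q}}(-E)}(G),
\qquad
G \coloneqq \tilde{\jmath}_*(a_\Lambda^*F \otimes \sO_\Lambda(\Lambda)).
\]
Unwinding the mutation from \parref{decompositions-mutations} produces a triangle
\[
b_*\sO_{\widetilde{Q}}(-E) \otimes \rho^*\widetilde{\rho}_*(G(E)) \longrightarrow b_*G \longrightarrow \Phi(F) \stackrel{+1}{\longrightarrow}
\]
On the other side, tensoring \(0 \to \mathcal{J} \to \sO_{Q\times_S Q'} \to j_*\sO_\Lambda \to 0\) with \(\pr_2^*\mathcal{M}\) gives a triangle
\[
\Phi_{\mathcal{J}\otimes\pr_2^*\mathcal{M}}(F) \longrightarrow \rho^*\rho'_*(F\otimes\mathcal{M}) \longrightarrow \pr_{1,*}j_*(a_\Lambda^*(F\otimes\mathcal{M})) \stackrel{+1}{\longrightarrow}
\]
The proof then amounts to matching these two triangles term by term.

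\textbf{Step 1: the \(\Lambda\)-term.} Since \(b\circ\tilde{\jmath} = \pr_1\circ j\), we have \(b_*G \cong \pr_{1,*}j_*(a_\Lambda^*F\otimes\sO_\Lambda(\Lambda))\). I will identify \(\sO_\Lambda(\Lambda)\) from \parref{lemma-blowup-relation} together with \parref{lemma-projection-facts}, using the relations \(H = h + E\) and \(\sO(H-2E)\cong \sO(\Lambda)\otimes\widetilde\rho^*(\mathcal{F}\otimes\mathcal{L}^\vee)\). These give \(\sO_\Lambda(\Lambda) \cong a_\Lambda^*\mathcal{M}\otimes b^*\sO_\rho(-1)\) up to the evident twists. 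The factor \(b^*\sO_\rho(-1)\) is pulled back from \(Q\) and so passes through \(\pr_{1,*}\) by the projection formula. I expect this to produce a uniform twist, or else one absorbed by the \(\sO(-E)\)-term. If a residual twist survives, it must be reconciled with the stated kernel, so this line-bundle bookkeeping has to be done carefully.

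\textbf{Step 2: the mutation term.} Twisting by \(E = H - h\) and pushing forward along \(a\), I want to show
\[
\widetilde{\rho}_*(G(E)) \cong \rho'_*(F \otimes \mathcal{M}).
\]
This uses \(a_{\Lambda,*}\) of the restriction of \(\sO(\Lambda + E)\) to the \(\PP^1\)-fibres of \(\Lambda \to Q'\), and relies on \(F \in \Ku(Q')\) to kill the correction terms. I will also use \(b_*\sO_{\widetilde{Q}}(-E) \cong \mathcal{I}_{\PP\mathcal{F}/Q}\), which sits in \(0\to \mathcal{I}_{\PP\mathcal{F}/Q} \to \sO_Q \to \sO_{\PP\mathcal{F}} \to 0\).

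\textbf{Step 3: matching the triangles.} Rotating the first triangle and comparing with the second, it remains to check that the connecting maps agree and that the discrepancy between \(\mathcal{I}_{\PP\mathcal{F}}\) and \(\sO_Q\) contributes nothing. That discrepancy is the term \(\sO_{\PP\mathcal{F}}\otimes\rho'_*(F\otimes\mathcal{M})\). It should cancel against the restriction of the \(\Lambda\)-term to \(\PP\mathcal{F}\), since \(\PP\mathcal{F}\times_S Q' \subset \Lambda\) because every line in the family passes through the section. I plan to do all the comparison at the level of kernels on \(Q\times_S Q'\), or on \(\widetilde{Q}\times_S Q'\), so that the maps are canonical restriction maps. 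Uniqueness of cones then follows from semiorthogonality, since both objects lie in \(\Ku(Q)\) and the relevant \(\Hom\)'s into \(\rho^*\Dqc(S)\)-components vanish.

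I expect this third step to be the main obstacle: verifying that the evaluation map in the mutation really corresponds to the restriction \(\sO \to j_*\sO_\Lambda\), and tracking the \(\PP\mathcal{F}\) correction term. If the direct comparison becomes unwieldy, the fallback is to reduce to the local case. There, as in \parref{hyperbolic-reduction-equations}, \(Q = \mathrm{V}(xy + q')\), and one can compute both kernels explicitly. This is enough because \(\mathcal{J}\) and the mutation commute with flat base change, by \parref{decompositions-residual-adjoints}.
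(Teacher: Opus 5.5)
The overall plan matches the paper's: write \(\Phi(F)\) as \(b_*\) of the left mutation of \(G=\tilde{\jmath}_*(a_\Lambda^*F\otimes\sO_\Lambda(\Lambda))\) through \(\sO_{\widetilde{Q}}(-E)\), then compare it with \(\pr_{1,*}\) of the restriction map \(\pr_2^*(F\otimes\mathcal{M})\to j_*j^*\pr_2^*(F\otimes\mathcal{M})\).

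\textbf{Step 1 is wrong, and the error matters.} From \(\sO_{\widetilde Q}(H-2E)\cong\sO_{\widetilde Q}(\Lambda)\otimes\widetilde\rho^*(\mathcal F\otimes\mathcal L^\vee)\) and \(H=h+E\) one gets
\[
\sO_\Lambda(\Lambda)\cong a_\Lambda^*\mathcal{M}\otimes\sO_{\widetilde Q}(-E)\rvert_\Lambda .
\]
Here \(\sO(-E)=\sO(h-H)\) equals \(a^*\sO_{\pi'}(1)\otimes b^*\sO_\pi(-1)\), not \(b^*\sO_\pi(-1)\). With your formula, \(b_*G\) would be the \(\Lambda\)-term of the target triangle twisted by the global factor \(\sO_\rho(-1)\), so the two triangles could not be matched. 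Your formula also contradicts Step 2, which (correctly) requires \(\sO_\Lambda(\Lambda+E)\cong a_\Lambda^*\mathcal{M}\). That isomorphism holds exactly, so Step 2 has no correction terms and does not use \(F\in\Ku(Q')\). Indeed, \(\widetilde\rho_*(G(E))\cong\rho'_*a_{\Lambda,*}a_\Lambda^*(F\otimes\mathcal M)\cong\rho'_*(F\otimes\mathcal M)\) simply because \(a_\Lambda\) is a \(\PP^1\)-bundle.

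\textbf{The missing idea is to twist by \(\sO(E)\) before pushing forward.} This removes your Step 3 obstacle entirely. Since \(b\) blows up a section of relative codimension \(n-1\geq 2\), we have \(Rb_*\sO_{\widetilde Q}(E)\cong\sO_Q\), hence \(\Phi(F)\cong b_*(b^*\Phi(F)\otimes\sO_{\widetilde Q}(E))\). Twisting the mutation triangle by \(\sO(E)\) kills the \(-E\) on the mutation term and turns \(G(E)\) into \(\tilde\jmath_*a_\Lambda^*(F\otimes\mathcal M)\). The triangle is then governed by the evaluation map \(\widetilde\rho^*\rho'_*(F\otimes\mathcal M)\to\tilde\jmath_*a_\Lambda^*(F\otimes\mathcal M)\). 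Now push forward along \(b\), using \(b_*\sO_{\widetilde Q}\cong\sO_Q\), flat base change, \(b\circ\tilde\jmath=\pr_1\circ j\), and \(a_\Lambda=\pr_2\circ j\). The map becomes literally \(\pr_{1,*}\) of the restriction map \(\pr_2^*(F\otimes\mathcal M)\to j_*j^*\pr_2^*(F\otimes\mathcal M)\), whose cone is \(\pr_2^*(F\otimes\mathcal M)\otimes\mathcal J\) up to shift. No \(\PP\mathcal F\)-discrepancy ever appears; this is the paper's proof.

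\textbf{Your untwisted route can also be completed once Step 1 is corrected.} The factor \(\sO(-E)\rvert_\Lambda\) is the ideal of the section \(E\cap\Lambda\cong\PP\mathcal F\times_S Q'\) of \(\Lambda\to Q'\). After \(b_*\), its contribution is \(\sigma_*\rho'_*(F\otimes\mathcal M)\), where \(\sigma\colon S\to Q\) is the section. This cancels the discrepancy coming from \(b_*\sO(-E)\cong\mathcal I_{\PP\mathcal F/Q}\), as you guessed. However, you would then owe an octahedral compatibility of the maps, which the twisted route avoids.

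\textbf{Your fallback does not suffice by itself.} Kernels that are isomorphic locally on \(S\) need not be globally isomorphic unless the local isomorphisms come from a globally defined comparison map.
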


\begin{proof}
Functors in this proof are all derived. By construction, \(\Phi\) sends an
object \(F \in \Ku(Q')\) to \(M \in \Ku(Q)\) characterized by
\begin{align*}
Lb^*M & \cong
\operatorname{cone}\big(
\widetilde{\rho}^* \mathcal{H} \otimes
\sO_{\widetilde{Q}}(-E) \to
\tilde{\jmath}_*(a_\Lambda^*F \otimes \mathcal{N}_{\Lambda/\widetilde{Q}})\big) \\
& \cong
\operatorname{cone}\big(
\widetilde{\rho}^*\mathcal{H} \to
\tilde{\jmath}_*a_\Lambda^*(F \otimes \sO_{\rho'}(1)) \otimes \widetilde{\rho}^*(\mathcal{F}^\vee \otimes \mathcal{L})
\big) \otimes \sO_{\widetilde{Q}}(-E),
\end{align*}
where \(\mathcal{N}_{\Lambda/\widetilde{\mathcal{Q}}}\) is identified as
\(\tilde{\jmath}^*(\sO_{\widetilde{Q}}(h-E) \otimes \widetilde{\rho}^*(\mathcal{F}^\vee \otimes \mathcal{L}))\)
via \parref{lemma-projection-facts}\ref{lemma-projection-facts-pic} and
\parref{lemma-blowup-relation}, and \(\mathcal{H}\) is the object
\begin{align*}
\mathcal{H}
& \coloneqq
\widetilde{\rho}_*\sHom_{\sO_{\widetilde{Q}}}(
  \sO_{\widetilde{Q}}(-E),
  \tilde{\jmath}_*(a^*F \otimes \mathcal{N}_{\Lambda/\widetilde{Q}})) \\
& \:\cong \widetilde{\rho}_*\tilde{\jmath}_* a^*(F \otimes \sO_{\rho'}(1)) \otimes (\mathcal{F}^\vee \otimes \mathcal{L})
\cong \rho'_*(F \otimes \sO_{\rho'}(1)) \otimes (\mathcal{F}^\vee \otimes \mathcal{L})
\in \Dqc(S).
\end{align*}
Since \(b \colon \widetilde{Q} \to Q\) is a blowup along a regular section,
\(b_*\sO_{\widetilde{Q}}(E) = \sO_Q\), %so pushing forward the map
%induced by multiplication by an equation of \(E\) together with the projection
%formula gives 
so the projection formula gives an isomorphism \(M \cong b_*(b^*M \otimes \sO_{\widetilde{Q}}(E))\).
Therefore there is a canonical isomorphism
\[
\Phi(F) \cong
b_*\operatorname{cone}\big(
  \widetilde{\rho}^*\mathcal{H} \to
  \tilde{\jmath}_*a^*(F \otimes \sO_{\rho'}(1)) \otimes \widetilde{\rho}^*(\mathcal{F}^\vee \otimes \mathcal{L})
\big).
\]
Writing
\(\rho^*\mathcal{H} = \pr_{1,*}\pr_2^*(F \otimes \sO_{\rho'}(1)) \otimes \rho^*(\mathcal{F}^\vee \otimes \mathcal{L})\)
and \(\pr_1 \colon Q \times_S Q' \to Q\) for the first projection, the
relations \(a = \pr_2 \circ j\) and \(b \circ \tilde{\jmath} = \pr_1 \circ j\)
from the diagram above simplifies this to
\begin{align*}
\Phi(F)
& \cong
  \pr_{1,*}\operatorname{cone}\big(
  \pr_2^*(F \otimes \sO_{\rho'}(1)) \to
  j_*j^*\pr_2^*(F \otimes \sO_{\rho'}(1))\big) \otimes \rho^*(\mathcal{F}^\vee \otimes \mathcal{L})\\
& \cong
  \pr_{1,*}\operatorname{cone}\big(
    \pr_2^*(F \otimes \sO_{\rho'}(1)) \to
    \pr_2^*(F \otimes \sO_{\rho'}(1)) \otimes j_*\sO_\Lambda\big) \otimes \rho^*(\mathcal{F}^\vee \otimes \mathcal{L})
\end{align*}
where the map is restriction to \(\Lambda\). Thus the cone is
\(\pr_2^*(F \otimes \sO_{\rho'}(1))\) tensored with the ideal sheaf
\(\mathcal{J}\) of \(\Lambda\) in \(Q \times_S Q'\), and so
\[
\Phi(F) \cong
\pr_{1,*}(
\pr_2^*(F)
    \otimes \pr_2^*\big(\sO_{\rho'}(1) \otimes \rho'^*(\mathcal{F}^\vee \otimes \mathcal{L})\big)
    \otimes \mathcal{J}
    ).
\qedhere
\]
\end{proof}

A particularly useful feature of the equivalence
\(\Phi \colon \Ku(Q') \to \Ku(Q)\) is that it is compatible with the
construction of spinor sheaves. More precisely, let
\(\mathcal{W}' \subset \mathcal{F}^\perp/\mathcal{F}\) be an isotropic
subbundle, and let \(\mathcal{W}\) be the corresponding subbundle of in
\(\mathcal{E}\). For each integer \(d\), these define spinor sheaves
\(\mathcal{S}_d'\) and \(\mathcal{S}_d\) on \(Q'\) and \(Q\), respectively.
By \parref{cliff-spinor-cohomology}, these may be viewed as objects in
\(\Ku(Q')\) and \(\Ku(Q)\), respectively. The statement is that \(\Phi\)
sends the dual of one to the dual of the other:

\begin{Proposition}\label{cliff-hyperbolic-reduction-spinors}
\(\Phi(\mathcal{S}_d'^\vee) \cong \mathcal{S}_d^\vee\) for all \(d \in \mathbf{Z}\).
\end{Proposition}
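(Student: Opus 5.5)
The plan is to compute \(\Phi(\mathcal{S}_d'^\vee)\) with the explicit kernel of \parref{proposition-identify-the-kernel}, i.e.\ via the formula from its proof:
\[
\Phi(F) \cong \pr_{1,*}\big(\pr_2^*(F \otimes \sO_{\rho'}(1)) \otimes \mathcal{J}\big) \otimes \rho^*(\mathcal{F}^\vee \otimes \mathcal{L}),
\]
or equivalently through the blowup description \(\Phi(F) \cong b_*\operatorname{cone}(\widetilde{\rho}^*\mathcal{H} \to \tilde{\jmath}_*a_\Lambda^*(F(1)) \otimes \widetilde{\rho}^*(\mathcal{F}^\vee\otimes\mathcal{L}))\). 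By the duality relations of \parref{cliff-spinors-duality}, \(\mathcal{S}_d'^\vee\) is a twist of \(\mathcal{S}'_{r'-d-1}\) (or \(\mathcal{S}'_{r'-d}\)) by \(\sO_{\rho'}(-1)\) and a line bundle from \(S\). Here \(r' = r-1\), and \(\rank(\mathcal{F}^\perp/\mathcal{F}) = \rank\mathcal{E}-2\) has the same parity as \(\rank\mathcal{E}\). So \(F \otimes \sO_{\rho'}(1)\) is, up to an \(S\)-twist, a spinor sheaf \(\mathcal{S}'_e\) itself, and likewise for \(\mathcal{S}_d^\vee\). The statement therefore reduces to showing that
\[
\operatorname{cone}\big(\rho^*\rho'_*\mathcal{S}'_e \to \pr_{1,*}j_*a_\Lambda^*\mathcal{S}'_e\big)
\]
is, up to shift and twist, \(\mathcal{S}_{e'}\otimes\sO_\rho(-1)\) for the appropriate index \(e'\). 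One then tracks the line bundles \(\det\mathcal{W}\), \(\det\mathcal{W}'\), \(\det\mathcal{E}\), \(\mathcal{F}\) to check that they match; this uses \(\det\mathcal{W}\cong\mathcal{F}\otimes\det\mathcal{W}'\) and \(\det\mathcal{E}\cong\det(\mathcal{F}^\perp/\mathcal{F})\otimes\mathcal{F}\otimes\mathcal{F}^\vee\otimes\mathcal{L}\).

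The key geometric input is to identify \(\pr_{1,*}j_*a_\Lambda^*\mathcal{S}'_e\) on \(Q\). The map \(\Lambda \to Q\) is an isomorphism away from \(\PP\mathcal{F}\), and over \(\PP\mathcal{F}\) its fibre is \(Q'\). I would compare with the Clifford-theoretic description instead of computing directly. The Clifford ideal \(\mathcal{I}^{\mathcal{W}}\) of \(\mathcal{E}\) contains \(\mathcal{F}\cdot\)(lift of \(\mathcal{I}'^{\mathcal{W}'}\)), since \(\mathcal{W}\supset\mathcal{F}\). I would use the standard hyperbolic splitting \(\mathcal{E}\cong \mathcal{F}\oplus\mathcal{F}^\vee\!\otimes\!\mathcal{L}\oplus \mathcal{F}^\perp/\mathcal{F}\), valid Zariski locally, as in the proof of \parref{hyperbolic-reduction-equations}. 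Under it, \(\Cl(\mathcal{E},q)\) is \(\Cl(\mathcal{F}\oplus\mathcal{F}^\vee\otimes\mathcal{L})\mathbin{\hat\otimes}\Cl(\mathcal{F}^\perp/\mathcal{F},q')\), and \(\mathcal{I}_d \cong \mathcal{I}'_{d-1}\otimes\mathcal{F}\ \oplus\ \mathcal{I}'_{d-2}\otimes\mathcal{F}\otimes\mathcal{F}^\vee\!\otimes\mathcal{L}\cdot\) (suitably). In particular \(\mathcal{I}_d\) is an extension of \(\mathcal{I}'_{d-1}\otimes\mathcal{F}\) by \(\mathcal{I}'_{d}\otimes\mathcal{L}\)-type pieces. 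Pulling back the defining presentations to \(\widetilde{Q}\) and pushing along \(b\), I expect to match the triangle \(\rho^*\mathcal{I}'_e \to \pr_{1,*}j_*a_\Lambda^*\mathcal{S}'_e \to \ldots\) with the first resolution in \parref{cliff-spinors-duality}. This would give a short exact sequence \(0\to \mathcal{S}_{e'}(-1)\to\rho^*\mathcal{I}'_e\otimes(\cdot)\to b_*\tilde{\jmath}_*a_\Lambda^*\mathcal{S}'_e\to0\), in the spirit of \parref{cliff-subbundle-modify}. That proposition is in fact the model: on \(\widetilde{Q}\), the divisor \(\Lambda\) plays the role of the hyperplane section, and the modification sequence there has exactly the shape needed.

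Concretely, the steps are as follows. First, reduce to \(S\) affine with all bundles trivial; both sides are globally defined sheaves, so the identification must then be shown canonical, or checked via an \(S\)-linear natural map. Second, write \(\Phi(\mathcal{S}'^\vee_d)\) as \(b_*\) of the cone above. Third, prove \(R^{>0}\) vanishing so that everything is a sheaf; \(\mathcal{S}'_e\) is \(S\)-flat by \parref{lemma-spinorsheavesrelperf} and \(\Lambda\to Q'\) is a \(\PP^1\)-bundle. Fourth, produce a natural map to \(\mathcal{S}_d^\vee\), or from it, and check that it is an isomorphism. For the last step, both objects lie in \(\Ku(Q)\), are \(S\)-flat, and by base change (\parref{derived-fm}) it suffices to check on geometric fibres, reducing to quadrics over a field. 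There, away from \(\PP\mathcal{F}\) the comparison is the identification of \(\mathcal{S}\) restricted to the cone over \(Q'\), via \parref{cliff-cone-spinors} applied with \(\mathcal{K}=\mathcal{F}\) on \(\mathcal{F}^\perp\). This is the main obstacle: producing the global natural map and controlling behaviour along \(\PP\mathcal{F}\) and over \(S_{n-1}\), where \(Q'\) degenerates. I would first try to write the map purely Clifford-theoretically, via the inclusion \(\mathcal{F}\otimes\mathcal{I}'\hookrightarrow\mathcal{I}\), so that it is visibly global and base-change compatible.
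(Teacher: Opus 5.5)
Your outer reduction is the same as the paper's. You compute with the kernel of \parref{proposition-identify-the-kernel}, use the duality relations to replace \(\mathcal{S}_d'^\vee\) by a twisted \(\mathcal{S}'_e\), and then identify the cocone of \(\rho^*\rho'_*\mathcal{S}'_e \cong \rho^*\mathcal{I}'_e \to \pr_{1,*}j_*a_\Lambda^*\mathcal{S}'_e\), which comes from the ideal sequence of \(\Lambda \subset Q \times_S Q'\). Your determinant bookkeeping also matches.

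The gap is the central identification, which you leave as the ``main obstacle''. The geometry you propose to base it on is wrong: \(\Lambda \to Q\) is not an isomorphism away from \(\PP\mathcal{F}\). It does not dominate \(Q\). It maps onto the tangent hyperplane section \(i'' \colon Q'' = Q \cap \PP\mathcal{F}^\perp \hookrightarrow Q\), which is the cone over \(Q'\) with vertex \(\PP\mathcal{F}\), and \(\Lambda \to Q''\) is the blowup of that vertex. This is the missing idea, and with it the proof goes through:
\begin{itemize}
\item \parref{cliff-cone-spinors}, applied to \(\mathcal{K} = \mathcal{F} \subset \mathcal{F}^\perp\), identifies the third term globally, vertex included, as \(i''_*\mathcal{S}''_{e+1} \otimes \rho^*\mathcal{F}^\vee\). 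Here \(\mathcal{S}''\) is the spinor sheaf of the cone \(Q''\) attached to \(\mathcal{W}\). It is not the restriction \(\mathcal{S}\rvert_{Q''}\), which by \parref{cliff-subbundle-spinors} is an extension of two such sheaves.
\item \parref{cliff-cone-ideals} rewrites the middle term as \(\rho^*\mathcal{I}''_{e+1} \otimes \rho^*\mathcal{F}^\vee\).
\item The map between these two terms is the evaluation map. Apply \parref{cliff-subbundle-modify} on \(Q\) itself (not on \(\widetilde{Q}\)) to the hyperplane \(\mathcal{E}' = \mathcal{F}^\perp\), with \(\mathcal{E}'' = \mathcal{E}/\mathcal{F}^\perp \cong \mathcal{F}^\vee \otimes \mathcal{L}\). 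This shows the map is surjective with kernel \(\mathcal{S}_{e+1} \otimes \sO_\rho(-1) \otimes \rho^*\mathcal{L}^\vee\).
\end{itemize}
Together with your line-bundle bookkeeping, that is the entire argument.

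So the global natural map you are looking for is simply the map in the ideal-sheaf triangle. You need neither a local hyperbolic splitting of \(\Cl(\mathcal{E},q)\) nor any analysis on \(\widetilde{Q}\); the only ideal comparisons required are the global ones in \parref{cliff-subbundle-ideals} and \parref{cliff-cone-ideals}.

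The fibrewise reduction you fall back on would also not go through as stated. At \(s \in S_{n-1}\) the form \(q'_s\) vanishes, so \(Q'_s\) is all of \(\PP(\mathcal{F}^\perp/\mathcal{F})_s\) and \(\rho'\) is not flat there. On that fibre the map \(\phi'_d\) in the defining presentation of \(\mathcal{S}'_d\) is no longer injective, so the derived fibre of \(\mathcal{S}'_d\) is not a sheaf. Likewise, the Cartier hypotheses needed for \parref{cliff-cone-spinors} and \parref{cliff-subbundle-modify} fail on that fibre; for instance, \(Q''_s\) is a whole component of \(Q_s\). The standing assumption on weakly associated points supplies these hypotheses only over \(S\) itself, via \parref{lemma-cartier-divisor-criterion}. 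That is why the identification has to be made globally, as above.
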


\begin{proof}
All functors are again derived in this proof. Continuing with the notation from
the diagram preceding \parref{proposition-identify-the-kernel}, tensor the
ideal sheaf sequence for \(j \colon \Lambda \to Q \times_S Q'\) with
\(\pr_2^*\mathcal{S}_d'\) and pushforward along
\(\pr_1 \colon Q \times_S Q' \to Q\) to obtain the triangle
\[
\pr_{1,*}(
\pr_2^*\mathcal{S}_d' \otimes \mathcal{J}) \longrightarrow
\pr_{1,*}\pr_2^*\mathcal{S}_d' \longrightarrow
(\pr_1 \circ j)_*(\pr_2 \circ j)^*\mathcal{S}_d' \stackrel{+1}{\longrightarrow}
\]
in \(\Dqc(Q)\). The defining presentation for the spinor sheaf from
\parref{cliff-spinors} identifies the middle term as
\[
\pr_{1,*}\pr_2^*\mathcal{S}_d' \cong
\rho^*\rho_*'\mathcal{S}_d' \cong
\rho^*\mathcal{I}_d'
\]
where \(\mathcal{I}_d\) is the \(d\)-th Clifford ideal on \(S\) associated with
\(\mathcal{W}'\). To identify the term on the right, let \(Q''\) be the
intersection of \(Q\) with its tangent space along \(\PP\mathcal{F}\); in other
words, let \(q'' \colon \mathcal{F}^\perp \to \mathcal{L}\) be the
restriction of \(q\) to the orthogonal of \(\mathcal{F}\), and let \(Q''\)
be the associated quadric bundle in \(\PP\mathcal{F}^\perp\). Then \(Q''\)
is a cone over \(Q'\) with vertex \(\PP\mathcal{F}\), and the morphism
\(\pr_1 \circ j \colon \Lambda \to Q\) factors through a morphism
\(b_\Lambda \colon \Lambda \to Q''\) identifying it with the blowup along
the vertex. Thus there is a commutative diagram
\[
\begin{tikzcd}
\Lambda \rar["j"'] \dar["b_\Lambda"']
& Q \times_S Q' \dar["\pr_1"] \rar["\pr_2"']
& Q' \\
Q'' \rar["i''"]
& Q.
\end{tikzcd}
\]
Writing \(a_\Lambda = \pr_2 \circ j \colon \Lambda \to Q'\), the term on the
right is identified via \parref{cliff-cone-spinors} as the pushforward to
\(Q\) of a spinor sheaf on \(Q''\) associated with \(\mathcal{W}\):
\[
(\pr_1 \circ j)_*(\pr_2 \circ j)^*\mathcal{S}_d' \cong
i''_*b_{\Lambda,*}a_{\Lambda}^*\mathcal{S}_d' \cong
i''_*\mathcal{S}_{d+1}'' \otimes \rho^*\mathcal{F}^\vee.
\]
Identifying furthermore the Clifford ideal \(\mathcal{I}_d'\) for
\(\rho' \colon Q' \to S\) with the twisted Clifford ideal
\(\mathcal{I}_{d+1}'' \otimes \mathcal{F}^\vee\) for
\(\rho'' \colon Q'' \to S\) via \parref{cliff-cone-ideals} transforms the
triangle to
\[
\pr_{1,*}(\pr_2^*\mathcal{S}_d' \otimes \mathcal{J}) \longrightarrow
\rho^*\mathcal{I}_{d+1}'' \otimes \rho^*\mathcal{F}^\vee \longrightarrow
i''_*\mathcal{S}_{d+1}'' \otimes \rho^*\mathcal{F}^\vee \stackrel{+1}{\longrightarrow}
\]
where the second map is induced by the evaluation map. Comparing with
\parref{cliff-subbundle-modify} and using the fact that
\(\mathcal{E}/\mathcal{F}^\perp \cong \mathcal{F}^\vee \otimes \mathcal{L}\)
shows that
\begin{equation}\label{eq:cliff-hyperbolic-reduction-spinors.identity}
\pr_{1,*}(\pr_2^*\mathcal{S}_d' \otimes \mathcal{J}) \cong
\mathcal{S}_{d+1} \otimes \sO_\rho(-1) \otimes \rho^*\mathcal{L}^\vee.
\tag{\(\star\)}
\end{equation}

To conclude, it remains to compare the two spinor sheaves to their duals via
the relations in \parref{cliff-spinors}. Writing the rank of \(\mathcal{E}\) as
\(2k\) or \(2k+1\), and that of \(\mathcal{W}\) as \(r\), the relations give
\begin{align*}
\mathcal{S}_d' \otimes \sO_{\rho'}(-1) & \cong
\mathcal{S}_{r-d-2}'^\vee \otimes \rho'^*(\det\mathcal{W}' \otimes \det(\mathcal{F}^\perp/\mathcal{F}) \otimes \mathcal{L}^{\vee, \otimes k-1}),\;\text{and} \\
\mathcal{S}_{d+1} \otimes \sO_{\rho\phantom{'}}(-1) & \cong
\mathcal{S}_{r-d-2}^\vee \otimes \rho^*(\det\mathcal{W} \otimes \det\mathcal{E} \otimes \mathcal{L}^{\vee, \otimes k}).
\end{align*}
Since \(\det\mathcal{W}' \cong \det\mathcal{W} \otimes \mathcal{F}^\vee\)
and \(\det(\mathcal{F}^\perp/\mathcal{F}) \cong \det\mathcal{E} \otimes \mathcal{L}^\vee\),
substituting these identifications into
\eqref{eq:cliff-hyperbolic-reduction-spinors.identity}, cancelling out common
invertible factors, and multiplying through by \(\rho^*\mathcal{L}\) shows
\[
\pr_{1,*}\big(
\pr_2^*\mathcal{S}_d'^\vee \otimes
\pr_2^*\big(
  \sO_{\rho'}(1) \otimes
  \rho'^*(\mathcal{F}^\vee \otimes \mathcal{L})
 \big)
\otimes \mathcal{J}\big)
\cong
\mathcal{S}_d^\vee.
\]
The functor on the left is \(\Phi\) by
\parref{proposition-identify-the-kernel}, yielding the result.
\end{proof}

One simple application of \parref{cliff-hyperbolic-reduction-spinors} is
to show that spinor sheaves are relatively simple in certain situations.
Note that this is not always the case: see \cite[Proposition
6.2]{Addington:Spinors}.

\begin{Corollary}\label{spinor-moduli-simple}
Let \(\rho \colon Q \to S\) be a quadric \(2\ell\)-fold bundle,
\(\PP\mathcal{W} \to S\) an isotropic \(\ell\)-plane intersecting
\(\Sing\rho\) in at most \(1\) point in each fibre, and
\(\mathcal{S}_d^\vee\) the associated spinor sheaf. Then the canonical map
\[
\sO_S \to \rho_*\sHom_{\sO_Q}(\mathcal{S}_d^\vee, \mathcal{S}_d^\vee)
\]
is an isomorphism for any \(d \in \mathbf{Z}\). In other words,
\(\mathcal{S}_d^\vee\) is simple over \(S\).
\end{Corollary}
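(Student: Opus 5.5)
The plan is to reduce, by base change and descent, to the case where a regular section exists, and then to iterate hyperbolic reduction via \parref{cliff-hyperbolic-reduction-spinors} down to relative dimension \(0\), where spinors are explicit.

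\textbf{Reduction to a local statement.} The spinor sheaf \(\mathcal{S}_d^\vee\) is \(S\)-perfect and \(S\)-flat by \parref{lemma-spinorsheavesrelperf} and the duality relations of \parref{cliff-spinors-duality}. So \parref{lemma-internalhomandbasechange} shows that \(R\rho_*R\sHom_{\sO_Q}(\mathcal{S}_d^\vee,\mathcal{S}_d^\vee)\) is perfect and commutes with arbitrary base change. Whether the canonical map from \(\sO_S\) induces an isomorphism on \(\mathcal{H}^0\) is fpqc local on \(S\). I would therefore pass to an fppf (even \'etale) cover on which there is a regular section \(\PP\mathcal{F}\) contained in \(\PP\mathcal{W}\). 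Such a section should exist locally because \(\PP\mathcal{W}\) meets \(\Sing\rho\) in at most one point per fibre and \(\ell\geq 1\). Lemma \parref{cliff-dependence-corank-2} is the fallback if one wants the section elsewhere. The hypothesis in \parref{hypred-equivalence} on weakly associated points is not assumed here, so I would first prove the statement for the universal family (an integral base, where \(S_{n-1}\neq S\)) and then deduce the general case by base change. That deduction uses the fact that, for a simple complex, \(\mathcal{H}^0\) commutes with base change when the negative Exts vanish; one could alternatively argue fibrewise over a field and invoke cohomology and base change. I would set this point up carefully.

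\textbf{Induction.} With \(\mathcal{F}\subset\mathcal{W}\), the quotient \(\mathcal{W}'=\mathcal{W}/\mathcal{F}\) is an isotropic \((\ell-1)\)-plane in the hyperbolic reduction \(Q'\), and it again meets \(\Sing\rho'\) in at most one point. By \parref{cliff-hyperbolic-reduction-spinors} we have \(\Phi(\mathcal{S}_d'^\vee)\cong\mathcal{S}_d^\vee\). Since \(\Phi\) is \(S\)-linear and fully faithful (it is an equivalence by \parref{hypred-equivalence}), the enriched-Hom map of \parref{derived-fm}\ref{derived-fm.enriched-hom} identifies \(R\rho'_*R\sHom(\mathcal{S}_d'^\vee,\mathcal{S}_d'^\vee)\) with \(R\rho_*R\sHom(\mathcal{S}_d^\vee,\mathcal{S}_d^\vee)\), compatibly with the unit maps from \(\sO_S\). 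Iterating \(\ell\) times reduces the claim to a quadric bundle of relative dimension \(0\) with an isotropic line subbundle.

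\textbf{Base case and main obstacle.} In relative dimension \(0\), \parref{cliff-spinor-zero-dimensional} identifies the spinor as a twist of \(i_*\sO_{\PP\mathcal{W}}\) or \(j_*\sO_Z\). Here \(\PP\mathcal{W}\cong S\) is a section, and the corank-\(\le 1\) hypothesis makes \(Z\) also an effective divisor of degree \(1\). The dual is a shift of a twist of the same kind of sheaf, and \(\rho_*\sHom(\sO_Z,\sO_Z)=\rho_*\sO_Z=\sO_S\), which gives the isomorphism on \(\mathcal{H}^0\). I expect the main obstacle to be the bookkeeping in the reduction: each intermediate quadric must retain the corank hypothesis, the intersection condition must persist under reduction, and the Cartier-divisor hypothesis needed for \parref{hypred-equivalence} must be handled, most naturally by working universally and then base changing.
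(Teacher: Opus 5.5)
Your main line (\'etale-locally pick a regular section \(\PP\mathcal{F}\subset\PP\mathcal{W}\), reduce, transport \(R\rho_*R\sHom(\mathcal{S}_d^\vee,\mathcal{S}_d^\vee)\) through \(\Phi\) via \parref{cliff-hyperbolic-reduction-spinors}, and induct down to relative dimension \(0\)) is exactly the paper's proof. The paper applies \parref{hypred-equivalence} at every step, i.e.\ it works under the Cartier hypothesis of \S\parref{section-hyperbolic-reduction}. Since hyperbolic reduction preserves coranks, at the last step this means \(S_2\) contains no weakly associated points of \(S\); the paper does not try to remove this hypothesis.

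Your extra step, which does try to remove it, has a genuine gap resting on two false claims. First, \(R\rho_*R\sHom_{\sO_Q}(\mathcal{S}_d^\vee,\mathcal{S}_d^\vee)\) is not perfect. \parref{lemma-internalhomandbasechange} gives only quasi-coherence and base change. Fibrewise, at a point of \(\PP\mathcal{W}\cap\Sing\rho\), the spinor is a non-free maximal Cohen--Macaulay module, so its self-\(\mathcal{E}\mathit{xt}\) sheaves are nonzero in all even positive degrees. Second, even for a perfect \(K\) with \(\mathcal{H}^{<0}(K)=0\), \(\mathcal{H}^0\) does not commute with base change. For example, \(K=\sO\oplus[\sO\xrightarrow{t}\sO]\) on \(\mathbf{A}^1_\kk\) has \(\mathcal{H}^0(K)=\sO\) but a \(2\)-dimensional \(\mathcal{H}^0\) on the fibre over \(t=0\). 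So simplicity over the integral universal base says nothing about special points, and a jump of \(\operatorname{End}\) at points of \(S_2\) is precisely what must be excluded.

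Your fibrewise alternative needs \(\operatorname{End}(\mathcal{S}^\vee_{d,s})=\kappa(s)\) for every \(s\), including \(s\in S_2\). The hypothesis allows such points: an \(\ell\)-plane may meet the singular line of a corank \(2\) fibre in one point. So there is no ``corank \(\leq 1\)'' assumption to invoke in your base case. At such \(s\) the induction breaks. Once you reach a corank \(2\) quadric surface over \(\kappa(s)\), reducing along a smooth point gives the zero form on a rank \(2\) space. Then \(Q'=\PP^1\) is not Cartier, and neither \parref{hypred-equivalence} nor the presentation of \parref{cliff-spinors} applies.

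An extra input is needed there, for instance the following. Over \(\bar\kappa(s)\), up to twists and dualizing (which do not change endomorphisms), \(\mathcal{S}_d^\vee\) is \(\mathcal{I}_{L/Q}\) by \parref{reduction-quadric-surfaces-example}. This ideal has depth \(2\) at \(p=L\cap\Sing Q\) and is invertible elsewhere. Hence \(\sO_Q\to\sHom(\mathcal{I}_{L/Q},\mathcal{I}_{L/Q})\) is a map of sheaves of depth \(\geq 2\) at \(p\) that is an isomorphism away from \(p\), so it is an isomorphism, and \(\mathrm{H}^0(Q,\sO_Q)=\bar\kappa(s)\). Reducing from dimension \(2\ell\) to a surface is harmless over a field, since coranks are \(\leq 2\).

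Then finish with the fact that simplicity of an \(S\)-flat finitely presented sheaf on a flat proper family is fibrewise. By EGA III 7.7.6 there is a finitely presented \(\sO_S\)-module \(\mathcal{V}\) with \(\Hom(\mathcal{F}_T,\mathcal{F}_T)\cong\Hom_{\sO_T}(\mathcal{V}_T,\sO_T)\) for all \(T\). Fibrewise simplicity forces \(\mathcal{V}\) to be locally cyclic with the identity going to a unit, so \(\mathcal{V}\cong\sO_S\). This proves the statement over any \(S\) and makes the universal family unnecessary.
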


\begin{proof}
The assertion is local on \(S\). Passing to an \'etale cover, we may assume
that \(\mathcal{W}\) admits a line subbundle \(\mathcal{N}\). Let
\(\rho' \colon Q' \to S\) be the associated hyperbolic reduction, set
\(\mathcal{W}' \coloneqq \mathcal{W}/\mathcal{N}\), and write
\(\mathcal{S}_d'^\vee\) for the corresponding \(d\)-th dual spinor sheaf.
There is an \(S\)-linear equivalence \(\Phi \colon \Ku(Q') \to \Ku(Q)\)
satisfying \(\mathcal{S}^\vee_d \cong \Phi(\mathcal{S}_d'^\vee)\) by
\parref{hypred-equivalence} and \parref{cliff-hyperbolic-reduction-spinors}, so
this induces an isomorphism
\[
R\rho_*R\sHom_{\sO_Q}(\mathcal{S}^\vee_d, \mathcal{S}^\vee_d) \cong
R\rho'_*R\sHom_{\sO_{Q'}}(\mathcal{S}_d'^\vee, \mathcal{S}_d'^\vee).
\]
By induction, then, it suffices to treat the case when \(\rho \colon Q \to S\)
has relative dimension \(0\) and \(\PP\mathcal{W} \to S\) provides a section
\(w \colon S \to Q\)  of \(\rho\). In this case,
\parref{cliff-spinor-zero-dimensional} shows that the spinor sheaves
\(\mathcal{S}_d^\vee\) are, up to invertible factors, of the form
\(i_*\sO_T\) for a closed subscheme \(i \colon T \to Q\) cut out in the
\(\PP^1\)-bundle \(\pi \colon \PP\mathcal{E} \to S\) by a line bundle of
relative degree \(1\). Noting that \(\rho\) is flat and \(i\) is affine, taking
degree \(0\) cohomology sheaves then gives the result:
\[
\rho_*\sHom_{\sO_Q}(\mathcal{S}_d^\vee, \mathcal{S}_d^\vee) \cong
\rho_*\sHom_{\sO_Q}(i_*\sO_T, i_*\sO_T) \cong
\rho_*i_*\sHom_{\sO_T}(\sO_T,\sO_T) \cong
\rho_*i_*\sO_T \cong
\sO_S.
\qedhere
\]
\end{proof}

\subsectiondash{Maximal hyperbolic reduction}\label{reduction-fourier-mukai}
Let \(\rho \colon Q \to S\) be a quadric bundle of even relative dimension
\(2\ell\). Suppose that there exists a regular isotropic subbundle
\(\mathcal{F} \subset \mathcal{E}\) of rank \(\ell\); in particular, this
implies that \(S_3 = \varnothing\). Assuming furthermore that \(S_2\)
contains no weakly associated points of \(S\), the corresponding hyperbolic
reduction \(\mu \colon M \to S\) is a family of quadrics of relative dimension
\(0\), having \(1\)-dimensional fibres over points of \(S_2\). As such, we
refer to \(\mu \colon M \to S\) as a \emph{maximal hyperbolic reduction} of the
quadric bundle \(\rho \colon Q \to S\).

Identify \(M\) via \parref{lemma-fano-correspondence} as the subscheme of
the relative Fano scheme \(\mathbf{F}_\ell(Q/S)\) parameterizing
\(\ell\)-planes along \(\rho \colon Q \to S\) that contain \(\PP\mathcal{F}\),
and let \(\widetilde{\mathcal{F}}\) be the tautological isotropic subbundle of
rank \(\ell+1\) in \(\mu^*\mathcal{E}\). In this way, the quadric bundle
\[
Q \times_S M \subset
\PP\mathcal{E} \times_S M \cong
\PP(\mu^*\mathcal{E})
\]
over \(M\) via \(\pr_2 \colon Q \times_S M \to M\) carries a tautological
family \(\PP\widetilde{\mathcal{F}}\) of isotropic \(\ell\)-planes. Writing
\(\mathcal{S}_d\) for the associated degree \(d\) spinor sheaf on
\(Q \times_S M\), we have the following:

\begin{Proposition}\label{reduction-fourier-mukai-equivalence}
In the setting above, the Fourier--Mukai functor
\(\Dqc(M) \to \Dqc(Q)\) given by
\[
\Psi_d(F) \coloneqq
R\pr_{1,*}(L\pr_2^*F \otimes^L \mathcal{S}_d^\vee)
\]
factors through \(\Ku(Q)\) and defines an \(S\)-linear equivalence
\(\Psi_d \colon \Dqc(M) \to \Ku(Q)\) for each \(d \in \mathbf{Z}\).
\end{Proposition}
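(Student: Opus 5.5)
The plan is to induct on \(\ell\), using the hyperbolic reduction equivalence \parref{hypred-equivalence} together with its compatibility with dual spinor sheaves \parref{cliff-hyperbolic-reduction-spinors}. The statement is \(S\)-linear and its kernel is defined globally. Full faithfulness and essential image \(\Ku(Q)\) can be checked fppf locally by \parref{thm-locallytwisted}, applied with the trivial gerbe over \(M\) and weight-\(0\) kernels, with \(\mathcal{A} = \Ku(Q)\) the orthogonal of the perfect complex \(\bigoplus_{i=0}^{n-2}\sO_\rho(i)\). The hypotheses of that theorem hold here: \(\mu\) is \(H\)-projective, and \(\mathcal{S}_d^\vee\) is \(Q \times_S M\)-perfect by \parref{lemma-spinorsheavesrelperf} and duality. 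So we may work Zariski locally on \(S\) and assume \(\mathcal{F}\) contains a line subbundle \(\mathcal{N}\). The containment \(\Psi_d(\Dqc(M)) \subset \Ku(Q)\) follows from \parref{cliff-spinor-cohomology} applied to the family over \(M\), together with base change and \(S\)-linearity.

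\textbf{Reduction step.} Since \(\mathcal{F}\) is regular isotropic, so is \(\mathcal{N}\). Let \(\rho' \colon Q' \to S\) be the hyperbolic reduction along \(\mathcal{N}\); it is a quadric \((2\ell-2)\)-fold bundle. The condition on weakly associated points needed in \parref{hypred-equivalence} is that \(S_{2\ell}\) avoids them, and this holds because \(S_{2\ell} \subset S_2\) for \(\ell \geq 1\). Moreover, \(\mathcal{F}/\mathcal{N} \subset \mathcal{N}^\perp/\mathcal{N}\) is regular isotropic of rank \(\ell - 1\), and the corank strata of \(Q'\) agree with those of \(Q\). By \parref{lemma-fano-correspondence}, the maximal hyperbolic reduction of \(Q'\) along \(\mathcal{F}/\mathcal{N}\) is the same \(M\), with tautological bundle \(\widetilde{\mathcal{F}}/\mathcal{N}_M\).

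Next I will show \(\Psi_d \cong \Phi \circ \Psi'_d\). The kernel of the composite is the convolution over \(Q'\) of \(\mathcal{S}_d'^\vee\) on \(Q' \times_S M\) with the kernel of \parref{proposition-identify-the-kernel}. By flat base change, this convolution is the base-changed functor \(\Phi_M \colon \Ku(Q'_M) \to \Ku(Q_M)\) applied to \(\mathcal{S}_d'^\vee\). Here \(Q_M \to M\) is a quadric bundle with regular section \(\mathcal{N}_M\) and isotropic subbundle \(\widetilde{\mathcal{F}}\), so \parref{cliff-hyperbolic-reduction-spinors} over the base \(M\) gives \(\Phi_M(\mathcal{S}_d'^\vee) \cong \mathcal{S}_d^\vee\). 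That proposition assumes \(Q'_M\) is Cartier in its projective bundle. Flatness of \(Q\) gives this when \(\ell \geq 2\); when \(\ell = 1\), I expect it to follow from \(M\) itself being a Cartier divisor, and this needs checking. Since \(\Phi\) is an equivalence, induction reduces everything to the case \(\ell = 0\).

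\textbf{Base case and main obstacle.} When \(\ell = 0\), we have \(M = Q\), the collection defining \(\Ku(Q)\) is empty, and the tautological \(\widetilde{\mathcal{F}}\) on \(Q \times_S Q\) is the point given by the second factor. By \parref{cliff-spinor-zero-dimensional}, together with the duality \(\mathcal{S}_d^\vee \simeq \mathcal{S}_{-d}\) up to twists, the kernel is, up to line bundle twists, either \(\sO_\Delta\) or \(j_*\sO_Z\), where \(Z\) is the residual to the diagonal; which one occurs depends on the parity of \(d\). For even parity the kernel is a twisted diagonal, so \(\Psi_d\) is an equivalence. I expect the odd parity to be the main obstacle, because \(Z\) contains entire fibres over \(S_2\). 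My first attempt would be the exact sequence \(0 \to \sO_{\PP}(-\Delta - Z)\to \sO_{\PP} \to \sO_{Q}\) on \(\PP\mathcal{E}\times_S Q\). From it I would try to exhibit \(\Psi_{\mathrm{odd}}\) as \(\Psi_{\mathrm{even}}\) composed with an autoequivalence of \(\Dqc(Q)\), namely a reflection or spherical-type twist. If that fails, I would instead check the criterion \parref{lemma-ffconditions} directly, computing with a relative generator \(\sO_\mu \oplus \sO_\mu(1)\) pulled back from the \(\PP^1\)-bundle.
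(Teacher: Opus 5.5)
Your inductive step for \(\ell \geq 2\) is the same as the paper's. The gap is at the bottom of the induction: you descend to \(\ell = 0\), and the step \(\ell = 1 \to \ell = 0\) leaves the range of the results you invoke.

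To get \(\Phi_M(\mathcal{S}_d'^\vee) \cong \mathcal{S}_d^\vee\) you apply \parref{cliff-hyperbolic-reduction-spinors} to \(Q_M \to M\). Its reduction along \(\mathcal{N}_M\) is \(M \times_S M \subset \PP(\mathcal{N}^\perp/\mathcal{N})_M\). By \parref{lemma-cartier-divisor-criterion} over the base \(M\), this is Cartier only if \(\mu^{-1}(S_2)\) contains no weakly associated point of \(M\). That typically fails when \(S_2\) has codimension one, since \(\mu^{-1}(S_2)\) then contains whole irreducible components of \(M\).

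Concretely, take \(Q = \mathrm{V}(x_0x_1 + t\,x_2x_3) \subset \PP^3_{\kk[t]}\) with regular section \((1:0:0:0)\). Then \(M = \mathrm{V}(t\,x_2x_3) \subset \PP^1_{\kk[t]}\) is Cartier. But on \(\mathrm{D}_+(x_3)\), where \(\sO_M = \kk[t,x]/(tx)\), the equation \(t\,y_2y_3\) of \(M \times_S M\) in \(\PP^1_M\) is killed by \(x \neq 0\). This has three consequences:
\begin{itemize}
\item your expectation that Cartierness follows from that of \(M\) is false;
\item the spinor formalism of \S\parref{section-clifford-and-spinor} (exact presentations, the duality of \parref{cliff-spinors-duality}, \parref{cliff-spinor-zero-dimensional}) is unavailable on \(M \times_S M\);
\item your odd kernel \(\sO_Z\) is not even \(M\)-perfect: here \(Z\) is locally \(\mathrm{V}(t)\), and \(\sO_M/(t)\) has infinite tor-dimension over \(\sO_M\) at the node \(t = x = 0\).
\end{itemize}
Independently of this, the odd-parity base case is left as an untested plan.

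The remedy is to stop at \(\ell = 1\), as the paper does. There \(M\) is itself the hyperbolic reduction of \(Q\) along \(\mathcal{F}\), and \(\Ku(M) = \Dqc(M)\). Let \(\mathcal{J}\) be the ideal of the tautological line \(\Lambda \subset Q \times_S M\). Then \parref{reduction-quadric-surfaces-example} and \parref{proposition-identify-the-kernel} show that \(\mathcal{S}_{2k}^\vee\) is a twist of \(\mathcal{J}\), the kernel of the equivalence \(\Phi\) of \parref{hypred-equivalence}.

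For odd degree, \(\mathcal{S}_{2k+1}^\vee\) is a twist of \(\sHom(\mathcal{J},\sO_{Q\times_S M}) \otimes \pr_1^*\sO_\rho(-1)\). Since \(M \to S\) is Gorenstein with dualizing sheaf pulled back from \(S\), \(\Psi_{2k+1}\) is \(\Psi_{2k}\) conjugated by dualities. This parity switch is exactly what is lost at \(\ell = 0\): for rank \(2\) forms, \parref{cliff-spinors-duality} sends \(\mathcal{S}_d\) to a twist of \(\mathcal{S}_{-d}\), so duality preserves parity.

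Separately, \parref{thm-locallytwisted} requires the kernel to be perfect relative to the target \(Q\), whereas \parref{lemma-spinorsheavesrelperf} only gives \(M\)-perfectness. Since \(\pr_1 \colon Q \times_S M \to Q\) is not flat over \(S_2\), this is a real statement; the paper proves it in \parref{lemma-thekernelisrelperf}, by an induction that also starts at \(\ell = 1\). Because you only localize Zariski on \(S\), you could alternatively bypass the descent theorem by checking that \(\Psi_d\) and its right adjoint commute with restriction to opens.
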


\begin{proof}
To ease notation, all functors in this proof are derived.
That the functor factors through \(\Ku(Q)\) follows from
\parref{cliff-spinor-cohomology}: Indeed, using
\(\rho \circ \pr_1 = \mu \circ \pr_2\), the fact that the relative tautological
bundle of \(\pr_2 \colon Q \times_S M \to M\) is
\(\sO_{\pr_2}(1) \coloneqq \pr_1^*\sO_{\rho}(1)\), and the projection formula,
\begin{align*}
\rho_*\sHom_{\sO_Q}(\sO_{\rho}(i), \Psi_d(F))
& \cong
  \rho_*\pr_{1,*}(
    \pr_2^*F \otimes
    \sHom_{\sO_{Q \times_S M}}(\sO_{\pr_2}(i), \mathcal{S}_d^\vee)) \\
& \cong
  \mu_*(F \otimes
    \pr_{2,*}\sHom_{\sO_{Q \times_S M}}(\sO_{\pr_2}(i), \mathcal{S}_d^\vee)) \\
& = 0\;\;\text{for}\; 0 \leq i \leq 2r-1.
\end{align*}
Henceforth, view \(\Psi_d\) as a functor \(\Dqc(M) \to \Ku(Q)\).
Show that this is an equivalence via induction on the integer \(\ell \geq 1\).
When \(\ell = 1\), the even degree \(d = 2k\) dual spinors
\(\mathcal{S}_{2k}^\vee\) are by \parref{reduction-quadric-surfaces-example},
up to twists by invertible objects pulled back from \(S\), the ideal sheaf
\(\mathcal{J}\) of the tautological line
\(j \colon \Lambda \hookrightarrow Q \times_S M\). In this case,
\parref{hypred-equivalence} and \parref{proposition-identify-the-kernel}
together imply that \(\Psi_{2k}\) is an equivalence. For odd \(d = 2k+1\),
\parref{reduction-quadric-surfaces-example} shows that, up to twists by
invertible objects pulled up from \(S\), \(\Psi_{2k+1}\) is of the form
\[
F \mapsto
\pr_{1,*}\big(
  \pr_2^*F \otimes
  \sHom_{\sO_{Q \times_S M}}(\mathcal{J}, \sO_{Q \times_S M}) \otimes
  \pr_1^*\sO_\rho(-1)\big).
\]
Since the ideal sheaf \(\mathcal{J}\) %is finitely presented,
is pseudo-coherent, \(\pr_2^*F\) may
be brought into the \(\sHom\)-complex; since \(\mu \colon M \to S\) is an
effective Cartier divisor of relative degree \(2\) in a \(\PP^1\)-bundle over
\(S\), it has a dualizing sheaf which is the pullback of a line bundle on 
\(S\). Therefore, the functor above is, up to invertible objects from \(S\),
isomorphic to
\[
F \mapsto
\sHom_{\sO_Q}\big(\pr_{1,*}(\pr_2^*\sHom_{\sO_M}(F, \sO_M) \otimes \mathcal{J}), \sO_Q(-1)\big).
\]
This shows that \(\Psi_{2k+1}\) is obtained from the Fourier--Mukai equivalence
of \parref{hypred-equivalence} and \parref{proposition-identify-the-kernel}
upon applying autoequivalences to the source and target, and so \(\Psi_{2k+1}\)
itself is an equivalence. This establishes the base case of the induction.

Assume \(\ell \geq 2\). By \parref{thm-locallytwisted}, which applies thanks
to \parref{lemma-thekernelisrelperf}, it suffices to show that \(\Psi_d\) is an
equivalence after replacing \(S\) by an \'etale cover.
Doing so, assume further that \(\mathcal{F}\) admits a line subbundle
\(\mathcal{N}\), and let \(\rho' \colon Q' \to S\) be the associated hyperbolic
reduction. Then \(\mu \colon M \to S\) is the maximal hyperbolic reduction of
the quadric \((2\ell-2)\)-fold bundle \(\rho' \colon Q' \to S\)
along the regular isotropic \((\ell-1)\)-bundle
\(\mathcal{F}' \coloneqq \mathcal{F}/\mathcal{N}\). The constructions of
\parref{reduction-fourier-mukai} then apply to yield, for each integer \(d\),
dual spinor sheaves
\[
\mathcal{S}_d'^\vee
\;\text{on the quadric bundle}\;
\pr_2' \colon Q' \times_S M \to M
\]
constructed from the tautological isotropic rank \(\ell\) subbundle
\(\widetilde{\mathcal{F}}'\) of \(\mu^*(\mathcal{N}^\perp/\mathcal{N})\).
The inductive hypothesis implies that the corresponding Fourier--Mukai functor
\(\Psi_d' \colon \Dqc(M) \to \Ku(Q')\) is an equivalence. Let
\(\Phi \colon \Ku(Q') \to \Ku(Q)\) be the equivalence from
\parref{hypred-equivalence}. We now show that \(\Psi_d\) and
\(\Phi \circ \Psi_d'\) are isomorphic as functors \(\Dqc(M) \to \Ku(Q)\),
from which the result follows.

On the one hand, \(\Phi \circ \Psi_d'\) is a Fourier--Mukai functor whose
kernel, in view of \parref{proposition-identify-the-kernel}, is
\[
\mathcal{K} \coloneqq
\pr_{1,3,*}\big(
\pr_{1,2}^*\mathcal{J} \otimes
\pr_2^*(\sO_{\rho'}(1) \otimes \rho'^*(\mathcal{N}^\vee \otimes \mathcal{L})
)
\otimes
\pr_{2,3}^*\mathcal{S}_d'^\vee
\big)
\in \Dqc(Q \times_S M),
\]
where \(\mathcal{J}\) is the ideal sheaf of the incidence correspondence
\(\Lambda \hookrightarrow Q \times_S Q'\) as before. On the other hand,
base change along \(\mu \colon M \to S\), and view the
quadric bundle \(\pr_2' \colon Q' \times_S M \to M\) as the hyperbolic
reduction of \(\pr_2 \colon Q \times_S M \to M\) along the regular isotropic
line subbundle \(\mu^*\mathcal{N} \subset \mu^*\mathcal{E}\). Applying
\parref{hypred-equivalence} to this situation thus gives an equivalence
\[
\Phi_M \colon
\Ku(Q' \times_S M) \to
\Ku(Q \times_S M).
\]
The tautological incidence correspondence between \(Q \times_S M\) and
\(Q' \times M\) is none other than
\[
\Lambda \times_S M =
\pr_{1,2}^{-1}(\Lambda) \hookrightarrow
Q \times_S Q' \times_S M \cong
(Q \times_S M) \times_{M} (Q' \times_S M),
\]
so its ideal sheaf is \(\pr_{1,2}^*\mathcal{J}\).
Comparing with \parref{proposition-identify-the-kernel} then shows that
\[
\mathcal{K}
\cong \Phi_M(\mathcal{S}_d'^\vee)
\in \Dqc(Q \times_S M).
\]
Since \(\mathcal{S}_d'^\vee\) is the \(d\)-th dual spinor bundle associated
with \(\widetilde{\mathcal{F}}' = \widetilde{\mathcal{F}}/\mathcal{N}\) on
\(Q' \times_S M\), and \(\Phi_M\) respects dual spinor sheaves by
\parref{cliff-hyperbolic-reduction-spinors}, it follows that
\(\mathcal{K} \cong \mathcal{S}_d^\vee\). Thus \(\Phi \circ \Psi_d'\)
and \(\Psi_d\) are both Fourier--Mukai transforms \(\Dqc(M) \to \Ku(Q)\)
with isomorphic kernels, and so the functors are isomorphic.
\end{proof}

In order to apply \parref{thm-locallytwisted}, we need to verify that the
tautological dual spinor sheaves \(\mathcal{S}_d^\vee\) are relatively
perfect relative to both factors of \(Q \times_S M\). That it is \(M\)-perfect
follows from \parref{lemma-spinorsheavesrelperf}, whereas \(Q\)-perfectness can
be established by induction using the technique of
\parref{cliff-hyperbolic-reduction-spinors}:

\begin{Lemma}
\label{lemma-thekernelisrelperf}
In the setting of \parref{reduction-fourier-mukai}, the dual spinor sheaf
\(\mathcal{S}^\vee_d\) is perfect relative to \(Q\).
\end{Lemma}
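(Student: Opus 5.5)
The plan is to induct on \(\ell\), mirroring the proof of \parref{reduction-fourier-mukai-equivalence}, after reducing to a local statement. Relative perfectness is fppf local on both source and target by \parref{lemma-relperfislocal}. Flat base change \(U \to S\) is compatible with forming \(Q \times_S M\), with the tautological subbundle \(\widetilde{\mathcal{F}}\), and with spinor sheaves and their duals (the latter by the duality relations of \parref{cliff-spinors-duality}). Combined with \parref{lemma-basechangeofrelperf}, this means we may pass to an \'etale cover of \(S\). There we assume \(\mathcal{F}\) has a line subbundle \(\mathcal{N}\), and also that \(\mathcal{E}\) and \(\mathcal{L}\) are trivial. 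By the duality relations, \(\mathcal{S}_d^\vee\) is a twist of \(\mathcal{S}_{r-d-1}\) by a line bundle, so it suffices to show that each spinor sheaf \(\mathcal{S}_d\) on \(Q \times_S M\) is \(Q\)-perfect.

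\textbf{Base case \(\ell = 1\).} Here \(Q\) is a quadric surface bundle, and by \parref{reduction-quadric-surfaces-example} the dual spinors are, up to twists, either the ideal sheaf \(\mathcal{J}\) of the tautological incidence \(\Lambda \subset Q \times_S M\) or its dual. By the ideal sheaf sequence, it suffices to check that \(\sO_\Lambda\) is \(Q\)-perfect. The projection \(\Lambda \to Q\) is the blowup \(b_\Lambda\) of the cone \(Q''\) along its vertex, followed by the inclusion \(Q'' \subset Q\) of the tangent hyperplane section, which is an effective Cartier divisor. I would verify \(Q\)-perfectness directly. Finite tor-dimension over \(\sO_Q\) follows because \(\Lambda \subset \Lambda\times\)-type embeddings are controlled: \(\Lambda\) is a \(\PP^1\)-bundle over \(M\), and \(M\) is a Cartier divisor in a \(\PP^1\)-bundle over \(S\). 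Concretely, \(\Lambda\) should be cut out in \(Q \times_S \PP\mathcal{E}'\) by a regular sequence, in the spirit of the proof of \parref{lemma-blowup-relation}. Alternatively one can use the explicit local equations of \parref{hyperbolic-reduction-equations} and check that \(\Lambda\) is flat over \(Q''\) away from the vertex, then handle the vertex \(\PP\mathcal{F}\) separately. Pseudo-coherence is automatic, since everything is of finite presentation.

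\textbf{Inductive step.} Assume \(\ell \geq 2\). The identity \eqref{eq:cliff-hyperbolic-reduction-spinors.identity} from the proof of \parref{cliff-hyperbolic-reduction-spinors}, applied over the base \(M\) to the reduction of \(Q\times_S M\) along \(\mu^*\mathcal{N}\), gives up to twist
\[
\mathcal{S}_{d+1} \cong \pr_{1,3,*}\big(\pr_{2,3}^*\mathcal{S}_d' \otimes \pr_{1,2}^*\mathcal{J}\big)
\]
on \(Q\times_S M\). Here \(\mathcal{S}'_d\) lives on \(Q'\times_S M\) and is \(Q'\)-perfect by induction, and \(\mathcal{J}\) is the ideal sheaf of the incidence in \(Q\times_S Q'\). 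I would then show that \(\pr_{1,2}^*\mathcal{J}\) is \(Q \times_S M\)-perfect, hence \(Q\)-perfect, by the same local argument as in the base case. Relative perfectness of the tensor product over the triple product is then obtained by a Künneth/tor-independence argument, using that \(Q\times_S Q'\times_S M \to Q\times_S M\) is flat over the \(Q\)-factor wherever \(\mathcal{J}\) is flat, with the incidence handled separately. The pushforward along the projective morphism \(\pr_{1,3}\) is handled by the fibred analogue of \parref{lemma-perfectpush}: relative perfectness over \(Q\) is preserved by proper H-projective pushforward relative to \(Q\).

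\textbf{Main obstacle.} I expect the hardest part to be the tor-dimension of \(\mathcal{J}\), equivalently of \(\sO_\Lambda\), over \(\sO_Q\) at points of the vertex \(\PP\mathcal{F}\) and over the corank-\(2\) locus, where \(\Lambda \to Q\) is not flat. I would attack this with the explicit coordinates of \parref{hyperbolic-reduction-equations}, writing \(\Lambda\) as the zero locus of a section of a rank-\(2\) bundle on \(Q\times_S\PP\mathcal{E}'\). I would then check, using the hypothesis that \(S_{n-1}\) contains no weakly associated points, that this section is a regular sequence on \(Q\times_S \PP\mathcal{E}'\), which is flat over \(Q\). This gives a two-term Koszul resolution by \(Q\)-flat modules.
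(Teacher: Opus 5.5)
Your framework (\'etale localization, replacing dual spinors by spinors via the duality relations of \parref{cliff-spinors-duality}, and induction on $\ell$) is reasonable. The duality reduction is a fine substitute for the paper's appeal to \parref{complexes-duality}. However, the one step with real content is left open, and the concrete plan you give for it cannot work.

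That step is the finiteness of the tor-dimension of $\sO_\Lambda$ over $\sO_Q$, i.e.\ that $\Lambda \to Q$ is a perfect morphism. Knowing that $\Lambda$ is a $\PP^1$-bundle over $M$, and that $M$ is a Cartier divisor in a $\PP^1$-bundle over $S$, controls $\Lambda \to S$, not $\Lambda \to Q$.

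Your proposed fix is to exhibit $\Lambda$ as the zero locus of a regular section of a rank-$2$ bundle on $Q \times_S \PP\mathcal{E}'$. This is impossible. Take $s \notin S_2$; such points exist by the weakly associated points hypothesis. In the base case the fibre of $Q \times_S \PP\mathcal{E}'$ over $s$ is $4$-dimensional. By Krull, every component of the zero locus of any section of a rank-$2$ bundle on it has dimension $\geq 2$, whereas $\Lambda_s$ is $1$-dimensional. So $\Lambda$ has codimension $3$ there, not $2$.

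Factoring through the cone $Q''$ does not help either. $\Lambda \to Q''$ is the blowup of the vertex of a cone, which is not a regularly embedded centre. Flatness away from the vertex says nothing at the vertex, which is exactly where the difficulty lives.

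The missing idea is the other factorization $\Lambda \xrightarrow{\tilde\jmath} \widetilde{Q} \xrightarrow{b} Q$ from \S\parref{section-hyperbolic-reduction}. By \parref{lemma-blowup-relation} (this is where \parref{lemma-cartier-divisor-criterion} and the weakly associated points hypothesis enter), $\tilde\jmath$ is the exceptional Cartier divisor of the blowup $a \colon \widetilde{Q} \to \PP\mathcal{E}'$ along $Q'$. Meanwhile $b$ is the blowup of $Q$ along the regular section, a regularly embedded centre. Hence $\Lambda \to Q$ is lci, so perfect, and $\sO_\Lambda$ is $Q$-perfect.

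There are two smaller omissions in the base case.
\begin{itemize}
\item $\sO_{Q \times_S M}$ is not automatically $Q$-perfect, since $\pr_1$ is not flat over $S_2$. One needs that $Q \times_S M \hookrightarrow Q \times_S \PP(\mathcal{E}/\mathcal{F})$ is a regular immersion, again by \parref{lemma-cartier-divisor-criterion}.
\item The ideal sheaf sequence only treats $\mathcal{J}$. For the other parity, use the exact sequence $0 \to \rho^*\det\mathcal{W} \to \mathcal{S}_+ \to j_*\sO_{\PP\mathcal{W}} \to 0$ established in the proof of \parref{reduction-quadric-surfaces-example}.
\end{itemize}

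Your inductive step via $(\star)$ is a legitimate alternative to the paper's. The paper instead uses the underlying short exact sequence $0 \to \mathcal{S}_d \otimes \sO_{\pr_2}(-1) \otimes (\cdots) \to \pr_2^*\mathcal{I}'_{d-1} \to (i'' \times \id_M)_*(b \times \id_M)_*(a \times \id_M)^*\mathcal{S}'_{d-1} \to 0$. It treats the last term through the cone geometry and avoids the triple product.

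As phrased, though, your tensor-product step is not an argument: $\mathcal{J}$ fails to be $Q$-flat precisely along the incidence. What works is the following.
\begin{itemize}
\item $\mathcal{J}$ is $Q$-perfect on $Q \times_S Q'$. This uses the lci fact above, plus flatness of $Q \times_S Q' \to Q$, which holds because $S_{2\ell} \subseteq S_3 = \varnothing$ for $\ell \geq 2$.
\item $\pr_{2,3}^*\mathcal{S}'_d$ is perfect relative to $Q \times_S Q'$, by flat base change of the induction hypothesis (\parref{lemma-basechangeofrelperf}).
\item Associativity of derived tensor products then bounds the tor-amplitude of the tensor product over $\sO_Q$.
\item The projection formula, together with finite cohomological dimension of the projective morphism $\pr_{1,3}$, preserves this bound after pushforward.
\end{itemize}
So your inductive step rests on the same lci input that your base case is missing.
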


\begin{proof}
Induct on the integer \(\ell \geq 1\). The base case \(\ell = 1\) concerns
a quadric surface bundle \(\rho \colon Q \to S\) and by
\parref{reduction-quadric-surfaces-example} and
\parref{proposition-identify-the-kernel}, ignoring twists by line bundles, the
ideal sheaf \(\mathcal{J}\) of the tautological line
\(j \colon \Lambda \to Q \times_S M\). By the ideal sheaf sequence, it suffices
to verify that \(\sO_{Q \times_S M}\) and \(j_*\sO_\Lambda\) are \(Q\)-perfect.
For the former, observe that first projection factors as
\[
\pr_1 \colon
Q \times_S M \hookrightarrow
Q \times_S \PP(\mathcal{E}/\mathcal{F}) \to
Q
\]
where the first arrow is a closed immersion, which is regular by the standing
assumption on weakly associated points together with
\parref{lemma-cartier-divisor-criterion}, and the second arrow is a projective
bundle. Thus \(\pr_1\) is a composition of perfect morphisms, and thus is itself
perfect, meaning that \(\sO_{Q \times_S M}\) is \(Q\)-perfect. For
\(j_*\sO_\Lambda\), as explained above \parref{proposition-identify-the-kernel},
the map \(\Lambda \to Q\) factors as \(b \circ \tilde{\jmath}\) where
\(\tilde{\jmath}\) is the inclusion of an effective Cartier divisor and \(b\)
is a blowup in a local complete intersection closed subscheme. Therefore
$\Lambda \to Q$ is an local complete intersection morphism in the sense of
\citeSP{068E}, and so \(j_*\sO_\Lambda\) is perfect relative to $Q$ by
\citeSP{069H}. This establishes the base case.

Assume \(\ell > 1\) and that the tautological dual spinors between a quadric
\(2(\ell-1)\)-bundle and its maximal hyperbolic reduction is perfect relative
to the quadric. Let \(\rho \colon Q \to S\) be a quadric \(2\ell\)-fold bundle
with maximal hyperbolic reduction \(\mu \colon M \to S\) with respect to a
regular isotropic subbundle \(\mathcal{F} \subset \mathcal{E}\) of rank \(\ell\).
The problem is local on \(S\), so pass to an \'etale cover to assume that
\(\mathcal{F}\) admits a line subbundle \(\mathcal{N}\) so that the associated
hyperbolic reduction \(\rho' \colon Q' \to S\) shares \(\mu \colon M \to S\) as
its maximal hyperbolic reduction along \(\mathcal{F}/\mathcal{N}\). The proof
of \parref{cliff-hyperbolic-reduction-spinors} shows, using
\parref{cliff-subbundle-modify} and \parref{cliff-cone-spinors}, that the
spinors of \(Q \times_S M\) and \(Q' \times_S M\) fit into a short exact
sequence
\[
0 \to
\mathcal{S}_d \otimes \sO_{\pr_2}(-1) \otimes (\rho \circ \pr_1)^*\mathcal{L}^\vee \to
\pr_2^*\mathcal{I}'_{d-1} \to
(i'' \times \id_M)_*(b \times \id_M)_*(a \times \id_M)^*\mathcal{S}_{d-1}' \to
0
\]
where \(\mathcal{I}_{d-1}'\) is the Clifford ideal associated with the tautological
\((\ell-1)\)-plane in \(Q' \times_S M\), \(i'' \colon Q'' \to Q\) is the
inclusion of the tangent hyperplane section along \(\PP\mathcal{N}\),
\(a \colon \widetilde{Q} \to Q'\) is a \(\PP^1\)-bundle, and
\(b \colon \widetilde{Q} \to Q''\) is the blowup along the vertex
\(\PP\mathcal{N}\).

The morphism \(\pr_1 \colon Q \times_S M \to Q\) is Gorenstein, so
\parref{complexes-duality}\ref{complexes-duality.perfect} implies
that it suffices to show that \(\mathcal{S}_d\) is \(Q\)-perfect, and this
would follow from seeing that the second and third terms of this sequence are
\(Q\)-perfect. This is true for \(\pr_2^*\mathcal{I}_{d-1}'\) since it is
locally free and \(\pr_1 \colon Q \times_S M \to Q\) is perfect by the argument
in the base case. As for the term on the right,
induction and \parref{complexes-duality}\ref{complexes-duality.perfect}
together mean that \(\mathcal{S}_{d-1}'\) is \(Q'\)-perfect. Relative
perfectness is clearly preserved upon pullback along the \(\PP^1\)-bundle
\(a \times \id_M\), and also pushforward along \(b \times \id_M\) by
\parref{lemma-perfectpush}. Finally, \(i'' \colon Q'' \to Q\) is the
inclusion of a effective Cartier divisor since it is the tangent hyperplane
section along a smooth point, so the pushforward of a \(Q''\)-perfect object
yields a \(Q\)-perfect object. Together, this completes the proof.
\end{proof}

\section{Stack of spinor sheaves}\label{section-spinor-moduli}
The goal of this section is to prove a global version of
\parref{reduction-fourier-mukai-equivalence}, in which the quadric bundle may
not possess a regular isotropic subbundle of maximal dimension. This is
achieved in \parref{spinor-main-theorem}, wherein a global incarnation of the
maximal hyperbolic reduction is constructed as an open subspace of a moduli
space of spinor sheaves. Much of this section is therefore devoted to a
study of the moduli stack of spinor sheaves on quadric bundles. Throughout
this section, unless otherwise specified, \(\rho \colon Q \to S\) denotes a
quadric \(2\ell\)-fold bundle with \(\ell \geq 1\) and \(S_3 = \varnothing\).
%, such that
%\(S_2\) contains no weakly associated points of \(S\), and \(S_3 = \varnothing\). -- Hope to remove from many results, but not derived category ones.

\subsectiondash{Definition and local structure}
\label{spinor-moduli-stack}
Applying the constructions of \parref{cliff-spinors} to the tautological
rank \(\ell+1\) subbundle on \(\mathbf{F}_\ell(Q/S)\) provides, for each
\(d \in \mathbf{Z}\), a morphism
\begin{equation*}
%\label{equn-fanotocoh}
\zeta_d \colon \mathbf{F}_\ell(Q/S) \to \sCoh_{Q/S}
\end{equation*}
from the relative Fano scheme of $\ell$-planes of $\rho \colon Q \to S$ to the
stack of coherent sheaves on $Q$ over \(S\) which sends an $\ell$-plane to its
associated degree $d$ dual spinor sheaf. We aim to prove:

\begin{Proposition}
\label{prop-fanotocoh}
The morphism \(\zeta_d \colon \mathbf{F}_\ell(Q/S) \to \sCoh_{Q/S}\) is smooth.
\end{Proposition}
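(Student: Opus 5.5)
Both source and target are locally of finite presentation over \(S\): the Fano scheme \(\mathbf{F}_\ell(Q/S)\) is a closed subscheme of a Grassmannian, and \(\sCoh_{Q/S}\) is an algebraic stack locally of finite presentation. So I would verify smoothness of \(\zeta_d\) through the infinitesimal lifting criterion, exactly as in the proof of \parref{theorem-fully-faithful-open-immersion}. Concretely, suppose given a square zero thickening \(T \hookrightarrow T'\) of affine \(S\)-schemes, an isotropic \(\ell\)-plane \(\mathcal{W} \subset \mathcal{E}_T\), and an \(S\)-flat coherent sheaf \(\mathcal{G}'\) on \(Q_{T'}\) with an isomorphism \(\mathcal{G}'|_{Q_T} \cong \mathcal{S}^{\mathcal{W},\vee}_d\). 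The task is to find an isotropic lift \(\mathcal{W}' \subset \mathcal{E}_{T'}\) of \(\mathcal{W}\) together with an isomorphism \(\mathcal{S}^{\mathcal{W}',\vee}_d \cong \mathcal{G}'\) restricting to the given one.

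Since smoothness is étale local on the source and target, and the whole situation is compatible with base change, I would first reduce to the case where \(T'\) is strictly henselian local. I would then reduce the lifting problem to one about spinors containing a fixed regular section. By \parref{cliff-dependence-corank-2}, in dimension \(2\ell \ge 4\) the spinor of \(\PP\mathcal{W}\) agrees with that of an \(\ell\)-plane through any prescribed smooth point. Because the special Clifford group is smooth and acts on \(\mathbf{F}_\ell\) compatibly with \(\zeta_d\) via \parref{cliff-dependence}, this lets me assume, after an étale extension, that \(\mathcal{W}\) contains a regular isotropic line \(\mathcal{N}\), which itself lifts to \(T'\). The small case \(\ell = 1\) needs separate handling, either by the same argument or directly via \parref{reduction-quadric-surfaces-example}.

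With \(\mathcal{N}\) in hand, I would hyperbolically reduce to \(Q'\) as in \parref{hypred-equivalence}. By \parref{cliff-hyperbolic-reduction-spinors}, the equivalence \(\Phi\) carries dual spinors of \(\ell\)-planes through \(\PP\mathcal{N}\) to those of \((\ell-1)\)-planes in \(Q'\). Combining this with \parref{theorem-fully-faithful-open-immersion-2}, \(\Phi\) induces an open immersion on stacks of complexes, so deformations of \(\mathcal{G}'\) correspond to deformations of an object of \(\Ku(Q'_{T'})\). On the Fano side, \parref{lemma-fano-correspondence} identifies \(\ell\)-planes through \(\PP\mathcal{N}\) with \(\mathbf{F}_{\ell-1}(Q'/S)\). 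Induction on \(\ell\) would then bring us down to relative dimension \(0\). There \(\zeta_d\) sends a point of \(Q'\) (a section) to a twist of the structure sheaf of that point or of its residual by \parref{cliff-spinor-zero-dimensional}, and deformations of \(\mathcal{O}_T\)-type sheaves of a point on a degree \(2\) cover are exactly deformations of the point.

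The main obstacle I expect is this last base step, together with keeping the induction honest over the corank \(2\) fibres. When \(\ell = 0\) and the fibre is the whole \(\PP^1\), the sheaf-to-point correspondence must be checked carefully. In particular I must verify that every flat deformation of the spinor is again of the form \(\mathcal{O}_Z\) for a relative degree \(1\) subscheme. I would handle this via the Hilbert scheme, using that flatness and simplicity (\parref{spinor-moduli-simple}) force a quotient of \(\mathcal{O}\) after twisting by the deformed line bundle \(\mathcal{W}'\).
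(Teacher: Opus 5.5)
\textbf{The gap: the induction cannot be run from \(\ell = 1\) down to relative dimension \(0\).} Your inductive step matches the paper's proof for \(\ell \geq 2\): choose a regular section on the plane, hyperbolically reduce, and move the lifting problem to \(Q'\) using the open immersion of \parref{theorem-fully-faithful-open-immersion-2} and the fact that \(\Spec R\) and \(\Spec(R/J)\) have the same support. The step \(\ell = 1 \to \ell = 0\) fails for two reasons, so your proposed base case is never reached where it matters.

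First, a family of lines may have closed fibre equal to the singular line of a corank \(2\) quadric surface. This cannot be avoided: reducing an \(\ell\)-plane that contains \(\Sing Q_\kappa\) along a smooth point gives an \((\ell-1)\)-plane that contains \(\Sing Q'_\kappa\). Such a line carries no regular section, and you cannot move it to one. \parref{cliff-dependence-corank-2} needs dimension \(\geq 4\). Moreover, by \parref{cliff-spinors} the spinor of the singular line fails to be locally free along the whole line, so it is isomorphic to no other line's spinor.

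Second, suppose the line does contain a regular section, but the closed fibre over the Artinian base has corank \(2\). Then \(S_2\) contains the unique, associated point of \(\Spec R\). By \parref{lemma-cartier-divisor-criterion}, the reduction \(Q' \subset \PP^1_R\) is then not an effective Cartier divisor: its equation has all coefficients in \(\mathfrak{m}_R\), so it is killed by the socle. Consequently \parref{hypred-equivalence}, the resulting open immersion of stacks of complexes, and even the spinor formalism of \S\parref{section-clifford-and-spinor} are all unavailable for \(Q'\). For \(\ell \geq 2\) neither problem arises, because \(S_{2\ell} \subseteq S_3 = \varnothing\) and \(\dim \Sing Q_\kappa \leq 1 < \ell\). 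The corank \(2\) difficulty you anticipated in relative dimension \(0\) is really this obstruction, one step earlier.

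\textbf{What the base case \(\ell = 1\) actually requires.} Your proposal only gestures at this case; it needs a genuine deformation argument. The paper's argument runs as follows.
\begin{enumerate}
\item By \parref{reduction-quadric-surfaces-example} and \parref{complexes-autoequivalences}, \(\zeta_d\) agrees, up to automorphisms of the target, with \(L \mapsto \mathcal{I}_{L/Q}\).
\item Take a family \(L' \subset Q_{R/J}\) with ideal \(\mathcal{I}'\), and an \(R\)-flat \(\mathcal{F}\) on \(Q_R\) with \(\mathcal{F}|_{Q_{R/J}} \cong \mathcal{I}'\). The composite \(\mathcal{F} \to \mathcal{I}' \hookrightarrow \sO_{Q_{R/J}}\) lifts to \(\gamma \colon \mathcal{F} \to \sO_{Q_R}\). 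The obstruction lies in \(\Ext^1_{Q_\kappa}(\mathcal{I},\sO_{Q_\kappa}) \otimes_\kappa J\), which vanishes by \parref{lemma-surfacecomputation}.
\item On the closed fibre \(\gamma\) restricts to the inclusion \(\mathcal{I} \hookrightarrow \sO_{Q_\kappa}\). The local criterion of flatness then makes \(\gamma\) injective with \(R\)-flat cokernel, so \(\mathcal{F}\) is the ideal of a flat family of lines lifting \(L'\).
\end{enumerate}
This works uniformly, including for the singular line.

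Separately, your detour through \parref{cliff-dependence-corank-2} and the special Clifford group in the inductive step is unnecessary. For \(\ell \geq 2\), the given plane already contains a smooth point of \(Q_\kappa\). That point lifts to a section of the plane over \(R/J\), and then to a section of \(Q_R\). The detour would also rely on smoothness of \(\mathbf{S\Gamma}(\mathcal{E},q)\) for degenerate \(q\), which you have not justified.
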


Since smooth morphisms are open, the image of \(\zeta_d\) is an open
substack $\widebar{\mathcal{M}}_d(Q/S) \subset \sCoh_{Q/S}$: this is the
\emph{stack of spinor sheaves} of \(\rho \colon Q \to S\) of degree \(d\). Of
primary interest is the open substack
\[
\mathcal{M}_d(Q/S) \subseteq
\widebar{\mathcal{M}}_d(Q/S)
\]
consisting of spinors locally free away from at most one point in each fibre of
\(\rho \colon Q \to S\); by \parref{cliff-spinors}, this is the image of the
restriction \(\zeta_d^\circ \colon \mathbf{F}_\ell(Q/S)^\circ \to \sCoh_{Q/S}\)
of the morphism \(\zeta_d\) to the open subscheme of $\mathbf{F}_\ell(Q/S)$
parameterizing \(\ell\)-planes which fibrewise intersect the singular locus of
\(\rho \colon Q \to S\) in at most one point. Notably, thanks to
\parref{spinor-moduli-simple}, \(\mathcal{M}_d(Q/S)\) is contained in the open
substack \(\ssCoh_{Q/S} \subset \sCoh_{Q/S}\) of simple sheaves, and so it
admits a coarse moduli space \(\mathrm{M}_d(Q/S)\)---an algebraic space
over \(S\)---over which \(\mathcal{M}_d(Q/S)\) is a $\mathbf{G}_m$-gerbe.

The local picture of \(\mathcal{M}_d(Q/S)\) can be described as follows: Suppose
that \(\rho \colon Q \to S\) admits a family of \((\ell-1)\)-planes
$L \subset Q$ contained in its smooth locus, and let
$\mu \colon M \to S$ be the associated maximal hyperbolic reduction as in
\parref{reduction-fourier-mukai}. Identify $M$ via
\parref{lemma-fano-correspondence} as the closed subscheme of
$\mathbf{F}_\ell(Q/S)$ parametrizing $\ell$-planes in $Q$
containing $L$. In fact, $M \subset \mathbf{F}_\ell(Q/S)^\circ$ since an
$\ell$-plane containing $L \subset Q \setminus \Sing\rho$ can fibrewise
intersect the singular locus of $\rho \colon Q \to S$ in at most one point. Then:

\begin{Proposition}
\label{prop-hypredtocoh}
Let \(\mu \colon M \to S\) be a maximal hyperbolic reduction of
\(\rho \colon Q \to S\) as above. Then
\[
\zeta_d\rvert_M \colon
M \subset \mathbf{F}_\ell(Q/S)^\circ \to \mathcal{M}_d(Q/S)
\]
is smooth of relative dimension $1$, and the composite
% \begin{equation}
% \label{equn-composition}
% M \subset \mathbf{F}_\ell(Q/S)^\circ \to \sCoh(Q/S)
% \end{equation}
\(M \to \mathcal{M}_d(Q/S) \to \mathrm{M}_d(Q/S)\)
is an open immersion.
\end{Proposition}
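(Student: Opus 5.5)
The plan is to deduce both claims from the Fourier--Mukai equivalence \(\Psi_d \colon \Dqc(M) \to \Ku(Q)\) of \parref{reduction-fourier-mukai-equivalence} together with the open immersion statement \parref{theorem-fully-faithful-open-immersion-2}. Observe first that the composite \(M \to \mathcal{M}_d(Q/S) \subset \ssCoh_{Q/S}\) classifies the tautological dual spinor sheaf \(\mathcal{S}_d^\vee\) on \(Q \times_S M\). By the Yoneda-type lemma \parref{complexes-yoneda}, this is exactly the morphism obtained by composing the ``skyscraper'' morphism \(M \to \sComplexesStack_{M/S}\), \(m \mapsto \sO_m\), with \(\mathrm{FM}_{\mathcal{S}_d^\vee}\). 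Here \(\mathcal{S}_d^\vee\) is relatively perfect over both factors by \parref{lemma-spinorsheavesrelperf} and \parref{lemma-thekernelisrelperf}. Since \(\mu \colon M \to S\) is flat, proper, and finitely presented, it is a relative \(0\)-dimensional quadric, hence an effective Cartier divisor in a \(\PP^1\)-bundle, and \(\Psi_d\) is fully faithful. The theorem \parref{theorem-fully-faithful-open-immersion} (or its residual variant, with target \(\Ku(Q)\)) then gives an open immersion \(\mathrm{FM} \colon \ComplexesStack_{M/S} \to \ComplexesStack_{Q/S}\) that preserves simple objects.

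The next step is to identify the skyscraper locus. The map \(M \to \sComplexesStack_{M/S}\) lands in the \(\mathbf{G}_m\)-gerbe of simple complexes, and on coarse spaces it induces \(M \to \mathrm{sComplexes}_{M/S}\). I would check that this is an open immersion. Flat \(S\)-families of points on \(M\) are precisely the simple sheaves of rank one fibrewise supported at a point. Openness of this condition among simple perfect complexes follows from semicontinuity and Nakayama, via \parref{lemma-nakforpseudo}. The identification with \(M\) itself is the classical fact that the Hilbert scheme of one point is \(M\), and that a simple sheaf of length one is a structure sheaf of a point up to a twist pulled back from the base, which accounts for the gerbe. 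Composing with the coarse open immersion induced by \(\mathrm{FM}\), and noting that \(\mathcal{M}_d(Q/S)\) is open in \(\ssCoh_{Q/S} \subset \sComplexesStack_{Q/S}\), this yields that \(M \to \mathrm{M}_d(Q/S)\) is an open immersion.

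For smoothness of relative dimension \(1\), I would factor \(\zeta_d|_M\) as \(M \cong \mathcal{M}_d(Q/S) \times_{\mathrm{M}_d(Q/S)} M \times_M \ldots\). More concretely, since \(M \to \mathrm{M}_d(Q/S)\) is an open immersion, the map \(M \to \mathcal{M}_d(Q/S)\) is the pullback of a section \(M \to \mathcal{M}_d|_M\) of the \(\mathbf{G}_m\)-gerbe \(\mathcal{M}_d|_M \to M\). A section of a \(\mathbf{G}_m\)-gerbe is a \(\mathbf{G}_m\)-torsor, being the base change of \(\mathrm{pt} \to B\mathbf{G}_m\), and is therefore smooth of relative dimension \(1\).

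The main obstacle I expect is making the identification in the second step precise: one must show that the image of \(M\) in \(\mathrm{sComplexes}_{M/S}\) is open, especially over \(S_2\), where the fibres of \(\mu\) are singular and \(1\)-dimensional. I would first try to avoid that issue by arguing directly. Monomorphism on coarse spaces comes from full faithfulness, since distinct points give non-isomorphic skyscrapers and hence non-isomorphic spinors. Then I would check étaleness of \(M \to \mathrm{M}_d(Q/S)\) by the infinitesimal lifting argument of \parref{theorem-fully-faithful-open-immersion}: apply the right adjoint of \(\Psi_d\) to a deformation of \(\mathcal{S}_d^\vee\), which produces a deformation of a skyscraper sheaf. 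A deformation of a skyscraper remains a skyscraper, by flatness and length, and so lifts the point of \(M\). An étale monomorphism is an open immersion.
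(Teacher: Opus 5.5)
Your argument has a genuine gap, and it sits over \(S_2\), which is where the content of the proposition lies. Your first step asserts that \(\mu \colon M \to S\) is flat, but it is not. \(M\) is cut out of the \(\PP^1\)-bundle \(\PP(\mathcal{F}^\perp/\mathcal{F})\) by a quadratic form that vanishes identically over \(S_2\), so the fibres jump from length-\(2\) schemes to \(\PP^1\), as you note yourself at the end. Hence \(\ComplexesStack_{M/S}\) is not available as in \parref{complexes-stack}, and \parref{theorem-fully-faithful-open-immersion} does not apply.

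A more careful set-up would not fix this. The proof of that theorem needs \(\Phi_{K_T}\) to be fully faithful for \emph{every} \(S\)-scheme \(T\), both for the monomorphism property and in the infinitesimal lifting step over arbitrary Artinian thickenings. \parref{lemma-basechangeofff} provides this for non-flat \(T \to S\) only when both sides are flat over \(S\). Over a point \(s \in S_2\), full faithfulness genuinely fails. For \(\ell = 1\) we have \(Q_s = \PP^2 \cup_{\ell_0} \PP^2\). For a line \(L \neq \ell_0\), the ideal sheaf \(\mathcal{I}_L\) at the point \(L \cap \ell_0\) is a non-free maximal Cohen--Macaulay module over a hypersurface. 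So \(\Ext^i(\mathcal{I}_L,\mathcal{I}_L) \neq 0\) for infinitely many \(i\), whereas \(\Ext^i_{\PP^1}(\sO_m,\sO_m) = 0\) for \(i \geq 2\).

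Thus \(\Psi_{d,s}\) is not fully faithful, and the unit \(\sO_m \to R_s\Psi_{d,s}(\sO_m)\) (with \(R_s\) the right adjoint) is not an isomorphism. Both parts of your fallback therefore break down for Artinian points lying over \(S_2\): the monomorphism property from full faithfulness, and applying the right adjoint to a deformation of \(\mathcal{S}_d^\vee\) to get a deformation of a skyscraper. In fact the paper applies the proposition over a field with \(Q\) of corank \(2\), in the proof of \parref{spinors-coarse-moduli-fibres}, where no fully faithful \(\Psi_d\) exists at all.

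What is missing is an argument over \(S_2\) that does not go through a Fourier--Mukai functor out of \(\Dqc(M)\). The paper uses the open immersion of stacks of complexes only for hyperbolic reduction \(Q' \to Q\) along a regular section, while the relative dimension is still at least \(2\). There both \(Q\) and \(Q'\) are flat, full faithfulness survives arbitrary base change by \parref{decompositions-FM}, and iterating \parref{lemma-reductiontosurface} \'etale locally reduces to a quadric surface bundle with a regular section \(\sigma\).

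The surface case is then done by hand. By \parref{lemma-abeljac}, \(\mathbf{F}_1(Q/S)^\circ\) is open in the rank \(2\) affine bundle \(\mathbf{A}(\mathcal{H})\) over \(\mathcal{M}\). Inside it, \(M\) is open in the line subbundle \(\mathbf{A}(\mathcal{N})\), where \(\mathcal{N} = \pr_{2,*}\sHom(\mathcal{U},\mathcal{J}_{\mathcal{M}})\). This \(\mathcal{N}\) is a line bundle commuting with arbitrary base change by \parref{lemma-surfacecomputationwithsection}, a fibrewise cohomology computation that is equally valid on corank \(2\) fibres. This gives smoothness of relative dimension \(1\). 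The weight \(-1\) action on \(\mathcal{N}\) then trivializes the gerbe, and \(M\) becomes an open in the complement of the zero section, because a homomorphism in this rank one family is injective if and only if it is nonzero.

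Your final step is correct: once \(M \to \mathrm{M}_d(Q/S)\) is an open immersion, \(M \to \mathcal{M}_d(Q/S)\) is a \(\mathbf{G}_m\)-torsor over its image and hence smooth of relative dimension \(1\). That would let you reverse the paper's order of deduction, but only after the open immersion has been established by an argument that works over \(S_2\).
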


In general, \(\rho \colon Q \to S\) may not admit a maximal hyperbolic
reduction as there may not be a global family of \((\ell-1)\)-planes contained
in the smooth locus. Nonetheless, \'etale locally on \(S\), the space
$\mathrm{M}_d(Q/S)$ is covered by opens of the form $M$ as above:

\begin{Lemma}
\label{lemma-sectionetalelocally}
    Let $f \colon T \to \mathcal{M}_d(Q/S)$ be a morphism from an $S$-scheme
    $T$. Then there exists an \'etale covering $U \to T$ and a family of
    $(\ell-1)$-planes $L \subset Q_U$  contained in the smooth locus of
    $\rho_U \colon Q_U \to U$ such that the composite $U \to T \to
    \mathcal{M}_d(Q/S)$ factors through the maximal hyperbolic reduction
    $\mu_U \colon M_U \to U$ of $\rho_U \colon Q_U \to U$ with respect to $L$:
    $$
    \begin{tikzcd}
    U \ar[r] \ar[d] & M_U \ar[d, "\zeta_d\rvert_{M_U}"] \\
    T \ar[r, "f"] & \mathcal{M}_d(Q/S).
    \end{tikzcd}
    $$
\end{Lemma}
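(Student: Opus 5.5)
First reduce to the case where the map $f$ lifts to the Fano scheme. By definition $\mathcal{M}_d(Q/S)$ is the image of the smooth morphism $\zeta_d^\circ \colon \mathbf{F}_\ell(Q/S)^\circ \to \mathcal{M}_d(Q/S)$ of \parref{prop-fanotocoh}, so $\zeta_d^\circ$ is smooth and surjective onto $\mathcal{M}_d(Q/S)$. The base change $T \times_{\mathcal{M}_d(Q/S)} \mathbf{F}_\ell(Q/S)^\circ \to T$ is then a smooth surjection from an algebraic space, and so it admits sections \'etale locally on $T$ (\citeSP{055U}). After replacing $T$ by an \'etale cover, we may therefore assume that $f = \zeta_d \circ w$ for a morphism $w \colon T \to \mathbf{F}_\ell(Q/S)^\circ$. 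This $w$ is a rank $\ell+1$ isotropic subbundle $\mathcal{W} \subset \mathcal{E}_T$ such that $\PP\mathcal{W}$ meets $\Sing\rho_T$ in at most one point in each fibre.

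Next construct $L$ inside $\PP\mathcal{W}$. The task is to find, \'etale locally on $T$, a rank $\ell$ subbundle $\mathcal{F} \subset \mathcal{W}$ with $\PP\mathcal{F}$ disjoint from $\Sing\rho_T$. Regularity of $\mathcal{F}$ can then be checked fibrewise via the smooth-locus characterization in \parref{hyperbolic-reduction-setting}. Fix a point $t \in T$. The locus $\PP\mathcal{W}_t \cap \Sing Q_t$ is at most a single point, possibly a nonreduced point in characteristic $2$. Its support is contained in a proper linear subspace of $\PP\mathcal{W}_t \cong \PP^\ell$, and since $\ell \geq 1$, a general hyperplane of $\PP\mathcal{W}_t$ avoids it. Such hyperplanes form a nonempty open subset of the dual projective bundle $\PP(\mathcal{W}^\vee)$ over a neighbourhood of $t$. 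This is a condition on the points of $\PP(\mathcal{W}^\vee)$, namely that the hyperplane misses the closed subset $\PP\mathcal{W} \cap \Sing\rho_T$, so it is open after using properness to push forward the bad locus. The open subset is smooth and surjective over a neighbourhood of $t$. Smooth surjections have sections \'etale locally, which yields an \'etale neighbourhood $U \to T$ and a hyperplane subbundle $\mathcal{F} \subset \mathcal{W}_U$ with $L \coloneqq \PP\mathcal{F}$ contained in the smooth locus of $\rho_U$. Over a finite field a general hyperplane might not exist rationally, but passing to the dual bundle and an \'etale section circumvents this, so residue fields cause no problem.

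Finally, let $\mu_U \colon M_U \to U$ be the maximal hyperbolic reduction along $\mathcal{F}$. By \parref{lemma-fano-correspondence}, $M_U$ is the closed subscheme of $\mathbf{F}_\ell(Q_U/U)$ parameterizing $\ell$-planes containing $L$. Since $\mathcal{F} \subset \mathcal{W}_U$, the $\ell$-plane $\PP\mathcal{W}_U$ is a $U$-point of $M_U$, that is, a section $U \to M_U$. Composing this section with $\zeta_d\rvert_{M_U}$ returns the dual spinor sheaf of $\mathcal{W}_U$, which is $f\rvert_U$ by construction. This gives the commuting square, where $\zeta_d\rvert_{M_U}$ is viewed through $M_U \subset \mathbf{F}_\ell(Q/S)^\circ \times_S U$.

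The step I expect to be the main obstacle is the second one: producing, \'etale locally, a hyperplane of $\PP\mathcal{W}$ that avoids $\Sing\rho_T$ scheme-theoretically. The difficulty is the nonreduced base and the characteristic $2$ double-point singular loci. Because avoidance is a set-theoretic condition on fibres, the openness and properness argument should still handle it.
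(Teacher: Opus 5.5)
Your proposal is correct and follows the paper's proof essentially step for step. You lift $f$ \'etale locally through the smooth surjection $\zeta_d^\circ$, take an \'etale-local section of the open subscheme of hyperplanes in $\PP\mathcal{W}$ that avoid $\Sing\rho_T$ (your open subset of $\PP(\mathcal{W}^\vee)$ is the paper's $\mathbf{G} \subset \mathbf{G}(\ell-1,P)$), and then use \parref{lemma-fano-correspondence} to see that the $\ell$-plane gives a $U$-point of $M_U$ lying over $f$.
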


\begin{proof}
After replacing $T$ by an \'etale cover, we may assume $f$ factors through
\(\zeta_d^\circ\): Indeed, this is because, by \parref{prop-fanotocoh}, the
fibre product $T \times _{\mathcal{M}_d(Q/S)} \mathbf{F}_\ell(Q/S)^\circ$
is a smooth algebraic space over $T$ mapping surjectively onto $T$, and so it
admits a section after an \'etale covering. This means that $f$ may be taken
to be the classifying map for the spinor sheaf associated with
a family $P \subset Q_T$ of $\ell$-planes which intersects the singular locus
of \(Q_T \to T\) fibrewise in at most one point. Since most hyperplanes avoid a
given point, there is an open subscheme \(\mathbf{G}\) of the Grassmannian
\(\mathbf{G}(\ell-1,P) \to T\) of hyperplanes in \(P \to T\) which is nonempty
over every point of \(T\) and which parameterizes those \((\ell-1)\)-planes
\(L \subset P\) contained in the smooth locus of \(Q_T \to T\). Since
\(\mathbf{G}\) is smooth over \(T\), replacing \(T\) by yet another \'etale
covering provides a section, whence a family \(L \subset Q_T\) of
\((\ell-1)\)-planes contained in the smooth locus. Writing \(M_T \to T\) for
the associated maximal hyperbolic reduction, the containment
\(L \subset P\) means via \parref{lemma-fano-correspondence} that the
classifying morphism \(T \to \mathbf{F}_\ell(Q/S)^\circ\) for \(P\) factors
through \(M_T \subseteq \mathbf{F}_\ell(Q/S)^\circ\), from which the
result follows.
% Then for every geometric point $\bar{t} \to T$, the plane $P_{\bar{t}} \subset Q_{\bar{t}}$ intersects the singular locus of $Q_{\bar{t}}$ in at most one point, so there exists a codimension one linear space $L_0 \subset P_{\bar{t}}$ which is contained in the smooth locus of $Q_{\bar{t}}$. After replacing $T$ by an \'etale neighborhood of $\bar{t}$, we may assume there exists a family $L \subset P \subset Q_T$ of $(\ell-1)$-planes contained in the smooth locus of $Q_T/T$ such that $L_0 = L_{\bar{t}}$. This is the Grassmanian of $P$  is smooth at the point $L_0$. Then if $\bar{Q}_T$ denotes the hyperbolic reduction of $Q_T$ with respect to $L$, the morphism $T \to \mathbf{F}_\ell(Q/S)$ classified by $P$ factors through $\bar{Q}_T$ by construction (since $L \subset P$). Thus we have solved the problem in an \'etale neighborhood of $\bar{t}$. Doing this for every geometric point proves the lemma. 
\end{proof}

Propositions \parref{prop-fanotocoh} and \parref{prop-hypredtocoh} will be
proven over the course of the following few paragraphs. We first treat the
special case where \(\rho \colon Q \to S\) is a quadric surface bundle in
\parref{spinors-fanotocoh-surfaces} and \parref{spinors-hypredtocoh-surfaces},
respectively. Then we reduce to the case of relative dimension \(2\) using the
following, the statement of which only makes sense after
\parref{prop-fanotocoh} is established:

\begin{Lemma}
\label{lemma-reductiontosurface}
Let \(\rho \colon Q \to S\) be a quadric \(2\ell\)-bundle with \(\ell > 1\)
and let \(\rho' \colon Q' \to S\) be the family of quadrics of relative
dimension \(2\ell-2\) obtained via hyperbolic reduction of
\(\rho \colon Q \to S\) along a regular section. Then there is an isomorphism
$\alpha\colon \widebar{\mathcal{M}}_d(Q'/S) \cong \widebar{\mathcal{M}}_d(Q/S)$
fitting into a commutative square
\[
\begin{tikzcd}
    \mathbf{F}_{\ell-1}(Q'/S) \ar[r, "\zeta_d'"'] \ar[d, "\beta"']
    & \widebar{\mathcal{M}}_d(Q'/S) \ar[d, "\alpha"] \\
    \mathbf{F}_{\ell}(Q/S) \ar[r, "\zeta_d"] & \widebar{\mathcal{M}}_d(Q/S),
\end{tikzcd}
\]
where $\beta$ is the closed immersion of \parref{lemma-fano-correspondence}
and \(\zeta'_d \colon \mathbf{F}_{\ell-1}(Q'/S) \to \mathcal{M}_d(Q'/S)\) is
the tautological degree \(d\) dual spinor morphism of
\parref{spinor-moduli-stack} associated with \(\rho' \colon Q' \to S\).
\end{Lemma}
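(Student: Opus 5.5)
The plan is to obtain \(\alpha\) from the open immersion of stacks of complexes induced by the fully faithful hyperbolic reduction functor. Let \(\Phi \colon \Ku(Q') \to \Ku(Q)\) be the equivalence of \parref{hypred-equivalence}. Its kernel, identified in \parref{proposition-identify-the-kernel} as a twist of the ideal sheaf \(\mathcal{J}\) of the incidence correspondence \(\Lambda \subset Q \times_S Q'\), should be perfect relative to both factors. For \(Q\)-perfectness, argue exactly as in the base case of \parref{lemma-thekernelisrelperf}: \(\pr_1\) is perfect and \(\Lambda \to Q\) is a local complete intersection morphism. For \(Q'\)-perfectness, note that \(\Lambda\) is flat over \(Q'\), being a \(\PP^1\)-bundle. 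Then \parref{theorem-fully-faithful-open-immersion-2}, applied with \(\mathcal{A}_0 = \Ku(Q')\) and the relative exceptional collection \(\sO_{\rho'},\ldots,\sO_{\rho'}(2\ell-3)\), gives an open immersion
\[
\mathrm{FM} \colon \ComplexesStack_{\Ku(Q')/S} \to \ComplexesStack_{Q/S},
\]
which preserves simple objects.

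Next I restrict this to the stacks of spinor sheaves. Dual spinor sheaves are \(S\)-flat coherent sheaves by \parref{lemma-spinorsheavesrelperf} and \parref{cliff-spinors-duality}. They lie in the Kuznetsov component by \parref{cliff-spinor-cohomology}, and they have no negative self-Exts, so \(\widebar{\mathcal{M}}_d(Q'/S)\) is an open substack of both \(\sCoh_{Q'/S}\) and \(\ComplexesStack_{\Ku(Q')/S}\); the same holds for \(Q\). The key compatibility is \parref{cliff-hyperbolic-reduction-spinors}, applied after base change to \(T = \mathbf{F}_{\ell-1}(Q'/S)\) with the tautological subbundle \(\mathcal{W}'\) and the corresponding \(\mathcal{W} \subset \mathcal{E}_T\). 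This is exactly the \(\ell\)-plane classified by \(\beta\) via \parref{lemma-fano-correspondence}. It gives \(\Phi_T(\mathcal{S}'^\vee_d) \cong \mathcal{S}^\vee_d\) canonically. Here the hypothesis on weakly associated points needed for \parref{hypred-equivalence} should be harmless: everything can be pulled back from the universal situation over the parameter space of quadrics with a regular section, where it holds. This yields a 2-commutative square \(\mathrm{FM} \circ \zeta'_d \simeq \zeta_d \circ \beta\).

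From the square, \(\mathrm{FM}\) carries the image \(\widebar{\mathcal{M}}_d(Q'/S)\) of \(\zeta'_d\) into \(\widebar{\mathcal{M}}_d(Q/S)\). This uses that images of smooth morphisms are computed fppf locally, which presumes \parref{prop-fanotocoh}, as the statement indicates. Write \(\alpha\) for the resulting morphism; it is again an open immersion, in particular a monomorphism. It remains to show that \(\alpha\) is surjective, i.e.\ that every object of \(\widebar{\mathcal{M}}_d(Q/S)\) lies, fppf locally, in the image of \(\zeta_d \circ \beta\). I expect this to be the main obstacle.

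To handle it, take \(T \to \widebar{\mathcal{M}}_d(Q/S)\) and, after an \'etale cover using smoothness of \(\zeta_d\), lift it to a family of \(\ell\)-planes \(P \subset Q_T\). I must find, \'etale locally, an \(\ell\)-plane \(P'\) containing the section \(\PP\mathcal{F}\) whose spinor sheaf is isomorphic to that of \(P\). On geometric fibres this is \parref{cliff-dependence-corank-2}, which produces a \(P'\) through a smooth point in the same special Clifford group orbit; for \(\ell = 2\) it needs \(\dim \geq 4\), and for \(\ell = 1\) no reduction is claimed. To globalise, consider the locus in \(\mathbf{F}_\ell(Q/S)_T \times_T \beta(\mathbf{F}_{\ell-1}(Q'/S))\) where the two spinor sheaves are isomorphic. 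This is the base change of the diagonal of \(\sCoh_{Q/S}\), and \parref{spinor-moduli-simple} makes it representable where the sheaves are simple. Show it is smooth and surjective over \(T\), and take an \'etale section. Alternatively, reduce surjectivity to geometric points, which suffices for an open immersion between stacks locally of finite presentation. In that case the fibrewise statement \parref{cliff-dependence-corank-2}, together with a direct check for the \(\ell = 2\) case, finishes the proof.
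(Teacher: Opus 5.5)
Your proposal is correct and follows essentially the same route as the paper. Both define $\alpha$ as the restriction of the open immersion of \parref{theorem-fully-faithful-open-immersion-2} induced by $\Phi$, get the commutative square from \parref{cliff-hyperbolic-reduction-spinors}, and prove surjectivity on geometric points via \parref{cliff-dependence-corank-2}. Two of your hedges are unnecessary: the weakly-associated-points hypothesis holds automatically because $S_{2\ell} \subseteq S_3 = \varnothing$ for $\ell \geq 2$, so no pullback from a universal family is needed; and \parref{cliff-dependence-corank-2} already covers $\ell = 2$ since $\dim Q = 2\ell = 4$, so no separate check is required.
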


\begin{proof}
Embedding the stack of coherent sheaves as an open substack of the stack of
complexes as in \parref{complexes-stack}, view \(\zeta_d\) and \(\zeta_d'\) as
taking values in their respective stacks of complexes. Furthermore, by \parref{cliff-spinor-cohomology}, we may actually view $\zeta_d$ and $\zeta_d'$ as taking values in the open substack parametrizing complexes in the respective  residual categories. The morphism 
\(\alpha \colon \widebar{\mathcal{M}}_d(Q'/S) \to \widebar{\mathcal{M}}_d(Q/S)\)
is that induced by the open immersion from
\parref{theorem-fully-faithful-open-immersion-2} coming from the equivalence \(\Phi\) of
\parref{hypred-equivalence}; that \(\alpha\) factors through
\(\widebar{\mathcal{M}}_d(Q/S)\) and that the square commutes is because
\(\Phi\) preserves dual spinor sheaves by
\parref{cliff-hyperbolic-reduction-spinors}. Since \(\alpha\) is an open
immersion, it remains to show $\alpha$ induces a bijection on geometric
points. We must now show that a given spinor sheaf on \(Q\) is isomorphic to
one defined by a linear space containing the regular section defining \(Q'\),
and this follows from \parref{cliff-dependence-corank-2}.
\end{proof}

\subsectiondash{Stack of spinors for quadric surfaces}
\label{subsection-spinorsonsurface}
Until \parref{spinor-stack-general}, assume that $\ell = 1$, meaning that
\(\rho \colon Q \to S\) is a quadric surface bundle. The standing
hypothesis that \(S_3 = \varnothing\) is equivalent to the statement that every
geometric fiber $Q_{\bar{s}}$ is a reduced quadric surface. Begin with a
more concrete description of the morphism
\(\zeta_d \colon \mathbf{F}_1(Q/S) \to \sCoh_{Q/S}\):

According to \parref{reduction-quadric-surfaces-example}, the $d$-th spinor
sheaf corresponding to a flat family of lines is identified with the
corresponding ideal sheaf, at least up to duals and tensoring with a line
bundle. Since the derived dual of a spinor coincides with its linear dual as in
\parref{cliff-spinors-duality}, and since quadric bundles are Gorenstein, up to
automorphisms of the stack \(\ComplexesStack_{Q/S}\) as in
\parref{complexes-autoequivalences}, this means that the morphism \(\zeta_d\)
may be identified with the morphism
\begin{equation*}
%\label{equn-fanotocohsurface}
\zeta_{\mathrm{ideal}} \colon
\mathbf{F}_1(Q/S) \to \sCoh_{Q/S} \subset \ComplexesStack_{Q/S}
\end{equation*}
which on $T$-points takes a family of lines $L \subset Q_T$ to its ideal sheaf
$\mathcal{I}_{L/Q_T}$. Since both statements \parref{prop-fanotocoh} and
\parref{prop-hypredtocoh} are insensitive to modification by automorphisms of
the target, it suffices to prove the analogous statements for
\(\zeta_{\mathrm{ideal}}\). These proofs are based on the following two
computations:

\begin{Lemma}
\label{lemma-surfacecomputation}
Let $L \subset Q$ be an $S$-flat family of lines with ideal sheaf
$\mathcal{I}$. Then the object
\[
R\rho_* R\sHom_{\sO_Q}(\mathcal{I}, \mathcal{O}_Q)
\in \Dqc(S)
\]
is a rank two vector bundle in degree zero. In particular, the sheaf
$\rho_*\sHom_{\sO_Q}(\mathcal{I}, \mathcal{O}_Q)$ is a vector bundle of
rank two whose formation commutes with arbitrary base change.
\end{Lemma}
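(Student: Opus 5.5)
Apply $R\rho_*R\sHom_{\sO_Q}(-,\sO_Q)$ to the ideal sheaf sequence $0 \to \mathcal{I} \to \sO_Q \to i_*\sO_L \to 0$, where \(i \colon L \to Q\) is the inclusion. This gives a distinguished triangle
\[
R\rho_*R\sHom_{\sO_Q}(i_*\sO_L,\sO_Q) \longrightarrow
R\rho_*\sO_Q \longrightarrow
R\rho_*R\sHom_{\sO_Q}(\mathcal{I},\sO_Q) \stackrel{+1}{\longrightarrow}
\]
in \(\Dqc(S)\). The middle term is \(\sO_S\) in degree \(0\), since \(\rho\) is a flat family of quadric surfaces in a \(\PP^3\)-bundle; this follows from the defining sequence \(0 \to \sO_\pi(-2)\otimes\pi^*\mathcal{L}^\vee \to \sO_{\PP\mathcal{E}} \to \iota_*\sO_Q \to 0\) and the cohomology of projective bundles.

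For the first term, use Grothendieck duality along \(i\). Since \(L \subset Q\) is \(S\)-flat and a \(\PP^1\)-bundle inside the Gorenstein morphism \(\rho\), with \(\omega_{Q/S} \cong \sO_\rho(-2)\) up to a line bundle from \(S\), we get
\[
R\sHom_{\sO_Q}(i_*\sO_L,\sO_Q) \cong i_*\big(\omega_{L/S}\otimes i^*\omega_{Q/S}^\vee\big)[-1].
\]
This holds because \(i_*\sO_L\) is \(S\)-perfect, and on fibres \(L_s \subset Q_s\) has codimension \(1\) with Cohen--Macaulay \(Q_s\); compare the \(\mathcal{E}\mathit{xt}^1\) computation in \parref{reduction-quadric-surfaces-example}. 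Fibrewise, \(\omega_{L}\otimes\omega_Q^\vee|_L \cong \sO_{\PP^1}(-2)\otimes\sO_{\PP^1}(2) \cong \sO_{\PP^1}\). Hence the first term is \(R\rho_*i_*\sO_L[-1] \cong \sO_S[-1]\), possibly twisted by a line bundle \(\mathcal{M}\) on \(S\). To make this rigorous I would check it over each fibre and use cohomology and base change, which is legitimate because every object involved is \(S\)-perfect and \(R\sHom\) commutes with base change by \parref{lemma-internalhomandbasechange}.

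Rotating the triangle gives
\[
\sO_S \longrightarrow R\rho_*R\sHom(\mathcal{I},\sO_Q) \longrightarrow \mathcal{M}
\]
with both outer terms in degree \(0\). So the middle object is a sheaf in degree \(0\), locally an extension of line bundles, hence a rank two vector bundle. Its formation commutes with base change by \parref{lemma-internalhomandbasechange} and tor-independent base change along the flat morphism \(\rho\). Since the object is a vector bundle in degree \(0\), its \(\mathrm{H}^0\) is \(\rho_*\sHom(\mathcal{I},\sO_Q)\), which gives the final assertion.

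I expect the main obstacle to be the duality step over corank-\(2\) fibres. There \(Q_s = \PP^2\cup\PP^2\) and \(L_s\) may be the singular line, so \(L_s\) need not be Cartier in \(Q_s\), and I must verify that \(R\sHom(\sO_{L_s},\sO_{Q_s})\) is still concentrated in degree \(1\) with the predicted \(\mathcal{E}\mathit{xt}^1\). My first approach would be to use that \(L_s\) is Cohen--Macaulay of codimension \(1\) in the Gorenstein scheme \(Q_s\), which kills \(\mathcal{E}\mathit{xt}^j\) for \(j \neq 1\). Then \(\mathcal{E}\mathit{xt}^1 = \omega_{L_s}\otimes\omega_{Q_s}^\vee\) by transitivity of dualizing sheaves through \(\PP^3\), exactly as in \parref{reduction-quadric-surfaces-example}.
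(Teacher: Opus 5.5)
Your argument is correct, but it runs in the opposite order from the paper's, and the two differ in which duality they use.

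\textbf{The paper's route.} It works locally on $S$, so that $\omega_{Q/S}\cong\sO_Q(-2)$, and first applies Grothendieck duality along $\rho$. This rewrites the object as $R\sHom_{\sO_S}(R\rho_*\mathcal{I}(-2),\sO_S)[2]$. Only then does it twist the ideal sheaf sequence by $\sO_Q(-2)$ and push forward. The outer terms are then the standard cohomology of $\sO_Q(-2)$ and of $\sO_L(-2)$ on a $\PP^1$-bundle. So the local position of $L$ inside $Q$ never enters, and in particular the non-Cartier singular line in a corank $2$ fibre causes no trouble.

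\textbf{Your route.} You apply the ideal sheaf sequence first and dualize along the closed immersion $i$. This requires the local computation $R\sHom_{\sO_Q}(i_*\sO_L,\sO_Q)\cong i_*(\omega_{L/S}\otimes i^*\omega_{Q/S}^\vee)[-1]$.

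\textbf{Comparison.} The two computations are exchanged by Grothendieck duality for $\rho$ and produce the same extension. The paper's ordering avoids any $\mathcal{E}\mathit{xt}$ computation. Yours exhibits the bundle directly as $0\to\sO_S\to\rho_*\sHom_{\sO_Q}(\mathcal{I},\sO_Q)\to\mathcal{M}\to 0$, with the sub spanned by the inclusion $\mathcal{I}\hookrightarrow\sO_Q$. That is the form in which the bundle is used afterwards, for instance when evaluating maps $\mathcal{I}\to\sO_Q$ at a section.

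\textbf{The anticipated corank $2$ obstacle is not needed.} Neither the fibrewise Cohen--Macaulay argument nor the hedge of checking fibrewise and invoking cohomology and base change is required. Work on an affine open of $S$ and write $a_f$ for the right adjoint of $Rf_*$. Then $\omega^\bullet_{Q/S}=a_\rho(\sO_S)\cong\omega_{Q/S}[2]$ and $\omega^\bullet_{L/S}=a_{\rho\circ i}(\sO_S)\cong\omega_{L/S}[1]$. Since right adjoints compose, $R\sHom_{\sO_Q}(i_*\sO_L,\omega^\bullet_{Q/S})\cong i_*\omega^\bullet_{L/S}$. This holds for any closed immersion of flat proper $S$-schemes, whether or not $L$ is Cartier in $Q$. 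Tensoring with $\omega_{Q/S}^\vee[-2]$ gives your formula in the family directly. From there, your identification $\omega_{L/S}\otimes i^*\omega_{Q/S}^\vee\cong(\rho\circ i)^*\mathcal{M}$, the long exact sequence, and the base change argument for the final sentence all go through as written.
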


\begin{proof}
The statement is local on \(S\), so we may assume that
$Q \subset \mathbf{P}^3_S$ is defined by a section of $\mathcal{O}_{\mathbf{P}^3_S}(2)$
and $L \cong \mathbf{P}^1_S$. Grothendieck duality gives
\[
R\rho_* R\sHom_{\mathcal{O}_{Q}}(\mathcal{I}, \mathcal{O}_Q) =
R\rho_* R\sHom_{\mathcal{O}_{Q}}(\mathcal{I}(-2), \mathcal{O}_Q(-2)) =
R\sHom_{\mathcal{O}_S}(R\rho_*\mathcal{I}(-2), \mathcal{O}_S)[2].
\]
Twisting and pushing the ideal sheaf sequence for \(L\) provides a triangle
\[
R\rho_*\mathcal{I}(-2) \longrightarrow
R\rho_*\mathcal{O}_Q(-2) \longrightarrow
R\rho_* \mathcal{O}_L(-2) \stackrel{+1}{\longrightarrow}
\]
in \(\Dqc(S)\), whose long exact sequence in cohomology sheaves reduces to a short
exact sequence
\[
0 \to R^1 \rho_*\mathcal{O}_L(-2) \to R^2\rho_*\mathcal{I}_L(-2) \to R^2\rho_*\mathcal{O}_Q(-2) \to 0.
\]
Since the external terms are line bundles by standard computations, this shows
that $R\rho_*\mathcal{I}(-2)$ is a rank 2 vector bundle in degree 2, yielding
the first statement. The underived second statement then follows from the
derived one by \parref{lemma-internalhomandbasechange}.
\end{proof}

\begin{Lemma}
\label{lemma-surfacecomputationwithsection}
Let $L \subset Q$ be an $S$-flat family of lines and let
\(\sigma \colon S \to Q\) be a regular section of \(\rho \colon Q \to S\).
Let $\mathcal{I}$ and \(\mathcal{J}\) be the ideal sheaves of \(L\) and
\(\sigma(S)\) in \(Q\), respectively. Then
the object
\[
R\rho_* R\sHom_{\sO_Q}(\mathcal{I}, \mathcal{J}) \in \Dqc(S)
\]
is a line bundle in degree zero. In particular, the sheaf
$\rho_*\sHom_{\sO_Q}(\mathcal{I}, \mathcal{J})$ is a line bundle whose
formation commutes with arbitrary base change.
\end{Lemma}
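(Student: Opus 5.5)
We mirror the proof of \parref{lemma-surfacecomputation}, replacing \(\sO_Q\) by \(\mathcal{J}\) via the ideal sheaf sequence of the section. The statement is local on \(S\). We may therefore assume \(Q \subset \PP^3_S\) is cut out by a section of \(\sO_{\PP^3_S}(2)\), that \(L \cong \PP^1_S\), and that \(\sigma(S)\) is a section lying in the smooth locus of \(\rho\). Applying \(R\rho_*R\sHom_{\sO_Q}(\mathcal{I},-)\) to \(0 \to \mathcal{J} \to \sO_Q \to \sigma_*\sO_S \to 0\) gives a triangle
\[
R\rho_*R\sHom_{\sO_Q}(\mathcal{I},\mathcal{J}) \longrightarrow
R\rho_*R\sHom_{\sO_Q}(\mathcal{I},\sO_Q) \longrightarrow
R\rho_*R\sHom_{\sO_Q}(\mathcal{I},\sigma_*\sO_S) \stackrel{+1}{\longrightarrow}
\]
By \parref{lemma-surfacecomputation}, the middle term is a rank \(2\) vector bundle in degree \(0\).

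The main computation is the third term. By adjunction it is \(R\sHom_{\sO_S}(L\sigma^*\mathcal{I},\sO_S)\). The point is that \(\mathcal{I}\) is locally free of rank \(1\) near \(\sigma(S)\) whenever the line avoids the section there, but not in general. However, \(\mathcal{I}\) is \(S\)-perfect, and since \(\sigma\) lands in the smooth locus of \(\rho\), the morphism \(\sigma\) is a regular immersion. Locally near \(\sigma(S)\), the scheme \(Q\) is smooth over \(S\) of relative dimension \(2\), so \(L\) is a relative effective Cartier divisor there: it is \(S\)-flat and fibrewise a curve in a smooth surface. Hence \(\mathcal{I}\) is invertible along \(\sigma(S)\), and \(L\sigma^*\mathcal{I}\) is a line bundle on \(S\). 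The third term is therefore a line bundle in degree \(0\), and this holds after any base change.

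It remains to show that the map from the rank \(2\) bundle \(\rho_*\sHom(\mathcal{I},\sO_Q)\) to the line bundle \((\sigma^*\mathcal{I})^\vee\) is surjective. Then the first term is the kernel, a line bundle in degree \(0\), and the long exact sequence also kills negative and positive cohomology. Since both sheaves commute with base change (by \parref{lemma-surfacecomputation} and the above), surjectivity may be checked on geometric fibres by Nakayama. So we reduce to \(S\) the spectrum of an algebraically closed field and \(Q\) a reduced quadric surface of corank at most \(2\) (using \(S_3 = \varnothing\)), with \(x = \sigma(S)\) a smooth point. Here we must show that some homomorphism \(\mathcal{I} \to \sO_Q\) does not vanish at \(x\).

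I expect this fibrewise surjectivity to be the main obstacle, and would handle it case by case. The homomorphisms \(\mathcal{I} \to \sO_Q\) include the inclusion \(\mathcal{I} \subset \sO_Q\). When \(x \notin L\), \(\mathcal{I}_x = \sO_{Q,x}\) and the inclusion is nonzero at \(x\), so we are done. When \(x \in L\), we use the second section: \(\sHom(\mathcal{I},\sO_Q) \supset \sO_Q\) has a further section given by multiplying \(\mathcal{I}\) by \(1/h\), where \(h\) is a local equation of a residual divisor. Concretely, choose a plane \(H\) containing \(L\) with \(H \cap Q = L \cup L'\) and \(x \notin L'\); such a plane exists since \(x\) is smooth. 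Then \(\mathcal{I} \cdot \sO(H) \cong \mathcal{I}_{L'}(1)\) locally, and the map \(\mathcal{I} \to \mathcal{I}_{L'}(1)\otimes\sO(-1)\cdot(\text{linear form})\) yields a homomorphism \(\mathcal{I}\to\sO_Q\) that generates at \(x\). I would verify this explicitly in the coordinates \(x_0L_0 + x_1L_1\) of \parref{reduction-quadric-surfaces-example}, with \(L = \mathrm{V}(x_0,x_1)\), where \(\sHom(\mathcal{I},\sO_Q)\) is generated by \(1\) and \(L_0/x_1 = -L_1/x_0\).
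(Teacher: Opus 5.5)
Your proposal is correct and follows the paper's proof. Both use the triangle from the ideal sheaf sequence of \(\sigma(S)\), identify the middle term via \parref{lemma-surfacecomputation}, and identify the third term as \(R\sHom_{\sO_S}(\sigma^*\mathcal{I},\sO_S)\) using that \(\mathcal{I}\) is invertible near the regular section, then reduce to showing that evaluation at \(\sigma(\bar{s})\) is nonzero on geometric fibres.

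The only difference is in that last step. The paper appeals to Addington's description of lines with isomorphic ideal sheaves to find one avoiding \(\sigma(\bar{s})\). Your coordinate homomorphism \(x_0 \mapsto -L_1\), \(x_1 \mapsto L_0\) settles it directly, because smoothness of \(Q\) at \(x \in L = \mathrm{V}(x_0,x_1)\) forces \((L_0(x),L_1(x)) \neq (0,0)\). That check is what carries the argument, and the muddled plane-section description before it can simply be dropped.
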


\begin{proof}
The ideal sheaf sequence for \(\sigma(S) \subset Q\) gives rise to a triangle
\[
R\rho_*R\sHom_{\sO_Q}(\mathcal{I}, \mathcal{J}) \longrightarrow
R\rho_*R\sHom_{\sO_Q}(\mathcal{I}, \mathcal{O}_Q) \longrightarrow
R\rho_*R\sHom_{\sO_Q}(\mathcal{I}, \sigma_*\mathcal{O}_S) \stackrel{+1}{\longrightarrow}
\]
The middle term was identified in \parref{lemma-surfacecomputation} as a rank
\(2\) vector bundle in degree \(0\), and the term on the right is a line bundle
in degree \(0\) because $\sigma$ is a smooth
section (so $\mathcal{I}$ is a line bundle in a neighborhood of $\sigma(S)$) and
\[
R\rho_*R\sHom_{\sO_Q}(\mathcal{I},\sigma_*\mathcal{O}_S)
= R\sHom_{\sO_S}(\sigma^*\mathcal{I}, \mathcal{O}_S).
\]
To complete the proof, it suffices to show that the second arrow induces a
surjection
\[
\rho_*\sHom_{\sO_Q}(\mathcal{I},\sO_Q) \to
\sHom_{\sO_S}(\sigma^*\mathcal{I},\sO_S)
\]
of locally free $\mathcal{O}_S$-modules on degree \(0\) cohomology sheaves.
Over a geometric point $\bar{s} \to S$, this is the evaluation at
\(\sigma(\bar{s})\) map. There is always a line $M$ whose ideal sheaf is isomorphic to that of $L_{\bar{s}}$ and such that $\sigma(\bar{s}) \not\in M$, and this defines a morphism $\mathcal{I}_{\bar{s}} \hookrightarrow \mathcal{O}_{Q_{\bar{s}}}$ which is not zero after evaluation at $\sigma(\bar{s})$. If $Q_{\bar{s}}$ has corank $2$ and $L_{\bar{s}}$ is equal to the singular locus, then this is clear because $\sigma$ is a regular section. Otherwise, $L_{\bar{s}}$ intersects the singular locus in at most one point and the fact that one can find such an $M$ follows from the description of lines with the same ideal sheaf as $L_{\bar{s}}$, see \cite[Section 3]{Addington:Spinors}. 
%the source is the space of
%equations for the pencil of lines linearly equivalent to \(L_{\bar{s}}\);
%otherwise, \(L_{\bar{s}}\) is the singular line in a corank \(2\) quadric
%surface \(Q_{\bar{s}}\), in which case the source may be identified as the
%\(2\)-dimensional space of linear forms from \(\PP^3\) vanishing on
%\(L_{\bar{s}}\). Since \(\sigma(\bar{s})\) is a smooth point of
%\(Q_{\bar{s}}\), there is a function which does not vanish there. This implies
%that the displayed map is surjective.
\end{proof}

We now prove smoothness of
\(\zeta_{\mathrm{ideal}} \colon \mathbf{F}_1(Q/S) \to \sCoh_{Q/S}\) via
the infinitesimal lifting criterion:

\subsectiondash{Proof of \parref{prop-fanotocoh} when $\ell = 1$}
\label{spinors-fanotocoh-surfaces}
We may assume $S$ is Noetherian. Consider a solid commutative diagram
\[
\begin{tikzcd}
    \Spec(R/J) \ar[r] \ar[d] & \mathbf{F}_1(Q/S) \ar[d, "\zeta_{\mathrm{ideal}}"]\\
    \Spec(R)\ar[ur, dashed] \ar[r] &  \sCoh_{Q/S}
\end{tikzcd}
\]
in which $(R, \mathfrak{m}_R)$ is an Artinian local ring over $S$ with residue
field $\kappa$ and $J \subset R$ is an ideal such that
$\mathfrak{m}_R\cdot J = 0$. The task is to construct a dashed arrow making the
diagram commute.

Denote by $i \colon Q_{R/J} \to Q_R$ the inclusion. The data of the diagram
gives an $R/J$-flat family of lines $L' \subset Q_{R/J}$ with ideal sheaf
$\mathcal{I}'$, an $R$-flat coherent sheaf $\mathcal{F}$ on $Q_R$, and an
isomorphism $\alpha: i^*\mathcal{F} \cong \mathcal{I}'$. Flatness of $Q \to S$
means that the ideal sheaf of the closed subscheme $Q_{R/J} \subset Q_R$ is
isomorphic to $J \otimes_{\kappa} \mathcal{O}_{Q_\kappa}$, so there is a
distinguished triangle
\[
J \otimes_\kappa \mathcal{O}_{Q_\kappa} \longrightarrow
\mathcal{O}_{Q_R} \longrightarrow
i_*\mathcal{O}_{Q_{R/J}} \longrightarrow
J \otimes_\kappa \mathcal{O}_{Q_\kappa}[1]
\]
in the derived category of $Q_R$. The isomorphism $\alpha$ gives rise to a
morphism $\beta \colon \mathcal{F} \to i_*\mathcal{O}_{Q_{R/J}}$. The
triangle together with the vanishing of
$\Ext^1_{Q_\kappa}(\mathcal{I}, \mathcal{O}_{Q_\kappa})$
from \parref{lemma-surfacecomputation} over \(\Spec(\kappa)\) shows that
$\beta$ lifts to a morphism
$\gamma \colon \mathcal{F} \to \mathcal{O}_{Q_R}$. Applying \citeSP{046Y}, we
see that $\gamma$ is injective and has flat cokernel, which is necessarily the
structure sheaf of a family of lines in $Q_R$. This family gives the dashed
arrow of the diagram.
\qed

\medskip
Having established smoothness of \(\zeta_{\mathrm{ideal}}\), define as in
\parref{spinor-moduli-stack} open substacks
$$
\mathcal{M} \subset \widebar{\mathcal{M}} \subset \sCoh_{Q/S}
$$
as the images of \(\mathbf{F}_1(Q/S)^\circ\) and \(\mathbf{F}_1(Q/S)\). Thus
\(T\)-points of \(\widebar{\mathcal{M}}\) consist of sheaves
$\mathcal{F} \in \sCoh_{Q/S}(T)$ such that \(\mathcal{F}_{\bar{t}}\) is
isomorphic to the ideal sheaf of a line on $Q_{\bar{t}}$ for every geometric
point $\bar{t} \to T$. Similarly, \(T\)-points
of \(\mathcal{M}\) are those \(\mathcal{F}\) such that each
\(\mathcal{F}_{\bar{t}}\) is isomorphic to the ideal of a line not contained in
the singular locus of $Q_{\bar{t}}$.

Writing \(\mathcal{U}\) for the universal sheaf on \(Q \times_S \mathcal{M}\),
the following provides a useful alternative description of
\(\mathbf{F}_1(Q/S)^\circ\) in terms of maps from an ideal sheaf into
the structure sheaf:

\begin{Lemma}
\label{lemma-abeljac}
$\zeta_{\mathrm{ideal}}^\circ \colon \mathbf{F}_1(Q/S)^\circ \to \mathcal{M}$
factors through an open immersion
\(j \colon \mathbf{F}_1(Q/S)^\circ \to \mathbf{A}(\mathcal{H})\) where
\(\mathcal{H}\) is the rank \(2\) vector bundle on \(\mathcal{M}\) given by
\[
\mathcal{H} \coloneqq
\pr_{2,*}\sHom_{\sO_{Q \times _S \mathcal{M}}}(\mathcal{U}, \mathcal{O}_{Q\times _S \mathcal{M}}).
\]
%  $$
%   \begin{tikzcd}
%       \mathbf{F}_1(Q/S)^\circ \ar[r, hook, "j"] \ar[dr] & \mathbf{A}(\mathcal{H}^\vee) \ar[d] \\
%       & \mathcal{M}
%   \end{tikzcd}
%  $$
%  where $j$ is an open immersion and $\mathcal{H}$ is a rank 2 vector bundle on $\mathcal{M}$.
\end{Lemma}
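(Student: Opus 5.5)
The plan is to build the map \(j\) from the tautological inclusion of ideal sheaves, and then prove it is an open immersion by identifying \(\mathbf{F}_1(Q/S)^\circ\) with the open locus of \(\mathbf{A}(\mathcal{H})\) where a homomorphism \(\mathcal{U} \to \sO\) is injective with flat cokernel.

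First, \(\mathcal{H}\) is a rank \(2\) vector bundle whose formation commutes with base change. This is fppf local on \(\mathcal{M}\). By smoothness of \(\zeta_{\mathrm{ideal}}\), \(\mathcal{U}\) is étale locally isomorphic to an ideal sheaf of a family of lines, up to a line bundle from the base. So \parref{lemma-surfacecomputation} applies, together with \parref{lemma-internalhomandbasechange} for the base change statement.

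Next, we construct \(j\). Over \(\mathbf{F}_1(Q/S)^\circ\) the pullback of \(\mathcal{U}\) is identified with the universal ideal sheaf \(\mathcal{I}_L\). Its inclusion \(\mathcal{I}_L \hookrightarrow \sO\) gives a section of \(\zeta^{\circ *}\mathcal{H}\), by base change of \(\mathcal{H}\). Such a section is the same as a lift of \(\zeta^\circ_{\mathrm{ideal}}\) to a morphism \(j \colon \mathbf{F}_1(Q/S)^\circ \to \mathbf{A}(\mathcal{H})\) over \(\mathcal{M}\); here \(\mathbf{A}(\mathcal{H})\) denotes the total space, whose \(T\)-points over \(\mathcal{M}\) are sections of \(\mathcal{H}_T\). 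The \(\mathbf{G}_m\)-gerbe automorphisms scale both \(\mathcal{U}\) and the section compatibly, so \(j\) is well defined as a morphism of stacks.

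Now define \(V \subset \mathbf{A}(\mathcal{H})\) as the locus of pairs \((\mathcal{F}, \gamma \colon \mathcal{F} \to \sO_{Q_T})\) such that \(\gamma\) is fibrewise injective. By \citeSP{046Y}, this is the same as \(\gamma\) being injective with \(T\)-flat cokernel, so \(V\) is an open substack: injectivity on fibres is an open condition because the fibre kernels are controlled by a proper, finitely presented family. The cokernel of such a \(\gamma\) is a \(T\)-flat quotient of \(\sO_{Q_T}\), that is, a closed subscheme \(Z\). Since \(\mathcal{F}_{\bar t}\) is the ideal of some line, the ideal of \(Z_{\bar t}\) is abstractly isomorphic to the ideal of a line, so it has the same Hilbert polynomial. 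Hence \(Z\) is a family of lines, and since \(T\) lies over \(\mathcal{M}\) it is not the singular line. This produces an inverse morphism \(V \to \mathbf{F}_1(Q/S)^\circ\). The compositions are identities: one way recovers \(L\), and the other recovers \((\mathcal{F}, \gamma)\) up to the gerbe isomorphism \(\mathcal{F} \cong \mathcal{I}_Z\) induced by \(\gamma\). Thus \(j\) is an isomorphism onto the open \(V\).

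The main obstacle is the geometric-point claim hidden in the last step: a nonzero map \(\gamma \colon \mathcal{I}_{L} \to \sO_{Q_{\bar t}}\) need not be injective, and its image must be the ideal of a line rather than a smaller ideal. I would avoid classifying such maps and simply rely on openness of the injectivity locus, together with the Hilbert polynomial count, to see that the cokernel is a line. Alternatively, I would verify directly on reduced quadric surfaces, using \cite[Section 3]{Addington:Spinors}, that the injective maps are exactly the ones coming from lines in the same class.
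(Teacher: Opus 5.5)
Your proposal is correct and follows essentially the paper's argument: \(\mathcal{H}\) is checked to be a rank \(2\) bundle after pullback along the smooth cover \(\zeta_{\mathrm{ideal}}^\circ\) using \parref{lemma-surfacecomputation}, and \(\mathbf{F}_1(Q/S)^\circ\) is identified with the locus of pairs \((\mathcal{F},\varphi)\) with \(\varphi\) injective, which is open by \citeSP{046Y}. The one difference is in how that identification is made: you build an explicit quasi-inverse and use a Hilbert polynomial count to see that the cokernel is a family of lines, whereas the paper observes that injectivity of \(\varphi\) kills the \(\mathbf{G}_m\)-automorphisms, so the locus is an algebraic space whose \(T\)-points are subsheaves of \(\sO_{Q_T}\) lying in \(\mathcal{M}\), and it leaves implicit the step you spell out.
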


\begin{proof}
%   As above we identify the morphism (\ref{equn-fanotocoh}) with the one given by the ideal $\mathcal{J}$ of the universal line on $Q \times _S \mathbf{F}_1(Q/S)$.
That \(\mathcal{H}\) is a rank \(2\) vector bundle whose formation commutes
with arbitrary base change may be verified after pullback along the smooth
covering
\(\zeta_{\mathrm{ideal}}^\circ \colon \mathbf{F}_1(Q/S)^\circ \to \mathcal{M}\),
whereupon this is the statement of \parref{lemma-surfacecomputation} for the
quadric bundle $Q \times_S \mathbf{F}_1(Q/S)^\circ \to \mathbf{F}_1(Q/S)^\circ$.
Given an \(S\)-scheme \(T\), an object of the fiber category
$\mathbf{A}(\mathcal{H})(T)$ is a pair $(\mathcal{F}, \varphi)$ where
$\mathcal{F} \in \mathcal{M}_d(Q/S)(T)$ and
$\varphi \in \Gamma(T, \sHom_{\sO_{Q_T}}(\mathcal{F}, \mathcal{O}_{Q_T})) = \Hom_{Q_T}(\mathcal{F}, \mathcal{O}_{Q_T})$,
and a morphism $(\mathcal{F}, \varphi) \to (\mathcal{G}, \psi)$ is an isomorphism
$\alpha\colon \mathcal{F} \to \mathcal{G}$ such that the diagram
\[
\begin{tikzcd}
    \mathcal{F} \ar[r, "\varphi"'] \ar[d, "\alpha"'] & \mathcal{O}_{Q_T} \ar[d, "="] \\
    \mathcal{G} \ar[r, "\psi"] &\mathcal{O}_{Q_T}
\end{tikzcd}
\]
commutes. Consider the open substack
$\mathcal{V} \subset \mathbf{A}(\mathcal{H})$ parametrizing pairs
$(\mathcal{F}, \varphi)$ with $\varphi$ injective; that this is an open
substack can be seen using \citeSP{046Y}. The stack $\mathcal{V}$ is in fact an
algebraic space: Since $\mathcal{M}_d(Q/S)$ is a $\mathbf{G}_m$-gerbe by
\parref{spinor-moduli-simple}, \'etale locally on $T$, any two morphisms
$(\mathcal{F}, \varphi) \to (\mathcal{G}, \psi)$ differ by some
$u \in \sO(T)^\times$. But the only way both $\alpha$ and $u\alpha$ can make
the square above commute when \(\varphi\) and \(\psi\) are injective is if
$u = 1$. Thus $\mathcal{V}$ is the algebraic space whose $T$-points are the set
of subobjects
$\mathcal{F} \subset \sO_{Q_T}$ with $\mathcal{F} \in \mathcal{M}_d(Q/S)$,
and this is precisely $\mathbf{F}_1(Q/S)^\circ$.
\end{proof}

Suppose now that \(\sigma \colon S \to Q\) is a regular section of
\(\rho \colon Q \to S\). Let \(\mu \colon M \to S\) be the corresponding
maximal hyperbolic reduction and, as usual, identify \(M\) via
\parref{lemma-fano-correspondence} as the closed subscheme of
\(\mathbf{F}_1(Q/S)^\circ\) parameterizing lines through \(\sigma\). This
puts us in the setting of:

\subsectiondash{Proof of \parref{prop-hypredtocoh} when $\ell = 1$}
\label{spinors-hypredtocoh-surfaces}
To show that \(\zeta_{\mathrm{ideal}}\rvert_M \colon M \to \mathcal{M}\)
is smooth of relative dimension \(1\), we claim that under the identification
from \parref{lemma-abeljac} of \(\mathbf{F}_1(Q/S)^\circ\) as an open subscheme
of the affine \(2\)-space bundle \(\mathbf{A}(\mathcal{H}) \to \mathcal{M}\),
\(M\) is an open subscheme of the subbundle on the line subbundle
\[
\mathcal{N} \coloneqq
\pr_{2,*}\sHom_{\sO_{Q \times_S \mathcal{M}}}(\mathcal{U},\mathcal{J}_{\mathcal{M}})
\subseteq
\pr_{2,*}\sHom_{\sO_{Q \times_S \mathcal{M}}}(\mathcal{U}, \sO_{Q \times_S \mathcal{M}}) =
\mathcal{H}
\]
where $\mathcal{J}_{\mathcal{M}}$ is the pullback of the ideal sheaf
$\mathcal{J} \subset \mathcal{O}_Q$ of $\sigma(S)$ to
$Q \times_S \mathcal{M}$; that \(\mathcal{N}\) is a line bundle on
$\mathcal{M}$ whose formation commutes with arbitrary base change follows from
\parref{lemma-surfacecomputationwithsection}. Since the structure map
\(\mathbf{A}(\mathcal{N}) \to \mathcal{M}\) is smooth of relative dimension
\(1\), this would give the result. With the notation of \parref{lemma-abeljac},
observe that a $T$-point $(\mathcal{F}, \varphi)$ of
$\mathbf{A}(\mathcal{H}) \supseteq \mathbf{F}_1(Q/S)^\circ$ is in the locally
closed subscheme $M$ if and only if $\varphi$ is injective and the composition
\[
\mathcal{F} \xrightarrow{\varphi} \mathcal{O}_{Q_T} \to \sigma_{T, *}\mathcal{O}_T
\]
is zero. Equivalently, this means that $\varphi$ is injective and factors
through the ideal sheaf $\mathcal{J}_T \subset \mathcal{O}_{Q_T}$
of $\sigma_T(T) \subset Q_T$. The latter condition defines the subbundle
$\mathbf{A}(\mathcal{N}) \subset \mathbf{A}(\mathcal{H})$ and the former
condition, as before, is an open condition on $\varphi$, proving
the claim.

For the remaining statement of \parref{prop-hypredtocoh}, observe that
\(\mathbf{G}_m\) acts on the line bundle \(\mathcal{N}\) with weight \(-1\),
and this means that \(\mathcal{M}\) is, in fact, a trivial
\(\mathbf{G}_m\)-gerbe: see \parref{example-trivialityofgerbe}. Furthermore,
its \(\mathbf{G}_m\)-rigidification may be identified as the complement
of the zero section of \(\mathbf{A}(\mathcal{N})\). The same is true for the
\(\mathbf{G}_m\)-gerbe given by the open substack
\(\zeta_{\mathrm{ideal}}(M) \subseteq \mathcal{M}\). The previous paragraph
now shows that the complement of the zero section of
\(\mathbf{A}(\mathcal{N}) \times_{\mathcal{M}} \zeta_{\mathrm{ideal}}(M)\)
is the maximal hyperbolic reduction \(M\)---the space of \(\varphi\) is
\(1\)-dimensional, so \(\varphi\) is injective if and only it is
nonzero!---from which the result follows.
\qed

% \subsectiondash{Remark}\label{spinor-ideals-on-reducible-quadrics}
% The stacks \(\zeta_{\mathrm{ideal}}(M)\) and \(\mathcal{M}\) differ above
% points of \(S_2\) because for a reducible quadric surface \(Q = P_1 \cup P_2\),
% if \(\mathcal{J}\) is the ideal sheaf of a smooth point \(x \in P_1\) of \(Q\)
% and \(\mathcal{I}\) is the ideal sheaf of a line \(\ell \subset P_2\), then
% although \(\dim_\kk \Hom_Q(\mathcal{I},\mathcal{J}) = 1\),
% no member \(\varphi \colon \mathcal{I} \to \mathcal{J}\) is injective.

\subsectiondash{Stack of spinors in general}
\label{spinor-stack-general}
Having established the surface cases, we now prove \parref{prop-fanotocoh} and
\parref{prop-hypredtocoh} for quadric \(2\ell\)-fold bundles \(\rho \colon Q
\to S\) with \(\ell > 1\). Smoothness of \(\zeta_d\) is established via
induction on dimension together with, once again, the infinitesimal lifting
criterion:

\begin{proof}[Proof of \parref{prop-fanotocoh}]
View \(\zeta_d\) as a morphism
$\mathbf{F}_{\ell}(Q/S) \to \ComplexesStack_{\Ku(Q)/S}$, as is possible by
\parref{cliff-spinor-cohomology}. Proceed by induction on $\ell \geq 1$. The
base case $\ell = 1$ handled in \parref{spinors-fanotocoh-surfaces}. So let
\(\ell > 1\) and assume that \parref{prop-fanotocoh} holds for all quadric
bundles of relative dimension $2 (\ell - 1)$.

We may assume $S$ is Noetherian. Consider a solid commutative diagram
\begin{equation}
\label{equn-fanotocohsqr}
\tag{\(\star\star\)}
\begin{tikzcd}
  \Spec(R/J) \ar[r, "t"] \ar[d] & \mathbf{F}_\ell(Q/S) \ar[d, "\zeta_d"]\\
  \Spec(R)\ar[ur, dashed, "v"] \ar[r, "u"] &  \ComplexesStack_{\Ku(Q)/S}
\end{tikzcd}
\end{equation}
in which $(R, \mathfrak{m}_R)$ is an Artinian local ring over $S$ with
algebraically closed residue field $\kappa$ and $J \subset R$ is an ideal such
that $\mathfrak{m}_R\cdot J = 0$. The task is to construct a dashed arrow
\(v\) making the diagram commute.

The morphism $t$ corresponds to a flat family $L \subset Q_{R/J}$ of
$\ell$-planes. The standing assumption that $S_3 = \varnothing$ means that the
singular locus of $Q_\kappa$ has dimension $\leq 1$. Since $L_\kappa$ has
dimension $\ell > 1$, there exists a point $x \in L_\kappa(\kappa)$ not
contained in the singular locus of $Q_\kappa$. Then since $L$ is smooth over
$R/J$, there exists a section $\bar{\sigma}$ of $L \to \Spec(R/J)$ whose
restriction to $\Spec(\kappa)$ is $x$. Moreover, since $\rho \colon Q \to S$ is
smooth at $x$, there is a section $\sigma$ of $\rho_R \colon Q_R \to \Spec(R)$
extending $\bar{\sigma}$. Then $\sigma$ is a regular section of $Q_R$ over
\(R\), the associated hyperbolic reduction $Q'$ is a quadric $2(\ell -1)$-fold
bundle over $R$, and \(L\) induces a flat family \(L' \subset Q'_{R/J}\) of
\((\ell-1)\)-planes. This provides the morphism \(t'\) in the commutative
diagram
\[
\begin{tikzcd}
\Spec(R/J) \ar[r, "t'"'] \ar[dr, "t", bend right=20] &
\mathbf{F}_{\ell-1}(Q'/R) \ar[d, "\beta"] \ar[r, "\zeta_d'"'] &
\ComplexesStack_{\Ku(Q')/R} \ar[d, "\alpha"] \\
&
\mathbf{F}_\ell(Q_R/R) \ar[r, "\zeta_d"] &
\ComplexesStack_{\Ku(Q_R)/R}
\end{tikzcd}
\]
where \(\zeta_d\) and \(\zeta_d'\) are the morphisms from
\parref{spinor-moduli-stack}; $\beta$ is the closed immersion coming from the
identification from \parref{lemma-fano-correspondence} of $(\ell-1)$-planes on
$Q'$ with $\ell$-planes on $Q_R$ going through $\sigma$; $\alpha$ is the
open immersion \parref{theorem-fully-faithful-open-immersion-2} coming from the
equivalence of \parref{hypred-equivalence}; and that the square commutes
follows from \parref{cliff-hyperbolic-reduction-spinors}.

The two commutative diagrams together show that
\(u \colon \Spec(R) \to \ComplexesStack_{\Ku(Q_R)/R}\) restricted to the closed
subscheme \(\Spec(R/I)\) may be written as
\[
u\rvert_{\Spec(R/I)} =
\zeta_d \circ t =
\alpha \circ \zeta_d' \circ t'.
\]
Since \(\alpha\) is an open immersion and \(\Spec(R)\) and \(\Spec(R/I)\)
have the same support, \(u\) uniquely factors through \(\alpha\), providing a
morphism \(u' \colon \Spec(R) \to \ComplexesStack_{\Ku(Q')/R}\) whose restriction
to \(\Spec(R/J)\) is the composition \(\zeta_d' \circ t'\); in other words,
\(t'\) and \(u'\) together fit into a solid diagram as in
\eqref{equn-fanotocohsqr} for \(Q'\). By induction, \(\zeta_d'\) is smooth,
and so there exists a morphism
\(v' \colon \Spec(R) \to \mathbf{F}_{\ell-1}(Q'/R)\) filling in the
corresponding dashed arrow. It is now straightforward to see that
\(v \coloneqq \beta \circ v' \colon \Spec(R) \to \mathbf{F}_\ell(Q/S)\) fits
as a dashed arrow making the square \eqref{equn-fanotocohsqr}
commute.
\end{proof}

Suppose now that \(\rho \colon Q \to S\) admits a family \(L \subset Q\) of
\((\ell-1)\)-planes contained in its smooth locus, and let
\(\mu \colon M \to S\) be the associated maximal hyperbolic reduction.

\begin{proof}[Proof of \parref{prop-hypredtocoh}]
The statements may be verified \'etale locally on \(S\), so passing to a
cover, we may additionally assume that \(L\) contains a flat family
\(L' \subset L\) of \((\ell-2)\)-planes. Hyperbolic reduction along \(L'\)
yields a quadric surface bundle \(\rho' \colon Q' \to S\). The
\(\ell\)-plane \(L\) induces a regular section of \(\rho'\) and, by
comparing functors of points using \parref{lemma-fano-correspondence},
the assoicated maximal hyperbolic reduction may be canonically identified with
\(\mu \colon M \to S\). Now, iteratively applying
\parref{lemma-reductiontosurface} shows that
\(\zeta_d \rvert_M = \alpha \circ \zeta_d' \rvert_M\) where \(\alpha\) is
the isomorphism \(\mathcal{M}_d(Q'/S) \cong \mathcal{M}_d(Q/S)\) induced
by the equivalence \parref{hypred-equivalence}. In this way, the result is
reduced to the surface case, which was established in
\parref{spinors-hypredtocoh-surfaces}.
% We know now that $\mathcal{M}$ is a $\mathbf{G}_m$-gerbe over an algebraic space $M$ and we have to show the composition $\bar{Q} \to \mathcal{M} \to M$ is an open immersion. This can be checked \'etale locally on $S$, so we may assume there is a flat family $L' \subset L$ of $(\ell-2)$-planes contained in $L$. Then let $Q'/S$ be the hyperbolic reduction of $Q$ with respect to $L'$, a flat quadric surface bundle. Then $Q'/S$ itself has a smooth section corresponding to $L$, and the hyperbolic reduction with respect to this section is $\bar{Q}$, as can be seen with the functor of points description of the hyperbolic reduction, see Lemma \ref{lemma-fano-correspondence}. Then as in the proof of Proposition \ref{prop-fanotocoh}, we have a commutative diagram
% $$
% \begin{tikzcd}
% \bar{Q'} \ar[d, "="] \ar[r] &\mathcal{M}_d(Q'/S) \ar[d, hook, "g"] \\
% \bar{Q} \ar[r] & \mathcal{M}_d(Q/S)
% \end{tikzcd}
% $$
% where $g$ is the open immersion coming from the equivalence of categories between the Kuznetsov component of $Q'/S$ and $Q/S$. By the surface case, we know the top horizontal arrow induces an open immersion to the $\mathbf{G}_m$-rigidification of $\mathcal{M}_d(Q'/S)$ and we deduce that the same holds for the bottom horizontal arrow. 
\end{proof}

\subsectiondash{Coarse moduli space}\label{spinor-coarse-moduli}
As already mentioned following \parref{prop-fanotocoh}, \(\mathcal{M}_d(Q/S)\)
consists of simple sheaves by \parref{spinor-moduli-simple} and is therefore
a $\mathbf{G}_m$-gerbe over an algebraic space $\mathrm{M}_d(Q/S)$ which we
will refer to as the \emph{coarse moduli space of spinors} on
\(\rho \colon Q \to S\). By \parref{prop-hypredtocoh} and
\parref{lemma-sectionetalelocally}, \'etale locally on the source and target,
$\mathrm{M}_d(Q/S) \to S$ is identified with a maximal hyperbolic reduction of
$\rho \colon Q \to S$. In fact, away from $S_2$, the morphism
$\mathrm{M}_d(Q/S) \to S$ is isomorphic to a maximal hyperbolic reduction
\'etale locally on the target. Over geometric points of \(S_2\) on the other hand, a
maximal hyperbolic reduction provides one of the two connected components:

\begin{Lemma}\label{spinors-coarse-moduli-fibres}
The fibre of \(\mathrm{M}_d(Q/S) \to S\) over a geometric point
\(\bar{s} \to S\) is isomorphic to
\[
\mathrm{M}_d(Q/S)_{\bar{s}} \cong
\begin{dcases*}
\bar{s} \sqcup \bar{s} & if \(\bar{s} \to S \setminus S_1\), \\
\bar{s}[\epsilon] & if \(\bar{s} \to S_1 \setminus S_2\), and \\
\PP^1_{\bar{s}} \sqcup \PP^1_{\bar{s}} & if \(\bar{s} \to S_2\).
\end{dcases*}
\]
If \(\mu \colon M \to S\) is a maximal hyperbolic reduction of
\(\rho \colon Q \to S\), then the morphism
\(\zeta_d \rvert_M \colon M \to \mathrm{M}_d(Q/S)\) is an isomorphism over
\(S \setminus S_2\).
\end{Lemma}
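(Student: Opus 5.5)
Both assertions are geometric on \(S\) and compatible with base change: formation of \(\mathcal{M}_d(Q/S)\), hence of its rigidification \(\mathrm{M}_d(Q/S)\), commutes with base change, because \(\zeta_d\) and the conditions defining the open substack do. So for the fibre computation we may take \(S = \bar{s} = \Spec\kk\) with \(\kk\) algebraically closed and \(Q\) a quadric \(2\ell\)-fold of corank \(c \le 2\).

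\textbf{Local model.} By \parref{prop-hypredtocoh} and \parref{lemma-sectionetalelocally}, \(\mathrm{M}_d(Q/\kk)\) is covered by open subschemes of the form \(M_L\). Here \(M_L\) is the maximal hyperbolic reduction along an \((\ell-1)\)-plane \(L\) contained in the smooth locus of \(Q\). Such an \(M_L\) is a quadric of relative dimension \(0\) in \(\PP^1\) whose corank equals that of \(Q\): hyperbolic reduction preserves the radical. Hence \(M_L\) is
\begin{itemize}
\item two reduced points if \(c = 0\);
\item a double point \(\Spec\kk[\epsilon]\) if \(c = 1\);
\item all of \(\PP^1\) if \(c = 2\).
\end{itemize}

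\textbf{Counting components.} It remains to see how many such charts are needed and how they glue. For this, identify the \(\kk\)-points of \(\mathrm{M}_d\) with isomorphism classes of dual spinor sheaves of \(\ell\)-planes meeting \(\Sing Q\) in at most one point.

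\begin{itemize}
\item For \(c = 0\): \(\mathbf{F}_\ell(Q)\) has two components, and by \parref{cliff-dependence} two \(\ell\)-planes in the same family give isomorphic spinors. Spinors from opposite families are non-isomorphic, by Kapranov's decomposition or by comparing \(\rho_*\). So there are exactly two points, each a reduced point since \(M_L\) is.
\item For \(c = 1\): the two families of \(\ell\)-planes coalesce. A single chart \(M_L\) is a one-point scheme with \(\kk\)-point set equal to all spinor classes, since \parref{cliff-dependence-corank-2} puts every spinor through a point of \(L\). Hence \(M_L \to \mathrm{M}_d\) is an open immersion that is surjective on points, so an isomorphism.
\item For \(c = 2\): \(\ell\)-planes not containing the singular line map to the base quadric of the cone. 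They split into the two families of \(\ell-1\)-planes of \(\widebar{Q}\), as in \parref{quadric-bundles-fano-schemes-corank-2}. A chart \(M_L\) contains only spinors from the family of \(L\), because \(\mathcal{M}_d\) is a union of \(\zeta_d\)-images of connected pieces. Choosing \(L\) in either family gives two disjoint open copies of \(\PP^1\). Via \parref{cliff-dependence-corank-2}, every spinor class lies in one of them. Each copy is proper, so it is also closed, and the fibre is \(\PP^1 \sqcup \PP^1\).
\end{itemize}
Disjointness in the \(c = 2\) case reduces to distinguishing spinors of the two families. I expect to do this by restricting to \(Q \setminus \Sing Q\), a cone over \(\widebar{Q}\), and using the \(c = 0\) case for \(\widebar{Q}\) through \parref{cliff-cone-spinors}.

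\textbf{The second assertion.} The morphism \(\zeta_d|_M \colon M \to \mathrm{M}_d(Q/S)\) is an open immersion by \parref{prop-hypredtocoh}. An open immersion that is surjective on geometric fibres is an isomorphism, so it suffices to check surjectivity over \(S \setminus S_2\). When \(c = 1\) this is the computation above. When \(c = 0\), \(M_{\bar{s}}\) is two points, one in each family, since a smooth quadric \(2\ell\)-fold has exactly one \(\ell\)-plane of each family through a given \((\ell-1)\)-plane. These map onto the two points of the fibre.

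The main obstacle is the \(c = 2\) case: showing that the two families give non-isomorphic spinor sheaves, so that the fibre is genuinely two disjoint \(\PP^1\)'s rather than something glued.
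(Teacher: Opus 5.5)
Your corank \(0\) analysis and your deduction of the second assertion are fine and essentially follow the paper. The corank \(1\) case has a gap, and the corank \(2\) case has a more serious one.

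\textbf{The covering step.} This does not follow from \parref{cliff-dependence-corank-2}. That lemma only produces an \(\ell\)-plane with isomorphic spinor through a \emph{single} smooth point \(x\), not one containing the whole \((\ell-1)\)-plane \(L\). So it does not place a given spinor in the chart \(M_L\). It also requires \(\dim Q \geq 4\), so it is unavailable for \(\ell = 1\). Iterating it through hyperbolic reductions is exactly \parref{lemma-reductiontosurface}, which brings you down to the quadric surface case, where a separate argument is still needed.

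For corank \(1\) there is an easy repair, which is what the paper does. The fibre is \(0\)-dimensional and is surjected on by the connected Fano scheme of \(\ell\)-planes of a corank \(1\) quadric \(2\ell\)-fold. Hence it is one point, and the open immersion \(M_L \to \mathrm{M}_d\) is onto.

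Also, your step ``proper, hence closed'' presupposes that \(\mathrm{M}_d(Q/\kk)\) is separated, which you do not know a priori. This is harmless only once you know the two charts are disjoint and cover.

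\textbf{Separating the families in corank \(2\).} Your proposed route cannot work. Take \(\ell = 1\), with \(Q = W_1 \cup W_2\) and \(\Sing Q = \ell_0\). The smooth locus is \(\mathbf{A}^2 \sqcup \mathbf{A}^2\), and the ideal sheaf of every line \(\ell \neq \ell_0\), in either plane, restricts to the trivial line bundle there. The information distinguishing the families sits along \(\Sing Q\), which restriction discards. Nor does \parref{cliff-cone-spinors} help: it requires \(\mathcal{W}\) to contain the radical subbundle being quotiented, whereas the \(\ell\)-planes in \(\mathcal{M}_d\) meet \(\Sing Q\) in a single point.

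The missing ingredient is the paper's:
\begin{enumerate}
\item Reduce to the reduced reducible quadric surface via \parref{lemma-reductiontosurface}.
\item Use the classification of ideal sheaves of lines there: for \(\ell, \ell' \neq \ell_0\), \(\mathcal{I}_\ell \cong \mathcal{I}_{\ell'}\) if and only if \(\ell\) and \(\ell'\) lie in the same plane and meet \(\ell_0\) in the same point (\cite[Example 5.2]{Hartshorne:Divisors}, \cite[Section 3]{Addington:Spinors}).
\item Combine this with the fact that \(\mathbf{F}_1(Q)^\circ\) has two connected components, whose images are open by \parref{prop-fanotocoh}.
\end{enumerate}
This shows two things at once. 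The two images are disjoint, so \(\mathrm{M}_d(Q/\kk)\) has exactly two connected components. And the chart of lines through a smooth point of \(W_i\) surjects onto the component belonging to \(W_i\).
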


\begin{proof}
The statement is local on \(S\), so assume that \(\rho \colon Q \to S\) admits
a maximal hyperbolic reduction \(\mu \colon M \to S\). By
\parref{prop-hypredtocoh}, the morphism \(\zeta_d\rvert_M\) is an open
immersion. So for the statements away from \(S_2\), it remains to see it is
surjective on geometric points. On the one hand, the geometric fibres of
$\mu \colon M \to S$ have cardinality \(2\) over points
of $S \setminus S_1$, and \(1\) over points of $S_1 \setminus S_2$.
%On the other hand, the morphism $\mathrm{M}_d(Q/S) \to S$ is quasi-finite away from \(S_2\) by \parref{prop-hypredtocoh},
On the other hand, the fibres of \(\mathrm{M}_d(Q/S) \to S\) away from \(S_2\)
are \(0\)-dimensional by \parref{prop-hypredtocoh}, and its geometric fibers
have cardinality at most 2 over points of
$S \setminus S_1$ and at most 1 over $S_1 \setminus S_2$ since they are
surjected on respectively by the Fano scheme of a smooth quadric $2\ell$-fold
and a corank 1 quadric $2\ell$-fold. Together, this implies that
\(\zeta_d\rvert_M\) must be surjective on geometric points away from \(S_2\),
as desired.

To analyze geometric fibres over \(S_2\), apply
\parref{lemma-reductiontosurface} to reduce this to the case of a quadric
surface bundle, at which point the problem is to show that if \(Q\) is a
reducible, reduced quadric surface over an algebraically closed field \(\kk\),
then $\mathrm{M}_d(Q/\kk)$ is a disjoint union of two projective lines. Since
\(\mathrm{M}_d(Q/\kk)\) admits a surjection from
\[
\mathbf{F}_1(Q/\kk)^\circ \cong
\PP^2_{\kk} \setminus \{\mathrm{pt}\} \sqcup
\PP^2_{\kk} \setminus \{\mathrm{pt}\}
\]
by construction, it too has at most two connected components. Recall from
\parref{subsection-spinorsonsurface} that this surjection may be identified
with the morphism \(\zeta_{\mathrm{ideal}}^\circ\) which takes a line to its
ideal sheaf. This implies that the images of the two connected components of
\(\mathbf{F}_1(Q/\kk)^\circ\) are disjoint: The ideal sheaf of a line on one
irreducible component of $Q$ cannot be isomorphic to the ideal sheaf of a line
on the other. Since the images are open by \parref{prop-fanotocoh},
$\mathrm{M}_d(Q/\kk)$ must have exactly two connected components and they consist
of ideal sheaves of lines on the respective irreducible components of $Q$.

Finally, let $M$ be the hyperbolic reduction of $Q$ with respect to a smooth
point on an irreducible component $W \subset Q$. It now suffices to show
that the open immersion
\[
\zeta_{\mathrm{ideal}}\rvert_M \colon
M \cong \mathbf{P}^1_{\kk} \to \mathrm{M}_d(Q/\kk)
\]
of \parref{prop-hypredtocoh} surjects onto the connected component
parametrizing ideal sheaves of lines on $W$. This follows from the fact that
the ideal sheaves of two lines on the reducible quadric $Q$ which are not equal
to the singular locus of $Q$ are isomorphic if and only if they lie on the same
irreducible component and have the same intersection point with the singular
locus.
See \cite[Section 3]{Addington:Spinors}.
\end{proof}

% \begin{Lemma}\label{spinor-away-from-S2}
% Let \(\mu \colon M \to S\) be a maximal hyperbolic reduction of
% \(\rho \colon Q \to S\). If \(S_2 = \varnothing\), then the morphism
% $\zeta_d\rvert_M \colon M \to \mathrm{M}_d(Q/S)$ is an isomorphism.
% \end{Lemma}

% If $S_2 \neq \varnothing$ this is not true. In fact fibers of
% $\mu \colon M \to S$ over a geometric point of $\bar{s} \to S_2$ is isomorphic
% to $\mathbf{P}^1_{\bar{s}}$, whereas:

% \begin{Lemma}
% \label{lemma-fiberoverS2}
% $\mathrm{M}_d(Q/S)_{\bar{s}} \cong \PP^1_{\bar{s}} \sqcup \PP^1_{\bar{s}}$
% for any geometric point \(\bar{s} \to S_2\).
% \end{Lemma}

% In summary, the geometric fibers of $\mathrm{M}_d(Q/S) \to S$ are isomorphic to

These arguments show that, if \(\rho \colon Q \to S\) admits a maximal
hyperbolic reduction \(\mu \colon M \to S\), then the base change of the
morphism $M \to \mathrm{M}_d(Q/S)$ of \parref{prop-hypredtocoh} to a geometric
point of $S_2$ is the inclusion of one of two connected components of
$\PP^1 \sqcup \PP^1$; moreover, this means that \(\mathrm{M}_d(Q/S)\) is in
general not separated over $S$ if $S_2 \neq \varnothing$ as points over
$S \setminus S_2$ can specialize to points of both connected components of a
geometric fiber over a point of $S_2$. We aim to construct an open subspace of
$\mathrm{M}_d(Q/S)$ which is \'etale locally on the base isomorphic to a
maximal hyperbolic reduction. Toward this, observe that the restriction of
\(\mathrm{M}_d(Q/S) \to S\) to \(S_2\) factors through the canonical double
cover \(\tilde{S}_2 \to S_2\) of the corank \(2\) locus provided by
\parref{quadric-bundles-fano-schemes-corank-2}, separating the two projective
lines in the fibre. More precisely:

%The discussion of \parref{spinor-moduli-open-immersion-surfaces} and
%\parref{spinor-moduli-open-proof}, together with the dimensional reduction
%statement \parref{spinor-moduli-reduction}, implies that \(\mathrm{M}_d(Q/S)
%\to S\) is locally isomorphic to a maximal hyperbolic reduction of \(Q \to S\)
%away from \(S_2\). In fact, combined further with
%\parref{spinor-moduli-injectivity} gives the following statement:

%\begin{Lemma}\label{spinor-coarse-moduli-reduction}
%Let \(\mu \colon M \to S\) be a maximal hyperbolic reduction of
%\(\rho \colon Q \to S\). Then there is a canonical open immersion
%\(M \to \mathrm{M}_d(Q/S)\).
%\end{Lemma}

%\begin{proof}
%Consider the composite of the closed immersion
%\(M \to \mathbf{F}_\ell(Q/S)^\circ\) from \parref{lemma-fano-correspondence},
%the morphism
%\(\zeta_d \colon \mathbf{F}_\ell(Q/S)^\circ \to \mathcal{M}_d(Q/S)\) from
%\parref{spinor-moduli-stack}, and finally the coarse moduli space map
%\(\mathcal{M}_d(Q/S) \to \mathrm{M}_d(Q/S)\). The maps
%\(M \to \mathcal{M}_d(Q/S)\) and \(\mathcal{M}_d(Q/S) \to \mathrm{M}_d(Q/S)\)
%are universally injective and smooth of relative dimensions \(1\) and
%\(-1\), respectively, by \parref{spinor-moduli-open-immersion-surfaces} and
%\parref{spinor-moduli-injectivity} for the former, and since the latter
%is a \(\mathbf{G}_m\)-gerbe. Thus \(M \to \mathrm{M}_d(Q/S)\)
%is universally injective and \'etale, so it is an open immersion.
%\end{proof}

\begin{Proposition}\label{spinor-moduli-corank-2-stein}
Assume that \(\rho \colon Q \to S\) satisfies $S = S_2$ scheme-theoretically
and $S_3 = \varnothing$. Then the structure morphism
\(\mathrm{M}_d(Q/S) \to S\) factors through the canonical \'etale double cover
\(\tilde{S} \to S\) constructed in
\parref{quadric-bundles-fano-schemes-corank-2}. Furthermore,
\(\mathrm{M}_d(Q/S) \to \tilde{S}\) has geometrically connected fibres.
\end{Proposition}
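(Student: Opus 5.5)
The plan is to construct the map \(\mathrm{M}_d(Q/S) \to \tilde{S}\) on the level of the smooth atlas \(\mathbf{F}_\ell(Q/S)^\circ \to \mathcal{M}_d(Q/S) \to \mathrm{M}_d(Q/S)\) and then descend it. By \parref{quadric-bundles-even-rank}, \(\mathcal{K} \coloneqq \rad b_q\) is a rank \(2\) isotropic subbundle and the quotient form on \(\bar{\mathcal{E}} = \mathcal{E}/\mathcal{K}\) defines a smooth quadric \((2\ell-2)\)-fold bundle \(\bar{Q} \to S\). By the proof of \parref{quadric-bundles-fano-schemes-corank-2}, \(\tilde{S}\) is the Stein factorization of \(\mathbf{F}_{\ell-1}(\bar{Q}/S) \cong \mathbf{F}_{\ell+1}(Q/S)\). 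First I define \(\mathbf{F}_\ell(Q/S)^\circ \to \mathbf{F}_{\ell-1}(\bar{Q}/S)\) by sending an isotropic rank \(\ell+1\) subbundle \(\mathcal{W}\) to \(\mathcal{W} + \mathcal{K}\). Over \(\mathbf{F}_\ell(Q/S)^\circ\), the plane meets \(\PP\mathcal{K}\) in exactly one point fibrewise. Since \(\mathcal{W}\) has rank \(\ell+1\) and \(\bar{\mathcal{E}}\) carries a regular form of rank \(2\ell\), the rank of \(b_q\rvert_{\mathcal{W}}\) is \(\ell\). Thus \parref{quadric-bundles-intersect-radical} applies and shows that \(\mathcal{W}\cap\mathcal{K}\) is a line subbundle and \(\mathcal{W}/(\mathcal{W}\cap\mathcal{K})\) is a rank \(\ell\) isotropic subbundle of \(\bar{\mathcal{E}}\). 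Composing with the Stein factorization gives a morphism \(\mathbf{F}_\ell(Q/S)^\circ \to \tilde{S}\).

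The second step is to show this morphism descends along the smooth surjection \(\zeta_d^\circ \colon \mathbf{F}_\ell(Q/S)^\circ \to \mathcal{M}_d(Q/S)\) of \parref{prop-fanotocoh}, and hence through the coarse space. Since \(\tilde{S} \to S\) is étale, hence an algebraic space unramified over \(S\), a morphism from the atlas descends once its two pullbacks to \(\mathbf{F}^\circ \times_{\mathcal{M}} \mathbf{F}^\circ\) agree. Two morphisms to an étale \(S\)-space agree as soon as they agree on geometric points, because the equalizer is open and closed and contains all geometric points. So the key step is this: if two \(\ell\)-planes \(\PP W_1, \PP W_2\) in a corank \(2\) quadric over an algebraically closed field have isomorphic dual spinor sheaves, then their images \(\bar W_i\) in \(\bar{Q}\) lie in the same family of maximal isotropic subspaces. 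For this I reduce to surfaces. Choose a smooth point, move both planes through it by \parref{cliff-dependence-corank-2}, and hyperbolically reduce as in \parref{lemma-reductiontosurface}; the family in \(\bar{Q}\) is compatible with this reduction. In the surface case the claim is that ideal sheaves of lines on different components of \(Q = P_1 \cup P_2\) are non-isomorphic, which was already used in \parref{spinors-coarse-moduli-fibres}. The two components correspond to the two points of \(\tilde{S}_{\bar s}\), since the two \(2\)-planes of \(Q\) are the two \((\ell+1)\)-planes containing \(\PP\mathcal{K}\).

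For geometric connectedness, compute on a geometric fibre over \(\bar{s}\). By \parref{spinors-coarse-moduli-fibres}, \(\mathrm{M}_d(Q/S)_{\bar s} \cong \PP^1 \sqcup \PP^1\), and \(\tilde{S}_{\bar s}\) consists of two points. By the surface identification above, the two \(\PP^1\)'s map to distinct points of \(\tilde{S}_{\bar s}\): they are the images of the two connected components of \(\mathbf{F}_1^\circ\), consisting of lines on different components. Hence each fibre of \(\mathrm{M}_d(Q/S) \to \tilde{S}\) over a geometric point is a single \(\PP^1\), which is connected.

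The main obstacle is making the geometric-point comparison rigorous: verifying that hyperbolic reduction along a smooth point respects the map to \(\tilde{S}\), since \(\tilde{S}\) for \(Q\) and for \(Q'\) must be identified via \parref{lemma-fano-correspondence}. I would first check this directly on points using the correspondence of \((\ell+1)\)-planes through the chosen point.
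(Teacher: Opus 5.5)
Your overall architecture is sound. The map \(\mathbf{F}_\ell(Q/S)^\circ \to \tilde{S}\), \(\mathcal{W} \mapsto \mathcal{W} + \rad b_q\), is the one the paper uses. Since \(\tilde{S} \to S\) is finite \'etale, descending it along the smooth cover \(\mathbf{F}_\ell(Q/S)^\circ \to \mathrm{M}_d(Q/S)\) does reduce, via the open and closed equalizer, to a pointwise claim: two \(\ell\)-planes in \(\mathbf{F}_\ell(Q_{\bar s})^\circ\) with isomorphic dual spinor sheaves define the same point of \(\tilde{S}_{\bar s}\).

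The gap is in your proof of that claim. As stated, \parref{cliff-dependence-corank-2} only produces \(\PP W_i' \ni x\) whose spinor sheaf is isomorphic to that of \(\PP W_i\). It says nothing about \(W_i' + \rad b_q\) lying in the same connected component of \(\mathbf{F}_{\ell+1}(Q_{\bar s})\) as \(W_i + \rad b_q\). So the reduction to surfaces shows that \(W_1'\) and \(W_2'\) are in the same family. Carrying this back to \(W_1\) and \(W_2\) needs ``isomorphic spinors implies same family'' for the pairs \((W_i, W_i')\), which is precisely what you are proving. By contrast, the compatibility you flag as the main obstacle is routine. For maximal isotropic \(A_1, A_2 \ni \bar{x}\) in \(\mathcal{E}_{\bar s}/\rad b_q\), passing to \(\bar{x}^\perp/\bar{x}\) lowers both \(\dim(A_1 \cap A_2)\) and the half-dimension by one, so the parity criterion for families is preserved.

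There are two ways to close the gap.
\begin{itemize}
\item Open up the proof of \parref{cliff-dependence-corank-2}. There the plane is moved by \(g = (g',1)\) with \(g' \in \mathbf{S\Gamma}(U)\), for a nondegenerate \(4\)-dimensional \(U \subset V = \mathcal{E}_{\bar s}\). This \(g\) fixes \(U^\perp \supseteq \rad b_q\) pointwise and acts on \(V/\rad b_q = U \oplus U^\perp/\rad b_q\) through the connected group \(\mathbf{SO}(U) \times 1\). Hence it preserves each of the two families.
\item Avoid moving planes altogether. Write \(\mathbf{F}_\ell(Q_{\bar s})^\circ = F_+ \sqcup F_-\) for the preimages of the two points of \(\tilde{S}_{\bar s}\). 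Each \(F_\pm\) is connected, because \(W \mapsto (W \cap \rad b_q, (W + \rad b_q)/\rad b_q)\) exhibits \(\mathbf{F}_\ell(Q_{\bar s})^\circ\) as an \(\mathbf{A}^\ell\)-bundle over \(\PP^1 \times \mathbf{F}_{\ell-1}(\bar{Q}_{\bar s})\). Their images under the surjection \(\zeta_d^\circ\) are then connected and cover \(\mathrm{M}_d(Q/S)_{\bar s} \cong \PP^1 \sqcup \PP^1\) from \parref{spinors-coarse-moduli-fibres}. So they are the two distinct components, which gives the pointwise claim and your fibre-connectedness statement at once.
\end{itemize}

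With either fix, your route genuinely differs from the paper's. The paper applies the rigidity lemma \parref{lemma-rigidity2} to \(\zeta_d^\circ \colon \mathbf{F}_\ell(Q/S)^\circ \to \mathrm{M}_d(Q/S)\). That requires the stronger input that the fibres of \(\zeta_d^\circ\) are geometrically connected, which it gets by identifying them with the \(\ell\)-planes of a fixed family meeting \(\Sing Q\) at a fixed point. Your equalizer check needs only the implication ``isomorphic spinors \(\Rightarrow\) same family'', and you also verify the second clause of the statement explicitly.
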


One way to prove this might go as follows: Generalizing \cite[Lemma 6.1 and
Proposition 6.2]{Addington:Spinors},
the spinor sheaf \(\mathcal{S}_d\) associated with an isotropic family of
\(\ell\)-planes in \(Q \to S\), fibrewise intersecting the singular locus at a
single point, has a unique simple Clifford submodule isomorphic to the spinor
associated with the corresponding family of \((\ell+1)\)-planes. Let \(\bar{Q} \to S\)
be the smooth quadric bundle of relative dimension \(2\ell-2\) obtained by
projecting away from the radical. An analogue of \parref{cliff-cone-spinors}
shows that this determines a spinor bundle on \(\bar{Q}\), thereby producing
a morphism \(\mathcal{M}_d(Q/S) \to \mathcal{M}_d(\bar{Q}/S)\). Passing to
coarse moduli spaces and comparing with
\parref{quadric-bundles-fano-schemes-corank-2} would then give the result.

Rather than pursuing this route, we prove \parref{spinor-moduli-corank-2-stein}
directly by observing that \(\tilde{S}_2\) parameterizes geometric connected
components of the fibres \(\mathbf{F}_\ell(Q/S)^\circ \to S\) over \(S_2\),
and that the morphism \(\mathbf{F}_\ell(Q/S)^\circ \to \mathrm{M}_d(Q/S)\)
induced by \(\zeta_d\) reflects connected components as it has geometrically
connected fibres. We begin with a series of general results, cumulating in a
rigidity lemma \parref{lemma-rigidity2}, from which we may directly deduce
\parref{spinor-moduli-corank-2-stein}.

\begin{Lemma}
\label{lemma-connectedness}
Let $f \colon X \to S$ be a continuous map of topological spaces. Assume that
$f$ is open, $S$ is connected, and the fibers $f^{-1}(s) \subseteq X$ are
connected. Then $X$ is connected.
\end{Lemma}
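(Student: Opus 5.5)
The plan is to argue by contradiction with a clopen decomposition. Suppose $X = U \sqcup V$ with $U$ and $V$ nonempty, open, and disjoint; the aim is to produce a corresponding separation of $S$, which is impossible since $S$ is connected.

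First, each fibre $f^{-1}(s)$ is connected. Hence it lies entirely in $U$ or entirely in $V$, because $f^{-1}(s) = (f^{-1}(s)\cap U) \sqcup (f^{-1}(s)\cap V)$ is a decomposition into relatively open subsets. Empty fibres are the one subtlety: the empty space counts as connected, and $f$ need not be surjective.

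To handle this, I would first replace $S$ by the image $f(X)$. This set is open in $S$ because $f$ is open. Openness alone does not make $f(X)$ connected, so the argument should instead work inside $S$ directly, as follows.

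Set $A = f(U)$ and $B = f(V)$. Both are open in $S$ since $f$ is open, and both are nonempty. They are disjoint: a point in $A \cap B$ would have a fibre meeting both $U$ and $V$, contradicting the previous step. This gives disjoint nonempty open subsets $A, B \subseteq S$ with $A \cup B = f(X)$.

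If $f$ is surjective, then $S = A \sqcup B$ is a separation, which contradicts connectedness of $S$. The main obstacle is therefore the non-surjective case: there $S \setminus f(X)$ need not be open, and the conclusion can fail. For example, take $X$ to be two points mapping openly onto two open points of a connected three-point space, with all other fibres empty.

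So I will read ``the fibres are connected'' as including nonemptiness, that is, $f$ surjective. This is the situation in the application, where $f$ is a morphism onto the relevant space, such as a smooth surjection or a base change of one. I would state this convention explicitly in the proof, and then the argument above concludes that $X$ is connected.
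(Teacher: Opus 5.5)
Your argument is correct and is essentially the paper's. The paper takes a nonempty clopen $U \subseteq X$ and uses openness of $f$ together with connectedness of the fibres to show that $f(U) = S \setminus f(X \setminus U)$ is clopen in $S$; this is exactly your separation $S = f(U) \sqcup f(V)$. Your caveat about empty fibres matches the paper's convention that connected spaces are nonempty: its proof explicitly uses that every fibre is nonempty, which also gives $X \neq \varnothing$.
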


\begin{proof}
Since $S \neq \varnothing$ and each fiber of \(f\) is non-empty,
$X \neq \varnothing$. Suppose $U \subseteq X$ is a non-empty open and closed
subset. Then $V \coloneqq f(U) \subseteq S$ is open and non-empty, and for
$s \in S$ we have
\begin{equation*}
%\label{equn-connectedness}
s \in V \iff
f^{-1}(s) \cap U \neq \varnothing \iff
f^{-1}(s) \subseteq U \iff
s \not \in f(X \setminus U)
\end{equation*}
since $f^{-1}(s)$ is connected. Thus $V = S \setminus f(X \setminus U)$ is
closed and open, so $V = S$. Applying
%(\ref{equn-connectedness})
the equivalences again shows that $f^{-1}(s) \subseteq U$ for all $s \in S$,
hence $U = X$. 
\end{proof}

\begin{Lemma}
\label{lemma-samemorph}
Let \(f, g \colon X \to Y\) be morphisms of algebraic spaces over an algebraic
space \(S\). Assume that \(X\) is connected, \(Y\) is separated and
\'etale over \(S\), and that there is \(x \in |X|\) such that
\(f(x) = g(x) \in |Y|\). Then \(f = g\).
\end{Lemma}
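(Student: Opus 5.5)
The plan is the standard equalizer argument. Consider the morphism \((f,g) \colon X \to Y \times_S Y\) and the diagonal \(\Delta \colon Y \to Y \times_S Y\). Since \(Y \to S\) is étale, \(\Delta\) is an open immersion; since \(Y \to S\) is separated, \(\Delta\) is a closed immersion. Hence \(\Delta\) is an open and closed immersion. Its pullback \(E \coloneqq X \times_{(f,g), Y \times_S Y, \Delta} Y \to X\) is then an open and closed subspace of \(X\). By the universal property of the fibre product, \(E\) is the equalizer of \(f\) and \(g\): a morphism \(T \to X\) factors through \(E\) if and only if \(f|_T = g|_T\). Because \(X\) is connected, \(|E|\) is either empty or all of \(|X|\). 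In the second case \(E \to X\) is an open immersion that is surjective, hence an isomorphism, and so \(f = g\). It therefore remains to show that \(E \neq \varnothing\), and the natural candidate is the given point \(x\).

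The main obstacle is this last step: deducing \(x \in |E|\) from the hypothesis \(f(x) = g(x) \in |Y|\). Write \(y\) for this common image and \(s\) for its image in \(|S|\). The image of \(x\) under \((f,g)\) is a point of \(|Y \times_S Y|\) lying over \((y,y)\). The fibre of \(Y \times_S Y \to Y \times Y\) over \((y,y)\) is \(\Spec(\kappa(y) \otimes_{\kappa(s)} \kappa(y))\). Since \(Y \to S\) is étale, \(\kappa(y)/\kappa(s)\) is finite separable, so this fibre is a finite disjoint union of spectra of fields. The diagonal meets it in exactly one of these points, namely the one cut out by the multiplication map. So \(x \in |E|\) exactly when the two induced maps \(\kappa(y) \to \kappa(x)\) coming from \(f\) and \(g\) agree.

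This holds automatically when \(\kappa(y) = \kappa(s)\), for instance when the point of \(Y\) in question is defined over its image in \(S\). Otherwise the hypothesis on \(|Y|\) alone does not suffice. The identity and complex conjugation on \(\Spec \mathbf{C}\) over \(\Spec \mathbf{R}\) agree on \(|Y|\) but are different morphisms. So I would carry out the proof verbatim, and at this step verify the residue-field compatibility in the case at hand. In the intended application to \parref{lemma-rigidity2} and \parref{spinor-moduli-corank-2-stein}, I expect to be able to arrange this by taking \(x\) to be a geometric point, or a point over which both maps factor through a common section. If that cannot be arranged, the hypothesis must be read as equality of the induced morphisms \(\Spec \kappa(x) \to Y\). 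With \(x \in |E|\) in hand, connectedness of \(X\) finishes the proof as in the first paragraph.
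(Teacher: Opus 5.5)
Your argument is the same as the paper's, which also forms \(U = (f,g)^{-1}(\Delta_Y)\), notes that it is open and closed because \(Y \to S\) is étale and separated, and concludes by connectedness. However, the paper simply asserts that \(U\) contains \(x\), and your diagnosis of that step is correct: the lemma as literally stated is false.

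Your example is a valid counterexample. Take \(X = Y = \Spec\mathbf{C}\) over \(S = \Spec\mathbf{R}\), with \(f = \mathrm{id}\) and \(g = c\) complex conjugation. Then \(Y \times_S Y \cong \Spec\mathbf{C} \sqcup \Spec\mathbf{C}\), and \((\mathrm{id}, c)\) lands in the non-diagonal component, so \(U = \varnothing\).

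The correct hypothesis is that \(f \circ \bar{x} = g \circ \bar{x}\) for some morphism \(\bar{x} \colon \Spec k \to X\) from a field representing \(x\). Equivalently, \((f,g)(x)\) lies in \(|\Delta_Y(Y)|\). Under that hypothesis \(\bar{x}\) factors through the equalizer \(U\), and your proof (and the paper's) is complete.

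One caution about your suggested workaround: passing to a geometric point does not help by itself. In your own example, the geometric point \(\mathrm{id} \colon \Spec\mathbf{C} \to X\) still has equal images in \(|Y|\). What is needed is equality of the composite morphisms, not of their images.

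The downstream results survive. The only use of this lemma is in the proof of \parref{lemma-rigsection}, applied to \(f \circ \pr_1\) and \(f \circ \pr_2\) at the diagonal point \(\Delta_X(x) \in |X \times_S X|\). There \((f \circ \pr_1, f \circ \pr_2) \circ \Delta_X = (f,f)\) factors through \(\Delta_Y\), so the corrected hypothesis holds tautologically. Hence \parref{lemma-rigidity2}, \parref{spinor-moduli-corank-2-stein} and \parref{spinors-converse} are unaffected.
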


\begin{proof}
The hypotheses on \(Y\) mean that the diagonal
$\Delta_Y \colon Y \to Y \times _S Y$ is both an open and a
closed immersion. Thus $U \coloneqq (f, g)^{-1}(\Delta_Y) \subseteq X$ is
an open and closed subspace containing the point \(x\), and so \(U = X\) by
connectedness. This implies \(f = g\).
%Then $|U| \subset |X|$ is open and closed and contains the point $x$, hence $U = X$. But this implies $f = g$.
\end{proof}

\begin{Lemma}
\label{lemma-rigsection}
Let $p\colon X \to S$ and $q \colon Y \to S$ be morphisms of algebraic spaces.
Assume  $p$ is flat, finitely presented, and has geometrically connected
fibers. Assume $q$ is separated and \'etale. Then any morphism $f \colon X \to
Y$ over $S$ is of the form $\eta \circ p$ for some unique section $\eta$ of
$q$.
\end{Lemma}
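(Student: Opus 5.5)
The plan is to construct \(\eta\) by descent along \(p\), using \parref{lemma-samemorph} fibrewise to see that \(f\) is constant along fibres of \(p\).

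\textbf{Uniqueness.} Since \(p\) is flat, finitely presented and has nonempty fibres, it is an fppf covering. Hence \(p\) is an epimorphism of fppf sheaves, and \(\eta \circ p = \eta' \circ p\) forces \(\eta = \eta'\).

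\textbf{Existence.} By fppf descent for morphisms into the algebraic space \(Y\), a morphism \(f \colon X \to Y\) factors as \(\eta \circ p\) as soon as the two composites
\[
f \circ \pr_1,\; f \circ \pr_2 \colon X \times_S X \to Y
\]
agree. Both are \(S\)-morphisms into the separated \'etale \(S\)-space \(Y\), and we would like to apply \parref{lemma-samemorph}. Since \(X \times_S X\) need not be connected, we instead argue on the open and closed subspace
\[
U \coloneqq (f\circ\pr_1, f\circ\pr_2)^{-1}(\Delta_{Y/S}) \subseteq X \times_S X,
\]
which is open and closed because \(\Delta_{Y/S}\) is an open and closed immersion. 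It suffices to show \(U\) contains every point.

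\textbf{Key step: points of \(X\times_S X\).} Each point of \(X \times_S X\) lies in a geometric fibre \(X_{\bar s} \times_{\bar s} X_{\bar s}\) over some geometric point \(\bar s \to S\). This fibre is connected, being a product of connected schemes over an algebraically closed field. It also meets the diagonal \(\Delta_X\), which lies in \(U\). We then argue as follows.

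\begin{itemize}
\item The base change \(Y_{\bar s}\) is \'etale over an algebraically closed point, hence a disjoint union of copies of \(\bar s\).
\item The restriction \(f_{\bar s} \colon X_{\bar s} \to Y_{\bar s}\) is constant because \(X_{\bar s}\) is connected.
\item Consequently both composites agree on \(X_{\bar s}\times X_{\bar s}\); equivalently, \(U \cap (X\times_S X)_{\bar s}\) is a nonempty open and closed subset of a connected space, hence everything.
\end{itemize}

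Since every point of \(X\times_S X\) lies over some \(\bar s\), we get \(U = X\times_S X\) set-theoretically. As \(U\) is an open subspace, this gives equality of spaces, so the two composites agree and \(f\) descends to \(\eta\).

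\textbf{Conclusion.} Finally, \(q \circ \eta \circ p = q \circ f = p\). Hence \(q\circ\eta = \mathrm{id}_S\) by the epimorphism property, so \(\eta\) is a section of \(q\).

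The main subtlety is the descent step: justifying that \(p\) is an effective epimorphism onto \(S\) for maps into an algebraic space. This follows since algebraic spaces are fppf sheaves and \(p\) is an fppf covering. One point needs checking: surjectivity of \(p\) requires the fibres to be nonempty, which I read into "geometrically connected".
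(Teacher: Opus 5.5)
Your proof is correct. Its skeleton matches the paper's: fppf descent along \(p\), reducing existence to \(f\circ\pr_1 = f\circ\pr_2\), and using the open and closed equalizer \(U \subseteq X\times_S X\). The difference is in how you show that \(U\) is everything.

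The paper reduces to \(S\) local and shows that \(X\times_S X\) is globally connected via \parref{lemma-connectedness}. This uses that the flat, finitely presented map \(X\times_S X\to S\) is open with connected fibres. It then invokes \parref{lemma-samemorph} at a diagonal point.

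You instead use that formation of \(U\) commutes with base change, and you check \(U\) over each geometric point \(\bar s\). There \(Y_{\bar s}\) is a disjoint union of copies of \(\bar s\), so \(f_{\bar s} = \iota\circ p_{\bar s}\) for the inclusion \(\iota\) of a single copy, and the two composites agree on the nose.

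What your route buys:
\begin{itemize}
\item It avoids \parref{lemma-connectedness}, the openness of flat finitely presented morphisms, and the reduction to a local base.
\item Your main line only uses that \(U\) is open, so separatedness of \(q\) is not actually needed. Your ``equivalently'' variant, which uses closedness of \(U\) and connectedness of \(X_{\bar s}\times X_{\bar s}\), is just the paper's argument run fibrewise.
\item It sidesteps \parref{lemma-samemorph}, whose hypothesis \(f(x) = g(x)\in|Y|\) is weaker than what its proof uses. Consider \(\id\) and complex conjugation \(\Spec\mathbf{C}\to\Spec\mathbf{C}\) over \(\Spec\mathbf{R}\). What is really needed is that \(x\) lies in the equalizer. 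This does hold for diagonal points, so the paper's proof of the present lemma is unaffected.
\end{itemize}

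The paper's route, in exchange, packages the rigidity into a single global connectedness statement that it can quote.

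Your reading of nonemptiness into ``geometrically connected'' agrees with the paper, which likewise uses that \(p\) is surjective.
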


\begin{proof}
Uniqueness of $\eta$ follows from the fact that $X \to S$ is flat, surjective,
and finitely presented since $Y$ is a sheaf for the fppf topology. For
existence, it suffices to show that the two morphisms
$f \circ \pr_i \colon X \times _S X \to Y$ are equal. For
this, one reduces to the case when $S$ is the spectrum of a local ring, hence
connected. Then $X \times _S X$ is connected by
\parref{lemma-connectedness}: $X \times _S X \to S$ is open since it is
flat and finitely presented, and has connected fibers since the product
of geometrically connected spaces of finite type over a field is geometrically
connected. Now, for any $x \in |X|$, there is a point
$(x,x) \in |X \times _S X|$ and $f \circ \pr_1$ and $f \circ \pr_2$ agree on
this point, so by \parref{lemma-samemorph}, the two morphisms are equal.
\end{proof}

\begin{Lemma}
\label{lemma-rigidity2}
Let $S$ be an algebraic space. Let $f \colon P \to X$ be a morphism of algebraic spaces
over $S$ which is flat, finitely presented, and has geometrically connected
fibers. Let $Y \to S$ be a separated, \'etale morphism of algebraic spaces.
Then any $S$-morphism $h \colon P \to Y$ factors through uniquely through $X$. 
\end{Lemma}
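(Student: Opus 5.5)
The plan is to reduce to \parref{lemma-rigsection} by base changing along \(X\), and then descend the resulting factorization along the fppf cover \(f\).

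First, base change \(Y \to S\) along the structure morphism \(X \to S\). This gives \(Y_X \coloneqq Y \times_S X \to X\), which is still separated and \'etale. The morphism \(h\) determines an \(X\)-morphism
\[
(h, f) \colon P \to Y_X.
\]
Now apply \parref{lemma-rigsection} with base \(X\), with \(p = f\) and \(q = Y_X \to X\). The hypotheses hold: \(f\) is flat, finitely presented, and has geometrically connected fibres, while \(q\) is separated and \'etale. The lemma produces a unique section \(\eta\) of \(Y_X \to X\) with \((h,f) = \eta \circ f\). Composing \(\eta\) with the projection \(Y_X \to Y\) gives an \(S\)-morphism \(g \colon X \to Y\) satisfying \(g \circ f = h\). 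This proves existence.

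For uniqueness, suppose \(g, g' \colon X \to Y\) both satisfy \(g \circ f = g' \circ f = h\). The graphs \((g,\id_X)\) and \((g',\id_X)\) are sections of \(Y_X \to X\), and both compose with \(f\) to give \((h,f)\). By the uniqueness part of \parref{lemma-rigsection} they coincide, so \(g = g'\).

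One caveat concerns the uniqueness step inside \parref{lemma-rigsection}, which uses that \(f\) is an fppf covering and hence an epimorphism of fppf sheaves. This requires \(f\) to be surjective. I expect the geometric connectedness hypothesis to be read as including nonempty fibres, making \(f\) surjective. If empty fibres are allowed, I would restrict to the open image \(f(P) \subseteq X\) (open because \(f\) is flat and finitely presented) and state the factorization there. Beyond this, the argument is formal and I do not expect any real obstacle.
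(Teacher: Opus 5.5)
Your proposal is correct and is essentially the paper's proof: the paper likewise identifies $g$ with a section of $\mathrm{pr}_1 \colon X \times_S Y \to X$ through which $(f,h)$ factors, and then applies \parref{lemma-rigsection} over the base $X$. Your surjectivity caveat is harmless. Under the Stacks Project conventions the paper follows, connected spaces are nonempty, so geometrically connected fibres already force $f$ to be surjective, as the proof of \parref{lemma-connectedness} also assumes.
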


\begin{proof}
We must show that there is a unique \(S\)-morphism \(g \colon X \to Y\) such
that \(h = g \circ f\). Consider the \(X\)-morphism
\((f,h) \colon P \to X \times_S Y\). Then the data of \(g\) is equivalent
to a section \(s \colon X \to X \times_S Y\) of \(\pr_1\) satisfying
\((f,h) = s \circ f\). Replacing \(S\) by \(X\), \(Y\) by \(X \times_S Y\),
and \(X\) by \(P\) then reduces this to \parref{lemma-rigsection}.
\end{proof}

\begin{proof}[Proof of \parref{spinor-moduli-corank-2-stein}]
Consider the Fano incidence correspondence
\[
\mathbf{F}_{\ell,\ell+1}(Q/S)^\circ \coloneqq
\big\{
([P_\ell], [P_{\ell+1}]) \in
\mathbf{F}_\ell(Q/S)^\circ \times_S \mathbf{F}_{\ell+1}(Q/S) :
P_\ell \subset P_{\ell+1}
\big\}
\]
parameterizing flags of \(\ell\)- and \((\ell+1)\)-planes in the corank \(2\)
quadric \(2\ell\)-fold bundle \(\rho \colon Q \to S\).
First projection maps this isomorphically onto \(\mathbf{F}_\ell(Q/S)^\circ\):
Indeed, as in the argument of \parref{quadric-bundles-fano-schemes-corank-2},
the \((\ell+1)\)-plane \(P_{\ell+1}\) contains \(\Sing\rho\), so it may
be uniquely determined as the span of \(\Sing\rho\) and the \(\ell\)-plane
\(P_\ell\)---note that \(P_\ell\) intersects the singular locus in exactly one
point in each fibre, and that this works in families follows from
\parref{quadric-bundles-intersect-radical}. Identifying
\(\mathbf{F}_\ell(Q/S)^\circ\) with the incidence correspondence provides a
morphism \(\mathbf{F}_\ell(Q/S)^\circ \to \mathbf{F}_{\ell+1}(Q/S)\), and hence
a commutative diagram of solid arrows:
\[
\begin{tikzcd}
\mathbf{F}_\ell(Q/S)^\circ \rar["\zeta_d^\circ"'] \dar
& \mathrm{M}_d(Q/S) \dar \ar[dl,dashed]\\
\tilde{S} \rar
& S\punct{.}
\end{tikzcd}
\]

We seek to show that there is a morphism \(\mathrm{M}_d(Q/S) \to \tilde{S}\)
filling in the dashed arrow and making the diagram commute. This would
follow from \parref{lemma-rigidity2} upon verifying that \(\zeta_d^\circ\) has
geometrically connected fibres, and this reduces to the situation where \(S\)
is the spectrum of an algebraically closed field \(\kk\). The arguments of
\parref{spinors-coarse-moduli-fibres} show that points of
\(\mathrm{M}_d(Q/\kk)\) correspond to a choice of connected component of
\(\mathbf{F}_\ell(Q)^\circ\) and a point \(x \in \Sing Q\). Furthermore,
the fibre of \(\zeta_d^\circ\) over consists of \(\ell\)-planes in the chosen 
connected component which intersects \(\Sing Q\) precisely at \(x\). Thus it
remains to show that
\[
\mathbf{F}_\ell(Q;x)^\circ \coloneqq
\{[P] \in \mathbf{F}_\ell(Q) : P \cap \Sing Q = \{x\}\}
\]
has just two connected components. But projection from the singular locus
identifies this with \(\mathbf{F}_{\ell-1}(\bar{Q})\), where \(\bar{Q}\) is
the smooth quadric \((2\ell-2)\)-fold at the base of the cone over \(\Sing Q\),
and the latter has two connected components. % as it is a homogeneous space for an orthogonal group.
\end{proof}

\subsectiondash{Choosing a family of spinors}\label{spinor-coarse-moduli-section}
The discussion of \parref{spinors-coarse-moduli-fibres} shows that in order to
construct a space locally isomorphic to a maximal hyperbolic reduction of
\(\rho \colon Q \to S\) out of \(\mathrm{M}_d(Q/S)\), we must be able to
consistently choose one of the two connected components in the fibres over the
corank \(2\) locus \(S_2\). This is possible precisely when the double cover
\(\tilde{S}_2 \to S_2\) splits.

Consider this case, and suppose \(\sigma \colon S_2 \to \tilde{S}_2\) is a
section of the double cover \(\tilde{S}_2 \to S_2\) from
\parref{quadric-bundles-fano-schemes-corank-2}. Pulling back the morphism
\(\mathrm{M}_d(Q/S)\rvert_{S_2} \to \tilde{S}_2\)
from \parref{spinor-moduli-corank-2-stein} along \(\sigma\) produces a
closed subspace of \(\mathrm{M}_d(Q/S)\) whose open complement
\[
M_\sigma \coloneqq
\mathrm{M}_d(Q/S) \setminus
(S_2 \times_{\sigma, \tilde{S}_2} \mathrm{M}_d(Q/S)\rvert_{S_2})
\]
we shall see is separated over \(S\). Note that here $\widetilde{S}_2 = \sigma (S_2) \coprod T$ where $T$  is the image of the complementary section, so the fiber product above is simply the preimage of the closed and open set $\sigma(S_2) \subset \tilde{S}_2$.

\subsectiondash{Maximal hyperbolic reduction and sections}
\label{spinor-coarse-moduli-section-reduction}
One situation in which \(\tilde{S}_2 \to S_2\) splits is when
\(\rho \colon Q \to S\) carries a regular isotropic \((\ell-1)\)-bundle
\(\PP\mathcal{F}\). Let \(\mu \colon M \to S\) be the associated maximal
hyperbolic reduction, as in \parref{reduction-fourier-mukai}. Consider the
closed subscheme
\[
Z \coloneqq
\{[P] \in \mathbf{F}_{\ell+1}(Q/S) \rvert_{S_2} :
\PP\mathcal{F}\rvert_{S_2} \subset P\}
\]
parameterizing isotropic \((\ell+1)\)-planes over \(S_2\) containing the
restriction of \(\PP\mathcal{F}\). Then \parref{lemma-fano-correspondence}
identifies \(Z\) as the scheme of lines of \(M \to S\) restricted to \(S_2\):
this implies that the map \(Z \to S_2\) is an isomorphism. Post-composing the
inclusion of \(Z\) with the map
\(\mathbf{F}_{\ell+1}(Q/S)\rvert_{S_2} \to \tilde{S}_2\) from
\parref{quadric-bundles-fano-schemes-corank-2} arising from the Stein
factorization provides a section
\(S_2 \xrightarrow{\sim} Z \to \tilde{S}_2\). Call the complementary section
\(\sigma \colon S_2 \to \tilde{S}_2\).

This situation thus provides two distinct ways of obtaining open subspaces of
\(\mathrm{M}_d(Q/S)\): On the one hand, the maximal hyperbolic
reduction \(\mu \colon M \to S\) canonically embeds as an open via
\parref{prop-hypredtocoh} and \parref{spinors-coarse-moduli-fibres}. On the
other hand, the section \(\sigma \colon S_2 \to \tilde{S}_2\) provides an open
subspace \(M_\sigma\) via \parref{spinor-coarse-moduli-section}. The two
methods produce the same open subspace:

\begin{Lemma}
\label{spinor-moduli-compatible}
The open immersion \(M \to \mathrm{M}_d(Q/S)\) provides an identification
\(M \cong M_\sigma\).
\end{Lemma}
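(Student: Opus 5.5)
The plan is to show that the open immersion \(\zeta_d\rvert_M \colon M \to \mathrm{M}_d(Q/S)\) from \parref{prop-hypredtocoh} has image exactly the open subspace \(M_\sigma\). Both are open subspaces of \(\mathrm{M}_d(Q/S)\), so it suffices to show they have the same underlying set of points. This can be checked on geometric fibres over \(S\). Over \(S \setminus S_2\), \(M_\sigma\) agrees with \(\mathrm{M}_d(Q/S)\) by definition. There \parref{spinors-coarse-moduli-fibres} shows that \(\zeta_d\rvert_M\) is an isomorphism onto \(\mathrm{M}_d(Q/S)\rvert_{S \setminus S_2}\), so both images coincide away from \(S_2\).

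Now fix a geometric point \(\bar{s} \to S_2\). By \parref{spinors-coarse-moduli-fibres}, the fibre \(\mathrm{M}_d(Q/S)_{\bar{s}}\) is \(\PP^1 \sqcup \PP^1\), and the image of \(M_{\bar{s}} \cong \PP^1\) is exactly one of the two components. By \parref{spinor-moduli-corank-2-stein}, these two components are the fibres of \(\mathrm{M}_d(Q/S)\rvert_{S_2} \to \tilde{S}_2\) over the two points of \(\tilde{S}_2\) above \(\bar{s}\). The fibre \((M_\sigma)_{\bar{s}}\) is the component lying over the point of \(\tilde{S}_2\) not in \(\sigma(S_2)\), that is, over the image of \(Z\). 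So the whole proof reduces to one claim: the composite \(M\rvert_{S_2} \to \mathrm{M}_d(Q/S)\rvert_{S_2} \to \tilde{S}_2\) factors through the section \(S_2 \cong Z \to \tilde{S}_2\).

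To prove this claim, I will trace through the construction in the proof of \parref{spinor-moduli-corank-2-stein}. There the map \(\mathrm{M}_d(Q/S)\rvert_{S_2} \to \tilde{S}_2\) is obtained by descending the map \(\mathbf{F}_\ell(Q/S)^\circ\rvert_{S_2} \to \mathbf{F}_{\ell+1}(Q/S)\rvert_{S_2} \to \tilde{S}_2\), which sends an \(\ell\)-plane \(P\) to its span with \(\Sing\rho\). The map \(M \to \mathrm{M}_d(Q/S)\) is induced by the inclusion \(M \subset \mathbf{F}_\ell(Q/S)^\circ\) together with \(\zeta_d\). Hence the composite \(M\rvert_{S_2} \to \tilde{S}_2\) sends an \(\ell\)-plane \(P \supset \PP\mathcal{F}\) to the class of \(\langle P, \Sing\rho\rangle\). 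This \((\ell+1)\)-plane contains \(\PP\mathcal{F}\), so it lies in \(Z\). Therefore the composite equals \(M\rvert_{S_2} \to S_2 \cong Z \to \tilde{S}_2\), which is the complementary section to \(\sigma\).

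Once the claim holds, the image of \(M\) avoids \(S_2 \times_{\sigma,\tilde{S}_2} \mathrm{M}_d(Q/S)\rvert_{S_2}\), so \(M \subseteq M_\sigma\). The fibrewise comparison from the second paragraph then gives equality on points, and since both are open subspaces this yields \(M \cong M_\sigma\).

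The main obstacle is the compatibility in the third paragraph. It requires checking that the descended map from \parref{lemma-rigidity2} really restricts along \(M\) to the naive span map. This should follow from the uniqueness in \parref{lemma-rigidity2} and the commutative square in the proof of \parref{spinor-moduli-corank-2-stein}, but it needs the span construction to behave well in families over \(S_2\). That is supplied by \parref{quadric-bundles-intersect-radical}, and it is the step I would verify most carefully.
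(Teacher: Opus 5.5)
Your proposal is correct and follows the paper's proof. You get agreement away from $S_2$ from \parref{spinors-coarse-moduli-fibres}, and over $S_2$ you use the same key point: $M\rvert_{S_2} \to \tilde{S}_2$ factors through $Z$, because the span of an $\ell$-plane containing $\PP\mathcal{F}$ with $\Sing\rho$ still contains $\PP\mathcal{F}$. Your pointwise comparison of open subspaces only spells out what the paper compresses into ``this implies the result''. The compatibility you flag needs no uniqueness argument, since it follows directly from the factorization $h = g \circ f$ that defines the map in \parref{lemma-rigidity2}.
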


\begin{proof}
The two opens agree away from the corank $2$ locus by
\parref{spinors-coarse-moduli-fibres}, so we may assume
\(\rho \colon Q \to S\) is everywhere of corank \(2\). In this situation,
\(\tilde{S} \cong \sigma(S) \sqcup Z\) decomposes as a disjoint union
of the two sections over \(S\), and the open subspace corresponding to the
section \(\sigma\) is taken in \parref{spinor-coarse-moduli-section} to be
\[
M_\sigma \coloneqq \mathrm{M}_d(Q/S) \times_{\tilde{S}} Z.
\]

As explained in the proof of \parref{spinor-moduli-corank-2-stein}, the
structure map \(\mathbf{F}_\ell(Q/S)^\circ \to S\) factors through
\(\tilde{S}\), meaning it has two connected components over \(S\). The
embedding of \parref{lemma-fano-correspondence} identifies \(M\) as
the closed subscheme
\[
M \cong \{[P] \in \mathbf{F}_\ell(Q/S)^\circ : \PP\mathcal{F} \subset P\}
\]
and so, comparing with the definition of \(Z \subset \mathbf{F}_{\ell+1}(Q/S)\)
from \parref{spinor-coarse-moduli-section-reduction}, the composite map
\(M \to \tilde{S}\) factors through \(Z\). This implies the result.
\end{proof}

Write $\mathcal{M}_\sigma$ for the restriction of the $\mathbf{G}_m$-gerbe
$\mathcal{M}_d(Q/S) \to \mathrm{M}_d(Q/S)$ to the open $M_\sigma \subset \mathrm{M}_d(Q/S)$.

\begin{Corollary}
\label{corollary-kernelscorrespond}
    There is an equivalence $M \times B\mathbf{G}_m \cong \mathcal{M}_\sigma$ such that, under the associated equivalence
    $$
    (M \times B \mathbf{G}_m) \times _S Q \cong \mathcal{M}_\sigma \times _S Q,
    $$
    the universal degree $d$ dual spinor sheaf $\tilde{\mathcal{S}}_d^\vee$ on the right hand side corresponds to the degree $d$ dual spinor sheaf $\mathcal{S}_d^\vee$ associated to the tautological family of $\ell$-planes in $M \times _S Q$ placed in $\mathbf{G}_m$-weight one. 
\end{Corollary}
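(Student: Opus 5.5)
The plan is to build a morphism $M \to \mathcal{M}_\sigma$ from the tautological family, check that it is a section of the gerbe over the isomorphism $M \cong M_\sigma$, and then use the trivial-gerbe discussion of \parref{example-trivialityofgerbe}.

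First, by \parref{spinor-moduli-simple} and \parref{lemma-spinorsheavesrelperf}, the dual spinor sheaf $\mathcal{S}_d^\vee$ on $Q \times_S M$ attached to the tautological isotropic subbundle $\widetilde{\mathcal{F}}$ is $M$-perfect, flat, and simple. Being a coherent sheaf, it is also universally glueable. So \parref{complexes-yoneda} gives a classifying morphism $\tilde\zeta \colon M \to \sComplexesStack_{Q/S}$. By construction this is the restriction $\zeta_d\rvert_M$, and so it lands in $\mathcal{M}_d(Q/S)$. Composing with the coarse map gives the open immersion $M \to \mathrm{M}_d(Q/S)$ of \parref{prop-hypredtocoh}. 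By \parref{spinor-moduli-compatible} its image is exactly $M_\sigma$. Hence $\tilde\zeta$ factors through $\mathcal{M}_\sigma = \mathcal{M}_d(Q/S)\times_{\mathrm{M}_d(Q/S)} M_\sigma$ and is a section $s \colon M_\sigma \cong M \to \mathcal{M}_\sigma$ of the $\mathbf{G}_m$-gerbe $\pi \colon \mathcal{M}_\sigma \to M_\sigma$.

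Next, a $\mathbf{G}_m$-gerbe with a section is trivial. Concretely, $s$ corresponds to a weight $1$ line bundle, namely the one obtained by descending along the gerbe. Alternatively, one can use the trivializing line bundle $\mathcal{N}$ from \parref{spinors-hypredtocoh-surfaces}, after reducing to surfaces via \parref{lemma-reductiontosurface}. Either way, \parref{example-trivialityofgerbe} gives an isomorphism of gerbes $M \times B\mathbf{G}_m \cong \mathcal{M}_\sigma$ over $M \cong M_\sigma$. Under this isomorphism, the composite $M \to M\times B\mathbf{G}_m \to \mathcal{M}_\sigma$ of the tautological section with the equivalence is $s$.

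Finally, I compare kernels. The universal sheaf $\tilde{\mathcal{S}}_d^\vee$ on $\mathcal{M}_\sigma \times_S Q$ has weight $1$, as in \parref{complexes-stack}. Pulling it back to $(M\times B\mathbf{G}_m)\times_S Q$ gives a weight $1$ object. By \parref{gmgerbes-derived-categories}\ref{gmgerbes-derived-categories.section}, a weight $1$ object is determined by its pullback along the section $M \times_S Q \to (M \times B\mathbf{G}_m)\times_S Q$. That pullback is $s^*\tilde{\mathcal{S}}_d^\vee$, which by the Yoneda identification of \parref{complexes-yoneda} is the sheaf classified by $s$, namely $\mathcal{S}_d^\vee$. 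Therefore the pulled-back universal sheaf is $\mathcal{S}_d^\vee$ placed in weight one, i.e.\ $\mathcal{S}_d^\vee \boxtimes \mathcal{O}(1)$ with $\mathcal{O}(1)$ the tautological weight-one line bundle on $B\mathbf{G}_m$.

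The main obstacle is bookkeeping rather than content. I must make sure the identification of weight-$1$ objects with objects on $M$ is compatible with the chosen trivialization, since a different section changes $\mathcal{S}_d^\vee$ by a line bundle from $M$. This is harmless, because the trivialization is defined using $s$ itself. I will also verify that the Yoneda equivalence identifies $s^*$ of the universal object with the classified object, which holds by construction of $E_{\mathrm{univ}}$ via \parref{complexes-yoneda}.
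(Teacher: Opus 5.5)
Your proposal is correct and follows essentially the same route as the paper. The classifying map of $\mathcal{S}_d^\vee$ on $M \cong M_\sigma$ is a section of the gerbe $\mathcal{M}_\sigma \to M_\sigma$, which trivializes the gerbe compatibly with bands (the paper cites Stacks 06QG for the factorization through the tautological section of $M \times B\mathbf{G}_m$), and the weight-one universal sheaf is then pinned down by its pullback along that section. One caution: only the trivialization built from $s$ itself, and not the one coming from $\mathcal{N}$, makes the tautological section match $s$; you note this yourself in your final paragraph.
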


\begin{proof}
By its definition, the open immersion $M \to M_d(Q/S)$ with image
$M_\sigma$ factors through the morphism $M \to \mathcal{M}_d(Q/S)$ for
which the pullback of $\tilde{\mathcal{S}}_d^\vee$ along $M \times _S Q \to
\mathcal{M}_d(Q/S) \times _S Q$ is $\mathcal{S}_d^\vee$. By general
properties of gerbes, $M \to \mathcal{M}_d(Q/S)$ factors as $M \to M \times
B\mathbf{G}_m \to \mathcal{M}_d(Q/S)$ where the first morphism is the
tautological section and the second induces the identity on autormorphism
groups, see \citeSP{06QG}. Thus there is a commutative diagram 
 % https://q.uiver.app/#q=WzAsNCxbMSwwLCJcXG1hdGhjYWx7TX1fXFxzaWdtYSJdLFswLDEsIk0iXSxbMCwwLCJNIFxcdGltZXMgQiBcXG1hdGhiZntHfV9tIl0sWzEsMSwiTV9cXHNpZ21hIl0sWzEsMF0sWzAsM10sWzEsMywiXFxjb25nIl0sWzIsMCwiXFxjb25nIl0sWzEsMl1d
\[\begin{tikzcd}
	{M \times B \mathbf{G}_m} & {\mathcal{M}_\sigma} \\
	M & {M_\sigma}
	\arrow["\cong", from=1-1, to=1-2]
	\arrow[from=1-2, to=2-2]
	\arrow[from=2-1, to=1-1]
	\arrow[from=2-1, to=1-2]
	\arrow["\cong", from=2-1, to=2-2]
\end{tikzcd}\]
The sheaf $\tilde{\mathcal{S}}_d^\vee$ on $\mathcal{M}_\sigma$ has $\mathbf{G}_m$-weight one, and so it corresponds to an object of $\mathbf{G}_m$-weight one under $M \times B \mathbf{G}_m \cong \mathcal{M}_\sigma \times _S Q$ since this equivalence induces the identity on bands. Since the pullback of the corresponding object under the tautological section $M \to M \times B \mathbf{G}_m$ is $\mathcal{S}_d^\vee$, the result follows.
\end{proof}

\subsectiondash{Main result}
\label{spinor-coarse-moduli-section-mainresult}
We now arrive at the main result of this section, which geometrically
identifies the Kuznetsov component of a quadric bundle of even relative
dimension in a rather general setting. The description is explicit in the
sense that the algebraic space \(M_\sigma\) below is, by
\parref{spinor-moduli-compatible}, \'etale locally on $S$ isomorphic to a
maximal hyperbolic reduction of $Q \to S$. In
particular, $M_\sigma \to S$ is proper, finite locally free over $S \setminus
S_1$, and a $\mathbf{P}^1$-bundle over $S_2$, with geometric fibres
isomorphic to two points over $S \setminus S_1$,
$\Spec(\kk[\epsilon])$ over $S_1 \setminus S_2$, and $\mathbf{P}^1$
over $S_2$.

\begin{Theorem}\label{spinor-main-theorem}
Let \(\rho \colon Q \to S\) be a quadric bundle of relative dimension \(2\ell\).
Assume that:
\begin{enumerate}
\item\label{spinor-main-theorem.S3}
\(S_3 = \varnothing\);
\item\label{spinor-main-theorem.associated-points}
\(S_2\) contains no weakly associated points of \(S\); and
\item\label{spinor-main-theorem.S2-cover}
the \'etale double cover \(\tilde{S}_2 \to S_2\) splits.
% \item \(\operatorname{depth}\sO_{S,s} \geq 2\) for every \(s \in S_2\); and
\end{enumerate}
Then each section \(\sigma \colon S_2 \to \tilde{S}_2\) provides an open
subspace \(M_\sigma \subset \mathrm{M}_d(Q/S)\), a Brauer class
\(\beta \in \Br(M_\sigma)\), and an \(S\)-linear equivalence
\[
\Phi_\sigma \colon \Dqc(M_\sigma, \beta) \to \Ku(Q).
\]
\end{Theorem}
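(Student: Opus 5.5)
The plan is to take \(\mathcal{M}_\sigma \to M_\sigma\), the restriction of the \(\mathbf{G}_m\)-gerbe \(\mathcal{M}_d(Q/S) \to \mathrm{M}_d(Q/S)\), as the gerbe whose class defines \(\beta\). The kernel will be the universal dual spinor sheaf \(\tilde{\mathcal{S}}_d^\vee\) on \(\mathcal{M}_\sigma \times_S Q\), which has weight \(1\). The functor \(\Phi_\sigma\) is then the Fourier--Mukai transform restricted to \(\mathrm{D}_{\mathrm{qc},-1}(\mathcal{M}_\sigma)\); this category is identified with \(\Dqc(M_\sigma,\beta)\), up to the sign convention on \(\beta\). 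Applying \parref{thm-locallytwisted} with \(i = 1\), \(X = M_\sigma\), \(Y = Q\) and \(P = \bigoplus_{i=0}^{2\ell-1}\sO_\rho(i)\), so that \(\langle P\rangle^\perp = \Ku(Q)\), it suffices to check essential image and equivalence after an fppf (indeed \'etale) cover \(U \to S\). By \parref{spinor-coarse-moduli-section} we may shrink to \(S\) qcqs, for example affine, since all statements are local on \(S\).

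First I would verify the hypotheses of \parref{thm-locallytwisted}.
\begin{enumerate}
\item \(M_\sigma \to S\) must be fpqc locally \(H\)-projective. By \parref{lemma-sectionetalelocally} together with \parref{spinor-moduli-compatible}, \'etale locally on \(S\) the space \(M_\sigma\) is a maximal hyperbolic reduction \(\mu\colon M \to S\). The latter is a closed subscheme of \(\PP(\mathcal{F}^\perp/\mathcal{F})\), hence \(H\)-projective. The cover is obtained by choosing, \'etale locally, a regular isotropic \((\ell-1)\)-plane; such planes exist through smooth points of the fibres, using smoothness of the relevant Fano schemes. The point is that \(\sigma\) must match the complementary section produced by that plane. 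Since \(\tilde S_2 \to S_2\) is finite \'etale of degree \(2\), we can swap to a plane in the other family \'etale locally if needed, because both families contain such planes.
\item The kernel must be \(Q\)-perfect (the \(Y\)-perfect hypothesis). This is local on \(S\), and by \parref{corollary-kernelscorrespond} it reduces to \parref{lemma-thekernelisrelperf}. Relative perfectness is checked smooth locally on the source, so pulling back along \(M \to \mathcal{M}_\sigma\) suffices.
\end{enumerate}

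On such a cover \(U\), \parref{corollary-kernelscorrespond} identifies \(\mathcal{M}_{\sigma,U}\) with \(M_U \times B\mathbf{G}_m\) and the kernel with \(\mathcal{S}_d^\vee\) in weight \(1\). Under \parref{gmgerbes-derived-categories}\ref{gmgerbes-derived-categories.section}, the weight \(-1\) component becomes \(\Dqc(M_U)\), and \(\Phi_U\) becomes \(\Psi_d\) of \parref{reduction-fourier-mukai-equivalence}. The hypothesis on weakly associated points is needed here, and it is preserved under flat base change. That result says \(\Psi_d\) is fully faithful with essential image \(\Ku(Q_U) = \mathcal{A}_U\). Parts \ref{thm-locallytwisted.fully-faithful}--\ref{thm-locallytwisted.equivalence} of \parref{thm-locallytwisted} then give the equivalence \(\mathrm{D}_{\mathrm{qc},-1}(\mathcal{M}_\sigma) \simeq \Ku(Q)\), which is \(S\)-linear by \parref{derived-fm}\ref{derived-fm.linear}. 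Finally, \(\beta\) is the class of the gerbe in \(\mathrm{H}^2(M_\sigma,\mathbf{G}_m)\). For it to lie in \(\Br\), I would use that the gerbe carries a weight-one locally free sheaf (a spinor restricted suitably, or the universal sheaf pushed forward along \(Q\)), so the class is torsion and Azumaya by \parref{example-trivialityofgerbe}. Alternatively, one can interpret \(\Br\) cohomologically.

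The main obstacle is showing that \(M_\sigma\) is \'etale locally isomorphic to a maximal hyperbolic reduction compatibly with \(\sigma\) over \(S_2\). Beyond the existence of a local regular \((\ell-1)\)-plane, this also requires separatedness and properness, so that \(H\)-projectivity descends. I would first try to prove that the local opens \(M_U\) from \parref{prop-hypredtocoh} cover \(M_\sigma\), using the fibre description in \parref{spinors-coarse-moduli-fibres}. Then I would show equality with \(M_{\sigma,U}\) via \parref{spinor-moduli-compatible}, choosing the plane in the correct family.
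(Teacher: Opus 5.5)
Your proposal is correct and follows the paper's proof. You restrict the gerbe to \(M_\sigma\) and take the weight-one universal dual spinor sheaf as kernel on the weight \(-1\) component; you reduce via \parref{thm-locallytwisted} to an \'etale cover, where \parref{spinor-moduli-compatible} and \parref{corollary-kernelscorrespond} identify everything with the setting of \parref{reduction-fourier-mukai-equivalence}. The extra checks you supply are ones the paper leaves implicit: choosing the local regular \((\ell-1)\)-plane in the family complementary to \(\sigma\), and \(Q\)-perfectness via \parref{lemma-thekernelisrelperf}. Also, no separatedness or properness is needed for \(H\)-projectivity to ``descend'', since being fpqc locally \(H\)-projective is a local condition by definition.
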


\begin{proof}
Fix a section \(\sigma \colon S_2 \to \tilde{S}_2\) and let \(M_\sigma\) be
the open subspace of \(\mathrm{M}_d(Q/S)\) constructed in
\parref{spinor-coarse-moduli-section}. Restricting the coarse moduli map
\(\mathcal{M}_d(Q/S) \to \mathrm{M}_d(Q/S)\) yields a
\(\mathbf{G}_m\)-gerbe \(\mathcal{M}_\sigma \to M_\sigma\). Let
\(\tilde{\mathcal{S}}_d^\vee\) be the universal \(d\)-th dual spinor sheaf
on \(Q \times_S \mathcal{M}_\sigma\), and consider the associated
Fourier--Mukai functor
\(\Phi \colon \Dqc(\mathcal{M}_\sigma) \to \Dqc(Q)\). As recalled
in \parref{subsection-gmgerbe},
%\cite[Theorem 5.4]{BS:BS},
the derived category of \(\mathcal{M}_\sigma\) decomposes into homogeneous
components
\[
\Dqc(\mathcal{M}_\sigma) \cong
\prod\nolimits_{k \in \mathbf{Z}} \mathrm{D}_{\mathrm{qc},k}(\mathcal{M}_\sigma) \cong
\prod\nolimits_{k \in \mathbf{Z}} \Dqc(M_\sigma,\beta^k)
\]
where \(\beta \in \Br(M_\sigma)\) is the Brauer class associated with the
\(\mathbf{G}_m\)-gerbe \(\mathcal{M}_\sigma \to M_\sigma\).

We claim that the restriction
\(\Phi_{-1} \colon \Dqc(M_\sigma, \beta^{-1}) \to \Dqc(Q)\) of
\(\Phi\) to the weight \(-1\) component induces an equivalence onto the subcategory $\operatorname{Ku}(Q)$. By
\parref{thm-locallytwisted}, it suffices to check this upon passing to an \'etale
cover of \(S\). Doing so, we may assume that \(\mathcal{M}_\sigma\) is identified with $M \times B \mathbf{G}_m$
with $M$ a maximal hyperbolic reduction of \(\rho \colon Q \to S\)
as in \parref{spinor-coarse-moduli-section-reduction} and
\parref{spinor-moduli-compatible}. By \parref{corollary-kernelscorrespond},
under this equivalence, the kernel $\tilde{S}_d^\vee$ corresponds to  the
\(d\)-th dual spinor sheaf \(\mathcal{S}_d^\vee\) on \(Q \times_S M_\sigma\)
given $\mathbf{G}_m$-weight one. It therefore follows from
\parref{reduction-fourier-mukai-equivalence} that this object induces the
sought-after equivalence
\[
\operatorname{D}_{\mathrm{qc},-1}(\mathcal{M}_\sigma) \to \operatorname{Ku}(Q).
\qedhere
\]
\end{proof}

\subsectiondash{On the hypotheses}\label{spinors-hypotheses}
Theorem \parref{spinor-main-theorem} says that, along with the usual
assumptions on the quadric bundle \(\rho \colon Q \to S\), once it is possible
to consistently choose one of the two families of spinor sheaves over the
corank \(2\) locus---precisely, if the canonical \'etale cover
\(\tilde{S}_2 \to S_2\) splits---then there is a Fourier--Mukai equivalence
\(\Phi_\sigma \colon \Dqc(M_\sigma,\beta) \to \Ku(Q)\). This is, of course,
always possible \'etale locally on \(S\), wherein \(M_\sigma\) may be
identified with a suitable maximal hyperbolic reduction of
\(\rho \colon Q \to S\) by \parref{spinors-coarse-moduli-fibres} and
\parref{spinor-moduli-compatible}; moreover, the equivalence \(\Phi_\sigma\)
is locally isomorphic to the Fourier--Mukai equivalence defined by the kernel
featured in \parref{reduction-fourier-mukai-equivalence}. The hypothesis
\parref{spinor-main-theorem}\ref{spinor-main-theorem.S2-cover} turns out to
be essentially necessary for a result of this form: in the setting of the
Theorem, if for some \'etale cover of \(S\), the local maximal hyperbolic
reductions and kernels glue and descend to an algebraic space \(M \to S\) and
equivalence \(\Dqc(M,\beta) \to \Ku(Q)\) for some \(\beta \in \Br(M)\), then
the double cover \(\tilde{S}_2 \to S\) is necessarily split:

%Suppose there is an \'etale cover $\{U_i \to S\}_i$ such that $Q_{U_i}/U_i$ admits a family of $(\ell-1)$-planes contained in its smooth locus. Denote $\bar{Q}_{U_i}/U_i$ the hyperbolic reduction with respect to one. There is an object $K_i \in D_{qc}(\bar{Q}_{U_i} \times_{U_i} Q_{U_i})$ which, under the assumption that $S_2 \subset S$ contains no weakly associated points, induces a Fourier--Mukai equivalence $D_{qc}(\bar{Q}_{U_i}) \to Ku(Q_{U_i}/U_i)$, see Proposition \ref{reduction-fourier-mukai-equivalence}. It is natural to ask whether the schemes $\bar{Q}_{U_i}$ and kernels $K_i$ can be descended to an $S$-scheme $\bar{Q}$ and object $K \in D_{qc}(\bar{Q} \times _S Q)$. Indeed, this seems to be considered by Xie in \cite{Xie:Quadrics} in the text following Conjecture 1. Using the results of this section, we can see that the double cover $\widetilde{S}_2 \to S_2$ must split in order to perform this glueing, even if we allow ourselves to glue in the category of algebraic spaces and twisted complexes. It is not too difficult to give examples of $Q/S$ such that the double cover $\widetilde{S}_2 \to S$ does not split. 

\begin{Proposition}\label{spinors-converse}
Let \(\rho \colon Q \to S\) be a quadric \(2\ell\)-fold bundle with
\(S_3 = \varnothing\) and \(S_2\) containing no weakly associated points.
Suppose there exists
\begin{enumerate}
\item\label{spinors-converse.global}
an algebraic space $Y \to S$, a $\mathbf{G}_m$-gerbe $\mathcal{Y} \to Y$, and
an object $K \in \mathrm{D}_{\mathrm{qc}, 1}(\mathcal{Y} \times _S Q)$;
\item\label{spinors-converse.local}
an \'etale cover $\{U_i \to S\}_i$, maximal hyperbolic
reductions $\mu_i \colon M_i \to U_i$ of \(\rho_{U_i} \colon Q_{U_i} \to U_i\), and objects $K_i \in \Dqc(M_i \times_{U_i} Q_{U_i})$
as in \parref{reduction-fourier-mukai-equivalence}; and
\item\label{spinors-converse.glue}
equivalences $\mathcal{Y}_{U_i} \cong M_i \times B\mathbf{G}_m$ over $U_i$
under which $K_{U_i}$ corresponds to $K_i$ with $\mathbf{G}_m$-weight one.
\end{enumerate}
Then the \'etale double cover $\tilde{S}_2 \to S_2$ admits a section.
\end{Proposition}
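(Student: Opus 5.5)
The plan is to use \(K\) to build a morphism \(\mathcal{Y} \to \mathcal{M}_d(Q/S)\), pass to coarse spaces to get \(Y \to \mathrm{M}_d(Q/S)\), and then read off a section of \(\tilde{S}_2 \to S_2\) from the component of the fibres over \(S_2\) that the image of \(Y\) selects. Since \(K\) has weight one, its dual family, or directly \(K\) up to the autoequivalences of \parref{complexes-autoequivalences}, is \(\mathcal{Y}\)-perfect, universally glueable and simple. These properties are étale local by \parref{complexes-glueable}. Over each \(U_i\), condition \ref{spinors-converse.glue} identifies \(K_{U_i}\) with the tautological dual spinor sheaf on \(M_i \times_{U_i} Q_{U_i}\), which is simple by \parref{spinor-moduli-simple}, relatively perfect by \parref{lemma-spinorsheavesrelperf}, and universally glueable.

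By the Yoneda lemma \parref{complexes-yoneda}, \(K\) therefore defines a morphism \(\mathcal{Y} \to \sComplexesStack_{Q/S}\). It lands in \(\mathcal{M}_d(Q/S)\) because this is true after restricting to each \(U_i\), where the map is the composite \(M_i \times B\mathbf{G}_m \to M_i \to \mathcal{M}_d(Q/S)\) given by \(\zeta_d\rvert_{M_i}\). The morphism has weight one on bands, so it induces \(\mathbf{G}_m\)-equivariant maps of gerbes and hence a morphism of coarse spaces \(g \colon Y \to \mathrm{M}_d(Q/S)\). Étale locally, \(g\) is identified with the open immersion \(M_i \to \mathrm{M}_d(Q/S)\) of \parref{prop-hypredtocoh}. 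It follows that \(g\) is an open immersion: it is étale, and it is a monomorphism, both of which descend along the cover. Its image is the open subspace \(V\) which over each \(U_i\) equals \(M_i\).

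Over \(U_i \cap S_2\), the earlier discussion shows that \(M_i\) meets each geometric fibre \(\PP^1 \sqcup \PP^1\) in exactly one of the two components. By \parref{spinor-coarse-moduli-section-reduction} and \parref{spinor-moduli-compatible}, that component corresponds to the section \(Z_i\) of \(\tilde{S}_2\rvert_{U_i}\). Consider \(V_{S_2} \subset \mathrm{M}_d(Q/S)\rvert_{S_2} \to \tilde{S}_2\), where the second map comes from \parref{spinor-moduli-corank-2-stein}. Its image is an open subset \(W \subset \tilde{S}_2\) which restricts over each \(U_i\) to \(Z_i\), so \(W \to S_2\) is an isomorphism étale locally and hence globally. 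Its inverse is the desired section. It would be cleaner to show that \(V_{S_2} \to \tilde{S}_2\) factors through a section by \parref{lemma-rigidity2}: \(V_{S_2} \to S_2\) is locally a \(\PP^1\)-bundle with connected fibres, and \(\tilde{S}_2\) is separated and étale over \(S_2\), so the factorization \(V_{S_2} \to \tilde{S}_2\) descends along \(V_{S_2} \to S_2\) to a section.

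The main obstacle is the descent step at the start: checking that the global \(K\) really defines a map into \(\mathcal{M}_d(Q/S)\) that is compatible with the local maps \(\zeta_d\rvert_{M_i}\), keeping track of the weight conventions and of whether \(K_i\) is the spinor or the dual spinor. The remaining difficulty is confirming that \(V_{S_2}\) selects one component consistently, but that is handled by the étale-local identification with \(M_i\) together with the rigidity lemma.
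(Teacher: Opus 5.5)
Your proposal is correct and is essentially the paper's argument: $K$ classifies $\mathcal{Y} \to \mathcal{M}_d(Q/S)$, hence $Y \to \mathrm{M}_d(Q/S)$, and the paper concludes by composing over $S_2$ with the map of \parref{spinor-moduli-corank-2-stein} and invoking the rigidity lemma. That is your ``cleaner'' alternative, which does not even need $g$ to be an open immersion; your descent of the open image $W$ is a valid but longer substitute. Two small corrections, neither of which affects your conclusions:

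First, no dualization is needed, since the $K_i$ of \parref{reduction-fourier-mukai-equivalence} are already the dual spinors $\mathcal{S}_d^\vee$ that $\mathcal{M}_d(Q/S)$ parametrizes.

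Second, over $U_i$ the weight-one object does not classify the composite through the projection $M_i \times B\mathbf{G}_m \to M_i$; that composite corresponds to the weight-zero object. It classifies a map that restricts to $\zeta_d\rvert_{M_i}$ along the tautological section, as in \parref{corollary-kernelscorrespond}.
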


\begin{proof}
The object \(K \in \mathrm{D}_{\mathrm{qc},1}(\mathcal{Y} \times_S Q)\) induces
an \(S\)-morphism \(\mathcal{Y} \to \mathcal{M}_d(Q/S)\) since this is true
locally by \ref{spinors-converse.local} and \ref{spinors-converse.glue}. This
induces an \(S\)-morphism \(Y \to \mathrm{M}_d(Q/S)\) of
\(\mathbf{G}_m\)-rigidifications. Over \(S_2\), post-composing this with the
morphism from \parref{spinor-moduli-corank-2-stein} provides a morphism
\(f \colon Y \times_S S_2 \to \tilde{S}_2\). Since each \(M_i \to U_i\) is a 
\(\PP^1\)-bundle, \(Y \times_S S_2 \to S_2\) is a Brauer--Severi space of relative
dimension \(1\) and, in particular, has geometrically connected fibres. Thus
the rigidity lemma \parref{lemma-rigsection} applies with \(f\) to show that
\(\tilde{S}_2 \to S\) must admit a section.
%   We may assume after base change to $S_2$ that $S_2 = S$ scheme theoretically (and $S_3 = \emptyset$ by our standing assumption). In this case, the hyperbolic reductions $\bar{Q}_{U_i} /U_i$ are $\mathbf{P}^1$-bundles, so we see that $Y/S$ is a Brauer--Severi space of relative dimension 1. The object $K \in D_{qc}(\mathcal{Y} \times _S Q)$ classifies a morphism $\mathcal{Y} \to \mathcal{M}_d(Q/S)$ and therefore we get also a morphism of $\mathbf{G}_m$-rigidifications $Y \to M_d(Q/S)$. Postcomposing with the morphism of Proposition \ref{spinor-moduli-corank-2-stein}, we obtain an $S$-morphism $Y \to \widetilde{S}_2$. By the Rigidity Lemma \ref{lemma-rigsection}, there must be a section of $\widetilde{S}_2 \to S$, as needed. 
\end{proof}

\begin{Example}\label{spinors-no-section}
The \'etale double cover \(\tilde{S}_2 \to S_2\) often will not split when
\(\dim S_2 \geq 1\), suggesting that the Kuznetsov component of quadric bundle
over fourfolds, for instance, may not be twisted geometric. When \(\dim S_2 = 0\),
the cover also might not split for arithmetic reasons: let
\(S \coloneqq \mathbf{A}^1_{\mathbf{R}}\) be the affine line with coordinate
\(t\), and consider the quadric surface bundle over \(S\) given by
\[
Q \coloneqq \{(x_0:x_1:x_2:x_3) \in \PP^3_S : tx_0x_1 + x_2^2 + x_3^2 = 0\}.
\]
Then \(S_2 \subset S\) is the closed subscheme \(t = 0\), and the canonical
double covering \(\tilde{S}_2 \to S_2\) from
\parref{quadric-bundles-fano-schemes-corank-2} is the Fano scheme of planes in
the real quadric surface \(x_2^2 + x_3^2 = 0\). It is straightforward to see
that this is isomorphic to the non-split \'etale cover
\(\Spec\mathbf{C} \to \Spec\mathbf{R}\).
\qed
\end{Example}

% \begin{Lemma}\label{spinor-moduli-equivalences}
% Let \(\rho \colon Q \to S\) be a quadric bundle. The endofunctors of \(\sCoh_{Q/S}\)
% \[
% - \otimes \rho^*\mathcal{L}^\vee
% \quad\text{and}\quad
% \mathcal{F} \mapsto \ker(\rho^*\rho_*\mathcal{F} \to \mathcal{F}) \otimes \sO_{Q/S}(1)
% \]
% induce equivalences
% \(\iota_{\ell,m} \colon \mathcal{M}_{\ell,m} \to \mathcal{M}_{\ell,m-2}\) and
% \(\tau_{\ell,m} \colon \mathcal{M}_{\ell,m} \to \mathcal{M}_{\ell,m-1}\) for
% all integers \(\ell \geq 0\) and \(m \in \mathbf{Z}\). The two equivalences
% \[
% \tau_{\ell,m-1} \circ \tau_{\ell,m} \colon \mathcal{M}_{\ell,m} \to \mathcal{M}_{\ell,m-2}
% \quad\text{and}\quad
% \iota_{\ell,m} \colon \mathcal{M}_{\ell,m} \to \mathcal{M}_{\ell,m-2}
% \]
% are naturally isomorphic.
% \end{Lemma}

% \begin{proof}
% The equivalence \(\iota_{\ell,m}\) arises from the degree shift relation
% \(\mathcal{S}_m \otimes \rho^*\mathcal{L}^\vee \cong \mathcal{S}_{m-2}\)
% recorded in \S\ref{cliff-spinors}, and \(\tau_{\ell,m}\) arises from the short
% exact sequence
% \[
% 0 \to
% \mathcal{S}_{m-1} \otimes \sO_{Q/S}(-1) \to
% \rho^*\mathcal{I}_m \to
% \mathcal{S}_m \to
% 0,
% \]
% as in the proof of Proposition \ref{cliff-subbundle-modify}, together with the
% observation that \(\rho_*\mathcal{S}_m \cong \mathcal{I}_m\). That
% \(\tau_{\ell,m-1} \circ \tau_{\ell,m}\) and \(\iota_{\ell,m}\) are naturally
% isomorphic is now clear.\todo{Is there really nothing to explain?}
% \end{proof}

\section{Complements and applications}\label{section-applications}
In this section, we sketch applications of our results to the derived
categories of intersections of quadrics and cubic fourfolds containing a plane
over a base field \(\kk\) of arbitrary characteristic \(p \geq 0\). We phrase
the results in terms of their bounded derived categories \(\Dbcoh\) of coherent
sheaves; that our results imply corresponding results in this setting may be
deduced by adapting the arguments of \cite[Theorem 6.2]{BS:Descent}. Both
examples are classical and well-studied, but our results allow us to give
uniform statements and proofs for general \(\kk\), some of which are new even
for \(p = 0\).

\subsectiondash{Discriminant algebra and cover}
\label{applications-discriminant-algebra}
Following \cite[IV.4.8]{Knus}, associated with any quadric bundle
\(\rho \colon Q \to S\) is a graded locally free commutative \(\sO_S\)-algebra
\(\sDisc(\mathcal{E},q)\) called its \emph{discriminant algebra}. This comes
with a canonical \(\sO_S\)-algebra morphism
\[
\psi \colon
\sDisc(\mathcal{E},q) \to
\Cl(\mathcal{E},q).
\]
For the universal quadric in \(\PP^n_{\mathbf{Z}}\), this is constructed as the
commutant of the Clifford algebra relative to its even subalgebra. In general,
\(\sDisc(\mathcal{E},q)\) is pulled back from the universal case, using the
fact that the pair \((\sDisc(\mathcal{E},q), \psi)\) is functorial for
isometries of quadratic forms over \(S\): see \cite[IV.4.4.8.10]{Knus}.

The discriminant algebra behaves differently depending on the parity of
\(\rank_{\sO_S}(\mathcal{E})\). We focus on the case \(\mathcal{E}\) has even
rank \(2\ell+2\), wherein \cite[IV.4.4.8.5]{Knus} implies that the \(0\)-th
graded component is a locally free \(\sO_S\)-module of rank \(2\) fitting into
a short exact sequence
\[
0 \to
\sO_S \to
\sDisc_0(\mathcal{E},q) \to
\det(\mathcal{E}) \otimes \mathcal{L}^{\vee, \otimes \ell+1} \to
0.
\]
The morphism
\(\psi_0 \colon \sDisc_0(\mathcal{E},q) \to \Cl_0(\mathcal{E},q)\)
factors through the centre \(Z(\Cl_0(\mathcal{E},q))\) of the \(0\)-th Clifford
algebra: compare with \cite[IV.4.4.8.1]{Knus} and
\cite[Theorem 3.7(ii)]{BK:Clifford}. Furthermore, when \(S\) is regular and
\((\mathcal{E},q)\) is generically nonsingular, then \(\psi_0\) provides
an isomorphism \(\sDisc_0(\mathcal{E},q) \cong Z(\Cl_0(\mathcal{E},q))\): see
\cite[IV.4.4.8.3]{Knus}. In any case, the spectrum
\[
\Disc(Q/S) \coloneqq
\Spec\sDisc_0(\mathcal{E},q)
\]
shall be called the \emph{discriminant cover} of \(S\) associated with \(\rho
\colon Q \to S\).

\subsectiondash{}\label{applications-discriminant-algebra-split}
An explicit presentation of \(\sDisc_0(\mathcal{E},q)\) is available locally
on \(S\). When the short exact sequence in
\parref{applications-discriminant-algebra} splits, there is a generator \(z\)
of \(\sDisc_0(\mathcal{E},q)\) satisfying
\[
\sDisc_0(\mathcal{E},q) \cong \sO_S[z]/(z^2 - \beta z + \gamma)
\]
for some sections
\(\beta \colon \sO_S \to \det(\mathcal{E}^\vee) \otimes \mathcal{L}^{\otimes \ell+1}\)
and
\(\gamma \colon \sO_S \to \det(\mathcal{E}^\vee)^{\otimes 2} \otimes \mathcal{L}^{\otimes 2\ell+2}\).
These sections may be explicitly expressed when
\(\mathcal{E} \cong \bigoplus\nolimits_{i = 0}^{2\ell+1} \mathcal{E}_i\)
furthermore splits into a sum of line bundles: Decompose the section
\(s_q \colon \sO_S \to \Sym^2(\mathcal{E}^\vee) \otimes \mathcal{L}\)
corresponding to the equation of \(Q\) in \(\PP\mathcal{E}\) from
\parref{quadric-bundles} as
\[
s_q =
\sum\nolimits_{i = 0}^n \xi_i +
\sum\nolimits_{0 \leq i < j \leq n} \xi_{i,j}
\]
where
\(\xi_i \colon \sO_S \to \mathcal{E}_i^{\vee,\otimes 2} \otimes \mathcal{L}\)
and
\(\xi_{i,j} \colon \sO_S \to \mathcal{E}_i^\vee \otimes \mathcal{E}_j^\vee \otimes \mathcal{L}\).
Let \(T\) be the set of fix-point free order \(2\) permutations on the set
\(\{0,1,\ldots,2\ell+1\}\) and, for each element
\(\tau = (t_0,t_1) \cdots (t_{2\ell}, t_{2\ell+1})\) written as a product of
disjoint transposition with \(t_{2i} < t_{2i+1}\), write
\(\xi_\tau \coloneqq \xi_{t_0,t_1} \cdots \xi_{t_{2\ell}, t_{2\ell+1}}\). With
this notation, \cite[IV.4.4.8.5]{Knus} gives
\[
\beta = \sum\nolimits_{\tau \in T}
\operatorname{sign}(\tau) \xi_\tau
\quad\text{and}\quad
\gamma = \Delta(\xi_0,\ldots,\xi_{2\ell,2\ell+1})
\]
where \(\Delta\) is a universal integral polynomial characterized by the
equality \(\beta^2 - 4\Delta = (-1)^\ell\det(b_q)\); observe that \(\beta\)
is the Pfaffian of the antisymmetric matrix determined by the \(\xi_{i,j}\).
For example, \(\beta = -\xi_{0,1}\) and
\(\Delta = \xi_0\xi_1\) when \(\ell = 1\), and
\(\beta = \xi_{0,1} \xi_{2,3} - \xi_{0,2} \xi_{1,3} + \xi_{0,3} \xi_{1,2}\) and
\begin{align*}
\Delta & =
 -4\xi_0\xi_1\xi_2\xi_3
+ \xi_0\xi_1\xi_{2,3}^2
+ \xi_0\xi_2\xi_{1,3}^2
+ \xi_0\xi_3\xi_{1,2}^2
+ \xi_1\xi_2\xi_{0,3}^2
+ \xi_1\xi_3\xi_{0,2}^2
+ \xi_2\xi_3\xi_{0,1}^2 \\
& \phantom{=4}- \xi_0\xi_{1,2}\xi_{1,3}\xi_{2,3}
- \xi_1\xi_{0,2}\xi_{2,3}\xi_{2,3}
- \xi_2\xi_{0,1}\xi_{0,3}\xi_{1,3}
- \xi_3\xi_{0,1}\xi_{0,2}\xi_{1,2}
+ \xi_{0,1}\xi_{0,3}\xi_{1,2}\xi_{2,3}
\end{align*}
when \(\ell = 2\). When \(2\) is invertible on \(S\), the coordinate change
\(z \mapsto z + \frac{1}{2}\beta\) identifies \(\Disc(Q/S)\) with the usual
cyclic double cover of \(S\) branched along the discriminant divisor of
\(\rho \colon Q \to S\).
\medskip

The discriminant cover is intimately related to the moduli of spinor sheaves as
in \S\parref{section-spinor-moduli}. The following two statements generalize
\cite[Proposition B.5]{ABB}:

\begin{Lemma}\label{applications-discriminant-cover}
Let \(\rho \colon Q \to S\) be a quadric \(2\ell\)-fold bundle with
\(S_3 = \varnothing\). There are canonical \(S\)-morphisms
\(\mathcal{M}_d(Q/S) \to \mathrm{M}_d(Q/S) \to \mathbf{Disc}(Q/S)\).
\end{Lemma}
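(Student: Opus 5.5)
The first morphism \(\mathcal{M}_d(Q/S) \to \mathrm{M}_d(Q/S)\) is the coarse moduli map of the \(\mathbf{G}_m\)-gerbe from \parref{spinor-coarse-moduli}, so the content is the second morphism. I would build a morphism \(\mathcal{M}_d(Q/S) \to \Disc(Q/S)\) and then descend it along the gerbe.

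To build it, let \(\tilde{\mathcal{S}}_d\) be the universal spinor sheaf on \(Q \times_S \mathcal{M}_d(Q/S)\), obtained by dualizing the universal dual spinor sheaf. Spinor sheaves are \(\Cl(\mathcal{E},q)\)-modules: the graded Clifford algebra acts by right multiplication on the Clifford ideal, and this action descends to the cokernel \(\mathcal{S}_d\) of \(\phi_d\), because \(\phi_d\) is given by left Clifford multiplication and the two actions commute. Composing with \(\psi\) gives an action of \(\sDisc(\mathcal{E},q)\), and its degree \(0\) part \(\sDisc_0\) acts by endomorphisms of \(\mathcal{S}_d\) of degree \(0\). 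These endomorphisms commute with the whole structure, since \(\psi_0\) lands in the centre of \(\Cl_0\).

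Next I would use simplicity, \parref{spinor-moduli-simple}: for any \(T \to \mathcal{M}_d(Q/S)\), the endomorphism sheaf \(\rho_{T,*}\sHom(\mathcal{S}_{d,T},\mathcal{S}_{d,T})\) is \(\sO_T\), and its formation commutes with base change by \parref{complexes-homs-glue}. The action therefore gives an \(\sO_T\)-algebra map \(\sDisc_0(\mathcal{E},q)_T \to \sO_T\), that is, a \(T\)-point of \(\Disc(Q/S)\). This construction is functorial in \(T\) and compatible with isomorphisms of sheaves, so it defines \(\mathcal{M}_d(Q/S) \to \Disc(Q/S)\).

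Because \(\Disc(Q/S)\) is an algebraic space (indeed a scheme), the morphism factors uniquely through the \(\mathbf{G}_m\)-rigidification \(\mathrm{M}_d(Q/S)\). The reason is that the inertia \(\mathbf{G}_m\) acts on the sheaf by scalars, and scalars commute with everything, so the resulting algebra map is invariant under inertia.

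The main obstacle is making the \(\Cl\)-module structure on \(\mathcal{S}_d\) precise in families. The natural worry is whether right multiplication really passes to \(\mathcal{S}_d\) given the left-ideal presentation from \parref{cliff-ideals}. On reflection, \(\mathcal{I}\) is a left ideal, so it has no right module structure in general. I would therefore instead use endomorphisms coming from the centre \(Z(\Cl_0)\). These act on \(\mathcal{I}_d\) by left multiplication, and since they are central they commute with \(\phi_d\), which is left multiplication by vectors composed appropriately. Here I must check that central even elements commute with odd vectors up to the graded twist; if they only graded-commute, I would use the \(\sDisc\)-action on the full graded module, which is built to commute. Verifying this compatibility in the universal case over \(\mathbf{Z}\) and pulling back is the step I expect to need care.
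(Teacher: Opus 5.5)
Your outline is the paper's: the first arrow is the coarse moduli map. The second comes from letting \(z_0 \coloneqq \psi_0(z) \in Z(\Cl_0(\mathcal{E},q))\) act on the spinor sheaf by left multiplication on the Clifford ideal, using simplicity \parref{spinor-moduli-simple} to turn the action into an algebra map \(\sDisc_0(\mathcal{E},q)_T \to \sO_T\), and then factoring through the rigidification. You were right to discard right multiplication.

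The gap is the one step with real content: that left multiplication by \(z_0\) on \(\pi^*\mathcal{I}_d\) descends to \(\coker\phi_d\). The reason you give, that central elements commute with \(\phi_d\), is false. Elements of \(Z(\Cl_0)\) are central only in the even part and do not commute with vectors. Take \(\rank\mathcal{E} = 2\) and \(q = a x_0^2 + c x_0x_1 + a' x_1^2\). There \(Z(\Cl_0) = \Cl_0\) is spanned by \(1\) and \(z_0 = e_0e_1\), and one computes \(z_0 e_i = e_i(c - z_0)\) for \(i = 0,1\). For \(c = 0\) with \(2\) invertible this is anticommutation. In characteristic \(2\) with \(c \neq 0\) it is neither commutation nor commutation up to a sign. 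So \(z_0\) does not commute with \(\phi_d\). Your fallback to a full graded \(\sDisc\)-action ``built to commute'' is also unavailable: \(\psi(\sDisc)\) is not graded-central in \(\Cl(\mathcal{E},q)\), since its even part already fails to commute with odd elements.

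The missing idea is that commutation is not needed; you only need \(z_0 \cdot \image\phi_d \subseteq \image\phi_d\). The universal formulae in \cite[IV.4.8.5 and IV.2.3.2]{Knus} give, for every local section \(x\) of \(\mathcal{E}\), a twisted relation \(z_0 \cdot x = x \cdot (b - z_0)\) with \(b\) a local section of \(\Cl_0(\mathcal{E},q)\). In the example above \(b = c\), and \(b - z_0\) is the conjugate of \(z_0\) in the quadratic algebra. Locally \(\image\phi_d = v \cdot \pi^*\mathcal{I}_{d-1}\) for the tautological vector \(v\), and \(\mathcal{I}\) is a left ideal. Hence \(z_0 \cdot v y = v \cdot (b - z_0) y\) lies again in \(v \cdot \pi^*\mathcal{I}_{d-1}\). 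Thus \(\phi_d\) intertwines left multiplication by \(b - z_0\) on its source with left multiplication by \(z_0\) on its target. That is exactly enough to induce an \(\sDisc_0\)-module structure on \(\mathcal{S}_d\), and by transposition on \(\mathcal{S}_d^\vee\), since the algebra is commutative. With this inserted, the rest of your argument goes through as in the paper: simplicity, adjunction along \(\pr_2\), and the universal property of the coarse space.
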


\begin{proof}
Construct an \(S\)-morphism \(\mathcal{M}_d(Q/S) \to \mathbf{Disc}(Q/S)\) as
follows: Let \(\widetilde{\mu} \colon \mathcal{M}_d(Q/S) \to S\) be the
structure morphism and \(\mathcal{S}_d^\vee\) the universal spinor
on \(Q \times_S \mathcal{M}_d(Q/S)\). We claim that the morphism \(\psi_0\)
provides \(\mathcal{S}_d^\vee\) with a
\(\pr_2^*\widetilde{\mu}^*\sDisc_0(\mathcal{E},q)\)-module structure. For this,
it suffices to show that left multiplication by the discriminant preserves the
image of the Clifford multiplication map \(\phi_d\) appearing in the defining
presentation of the spinor sheaf in \parref{cliff-spinors}. This follows from
the universal formula for image \(z_0 \coloneqq \psi_0(z)\) in
\(Z(\Cl_0(\mathcal{E},q))\) of the nontrivial generator of the discriminant
algebra together with the formulae in \cite[IV.4.8.5 and IV.2.3.2]{Knus},
which imply that for any local section \(x\) of \(\mathcal{E}\),
\(z_0 \cdot x = x \cdot (b-z_0)\) for some local section \(b\) of
\(\Cl_0(\mathcal{E},q)\). This module structure now provides a morphism of
sheaves of algebras
\[
\pr_2^*\widetilde{\mu}^*\sDisc_0(\mathcal{E},q) \to
\sHom_{\sO_{Q \times_S \mathcal{M}_d(Q/S)}}(\mathcal{S}_d^\vee, \mathcal{S}_d^\vee).
\]
Since spinors are simple here by \parref{spinor-moduli-simple},
the target is the structure sheaf. Adjunction for \(\pr_2\) provides a map
\(\widetilde{\mu}^*\sDisc_0(\mathcal{E},q) \to \sO_{\mathcal{M}_d(Q/S)}\) of
algebras and thus an \(S\)-morphism
\(\mathcal{M}_d(Q/S) \to \Disc(Q/S)\). The universal property of the coarse
moduli space map then provides \(\mathrm{M}_d(Q/S) \to \Disc(Q/S)\).
\end{proof}

The coarse moduli space of spinor sheaves matches with the discriminant cover
whenever the quadric bundle \(\rho \colon Q \to S\) has at worst corank \(1\)
fibres:

\begin{Lemma}\label{applications-discriminant-cover-simple-degen}
If \(S_2 = \varnothing\), then the map
\(\mathrm{M}_d(Q/S) \to \mathbf{Disc}(Q/S)\) is an isomorphism.
\end{Lemma}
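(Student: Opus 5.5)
The plan is to reduce to the local situation and compare two finite flat double covers of \(S\). Both \(\mathrm{M}_d(Q/S)\) and \(\Disc(Q/S)\) are algebraic spaces over \(S\), and the morphism of \parref{applications-discriminant-cover} is compatible with base change, since the discriminant algebra and the spinor moduli both commute with base change. Being an isomorphism is étale local on \(S\). By \parref{lemma-sectionetalelocally}, \parref{prop-hypredtocoh} and \parref{spinors-coarse-moduli-fibres}, when \(S_2 = \varnothing\) we may pass to an étale cover on which \(\rho\) carries a regular isotropic \((\ell-1)\)-plane bundle. Then \(\mathrm{M}_d(Q/S)\) is identified with the maximal hyperbolic reduction \(\mu \colon M \to S\).

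Because \(S_2 = \varnothing\), \(M\) is a relative degree \(2\) effective Cartier divisor in a \(\PP^1\)-bundle. Hence \(M = \Spec \mathcal{A}\) with \(\mathcal{A}\) a locally free \(\sO_S\)-algebra of rank \(2\). The target \(\Disc(Q/S)\) is likewise \(\Spec\) of the rank \(2\) algebra \(\sDisc_0(\mathcal{E},q)\). The map in question is therefore induced by an \(\sO_S\)-algebra homomorphism \(\sDisc_0(\mathcal{E},q) \to \mathcal{A}\) between rank \(2\) locally free algebras. Such a map sends \(1 \mapsto 1\), so it is an isomorphism as soon as the induced map on the quotients \(\sDisc_0/\sO_S \to \mathcal{A}/\sO_S\) is an isomorphism of line bundles. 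That condition can be checked on fibres at points \(s \in S\), and even at geometric points.

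This reduces the lemma to quadrics over an algebraically closed field \(\kk\) of corank \(\le 1\). Next I would shrink further using hyperbolic reduction. The discriminant algebra is invariant under splitting off a hyperbolic plane, because it is functorial under isometries and \(q = h \perp q'\). The spinor moduli are compatible under reduction by \parref{lemma-reductiontosurface} and \parref{cliff-hyperbolic-reduction-spinors}, and one must check that the map of \parref{applications-discriminant-cover} is compatible with this identification. Granting that, it suffices to treat \(\rank \mathcal{E} = 2\). In that case \(q = \xi_0 x_0^2 + \xi_{0,1}x_0x_1 + \xi_1 x_1^2\), spinor sheaves are structure sheaves of the points of \(Q\) by \parref{cliff-spinor-zero-dimensional}, and \(\mathrm{M}_d\) is \(Q\) itself. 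The presentation \(z^2 + \xi_{0,1} z + \xi_0\xi_1\) from \parref{applications-discriminant-algebra-split} is then visibly the coordinate ring of \(Q\), after dehomogenizing in the smooth direction \(x_1\) and rescaling by \(\xi_1\). The remaining task is to check that \(\psi_0\), acting on \(\sO_{\text{pt}}\), gives exactly this identification, which amounts to computing the action of \(z_0 = e_0e_1\) on the spinor.

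The main obstacle is the compatibility step. One must verify that the Clifford-centre action defining \(\mathcal{M}_d \to \Disc\) is carried along by the equivalence \(\Phi\) of \parref{hypred-equivalence}. Equivalently, the sheaf of endomorphisms induced by \(\sDisc_0(\mathcal{E},q)\) on \(\mathcal{S}_d^\vee\) must match that induced by \(\sDisc_0(\mathcal{F}^\perp/\mathcal{F}, q')\) on \(\mathcal{S}_d'^\vee\). I would attempt this through the Clifford-ideal comparisons \parref{cliff-subbundle-ideals} and \parref{cliff-cone-ideals}, which are \(\Cl_0\)-linear and hence respect central elements. If that proves messy, a fallback is a direct check over a geometric point. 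There it suffices to show the map of schemes is a bijection on points and an isomorphism on tangent spaces: the smooth case gives \(2\) points on each side, and the corank \(1\) case gives \(\Spec\kk[\epsilon]\) on each side by \parref{spinors-coarse-moduli-fibres}. One then shows the map is nonconstant on \(\Spec\kk[\epsilon]\) by deforming into the smooth locus.
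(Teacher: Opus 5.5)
Your first reduction is sound and differs from the paper. Since \(\sO_S\) is a subbundle of both \(\sDisc_0(\mathcal{E},q)\) and \(\mu_*\sO_M\), it suffices that the induced map of quotient line bundles be nonzero at every geometric point. The gap is that the proposal never proves this pointwise statement, and it is the entire content of the lemma. One has to know the scalar by which \(\psi_0(z)\) acts on the spinor, and show that it separates the two spinor families over smooth fibres and is nonconstant to first order over corank \(1\) fibres.

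Your main route simply grants that the \(\sDisc_0\)-action is compatible with the equivalence of \parref{hypred-equivalence}, and the mechanism you suggest does not supply this. In the hyperbolic reduction, \parref{cliff-subbundle-ideals} and \parref{cliff-cone-ideals} are applied to the tangent hyperplane \(\mathcal{F}^\perp\), so they are linear only over \(\Cl(\mathcal{F}^\perp)\). By contrast, \(\psi_0(z)\) is central in \(\Cl_0(\mathcal{E},q)\) and does not lie in \(\Cl_0(\mathcal{F}^\perp)\). To compare the two central characters you need an explicit relation between the centre of \(\Cl_0(\mathcal{E})\) and that of \(\Cl_0(\mathcal{F}^\perp/\mathcal{F})\). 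For instance, split off the hyperbolic plane through \(f\) and compute how its contribution acts on \(\det\mathcal{W}\) when \(f \in \mathcal{W}\). That is exactly the Clifford computation you were trying to avoid.

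Two smaller points. First, identifying \(\sDisc_0\) of \(h \perp q'\) with that of \(q'\) needs an input beyond functoriality for isometries, which only compares forms of the same rank. Second, \parref{lemma-reductiontosurface} stops at quadric surfaces, so descending to \(\rank\mathcal{E} = 2\) would require setting up spinor moduli for \(\ell = 0\).

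The fallback has its own holes. For a smooth fibre, ``two points on each side'' does not give a bijection. If both spinor families had the same central character, the algebra map \(\kk \times \kk \to \kk \times \kk\) would factor through \(\kk\). You need that the centre of \(\Cl_0\) acts on the two half-spin modules through different characters.

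For a corank \(1\) fibre, deforming into the smooth locus only shows the map is an isomorphism at the generic point of a one-parameter family. That does not force it at the special point: the map \(z \mapsto tw\) from \(R[z]/(z^2 - t^2u)\) to \(R[w]/(w^2 - u)\) is a generic isomorphism that degenerates at \(t = 0\). You would additionally need the discriminant cover of the family to be normal, or a comparison of discriminant ideals.

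The paper avoids all of this. Fppf locally it puts \(q\) in the normal form \(\gamma_0T_0^2 - \beta T_0T_1 + \gamma_1T_1^2 + \sum_i T_{2i}T_{2i+1}\) and identifies \(M\) with the binary quadric. It then computes \(z \cdot \det\mathcal{W}\) directly for the tautological \(\ell\)-plane spanned by \(T_0e_0 + T_1e_1\) and the \(e_{2i}\). Finally it matches the resulting function with the generator of \(\mu_*\sO_M\) from \parref{applications-discriminant-global-sections}. Combining your fibrewise reduction with that direct computation, even just at a geometric point, would give a complete proof.
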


\begin{proof}
Having constructed a canonical \(S\)-morphism
\(\mathrm{M}_d(Q/S) \to \Disc(Q/S)\) in
\parref{applications-discriminant-cover}, whether it is an isomorphism is a
question local on \(S\). Replacing \(S\) by an fppf cover, we may assume that
\[
Q =
\Big\{(T_0:T_1:\cdots:T_{2\ell+1}) \in \PP^{2\ell+1}_S : 
\gamma_0 T_0^2 - \beta T_0T_1 + \gamma_1 T_1^2 + \sum\nolimits_{i = 1}^\ell T_{2i}T_{2i+1} = 0\Big\}
\]
for some \(\beta, \gamma_0, \gamma_1 \in \mathrm{H}^0(S,\sO_S)\). Maximal
hyperbolic reduction along
\(\PP W \coloneqq \mathrm{V}(T_3,T_5,\ldots,T_{2\ell+1})\) may be identified with
the quadric bundle
\[
M =
\{(T_0:T_1) \in \PP^1_S : \gamma_0 T_0^2 - \beta T_0T_1 + \gamma_1 T_1^2 = 0\}.
\]
Since \(S_2 = \varnothing\), \(\mathrm{M}_d(Q/S) \cong M\) by
\parref{spinors-coarse-moduli-fibres}, and so it remains to show that the
resulting morphism \(M \to \Disc(Q/S)\) is an isomorphism.

For each \(0 \leq i \leq 2\ell+1\), let \(e_i\) be the \(i\)-th basis vector
of \(\mathcal{E} \cong \sO_S^{\oplus 2\ell+2}\) dual to the coordinate \(T_i\).
By \cite[IV.4.4.8.5]{Knus}, the generator \(z\) of \(\sDisc_0(\mathcal{E},q)\)
maps, up to a sign, to
\[
z \mapsto
e_0e_1 - \beta\Big(\sum\nolimits_{i = 1}^\ell e_{2i}e_{2i+1}\Big) + \Xi \in 
\Cl_0(\mathcal{E},q)
\]
where every term in \(\Xi\) involves \(\geq 4\) basis vectors and contains
\(e_{2i}\) for some \(1 \leq i \leq \ell\). By its construction in
\parref{applications-discriminant-cover}, the map \(M \to \Disc(Q/S)\)
is determined by the action of \(z\) on the universal sheaf
\(\mathcal{S}_d^\vee\) on \(M \times_S Q\) corresponding to the tautological
\(\ell\)-plane
\[
\PP\mathcal{W} \coloneqq
\big\{((T_0:T_1), (T_0:T_1:T_2:0:\cdots:T_{2\ell}:0)) \in M \times_S Q :
\gamma_0 T_0^2 - \beta T_0T_1 + \gamma_1 T_1^2 = 0\big\}.
\]
Since the image of \(z\) lies in the centre of \(\Cl_0(\mathcal{E},q)\), its
action on \(\mathcal{S}_d^\vee\) is determined by its action on the generator
\(\det\mathcal{W}\) of the corresponding Clifford ideal, as in
\parref{cliff-ideals}. Then
\begin{align*}
z \cdot \det\mathcal{W}
= z \cdot (T_0e_0 + T_1e_1) \wedge \det W
& = e_0e_1 \cdot (T_0e_0 + T_1e_1) \wedge \det W \\
& = \big((\gamma_1 T_1 - \beta T_0) e_0 - \gamma_0 T_0 e_1\big) \wedge \det W
\end{align*}
Since the equation of \(M\) implies
\(-\gamma_0 T_0^2 = (\gamma_1 T_1 - \beta T_0)T_1\), the action of \(z\) on
\(\det\mathcal{W}\) may be locally expressed on the standard open cover
\(\mathrm{D}_+(T_1) \cup \mathrm{D}_+(T_0)\) of \(M \subset \PP^1_S\) as
\[
z \cdot \det\mathcal{W} =
\begin{dcases*}
-\gamma_0 T_0T_1^{-1} \det\mathcal{W} & on \(\mathrm{D}_+(T_1)\), and \\
(\gamma_1 T_1 + \beta T_0)T_0^{-1}T_1 \det\mathcal{W} & on \(\mathrm{D}_+(T_0)\).
\end{dcases*}
\]
By \parref{applications-discriminant-global-sections}, these scalars locally
represent the generator of \(\mu_*\sO_M\). Therefore the map
\(\sDisc_0(\mathcal{E},q) \to \mu_*\sO_M\) between locally free rank \(2\)
\(\sO_S\)-algebras matches the generators, so it is an isomorphism.
\end{proof}

The following gives an explicit description of the relative coordinate algebra
of a family of quadrics of relative dimension \(0\). Note it holds even when
there may be some corank \(2\) fibres.

\begin{Lemma}\label{applications-discriminant-global-sections}
Let \(\mu \colon M \to S\) be cut out in \(\PP^1_S\) by a regular section
\(\gamma_0 T_0^2 - \beta T_0T_1 + \gamma_1 T_1^2\). Then
\[
\mu_*\sO_M \cong \sO_S[z]/(z^2 - \beta z + \gamma_0\gamma_1)
\]
as a free rank \(2\) \(\sO_S\)-algebra. A global section
\(\tilde{z} \colon \sO_M \to \sO_M\) representing \(z\) may be locally given by
\[
\tilde{z}\rvert_{\mathrm{D}_+(T_i)} =
\begin{dcases*}
\gamma_0 T_0T_1^{-1} & on \(\mathrm{D}_+(T_1)\), and \\
(\beta T_0 - \gamma_1 T_1)T_1T_0^{-1} & on \(\mathrm{D}_+(T_0)\).
\end{dcases*}
\]
\end{Lemma}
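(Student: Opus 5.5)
The plan is to compute \(\mu_*\sO_M\) directly from the defining sequence of the Cartier divisor \(M \subset \PP^1_S\), and then to check that the displayed local functions glue to a global section satisfying the quadratic relation. Since \(M\) is cut out by a regular section of \(\sO(2)\), there is a short exact sequence
\[
0 \to \sO_{\PP^1_S}(-2) \to \sO_{\PP^1_S} \to \sO_M \to 0.
\]
Pushing forward along \(\PP^1_S \to S\) and using \(R\pi_*\sO(-2) \cong \sO_S[-1]\) and \(R\pi_*\sO = \sO_S\) shows that \(\mu_*\sO_M\) is an extension of \(\sO_S\) by \(\sO_S\). Hence it is locally free of rank \(2\), with the unit \(1\) generating a sub-line-bundle that has invertible quotient, and its formation commutes with base change. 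The statement is local on \(S\), so we may assume \(S = \Spec A\).

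First, I will verify that \(\tilde z\) is a well-defined global function on \(M\). On \(\mathrm{D}_+(T_0)\cap\mathrm{D}_+(T_1)\) we have \(\gamma_0 T_0^2 = (\beta T_0 - \gamma_1T_1)T_1\) from the equation of \(M\). Dividing by \(T_0T_1\) gives
\[
\gamma_0T_0T_1^{-1} = (\beta T_0-\gamma_1T_1)T_0^{-1},
\]
which matches the second formula once it is read as a degree-zero expression. I will re-check the homogeneity normalization in the displayed formula, since it should read \((\beta T_0-\gamma_1T_1)T_0^{-1}\).

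Next, I will compute \(\tilde z^2 - \beta\tilde z + \gamma_0\gamma_1\) on \(\mathrm{D}_+(T_1)\), writing \(u = T_0/T_1\), where the equation becomes \(\gamma_0u^2 - \beta u + \gamma_1 = 0\). Then
\[
\gamma_0^2u^2 - \beta\gamma_0 u + \gamma_0\gamma_1 = \gamma_0(\gamma_0u^2 - \beta u + \gamma_1) = 0,
\]
and a symmetric computation works on \(\mathrm{D}_+(T_0)\) using \(v = T_1/T_0\) and the expression \(\beta - \gamma_1 v\). This gives an \(A\)-algebra map \(A[z]/(z^2-\beta z+\gamma_0\gamma_1) \to \Gamma(M,\sO_M)\).

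The main step is proving that this map is an isomorphism, that is, that \(1\) and \(\tilde z\) form an \(A\)-basis of \(\Gamma(M,\sO_M)\). Both sides are locally free of rank \(2\) and their formation commutes with base change: for the source this is clear, and for the target it follows from the sequence above. By Nakayama it therefore suffices to check at each residue field \(\kappa\), which I will do by cases. If \(\gamma_0 \neq 0\) in \(\kappa\), then \(M_\kappa\) misses \(T_1 = 0\), so it lies in \(\mathrm{D}_+(T_1)\) with coordinate ring \(\kappa[u]/(\gamma_0u^2-\beta u+\gamma_1)\), and \(\tilde z = \gamma_0u\) together with \(1\) is a basis. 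The case \(\gamma_1 \neq 0\) is symmetric, using \(\beta - \gamma_1 v\). If \(\gamma_0 = \gamma_1 = 0\) but \(\beta \neq 0\), then \(M_\kappa = \{T_0T_1 = 0\}\) is two reduced points, and \(\tilde z\) restricts to \(0\) at the point \(T_0 = 0\) and to \(\beta\) at \(T_1 = 0\), so it separates them. All coefficients vanishing is excluded because the section is regular, hence fibrewise nonzero.

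This fibrewise case check is the only real content. The subtle case is where both \(\gamma_i\) vanish, and there the explicit values of \(\tilde z\) at the two points do the work.
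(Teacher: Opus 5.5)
Your gluing check and your verification that \(\tilde z^2-\beta\tilde z+\gamma_0\gamma_1=0\) are fine. Your reading of the second local formula as \((\beta T_0-\gamma_1T_1)T_0^{-1}=\beta-\gamma_1 v\) is also exactly what the paper's proof uses.

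The gap is the final reduction to fibres. A regular section of \(\sO_{\PP^1_S}(2)\) is a nonzerodivisor, i.e.\ \(M\) is an effective Cartier divisor, and this does \emph{not} make the section nonzero on every fibre. For example, over \(S=\Spec\kk[x]\) the section \(xT_0^2\) is regular but vanishes identically over \(x=0\). The lemma still holds there, with basis \(1\) and \(\tilde z\), where \(\tilde z\) is \(xT_0T_1^{-1}\) on \(\mathrm{D}_+(T_1)\) and \(0\) on \(\mathrm{D}_+(T_0)\). The paper explicitly wants this case (``it holds even when there may be some corank \(2\) fibres''), since maximal hyperbolic reductions as in \parref{reduction-fourier-mukai} have \(\PP^1\) fibres over \(S_2\), which is exactly where the binary form vanishes identically.

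At such a point, your claim that formation of \(\mu_*\sO_M\) commutes with base change is false. On the fibre, the map \(\sO(-2)\to\sO\) becomes zero, so \(M_\kappa=\PP^1_\kappa\) and \(\Gamma(M_\kappa,\sO_{M_\kappa})=\kappa\), whereas \(\mu_*\sO_M\otimes\kappa\) is two-dimensional. Moreover, \(\tilde z\) restricts to \(0\) on \(M_\kappa\). So your Nakayama argument against \(\Gamma(M_\kappa,\sO_{M_\kappa})\) breaks down there. Your case analysis proves the lemma only when \(M\to S\) is flat, which is enough for \parref{applications-discriminant-cover-simple-degen} where \(S_2=\varnothing\), but not the statement as given.

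The missing step is to test \(\tilde z\) globally against the quotient \(\mu_*\sO_M/\sO_S\cong R^1\pi_*\sO_{\PP^1_S}(-2)\), rather than against functions on the fibres of \(M\). This is what the paper does, using the exact sequence \(0\to\sO_S\to\mu_*\sO_M\xrightarrow{\delta}R^1\pi_*\sO_{\PP^1_S}(-2)\to0\):
\begin{enumerate}
\item Lift \(\tilde z=(\gamma_0u,\beta-\gamma_1v)\) to a \v{C}ech \(0\)-cochain of \(\sO_{\PP^1_S}\).
\item Its \v{C}ech differential is \(\gamma_0u-\beta+\gamma_1v=(\gamma_0T_0^2-\beta T_0T_1+\gamma_1T_1^2)\cdot(T_0T_1)^{-1}\).
\item Hence \(\delta(\tilde z)\) is the class of \((T_0T_1)^{-1}\), which generates \(R^1\pi_*\sO_{\PP^1_S}(-2)\).
\end{enumerate}
Since \(1\) spans \(\ker\delta\), \(\{1,\tilde z\}\) is a basis, with no flatness hypothesis and no case split. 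If you prefer to keep Nakayama, apply it to \(\mu_*\sO_M\otimes\kappa\) rather than to \(\Gamma(M_\kappa,\sO_{M_\kappa})\). Checking that \(\tilde z\) is nonzero in \((\mu_*\sO_M/\sO_S)\otimes\kappa\) is then the same computation.
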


\begin{proof}
Compute \(\mu_*\sO_M\) using the \v{C}ech complex for the standard open cover
\(\PP^1_S = \mathrm{D}_+(T_1) \cup \mathrm{D}_+(T_0)\):
\[
\sO_S[s]/(\gamma_0 s^2 - \beta s + \gamma_1) \times
\sO_S[t]/(\gamma_0 - \beta t + \gamma_1 t^2) \to
\sO_S[s,s^{-1}]/(\gamma_0 s^2 - \beta s + \gamma_1)
\]
where \(s \coloneqq T_0T_1^{-1}\), \(t \coloneqq T_1T_0^{-1}\), and
the differential is \((f(s),g(t)) \mapsto f(s) - g(s^{-1})\). The element
\(\tilde{z} \coloneqq (\gamma_0 s, \beta - \gamma_1 t)\) lies in the kernel,
so defines a section \(z \colon \sO_S \to \mu_*\sO_M\). A direct computation
shows that \(\tilde{z}^2 - \beta \tilde{z} + \gamma_0\gamma_1 = 0\), and so
\(z\) satisfies the same relation.

It remains to see that \(1\) and \(z\) freely generate \(\mu_*\sO_M\) as an
\(\sO_S\)-module. By the short exact sequence
\[
0 \to \sO_S \to \mu_*\sO_M \to \mathrm{R}^1\pi_*\sO_{\PP^1_S}(-2) \to 0,
\]
we must show that \(z\) maps to a generator of
\(\mathrm{R}^1\pi_*\sO_{\PP^1_S}(-2)\). The boundary map is computed by
viewing \(\tilde{z} = (\gamma_0 s, \beta - \gamma_1 t)\) as a local section of
\(\sO_{\PP^1_S}\), applying the \v{C}ech differential to obtain the element
\[
\gamma_0 s - \beta + \gamma_1 t =
(\gamma_0 T_0^2 - \beta T_0T_1 + \gamma_1 T_1^2) \cdot \frac{1}{T_0T_1},
\]
and dividing out the equation \(\gamma_0 T_0^2 - \beta T_0T_1 + \gamma_1 T_1^2\)
of \(M\) in \(\PP^1_S\). This means that \(z\) maps to the generator
\(1/T_0T_1\) of \(\mathrm{R}^1\pi_*\sO_{\PP^1_S}\), as desired.
\end{proof}

\subsectiondash{Intersection of quadrics}\label{applications-intersections}
As a first application of our results, consider a complete intersection
\(X \subset \PP^{2\ell+1}\) of \(m\) even-dimensional quadrics over an
arbitrary field \(\kk\). Write \(\rho \colon Q \to \PP^{m-1}\) for the
corresponding family of quadrics over the associated linear system. Whenever
\(n+2 \geq 2m\), Kuznetsov gives in \cite[Theorem 5.5]{Kuznetsov:Quadrics} a
semiorthogonal decomposition
\[
\Dbcoh(X) =
\langle
\Ku(Q),
\sO_X(1),
\sO_X(2),
\ldots,
\sO_X(n+2-2m)
\rangle
\]
where \(\Ku(Q)\) is the Kuznetsov component of the quadric bundle
\(\rho \colon Q \to \PP^{m-1}\) as in the Introduction. Combined with
\parref{spinor-main-theorem}, this shows that the Kuznetsov component of the
intersection of at most \(4\) quadrics is twisted geometric:

\begin{Proposition}\label{applications-intersections-result}
Let \(X \subset \PP^{2\ell+1}\) be a complete intersection of
\(m \leq \ell + 1\) quadrics with corresponding quadric \(2\ell\)-fold bundle
\(\rho \colon Q \to \PP^{m-1}\). Assume that \(X\) and \(Q\) are smooth over
\(\kk\).
\begin{enumerate}
\item\label{applications-intersections-result.curve}
If \(m = 2\), then there is a semiorthogonal decomposition
\[
\Dbcoh(X) =
\langle
\Dbcoh(C,\alpha),
\sO_X(1),
\sO_X(2),
\ldots,
\sO_X(2\ell-2)
\rangle
\]
where \(C\) is a smooth curve with a finite flat map \(C \to \PP^1\) of
degree \(2\) and \(\alpha \in  \Br(C)\).
\end{enumerate}
Assume henceforth that \(\kk\) is algebraically closed and
\(\rho \colon Q \to \PP^{m-1}\) has finitely many corank \(2\) fibres.
\begin{enumerate}
\setcounter{enumi}{1}
\item\label{applications-intersections-result.surface}
If \(m = 3\), then there is a semiorthogonal decomposition
\[
\Dbcoh(X) =
\langle
\Dbcoh(S,\alpha),
\sO_X(1),
\sO_X(2),
\ldots,
\sO_X(2\ell-4)
\rangle
\]
where \(S \to \PP^2\) is a proper generically finite morphism of degree
\(2\) from a smooth projective surface and \(\alpha \in \operatorname{Br}(S)\).
\item\label{applications-intersections-result.threefold}
If \(m = 4\) and \(\rho \colon Q \to \PP^3\) has no corank \(3\) fibres, then
there is a semiorthogonal decomposition
\[
\Dbcoh(X) =
\langle
\Dbcoh(M,\alpha),
\sO_X(1),
\sO_X(2),
\ldots,
\sO_X(2\ell-6)
\rangle
\]
where \(M \to \PP^3\) is a proper generically finite morphism of degree \(2\)
from a smooth proper algebraic space and \(\alpha \in \operatorname{Br}(M)\).
\end{enumerate}
\end{Proposition}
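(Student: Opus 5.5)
The proof will combine Kuznetsov's decomposition $\Dbcoh(X) = \langle \Ku(Q), \sO_X(1), \ldots, \sO_X(2\ell+2-2m)\rangle$ from \cite[Theorem 5.5]{Kuznetsov:Quadrics} with \parref{spinor-main-theorem}, applied to $\rho \colon Q \to \PP^{m-1}$. Here $n = 2\ell+1$, so the tail has $2\ell+2-2m$ terms, which matches the three displayed decompositions. The passage from $\Dqc$ to $\Dbcoh$ will be handled as announced, by adapting \cite[Theorem 6.2]{BS:Descent}; the twisted equivalence $\Dqc(M_\sigma,\beta)\simeq \Ku(Q)$ restricts to compact objects, and for smooth proper $M_\sigma$ these are $\Dbcoh(M_\sigma,\beta)$. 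The work is then to check the three hypotheses of \parref{spinor-main-theorem} in each case, and to show that $M_\sigma$ is smooth (and projective or proper, as stated).

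\textbf{Hypotheses.} Since $\PP^{m-1}$ is integral, hypothesis (ii) amounts to $S_2 \neq \PP^{m-1}$.
\begin{itemize}
\item For $m=2$: smoothness of $X$ forces every member of the pencil to have corank $\leq 1$. A corank $\geq 2$ member $q_t$ with radical $R$ would give a point of $X$ in $\PP R$ (using $\dim\geq 2$ and $q_{t'}|_R$) where the gradients are dependent, so $X$ would be singular there. Thus $S_2=\varnothing$, and (i)–(iii) hold vacuously.
\item For $m=3,4$: the assumption of finitely many corank $2$ fibres (and no corank $3$ in case (iii)) gives (i) and (ii). Since $\kk$ is algebraically closed and $S_2$ is a finite set of reduced points, the étale double cover $\tilde S_2 \to S_2$ splits, giving (iii). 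One should check that $S_2$ is reduced with the paper's scheme structure; if not, replace it by its reduction, since splitting of a finite étale cover only depends on $S_{2,\mathrm{red}}$.
\end{itemize}

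\textbf{Geometry of $M$.} For $m=2$, \parref{applications-discriminant-cover-simple-degen} identifies $M_\sigma = \mathrm{M}_d(Q/S)$ with $\Disc(Q/\PP^1)$, which is finite flat of degree $2$. Smoothness of $C$ should follow from smoothness of $Q$. Étale locally, $M$ is the maximal hyperbolic reduction, a conic-in-$\PP^1$ bundle cut out by $\gamma_0T_0^2-\beta T_0T_1+\gamma_1T_1^2$, and $Q$ is étale locally a product of $M$'s equation with hyperbolic planes. So $Q$ smooth along the relevant points forces the local equation of $M$ to have nonvanishing differential, that is, $M$ smooth. This local comparison (regularity of $Q$ implies regularity of $M$) I would do via the blowup description of \parref{lemma-blowup-relation}: $\widetilde Q$ is the blowup of $\PP\mathcal{E}'$ along $Q'$, and smoothness of $\widetilde Q$, as a blowup of a smooth $Q$ along a smooth section, passes to the smoothness of the Cartier divisor $Q'$ in the smooth $\PP(\mathcal{F}^\perp/\mathcal{F})$. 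Iterating gives smoothness of $M$, both over $S\setminus S_2$ and over $S_2$, where fibres are $\PP^1$. For $m=3$, $M_\sigma$ is then a smooth proper algebraic space of dimension $2$, hence a projective surface, since smooth proper two-dimensional algebraic spaces are schemes and projective. For $m=4$ we only claim a smooth proper algebraic space; properness comes from the étale-local description of $M_\sigma$ as maximal hyperbolic reductions.

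The main obstacle is this smoothness transfer from $Q$ to $M_\sigma$ over the corank $2$ points, together with checking that the curve in case (i) is smooth in characteristic $2$. I would first attempt it by the iterated blowup argument above, working étale locally where a regular isotropic $(\ell-1)$-plane exists.
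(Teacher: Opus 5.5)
The gap is in case \(m=3\). Hypothesis (i) of \parref{spinor-main-theorem} is \(S_3=\varnothing\), and this is not among the assumptions of the Proposition. ``Finitely many corank \(2\) fibres'' says nothing about corank \(3\) fibres, so it does not give (i) as you claim. As in the paper, \(S_3=\varnothing\) has to come from smoothness of \(X\), which forces the corank \(\geq m\) locus of \(\rho\) to be empty. Your \(m=2\) argument generalizes directly. Over \(\bar\kk\), a member of corank \(\geq m\) has singular locus a linear space of dimension \(\geq m-1\). That linear space therefore meets the remaining \(m-1\) quadrics in a point of \(X\), and at that point the \(m\) differentials are dependent. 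For \(m=4\) nothing more is needed, since no corank \(3\) fibres is assumed there.

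Otherwise your route is the paper's: apply \parref{spinor-main-theorem} inside Kuznetsov's decomposition, check its hypotheses, and read off the geometry of \(M_\sigma\). The paper only asserts that smoothness of \(Q\) gives smoothness of \(C\), \(S\), \(M\), and your iterated-blowup argument does justify it. The key step, ``smooth blowup forces a smooth centre'', is not true for arbitrary centres, but it holds here. By \parref{lemma-blowup-relation}, \(\widetilde{Q}\) is the blowup of the smooth \(\PP\mathcal{E}'\) along \(Q'=\mathrm{V}(x,f)\), where \(\mathrm{V}(x)=\PP(\mathcal{F}^\perp/\mathcal{F})\) is smooth. Locally this blowup is \(\{f=xv\}\) (the other chart is always smooth), and it is singular exactly over the points of \(Q'\) where \(d(f\rvert_{x=0})\) vanishes. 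Your other steps are also fine. You get finite flatness and schemeness of \(C\) from \parref{applications-discriminant-cover-simple-degen} rather than from the one-dimensionality argument the paper uses. You pass to \(\Dbcoh\) via compact objects, using \parref{decompositions-compactness} and smoothness of \(Q\) and \(M_\sigma\), instead of invoking Bergh--Schn\"urer.
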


\begin{proof}
Smoothness of \(X\) implies that the corank \(\geq m\) locus of
\(\rho \colon Q \to \PP^{m-1}\) is empty. Combined with the assumptions, it is
straightforward to see that the hypotheses of \parref{spinor-main-theorem} are
satisfied, whereupon the decompositions in each case follow from the discussion
of \parref{applications-intersections}.

Regarding the geometric properties: Smoothness of \(Q\) implies smoothness of
each of \(C\), \(S\), and \(M\). When \(m = 1\), \(C \to \PP^1\) is flat
because \(\rho \colon Q \to \PP^1\) has no corank \(\geq 2\) fibres
or simply because \(C\) and $\mathbf{P}^1$ are smooth curves, \(C\)
is a scheme by \citeSP{0ADD}.
%, and the Brauer class \(\alpha\) comes from
%the base field \(\kk\) follows from Tsen's theorem. 
Finally, when \(m = 2\),
\(S\) is a scheme by a result of Artin in \cite[Th\'eor\`eme
4.7]{Artin:Algebraic-Spaces}.
\end{proof}

\subsectiondash{Remarks}\label{applications-intersections-remarks}
A few remarks regarding this are in order:
\begin{itemize}
\item
Bertini's theorem shows that the geometric hypotheses in
\parref{applications-intersections-result} hold for general
\(\rho \colon Q \to \PP^{m-1}\).
\item When \(\operatorname{char}\kk \neq 2\), smoothness of \(Q\) implies that
the general fibre of \(\rho \colon Q \to \PP^{m-1}\) is smooth. In particular,
each cover of \(\PP^{m-1}\) is generically \'etale. However, when
\(\operatorname{char}\kk = 2\), it is possible that every fibre of
\(\rho \colon Q \to \PP^{m-1}\) has corank \(\geq 1\), in which case the covers
of \(\PP^{m-1}\) would be generically purely inseparable. Examples of this
nature do occur.
\item
\parref{applications-intersections-result}\ref{applications-intersections-result.surface}
improves on \cite[Proposition 6.5]{Xie:Quadrics} and verifies expectations remarked there.
\item
\parref{applications-intersections-result}\ref{applications-intersections-result.threefold}
generalizes \cite{Addington:Intersections} from the complex numbers to any
algebraically closed field.
\item
The algebraic space \(M\) appearing in
\parref{applications-intersections-result}\ref{applications-intersections-result.threefold}
may sometimes be non-schematic, and this can sometimes be verified explicitly:
see \cite[Proposition 3.12]{CPZ} for an example.
\item
The methods here may be adapted to study families of complete intersections of
quadrics, as considered already in \cite[\S5]{ABB}.
\end{itemize}

\subsectiondash{Cubic fourfolds}\label{applications-cubic-fourfolds}
As a second application of our results, consider a smooth cubic fourfold
\(X \subset \PP^5\) over an algebraically closed field \(\kk\) of arbitrary
characteristic. As mentioned in the Introduction, the derived category of
\(X\) has a well-known semiorthogonal decomposition
\[
\Dbcoh(X) =
\langle
\mathcal{A}_X, \sO_X, \sO_X(1), \sO_X(2)
\rangle
\]
where the residual component \(\mathcal{A}_X\) looks like the derived category
of a K3 surface

At least when \(\operatorname{char}\kk \neq 2\), a
particularly simple and well-studied case in which \(\mathcal{A}_X\) is
equivalent to the twisted derived category of a K3 surface is when \(X\)
contains a plane. The following completes the picture for arbitrary
base field characteristics:

\begin{Theorem}\label{applications-cubic-fourfolds-decomposition}
Let \(X \subset \PP^5\) be a smooth cubic fourfold over an algebraically
closed field \(\kk\). Assume that \(X\) contains a plane \(P\). Then
there is a semiorthogonal decomposition
\[
\Dbcoh(X) =
\langle \Dbcoh(S,\alpha), \sO_X, \sO_X(1), \sO_X(2) \rangle
\]
where \(S\) is a K3 surface of degree \(2\) and \(\alpha \in \mathrm{Br}(S)\).
\end{Theorem}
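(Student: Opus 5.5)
The approach is to realize \(X\) as a quadric surface bundle by projecting from the plane \(P\), and then invoke \parref{spinor-main-theorem}. Concretely, let \(\widetilde{X} \to X\) be the blowup along \(P\). Linear projection from \(P\) gives a morphism \(\rho \colon \widetilde{X} \to \PP^2\) whose fibres are the residual quadric surfaces in the \(3\)-planes containing \(P\). Thus \(\widetilde{X} = Q\) is a quadric surface bundle in \(\PP\mathcal{E}\) with \(\mathcal{E} = \sO^{\oplus 3} \oplus \sO(-1)\) and \(\mathcal{L} = \sO(1)\), which is flat because smoothness of \(X\) prevents any whole \(3\)-plane from lying in \(X\). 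Kuznetsov's mutation argument \cite[\S4]{Kuznetsov:Cubic} compares Orlov's blowup formula for \(\widetilde{X} \to X\) with the \(\PP^2\)-linear decomposition of \(\Dbcoh(\widetilde{X})\) from the quadric bundle structure. Its ingredients are only the blowup formula and \(\PP^2\)-linear exceptional collections, so it is characteristic free, and it identifies \(\mathcal{A}_X \simeq \Ku(Q)\). I would run this argument first, checking that no step divides by \(2\).

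Next I verify the hypotheses of \parref{spinor-main-theorem} for \(\rho \colon Q \to \PP^2\) with \(\ell = 1\). For \(S_3 = \varnothing\): a fibre of corank \(3\) is a double plane (or a nonreduced degeneration in characteristic \(2\)), and a standard local computation shows that \(X\) would then be singular at a point of \(P\). For condition (ii): \(S = \PP^2\) is integral, so it suffices that \(S_2 \neq \PP^2\). Here is the main obstacle, since in characteristic \(2\) every fibre may be singular, as in the Fermat example. Even so, I would argue that \(S_2\) is a finite set of points. Otherwise a curve of corank \(2\) fibres would sweep out a surface of singular lines, and the resulting singularities of \(Q\) together with the fibre structure over this curve would contradict smoothness of \(X\) along \(P\). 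To do this I would write the cubic as \(\sum_{i,j} a_{ij}(y) x_i x_j + 2\sum_i b_i(y) x_i + c(y)\) and compute the Jacobian along \(P\) directly.

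For condition (iii): once \(S_2\) is a finite set of closed points over an algebraically closed field, \(S_2\) with its reduced structure is a disjoint union of \(\Spec \kk\)'s. One needs the corank-\(2\) scheme structure to be reduced there, or at least to note that an étale double cover of a local Artinian scheme with algebraically closed residue field splits. Either way \(\tilde{S}_2 \to S_2\) splits. The theorem then yields \(M_\sigma \to \PP^2\), \(\beta\), and an equivalence \(\Dqc(M_\sigma, \beta) \simeq \Ku(Q)\), which passes to \(\Dbcoh\) as indicated in the text.

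Finally I identify \(S \coloneqq M_\sigma\) as a K3 surface of degree \(2\). Away from \(S_2\), \(M_\sigma\) is the discriminant cover \(\Disc(Q/\PP^2)\) by \parref{applications-discriminant-cover-simple-degen}. This is a double cover branched along (in characteristic \(2\), defined by) a section of \(\sO(6)\) coming from \(\det\mathcal{E}^\vee \otimes \mathcal{L}^{\otimes 2}\), and over the finitely many points of \(S_2\) the fibre is replaced by a \(\PP^1\). This exhibits \(M_\sigma\) as a small resolution of the degree-\(2\) K3 with nodes at \(S_2\). Smoothness of \(M_\sigma\) follows from smoothness of \(Q\), using the étale-local description as a maximal hyperbolic reduction. \(M_\sigma\) is a scheme by Artin's result for surfaces, as in \parref{applications-intersections-result}. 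Triviality of \(\omega\) and \(h^1(\sO) = 0\) can be read off from the Serre functor of \(\mathcal{A}_X\) being \([2]\) together with its Hochschild homology. The delicate point remains the corank analysis in characteristic \(2\).
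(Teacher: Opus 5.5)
Your proposal is correct and follows the paper's skeleton: the quadric surface bundle \(a\colon\widetilde X\to\PP^2\), Theorem \parref{spinor-main-theorem}, Kuznetsov's characteristic-free mutation argument, smoothness of \(S\) from smoothness of \(\widetilde X\), and \(\omega_S\cong\sO_S\), \(h^1(\sO_S)=0\) read off from \(\mathcal{A}_X\). The genuine difference is in how you verify \(S_3=\varnothing\) and that \(S_2\) is finite.

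\textbf{The paper's route.} It gets both facts at once from Debarre's theorem (unpublished, written up by Starr) that \(\mathbf{F}_2(X)\) is a finite reduced scheme. A corank \(2\) fibre is a pair of planes of \(X\), and a corank \(3\) fibre is a double plane.

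\textbf{Your route.} You derive both facts from smoothness of \(\widetilde X=\mathrm{Bl}_PX\). This avoids Debarre's result, and it is really a general fact about quadric surface bundles over a smooth surface with smooth total space. \'Etale locally near a corank \(2\) point, write \(q=x_0x_1+ax_2^2+cx_2x_3+dx_3^2\), so that \(S_2=\{a=c=d=0\}\) set-theoretically. If \(S_2\) contained a curve, then at a smooth point of it \(da,dc,dd\) kill its tangent line. Hence the base derivatives of \(q\) along the singular line \(\{x_0=x_1=0\}\) are all multiples of a single binary quadric, and at a root of that quadric every partial derivative vanishes. For corank \(3\), the analogous step is that two conics in the reduced plane of the fibre meet. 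This works verbatim in characteristic \(2\), so the step you flag as delicate is less so.

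\textbf{Two corrections to your sketch.} First, the singular point produced lies on the singular line (resp.\ the reduced plane) of the fibre, which meets \(P\) only in a point (resp.\ a line). So the contradiction is with smoothness of \(X\) somewhere, not ``along \(P\)'', and a Jacobian computation restricted to \(P\) need not detect it. Second, in characteristic \(2\) you cannot write the linear-in-\(x\) part as \(2\sum b_i(y)x_i\); use \(f=g+\sum q_ix_i+\sum\ell_{i,j}x_ix_j\) as in \parref{applications-cubic-fourfolds-discriminant}.

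\textbf{On the identification of \(M_\sigma\).} Your description via the discriminant cover is extra and not needed for the statement. It is not a ``small'' resolution, since each point of \(S_2\) is replaced by a \(\PP^1\). Nodality of the double cover over \(S_2\) in characteristic \(2\) would also need its own argument. The degree \(2\) simply comes from \(M_\sigma\to\PP^2\) being generically finite of degree \(2\).
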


\begin{proof}
Fix a plane \(P \subset X\). Linear projection centred at \(P\) is resolved
on the blowup \(b \colon \widetilde{X} \to X\) along \(P\). The resulting
morphism \(a \colon \widetilde{X} \to \PP^2\) is a quadric surface bundle.
The Fano scheme \(\mathbf{F}_2(X)\) is a finite set of reduced points by
an unpublished result of Debarre in \cite[Lemma 3]{Debarre:Lines}---see also
Starr's treatment in \cite[Appendix]{BH:Density}---and so
\(a \colon \widetilde{X} \to \PP^2\) has only finitely many corank \(2\) fibres
and no corank \(3\) fibres. Since \(\kk\) is algebraically closed, the \'etale
cover over the corank \(2\) locus from
\parref{quadric-bundles-fano-schemes-corank-2} has a section. The hypotheses of
\parref{spinor-main-theorem} thus are satisfied, and so there is a
semiorthogonal decomposition
\[
\Dbcoh(\widetilde{X}) =
\langle
\Dbcoh(S,\alpha),
La^*\Dbcoh(\PP^2) \otimes^L \sO_a,
La^*\Dbcoh(\PP^2) \otimes^L \sO_a(1)
\rangle
\]
where \(S \to \PP^2\) is an open in the moduli space of spinor sheaves along
\(a \colon \widetilde{X} \to \PP^2\) and \(\alpha \in \mathrm{Br}(S)\). The
mutation argument in \cite[pp. 228--230]{Kuznetsov:Cubic} remains valid here
and yields an equivalence
\[
\Dbcoh(S,\alpha) \simeq \mathcal{A}_X.
\]
Finally, remark that \(S\) is a K3 surface: it is smooth since
\(\widetilde{X}\) is; \(\omega_S \cong \sO_S\) since \(\mathcal{A}_X\) is a K3
category; and \(\mathrm{H}^1(S,\sO_S) = 0\) thanks to Noether's formula
\(12\chi(S,\sO_S) = c_2(S)^2\) and the fact that the Hochschild homology of
\(\mathcal{A}_X\) recovers the Betti numbers of a K3 surface.
\end{proof}

The final few paragraphs give explicit models for the K3 surface \(S\). Without
further assumptions on \(X\), the discriminant cover
\(S_0 \coloneqq \Disc(\widetilde{X}/\PP^2)\) of
\(a \colon \widetilde{X} \to \PP^2\) from
\parref{applications-discriminant-algebra} provides perhaps the most convenient
model for \(S\). Note that \parref{applications-discriminant-cover} and
\parref{applications-discriminant-cover-simple-degen} give a canonical
birational morphism \(S \to S_0\) over \(\PP^2\)
which is an isomorphism away from from the corank \(2\) locus of
\(a \colon \widetilde{X} \to \PP^2\).

\subsectiondash{}\label{applications-cubic-fourfolds-discriminant}
To give an equation for \(S_0\), choose projective coordinates
\((x_0:x_1:x_2:y_0:y_1:y_2)\) for \(\PP^5\) such that the plane in \(X\) is
given by \(P = \mathrm{V}(y_0,y_1,y_2)\), and identify \((y_0:y_1:y_2)\) as the
coordinates on the \(\PP^2\) at the base of linear projection
\(a \colon \widetilde{X} \to \PP^2\). The equation of \(X\) takes the form
\[
f =
g(y_0,y_1,y_2) +
\sum\nolimits_{i = 0}^2 q_i(y_0,y_1,y_2) x_i +
\sum\nolimits_{0 \leq i \leq j \leq 2} \ell_{i,j}(y_0,y_1,y_2) x_ix_j
\]
where \(g \in \mathrm{H}^0(\PP^2,\sO_{\PP^2}(3))\),
\(q_i \in \mathrm{H}^0(\PP^2,\sO_{\PP^2}(2))\), and
\(\ell_{i,j} \in \mathrm{H}^0(\PP^2,\sO_{\PP^2}(1))\). The quadratic form
defining \(\widetilde{X}\) may be defined by the non-symmetric bilinear form
\[
\begin{pmatrix}
\ell_{0,0} & \ell_{0,1} & \ell_{0,2} & q_0 \\
           & \ell_{1,1} & \ell_{1,2} & q_1 \\
           &            & \ell_{2,2} & q_2 \\
           &            &            & g
\end{pmatrix} \colon
\sO_{\PP^2}^{\oplus 3} \oplus \sO_{\PP^2}(-1) \to
\sO_{\PP^2}(1)^{\oplus 3} \oplus \sO_{\PP^2}(2).
\]
By \parref{applications-discriminant-algebra-split}, \(S_0 \to \PP^2\) is the
flat double cover given by
\(z^2 - (\ell_{0,1}q_2 - \ell_{0,2}q_1 + \ell_{1,2} q_0) z + \gamma = 0\)
where
\begin{multline*}
\gamma \coloneqq
( \ell_{0,0}\ell_{1,2}^2
+ \ell_{1,1}\ell_{0,2}^2
+ \ell_{2,2}\ell_{0,1}^2
- \ell_{0,1}\ell_{0,2}\ell_{1,2}
-4\ell_{0,0}\ell_{1,1}\ell_{2,2})g
+ \ell_{0,1}\ell_{1,2}q_0q_2 \\
+ \ell_{1,1}\ell_{2,2}q_0^2
+ \ell_{0,0}\ell_{2,2}q_1^2
+ \ell_{0,0}\ell_{1,1}q_2^2
- \ell_{0,0}\ell_{1,2}q_1q_2
- \ell_{1,1}\ell_{0,2}q_0q_2
- \ell_{2,2}\ell_{0,1}q_0q_1.
\end{multline*}

\subsectiondash{}\label{applications-cubic-fourfolds-two-planes}
One particularly pleasant situation in which global equations for \(S\) can
be given is when \(X\) contains a second plane \(P'\) disjoint from \(P\).
Choose coordinates as in \parref{applications-cubic-fourfolds-discriminant}
such that \(P = \mathrm{V}(y_0,y_1,y_2)\) and, furthermore, \(P' = \mathrm{V}(x_0,x_1,x_2)\).
The equation of \(X\) may then be written as the sum
\[
f =
f_{1,2}(x_0,x_1,x_2;y_0,y_1,y_2) +
f_{2,1}(x_0,x_1,x_2;y_0,y_1,y_2)
\]
where \(f_{1,2}\) and \(f_{2,1}\) are bihomogeneous of bidegrees \((1,2)\) and
\((2,1)\), respectively. Viewed as functions on \(\PP^2 \times \PP^2\), these
cut out \(S\): see
\cite[\S1.2]{Hassett:Survey}.

\subsectiondash{}\label{applications-cubic-fourfolds-fermat}
As an explicit example, consider the cubic fourfold
\[
X \coloneqq
\big\{(x_0:x_1:x_2:y_0:y_1:y_2) \in \PP^5 :
x_0^2 y_0 + x_1^2 y_1 + x_2^2 y_2 + x_0 y_0^2 + x_1 y_1^2 + x_2 y_2^2 = 0\big\}.
\]
By \parref{applications-cubic-fourfolds-discriminant}, the discriminant cover
\(S_0 \to \PP^2\) is given by \(z^2 - y_0y_1y_2(y_0^3 + y_1^3 + y_2^3) = 0\).
In particular, \(S_0\) is singular above points of intersection amongst the
various components of the divisor appearing. By
\parref{applications-cubic-fourfolds-two-planes}, \(S_0\) is resolved by
the K3 surface \(S\) may be given as
\[
S \coloneqq
\big\{((x_0:x_1:x_2),(y_0:y_1:y_2)) \in \PP^2 \times \PP^2 :
x_0^2 y_0 + x_1^2 y_1 + x_2^2 y_2 =
x_0 y_0^2 + x_1 y_1^2 + x_2 y_2^2 = 0
\big\}.
\]
When \(\kk\) has characteristic \(2\), both
\cite[Theorem 1.1(vi) and (viii)]{DK:K3} show that \(S\) is the supersingular
K3 surface of Artin invariant \(1\). Since \(X\) here is a smooth \(q\)-bic
fourfold with \(q = 2\) in the sense of \cite{fano-schemes}, this is isomorphic
to the Fermat cubic fourfold. This completes the proof of Theorem
\parref{intro-theorem-fermat-cubic}.
\qed

\bibliographystyle{amsalpha}
\bibliography{main}
\end{document}